\def\bfa{{\bf a}}
\def\bfb{{\bf b}}
\def\Skl{\operatorname{1^\nu-Set}}
\def\hSkl{\operatorname{P1^\nu-Set}}
\def\skeletona{K-hypersurface}
\def\Hdim{\operatorname{Hdim}}
\def\HSpec{\operatorname{HSpec}}
\def\ConSpan{\operatorname{Conv}}
\def\hgt{\operatorname{hgt}}
\def\condeg{\operatorname{d}_{\operatorname{conv}}}
\def\skeletonb{K-variety}
\def\skeletonbs{K-varieties}
\def\vsemifield0{$\nu$-semifield$^\dagger$}
\def\vsemifields0{$\nu$-semifields$^\dagger$}
\def\vdomain0{$\nu$-domain$^\dagger$}
\def\vdomains0{$\nu$-domains$^\dagger$}
\def\scrL{\mathscr L}
\def\onenu{1^\nu}
\def\Plr{\mathcal Plr}
\def\Frac{{\operatorname{Frac}}}
\def\FF{{\langle F \rangle}}
\theoremstyle{plain}
\newtheorem{thm}{Theorem}[subsection]
\newtheorem{exampl}[thm]{Example}
\newtheorem{lem}[thm]{Lemma}
\newtheorem{cor}[thm]{Corollary}
\newtheorem{prop}[thm]{Proposition}
\newtheorem{constr}[thm]{Construction}
\theoremstyle{remark}
\newtheorem{rem}[thm]{Remark}
\newtheorem{note}[thm]{Note}
\theoremstyle{definition}
\newtheorem{defn}[thm]{Definition}
\newtheorem{nota}[thm]{Notation}
\newtheorem{exmp}[thm]{Example}
\numberwithin{equation}{section}
\DeclareMathOperator{\PCon}{\mathcal P}
\renewcommand{\Im}{\operatorname{Im\,}}
\def\la{\lambda}
\def\Skel{\operatorname{1}_{\operatorname{loc}}}
\def\Ker{\mathcal Kern}
\def\dotplusss{+}
\def\rat{\operatorname{rat}}
\def\Corn{\operatorname{C}_{\operatorname{loc}}}
\def\semiring0{semiring$^\dagger$}
\def\Semirings0{Semiring$^\dagger$}
\newcommand{\etype}[1]{\renewcommand{\labelenumi}{(#1{enumi})}}
\def\eroman{\etype{\roman}}
\def\Net{\mathbb N}
\def\semialg0{semi-algebra$^\dagger$}
\def\semirings0{semirings$^\dagger$}
\def\domain0{domain$^\dagger$}
\def\domains0{domains$^\dagger$}
\def\field0{semifield$^\dagger$}
\def\semifield0{semifield$^\dagger$}
\def\Semifield0{Semifield$^\dagger$}
\def\semifields0{semifields$^\dagger$}
\def\Semifields0{Semifields$^\dagger$}
\def\Real{\mathbb R}
\def\R{\mathbb R}
\def\Q{\mathbb Q}
\def\tG{\mathcal{G}}
\def\ptM{M}
\def\tSS{S}
\def\tS{\mathcal{S}}
\def\tT{\mathcal{T}}
\def\FunStM{\operatorname{Fun} (S,\ptM)}
\def\FunSF{\operatorname{Fun} (S,F)}
\def\one{\mathbf{1}}
\def\zero{\mathbf{0}}
\def\rone{\one_R}
\def\rzero{\zero_R}
\def\skel{{\operatorname{1-loc}}}
\def\vrp{\varphi}
\def\a{\alpha}
\def\nucong{\cong_\nu}
\def\tlvrp{\widetilde \varphi}
\def\Cong{\Omega}
\newcommand{\dss}[1]{\quad {#1} \quad }
\def\fone{\one_F}
\def\FunSR{\operatorname{Fun} (\tS,R)}
\def\Fun{{\operatorname{Fun}}}
\def\FunF{\Fun (F^{(n)},F)}
\def\Rplus{(\mathbb R,+)}
\begin{document}
\pagenumbering{arabic}

\title{Kernels in tropical geometry and a Jordan-H\"{o}lder theorem}%
\author{Tal Perri, Louis Rowen}%
\email {talperri@zahav.net.il} \email{rowen@math.biu.ac.il}
\subjclass[2010]  {Primary: 14T05, 12K10 ; Secondary: 16Y60 }
\begin{abstract}
A correspondence exists between affine tropical varieties and
algebraic objects, following the classical Zariski correspondence
between irreducible affine varieties and the prime spectrum of the
coordinate algebra in affine algebraic geometry. In this context the
natural analog of the polynomial ring over a field is the polynomial
semiring over a semifield, but one obtains homomorphic images of
coordinate algebras via congruences rather than ideals, which
complicates the algebraic theory considerably.

In this paper, we pass to the semifield $F(\la_1, \dots, \la_n)$ of
fractions of the polynomial semiring, for which there already exists
a well developed theory of \textbf{kernels}, also known as
\textbf{lattice ordered subgroups}; this approach enables us to
switch the structural roles of addition and multiplication and makes
available much of the extensive theory of chains of homomorphisms of
groups. The parallel of the zero set now is the \textbf{1-set}.
(Idempotent semifields correspond to lattice ordered groups, and the
kernels to normal convex $l$-subgroups.)

These notions are refined in the language of supertropical algebra
to $\nu$-\textbf{kernels} and $\onenu$-\textbf{sets},   lending
more precision to tropical varieties when viewed as sets of common
roots of polynomials. The $\nu$-kernels corresponding to
(supertropical) hypersurfaces are the $\onenu$-sets of corner
internal rational functions.  The $\nu$-kernels corresponding to
``usual'' tropical geometry are the regular, corner-internal
$\nu$-kernels.

 In analogy to Hilbert's celebrated Nullstellensatz which
provides a correspondence between radical ideals and zero sets, we
develop a correspondence between $\onenu$-sets and a well-studied
class of $\nu$-kernels of the rational semifield called
\textbf{polars}, originating from the theory of lattice-ordered
groups. This correspondence becomes simpler and more applicable
when restricted to a special kind of $\nu$-kernel, called
\textbf{principal}, intersected with the $\nu$-kernel generated by
$F$. We utilize this theory to study tropical roots of finite sets
of tropical polynomials.

 For our main application, we   develop algebraic  notions such as composition series and convexity degree,
 along with notions having a geometric interpretation, like reducibility and hyperdimension,
 leading to a tropical version of the
Jordan-H\"{o}lder theorem for the relevant class of $\nu$-kernels.
\end{abstract}

\maketitle
\setcounter{tocdepth}{1}{\small \tableofcontents}

\section{Introduction}

This paper is a combination of \cite{Kern}  and \cite{KS}. The
underlying motive of tropical algebra is that the valuation group
of the order valuation (and related valuations) on the Puiseux
series field is the ordered group $(\Real, +)$ or $(\Q, +)$
(depending on which set one uses for powers in the Puiseux
series), which can also be viewed as the max-plus algebra on
$(\Real, +)$ or $(\Q, +)$. This leads one to a procedure of
tropicalization, based on valuations of Puiseux series, which
takes us from polynomials over Puiseux series to ``tropical''
polynomials over the max-plus algebra. One of the main goals of
tropical geometry is to study the ensuing varieties.
Traditionally, following Zariski, in the affine case, one would
pass to the ideal of the polynomial algebra generated by the
polynomials defining the variety. However, this does not work well
in the tropical world, since one winds up with ``too many''
varieties for an intuitive theory of dimension. For example,
intuitively one would like the tropical line to be an irreducible
variety of dimension 1. But the intersection of the tropical lines
$\la _1 + \la_2 + 1$ and $\la _1 +\la_2 + 2$ is the diagonal ray
emanating from $(2,2)$, so one must deal with rays, and the
tropical line is the union of three rays. The customary solution
to this dilemma is to introduce the ``balancing condition,'' given
in rather general terms in Yu~\cite{Y}, but this is tricky in
higher dimensions and does not relate directly to the algebraic
structure.

Another promising direction, which we follow, is to develop an
analog of the celebrated Zariski correspondence, in order to
understand tropical geometry in terms of the algebraic structure
of various \semirings0 refining the polynomial algebra over the
max-plus algebra. Such a correspondence is one of the traditional
ways of treating algebraic geometry in terms of universal algebra,
as described in~\cite[Theorem~1.1]{UnivAlg} and in considerable
generality in \cite{Alglogic}, and was adapted to tropical algebra
by means of corner roots, as described in~\cite{IntroTropGeo}; a
structural description of roots in the ``supertropical language''
was given in \cite{SuperTropicalAlg}.

Let $\mathbb{K} = \mathbb{C}\{\{t\}\}$ be the field of Puiseux
series on the variable $t$, which is the set of formal series of
the form $f = \sum_{k = k_0}^{\infty} c_k t^{k/N}$ where $N \in
\mathbb{N}$, $k_0 \in \mathbb{Z}$, and $c_k \in \mathbb{C}$. Then
$\mathbb{K}$ is an algebraically closed field equipped with the
\textbf{Puiseux valuation} $val : \mathbb{K}^{\ast} =
\mathbb{C}\{\{t\}\} \setminus \{0\} \rightarrow \mathbb{Q} \subset
\mathbb{R}$ defined by
\begin{equation}
val(f) = -\min_{c_k \neq 0}\{k/N\}.
\end{equation}
%
%

The \textbf{tropicalization} of a Laurent polynomial  $f = \sum
c_{u}x^{u} \in \mathbb{K}[x_1^{\pm 1},...,x_n^{\pm 1}]$ is defined
to be $trop(f) : \mathbb{R}^{(n)}  \rightarrow \mathbb{R}$ given by
\begin{equation}
trop(f)(w) = max(val(c_u) + w \cdot u)
\end{equation}
where $u \in \mathbb{N}$ is the power vector and $\cdot$ is the
scalar product.

Given a Laurent polynomial  $f   \in \mathbb{K}[x_1^{\pm
1},...,x_n^{\pm 1}]$, one defines its \textbf{tropical hypersurface}
as
\begin{equation}
T(trop(f)) = \{ w\in \mathbb{R}^{(n)}  : \text{the maximum in
$trop(f)$ is achieved at least twice}. \}
\end{equation}

 To develop tropical algebraic geometry further, one has the choice either of working directly
at the level of polynomials over Puiseux series, and then
tropicalizing, or first tropicalizing and then utilizing a Zariski
correspondence at the tropical level.
 The latter is
attractive from the point of view of being able to work directly
with a simpler concept, and one is led to continue the algebraic
study of tropical geometry by means of corner roots of
polynomials, developing algebraic geometry over the ordered group
$(\Q,+)$; some relevant references are
\cite{Ca,GG,SuperTropicalAlg,Mac}. One tricky issue is that we
need to consider intersections of tropical hypersurfaces.

There has also been recent interest in algebraic geometry over
monoids, and much of the basic algebraic material can be found in
\cite{CHWW}. But  many problems arise in formulating algebraic
geometry directly over the max-plus algebra, not the least of which
is the failure of the max-plus algebra to reflect the uncertainty
involved in taking the value of the sum of two polynomials. This was
dealt with in \cite{Layered,SuperTropicalAlg}, by refining the
max-plus algebra to a ``supertropical \semifield0'' $F$ and, even
more generally, to a ``layered'' \semifield0, i.e., semifield
without 0. \cite{GG} also treats tropical algebraic geometry in
terms of valuations.

When developing tropical affine geometry, it is natural to try to
obtain the Zariski-type correspondence between tropical varieties
and coordinate semirings. This approach is used for our proposed
definition of tropical (affine) variety, in parallel to classical
affine algebraic geometry. (Then the usual constructions of schemes
and sheaves can be obtained for ``semiringed spaces'' by noting that
negation is not needed in the proofs; the schemes correspond to the
congruences defining tropical varieties, and the sheaves to the
localizations of the coordinate semirings.) We do not go into the
details here, for lack of space.)

 Our approach to affine tropical geometry also focuses on a
Zariski correspondence parallel to the Zariski correspondence in
classical algebraic geometry, which  is obtained from the passage
between ordered groups and \semifields0. Since tropical geometry
often is understood in terms of corner roots of polynomials over
the target of the order valuation, one should investigate the
\semiring0 $R =  F[\la_1, \dots, \la_n]$ of polynomials over a
\semifield0 $F$, as well as its homomorphic images.

There is a natural homomorphism $\psi$ from $F[\la_1, \dots, \la_n]$
to the semiring  $\FunF$ of functions from $F^{(n)}$ to $F$,
obtained by viewing any polynomial as a function. Since polynomials
over the max-plus algebra could have inessential terms which do not
affect their values, two different polynomials could have the same
image under $\psi$  (such as  $5\la^2 +\la +7$ and $5\la^2  +7$).
But their corner roots must be the same, so it is convenient to work
with the image $\psi(F[\la_1, \dots, \la_n ])$, which we denote as
$\overline{F[\la_1, \dots, \la _n]}$. This is particularly apt when
one views the coordinate semiring of a variety as the set of
polynomial functions restricted to that variety. Viewed in this way,
all coordinate semirings are cancellative, and one can define
morphisms of varieties in terms of \semiring0 homomorphisms.   (The
information lost by passing to $\overline{F[\la_1, \dots, \la _n]}$
can be accounted for by \textbf{region fractions},
cf.~Definition~\ref{regionfrac}. These arise in the decomposition
given in Theorem~\ref{thm_HP_expansion}, described presently.)

In general \semiring0 theory, such homomorphisms are not attained
by means of (prime) ideals, but rather via congruences, which are
defined as subsets of $R\times R$ rather than of $R$. Since
congruences replace ideals in the study of homomorphic images, one
is led to the study of congruences of \semirings0. Developing
ideas of \cite{BeE}, Joo and Mincheva
 \cite{JoM} have developed an elegant theory of  prime congruences,
 to be discussed in Remark~\ref{primecon}.

Several authors, \cite{GG, MacRin, Mac} have explored ``bend
congruences.'' Even so, the theory of congruences is considerably
more complicated than the study of ideals of rings, since it
involves substructures of $R\times R$ rather than $R$.
Furthermore, the polynomial algebra over a \semifield0 need not be
Noetherian, cf.~\cite{IzhakianRowenIdeals}, although  \cite{JoM}
shows that chains of prime congruences are bounded.

 The main
innovation of this paper, put forth in 2013 by T.~Perri in his
doctoral dissertation  \cite{AlgAspTropMath}, is to  switch the
roles of multiplication and addition (which is natural enough,
since we started with the max-plus algebra). Although
mathematically equivalent, this switch enables us to view
$\overline{F[\la_1, \dots, \la _n]}$ as a lattice-ordered
(multiplicative) monoid and pass to its group of fractions,
focusing on the group structure. This leads us further into the
classical theory of lattice-ordered monoids and groups \cite{B,
CG,Ho,Ko,M,Sa}
  and their
corresponding idempotent \semifields0.

Put another way, although the polynomial \semiring0 is not a
\semifield0, it is a cancellative \semiring0 when its elements are
viewed as functions,  so we can  view polynomials as functions and
then pass to the \semifield0 of fractions, which is called the
\semifield0 of \textbf{rational functions} over~ $F$. (The
information thereby lost is compensated by  Homomorphisms of
idempotent \semifields0 have been studied long ago in the literature
\cite{Hom_Semifields,Semifields_LO_Groups,Cont_Semifields}, where
the homomorphic images are described in terms of what they call
\textbf{(\semifield0) kernels}, which as noted in
\cite{Hom_Semifields} are just the convex (normal)  $l$-subgroups of
the corresponding lattice-ordered groups. Since \semifield0 kernels
are subgroups which can be described algebraically, cf.~
\cite[Theorem~3.2]{Hom_Semifields}, they are more amenable to the
classical structure theorems of group theory (Noether isomorphism
theorems, Jordan-H\"{o}lder   theorem, etc.) than congruences. For
the supertropical theory, we need the $\nu$-analog of kernels, which
we call $\nu$-\textbf{kernels}.

The parallel notion to a zero set in algebraic geometry now is the
set of points which when substituted into a function give the
value 1 instead of 0, so we call these sets
\textbf{$\onenu$-sets}. Our ultimate goal is a 1:1 correspondence
between  sets of corner roots (which in the supertropical language
are ``ghost roots'') of polynomials and $\onenu$-sets of rational
functions.

 A $\nu$-kernel
is called \textbf{principal} if it is generated by one element.
One big advantage of the use of $\nu$-kernels is that the product
of finitely many principal $\nu$-kernels is a principal
$\nu$-kernel,
cf.~Theorem~\ref{cor_max_semifield_principal_kernels_operations},
and thus varieties defined as finite intersections of
hypersurfaces still can be described in terms of principal
$\nu$-kernels. This enables us to study tropical varieties that
are not necessarily hypersurfaces.


We come upon various correspondences, but for some reason to be
explained below, we need to intersect down to the $\nu$-kernel
generated by the constant functions, which we
 call $\langle F \rangle$, in order to obtain the correspondence given in
Corollary~\ref{corlatticegeneratedbyCIkernels}:

\medskip

 The lattice of (tangible) simultaneous corner loci with respect to a finite set of polynomials corresponds
to the lattice of principal (tangible)   $\nu$-kernels of $\langle
F \rangle$.

\medskip
This result, a consequence of Theorem~\ref{thmcireglattice}, to be
described soon, enables one to transfer the geometric theory to
$\nu$-kernels of \vsemifields0. The preparation for its proof
requires a careful description of various different kinds of
$\nu$-kernels, each with its particular geometric properties, and
takes the bulk of this paper.

 The application of $\nu$-kernels to
tropical geometry, given in \cite{AlgAspTropMath}, is related to
the bend congruences of~\cite{GG}, as discussed in
Remark~\ref{bend}.

To obtain our results algebraically, we need a slight modification of the results about \semifields0 in
the literature. As explained in
\cite{IzhakianKnebuschRowen2012Algebraicstructures} and elaborated
in \cite{Layered,IzhakianKnebuschRowen2011CategoriesII}, there are
other \semirings0~$R$ covering the usual idempotent max-plus
algebra, which provide better tools for examining the algebraic
structure arising from valuations on Puiseux series, and these
\semirings0 are no longer embeddable into \semifields0. We list the
relevant structures in increasing level of refinement:

\begin{itemize}
\item The max-plus algebra
\item Supertropical algebra
\item Layered tropical algebras
\end{itemize}

In each of the latter three cases, although the multiplicative
monoid is not ordered, there is an underlying partially ordered
monoid, so we can modify the results about (semifield)
$\nu$-kernels in the literature to suit our purposes. Furthermore,
in the supertropical and layered situations, there is a ``ghost
map'' $\nu$ from $R$ to  the set of ``ghost elements'' $\tG$,
which enables us to compare elements in $R$ via their images in
the ordered monoid~$\tG$. We treat only the supertropical setting
here, since we feel that the  theory can be described more
concisely in that context. A more thorough description using the
layered setting would require even more technical detail.

Let us proceed with a more detailed overview of this paper. Since
both \vsemifields0 and their $\nu$-kernels may be unfamiliar to
many researchers, we review  ordered monoids and their \semirings0
in \S\ref{back}.  After an introduction to the supertropical
theory in \S\ref{supertrop}, the basic notions of \vsemifield0
$\nu$-kernels (and their supertropical analogs) are presented in
\S\ref{ker0}.

 The pertinence to tropical geometry is given in
\S\ref{supertrop1}. Corner roots are replaced by  kernel roots,
those points whose value at each function in the $\nu$-kernel is
$\nu$-congruent to 1; these are called $\onenu$-sets, which are
the kernel-theoretic analogues of corner loci. The basic
Zariski-type correspondence between $\onenu$-sets and
$\nu$-kernels is given in
 Theorem~\ref{prop_correspondence}. But different $\nu$-kernels could
 correspond to the same $\onenu$-set, cf.~Example~\ref{nonuniq}.
 This ambiguity is overcome by intersecting with the $\nu$-kernel $\langle F
 \rangle$. Indeed, one still obtains all the $\onenu$-sets, by
 Proposition~\ref{intersects}, but now  Proposition~\ref{prop_generator_of_skel2}
 yields uniqueness in the correspondence, eventually yielding
the desired 1:1 correspondence between principal $\onenu$-sets and
principal $\nu$-subkernels of $\langle \mathscr{R} \rangle$ in
Theorem~\ref{cor0}.

The road to Theorem~\ref{cor0} is somewhat arduous. The interplay
of corner roots of polynomials and $\nu$-kernel roots comes in
Section \ref{supertrop16}. In applying the theory to sets of
polynomials defining tropical varieties, one quickly sees that
there still are too many $\nu$-kernels; for example, some of the
varieties defined by $\nu$-kernels do not satisfy basic tropical
conditions such as the balancing condition. Example~\ref{ci10}
also displays a certain form of pathology.  Thus, there is no way
to obtain such basic notions as dimension without specifying which
kinds of $\nu$-kernels to yield the tropical varieties arising
from hypersurfaces and their intersections. These $\nu$-kernels,
called \textbf{corner internal} $\nu$-kernels, lie at the heart of
our theory, being the ones that provide the transition between
ghost roots and kernel roots:

A rational function $f$ is  \textbf{corner internal} if $f = \frac
hg $ such that every ghost root of $g+h$ is a $\nu$-kernel root of
$f$. A \textbf{corner internal} $\nu$-kernel,
cf.~Definition~\ref{defn_corner_admissible}, is a $\nu$-kernel
generated by a corner internal rational function.
\medskip

There are two main ways of passing from hypersurfaces to corner
internal rational function, one given in the basic ``hat
construction'' of \S\ref{hat1}, leading to Theorem~\ref{cor11},
stating  that the correspondences $f \mapsto \hat f$ and $h
\mapsto \underline{h}$ induce  1:1 correspondences between
hypersurfaces and $\onenu$-sets of corner internal rational
functions.

 Another, subtler way, of describing corner internal
rational functions is given in Definition~\ref{hat2} and
Theorem~\ref{cor_CI_map1}. In \S\ref{sym} we specify the
\textbf{regular}   rational functions, which locally are not the
identity, and thus have tropical significance, distinguishing
between two general types of nontrivial principal $\onenu$-sets in
$F^{(n)} $:
\begin{itemize}\item $\onenu$-sets not containing a region of
dimension $n$; \item $\onenu$-sets that do contain a region of
dimension $n$.\end{itemize}

The $\nu$-kernels defining ``traditional'' tropical hypersurfaces
are the regular corner internal $\nu$-kernels.

 We further narrow down the $\nu$-kernels of interest, finally showing how one obtains a strong Zariski correspondence
  using
 regular principal $\nu$-kernels. 
There is a basic question as to over which \semifields0   we
should define our $\nu$-kernels. Although most of the results are
for $\nu$-kernels of $F(\lambda_1, \dots, \lambda _n)$, the
rational fractions of the polynomial \semiring0 over an arbitrary
supertropical \semifield0 $F$, in \S\ref{compl1} we add the
condition that the underlying \semifield0 $F$ be complete.  At
times we need to assume that $F$ is complete, archimedean, and
divisible, in order to obtain principal $\nu$-kernels.

Most of \S\ref{subsection:bounded_kernel0} involves intersecting
down to $\langle F \rangle$. In Theorem~\ref{thmcireglattice} we
formulate the major result:

\medskip

All (regular) principal $\nu$-kernels  of $\langle F \rangle$ are
generated by (regular) corner internal principal $\nu$-kernels.

\medskip

See Example~\ref{tropline1} as to how this result applies to the
familiar tropical line. Its proof requires more machinery, and
also requires us to take $F$ to be complete under the order
topology, say $F = \mathscr{R}$, the supertropical version of the
real numbers (viewed as a max-plus structure).

 The \textbf{wedge decomposition} is a tool introduced
in Definition~\ref{defn_decomposition}, and in
Theorem~\ref{thm_every_gen_is_reducible} it is seen that an
intersection of principal $\nu$-subkernels of $\langle \mathscr{R}
\rangle$ has a corresponding wedge decomposition.

But we still need more preparation to prove
Theorem~\ref{thmcireglattice}. Our classes of $\nu$-kernels are
further subdivided in~\S\ref{HOdecomp} into ``HO-kernels'' and
another kind of $\nu$-kernel which can be bypassed when
restricting from $\overline{F(\Lambda)}$ to~$\langle F \rangle$.
HO-kernels are decomposed as products of ``HS-kernels'' and
``region $\nu$-kernels'' in Theorem~\ref{thm_HP_expansion}:

 \medskip

 Every
principal $\nu$-kernel $\langle f \rangle$ of $F(\la_1,...,\la_n)$
can be written as the intersection of finitely many principal
$\nu$-kernels
$$\{K_i : i=1,...,q\} \ \text{and} \ \{N_j : j = 1,...,m \},$$
whereas each $K_i$ is the product of an HS-kernel and a region
$\nu$-kernel, and each $N_j$ is a product of ``bounded from below''
$\nu$-kernels and (complementary) region $\nu$-kernels.

This enables us to reduce to finite decompositions and finally
prove Theorem~\ref{thmcireglattice}.

%
%
%
%

 As a major application, the decomposition theory we have just described also
 gives rise 
 to an algebraic description of basic geometric tropical concepts such
as dimension, which is seen 
to match the intuitive geometric notion. Towards this end, we define
a \textbf{hyperplane $\nu$-kernel}, or  \textbf{HP-kernel}, for
short, to be a principal $\nu$-kernel of $F(\la_1,...,\la_n)$
generated by a single Laurent monomial. More generally, a
\textbf{hyperspace-fraction $\nu$-kernel}, or \textbf{HS-kernel}, is
a principal $\nu$-kernel of $F(\la_1,...,\la_n)$ generated by a
rational function  $f \sim_{\FF} \sum_{i=1}^{t} |f_i|$
(cf.~Definition~\ref{defn_abstract_similarity_of_generators}) where
the $f_i $ are non-proportional non-constant Laurent monomials, and
corresponds to a hyperplane in polyhedral geometry.

On the other hand, we also encounter \textbf{order $\nu$-kernels}.
Imposition of an order $\nu$-kernel intersects our variety with a
half-space, so does not reduce the dimension. Furthermore, one
could have an infinite chain of ``parallel'' order $\nu$-kernels.
For these reasons,
our theory of dimension bypasses order $\nu$-kernels and turns to HS-kernels. 

The \textbf{height} $\hgt(L) $  of an HS-kernel $L$ is given
in~Definition~\ref{height0}, and the \textbf{convex  dimension} then
is given in~Definition~\ref{condim}.
 This dimension is catenary, in the sense of Theorem~\ref{caten}:

 \medskip

If $R$ is a region $\nu$-kernel and $L$ is an HS-kernel  of
$\overline{F(\Lambda)}$, then
$$\condeg(\overline{F(\Lambda)}/LR) = \condeg(\overline{F(\Lambda)}) - \condeg(L).$$

 \medskip

In particular, $\condeg(\overline{F(\Lambda)}/LR) = n -
\condeg(L).$ There is a well-known   general argument for
Jordan-H\"{o}lder type theorems for finite chains in modular
lattices, recalled in Theorem~\ref{Schreier},
 leading us to
Theorem~\ref{prop_kernel_descending_chain_properties}:

 \medskip

If a $\nu$-kernel $L $ is irreducible  in the lattice of
$\nu$-kernels finitely generated by HP-kernels of
$\overline{F(\Lambda)}$, then $\hgt(L) $ is its convex  dimension.
 Moreover, every factor of a  descending chain of HS-kernels of maximal length is an  HP-kernel.
 \medskip

Finally, let us turn to the Nullstellensatz. Several versions  for a
tropical Hilbert Nullstellensatz have been put forth, including
\cite{BeE,JoM}. Our objective here rather is to understand the
tropical spirit of the Nullstellensatz in a more general context.
 Just as the classical Hilbert
Nullstellensatz shows that affine varieties correspond to radical
ideals of the polynomial algebra, here affine tropical varieties
correspond to certain $\nu$-kernels    called \textbf{polars},
defined intrinsically in terms of the natural orthogonality relation
that $f\perp g $ for $f,g \in F(X)$ if their minimum always takes on
the value 1.

\medskip

 Polars are best  understood as $\nu$-kernels   in   the
completion of the natural image of $F(\lambda_1, \dots, \lambda _n)$
inside the \semiring0 of functions from some subset $S\subseteq
F^{(n)}$ to $F$, but we can take the restriction back to $F(\la_1,
\dots, \la_n)$. Unfortunately, finitely generated polars need not be
finitely generated as $\nu$-kernels, which is another reason for us
to restrict to bounded $\nu$-kernels.

We discuss some of their basic properties in \S\ref{Polar}. By
Theorem~\ref{prop_polar_k_kernel_completion_equivalence}, the polars
of $\overline{\mathscr{R}(\Lambda)}$ are precisely those that come
from $\onenu$-sets, thereby leading up to the fundamental
correspondence of Theorem~\ref{thm_polar}:

\medskip

 There is a $1:1$ correspondence from the polars of $\overline{\mathscr{R}(\Lambda)}$ to the
$\onenu$-sets in $\mathscr{R}^{n}$, given by $B \mapsto \Skel(B)$
and $Z \mapsto \Ker(Z)$, which  restricts to a correspondence
between the principal polars (as polars) of
$\overline{\mathscr{R}(\Lambda)}$ and the principal $\onenu$-sets
in~$\mathscr{R}^{n}$.

In this way we reduce $\nu$-kernels to finitely generated
$\nu$-kernels, which then are principal, and we obtain the
appropriate Zariski correspondence in its entirety.

\section{Algebraic Background}\label{back}

We start by reviewing some familiar notions that are needed
extensively in our exposition.
%
%
%
%
%
%

\subsection{Semi-lattice ordered monoids, groups, and semirings} $
$

 The passage to the max-plus algebra in tropical mathematics
can be viewed via ordered groups and, more generally, ordered
monoids and semirings, so we start with them, drawing on the
review given in
\cite{IzhakianKnebuschRowen2012Algebraicstructures}. This material
is well known, and largely can be found in \cite{OrderedGroups3}.
Recall that a monoid $(M,\cdot, 1)$ is just a semigroup with an
identity, i.e., unit element, denoted as $1$. We work solely with
\textbf{Abelian} monoids, in which the operation is commutative.
Our semigroups without identity are written in additive notation,
and monoids are written in multiplicative notation.

\begin{defn}\label{semilat} A (set-theoretic) \textbf{semi-lattice} $L$ is a partially ordered set with
an (associative) binary ``sup'' operation $\vee$, which means:
\begin{equation} a ,b \le a \vee b, \quad\text{and} \quad  \text{if } a,b \le c \text{ then }
 a \vee b \le  c.\end{equation}

\end{defn}
\begin{defn}
An element $a$ of  a semigroup $(\tSS,+)$ is \textbf{idempotent} if
$a+a = a$. A \textbf{band} is a semigroup in which
 every element is idempotent. A band $(\tSS,+)$ is \textbf{bipotent} if
 $a+b \in \{a,b\}$ for each pair of elements.
 \end{defn}

\begin{lem}\label{band0} Any
 semi-lattice can be viewed as a  band, where we define $a+b = a
 \vee b.$
\end{lem}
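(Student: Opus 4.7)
The plan is to verify the two conditions that the defined operation $+ := \vee$ makes $L$ into a band: namely, that $+$ is associative (so that $(L,+)$ is a semigroup), and that every element is idempotent with respect to $+$.

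First, associativity is essentially built into Definition~\ref{semilat}, which explicitly describes $\vee$ as an ``(associative) binary sup operation.'' If I wanted to derive it from the order-theoretic characterization rather than assume it as part of the definition, I would note that both $(a \vee b) \vee c$ and $a \vee (b \vee c)$ satisfy the universal property of being the least upper bound of $\{a,b,c\}$ and hence coincide; this uses only the two clauses in Definition~\ref{semilat} together with reflexivity and transitivity of the partial order.

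Second, for idempotence I need to show $a \vee a = a$ for every $a \in L$. The first clause of Definition~\ref{semilat} applied with $b = a$ gives $a \le a \vee a$. Conversely, since $a \le a$, the second clause (taking $b = a$ and $c = a$) yields $a \vee a \le a$. Antisymmetry of the partial order then forces $a \vee a = a$, i.e., $a + a = a$.

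I do not anticipate any real obstacle here; the lemma is a direct unpacking of the semi-lattice axioms and amounts to little more than a remark. The only subtle point worth flagging is whether associativity should be treated as part of the definition of a semi-lattice or derived from the order-theoretic formulation; the excerpt takes the former route, so the proof reduces to a one-line verification of idempotence.
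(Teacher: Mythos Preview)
Your proof is correct and follows essentially the same approach as the paper: both verify associativity via the characterization of $a \vee b \vee c$ as $\sup\{a,b,c\}$ and then check idempotence. You simply provide more detail on the idempotence step, deriving $a \vee a = a$ from the two clauses of Definition~\ref{semilat} and antisymmetry, whereas the paper just asserts it.
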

\begin{proof} To check associativity of addition we have
$(a+b)+c = \sup \{ a,b,c \} = a+ (b+c).$ Furthermore $a+a = a \vee
a = a.$
\end{proof}

\begin{defn}
 A \textbf{semi-lattice ordered monoid}
is a monoid $\ptM$ that is also a  semi-lattice with respect to
the operation
 $\vee$ and
satisfies the following property:

\begin{equation}\label{ogr} g(a \vee b) =  ga
\vee  gb  \end{equation} for all elements $a,b,g \in \ptM$.

 A \textbf{partially ordered monoid}
is a monoid $\ptM$ with a partial order satisfying
\begin{equation}\label{ogr1} a \le b \quad \text{implies}\quad ga
\le gb,\end{equation} for all elements $a,b,g \in \ptM$.

An \textbf{ordered monoid}  is a partially ordered monoid for
which the given partial order is a total order.

 A \textbf{lattice ordered monoid}
is a monoid $\ptM$ that is also a  lattice with respect to the
operations
 $\vee$ and $\wedge$ satisfying \eqref{ogr} for both $\vee$ and
 $\wedge.$
\end{defn}

 It follows
  readily  that if $a \le b $ and $g \le h,$ then $ga \le gb
\le hb.$ Note that the group $(\Real,+\;)$ is ordered, under this
definition, but $(\Real,\cdot \; )$ is not.

Every semi-lattice ordered monoid is  partially ordered with
respect to the   partial order given by $a\le b$ iff $a\vee b =
b.$

In case a semi-lattice ordered monoid $\ptM$ is a group $\tG$, one
defines the \textbf{dual semi-lattice} $(\tG,\wedge)$ via
\begin{equation}\label{dua} a\wedge b = (a^{-1} \vee
b^{-1})^{-1}.\end{equation} Now $\tG$ becomes a lattice, and we call
$(\tG, \vee, \wedge)$ a \textbf{lattice ordered group} if $\tG$ is a
lattice ordered monoid.

\begin{lem}\label{onesuf} Suppose   $(\tG, \vee)$ is a semi-lattice ordered Abelian group with respect to a given partial order $\le.$
Then $(\tG,\wedge)$ is indeed a semi-lattice, with respect to
which $\tG$ also becomes a lattice ordered group satisfying
$(a\vee b)^{-1} = a^{-1}\wedge b^{-1}.$\end{lem}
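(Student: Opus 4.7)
The plan is to lean entirely on the fact that inversion $x \mapsto x^{-1}$ is order-reversing in any semi-lattice ordered Abelian group, so that it carries the existing $\vee$-structure bijectively onto a dual $\wedge$-structure. The definition \eqref{dua} is engineered precisely to exploit this, so each clause of the lemma reduces to a transparent computation.

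First I would verify order-reversal of inversion. The partial order defined by $a \le b \iff a \vee b = b$ is compatible with multiplication: if $a \le b$, then $ga \vee gb = g(a \vee b) = gb$ by the distributivity \eqref{ogr}, whence $ga \le gb$. In particular, multiplying $a \le b$ by $a^{-1}b^{-1}$ yields $b^{-1} \le a^{-1}$, so inversion reverses $\le$.

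Next I would check that $a \wedge b := (a^{-1} \vee b^{-1})^{-1}$ is the infimum of $\{a,b\}$. Since $a^{-1}, b^{-1} \le a^{-1} \vee b^{-1}$, applying inversion gives $a \wedge b \le a$ and $a \wedge b \le b$. Conversely, if $c \le a$ and $c \le b$, then $a^{-1}, b^{-1} \le c^{-1}$, so by the universal property of the supremum $a^{-1} \vee b^{-1} \le c^{-1}$, and inverting again gives $c \le a \wedge b$. Associativity, commutativity, and idempotence of $\wedge$ are either immediate from the infimum characterization just established, or alternatively follow at once from the corresponding properties of $\vee$ together with the fact that $x \mapsto x^{-1}$ is an involution. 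Thus $(\tG, \wedge)$ is a semi-lattice and $\tG$ becomes a lattice under $(\vee, \wedge)$.

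Finally I would derive distributivity of multiplication over $\wedge$ from the distributivity over $\vee$:
\[
g(a \wedge b) \;=\; g\bigl(a^{-1} \vee b^{-1}\bigr)^{-1} \;=\; \bigl(g^{-1}(a^{-1} \vee b^{-1})\bigr)^{-1} \;=\; \bigl((ga)^{-1} \vee (gb)^{-1}\bigr)^{-1} \;=\; ga \wedge gb,
\]
which verifies \eqref{ogr} for $\wedge$ and shows that $\tG$ is a lattice ordered group. The identity $(a \vee b)^{-1} = a^{-1} \wedge b^{-1}$ is then immediate by applying the definition of $\wedge$ to $a^{-1}, b^{-1}$ and using that inversion is involutive. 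There is no real obstacle here; the one thing worth underlining is that the whole argument is driven by the single observation that compatibility of $\le$ with multiplication on an Abelian group automatically forces inversion to be order-reversing.
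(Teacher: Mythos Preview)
Your proof is correct and follows essentially the same approach as the paper: both establish that inversion is order-reversing (you via compatibility of multiplication with $\le$ and then multiplying by $a^{-1}b^{-1}$, the paper via the one-line identity $b^{-1} = (ab)^{-1}a \le (ab)^{-1}b = a^{-1}$), deduce that $\wedge$ is the infimum, and verify distributivity over $\wedge$ by the same chain of equalities you wrote. The only difference is that you spell out the greatest-lower-bound verification more explicitly, whereas the paper compresses it into a single display.
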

\begin{proof} $a \le b$ iff $b^{-1} = (ab)^{-1}a \le (ab)^{-1}b = a^{-1},$
so we get the dual partial order $\ge$. Hence $(\tG,\wedge)$ is a
semi-lattice with respect to $\ge$, since $$a\wedge b  = (a^{-1}
\vee b^{-1})^{-1} = \sup \left\{ a^{-1}, b^{-1} \right\} ^{-1} =
\inf \left\{ (a^{-1})^{-1}, (b^{-1})^{-1} \right\} = \inf\{ a,b
\}.$$

Then $g(a\wedge b) = g(a^{-1} \vee b^{-1})^{-1} = (g^{-1}(a^{-1}
\vee b^{-1}))^{-1} = (g^{-1}a^{-1} \vee g^{-1} b^{-1})^{-1} = ga
\wedge gb,$ as desired.

The last assertion follows from \eqref{dua} taking $a^{-1},
b^{-1}$ instead of $a,b.$
\end{proof}

The duality in Lemma~\ref{onesuf} shows that it suffices to consider
only $\vee$ (or only $\wedge$). 
Every partially ordered group that is a semi-lattice is a
semi-lattice ordered group, seen by using $g^{-1}$ instead of $g$ in
\eqref{ogr}.

In any ordered group we define the open interval of distance $b$
around $a$ given by $\{ g: ab^{-1} < g < ab\},$ giving rise to  the
\textbf{order topology}.

Although idempotence pervades the theory, it turns out that what
is really crucial for many applications is the following
well-known fact:

  \begin{lem}\label{idpar} In any band, if $a+b +c= a$,
then $a+c = a.$ \end{lem}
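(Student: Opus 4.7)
The plan is to add $c$ to the right-hand side of the hypothesis and exploit associativity together with the idempotence of $c$. No commutativity or further band structure is needed — only that the specific element $c$ satisfies $c+c=c$.

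Concretely, I would start from the given identity $a+b+c = a$ and add $c$ on the right to both sides, obtaining $(a+b+c)+c = a+c$. By associativity of the band operation, the left side equals $a+b+(c+c)$, and then idempotence of $c$ collapses $c+c$ to $c$, giving $a+b+c = a+c$. Substituting the hypothesis $a+b+c = a$ on the left yields $a = a+c$, which is the claim.

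The argument is essentially a one-line manipulation, so there is no real obstacle; the only thing worth emphasizing in the write-up is that one does not need to invoke idempotence of $a$, $b$, or $a+b$ — just of $c$ — and that associativity of $+$ is used freely since a band is by definition a semigroup. This makes the lemma applicable in any band, including the non-commutative case, although in the applications within the paper the bands in question arise from semi-lattices and are automatically commutative.
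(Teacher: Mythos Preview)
Your proof is correct and is essentially identical to the paper's, which writes the same manipulation as the single chain $a = a+b+c = (a+b+c)+c = a+c$. Your observation that only idempotence of $c$ (plus associativity) is used is accurate and matches what the paper's proof actually invokes.
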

\begin{proof} $ a = a+b+c = (a+b+c) +c = a+c$. \end{proof}

Let us call   a semigroup \textbf{proper} if it satisfies the
condition of Lemma~\ref{idpar}.

 \begin{rem}[{\cite[Theorem~4.3]{Cont_Semifields}}]\label{prop1} A proper semigroup cannot
have additively invertible elements other than~$\zero,$ since if
$a+c = \zero$, then $a = a+ \zero = a + a + c,$ implying $a = a+c
= \zero.$
\end{rem}

%
%

 A submonoid $\tT$ of an Abelian monoid $\ptM$ is called
\textbf{cancellative} if $a b = a c$ for $a \in \tT$ and $b,c \in
M$ implies $b=c.$ In this case, when $\tT = \ptM,$ we say that the
Abelian monoid $\ptM$ is \textbf{cancellative}.

A monoid $\ptM$ is \textbf{power-cancellative} (called
\textbf{torsion-free} in \cite{CHWW})
 if $a^n   =
b^n $ for some $n\in \Net$ implies $a=b.$ Any ordered, cancellative
monoid is power-cancellative and infinite.

\subsubsection{Divisibility}$ $

 We say that a monoid $\ptM$ is $\Net$-\textbf{divisible} (also called \textbf{radicalizible}
 in the tropical literature, but that terminology conflicts with  \cite{KA_Semifields} and
 radical theory) if for
each $a\in M$ and $m \in \Net$ there is $b\in M$ such that $b^m =
a.$ For example, $(\Q, +)$ is $\Net$-divisible. There is a standard
construction of the \textbf{divisible hull} of a cancellative monoid
$\ptM$, given in \cite{OrderedGroups3}, which is semi-lattice
ordered when $\ptM$ is semi-lattice ordered; namely,
\begin{equation}\label{div2} \root m \of a \vee \root n \of b = \root {mn} \of {a^n \vee
b^m}.\end{equation} This also was discussed
 in~\cite[Remark~2.3]{Layered}.

\subsection{Semirings without zero} $ $

 \begin{defn}\label{semir1} A
\textbf{\semiring0} (semiring without zero, at times called a
hemiring) is a set~$R:=(R,+,\cdot,1)$ equipped with
 binary operations $+$ and~$\cdot \;$ and distinguished element $\rone$ such that:
\begin{enumerate}\eroman
    \item $(R, + )$ is an Abelian semigroup;
    \item $(R, \cdot \ , \rone )$ is a monoid with identity element
    $\rone$;
    \item Multiplication distributes over addition.
      \item $R$ contains elements $r_0$ and $r_1$ with $r_0+r_1 = \rone.$

\end{enumerate}
\end{defn}


Note that (iv) is automatic if $(R, + )$ is a band, since then
$\rone+\rone = \rone.$ For the purposes of this paper, a
\textbf{\domain0} is a commutative \semiring0 whose multiplicative
monoid is cancellative.

\begin{defn} A
\textbf{\semifield0} is a \domain0 in which every element is
(multiplicatively) invertible.
\end{defn}

 (In other words, its multiplicative
monoid is an Abelian group. In \cite{Cont_Semifields} commutativity
is not assumed, but we make this assumption to avoid distraction
from our applications.) In particular, the max-plus algebras
$(\Net,+)$, $(\Q,+)$, and $(\Real,+)$ are \semifields0, whose
multiplication now is given by  $+$. We also have the
\textbf{trivial \semifield0} $\{ 1\}.$
 In the literature it is customary to write the
   operations as $\oplus$ and $\odot$,
   but we  use the usual algebraic notation of
   $+$ and $\cdot$ for   addition and
multiplication respectively, to emphasize the structural aspects of
the theory.

Any \semifield0 without negatives is proper, by \cite
[Proposition~20.37]{Hom_Semifields}. 

\medskip {\it Digression}: The customary definition of semiring
\cite{golan92} also requires the existence of a zero element:

 A \textbf{semiring} is a \semiring0 $R$
with a zero element  $\rzero $ satisfying
  $$a + \rzero = a, \quad  a \cdot \rzero
 =\rzero = \rzero \cdot a, \quad   \forall a\in R.$$
(Note that in the definition of \semiring0 one could then take $r_0
= \rzero$ and $r_1 = \rone$.) We use \semifields0 instead of
semifields since
  the zero element  usually can be adjoined formally, and often is
  irrelevant, and the concepts are easier to define when we do not
  need to
treat  the zero element separately. Also, the language of
\semifields0 is more
  appropriate to geometry over tori, which are direct products of groups.


A \semiring0 $R$ is \textbf{idempotent} if the semigroup $(R,+)$ is
a band. A \semiring0 is \textbf{bipotent} if  the semigroup $(R,+)$
is bipotent.
 Thus, the max-plus
algebra, viewed as a \semiring0, is {bipotent}.

Our basic structures are idempotent \semifields0, denoted as $F$
or $\mathbb S$ throughout. (Usually $F$ is the underlying
\semifield0, contained in $\mathbb S$.) Let us recall an idea of
Green from the theory of idempotent semigroups.

\begin{prop}[{\cite[\S4]{Semifields_LO_Groups}}]\label{bip}
 Any semi-lattice ordered Abelian monoid $\ptM $ becomes an idempotent (commutative) \semiring0, which we denote as $R$,  via the usual
max-plus procedure; we define the new multiplication on~$R$ to be
the operation given originally on $\ptM$, and addition on~$R$ is
defined as in Lemma~\ref{band0}:
\begin{equation}\label{add0} a+b : = a \vee b\end{equation}
(viewed in $\ptM$). Conversely, any idempotent commutative
\semiring0 $R$ becomes a semi-lattice ordered Abelian monoid by
reading \eqref{add0} backwards.

\end{prop}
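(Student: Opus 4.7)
The plan is to observe that the two structures carry essentially the same data, just repackaged, so the proof amounts to unpacking each axiom of one side and matching it with an axiom of the other. I would proceed in two independent directions and then note that the constructions are mutually inverse.

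\textbf{Forward direction.} Given a semi-lattice ordered Abelian monoid $(\ptM,\cdot,1,\vee)$, I define $R$ to have the same underlying set, the same multiplication, and addition $a+b := a\vee b$. Then $(R,+)$ is an Abelian band by Lemma~\ref{band0} (associativity of $\vee$ plus $a\vee a=a$), and commutativity of $+$ is built into the symmetry of sup. The monoid axiom for $(R,\cdot,1)$ is given. Distributivity is literally the content of \eqref{ogr}: $g(a+b)=g(a\vee b)=ga\vee gb=ga+gb$. Finally, axiom~(iv) of Definition~\ref{semir1} is satisfied trivially by choosing $r_0=r_1=1$, since $1+1=1\vee 1=1$. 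Hence $R$ is an idempotent commutative \semiring0.

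\textbf{Reverse direction.} Given an idempotent commutative \semiring0 $R$, I define $a\vee b := a+b$ on the underlying set $\ptM=R$, and keep the multiplication. Associativity, commutativity, and idempotence of $\vee$ are inherited from $+$. To recover the partial order I declare $a\le b$ iff $a\vee b=b$; reflexivity follows from idempotence, antisymmetry is immediate from the definition, and transitivity comes from the computation
\[ a\vee c = a\vee(b\vee c) = (a\vee b)\vee c = b\vee c = c \]
when $a\vee b=b$ and $b\vee c=c$. The sup property $a,b\le a\vee b$ follows from $a\vee(a\vee b)=(a\vee a)\vee b=a\vee b$ and its symmetric counterpart, while if $a,b\le c$ then $(a\vee b)\vee c = a\vee(b\vee c)=a\vee c=c$, so $a\vee b\le c$. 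Compatibility with multiplication, namely \eqref{ogr}, is just distributivity read in the opposite direction: $g(a\vee b)=g(a+b)=ga+gb=ga\vee gb$.

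\textbf{Mutual inverseness.} Both constructions leave the underlying set and the multiplication untouched, and they interchange $+$ with $\vee$ via the defining equation $a+b=a\vee b$. Hence starting with either a semi-lattice ordered monoid or an idempotent commutative \semiring0 and applying the two passages in succession returns the original structure. The main step requiring care is the verification in the reverse direction that $\le$ is a partial order and that the sup property holds, but every assertion there reduces by one or two applications of associativity, commutativity, and idempotence of $+$; there is no serious obstacle.
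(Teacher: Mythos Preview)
Your proof is correct and follows the same approach as the paper: direct verification of the \semiring0 axioms from the semi-lattice ordered monoid axioms and vice versa, with distributivity matching \eqref{ogr} in both directions. Your argument is in fact more complete than the paper's terse version, which only explicitly checks idempotence, antisymmetry, and the upper-bound half of the sup property, leaving reflexivity, transitivity, the lower-bound half, compatibility of $\vee$ with multiplication, and mutual inverseness implicit.
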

\begin{proof}  Distributivity follows
from \eqref{ogr}. Furthermore $a+a = a \vee a = a.$

Conversely,  if $a \le b$ and $b \le a$ we have    $a = a+b=b$, so
$\le$ is antisymmetric; we need to show that $a+b$ is the sup of
$a$ and $b$. In other words, if $a \le c$  and $b \le c$ then $a+b
\le c.$ But we are given $a+c = c$ and $b+c = c,$ so $$a+b + c =
(a+c) + (b+c) = c+ c = c,$$ as desired.
 \end{proof}

  When $\ptM$ is cancellative, then $R$ is a \domain0. When $\ptM$ is a group, then $R$ is a \semifield0.
(This could all be formulated categorically.)
%

\begin{lem}[{\cite[Property 2.6]{Prop_Semifields}}]\label{torfree} Every \domain0 is
torsion-free as a monoid.\end{lem}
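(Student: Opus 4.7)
The plan is to translate the classical factorization
$a^n - b^n = (a-b)\bigl(a^{n-1} + a^{n-2}b + \cdots + b^{n-1}\bigr)$
into the additive language available in a \semiring0, where subtraction is not at our disposal, and then finish with the hypothesis that the multiplicative monoid of a \domain0 is cancellative. The definition says ``torsion-free'' means power-cancellative, so given $a,b \in R$ with $a^n = b^n$, I have to deduce $a = b$.

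First I would introduce the auxiliary element
$S := \sum_{j=0}^{n-1} a^{n-1-j} b^{j} = a^{n-1} + a^{n-2}b + \cdots + a b^{n-2} + b^{n-1}$,
which lies in $R$ since $R$ is closed under addition and multiplication. Distributing the products and collecting the common interior terms $T := a^{n-1}b + a^{n-2}b^2 + \cdots + a b^{n-1}$, I obtain
$aS = a^n + T$ and $bS = T + b^n$.
Under the assumption $a^n = b^n$, the right-hand sides are the same element of $R$, so $aS = bS$. By commutativity this reads $Sa = Sb$, and cancellativity of the multiplicative monoid of the \domain0 $R$ yields $a = b$, as desired.

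The only real subtlety is making sure $S$ is eligible to be cancelled, i.e.\ that $S$ is not some absorbing element outside the scope of the cancellation hypothesis. This is automatic in our setting: a \domain0 is defined to have \emph{all} of $(R,\cdot,\rone)$ cancellative as a monoid, so every element of $R$ (in particular $S$) may be cancelled. (Equivalently, $S$ cannot be a ``zero'', because a non-trivial \domain0 has no multiplicative zero; if it did, cancellation against it would force $R$ to be the one-element \semiring0.) Thus no case analysis or extra hypothesis on $R$ is needed, and the proof is completed in the few lines described above.
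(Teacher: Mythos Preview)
Your proof is correct and uses the same telescoping idea as the paper: multiply by the ``geometric series'' element and cancel. The paper's proof is actually the special case $b=1$ of yours---it only shows $a^n=1\Rightarrow a=1$, whereas you establish the full power-cancellative statement $a^n=b^n\Rightarrow a=b$ that the lemma (given the paper's identification of ``torsion-free'' with power-cancellative) literally claims. Since the only later use of the lemma in the paper is the $b=1$ case, the gap is harmless there, but your version is the complete argument.
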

\begin{proof} If $a^n = 1,$ then $a(a^{n-1} +a^{n-2} + \dots + 1) = (1 +a^{n-1} +a^{n-2} + \dots + a),$ implying $ a = 1.$
\end{proof}

\begin{defn}
 A \semiring0
$R$ is \textbf{ordered} if $(R,+)$ and $(R,\cdot)$ are both
ordered monoids.
\end{defn}

\begin{rem}\label{rem_torsion_free}
The multiplicative group of every ordered \semifield0 is a
torsion-free group, i.e., all of its elements not equal to $1$ have
infinite order.
\end{rem}
%

Here is another important property:

\begin{defn} A \semiring0 $R$ is \textbf{Frobenius} if it satisfies the identity
$$(a+b)^n = a^n + b^n, \qquad \forall a,b \in R.$$
\end{defn}

It is clear that any bipotent \semiring0 is Frobenius.

\begin{defn}\label{defn_cond_complete}
A lattice $P$ is said to be \textbf{complete} if all of its
bounded subsets have both a supremum and an infimum. A sublattice
of $P$ is said to be \textbf{completely closed} if all of its
subsets have both a supremum and an infimum in $P$.

 An idempotent
\semiring0  is said to be \textbf{complete} if its underlying
lattice (with $\vee$ as $\dotplusss $ and $\wedge$ as $\cdot$) is
complete.
\end{defn}

\subsection{Localization}$ $

Since we are mainly interested in (proper) idempotent
\semifields0, we need a method of passing from \semirings0 to
\semifields0.

\begin{rem}\label{loc0}
There is a well-known localization procedure with respect to
multiplicative subsets of Abelian monoids, described in detail  in
\cite{B}. Namely, for any  submonoid $S$ of a monoid $\ptM$, we
define an equivalence on $S \times \ptM$ by saying $(s_1, a_1) \sim
(s_2,a_2)$ iff there is $s\in S$ such that $ss_2 a_1 = s s_1 a_2;$
then the \textbf{localization} $S^{-1}\ptM $  is the set of
equivalence classes $\{ [ (s,a)]: s \in S, a \in \ptM\}$, written as
$\frac as.$  $S^{-1}\ptM $  is a monoid via the operation
$$ \frac as \frac {a'}{s'} = \frac {aa'}{ss'}.$$

There is a homomorphism $\ptM \mapsto S^{-1}\ptM $ given by $a
\mapsto \frac a1.$ This map is 1:1 precisely when $a \ne b$
implies $\frac a1 \ne \frac b1;$ in other words, when the
submonoid $S$ of $\ptM$ is cancellative.

 If
the monoid $\ptM$ itself is cancellative, then localizing with
respect to all of $\ptM$ yields its \textbf{group of fractions}.
In this case $\frac { a_1}{s_1} = \frac { a_2}{s_2}$ iff $s_2 a_1
= s_1 a_2.$

If the monoid $\ptM$ is ordered, then $S^{-1}\ptM $ is also ordered,
by putting $ \frac as \le \frac {a'}{s'}$ iff $as's \le a'ss$ for
some $s\in S.$

When $R$ is a commutative \semiring0 and $S$ is a submonoid of
$(R,\cdot),$ we endow $S^{-1}R $ with addition given by $$ \frac as
+ \frac {a'}{s'} = \frac {as' +a's}{ss'}.$$ When $(R,  \cdot)$ is
ordered, this is compatible with the monoid order on $S^{-1}R $, in
the sense of Proposition~\ref{bip}.

Clearly, the localization of an idempotent (resp.~proper,
resp.~Frobenius) \semiring0 is idempotent (resp.~proper,
resp.~Frobenius).

Furthermore, if $R$ is a \domain0, and $S = R,$ then    we call
$S^{-1}R$ the \textbf{\semifield0 of fractions} of $R$.
\end{rem}

%
%
%

%

\begin{rem} These considerations are compatible with localization and the divisible hull. For example,
one can define $\vee$ on the monoid $S^{-1} \ptM$ via
$$ \frac as \vee \frac bs = \frac{a\vee b} s.$$
For the divisible hull, we would use the analog of
Equation~\ref{div2}.\end{rem}

\begin{prop} Suppose $R,R'$ are \domains0, with a \semiring0 homomorphism $\varphi: R
\to R',$ and suppose $S$ is a submonoid of $R$. Then there is a
homomorphism $ \tilde \varphi :    S^{-1}R \to \tilde \varphi
(S)^{-1}R',$ given by $ \tilde \varphi ( \frac as) =   \frac
{\varphi (a)}{\varphi (s)}.$
\end{prop}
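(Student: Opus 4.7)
The plan is to verify in turn that the proposed assignment $\tilde\varphi(\tfrac{a}{s}) = \tfrac{\varphi(a)}{\varphi(s)}$ is well-defined on equivalence classes, respects both semiring operations, and carries the identity to the identity. The key preliminary remark is that $\varphi(S)$ really is a multiplicative submonoid of $R'$, since a \semiring0 homomorphism sends $1$ to $1$ and is multiplicative, so the codomain localization $\varphi(S)^{-1}R'$ makes sense. Moreover, since $R'$ is a \domain0, its whole multiplicative monoid (and hence $\varphi(S)$) is cancellative, so by Remark~\ref{loc0} the equivalence on $\varphi(S) \times R'$ simplifies: $\tfrac{b_1}{t_1} = \tfrac{b_2}{t_2}$ iff $t_2 b_1 = t_1 b_2$.

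First I would check well-definedness. Suppose $\tfrac{a_1}{s_1} = \tfrac{a_2}{s_2}$ in $S^{-1}R$. Because $R$ is a \domain0, this is equivalent to $s_2 a_1 = s_1 a_2$. Applying the \semiring0 homomorphism $\varphi$ gives $\varphi(s_2)\varphi(a_1) = \varphi(s_1)\varphi(a_2)$, and by the cancellative criterion just noted this says exactly $\tfrac{\varphi(a_1)}{\varphi(s_1)} = \tfrac{\varphi(a_2)}{\varphi(s_2)}$ in $\varphi(S)^{-1}R'$. Hence $\tilde\varphi$ is well defined on representatives.

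Next I would verify the homomorphism axioms by direct computation, using the definitions of $+$ and $\cdot$ on the localizations from Remark~\ref{loc0} together with the fact that $\varphi$ preserves $+$ and $\cdot$. For multiplication,
\[
\tilde\varphi\!\left(\tfrac{a}{s}\cdot \tfrac{a'}{s'}\right) = \tilde\varphi\!\left(\tfrac{aa'}{ss'}\right) = \tfrac{\varphi(a)\varphi(a')}{\varphi(s)\varphi(s')} = \tilde\varphi\!\left(\tfrac{a}{s}\right)\cdot \tilde\varphi\!\left(\tfrac{a'}{s'}\right),
\]
and analogously for addition, using that $\varphi(as' + a's) = \varphi(a)\varphi(s') + \varphi(a')\varphi(s)$. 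Finally, $\tilde\varphi(\tfrac{1}{1}) = \tfrac{\varphi(1)}{\varphi(1)} = \tfrac{1}{1}$, so the multiplicative identity is preserved.

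There is essentially no obstacle here, since this is the standard universal property of the monoid/\semiring0 of fractions; the only subtle point is ensuring that the cancellative criterion for equality of fractions is available on both sides, and this is guaranteed exactly because $R$ and $R'$ are \domains0 in the sense of Definition~\ref{semir1}. If one wanted to, one could phrase the whole proposition as a consequence of the universal property of localization applied to the composite $R \xrightarrow{\varphi} R' \to \varphi(S)^{-1}R'$, noting that the image of every element of $S$ is invertible in the target.
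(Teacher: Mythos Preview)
Your proof is correct and follows essentially the same standard route as the paper: verify well-definedness via the cancellative equality criterion for fractions in a \domain0, then check the operations directly. Your version is slightly more thorough (you also treat addition and the identity, and you flag the universal-property reformulation), but the core argument is the same.
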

\begin{proof} This is standard. First we need to show that $\tilde \varphi$ is well-defined:
 If $\frac {a_1}{s_1} = \frac {a_2}{s_2}$ then $s  s_2 a_1 = ss _1
 a_2$ so $\varphi(s )\varphi( s_2 )\varphi( a_1 ) = \varphi( s)\varphi(s _1)\varphi(
 a_2)$, implying $\varphi( s_2 )\varphi( a_1 ) =  \varphi(s _1)\varphi(
 a_2)$, and thus $\tilde \varphi(\frac {a_1}{s_1}) = \tilde \varphi(\frac {a_2}{s_2}).$

Now $ \tilde \varphi \left(\frac {a s_1}{ s s_1}\right) =
\left(\frac {a_1}{s_1} \frac {a_2}{s_2} \right) =   \tilde
\varphi\left(\frac {a_1 a_2}{s_1 s_2} \right) = \frac
{\varphi(a_1a_2)}{\varphi(s_1s_2)} =  \tilde \varphi \left(\frac
{a_1}{s_1}\right) \tilde \varphi \left(\frac {a_2}{s_2}\right).$
\end{proof}
%

\subsection{Congruences}$ $

 A~\textbf{congruence} $\Cong $ of
an algebraic structure $A$ is an equivalence relation $\equiv$
that preserves
 all the relevant operations and relations;  we call  $\equiv$ the \textbf{underlying  equivalence} of $\Cong
 $. One can view a congruence more formally as a subalgebra of $A\times A.$
For ease of notation we write $a \equiv b$ when $(a,b) \in \Cong.$
 In the case of \semirings0,
 $\Cong$ being a congruence means that $a_i \equiv b_i$ for $i=1,2$ implies $a_1 + a_2 \equiv b_1 + b_2$ and $a_1 a_2 \equiv b_1  b_2$.

\begin{rem}\label{cong10} We recall some key
results of~\cite[\S 2]{J}:

 \begin{enumerate} \eroman  \item Given a congruence  $\Cong$ of an algebraic
 structure $R$, one can endow the set $$A/ \Cong := \{ [a]: a \in A \}$$ of equivalence
 classes  with the same (well-defined) algebraic structure, and the map
 $a \mapsto [a]$ defines an onto homomorphism $A\to A/\Cong $.

\item  In the opposite direction, for any homomorphism $\vrp:A \to
A',$  one can define a congruence $\Cong_\vrp $ on $A$ by saying
that $$a \equiv_\vrp b \dss{\text{ iff }} \vrp(a) = \vrp (b).$$ We
call $\Cong_\vrp $ the \textbf{congruence of} $\vrp.$ Then $\vrp$
induces a 1:1 homomorphism $\tlvrp :A/\Cong_\vrp  \ \to A',$ via
$\tlvrp ([a]) = \vrp(a)$, for which $\vrp$ factors through
$$A \to A/\Cong _\vrp \to A',$$ as indicated in
\cite[p.~62]{J}. Thus the homomorphic images of $A$ correspond to
the congruences defined on~$A$.
\end{enumerate}
 \end{rem}

\begin{defn}\label{def:primeCOng}  A congruence $\Cong$ on an Abelian monoid $\ptM$ is \textbf{cancellative}
 when the monoid
$\ptM/\Cong$ is cancellative.  The congruence $\Cong$ on an
Abelian monoid $\ptM$ is \textbf{power-cancellative} when the
monoid $\ptM/\Cong$ power-cancellative, i.e., if
 $a_1 ^k  \equiv  a_2^k$ for some $k \ge 1$ implies $a_1 \equiv
a_2$. \end{defn}

 For the moment, let $\mathcal C (\ptM)$ denote a given set of cancellative
 and  power-cancellative congruences on $\ptM$. A congruence  $\Cong \in \mathcal C (\ptM)$
 is \textbf{irreducible} (called \textbf{intersection indecomposable} in \cite{JoM}) if it is not the proper intersection of two
 congruences in $\mathcal C (\ptM)$.

%
%
%
%

 \begin{rem}\label{primecon} \textbf{Prime congruences} are defined by Joo and Mincheva
 in \cite{JoM},
 and are seen \cite[Theorem~2.11]{JoM} to be precisely the
 irreducible cancellative congruences.  \cite{JoM} goes on to
 classify the prime congruences of the polynomial semirings over
 the max-plus semifields  $\mathbb R$ and $\mathbb Z$, and also
 over the 2-element semifield.
 \end{rem}

\begin{lem}\label{ext}
 Any congruence $\Cong$ on a \semiring0 $R$ extends to its localization $S^{-1}R$ by a cancellative submonoid $S$, by putting $\frac a s \equiv \frac {a'}{s'}$
 when $as' \equiv a's$.
 \end{lem}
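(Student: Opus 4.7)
The plan is to verify directly that the prescribed rule $\frac{a}{s} \equiv \frac{a'}{s'} \Leftrightarrow as' \equiv a's$ defines a congruence on the localization $S^{-1}R$, by checking that it is well-defined on representative classes, is an equivalence relation, and is compatible with the two \semiring0 operations of $S^{-1}R$; the compatibility with the inclusion $R \hookrightarrow S^{-1}R$, $a \mapsto \frac{a}{1}$, is then immediate from the formula.

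First I would check well-definedness of the relation. By Remark~\ref{loc0} and the cancellativity of $S$, two representatives satisfy $\frac{a_1}{s_1} = \frac{a_2}{s_2}$ iff $a_1 s_2 = a_2 s_1$. Assuming this, and $a_1 t \equiv b s_1$, one multiplies the second relation by $s_2$ and uses the first equality to obtain $a_2 s_1 t \equiv b s_1 s_2$; cancelling the factor $s_1 \in S$ then yields $a_2 t \equiv b s_2$, as required. Reflexivity and symmetry of the relation on $S^{-1}R$ are inherited from those of $\Cong$ on $R$. For transitivity, given $as' \equiv a's$ and $a's'' \equiv a''s'$, scaling by $s''$ and $s$ respectively (which is legal since $\Cong$ is a congruence) produces $as's'' \equiv a'ss''$ and $a'ss'' \equiv a''ss'$, and combining these gives $as's'' \equiv a''ss'$; cancelling $s' \in S$ delivers $as'' \equiv a''s$.

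Next I would verify compatibility with the operations of $S^{-1}R$. Given $a_i t_i \equiv b_i s_i$ for $i=1,2$, substitution into the standard formulas $\frac{a_1}{s_1}+\frac{a_2}{s_2}=\frac{a_1 s_2 + a_2 s_1}{s_1 s_2}$ and $\frac{a_1}{s_1}\cdot\frac{a_2}{s_2}=\frac{a_1 a_2}{s_1 s_2}$ reduces the required congruence on sums and products in $S^{-1}R$ to the relations $(a_1 s_2 + a_2 s_1)\, t_1 t_2 \equiv (b_1 t_2 + b_2 t_1)\, s_1 s_2$ and $a_1 a_2 \, t_1 t_2 \equiv b_1 b_2\, s_1 s_2$, both of which follow by routine manipulation from the compatibility of $\Cong$ with $+$ and $\cdot$ on $R$. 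Restricting via $a \mapsto \frac{a}{1}$ clearly returns $\Cong$, so the extended relation genuinely extends the given one.

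The main obstacle is the cancellation of the factor $s' \in S$ in the transitivity and well-definedness arguments: the hypothesis that $S$ is a cancellative submonoid of $R$ in the sense of Section~\ref{back} must be used to ensure that multiplication by $s' \in S$ acts injectively on $\Cong$-classes. If one wishes to sidestep any assumption that this cancellativity survives in $R/\Cong$, the safest alternative is to define the extension existentially, namely $\frac{a}{s} \equiv \frac{a'}{s'}$ iff there is $t \in S$ with $tas' \equiv ta's$; transitivity is then automatic, and the two formulations agree as soon as $S$ is cancellative on $\Cong$-classes, which is precisely the cancellativity hypothesis supplied in the statement.
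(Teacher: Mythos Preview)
Your approach is the same as the paper's: a direct verification that the rule defines a congruence, checking compatibility with $+$ and $\cdot$ via the standard formulas. The paper in fact skips everything you wrote about well-definedness and transitivity, declaring it ``standard and easy,'' and only writes out the two compatibility checks, which match yours.

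You have, however, put your finger on a genuine subtlety that the paper elides. The transitivity and well-definedness steps, as you note, require cancelling a factor $s'\in S$ from a \emph{congruence} $as's'' \equiv a''ss'$, and the stated hypothesis---that $S$ is cancellative in $R$ in the sense of \S\ref{back}---only says that multiplication by $s'$ is injective on \emph{elements} of $R$, not on $\Cong$-classes. So your final sentence is not quite right: cancellativity of $S$ in $R$ is \emph{not} ``precisely'' cancellativity on $\Cong$-classes; the latter is a genuinely stronger condition (compare the paper's later Definition of a $\nu$-cancellative congruence). Your proposed fix---replacing the rule by its existential version $\exists\,t\in S$ with $tas'\equiv ta's$---is the standard remedy and makes transitivity automatic; under that definition the lemma goes through with only the hypothesis as stated, and the two definitions agree exactly when $S$ is cancellative modulo $\Cong$. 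So your plan is correct once you commit to the existential formulation (or add the hypothesis that $\Cong$ is cancellative with respect to $S$); just be aware that the paper's phrasing, read literally as an ``iff,'' shares this gap.
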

\begin{proof} It is standard and easy that $\equiv$ extends to the given
equivalence on $S^{-1}R$, so we need check merely that the given
operations are preserved: If $\frac {a_1}{ s_1} \equiv \frac
{a_1'}{s_1'}$ and $\frac {a_2}{ s_2} \equiv \frac {a_2'}{s_2'}$,
then $a_i s_i' \equiv a_i' s_i$ for $i = 1,2$, from which it
follows that ${a_1 s_1' }a_2 s_2' \equiv a_1' s_1 a_2' s_2$ and
thus
$$\frac {a_1}{ s_1}  \frac {a_2}{ s_2} =
 \frac {a_1 a_2}{s_1 s_2} \equiv \frac {a_1' a_2'}{s_1' s_2'}$$

Likewise,
$$\frac {a_1}{ s_1} + \frac {a_2}{ s_2} =
\frac{a_1 s_2 + a_2 s_1}{s_1s_2} \equiv \frac{a_1' s_2' + a_2'
s_1'}{s_1's_2'}=\frac {a_1'}{ s_1'} + \frac {a_2'}{ s_2'}.$$
\end{proof}

 \subsection{The function monoid and \semiring0}$ $

 The supertropical structure   permits us to detect
 corner roots of tropical polynomials
 in terms of the algebraic structure, by means of ghosts.
 To see this most clearly, we introduce a key structure taken from universal algebra.
%
%

 \begin{defn}\label{funcsem} The \textbf{function monoid}   $\FunStM$ is the set of
functions from a set $S$ to a monoid~$\ptM$.

The \textbf{function \semiring0}   $\FunSR$ is the set of functions
from a set $S$ to a \semiring0~$R$.
\end{defn}

When $\ptM$ is a monoid, $\FunStM$ becomes a monoid under the
componentwise operation, $fg(\bfa) = f(\bfa)g(\bfa)$, and is
cancellative when $\ptM$ is cancellative. If $\ptM$ is
semi-lattice ordered, then so is $\FunStM$, where we define $(f
\vee g )(\bfa) = f(\bfa) \vee g(\bfa)= f(\bfa)+g(\bfa).$

 When $R$ is a \semiring0, $\FunSR$ becomes a \semiring0 under
componentwise operations, and is idempotent (resp.~proper) when
$R$ is idempotent (resp.~proper). When $R$ is a \semifield0,
$\FunSR$ becomes a \semifield0.

 Note that the \semiring0 $\Fun (\tS,R)$ need  not be
 bipotent even if $R$ is bipotent, since some of the evaluations  of $f+g$ might come
from $f$ and others from $g$.
 As in Proposition~\ref{bip}, we identify the \semiring0 structure of $\Fun (\tS,R)$
 with the semilattice operation $\label{dua1} f\vee g = f+g$. When
 $R$ is a \semifield0 $F$,
we can also define
\begin{equation}
\label{dua2} f\wedge g= (f^{-1} + g^{-1})^{-1}.\end{equation}

\begin{rem}\label{ord1} Viewing $F$ as an ordered group, one sees easily that $ (f\wedge
g )(\bfa) = \min \{ f(\bfa), g(\bfa) \},$ and this is how we think
of it, although \eqref{dua2} is easier to handle formally.
\end{rem}

 Often  $\tS$ is taken
to be $F^{(n)},$  the Cartesian product of $n$ copies of $F$. But
using $\tS \subseteq F^{(n)}$    enables one to lay the foundations
of algebraic geometry.

For any subset $\tS' \subseteq \tS,$ there is a natural homomorphism
$$ \FunSF \to \Fun (\tS',F),$$ given by restriction of
functions.


 \subsection{Archimedean monoids}$ $

\begin{defn}
A  partially ordered monoid   $(G, \cdot)$ is called
\textbf{archimedean} if $a^{\mathbb{Z}} \leq b$ implies that $a =
1$.

 A \semiring0 $(R, + , \cdot)$ is \textbf{archimedean}
if it is archimedean as a  partially ordered monoid.
\end{defn}

This reduces to the usual definition when $G$ is totally ordered,
but we need this more general  version, because of the
next example.

\begin{exampl}
The \semifield0 $\Net$ is ordered and archimedean, as are
 $(\Q,+)$ and $(\R,+)$.
 \end{exampl}

\begin{rem} If $R$ is archimedean then $\FunSR$ also is
 archimedean.\end{rem}

\section{The tropical environment}\label{supertrop}

We would like to fit this all  into tropical mathematics,  mainly
for affine varieties defined by a finite number of polynomials.
Tropical mathematics involves using the Puiseux valuation to pass
from the field of Puiseux series to its value group $(\mathbb Q,
+)$ (or $(\mathbb R, +)$), which in turn is viewed as the max-plus
algebra. But this transition does not reflect the valuation theory
for Puiseux series having lowest order terms of the same degree;
their sums do not necessarily have lowest order terms of this
degree. Thus we should consider alternative structures in which $a
\vee a \ne a$ -- i.e., monoids which are ordered by sets which are
not quite semi-lattices.

 \subsection{The standard supertropical \semifield0}
$ $

We are ready to bring in the algebraic structure that reflects the
properties of the Puiseux valuation.

\begin{defn} A \textbf{\vdomain0} is a quadruple
$(R, \tT, \nu, \tG)$ where  $R$ is a \semiring0 and $\tT \subset R$
is a cancellative multiplicative submonoid  and $\tG \triangleleft
R$ is endowed with a partial order, together with an idempotent
homomorphism $\nu : R \to \tG$, with $\nu |_\tT$ onto, satisfying
the conditions:
$$  a+b =  a  \quad \text{whenever} \quad \nu (a) > \nu (b). $$
$$ a+b = \nu (a) \quad \text{whenever} \quad \nu (a) = \nu (b). $$
 $\tT$ is called  the \textbf{tangible
submonoid} of $R$.    $ \tG$ is called the \textbf{ghost ideal}.
%
\end{defn}

(In practice, $\nu$ is induced from an m-valuation of a field, for
example the  Puiseux valuation.)
%
The  condition $\nu |_\tT$ onto is introduced to enable us to move
back and forth between tangible and ghost elements.

\begin{rem}\label{notq}
 This definition is quite close to that of
\cite{Layered}, where $\tT$ would be the tangible elements and $\tG$
the ghost ideal $(1+1) R$. The main differences are as follows:

\begin{itemize}
\item Here the target $\tG$ need not be ordered, so that we can
handle semirings of polynomials directly.
%

\item  We do not require a zero element.
\end{itemize}
\end{rem}

We write $a^\nu$ for $\nu(a),$ for $a\in R.$
We write $a \nucong
  b$ if $a^\nu = b^\nu,$ and say that $a$ and $b$ are
  $\nu$-\textbf{equivalent}. Likewise we write $a \ge_\nu b$ (resp.~ $a >_\nu
  b$) if $a^\nu \ge  b^\nu$ (resp.~ $a^\nu >  b^\nu$).

\begin{rem}
Any   \semiring0 homomorphism $\varphi: R \to R' =(R',\tT',
\tG',\nu')$ of \vdomains0  satisfies $$\varphi (a^\nu) = \varphi
(a+a) =  \varphi (a)+\varphi(a) = \varphi(a)^{\nu'}$$ for all $a$ in
$R$.
\end{rem}

 Strictly speaking, the  supertropical \vdomains0\  can never   be \semifields0\ since the ghost elements are not
 invertible. Accordingly, we frame the next definition:

 \begin{defn}\label{vsemi2}
 A \vdomain0 $R$ is \textbf{$\nu$-bipotent} if $a+b \in \{a,b, a^\nu\}$ for all $a,b \in R.$
 A \textbf{$\nu$-\semifield0} $\mathbb S$
 is a
  \vdomain0 for which the tangible
submonoid $\tT$ is an Abelian group.
\end{defn}
\begin{exampl}\label{degen} In the  case where $F = \tT = \tG$,
with $\nu = 1_F$,  we are back to the \semifield0 theory. We call
this the \textbf{degenerate case} Although we are not interested
in this case, it shows us how to generalize to the $\nu$-theory.
Namely, we  must ``lift'' the usual theory from $\tG$ to $F$.
\end{exampl}

\begin{exampl}\label{mathscrR}
Any supertropical \semifield0 in the sense of \cite{SuperTropicalAlg} is a \vsemifield0, in the sense of this paper.

The most important example, which we denote as $\mathscr{R},$ is
defined by taking the tangible submonoid to be $(\mathbb R,+)$,
together with another ghost copy.
\end{exampl}

Here is our main example, but we will need our more general
definition when we consider functions, and in particular,
polynomial functions. 

\begin{exampl}\label{stropi} Given a group $\ptM$ and an ordered group $\tG$
with a group isomorphism $\nu: \ptM \to \tG,$ we write $a ^\nu$
for $\nu(a).$ The \textbf{standard supertropical} monoid $R$ is
the disjoint union $ \tT \cup \tG$ where $\tT$ is taken to be
$\ptM$, made into a group by starting with the given products on
$\ptM$ and $\tG$, and defining $ a b^\nu $ and $ a^\nu b $ to be $
(ab)^\nu$ for $a,b \in \ptM$.

We extend $v$ to the \textbf{ghost map} $\nu: R\to \tG$ by taking
$\nu|_\ptM = v$ and  $\nu_\tG$ to be the identity on $ \tG$. 

We make $R$ into a \semiring0, called the \textbf{standard
supertropical \semifield0}, by defining $$a+b =
\begin{cases} a \text{ for } a >_\nu b;\\b \text{ for } a
<_\nu b;\\a^\nu \text{ for } a\nucong b.
\end{cases}$$

In order for Condition (iv) of Definition~\ref{semir1} to hold, we
need the valuation $\nu$ to be nontrivial, in the sense that there
exists $a \in R$ such that $ a <_\nu  \one$.

$R$ is $\nu$-bipotent. $R$ is never additively cancellative, since
$$a + a^\nu = a^\nu = a^\nu + a^\nu.$$
%

\end{exampl}

The standard supertropical \semifield0 $R$  is a cover of the
max-plus algebra of $\tG$,
  in which we can ``resolve'' additive
 idempotents, in the sense that $a+a = a^\nu$ instead of $a+a =
 a.$

Although we are interested in $\ptM = \tT,$ the ordered group
 $\tG$    itself can be viewed as an idempotent
\semiring0 with unit element $\rone^\nu$ (instead of $\rone$).
Thus, we obtain $\nu$-versions of the previous notions by lifting
from $\tG$.

 The ghost ideal $\tG$ is  to be treated much the same
way that one  treats the 0 element in commutative algebra. The
standard supertropical
 \semifield0 works well with linear algebra, as seen for example
  in \cite{IzhakianRowenMatrix}, providing many of the analogs to
 the classical Cayley-Hamilton-Frobenius theory, but our interest
 here will be in its geometric significance.

\subsubsection{$\nu$-Localization}$ $

 If $R=(R,\tT, \tG,\nu)$ is a  \vdomain0, then we call $F: = \tT ^{-1}R $ the
\textbf{$\nu$-\semifield0 of fractions} $\Frac _\nu R$ of $R$.

\begin{lem}\label{vsemi20}
  $\Frac_\nu R$
 is a \vsemifield0 in the obvious way, where $\tT^{-1}\tT$
is the group of tangible elements.
\end{lem}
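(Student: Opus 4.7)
The plan is to exhibit the data making $F := \tT^{-1} R$ a \vsemifield0, and then verify the defining axioms by transferring them from $R$. The natural data are: the tangible submonoid $\tT' := \tT^{-1}\tT$, the ghost ideal $\tG' := \tT^{-1}\tG$, the ghost map $\nu' : F \to \tG'$ given by $\nu'(a/s) = \nu(a)/s$, and the partial order on $\tG'$ inherited from $\tG$ via the recipe of Remark~\ref{loc0}.

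First I would observe that $\tT' = \tT^{-1}\tT$ is just the group of fractions of the cancellative monoid $\tT$, hence an Abelian group; this gives the defining property of a \vsemifield0 once the underlying structure is shown to be a \vdomain0. Next I would check that $\tG' = \tT^{-1}\tG$ is an ideal of $F$: closure under addition follows from closure of $\tG$ under addition, and closure under multiplication by $F$ follows from $\tG$ being an ideal of $R$. For the ghost map, the formula $\nu'(a/s) = \nu(a)/s$ is well-defined because whenever $ts_2 a_1 = ts_1 a_2$ in $R$ (the localization equivalence), the supertropical identity $x\cdot y^\nu = (xy)^\nu$ together with $\nu$ being a \semiring0 homomorphism yields
$$ts_2 \,\nu(a_1) \;=\; (ts_2 a_1)^\nu \;=\; (ts_1 a_2)^\nu \;=\; ts_1 \,\nu(a_2),$$
so $\nu(a_1)/s_1 = \nu(a_2)/s_2$ in $F$. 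That $\nu'$ is a \semiring0 homomorphism, that it is idempotent, and that $\nu'|_{\tT'}$ is onto $\tG'$ all follow routinely from the corresponding properties of $\nu$: surjectivity, for example, since any $g \in \tG$ is $\nu(t)$ for some $t \in \tT$, giving $\nu'(t/s) = g/s$.

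The remaining (and slightly more involved) step is to verify the two additive compatibility conditions of a \vdomain0. Given $a/s, b/t \in F$ with $\nu'(a/s) >_\nu \nu'(b/t)$, one unpacks the partial order on $\tG' = \tT^{-1}\tG$ to obtain $\nu(at) > \nu(bs)$ in $\tG$; the corresponding axiom in $R$ then gives $at + bs = at$, so
$$\frac{a}{s} + \frac{b}{t} \;=\; \frac{at+bs}{st} \;=\; \frac{at}{st} \;=\; \frac{a}{s},$$
as required. The case $\nu'(a/s) \nucong \nu'(b/t)$ is handled analogously, using the second \vdomain0 axiom to rewrite $at + bs$ as $\nu(at)$ and hence the sum as $\nu'(a/s)$.

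The main (minor) obstacle is the book-keeping needed to define the partial order on $\tT^{-1}\tG$ consistently and to confirm that the supertropical multiplication identity $x y^\nu = (xy)^\nu$ is compatible with the usual cross-multiplication manipulations in the localization; once this is in place, every verification reduces to a direct application of the $\nu$-domain$^\dagger$ axioms of $R$ combined with the standard localization formulas recalled in Remark~\ref{loc0}.
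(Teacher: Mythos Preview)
Your proposal is correct and follows exactly the paper's approach: the paper's entire proof consists of the single line ``Define $\nu(\frac rs) = \frac{r^\nu}s$,'' which is precisely the ghost map you write down, and you have simply supplied the routine verifications that the paper leaves to the reader under the word ``obvious.''
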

\begin{proof} Define $\nu( \frac rs) = \frac{r^\nu}s.$
\end{proof}

%
%


\subsection{The (*)-operation}$ $

Strictly speaking, since a $\nu$-\semifield0  is not a \semifield0,
we need to modify our definition of inverse. Considering $\tG$ as a
group in its own right, with unit element $\onenu,$ we define $a^*$
to be the inverse of $a$ in the appropriate group to which it
belongs ($\tT$ or $\tG$). Specifically, we have:

\begin{rem}\label{aut1}  Any  supertropical \semifield0 $(F,\tT,
\tG,\nu)$  has a multiplicative monoid automorphism $(*)$ of order
2 given by
$$a^* =   a^{-1}, \qquad (a^\nu)^*  = (a^{-1})^\nu, \qquad a \in \tT.$$

 But $(*)$
reverses the partial order induced by $\nu$, and thus does not
preserve addition.\end{rem}

\begin{rem}\label{wedge1}
There is   ambiguity in defining $a \wedge b$ in a supertropical
\semifield0, since we can only compare $\nu$-values. So we define
\begin{equation}\label{starry}a\wedge b = (a^*+b^*)^*.\end{equation}
But now $a\wedge a = a^\nu$.
\end{rem}

\subsubsection{The $\nu$-norm}$ $

 Since the
element $\one_F$ plays an important role, the following notion will
be useful.

\begin{defn} Suppose $F= (F,\tT, \tG,\nu)$ is a  supertropical \semifield0.
 The $\nu$-\textbf{norm} $|a|$ of an element
$a \in F$ is $a + a^*.$ \end{defn}

In the    degenerate case of Example~\ref{degen}, $|a|=a + a^{-1}.$
 In the max-plus \semifield0\ this is the parallel of the
usual absolute value of $a$.
\begin{rem}\label{norm1}  (i) $|a|^2 =  \one^\nu  + |a^2|\ge _\nu \one$.

(ii) $|a| \nucong \one$  iff  $ a \nucong \one $. \end{rem}

 \subsection{The function  \vdomain0}$ $

 This supertropical structure also permits us to detect
 corner roots of tropical polynomials
 in terms of the algebraic structure by means of ghosts, once we provide a $\nu$-structure for
  the function \semiring0 of Definition~\ref{funcsem}.

\begin{exampl}\label{vloc31}
If $(R,\tT, \tG,\nu)$ is a \vdomain0, then $\Fun (\tS,R) $ is a
\vdomain0   has the induced map $\tilde \nu: \Fun (\tS,R)\to \Fun
(\tS,\tG)$ given by
$$\tilde  \nu(f)(\bfa) =  \nu(f(\bfa)).$$
Here the ghosts are $\Fun (\tS,\tG)$.


 We also extend the $(*)$-map to $\Fun (\tS,F)$:

  For $F$ a
supertropical \semifield0 and $f \in \Fun (\tS,F)$, define $f^*$
via $f^*(a) = f(a)^*$ for all $a\in F.$\end{exampl}

By Remark~\ref{aut1}, the map $f \mapsto f^*$ defines a monoid
automorphism of $\Fun (\tS,F)$ of order 2. In this way, we have the
following lattice operations on $\Fun (\tS,F)$:

\begin{equation}\label{dua01} f\vee g = f+g ; \qquad f\wedge g= (f^* + g^*)^*.\end{equation}

\begin{defn}\label{pos}  For any  semitropical
\semifield0 $F = (F, \tT, \tG, \nu)$ and any subset $A \subseteq
\FunSF$, $A^{+}$ denotes $ \{ f \in A : f >_\nu 1 \}$.

\end{defn}

\begin{rem}   If $R$ is a $\nu$-\domain0, then so is $R^+$.
\end{rem}

\subsubsection{Polynomials and Laurent polynomials}$ $

%
%

$\Lambda = \{ \la_1, \dots, \la _n \}$ always denotes a finite
commuting set of indeterminates; often $n=1$, and we have a single
indeterminate $\la$.

 Given a
 \semiring0 $R$, we have the polynomial \semiring0 $R[\Lambda]$.
Just as in \cite{SuperTropicalAlg}, we view
polynomials in $ R[\Lambda]$ as functions. 
More precisely, for any subset $\tS \subseteq R^{(n)},$ there is a
natural \semiring0 homomorphism
\begin{equation}\label{eq:polymap1} \psi:  R[\Lambda]  \to \Fun
(\tS, R),
\end{equation}
 obtained by viewing a polynomial as a function on $\tS$, and we write
 .$\overline{R[\Lambda]}$ for $\psi(R[\Lambda] )\subset \Fun
(\tS, R).$ (Likewise for $\tT [\Lambda]$  and $ \tG [\Lambda].$

To ease notation, we still write a typical element of
$\overline{R[\Lambda]}$ as $\left(\sum a_i \la_1^{i_1}\dots
\la_n^{i_n}\right)$. Any m-valuation $\nu : R \to \tG$ extends to
a map $\tilde \nu : \overline{R[\Lambda]} \to
\overline{\tG[\Lambda]} $ given by
$$\tilde \nu \left(\sum a_i \la_1^{i_1}\dots  \la_n^{i_n}\right) =
\sum \nu(a_i) \la_1^{i_1}\dots  \la_n^{i_n}.$$

\begin{rem}\label{vsemi23} If $R= (R,\tT, \tG,\nu)$ is a  \vdomain0, then so is
$(\overline{R[\Lambda]} ,\overline{\tT[\Lambda]}, \overline{\tG
[\Lambda]},\nu)$ (where $\la^\nu = \rone^\nu \la).$
\end{rem}

 This definition is
 concise, but we should note the difficulty that two polynomials,
 one tangible, and one non-tangible, can describe the same
 function, for example  $\la ^2 + 6$ and $(\la +3)^2 = \la ^2 + 3 \la ^\nu + 6$.
 One can remedy this formally by identifying polynomials agreeing on a dense subset of a
 variety.
%

 When $R$ is a \vsemifield0, the same
 analysis is applicable to the \vdomain0 of Laurent polynomials $\overline{R[\Lambda,
 \Lambda^{-1}]}$, since the homomorphism $\la_i \mapsto a_i$
 then sends $\la_i^{-1} \mapsto a_i^{-1}$.
But the algebraic structure of choice in this paper is the following
(and its $\nu$-analog described below):

\begin{defn}\label{ratfrac} When $(F,\tT, \tG,\nu)$ is a supertropical \vsemifield0, $\overline{F(\Lambda)}$
denotes $\Frac_\nu \overline{F[\Lambda]},$ the $\nu$-\semifield0
of fractions of~$\overline{F[\Lambda]}$ obtained by localizing at
$\overline{\tT[\Lambda]}$. We call $\overline{F(\Lambda)}$ the
$\nu$-\textbf{\semifield0 of rational functions} (over $F$); this
is our main subject of investigation.
\end{defn}

Viewed in $\Fun (F^{(n)},F)$, $\overline{F(\Lambda)}$ is contained
in $\{ fg^* : f,g\in \overline{F[\Lambda]}\},$ and they behave
similarly. We use the former ($\overline{F(\Lambda)}$) since it is
intuitively clearer, although probably the latter may be
technically superior.
%

 (Likewise,  we could also define
  $\overline{F[\Lambda]}_{\rat}$  where the exponents of the indeterminates $\la_i$ are taken to
  be arbitrary rational numbers; we
  could define substitution homomorphisms when $(F,\tT, \tG,\nu)$ is  power-cancellative and  divisible.)

These can all be
  viewed as elementary sentences in the appropriate language, so
 model theory  is applicable to polynomials
  and their (tropical) roots, to be considered shortly.

\begin{rem}\label{props} Here are the semilattice operations on
$\overline{F(\Lambda)}$:

 Suppose $f = \frac hg$ and  $f'= \frac {h'}{g'}$. Then
\begin{equation}\label{dua1} f\vee f' = f+f' = \frac{hg'+gh'}{gg'}; \qquad
f\wedge f' = ff'(f+f')^*  =  \frac{hh'}{gg'}{gg'}(hg'+gh')^* =
{hh'}(hg'+gh')^*.\end{equation} When $hg'+gh'$ is tangible, we have
\begin{equation}
\label{dua22} f\wedge f'= \frac{hh'}{hg'+gh'}.\end{equation}
\end{rem}

\subsection{Frobenius and archimedean properties}$ $

%
%

\begin{defn} A  \vdomain0 $R$ is  \textbf{$\nu$-Frobenius} if it satisfies the identity
$$(a+b)^n \nucong a^n + b^n, \qquad \forall a,b \in R.$$
$R$ is  $\nu$-\textbf{archimedean} if   if $a^{\mathbb{Z}}
\leq_{\nu} b$ implies that $a \nucong 1$.
\end{defn}

\begin{prop}\label{Fr1}
 \label{idpar1} If $R$ is a $\nu$-Frobenius \vdomain0, then so is $ \FunSR$.  \end{prop}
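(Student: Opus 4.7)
The plan is to exploit the pointwise definition of the operations on $\Fun(\tS,R)$, reducing the identity $(f+g)^n \nucong f^n + g^n$ to the corresponding identity in $R$ at every point $s \in \tS$.

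First I would recall from Example~\ref{vloc31} that when $R=(R,\tT,\tG,\nu)$ is a \vdomain0, then $\Fun(\tS,R)$ inherits a \vdomain0 structure with tangible submonoid $\Fun(\tS,\tT)$, ghost ideal $\Fun(\tS,\tG)$, and ghost map $\tilde\nu(f)(s)=\nu(f(s))$. In particular, for $f,g \in \Fun(\tS,R)$, the sum $f+g$ and the product $fg$ are computed componentwise, and consequently $(f+g)^n$ and $f^n+g^n$ are likewise computed componentwise.

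Next I would evaluate both sides at an arbitrary $s\in\tS$. On one hand, $\tilde\nu((f+g)^n)(s)=\nu((f(s)+g(s))^n)$; on the other hand, $\tilde\nu(f^n+g^n)(s)=\nu(f(s)^n+g(s)^n)$. Since $R$ is $\nu$-Frobenius, the hypothesis gives $(f(s)+g(s))^n \nucong f(s)^n+g(s)^n$ in $R$, meaning that after applying $\nu$ the two sides agree. As this holds for every $s$, the two functions $\tilde\nu((f+g)^n)$ and $\tilde\nu(f^n+g^n)$ are equal as elements of $\Fun(\tS,\tG)$, which is precisely the statement $(f+g)^n \nucong f^n+g^n$ in $\Fun(\tS,R)$.

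There is essentially no obstacle: the argument is a direct transport of the $\nu$-Frobenius identity through the componentwise structure, analogous to the way idempotence or properness is inherited by $\Fun(\tS,R)$ (cf.~the remarks following Definition~\ref{funcsem}). The only point to keep honest is the compatibility of the induced $\tilde\nu$ with sums and powers, which follows immediately from $\nu$ being a \semiring0 homomorphism on $R$.
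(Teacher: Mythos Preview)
Your proposal is correct and follows exactly the paper's approach: the paper's proof consists of the two words ``Pointwise verification,'' and you have simply spelled out what that verification entails.
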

\begin{proof} Pointwise verification. \end{proof}

\begin{prop}\label{prop_semifield_of_fractions_is_archimedean}
If  the  idempotent \semifield0   $F$ is $\nu$-archimedean, then
$Fun(F^{(n)},F)$ is $\nu$-archimedean. 
\end{prop}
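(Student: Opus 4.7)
The plan is to show that the $\nu$-archimedean property lifts from $F$ to $\Fun(F^{(n)},F)$ pointwise, entirely analogously to the proof of Proposition~\ref{Fr1} for the Frobenius property. I assume the partial order $\le_\nu$ on $\Fun(F^{(n)},F)$ is the pointwise order induced from the $\nu$-order on $F$ via the extended ghost map $\tilde\nu$ of Example~\ref{vloc31}, which is the only reasonable $\nu$-structure available on the function \semiring0.

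First, I would take arbitrary $f, g \in \Fun(F^{(n)},F)$ satisfying $f^k \le_\nu g$ for every $k \in \mathbb Z$, the hypothesis of the $\nu$-archimedean condition. Since multiplication in $\Fun(F^{(n)},F)$ is componentwise and $\le_\nu$ is pointwise, this hypothesis amounts to saying that for every point $\bfa \in F^{(n)}$,
\[ f(\bfa)^k \le_\nu g(\bfa) \qquad \text{for all } k \in \mathbb Z. \]

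Second, I would apply the $\nu$-archimedean property of $F$ at the point $\bfa$: since $f(\bfa)^{\mathbb Z} \le_\nu g(\bfa)$, we conclude $f(\bfa) \nucong 1$, i.e., $\nu(f(\bfa)) = \nu(\one_F)$. As $\bfa$ was arbitrary, this means $\tilde\nu(f)(\bfa) = \tilde\nu(\one)(\bfa)$ for every $\bfa$, so $f \nucong \one$ in $\Fun(F^{(n)},F)$, which is exactly what the $\nu$-archimedean condition requires.

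There is no substantive obstacle here; the argument is a pure pointwise reduction, and the only thing to be careful about is that the induced $\nu$-order on the function \vdomain0 really does coincide with the pointwise $\nu$-order, which follows immediately from the definition $\tilde\nu(f)(\bfa) = \nu(f(\bfa))$. The proof as written in the paper is presumably the single sentence ``Pointwise verification,'' parallel to Proposition~\ref{Fr1}.
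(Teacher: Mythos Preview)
Your proof is correct and is essentially the paper's own argument: fix $f,g$ with $f^{\mathbb Z}\le_\nu g$, evaluate at an arbitrary $\bfa\in F^{(n)}$, apply the $\nu$-archimedean property of $F$ to get $f(\bfa)\nucong 1$, and conclude $f\nucong 1$. The paper does spell this out in a few lines rather than the one-word ``Pointwise verification'' you predicted, but the content is identical.
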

\begin{proof}
Let $f,g \in Fun(F^{(n)},\tG)$ such that $f^{\mathbb{Z}} \leq_\nu
g$. If $\bfa \in F^{(n)}$ then by our assumption $f(\bfa)^{k}
\leq_{\nu} g(\bfa)$ for all $k \in \mathbb{Z}$. Since $F$ is
$\nu$-archimedean and $f(\bfa),g(\bfa) \in F$, we have $f(\bfa) =
1$. Since this holds for any $\bfa \in F^{(n)}$ we have
$f(F^{(n)})\nucong 1$, implying $f \nucong 1$.
\end{proof}

\begin{lem}\label{rem_uniqueness_of_root}
If a supertropical semifield $F$ is $\nu$-Frobenius, then   $k$-th
roots in $ Fun(F^{(n)},F)$ are unique.\end{lem}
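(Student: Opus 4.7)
The plan is to use the $\nu$-Frobenius identity to reduce the uniqueness question to a $\nu$-equivalence $f \nucong g$, and then to upgrade this to pointwise equality using the tangible/ghost structure of $F$. So suppose $f, g \in \Fun(F^{(n)},F)$ satisfy $f^k = g^k$; I aim to conclude $f = g$.

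By Proposition~\ref{Fr1} the function $\nu$-domain $\Fun(F^{(n)},F)$ inherits the $\nu$-Frobenius identity, so
$$(f+g)^k \nucong f^k + g^k.$$
Since $f^k = g^k$ and $F$ is $\nu$-bipotent, the right-hand side collapses pointwise to $f^k + f^k = (f^k)^\nu = (f^\nu)^k$; hence $(f+g)^k \nucong (f^\nu)^k$. Applying $\nu$ to both sides (which is a semiring homomorphism) transports the identity into the function monoid over the ghost group $\tG$, which is torsion-free by Lemma~\ref{torfree}; so I may extract $k$-th roots there to obtain $(f+g)^\nu = f^\nu$, i.e., $f + g \nucong f$. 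By $\nu$-bipotence this says $g \le_\nu f$; the symmetric computation gives $f \le_\nu g$, whence $f \nucong g$.

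To upgrade $\nucong$ to equality, I would work one point at a time. Fix $\bfa \in F^{(n)}$; then $f(\bfa) \nucong g(\bfa)$ and $f(\bfa)^k = g(\bfa)^k$ in $F$. The tangible submonoid $\tT$ and the ghost ideal $\tG$ are each closed under multiplication and disjoint in $F$, so the tangibility of $a^k$ matches that of $a$; consequently $f(\bfa)$ and $g(\bfa)$ are either both tangible or both ghost. In either case they lie in a torsion-free Abelian group (Lemma~\ref{torfree} applied to $\tT$, or via the isomorphic ghost group), so $f(\bfa)^k = g(\bfa)^k$ forces $f(\bfa) = g(\bfa)$. Since $\bfa$ was arbitrary, $f = g$.

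The main obstacle I anticipate is the extraction of the $k$-th root after the Frobenius step: Lemma~\ref{torfree} is phrased as $a^n = 1 \Rightarrow a = 1$ in a \domain0, not as a $\nu$-equivalence statement. I would circumvent this by first pushing the identity through $\nu$ into the ghost image, where $\nucong$ becomes honest equality and torsion-freeness of $\tG$ (viewed as the multiplicative group of the $\nu$-domain quotient by $\nu$) applies directly. A secondary subtlety is confirming that in our $\nu$-semifield $F$ the tangibles and ghosts are indeed disjoint and closed under multiplication, so that the tangibility type transfers through powers; this is built into Example~\ref{stropi} and persists in the function $\nu$-domain pointwise.
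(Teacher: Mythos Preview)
Your proof is correct and follows essentially the same approach as the paper. The paper's argument is slightly more direct: it works pointwise from the outset and, after noting $(a+b)^k \nucong a^k + b^k \in \tG$, immediately concludes $a+b \in \tG$ (since tangibility is preserved under $k$-th powers) and then splits into the tangible/ghost cases; you instead first establish $f \nucong g$ globally via torsion-freeness of $\tG$ before descending to the pointwise case split. One minor remark: Lemma~\ref{torfree} is stated for \domains0, and $\tT$ by itself is only a multiplicative group, not a \semiring0, so your alternative route via the isomorphism $\nu|_{\tT}:\tT\to\tG$ (with $\tG$ ordered, hence torsion-free) is the correct justification there.
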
 \begin{proof} It is
enough to check this pointwise in $F$. Suppose $a^k = b^k$. Then
$(a+b)^k \nucong a^k + b^k \in \tG,$ implying $a+b \in \tG$. If
moreover, $a^k$ is tangible then so is $a$ and $b,$ so $a=b$. But if
$a^k \in \tG$, then $a$ and $b$ are both ghost, so $a=b$.
\end{proof}

\begin{rem}\label{log0}
Let $(\mathcal{S}, \cdot,  +  )$ be a divisible \semifield0. By
Lemma ~\ref{rem_uniqueness_of_root}, we can uniquely define any
rational power of the elements of $\mathcal{S}$. In this way,
$\mathcal{S}$ becomes a vector space over $\mathbb{Q}$, rewriting
the multiplicative operation~$\cdot$ on $\mathcal{S}$ as addition
and defining
\begin{equation}\label{eq_vector_space_translation}
(m/n) \cdot \alpha = \alpha^{\frac{m}{n}}.
\end{equation}

Furthermore, when $\mathcal{S}$ is complete, we can define real
powers as limits of rational powers, and  $\mathcal{S}$ becomes a
vector space over $\mathbb{R}$. In this way we can apply linear
algebra techniques to $(\mathcal{S},\cdot)$.
\end{rem}

 \section{Kernels and $\nu$-kernels}\label{ker0}

 Although the theory of congruences applies generally in universal
 algebra and in particular for  a \semiring0~$R$, congruences are difficult to work with since they
 involve subsets of $R\times R$ rather than $R$ itself. When $R$
 is a \semifield0 $\mathcal{S}$, one can get around this by switching the
 roles of addition and multiplication. Although we are interested in
 the $\nu$-structure, we present the definitions without $\nu$, in
 order to get to the underlying ideas more quickly.

 \subsection{$\nu$-Congruences and  $\nu$-kernels of \vsemifields0}$ $

\begin{defn} A \textbf{kernel} of a \semiring0 $\mathcal{S}$ is a
subgroup ${K}$ which is \textbf{convex} in the sense that if  $a,b
\in {K}$ and $\a, \beta \in \mathcal{S}$ with $\a + \beta =\one_{\mathcal{S}},$
then $\a a + \beta b \in {K}.$\end{defn}

\begin{note} Usually one takes $\mathcal{S}$ to be a \semifield0,
which is our assumption through the remainder of this subsection,
but we frame the definition a bit more generally in order to be
able later to handle \vsemifields0, which technically are not
\semifields0.  In some of the literature \semifields0 are not
required to be commutative, and then $K$ is required to be a
normal subgroup; we forego this generality.
\end{note}

 Following \cite[Proposition
(1.1)]{Prop_Semifields}, we have the following key correspondence.

\begin{prop}\label{basicprop} If $\Cong$ is a congruence
on a \semifield0 $\mathcal{S}$, then ${K}_\Cong = \left\{ a  \in
\mathcal{S} : a \equiv 1\right\}$ is a kernel. Conversely, any
kernel $K$ of $\mathcal{S}$ defines a congruence $\rho_K$
according to \cite[Definition~3.1]{Hom_Semifields}, i.e., $a
\equiv b$ iff $\frac a b \equiv 1.$ If $\mathcal{S}$ is the
\semifield0 of the lattice-ordered group $G$, then the \semifield0
$\mathcal{S}/\rho_K$ is the \semifield0 of the lattice-ordered
group~$G/K.$
\end{prop}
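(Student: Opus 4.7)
The plan is to verify four things in order: $K_\Cong$ is a subgroup, $K_\Cong$ is convex, $\rho_K$ is a congruence, and the final identification of semifields.

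First I would handle the direction $\Cong \mapsto K_\Cong$. Since $\Cong$ preserves multiplication and inversion, if $a \equiv 1$ and $b \equiv 1$ then $ab \equiv 1$ and $a^{-1} \equiv 1$; trivially $1 \equiv 1$, so $K_\Cong$ is a multiplicative subgroup. Convexity is immediate: if $a,b \in K_\Cong$ and $\alpha + \beta = \one_{\mathcal{S}}$, then $\alpha a + \beta b \equiv \alpha \cdot 1 + \beta \cdot 1 = \alpha + \beta = \one_{\mathcal{S}}$.

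For the converse, given a kernel $K$, I would first check that $\rho_K$ (defined by $a\equiv b$ iff $a b^{-1}\in K$) is an equivalence relation, using that $K$ is a subgroup: reflexivity from $1\in K$, symmetry from closure under inverse, transitivity from closure under multiplication. Multiplicative compatibility is direct, since $(ac)(bd)^{-1} = (ab^{-1})(cd^{-1})\in K$. The main obstacle is additive compatibility, and this is precisely where convexity is used. Given $ab^{-1} \in K$ and $cd^{-1}\in K$, I would use the identity
\[
\frac{a+c}{b+d} \; = \; \frac{b}{b+d}\cdot\frac{a}{b} \; + \; \frac{d}{b+d}\cdot\frac{c}{d},
\]
observe that $\frac{b}{b+d} + \frac{d}{b+d} = \one_{\mathcal{S}}$, and invoke convexity of $K$ with $\alpha = \frac{b}{b+d}$, $\beta = \frac{d}{b+d}$, $a' = ab^{-1}$, $b' = cd^{-1}$ to conclude $(a+c)(b+d)^{-1}\in K$. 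This is the only step that uses convexity in an essential way, and it is the technical heart of the proposition.

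For the last assertion, I would unwind definitions. When $\mathcal{S}$ is the semifield attached to the lattice-ordered group $G$ under the dictionary of Proposition \ref{bip} (multiplication on $\mathcal{S}$ is the group operation of $G$, addition is the lattice join $\vee$), a convex $l$-subgroup $K$ of $G$ gives a well-defined quotient lattice-ordered group $G/K$. I would check that the cosets of $K$ in $G$ correspond to the $\rho_K$-classes in $\mathcal{S}$, that the multiplicative operation on $\mathcal{S}/\rho_K$ agrees with the group operation on $G/K$, and that addition passes to the quotient join $\vee$ on $G/K$ --- this last point being exactly what the additive compatibility of $\rho_K$ (via convexity) guarantees. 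Hence $\mathcal{S}/\rho_K$ is the semifield associated to $G/K$ via Proposition \ref{bip}. Throughout, the only subtle point is the additive step, and having settled it once, everything else is a routine translation between the group-theoretic and semifield-theoretic languages.
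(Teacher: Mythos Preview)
Your proof is correct and follows the standard approach; the paper itself does not give a proof but defers to \cite[Proposition~(1.1)]{Prop_Semifields} and \cite[Proposition~1.1]{Cont_Semifields}, whose arguments are precisely the ones you have written out, with the convex-combination identity $\frac{a+c}{b+d}=\frac{b}{b+d}\cdot\frac{a}{b}+\frac{d}{b+d}\cdot\frac{c}{d}$ as the key step for additive compatibility.
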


 A quick proof that $\Cong $ is indeed a congruence
is given in \cite[Proposition~1.1]{Cont_Semifields}.

We need to make a slight modification in order to take the
$\nu$-structure into account. Throughout, $R= (R,\tT, \tG,\nu)$ is
a \vdomain0. Any congruence of~$\tG$ also can be viewed as a
congruence of $R,$ but we also want to bring $\tT$ into play. The
next observation is due to Izhakian.

\begin{rem}\label{vsemi20}
Any congruence $\Cong$ satisfies the property that if $(a,b) \in
\Cong$ then $(a^\nu, b^\nu) = (\onenu,\onenu) (a,b) \in \Cong.$
\end{rem}

 \begin{defn}\label{vsemi24} A $\nu$-\textbf{congruence} is a  congruence $\Cong$
 for which $(a,b) \in \Cong$ iff $(a^\nu, b^\nu) \in \Cong.$ We write $a_1
\equiv_\nu  a_2$ when  $a_1^\nu \equiv  a_2^\nu.$

 A $\nu$-\textbf{kernel} of a \vsemifield0 $F$ is a
subgroup $K$ which is $\nu$-\textbf{convex} in the sense that if
$a,b \in K$ and $\a, \beta \in F$ with $\a + \beta \nucong \fone,$
then $\a a + \beta b \in K.$
\end{defn}
%
%

\begin{rem}
There is a natural correspondence between $\nu$-congruences
(resp.~$\nu$-kernels) of $R$ and   congruences (resp.~kernels) of
$\tG$, given as follows:

Any $\nu$-congruence $\Cong = \{(a,b): a,b \in R\}$ of $R$ defines
a congruence $\Cong^\nu = \{(a^\nu ,b^\nu ): (a,b) \in \Cong \}$
of $\tG$. Conversely, if $\Cong_\nu $ is a congruence of $\tG$,
then $\nu^{-1}(\Cong_\nu )$ is a $\nu$-congruence of $R$.

Any $\nu$-kernel ${\mathcal K}$ of a $\nu$-\semifield0 $F$ defines
a kernel ${\mathcal K}^\nu$ of $\tG$. Conversely, if ${\mathcal K}
$ is a kernel of $\tG$, then $\nu^{-1}({\mathcal K})$ is a
$\nu$-kernel of $F$.

 If
${\mathcal K} = {\mathcal K}_\Cong$, then $\nu^{-1}({\mathcal K})
= {\mathcal K}_{\nu^{-1}(\Cong)}$.
\end{rem}

Thus, we will quote results from the literature about kernels and
use their analogs for $\nu$-kernels.

\begin{thm}
Given a $\nu$-congruence $\Cong$ on a \vsemifield0 $F$,
corresponding to the $\nu$-kernel $K$ of $F$, the factor
\semiring0 $\bar F = F/\Cong$ is a \vsemifield0 with respect to
the idempotent map $\bar \nu: \bar F \to \overline{\tG}=
\tG/\nu(K)$ defined by $\bar \nu (Ka) = Ka^\nu.$\end{thm}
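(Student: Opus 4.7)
The plan is to verify in sequence that $\bar F = F/\Cong$ — which is already a \semiring0 by Remark~\ref{cong10}(i) — carries the structure of a \vsemifield0 via the indicated data. Using the correspondence between $\Cong$ and $K$ from Proposition~\ref{basicprop} (in its $\nu$-adapted form), I identify equivalence classes with cosets $Ka$. The argument then splits into four verifications.

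First, I check that $\bar\nu$ is well-defined: if $Ka = Kb$, i.e.\ $a \equiv b$, then by the defining property of a $\nu$-congruence (Definition~\ref{vsemi24}; cf.~Remark~\ref{vsemi20}) we have $a^\nu \equiv b^\nu$, so $Ka^\nu = Kb^\nu$ in $\overline{\tG} = \tG/\nu(K)$. Second, $\bar\nu$ is a semiring homomorphism and is idempotent: since $\Cong$ is a congruence, $Ka + Kb = K(a+b)$ and $Ka\cdot Kb = Kab$, so applying $\bar\nu$ and using that $\nu$ is multiplicative and additive on $F$ yields $\bar\nu(Kab) = Ka^\nu Kb^\nu$ and $\bar\nu(K(a+b)) = K(a^\nu+b^\nu) = Ka^\nu + Kb^\nu$; idempotence is immediate from $(a^\nu)^\nu = a^\nu$.

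Third, I check the structural conditions on $\bar\tT$ and $\overline{\tG}$: the tangible submonoid $\bar\tT$ is the image of $\tT$ under the quotient map and inherits the Abelian group structure from $\tT$, while surjectivity of $\bar\nu|_{\bar\tT}$ onto $\overline{\tG}$ follows because, given $Ka^\nu \in \overline{\tG}$, one can lift to $t\in\tT$ with $t^\nu = a^\nu$ (since $\nu|_\tT$ is onto) and then $\bar\nu(Kt) = Ka^\nu$. Fourth, and most delicate, I verify the two addition axioms of a \vdomain0 in $\bar F$. For the equal-value case, suppose $\bar\nu(Ka) = \bar\nu(Kb)$, so $a^\nu b^{-\nu}\in \nu(K)$; choose $k \in K$ with $k^\nu = a^\nu/b^\nu$ and set $b_0 = kb$, so that $Kb_0 = Kb$ and $b_0^\nu = a^\nu$. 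Then the axiom in $F$ gives $a + b_0 = a^\nu$, so $Ka + Kb = K(a+b_0) = Ka^\nu = \bar\nu(Ka)$. The strict-inequality case $\bar\nu(Ka) > \bar\nu(Kb)$ is handled analogously, by choosing representatives $a_0, b_0$ with $a_0^\nu > b_0^\nu$ in $\tG$, so that $a_0 + b_0 = a_0$ in $F$ descends to $Ka + Kb = Ka$.

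The main obstacle is the last step, in particular the strict-inequality case: one must argue that the partial order on $\overline{\tG} = \tG/\nu(K)$ can be realized by representatives in $\tG$, so that $\bar\nu(Ka) > \bar\nu(Kb)$ in the quotient actually lifts to $a_0^\nu > b_0^\nu$ in $\tG$ for some choice $Ka_0 = Ka$, $Kb_0 = Kb$. This is what the $\nu$-convexity condition on $K$ (and hence on $\nu(K)\subseteq \tG$) buys us, via the standard theory of convex subgroups of lattice-ordered groups recalled in Lemma~\ref{onesuf}. Once that is in hand, the $\nu$-congruence property used in Step~I is precisely what allows the chosen lift in $\tG$ to be pulled back to a lift in $F$ without changing the cosets.
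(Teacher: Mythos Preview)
Your proof is correct and considerably more thorough than the paper's. The paper gives a one-line argument: it observes that $a \nucong b$ implies $ab^* \in K$, hence $Ka = Kb$, and from this concludes that $\bar\nu$ is well-defined with target the \semifield0 $\overline{\tG}$; all the remaining \vsemifield0 axioms are left implicit. Your well-definedness check (Step~1) uses the defining property of a $\nu$-congruence directly, which is the more natural route; the paper's observation is really the complementary fact that $\nu$-equivalent elements already collapse to the same $K$-coset, which is what identifies the image of $\bar\nu$ inside $\bar F$ with $\tG/\nu(K)$.

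Your Steps~2--4 are entirely absent from the paper, and the representative-lifting in Step~4 is the substantive part of the verification. One minor point: the citation of Lemma~\ref{onesuf} for lifting strict inequalities from $\overline{\tG}$ to $\tG$ is not quite on target; what you actually use is that $\nu(K)$ is a convex subgroup of $\tG$, which follows from the $\nu$-convexity of $K$ recorded in Remark~\ref{bfacts1}(ii). With that in hand, the standard fact about quotients by convex subgroups gives exactly the lift you describe: if $\nu(K)a^\nu > \nu(K)b^\nu$, then after multiplying by suitable $k \in K$ one obtains representatives with $a_0^\nu > b_0^\nu$, and the addition axiom in $F$ descends as you wrote.
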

\begin{proof} If $a \nucong b$ then $ ab^* \in K,$ implying $Ka
= Kb.$ This implies $\bar \nu$ is well-defined, and its target is
the  \semifield0 $\overline{\tG}$.
\end{proof}

Writing $\bar a$ for the image of $a$ in $\bar R,$ we write $a
\equiv_\nu b$ to denote that $\bar a \nucong \bar b$, or
equivalently $\overline{a} \overline{b}^* \nucong \bar \fone.$
\begin{rem}\label{bfacts1}
\begin{enumerate} \eroman
\item  Given a $\nu$-congruence $\Cong$ on a
$\nu$-\semifield0~$F$, we define $K_\Cong = \left\{ a  \in F  : a
\equiv_\nu 1\right\}$. Conversely,  given a $\nu$-kernel $K$ of
$F$, we define the $\nu$-congruence $\Cong $ on $F$ by  $a \equiv
b$ iff $ a b^* \equiv_\nu \fone.$

 \item \cite[Corollary~1.1]{Cont_Semifields}, \cite[Property~2.4]{Prop_Semifields} Any  $\nu$-kernel $K$ is $\nu$-convex,
  in the sense that if $a \le_\nu  b \le_\nu  c$ with $a,c \in K$,
then $b \in K.$ This is  seen by passing to the factor \semiring0
$F/\Cong$  ($\Cong$ as in  (i)) and applying Lemma~\ref{idpar}.

 \item \cite[Proposition~2.3]{Cont_Semifields}. If $|a| \in {K}$, a $\nu$-kernel,
then $a\in {K}$.

 \item \cite{Cont_Semifields} The product $K_1   K_2 = \{ ab \ : \
a \in K_1, b \in K_2 \}$   of two $\nu$-kernels  is    a
$\nu$-kernel, in fact the smallest $\nu$-kernel containing $K_1
\cup K_2$. (This follows at
 once from (ii).)

 \item The intersection of  $\nu$-kernels is a $\nu$-kernel. Thus, for any set $S \subset F$ we
can define the $\nu$-kernel $\langle S \rangle$  \textbf{generated
by} $S$ to be the intersection of all $\nu$-kernels containing
$S$.

\item  \cite[Theorem~3.5]{Cont_Semifields}. Any $\nu$-kernel
generated by a finite set $\{ s_1, \dots, s_m\}$ is in fact
generated by the single element $\sum _{i=1}^m (|s_i|).$ (Follows
easily from (iii).)

\item The $\nu$-kernel generated by $a \in F$ is just the set of
finite sums $  \{ \sum_i  b_i  a^i : \ b_i \in F, \ \sum b_i
\nucong 1\}.$

 \item  \cite[Theorem~3.8]{Hom_Semifields}. If $K$ is a $\nu$-kernel of a \vsemifield0
$F$ and the \vsemifield0 $F/K$ is $\nu$-idempotent, then $K$ is a
sub-\vsemifield0 of $F$. (This is because for $a,b \in K$ the
image of $a+b$ is $\nu$-equivalent to $1K + 1K = 1K.$)

\item Let $K$ be a $\nu$-kernel of a \vsemifield0 $F$. For every
$a \in \tT$, if $a^{n} \in K$ for some $n \in \mathbb{N}$ then $a
\in K$. (Indeed, passing to $F/K$,  it suffices to note by the
analog of Lemma~\ref{torfree} that if $a^n=1$ then $a =1.$)

\item Any $\nu$-kernel of a $\nu$-kernel is a $\nu$-kernel.
\end{enumerate}
\end{rem}



%

%

%
%

\begin{prop}\label{lem_kernels_algebra}\label{prop_convex_l_subgroups_distributive_lattice}\cite[Theorem (2.2.5)]{OrderedGroups3}
The lattice of $\nu$-kernels of an idempotent \vsemifield0 is a
complete distributive lattice and satisfies the infinite
distributive law.\end{prop}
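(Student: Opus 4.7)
The plan is to reduce to Birkhoff's classical theorem on the lattice of convex $\ell$-subgroups of an abelian lattice-ordered group---cited here as Theorem~(2.2.5) of \cite{OrderedGroups3}---via the order-preserving correspondence $K \mapsto K^\nu$, $L \mapsto \nu^{-1}(L)$ between $\nu$-kernels of the \vsemifield0 $F$ and kernels of its idempotent \semifield0 $\tG$, recorded in the remark after Definition~\ref{vsemi24}. Since $\tG$ is an idempotent \semifield0, its multiplicative group is an abelian $\ell$-group whose kernels are exactly its convex $\ell$-subgroups, so Birkhoff applies: the lattice of kernels of $\tG$ is a complete distributive lattice satisfying the infinite distributive law.

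First I would verify that the correspondence is a lattice isomorphism. Order-preservation in both directions is immediate from the definitions. For arbitrary meets, $\bigcap_i K_i$ is again a $\nu$-kernel (Remark~\ref{bfacts1}(v)), and since each $K_i$ is $\nu$-saturated (Remark~\ref{bfacts1}(iii), i.e.\ $a \in K_i \Leftrightarrow a^\nu \in K_i$), one checks $(\bigcap_i K_i)^\nu = \bigcap_i K_i^\nu$. Arbitrary joins in either lattice can then be defined as the intersection of all (respectively, $\nu$-)kernels containing the given family, so both lattices are complete and the correspondence automatically preserves joins as well as meets.

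With the lattice isomorphism in hand, the infinite distributive identity $K \cap \bigvee_i K_i = \bigvee_i (K \cap K_i)$ for $\nu$-kernels of $F$ is equivalent to the same identity for the corresponding kernels $K^\nu$ and $K_i^\nu$ of $\tG$, which is precisely the content of the cited Birkhoff theorem; ordinary finite distributivity is the special case of two indices.

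The main obstacle is confirming that joins correspond under the bijection, i.e.\ that $\nu^{-1}$ of the kernel generated in $\tG$ by a family equals the $\nu$-kernel generated in $F$ by the preimage. Using the explicit description in Remark~\ref{bfacts1}(vi)--(vii)---every element of the $\nu$-kernel generated by a set $S$ is a finite $F$-linear combination $\sum b_j a_j$ of powers of elements of $S$, with coefficients satisfying $\sum b_j \nucong \one$---one applies $\nu$ termwise and uses surjectivity of $\nu|_\tT$ onto $\tG$ to match this up with the corresponding description in $\tG$. Once this compatibility for generated substructures is established, completeness and the infinite distributive law transfer directly from the classical $\ell$-group result to the lattice of $\nu$-kernels of~$F$.
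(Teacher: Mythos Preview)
Your proposal is correct and is precisely the mechanism the paper has in mind: the proposition is stated without proof, citing \cite[Theorem~(2.2.5)]{OrderedGroups3}, and the remark immediately following Definition~\ref{vsemi24} together with the sentence ``Thus, we will quote results from the literature about kernels and use their analogs for $\nu$-kernels'' make explicit that the intended justification is exactly the transfer along $K \mapsto K^\nu$, $L \mapsto \nu^{-1}(L)$ that you describe. Your verification that this correspondence is an order isomorphism preserving arbitrary meets (hence joins) is the only content not already spelled out in the paper, and it is routine given the $\nu$-saturation noted in Remark~\ref{bfacts1} and the surjectivity of $\nu|_{\tT}$.
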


\begin{thm}\label{thm_kernels_hom_relations}\cite[Theorems (3.4) and (3.5)]{Hom_Semifields}
Let $\phi : F_1 \rightarrow F_2$ be a \vsemifield0 homomorphism.
Then the following hold:
\begin{enumerate}
 \item For any $\nu$-kernel $L$ of \ $F_1$, $\phi(L)$ is a $\nu$-kernel of \ $\phi(F_1)$.
  \item For a $\nu$-kernel $K$ of \ $\phi(F_1)$, $\phi^{-1}(K)$ is a $\nu$-kernel of \ $F_1$.\end{enumerate}
In particular, $\phi^{-1}(1)$ is a $\nu$-kernel.\end{thm}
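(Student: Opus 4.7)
The plan is to verify, for each of the two claims, the two defining conditions of a $\nu$-kernel from Definition~\ref{vsemi24}: being a multiplicative subgroup, and being $\nu$-convex. The subgroup condition is automatic in both cases, since $\phi$ restricts to a group homomorphism of the underlying multiplicative groups, so images and preimages of subgroups are subgroups. The real content therefore lies in $\nu$-convexity.

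I would dispatch part (2) first, since it pushes through $\phi$ directly. Given $a,b \in \phi^{-1}(K)$ and $\alpha,\beta \in F_1$ with $\alpha+\beta \nucong \one_{F_1}$, I apply $\phi$ to the combination $\alpha a + \beta b$. Using the $\nu$-preservation identity $\phi(x^\nu)=\phi(x)^{\nu'}$ recorded just after Definition~\ref{vsemi2}, one has
\[
\phi(\alpha)+\phi(\beta)\;=\;\phi(\alpha+\beta)\;\nucong\;\phi(\one_{F_1})\;=\;\one_{\phi(F_1)},
\]
while $\phi(a),\phi(b)\in K$. The $\nu$-convexity of $K$ inside $\phi(F_1)$ then forces $\phi(\alpha a+\beta b)\in K$, i.e., $\alpha a+\beta b\in \phi^{-1}(K)$. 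The ``in particular'' statement is the special case where $K$ is taken to be the trivial $\nu$-kernel $\{x\in\phi(F_1):x\nucong \one_{\phi(F_1)}\}$.

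Part (1) is the more delicate direction, since the convex-combination condition must be verified in $\phi(F_1)$ while $L$ lives upstairs. Given $a,b\in \phi(L)$, I fix lifts $a',b'\in L$ with $\phi(a')=a$, $\phi(b')=b$; given $\alpha,\beta\in\phi(F_1)$ with $\alpha+\beta\nucong \one_{\phi(F_1)}$, the key task is to exhibit lifts $\alpha',\beta'\in F_1$ of $\alpha,\beta$ satisfying $\alpha'+\beta'\nucong \one_{F_1}$. Picking arbitrary lifts $\tilde\alpha,\tilde\beta$ one only obtains $\phi(\tilde\alpha+\tilde\beta)\nucong \one_{\phi(F_1)}$, so I would renormalize using the $(*)$-operation of Remark~\ref{aut1} by setting $\alpha':=(\tilde\alpha+\tilde\beta)^{*}\tilde\alpha$ and $\beta':=(\tilde\alpha+\tilde\beta)^{*}\tilde\beta$. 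Then $\alpha'+\beta'=(\tilde\alpha+\tilde\beta)^{*}(\tilde\alpha+\tilde\beta)\nucong \one_{F_1}$, while $\phi(\alpha')\nucong \alpha$ and $\phi(\beta')\nucong \beta$. The $\nu$-convexity of $L$ now yields $\alpha' a'+\beta' b'\in L$, whose image is $\nu$-equivalent to $\alpha a+\beta b$; one finishes by invoking Remark~\ref{bfacts1}(iii) to replace this image by a genuine representative of $\alpha a+\beta b$ inside $\phi(L)$.

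The main obstacle is precisely the lifting-and-renormalization step in part (1): an arbitrary preimage pair $(\tilde\alpha,\tilde\beta)$ of $(\alpha,\beta)$ need not satisfy the convex-combination identity in $F_1$, and the $(*)$-rescaling is the device (specific to the $\nu$-semifield setting) that brings it into line without disturbing the image under $\phi$ up to $\nu$-equivalence. This is the one ingredient distinguishing the $\nu$-version of the result from the classical one in \cite{Hom_Semifields}, and the proof is otherwise parallel to the \semifield0\ argument given there.
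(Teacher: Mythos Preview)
The paper does not prove this theorem; it is quoted directly from \cite{Hom_Semifields}, and the sentence immediately preceding the statement (``Thus, we will quote results from the literature about kernels and use their analogs for $\nu$-kernels'') indicates that the intended argument is simply to pass via the correspondence in the remark after Definition~\ref{vsemi24} to the ghost \semifield0\ $\tG$, where the classical result applies verbatim.

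Your argument for part (2) is correct and is exactly how one would write it out. Your approach to part (1), however, has a genuine gap at the last step. After renormalizing you obtain $\phi(\alpha' a'+\beta' b')\in\phi(L)$ with $\phi(\alpha')\nucong\alpha$ and $\phi(\beta')\nucong\beta$; but you only get $\phi(\alpha' a'+\beta' b')\nucong\alpha a+\beta b$, not equality. Concretely, when $\alpha+\beta=1^{\nu}$ one has $(\alpha+\beta)^*=1^{\nu}$, so $\phi(\alpha')=\alpha^{\nu}$ and your image is $(\alpha a+\beta b)^{\nu}$, not $\alpha a+\beta b$. To finish you invoke Remark~\ref{bfacts1}(iii), but that remark says $|a|\in K\Rightarrow a\in K$, which is a statement about $|a|=a+a^*$ and has nothing to do with replacing an element by a $\nu$-equivalent one. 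More seriously, even if you had the right closure statement, you cannot appeal to a $\nu$-kernel property of $\phi(L)$ in the middle of proving that $\phi(L)$ is a $\nu$-kernel; that is circular.

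One way to repair the argument is to avoid renormalization altogether and lift $\alpha,\beta$ more carefully by cases (for instance when $\alpha+\beta=1$ one of $\alpha,\beta$ equals $1$ and strictly dominates, and the naive lifts already work; when $\alpha+\beta=1^{\nu}$ one reduces to combinations with coefficients in $\{1,1^{\nu}\}$, which can be lifted exactly). Cleaner, and in line with the paper's convention, is to use the correspondence with kernels of $\tG$: show that $\nu_2(\phi(L))=\phi(\nu_1(L))$ is a kernel of $\nu_2(\phi(F_1))$ by the cited classical result, and then identify $\phi(L)$ with its $\nu_2$-preimage inside $\phi(F_1)$.
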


\begin{cor}\label{cor_subdirect_decomposition_for_semifield_of_fractions}
There is an injection $\mathbb{S}/( K_1 \cap K_2 \cap \dots  \cap
K_t) \hookrightarrow \prod_{i=1}^{t} \mathbb{S}/K_i$, for any
$\nu$-kernels $K_i$  of a  \vsemifield0~$\mathbb{S}$, induced by
the map $f \mapsto (fK_i).$
\end{cor}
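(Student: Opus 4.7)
The plan is to realize the stated map as the one induced by a direct-product homomorphism, compute its $\nu$-kernel, and invoke the first-isomorphism-type result contained in Theorem \ref{thm_kernels_hom_relations} together with Remark \ref{bfacts1}.

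First I would define
\[
\phi : \mathbb{S} \longrightarrow \prod_{i=1}^{t} \mathbb{S}/K_i, \qquad \phi(f) = (fK_1,\dots, fK_t),
\]
and note that $\phi$ is a \vsemifield0 homomorphism, since each projection $\pi_i \circ \phi$ is the canonical quotient map $\mathbb{S}\to \mathbb{S}/K_i$ coming from the $\nu$-congruence $\rho_{K_i}$ of Proposition \ref{basicprop}, and products of homomorphisms are computed componentwise. In particular $\phi$ respects $\nu$, because on each coordinate $\bar\nu$ is well-defined on $\mathbb{S}/K_i$.

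Next I would identify the $\nu$-kernel of $\phi$. By Theorem \ref{thm_kernels_hom_relations}(ii), $\phi^{-1}(\mathbf{1})$ is a $\nu$-kernel of $\mathbb{S}$; and by definition
\[
\phi^{-1}(\mathbf{1}) \;=\; \{ f \in \mathbb{S} : fK_i = K_i \text{ for every } i \} \;=\; \bigcap_{i=1}^{t} K_i,
\]
using the fact from Remark \ref{bfacts1}(i) that $fK_i = K_i$ iff $f \equiv_\nu 1$ modulo $K_i$, iff $f\in K_i$.

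Finally I would pass to the quotient. By Remark \ref{cong10}(ii) (applied in the $\nu$-setting via Remark \ref{bfacts1}(i)), the homomorphism $\phi$ factors through an injective homomorphism
\[
\bar\phi : \mathbb{S}\big/\!\phi^{-1}(\mathbf{1}) \;\hookrightarrow\; \prod_{i=1}^{t} \mathbb{S}/K_i,
\]
and substituting $\phi^{-1}(\mathbf{1}) = \bigcap_i K_i$ gives the required injection $f\,(\bigcap_i K_i)\mapsto (fK_i)_i$. No step looks like a real obstacle here; the only mild care needed is that the codomain $\prod_i \mathbb{S}/K_i$ is treated as a direct product of \vsemifields0 in the category in which $\phi$ and the isomorphism theorem operate, so that ``injective'' means injective on underlying sets, which is automatic once $\phi^{-1}(\mathbf{1})$ is the full fiber over the identity.
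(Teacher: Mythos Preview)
Your proposal is correct and is precisely the intended argument: the paper states this as a corollary of Theorem~\ref{thm_kernels_hom_relations} without writing out a proof, and your construction of the diagonal map into the product, identification of its $\nu$-kernel as $\bigcap_i K_i$, and passage to the quotient via Remark~\ref{cong10}(ii) is exactly the standard derivation one expects here.
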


\begin{flushleft}We have the   fundamental isomorphism theorems.\end{flushleft}

\begin{thm}\cite{KA_Semifields}\label{thm_nother_1_and_3}
Let $\mathcal{S}$ be an idempotent \vsemifield0 and $K,L$ be
$\nu$-kernels of $\mathcal{S}$.
\begin{enumerate}
  \item If $\mathcal{U}$ is a sub-\vsemifield0 of  $\mathcal{S}$, then $\mathcal{U} \cap K$ is a $\nu$-kernel of $\mathcal{U}$, and $K$ a $\nu$-kernel of the sub-\vsemifield0  $\mathcal{U}  K = \{u \cdot k \ : \ u \in \mathcal{U}, \ k \in K \}$ of \ $\mathcal{S}$,
   and one has the isomorphism $$\mathcal{U}/(\mathcal{U} \cap K) \cong \mathcal{U} K/K.$$
  \item $L \cap K$ is a $\nu$-kernel of $L$ and $K$ a $\nu$-kernel of $L  K$, and the group isomorphism $$L/(L \cap K) \cong L   K/K$$   is a \vsemifield0 isomorphism.
  \item If $L \subseteq K$, then $K/L$ is a $\nu$-kernel of \ $\mathcal{S}/L$ and one has the \vsemifield0 isomorphism $$\mathcal{S}/K \cong (\mathcal{S}/L)/(K/L).$$
\end{enumerate}
\end{thm}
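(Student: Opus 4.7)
The plan is to mimic the classical Noether isomorphism proofs (as they run for groups, rings, or lattice-ordered groups), using the correspondence in Remark~\ref{bfacts1}(i) between $\nu$-kernels and $\nu$-congruences to reduce each claim to a statement about homomorphisms of \vsemifields0, and invoking Theorem~\ref{thm_kernels_hom_relations} for the transfer of $\nu$-kernels under homomorphisms. A useful auxiliary observation throughout is that, by Remark~\ref{bfacts1}(iv), the product $\mathcal{U}K$ (resp.~$LK$) is again a sub-\vsemifield0 of $\mathcal{S}$, and the unit lies in every $\nu$-kernel so $\mathcal{U},K \subseteq \mathcal{U}K$ etc.

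For part (1), I would first verify that $\mathcal{U} \cap K$ is a subgroup of $\mathcal{U}$, and then check $\nu$-convexity directly: if $a,b \in \mathcal{U} \cap K$ and $\alpha + \beta \nucong \fone$, then $\alpha a + \beta b$ lies both in $\mathcal{U}$ (because $\mathcal{U}$ is a sub-\vsemifield0 and hence $\nu$-convex with respect to itself) and in $K$ (by $\nu$-convexity of $K$). A symmetric argument shows $K$ is a $\nu$-kernel of $\mathcal{U}K$. To build the isomorphism I would consider the composite homomorphism
\[
\varphi : \mathcal{U} \;\hookrightarrow\; \mathcal{U}K \;\twoheadrightarrow\; \mathcal{U}K/K,
\qquad u \mapsto uK,
\]
and show it is surjective (since $ukK = uK$ for any $k \in K$) and that its $\nu$-kernel in the sense of Theorem~\ref{thm_kernels_hom_relations} is precisely $\mathcal{U} \cap K$: indeed $u \in \varphi^{-1}(K) = \varphi^{-1}(1_{\mathcal{U}K/K})$ iff $uK = K$ iff $u \in K$, combined with $u \in \mathcal{U}$. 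Then the induced map $\tilde\varphi : \mathcal{U}/(\mathcal{U} \cap K) \to \mathcal{U}K/K$ produced by Remark~\ref{cong10}(ii) is a bijective \vsemifield0 homomorphism; compatibility with the $\nu$-structure follows because the congruence of $\varphi$ is a $\nu$-congruence by construction.

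Part (2) is just (1) specialised to the sub-\vsemifield0 $\mathcal{U} = L$, noting that $L$ is itself a \vsemifield0 by Remark~\ref{bfacts1}(viii) (as $L/L$ is $\nu$-idempotent, trivially). For part (3), the natural map $\pi : \mathcal{S}/L \to \mathcal{S}/K$ given by $sL \mapsto sK$ is well-defined because $L \subseteq K$ (any two representatives of $sL$ differ by an element of $L \subseteq K$), surjective by construction, and a \vsemifield0 homomorphism since both quotient structures inherit their operations from $\mathcal{S}$. Its $\nu$-kernel in $\mathcal{S}/L$ is exactly $\{sL : s \in K\} = K/L$, which by Theorem~\ref{thm_kernels_hom_relations} is therefore a $\nu$-kernel of $\mathcal{S}/L$; the induced map $(\mathcal{S}/L)/(K/L) \xrightarrow{\sim} \mathcal{S}/K$ is the desired isomorphism.

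The routine work is the verification of $\nu$-convexity and that the natural maps respect the quotient $\nu$-structure. The main subtlety is not a calculation but a conceptual one: in the $\nu$-setting the ``kernel of a homomorphism'' is the preimage of $\fone$ only up to $\nu$-equivalence, so one must consistently work with $\nucong$ rather than equality, both when checking $\nu$-convexity and when identifying $\varphi^{-1}(1)$. That caveat is handled cleanly once one reads ``$\equiv_\nu$'' (Definition~\ref{vsemi24}) everywhere in place of ``$=$'' and invokes the correspondence with $\nu$-congruences; thereafter the proofs run exactly as in the classical case.
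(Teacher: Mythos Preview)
The paper does not give its own proof of this theorem; it is stated with a citation to \cite{KA_Semifields} and used as a black box. Your proposal is the standard Noether-isomorphism argument transported to the $\nu$-kernel setting, and it is correct in outline and in its use of Remark~\ref{bfacts1}, Remark~\ref{cong10}, and Theorem~\ref{thm_kernels_hom_relations}.

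One small slip: in part~(2) you invoke Remark~\ref{bfacts1}(viii) by saying ``$L/L$ is $\nu$-idempotent, trivially''. That remark requires the quotient $\mathcal{S}/L$ (not $L/L$) to be $\nu$-idempotent in order to conclude that $L$ is a sub-\vsemifield0. This is still fine, since $\mathcal{S}$ is assumed idempotent and idempotence passes to quotients, but the justification should read ``$\mathcal{S}/L$ is idempotent because $\mathcal{S}$ is''. With that correction your argument goes through.
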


\begin{thm}\label{cor_qoutient_corr}
Let $L $ be a $\nu$-kernel of  a \vsemifield0 $F$. Every
$\nu$-kernel of $F/L$ has the form $K/L$ for some uniquely
determined $\nu$-kernel $K \supseteq L$, yielding a lattice
isomorphism
$$ \{ \text{Kernels of} \ F/L \} \rightarrow  \{ \text{Kernels of} \ F \ \text{containing} \ L \}$$
given by $K/L \mapsto K$.
\end{thm}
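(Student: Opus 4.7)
The plan is to follow the standard correspondence-theorem template, using the quotient homomorphism $\pi : F \to F/L$ as the bridge, and to invoke Theorem~\ref{thm_kernels_hom_relations} to transfer $\nu$-kernels in both directions.

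First, I would verify the map is well-defined. Given a $\nu$-kernel $K \supseteq L$, Theorem~\ref{thm_nother_1_and_3}(3) tells us that $K/L$ is a $\nu$-kernel of $F/L$, so the assignment $K \mapsto K/L$ makes sense. In the other direction, given a $\nu$-kernel $\overline{K}$ of $F/L$, Theorem~\ref{thm_kernels_hom_relations}(2) applied to the quotient homomorphism $\pi$ shows that $\pi^{-1}(\overline{K})$ is a $\nu$-kernel of $F$, and since $L = \pi^{-1}(1_{F/L}) \subseteq \pi^{-1}(\overline{K})$, it lies in the target set.

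Next I would check that these two constructions are mutually inverse. For any $\nu$-kernel $K \supseteq L$, we have $\pi^{-1}(K/L) = K$: the inclusion $K \subseteq \pi^{-1}(K/L)$ is clear from $\pi(K) = K/L$, and conversely, if $\pi(a) \in K/L$ then $\pi(a) = \pi(b)$ for some $b \in K$, so $ab^* \in L \subseteq K$, giving $a \in K$. For a $\nu$-kernel $\overline{K}$ of $F/L$, surjectivity of $\pi$ yields $\pi(\pi^{-1}(\overline{K})) = \overline{K}$, which is exactly $\pi^{-1}(\overline{K})/L$. This bijectivity also gives uniqueness of the $K$ representing a given $\overline{K}$.

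Finally I would show the bijection is a lattice isomorphism. Preservation of inclusion in both directions is immediate from the definitions of $\pi$ and $\pi^{-1}$. Intersections are preserved because $\pi^{-1}$ commutes with intersections, and the join (product) of $\nu$-kernels is preserved because $\pi$ is a homomorphism of \vsemifields0 and the product $K_1K_2$ is just the smallest $\nu$-kernel containing $K_1 \cup K_2$ by Remark~\ref{bfacts1}(iv), which is respected by both $\pi$ and $\pi^{-1}$.

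The main obstacle is not really the bijection itself, which is formal, but rather the verification that the $\nu$-structure behaves correctly under preimage and image — specifically, that $\nu$-convexity transfers across $\pi$. This is exactly what the previously quoted Theorem~\ref{thm_kernels_hom_relations} handles for us, so once that is invoked the argument becomes a routine adaptation of the group-theoretic correspondence theorem, with the $\nu$-equivalence $\equiv_\nu$ replacing ordinary equality at each step.
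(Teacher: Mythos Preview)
Your proposal is correct and follows essentially the same approach as the paper: invoke the standard group-theoretic correspondence for subgroups, then use Theorem~\ref{thm_kernels_hom_relations} to ensure that images and preimages under the quotient map preserve the $\nu$-kernel property. The paper's proof is just a two-sentence sketch of exactly this argument, so your version is a faithful (and more detailed) expansion of it.
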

\begin{proof}
From the theory of groups, there is such a bijection for
subgroups. We  note    by Theorem \ref{thm_kernels_hom_relations}
that the homomorphic image and pre-image of a $\nu$-kernel are
$\nu$-kernels.
\end{proof}

\begin{defn}
A \vsemifield0 which contains no $\nu$-kernels but the trivial
ones, $\{1,1^\nu\}$ and itself, is called \textbf{simple}.
\end{defn}

\begin{rem}\label{cor_max_ker_simple_corr}
A $\nu$-kernel $K$ of a  \vsemifield0 $F$ is a maximal
$\nu$-kernel if and only if the \vsemifield0 $F/K$ is simple.
\end{rem}

\subsubsection{Principal $\nu$-kernels}

\begin{defn}\label{defn_generation_of_kernels}
A $\nu$-kernel $K$ is said to be \textbf{finitely generated}  if
$K = \langle S \rangle$ where $S$ is a finite set.  If $K =
\langle a \rangle$ for some $a \in F$, then $K$ is called a
\textbf{principal $\nu$-kernel}.  \end{defn}

 $\PCon(K )$ denotes the set
of principal $\nu$-subkernels  of a $\nu$-kernel $K.$ $\PCon(K )$
turns out to be a lattice, and one of the keys of our
  study.

\begin{lem}\label{lem_generators}\cite[Property 2.3]{Prop_Semifields}
Let $K$ be a $\nu$-kernel  of an idempotent \vsemifield0
$\mathcal{S}$. Then for $a,b \in \mathcal{S}$,
\begin{equation}
|a| \in K \ \ \text{or} \ \ |a|+ b \in K \ \ \Rightarrow a \in K.
\end{equation}
\end{lem}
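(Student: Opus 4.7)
The plan is to observe that both hypotheses reduce to showing $|a| \in K$, after which the conclusion $a \in K$ follows directly from Remark~\ref{bfacts1}(iii). So the real content is the second case: showing that $|a| + b \in K$ already forces $|a| \in K$.

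For that reduction, I would sandwich $|a|$ between two known elements of $K$ and invoke $\nu$-convexity. First, $\fone \in K$ since every $\nu$-kernel is a subgroup containing the identity. Second, $|a| \geq_\nu \fone$; this is Remark~\ref{norm1}(i), which says $|a|^2 \nucong \one^\nu + |a^2| \geq_\nu \one$, and since we are in an idempotent \vsemifield0 this forces $|a| \geq_\nu \fone$. Third, $|a| \leq_\nu |a| + b$, which is immediate from idempotency of the semilattice sum $+ = \vee$. Putting these together gives the chain
\begin{equation*}
\fone \;\leq_\nu\; |a| \;\leq_\nu\; |a| + b,
\end{equation*}
with the two outer elements lying in $K$.

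At this point, $\nu$-convexity of $K$ (Remark~\ref{bfacts1}(ii)) yields $|a| \in K$, and then Remark~\ref{bfacts1}(iii) gives $a \in K$, which handles the second disjunct. The first disjunct, $|a| \in K$, is of course handled directly by Remark~\ref{bfacts1}(iii) and requires no extra argument. There is no real obstacle: the only subtle step is recognizing that $|a|$ always lies above $\fone$ in the $\nu$-order, but this is already encoded in Remark~\ref{norm1}(i). The whole proof is a two-line appeal to the previously established convexity and absolute-value properties of $\nu$-kernels.
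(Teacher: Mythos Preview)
Your argument is correct. The paper does not actually supply a proof of this lemma; it merely cites \cite[Property 2.3]{Prop_Semifields}. Your sandwich argument---$\fone \le_\nu |a| \le_\nu |a|+b$ followed by Remark~\ref{bfacts1}(ii) and then (iii)---is exactly the intended one-line justification, and is consistent with how the paper uses $|a| \ge_\nu \fone$ elsewhere (e.g., Lemma~\ref{rem_sum_prod_absolute_values}).
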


\begin{prop}\label{prop_principal_ker}\cite[Proposition (3.1)]{Prop_Semifields}
\begin{equation}
\langle a \rangle = \{ x \in \mathcal{S} \ : \ \exists n \in
\mathbb{N} \  \ \ \text{such that} \ \ \  a^{-n} \leq x \leq a^{n}
\}.
\end{equation}
\end{prop}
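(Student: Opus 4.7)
The plan is to first reduce to the case $a \geq_\nu 1$, then prove the two inclusions separately. Denote the right-hand side by $T$.

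\textbf{Reduction.} I would first note that $\langle a \rangle = \langle |a| \rangle$. By Remark~\ref{bfacts1}(iii), $|a| \in \langle |a|\rangle$ forces $a \in \langle |a|\rangle$, so $\langle a\rangle \subseteq \langle |a|\rangle$. For the reverse, since $\mathcal S$ is idempotent the scalars $\alpha = \beta = \one$ satisfy $\alpha + \beta \nucong \one$, so $\nu$-convexity of $\langle a\rangle$ applied to $a, a^{-1}\in \langle a\rangle$ gives $|a| = a + a^{-1}\in \langle a\rangle$. By Remark~\ref{norm1}, $|a| \geq_\nu \one$, so we may replace $a$ by $|a|$ and assume henceforth that $a \geq_\nu \one$.

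\textbf{$T \subseteq \langle a \rangle$.} Since $a \in \langle a\rangle$ and $\langle a\rangle$ is a subgroup, each $a^n, a^{-n} \in \langle a\rangle$. For any $x \in T$ with $a^{-n} \leq_\nu x \leq_\nu a^n$, the $\nu$-convexity in Remark~\ref{bfacts1}(ii) yields $x \in \langle a\rangle$.

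\textbf{$\langle a \rangle \subseteq T$.} It suffices to show that $T$ itself is a $\nu$-kernel containing $a$. Obviously $a \in T$ (take $n=1$). For the subgroup property: $\one \in T$ trivially; if $a^{-n}\leq_\nu x\leq_\nu a^n$, then inverting reverses the $\nu$-order and exchanges $a^n$ with $a^{-n}$, so $x^{-1}\in T$; and if $x, y \in T$ are both caught between $a^{\pm n}$ (after taking a common $n$), then $a^{-2n}\leq_\nu xy\leq_\nu a^{2n}$ by the compatibility of $\nu$-order with multiplication. For $\nu$-convexity, let $x, y \in T$ with $a^{-n}\leq_\nu x, y\leq_\nu a^n$ and let $\alpha, \beta \in \mathcal S$ satisfy $\alpha + \beta \nucong \one$. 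Because $\nu$ is a homomorphism into the bipotent ghost group, $\max(\alpha^\nu, \beta^\nu) = \one$, whence both $\alpha, \beta \leq_\nu \one$ and at least one of them (say $\alpha$) satisfies $\alpha \nucong \one$. The upper bound follows since $\alpha x + \beta y \leq_\nu x + y = x \vee y \leq_\nu a^n$; the lower bound follows from $\alpha x + \beta y \geq_\nu \alpha x \nucong x \geq_\nu a^{-n}$.

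\textbf{Main obstacle.} The delicate point is the lower bound in the $\nu$-convexity check: addition in an idempotent (semi)lattice is a join operation and naturally provides upper bounds, not lower ones. The argument relies on the fact that $\alpha + \beta \nucong \one$ forces at least one of $\alpha, \beta$ to have $\nu$-value $\one$, so that multiplying the corresponding $x$ or $y$ by it does not decrease the $\nu$-value. Apart from this observation, every step is a routine translation of the classical lattice-ordered group proof (as in \cite{OrderedGroups3}) into the $\nu$-setting, using $\nu$-bipotence of $\mathcal S$ to align $+$ with $\vee$ up to $\nu$-equivalence.
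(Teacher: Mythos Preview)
Your overall strategy is sound and matches the standard approach: show that the set $T$ on the right is itself a $\nu$-kernel containing $a$, and separately that $T \subseteq \langle a \rangle$ by $\nu$-convexity. The reduction to $|a|$ and the inclusion $T \subseteq \langle a\rangle$ are fine.

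The gap is in your lower bound for the $\nu$-convexity of $T$. You assert that $\alpha + \beta \nucong \one$ forces at least one of $\alpha,\beta$ to be $\nu$-equivalent to $\one$, invoking bipotence of the ghost. But $\mathcal S$ here is typically something like $\overline{F(\Lambda)}$ or $\Fun(S,F)$, whose ghost ideal is only partially ordered and \emph{not} bipotent. Concretely, in $\Fun(\{1,2\},\mathbb R_{\max})$ take $\alpha = (0,-1)$ and $\beta = (-1,0)$: then $\alpha + \beta = (0,0) = \one$ yet neither is $\nu$-equivalent to $\one$. So the step ``$\alpha x + \beta y \geq_\nu \alpha x \nucong x$'' fails. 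Your upper-bound reasoning survives, since $\alpha^\nu \vee \beta^\nu = \one^\nu$ does imply $\alpha,\beta \leq_\nu \one$ by the definition of join; it is only the ``at least one equals $\one$'' part that is false.

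The fix is simpler than your argument: just mirror what you did for the upper bound. From $x,y \geq_\nu a^{-n}$ and monotonicity of multiplication,
\[
\alpha x + \beta y \ \geq_\nu\ \alpha a^{-n} + \beta a^{-n} \ =\ (\alpha+\beta)\,a^{-n}\ \nucong\ a^{-n}.
\]
No bipotence is needed; distributivity and $\alpha+\beta \nucong \one$ suffice.
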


\begin{cor}\label{prop_absolute_generator}\cite{Prop_Semifields}
 $
\langle a \rangle = \langle a^k \rangle, $ for any element $a$ of
$\mathcal{S}$ and any $0 \ne k \in \mathbb Z $.
\end{cor}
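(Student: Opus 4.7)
The plan is to reduce quickly to the positive exponent case and then apply the element-wise characterization of Proposition~\ref{prop_principal_ker}. Since any $\nu$-kernel is a subgroup, we always have $a \in \langle a \rangle$ iff $a^{-1} \in \langle a \rangle$, so $\langle a^{-1}\rangle \subseteq \langle a \rangle$ and, symmetrically, $\langle a \rangle \subseteq \langle a^{-1}\rangle$; thus $\langle a \rangle = \langle a^{-1}\rangle$, and it suffices to treat $k \in \mathbb{N}$. Moreover, by Remark~\ref{bfacts1}(iii) we have $\langle a\rangle = \langle |a|\rangle$, so we may assume from the start that $a \ge_\nu 1$, which makes the inequalities in Proposition~\ref{prop_principal_ker} behave monotonically in the exponent.

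For the inclusion $\langle a^k \rangle \subseteq \langle a \rangle$: the element $a^k$ clearly lies in the $\nu$-kernel $\langle a \rangle$, and $\langle a^k \rangle$ is by definition the smallest $\nu$-kernel containing $a^k$, so the inclusion is immediate.

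For the reverse inclusion $\langle a \rangle \subseteq \langle a^k \rangle$, take any $x \in \langle a \rangle$. By Proposition~\ref{prop_principal_ker} there exists $n \in \mathbb{N}$ with $a^{-n} \leq x \leq a^{n}$. Choose $m \in \mathbb{N}$ large enough that $mk \geq n$; since $a \geq_\nu 1$, we then have $a^{-mk} \leq a^{-n} \leq x \leq a^{n} \leq a^{mk}$, which is precisely the condition from Proposition~\ref{prop_principal_ker} for $x \in \langle a^k \rangle$. Combining both inclusions gives $\langle a \rangle = \langle a^k\rangle$ for positive $k$, and the group argument of the first paragraph then extends the conclusion to every nonzero $k \in \mathbb{Z}$.

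There is no real obstacle here; the only point that requires a moment of care is that Proposition~\ref{prop_principal_ker} is stated with the inequality $a^{-n} \leq x \leq a^{n}$, so we need $a \geq_\nu 1$ in order to compare $a^n$ with $a^{mk}$ monotonically. This is handled cleanly by first replacing $a$ with $|a|$ via Remark~\ref{bfacts1}(iii), after which the exponent argument is purely formal.
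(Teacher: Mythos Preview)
The paper gives no proof here, merely citing \cite{Prop_Semifields}. Your argument once $a \ge_\nu 1$ is in hand is correct, as is the reduction from negative to positive $k$ via $\langle a\rangle = \langle a^{-1}\rangle$. The gap lies in the ``we may assume $a \ge_\nu 1$'' step. Knowing $\langle a\rangle = \langle |a|\rangle$ and then (for $|a|\ge_\nu 1$) $\langle |a|\rangle = \langle |a|^k\rangle$ gives only $\langle a\rangle = \langle |a|^k\rangle$; you still need $\langle |a|^k\rangle = \langle a^k\rangle$ to reach the goal. That does follow from the identity $|a|^k = |a^k|$, valid in any abelian lattice-ordered group and provable by induction from the standard fact $|x|\ge_\nu 1$, but you do not mention it and it is not entirely trivial. (A smaller point: Remark~\ref{bfacts1}(iii) by itself yields only $\langle a\rangle \subseteq \langle |a|\rangle$; the reverse inclusion needs that $\nu$-kernels are sub-\vsemifields0, e.g.\ Remark~\ref{bfacts1}(viii), so that $|a|=a+a^{-1}\in\langle a\rangle$.)

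A shorter route avoids both Proposition~\ref{prop_principal_ker} and the $|a|$-reduction entirely: for $k>0$ the inclusion $\langle a^k\rangle \subseteq \langle a\rangle$ is immediate since $\nu$-kernels are subgroups, while $a\in\langle a^k\rangle$ follows directly from Remark~\ref{bfacts1}(ix) (equivalently, torsion-freeness of $\mathcal{S}/\langle a^k\rangle$ via Lemma~\ref{torfree}), giving $\langle a\rangle\subseteq\langle a^k\rangle$ in one line. This two-step argument is closer in spirit to the paper's presentation of the statement as an immediate corollary.
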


\begin{cor}\label{prop_absolute_generator1}\cite{Prop_Semifields}
$
\langle a \rangle = \langle |a| \rangle,
$
for any element $a$ of    $\mathcal{S}$.
\end{cor}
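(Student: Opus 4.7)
The plan is to prove both inclusions $\langle a \rangle \subseteq \langle |a| \rangle$ and $\langle |a| \rangle \subseteq \langle a \rangle$ using the tools already assembled: the $\nu$-convexity defining a $\nu$-kernel, the $\nu$-convexity consequence in Remark~\ref{bfacts1}(iii), and (if we wish to double-check) Proposition~\ref{prop_principal_ker}.

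For the inclusion $\langle a \rangle \subseteq \langle |a| \rangle$, I would argue as follows. The element $|a|$ lies in $\langle |a| \rangle$ by construction. Since any $\nu$-kernel containing $|a|$ must contain $a$ by Remark~\ref{bfacts1}(iii), we get $a \in \langle |a| \rangle$, and hence $\langle a \rangle \subseteq \langle |a| \rangle$ by minimality of the generated $\nu$-kernel.

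For the reverse inclusion $\langle |a| \rangle \subseteq \langle a \rangle$, the key observation is that $\langle a \rangle$ contains both $a$ and its $*$-inverse $a^*$ (since a $\nu$-kernel is a subgroup under the $(*)$-operation). Now take the convex combination with coefficients $\alpha = \beta = \fone$. Since $\fone + \fone = \fone^{\nu} \nucong \fone$, the $\nu$-convexity clause in Definition~\ref{vsemi24} applies and gives
\begin{equation*}
\fone \cdot a + \fone \cdot a^{*} \;=\; a + a^{*} \;=\; |a| \;\in\; \langle a \rangle.
\end{equation*}
Therefore $\langle |a| \rangle \subseteq \langle a \rangle$, completing the proof.

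Neither step appears to present a serious obstacle; the only point that demands a moment of care is checking that the combination $\fone + \fone$ satisfies the $\nu$-convexity hypothesis $\alpha + \beta \nucong \fone$. This is immediate from the supertropical identity $\fone + \fone = \fone^{\nu}$ and the definition of $\nucong$. As a sanity check, one can alternatively derive the same containment from Proposition~\ref{prop_principal_ker}: after reducing to $a \ge_{\nu} \fone$ via Corollary~\ref{prop_absolute_generator}, one has $|a| \nucong a$, which yields $a^{-1} \le_{\nu} |a| \le_{\nu} a$, and symmetrically $|a|^{-1} \le_{\nu} a \le_{\nu} |a|$, placing each generator inside the other's principal $\nu$-kernel.
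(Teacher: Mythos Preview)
Your main argument is correct and complete: Remark~\ref{bfacts1}(iii) gives $a\in\langle|a|\rangle$, while $\nu$-convexity with $\alpha=\beta=\fone$ (and $\fone+\fone=\fone^\nu\nucong\fone$) gives $|a|=a+a^*\in\langle a\rangle$; this is exactly the kind of direct verification the paper has in mind, though the paper itself omits a proof and simply cites \cite{Prop_Semifields}.

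One small caveat on your ``sanity check'' paragraph: the reduction ``to $a\ge_\nu\fone$ via Corollary~\ref{prop_absolute_generator}'' is not quite legitimate in general. That corollary only tells you $\langle a\rangle=\langle a^{k}\rangle$, and in an arbitrary idempotent $\nu$-\semifield0 (for instance $\FunSF$) an element $a$ need not be $\nu$-comparable with $\fone$ at all, so no power of $a$ will satisfy $a^k\ge_\nu\fone$. Your main proof does not rely on this, so the result stands; just drop or reword the alternative check.
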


A direct consequence of Proposition \ref{prop_principal_ker} and
Corollary \ref{prop_absolute_generator1} is:

\begin{cor}\label{cor_principal_ker_by_order}
For any $a \in \mathcal{S}$,
\begin{align*}
\langle a \rangle = \ & \{x \in \mathcal{S} \ : \ \exists n \in
\mathbb{N} \ \text{such that} \  |a|^{-n} \leq_\nu   x \leq_\nu
|a|^{n} \}.
\end{align*}
\end{cor}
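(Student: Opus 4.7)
The plan is straightforward because, as the excerpt indicates, this is announced as a ``direct consequence'' of the two results immediately preceding it. The whole proof should amount to a one-line substitution.

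First, I would invoke Corollary~\ref{prop_absolute_generator1} to replace the generator: $\langle a \rangle = \langle |a| \rangle$. This shifts the problem to describing the principal $\nu$-kernel generated by $|a|$, which is the right thing to do because $|a| \geq_\nu 1$ by Remark~\ref{norm1}(i), so the bracketing $|a|^{-n} \leq_\nu 1 \leq_\nu |a|^n$ becomes automatic and the interval description is symmetric and clean. Next, I would apply Proposition~\ref{prop_principal_ker} to the element $|a|$ in place of $a$, obtaining
\begin{equation*}
\langle |a| \rangle = \{ x \in \mathcal{S} : \exists n \in \mathbb{N} \text{ such that } |a|^{-n} \leq x \leq |a|^n \}.
\end{equation*}
Combined with the first step, this already gives the asserted description up to the replacement of $\leq$ by $\leq_\nu$.

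The only real content to check is that in the present $\nu$-setting the $\leq$ appearing in Proposition~\ref{prop_principal_ker} agrees with $\leq_\nu$ on bounds of this form. One direction is automatic, since $\leq$ implies $\leq_\nu$. For the reverse, suppose $|a|^{-n} \leq_\nu x \leq_\nu |a|^n$; I would use the $\nu$-convexity of $\langle |a| \rangle$ (Remark~\ref{bfacts1}(ii)) together with the fact that both $|a|^{n}$ and $|a|^{-n} = (|a|^{n})^{*}$ lie in $\langle |a| \rangle$ by Proposition~\ref{prop_principal_ker}, to conclude $x \in \langle |a| \rangle$. Conversely, any $x \in \langle |a| \rangle$ is $\nu$-bounded between $|a|^{\pm n}$ for some $n$ by Proposition~\ref{prop_principal_ker} after passing to $\nu$-values.

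The main (minor) obstacle is the pedantic step of verifying that the two order relations produce the same interval when the generator is taken to be $|a|$; this is really only a matter of using $\nu$-convexity to absorb anything that sits $\nu$-below a member of the kernel and $\nu$-above its inverse. No new machinery is needed beyond Proposition~\ref{prop_principal_ker}, Corollary~\ref{prop_absolute_generator1}, and the $\nu$-convexity recorded in Remark~\ref{bfacts1}(ii).
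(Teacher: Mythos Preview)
Your proposal is correct and follows essentially the same approach as the paper, which simply declares the result a direct consequence of Proposition~\ref{prop_principal_ker} and Corollary~\ref{prop_absolute_generator1}. Your additional care in reconciling $\leq$ with $\leq_\nu$ via the $\nu$-convexity recorded in Remark~\ref{bfacts1}(ii) is more than the paper bothers to spell out, but it is the right justification for that step.
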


\begin{defn}\label{defn_generation_of_kernels1} A \vsemifield0 is said to be
\textbf{finitely generated} if it is finitely generated as a
$\nu$-kernel. If $\mathcal{S} = \langle a \rangle$ for some $a \in
\mathcal{S}$, then $a$ is called a \textbf{generator} of
$\mathcal{S}$ and $\mathcal{S}$ is said to be a \textbf{principal
\vsemifield0}.

\end{defn}

\begin{cor}\label{cor_positive_generator_of_a_semifield}
Every nontrivial principal \vsemifield0\ $\mathcal{S}$   has a
generator $a
> 1$.
\end{cor}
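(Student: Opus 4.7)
The plan is to start from an arbitrary generator $a$ of $\mathcal{S}$ and then replace it by one that is strictly $\nu$-above $1$, using the two identities $\langle a \rangle = \langle a^k \rangle$ (Corollary \ref{prop_absolute_generator}) and $\langle a \rangle = \langle |a| \rangle$ (Corollary \ref{prop_absolute_generator1}) already at our disposal. In a \vsemifield0\ every element satisfies exactly one of $a >_\nu 1$, $a <_\nu 1$, or $a \nucong 1$, so three cases arise.

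The first step is to eliminate the degenerate case $a \nucong 1$. By Remark \ref{norm1}(ii) this forces $|a| \nucong 1$, hence $|a|^n \nucong 1$ for every $n \in \mathbb{N}$. Plugging this into the description
\[
\langle a \rangle = \{ x \in \mathcal{S} \ : \ |a|^{-n} \leq_\nu x \leq_\nu |a|^{n} \text{ for some } n \in \mathbb{N}\}
\]
given by Corollary \ref{cor_principal_ker_by_order} collapses the generated $\nu$-kernel into $\{x : x \nucong 1\}$, which is the trivial $\nu$-kernel $\{1, 1^\nu\}$ in a \vsemifield0. This contradicts the nontriviality of $\mathcal{S}$.

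Having excluded $a \nucong 1$, I would handle the remaining two cases simultaneously by passing to $a' := |a| = a + a^*$. When $a >_\nu 1$ one has $a^* <_\nu 1$, so $|a| = a >_\nu 1$; symmetrically, when $a <_\nu 1$, $|a| = a^* >_\nu 1$. Either way $a' >_\nu 1$, and Corollary \ref{prop_absolute_generator1} yields
\[
\mathcal{S} \ = \ \langle a \rangle \ = \ \langle |a| \rangle \ = \ \langle a' \rangle,
\]
so $a'$ is the desired generator. (If one prefers to avoid $|\cdot|$, Corollary \ref{prop_absolute_generator} with $k=-1$ shows directly that $a^{-1}$ generates $\mathcal{S}$ in the case $a<_\nu 1$.) There is no real obstacle here; the only subtlety is to observe that a generator $\nu$-equivalent to $1$ can only yield the trivial $\nu$-kernel, which is precisely what Corollary \ref{cor_principal_ker_by_order} combined with Remark \ref{norm1}(ii) delivers.
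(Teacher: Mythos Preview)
Your argument is correct, but it proceeds differently from the paper. The paper does not split into cases according to the $\nu$-trichotomy at all: starting from any generator $u \in \mathcal{S} \setminus \{1\}$, it passes directly to $a := |u|^2 = u^2 + u^{-2} + 1$, observing that the explicit summand $1$ forces $a \ge 1$ in the semiring order regardless of how $u$ compares to $1$; and since $a$ still generates the nontrivial $\mathcal{S}$ (by Corollaries~\ref{prop_absolute_generator} and~\ref{prop_absolute_generator1}), one has $a \ne 1$, hence $a > 1$. Your route instead first rules out $a \nucong 1$ by showing $\langle a \rangle$ would collapse to the trivial $\nu$-kernel via Corollary~\ref{cor_principal_ker_by_order}, and in the remaining two cases shows that $|a|$ itself is already $>_\nu 1$, so you never need to square.

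The gain on your side is that $|a|$ suffices rather than $|a|^2$; the cost is that your argument leans on the trichotomy $a >_\nu 1$, $a <_\nu 1$, or $a \nucong 1$, which amounts to $\tG$ being totally ordered. The paper's definition of $\nu$-domain only stipulates a \emph{partial} order on $\tG$, so strictly speaking your trichotomy is an extra hypothesis (though one satisfied in all the standard supertropical examples of Example~\ref{stropi}). The paper's squaring device---essentially Remark~\ref{norm1}(i)---sidesteps this by manufacturing the inequality $a \ge 1$ structurally from the summand $1$, with no comparison between $u$ and $1$ ever required.
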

\begin{proof}
Let $u \in \mathcal{S} \setminus \{1\}$ be a generator of
$\mathcal{S}$. By Lemma \ref{lem_generators}, the element $|u|$ is
also a generator of $\mathcal{S}$ which yields that the element $a
= |u|^2 = u^2 + u^{-2} + 1 \ge 1$ is a generator of $\mathcal{S}$
too, by Proposition~\ref{prop_principal_ker}. But $a \ne 1,$ so $a
>1.$
\end{proof}

\begin{thm}\label{thm_semi_with_a_gen}
If an idempotent \vsemifield0\ $\mathcal{S}$ has a finite number
of generators $a_1, \dots, a_n$, then $\mathcal{S}$ is a principal
\vsemifield0, generated by $a  = |a_1|   + \dots + |a_n| .$
\end{thm}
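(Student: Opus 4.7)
The plan is to observe that this theorem is essentially an immediate reformulation of Remark~\ref{bfacts1}(vi), which asserts that any $\nu$-kernel generated by a finite set $\{s_1,\dots,s_m\}$ is in fact generated by the single element $\sum_{i=1}^m |s_i|$. Taking $s_i = a_i$ and $m = n$ yields the statement directly. However, since we are told this fact follows easily from Remark~\ref{bfacts1}(iii), it is worth writing out the verification, because the two containments use different pieces of the machinery already set up.

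Set $K = \langle a \rangle$ where $a = |a_1| + \cdots + |a_n|$. For the inclusion $K \subseteq \mathcal{S}$, note simply that each $|a_i| \in \mathcal{S}$ (as $a_i \in \mathcal{S}$ and $\mathcal{S}$ is closed under the $(*)$-operation and under addition), hence $a \in \mathcal{S}$ and so $\langle a\rangle \subseteq \mathcal{S}$.

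For the reverse inclusion $\mathcal{S} \subseteq K$, I will show that each generator $a_i$ lies in $K$. First, by Remark~\ref{norm1}(i), $|a_j|^2 \geq_\nu \one$ for every $j$, so $|a_j| \geq_\nu \one$, and the additive bipotence of $\nu$-values gives $\one \leq_\nu |a_i| \leq_\nu a$. Since $\one, a \in K$, the $\nu$-convexity of $K$ (Remark~\ref{bfacts1}(ii)) forces $|a_i| \in K$. Applying Remark~\ref{bfacts1}(iii) then yields $a_i \in K$. Hence $\{a_1,\dots,a_n\} \subseteq K$, and since $\mathcal{S}$ is generated by these elements, $\mathcal{S} \subseteq K$. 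Combining the two inclusions gives $\mathcal{S} = \langle a\rangle$, so $\mathcal{S}$ is principal.

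There is no real obstacle here: the content of the argument is packaged in Remark~\ref{bfacts1}, whose parts (ii) and (iii) together translate membership of $a$ into membership of each $|a_i|$ and then of each $a_i$. The only point to be careful about is that we are working in the $\nu$-setting, where comparisons are made through $\nu$-values rather than through an ambient total order, so one should invoke Remark~\ref{norm1}(i) to justify $|a_j| \geq_\nu \one$ rather than claim $|a_j| \geq \one$ outright.
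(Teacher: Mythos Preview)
Your proof is correct and essentially identical to the paper's, which simply cites Lemma~\ref{lem_generators} (whose clause $|a|+b \in K \Rightarrow a \in K$ is exactly your convexity step combined with Remark~\ref{bfacts1}(iii)). One minor point: Remark~\ref{norm1}(i) only yields $|a_j|^2 \geq_\nu \one$, not $|a_j| \geq_\nu \one$ directly, though the latter is a standard $\ell$-group fact used freely throughout the paper.
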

\begin{proof}
 By
Lemma~\ref{lem_generators}, $a_1,...,a_n$ are contained in the
$\nu$-kernel $\langle a \rangle \subseteq \mathcal{S}$; hence,
$\mathcal{S} = \langle a \rangle$ as desired.
\end{proof}

%

\begin{prop}\label{rem_order_simple}
Every   idempotent $\nu$-archimedean \vsemifield0\ $\mathcal{S}$
is simple.
\end{prop}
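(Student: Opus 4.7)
The plan is to show that any nontrivial $\nu$-kernel $K$ of $\mathcal{S}$ must equal $\mathcal{S}$, by exhibiting an element of $K$ that generates all of $\mathcal{S}$.

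First I would pick any $a \in K$ with $a \not\nucong 1$, and replace it by its $\nu$-norm $|a| = a + a^*$. By Remark~\ref{bfacts1}(iii) and Corollary~\ref{prop_absolute_generator1}, $|a| \in K$ and $\langle a \rangle = \langle |a| \rangle$. Moreover Remark~\ref{norm1}(i) gives $|a| \ge_\nu 1$, and Remark~\ref{norm1}(ii) together with $a \not\nucong 1$ gives $|a| \not\nucong 1$, so in fact $|a| >_\nu 1$.

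Next, fix an arbitrary $b \in \mathcal{S}$; the goal is to show $b \in \langle |a| \rangle$. Using Corollary~\ref{cor_principal_ker_by_order}, it suffices to find $n \in \mathbb{N}$ with $|a|^{-n} \le_\nu b \le_\nu |a|^n$. Since $\nu$-values in $\mathcal{S}$ are totally ordered (as $\tG$ is an ordered group), the contrapositive of the $\nu$-archimedean hypothesis applied to $|a| \not\nucong 1$ and $|b|$ yields some $n \in \mathbb{Z}$ such that $|a|^n >_\nu |b|$; since $|a| >_\nu 1$, we may take $n$ positive. Now $|b| = b + b^* \ge_\nu b$ and $|b| \ge_\nu b^*$, so $b \le_\nu |a|^n$ and $b^* \le_\nu |a|^n$, the latter giving $|a|^{-n} \le_\nu b$. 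Hence $b \in \langle |a| \rangle \subseteq K$.

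Since $b$ was arbitrary, $\mathcal{S} \subseteq K$, so $K = \mathcal{S}$, proving simplicity. The main (very mild) obstacle is making sure the $\nu$-archimedean condition, phrased over all of $\mathbb{Z}$, correctly yields a single positive $n$ with $|a|^n >_\nu |b|$; this just uses that $|a| >_\nu 1$ makes the sequence $(|a|^k)_{k \ge 0}$ strictly $\nu$-increasing, so the failure of $|a|^{\mathbb{Z}} \le_\nu |b|$ must occur at a positive exponent.
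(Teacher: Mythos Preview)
Your proof is correct and follows essentially the same approach as the paper's: pick an element with $\nu$-value strictly above $1$ and use the $\nu$-archimedean condition together with Corollary~\ref{cor_principal_ker_by_order} to show it generates all of~$\mathcal{S}$. You are simply more explicit than the paper about starting from an element of the given nontrivial $\nu$-kernel $K$ and passing to $|a|$ to ensure $|a| >_\nu 1$; the paper's proof leaves this step implicit. One minor citation point: Remark~\ref{bfacts1}(iii) goes the wrong direction for your purposes, but you don't actually need it---Corollary~\ref{prop_absolute_generator1} already gives $|a| \in \langle a \rangle \subseteq K$.
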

\begin{proof}
We may assume that $\mathcal{S} \neq \{ 1 \}$. Take $a \in
\mathcal{S}$ such that $a > 1$.   Since $\mathcal{S}$ is
$\nu$-archimedean, for every $b \in \mathcal{S}$ there exists $m
\in \mathbb{N}$ such that $a^{-m} \leq _\nu b \leq_\nu a^{m}$, so
$b \in \langle a \rangle$ by Proposition \ref{prop_principal_ker}.
Thus $\langle a \rangle = \mathcal{S}$ and our claim is proved.
\end{proof}

 \begin{nota} By Proposition~\ref{rem_order_simple}, $\mathcal{S} =
 \langle \alpha \rangle$ for each $\alpha \ne \{1\}$.

 The $\nu$-kernel $\langle \mathcal{S} \rangle$ in $\mathcal{S}(\Lambda)$ is
 somewhat bigger, containing all functions $\a f + \beta g$ where
 $f+g $ is the constant function $1$. This has a significant role,
 discussed in \S\ref{subsection:bounded_kernel} below, where we describe $\mathcal{S}(\Lambda)$
 more explicitly.
\end{nota}

\begin{prop}\label{prop_maximal_kernels_in_semifield_of_fractions_part1}
For any $\gamma_1,....,\gamma_n \in \mathcal{S}$ the $\nu$-kernel
$\langle \frac{\la_1}{\gamma_1},...,\frac{\la_n}{\gamma_n}
\rangle$ is a maximal $\nu$-kernel of $\mathcal{S}(\Lambda)$.
\end{prop}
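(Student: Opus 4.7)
The plan is to exhibit $\mathcal{S}(\Lambda)/\langle \lambda_1/\gamma_1,\dots,\lambda_n/\gamma_n\rangle \cong \mathcal{S}$ and then invoke Remark~\ref{cor_max_ker_simple_corr} together with the simplicity of $\mathcal{S}$ (which follows from Proposition~\ref{rem_order_simple} in the intended $\nu$-archimedean setting, e.g.\ $\mathcal{S}=\mathscr{R}$). First I would define the substitution homomorphism $\phi : \mathcal{S}(\Lambda) \to \mathcal{S}$ sending $\lambda_i \mapsto \gamma_i$; this extends canonically to rational functions because each $\gamma_i$ is invertible in the \vsemifield0 $\mathcal{S}$. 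By Theorem~\ref{thm_kernels_hom_relations} the preimage $K := \phi^{-1}(1)$ is a $\nu$-kernel of $\mathcal{S}(\Lambda)$.

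The central step is to show $K = K_0 := \langle \lambda_1/\gamma_1,\dots,\lambda_n/\gamma_n\rangle$. The inclusion $K_0 \subseteq K$ is immediate since $\phi(\lambda_i/\gamma_i) = 1$ and $K$ is a $\nu$-kernel. For the reverse inclusion I would pass to the quotient $\mathcal{S}(\Lambda)/K_0$: there each generator $\lambda_i/\gamma_i \equiv_\nu 1$, hence $\lambda_i \equiv_\nu \gamma_i$. Because $\nu$-congruence is preserved by both addition and multiplication, every polynomial satisfies $h(\lambda) \equiv_\nu h(\gamma) = \phi(h)$, and therefore every rational function $f = h/g$ satisfies $f \equiv_\nu \phi(f)$ modulo $K_0$. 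If $f \in K$, then $\phi(f)=1$, so $f \equiv_\nu 1$ modulo $K_0$, and since any $\nu$-kernel contains $\onenu$ (it arises as $1+1$ with $1+1 \nucong 1$, by $\nu$-convexity applied to $\alpha=\beta=1$), this gives $f \in K_0$.

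Combining these, the induced map $\mathcal{S}(\Lambda)/K \to \mathcal{S}$ is an isomorphism (by the factorization in Remark~\ref{cong10}(ii) and surjectivity of $\phi$). Hence by Remark~\ref{cor_max_ker_simple_corr}, $K$ is maximal iff $\mathcal{S}$ is simple, and simplicity is delivered by Proposition~\ref{rem_order_simple} under the $\nu$-archimedean hypothesis carried throughout this section.

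The main obstacle is the direction $K \subseteq K_0$: one must verify that identifying $\lambda_i$ with $\gamma_i$ in the quotient genuinely collapses every rational expression $h/g$ to its image $\phi(h)/\phi(g) \in \mathcal{S}$ modulo $K_0$, which relies on the fact that $\equiv_\nu$ in $\mathcal{S}(\Lambda)/K_0$ respects both semifield operations \emph{and} that ghost images such as $\onenu$ are automatically absorbed into the identity class of a $\nu$-kernel. Everything else is a straightforward application of the isomorphism-theorem machinery of \S\ref{ker0}.
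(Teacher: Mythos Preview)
Your approach is correct and is essentially the same as the paper's: the paper's proof is the one-liner ``The quotient is isomorphic to $\mathcal{S}$, which is simple,'' and you have simply unpacked this by exhibiting the substitution homomorphism $\phi:\lambda_i\mapsto\gamma_i$, identifying its $\nu$-kernel with $\langle\lambda_1/\gamma_1,\dots,\lambda_n/\gamma_n\rangle$, and then invoking Proposition~\ref{rem_order_simple} and Remark~\ref{cor_max_ker_simple_corr}. The extra care you take with the inclusion $K\subseteq K_0$ and the absorption of $\onenu$ is appropriate detail that the paper leaves implicit.
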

\begin{proof} The quotient is isomorphic to $\mathcal{S}$, which is simple.
\end{proof}

Here are more properties of $\nu$-kernels  of an idempotent
\vsemifield0\ and their generators.

\begin{prop}\label{prop_algebra_of_generators_of_kernels}\cite[Theorem 2.2.4(d)]{OrderedGroups3}
For any $V,W \subset \mathcal{S}$, any $\nu$-kernel  $K$ of
$\mathcal{S} $ and $f,g \in \mathcal{S}$ the following statements
hold:
\begin{enumerate}\eroman
  \item $\langle V \rangle   \langle W \rangle = \langle V \cup W \rangle = \langle \{|x| \dotplusss  |y| : x \in V, y \in W \} \rangle$.
  \item $\langle V \rangle \cap \langle W \rangle = \langle \{|x| \wedge |y| : x \in V,\ y \in W \} \rangle$.
  \item $\langle K, f \rangle \cap \langle K ,g \rangle  = K  \langle |f| \wedge |g| \rangle$.
\end{enumerate}
\end{prop}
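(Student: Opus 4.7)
The plan is to handle the three parts in the order given, since (iii) will follow quickly from (i), (ii), and the distributivity of the $\nu$-kernel lattice (Proposition~\ref{lem_kernels_algebra}). Throughout, I would exploit Remark~\ref{bfacts1}(vi)--(vii), Proposition~\ref{prop_principal_ker}, and Corollary~\ref{cor_principal_ker_by_order}, which together reduce questions about generation to $\nu$-order estimates involving $|\cdot|$.

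For (i), the first equality $\langle V \rangle \langle W \rangle = \langle V \cup W \rangle$ is immediate from Remark~\ref{bfacts1}(iv), since both sides are the smallest $\nu$-kernel containing $V \cup W$. For the second equality, each $|x| + |y|$ with $x \in V$, $y \in W$ lies in $\langle V \cup W \rangle$, so one inclusion is clear; conversely, fixing any $y_0 \in W$ and using $|x| \leq_\nu |x| + |y_0|$ together with Proposition~\ref{prop_principal_ker} and Corollary~\ref{prop_absolute_generator1}, one gets $x \in \langle |x| + |y_0| \rangle$, so $V \subseteq \langle \{|x|+|y|\} \rangle$, and symmetrically $W$.

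For (ii), which I expect to be the main technical obstacle, the easy inclusion uses that $1 \leq_\nu |x| \wedge |y| \leq_\nu |x|$ (by Remark~\ref{norm1}), so $|x| \wedge |y| \in \langle |x| \rangle \subseteq \langle V \rangle$ by Proposition~\ref{prop_principal_ker}, and analogously for $\langle W \rangle$. For the reverse inclusion, take $z \in \langle V \rangle \cap \langle W \rangle$; by Remark~\ref{bfacts1}(vi) and Corollary~\ref{cor_principal_ker_by_order} there exist finite subsets $\{x_i\} \subseteq V$, $\{y_j\} \subseteq W$ and $n \in \mathbb N$ with $|z| \leq_\nu s^n$ and $|z| \leq_\nu t^n$, where $s = \sum_i |x_i|$, $t = \sum_j |y_j|$. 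Hence $|z| \leq_\nu s^n \wedge t^n$, and the crux is to rewrite this meet in terms of generators $|x_i| \wedge |y_j|$. Using the $\nu$-Frobenius identity (available in the idempotent setting, since $+ = \vee$ is $\nu$-bipotent in character), we get $s^n \nucong \bigvee_i |x_i|^n$ and $t^n \nucong \bigvee_j |y_j|^n$; then distributivity (Proposition~\ref{lem_kernels_algebra}) yields
\[
s^n \wedge t^n \nucong \bigvee_{i,j} \bigl(|x_i|^n \wedge |y_j|^n\bigr) \nucong \bigvee_{i,j} (|x_i| \wedge |y_j|)^n,
\]
where the last step requires verifying the $\nu$-identity $(a \wedge b)^n \nucong a^n \wedge b^n$, obtained by applying $(\ast)$ and $\nu$-Frobenius to $(a^* + b^*)^n$ via~\eqref{starry}. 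This places $|z|$ below the $n$th power of an element in the $\nu$-kernel generated by the $|x_i| \wedge |y_j|$'s, giving $z \in \langle \{|x| \wedge |y|\} \rangle$ by Corollary~\ref{cor_principal_ker_by_order}.

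Finally, (iii) drops out by combining (i) with the infinite distributive law. Writing $\langle K,f \rangle = K \langle f \rangle$ and $\langle K,g \rangle = K\langle g \rangle$ from (i), distributivity of the $\nu$-kernel lattice gives
\[
\langle K,f \rangle \cap \langle K,g \rangle = (K \langle f \rangle) \cap (K \langle g \rangle) = K\bigl(\langle f \rangle \cap \langle g \rangle\bigr),
\]
and $\langle f \rangle \cap \langle g \rangle = \langle |f| \wedge |g| \rangle$ by the principal case of (ii). The only step needing a brief check is the distributivity step, which is precisely Proposition~\ref{lem_kernels_algebra}, so no new ingredients beyond those already assembled are required.
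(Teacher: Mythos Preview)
The paper does not give its own proof of this proposition; it is quoted directly from Steinberg~\cite[Theorem~2.2.4(d)]{OrderedGroups3}, so there is nothing to compare against. Your argument is correct and follows the natural route through the principal case, distributivity, and the order characterization of $\langle a \rangle$.

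One point to tighten: the justification ``$\nu$-Frobenius \ldots\ since $+ = \vee$ is $\nu$-bipotent in character'' for $(\bigvee_i |x_i|)^n \nucong \bigvee_i |x_i|^n$ is imprecise, because idempotent does not imply bipotent. The identity does hold in any (abelian) $\ell$-group, but you should either justify it---e.g.\ via the Riesz inequality $a \wedge bc \le (a\wedge b)(a\wedge c)$ for $a,b,c \ge 1$, which gives $(s\wedge t)^n = s^n \wedge t^n$, and then duality---or bypass it entirely. For the latter, note that you only need the $\wedge$-version: from $|z| \le_\nu s^n$ and $|z| \le_\nu t^n$ you get $|z| \le_\nu s^n \wedge t^n = (s\wedge t)^n$, and then element-level distributivity in the $\ell$-group (every $\ell$-group is a distributive lattice) gives
\[
s \wedge t \;=\; \Bigl(\bigvee_i |x_i|\Bigr) \wedge \Bigl(\bigvee_j |y_j|\Bigr) \;=\; \bigvee_{i,j}\bigl(|x_i|\wedge|y_j|\bigr),
\]
which already lies in $\langle\{|x|\wedge|y|\}\rangle$. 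This avoids the $\vee$-Frobenius step altogether.
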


\begin{lem}\label{rem_sum_prod_absolute_values}
\begin{equation}\label{eq_sum_prod_absolute_values}
|g||h| \in K \ \ \Leftrightarrow \ \ |g| \dotplusss  |h| \in K.
\end{equation}
for any $\nu$-kernel $K$ of an idempotent \vsemifield0.
\end{lem}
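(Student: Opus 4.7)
The plan is to establish the chain of $\nu$-inequalities
$$\one \;\leq_\nu\; |g|+|h| \;\leq_\nu\; |g||h| \;\leq_\nu\; (|g|+|h|)^2,$$
and then read off both directions of the equivalence from the $\nu$-convexity of $K$ (Remark~\ref{bfacts1}(ii)). Equivalently, this shows $\langle |g|+|h|\rangle = \langle |g||h|\rangle$ by Corollary~\ref{cor_principal_ker_by_order}, so an element lies in $K$ iff the common principal $\nu$-kernel it generates is contained in $K$.

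First I would verify the two nontrivial inequalities. Remark~\ref{norm1} yields $|g|,|h| \geq_\nu \one$, whence $|g| = |g|\cdot\one \leq_\nu |g||h|$ and similarly $|h| \leq_\nu |g||h|$. Using that $a,b \leq_\nu c$ forces $a+b \leq_\nu c$ in the $\nu$-bipotent setting (since $a+b \in \{a,b,a^\nu\}$ and $a^\nu \leq_\nu c$ whenever $a\leq_\nu c$), one obtains $|g|+|h| \leq_\nu |g||h|$. For the upper bound I would just distribute:
$$(|g|+|h|)^2 \;=\; |g|^2 \dotplusss |g||h| \dotplusss |h||g| \dotplusss |h|^2 \;\geq_\nu\; |g||h|.$$

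With this chain in hand, each direction of the lemma is immediate. If $|g||h| \in K$, then $\one$ and $|g||h|$ lie in $K$ and sandwich $|g|+|h|$ in $\leq_\nu$, so $|g|+|h| \in K$ by $\nu$-convexity. Conversely, if $|g|+|h|\in K$, then $(|g|+|h|)^2 \in K$ since $K$ is a subgroup, and $|g||h|$ is sandwiched between these two elements of $K$, giving $|g||h|\in K$.

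No serious obstacle is anticipated. The only point that deserves attention is the implication $a,b\leq_\nu c \Rightarrow a+b\leq_\nu c$, which is precisely where the idempotent/$\nu$-bipotent structure of the ambient \vsemifield0 enters; it is routine once the $\nu$-description of addition is applied.
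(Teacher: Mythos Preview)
Your proof is correct and follows essentially the same approach as the paper's: both establish $|g|+|h| \leq_\nu |g||h| \leq_\nu (|g|+|h|)^2$ via $|g|,|h|\geq_\nu 1$ and expansion of the square, then conclude (you via the $\nu$-convexity sandwich, the paper via the equivalent statement $\langle |g||h|\rangle = \langle |g|+|h|\rangle$ from Corollary~\ref{cor_principal_ker_by_order}). One small quibble: the implication $a,b\leq_\nu c \Rightarrow a+b\leq_\nu c$ follows already from idempotence (so that $a+b = a\vee b$), and you need not invoke $\nu$-bipotence, which in general fails for function semifields like $\overline{F(\Lambda)}$.
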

\begin{proof}
  $|g| \leq |g||h|$
and $|h| \leq |g||h|$  since $|g|, |h| \geq 1$, and thus
$$|g|+  |h| = \sup(|g|,|h|) \leq |g||h|.$$

On the other hand,
$$(|g| \dotplusss  |h|)^2 = |g|^2 \dotplusss  |g||h| \dotplusss  |h|^2 \geq |g||h|,$$
so, by Corollary \ref{cor_principal_ker_by_order}, $\langle |g||h|
\rangle = \langle |g| + |h| \rangle$ and
\eqref{eq_sum_prod_absolute_values} follows.
\end{proof}

Thus, we have the following fact, essentially garnered from
\cite{OrderedGroups3}.

\begin{thm}\label{cor_max_semifield_principal_kernels_operations}
Let $\mathcal{S}$ be an idempotent \vsemifield0. Then the
intersection and product of two principal $\nu$-kernels are
principal $\nu$-kernels. Namely, for every $f,g \in \mathcal{S}$
\begin{equation}\label{eq_max_principal_operations}
\langle f \rangle \cap \langle g \rangle = \langle  |f|   \wedge
 |g|  \rangle \ ; \ \langle f \rangle   \langle g \rangle =
\langle  |f|  |g|  \rangle.
\end{equation}
\end{thm}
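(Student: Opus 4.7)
My plan is to deduce the theorem directly from two already-proved results: Proposition~\ref{prop_algebra_of_generators_of_kernels} applied to singleton sets $V=\{f\}$ and $W=\{g\}$, and Lemma~\ref{rem_sum_prod_absolute_values}, which lets one interchange sums and products of absolute values in generating sets. Since both identities sought are symmetric statements about the $\nu$-kernels generated by a single element, the strategy is simply to unravel the singleton case of the more general proposition and then convert $|f|+|g|$ to $|f||g|$ in the product part.

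For the intersection, Proposition~\ref{prop_algebra_of_generators_of_kernels}(ii) states that
\[
\langle V \rangle \cap \langle W \rangle = \langle \{|x| \wedge |y| : x \in V,\, y \in W\}\rangle.
\]
Specializing to $V=\{f\}$ and $W=\{g\}$, the generating set on the right collapses to the single element $|f|\wedge |g|$, yielding $\langle f\rangle \cap \langle g\rangle = \langle |f|\wedge |g|\rangle$ at once.

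For the product, Proposition~\ref{prop_algebra_of_generators_of_kernels}(i) gives, in the same specialization, $\langle f\rangle\langle g\rangle = \langle |f|+|g|\rangle$. It remains to identify $\langle |f|+|g|\rangle$ with $\langle |f||g|\rangle$. This is immediate from Lemma~\ref{rem_sum_prod_absolute_values}: taking $K = \langle |f||g|\rangle$ in that lemma shows $|f|+|g|\in \langle |f||g|\rangle$, so $\langle |f|+|g|\rangle \subseteq \langle |f||g|\rangle$; taking $K = \langle |f|+|g|\rangle$ shows the reverse inclusion. Equality of the two principal $\nu$-kernels then gives $\langle f\rangle\langle g\rangle = \langle |f||g|\rangle$.

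There is no real obstacle here beyond correctly invoking the two preceding results; the theorem is essentially a bookkeeping corollary that packages the general statements of Proposition~\ref{prop_algebra_of_generators_of_kernels} in the especially useful principal case, with the sum-versus-product bridge supplied by Lemma~\ref{rem_sum_prod_absolute_values}.
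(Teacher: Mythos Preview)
Your proof is correct and follows essentially the same approach as the paper: specialize Proposition~\ref{prop_algebra_of_generators_of_kernels} to singletons $V=\{f\}$, $W=\{g\}$ for both identities, then invoke Lemma~\ref{rem_sum_prod_absolute_values} to pass from $\langle |f|+|g|\rangle$ to $\langle |f||g|\rangle$. Your version is in fact slightly more explicit than the paper's in spelling out both directions of the inclusion when applying the lemma.
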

\begin{proof}
Taking $V = \{f\}$ and $W = \{g\}$ in Proposition
\ref{prop_algebra_of_generators_of_kernels} yields the first
equality, whereas for the second equality,
Lemma~\ref{rem_sum_prod_absolute_values}  yields
$$\langle f \rangle \langle g \rangle = \langle  |f|  \dotplusss  |g|  \rangle,$$
 and again we apply the proposition.
\end{proof}

\begin{cor}\label{cor_principal_kernels_sublattice}
The set of principal $\nu$-kernels of an idempotent \vsemifield0\
forms a sublattice of the lattice of $\nu$-kernels (i.e., a
lattice with respect to intersection and  multiplication).
\end{cor}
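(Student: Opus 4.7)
The statement is an immediate structural consequence of the preceding Theorem~\ref{cor_max_semifield_principal_kernels_operations}, so my plan is largely to package that theorem together with the lattice structure already established on the full collection of $\nu$-kernels. First I would invoke Proposition~\ref{lem_kernels_algebra}, which says that the $\nu$-kernels of an idempotent \vsemifield0\ form a complete distributive lattice; here the meet operation is intersection and the join operation is the product $K_1 K_2$ (which by Remark~\ref{bfacts1}(iv) is the smallest $\nu$-kernel containing $K_1 \cup K_2$). Thus to show that the principal $\nu$-kernels form a sublattice, it suffices to verify that both operations $\cap$ and $\cdot$ remain within the class of principal $\nu$-kernels.

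The closure under both operations is exactly the content of Theorem~\ref{cor_max_semifield_principal_kernels_operations}: for any $f,g \in \mathcal{S}$,
\[
\langle f\rangle \cap \langle g\rangle \;=\; \langle\,|f|\wedge|g|\,\rangle, \qquad \langle f\rangle\,\langle g\rangle \;=\; \langle\,|f|\,|g|\,\rangle,
\]
and by definition both of these are again principal $\nu$-kernels, generated by a single element of $\mathcal{S}$. Hence the two ambient lattice operations restrict to operations on the subset of principal $\nu$-kernels.

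The only thing left to check is that these restricted operations still satisfy the lattice axioms (associativity, commutativity, absorption), but this is automatic: they are the restrictions of the lattice operations on the larger lattice of $\nu$-kernels, and a subset of a lattice closed under the two lattice operations is itself a sublattice. There is no real obstacle here; the work has already been done in proving Theorem~\ref{cor_max_semifield_principal_kernels_operations}, and the corollary is essentially a bookkeeping remark recording that principality is preserved by $\cap$ and $\cdot$.
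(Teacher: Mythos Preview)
Your proposal is correct and matches the paper's approach exactly: the corollary is stated there without a separate proof, as an immediate consequence of Theorem~\ref{cor_max_semifield_principal_kernels_operations}, and your write-up simply makes explicit the obvious unpacking (closure of principal $\nu$-kernels under $\cap$ and $\cdot$, within the ambient lattice from Proposition~\ref{lem_kernels_algebra}).
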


\begin{cor}\label{prop_frac_semifield_finitely_gen}
For any generator $a$ of a \vsemifield0 $F$,
$\overline{F(\Lambda)} = \langle a \rangle \prod_{i=1}^{n} \langle
\la_i \rangle$,  and $\overline{F(\Lambda)}$ is a principal
\vsemifield0\ with generator $\sum_{i=1}^n|\la_{i}|+ |a|$.
\end{cor}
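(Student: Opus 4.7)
The plan is to verify the displayed equality as $\nu$-kernels of $\overline{F(\Lambda)}$ and then invoke Theorem~\ref{thm_semi_with_a_gen} for principality.

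First, applying Proposition~\ref{prop_algebra_of_generators_of_kernels}(i) repeatedly gives
$$\langle a \rangle \prod_{i=1}^{n} \langle \la_i \rangle \;=\; \langle \{a, \la_1, \dots, \la_n\} \rangle$$
inside $\overline{F(\Lambda)}$. So it suffices to prove that $K := \langle \{a, \la_1, \dots, \la_n\} \rangle$ is all of $\overline{F(\Lambda)}$; the reverse inclusion is automatic.

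Next, I would show $F \subseteq \langle a \rangle \subseteq K$, interpreting $\langle a \rangle$ inside $\overline{F(\Lambda)}$. Since $a$ generates $F$ as a \vsemifield0, Proposition~\ref{prop_principal_ker} applied in $F$ gives, for each $c \in F$, an integer $n$ with $a^{-n} \le c \le a^n$; the same inequalities place $c$ inside the principal $\nu$-kernel $\langle a \rangle$ of $\overline{F(\Lambda)}$ by another application of Proposition~\ref{prop_principal_ker}. Consequently each monomial $c\,\la_1^{i_1}\cdots \la_n^{i_n}$ with $c\in F$ lies in $K$ (as $K$ is a multiplicative subgroup containing each $\la_i$ and each constant). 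The $\nu$-convexity axiom applied with $\alpha = \beta = 1$ --- where $1+1 = 1^\nu \nucong 1$ by the \vdomain0 axiom asserting $a+b = \nu(a)$ when $\nu(a) = \nu(b)$ --- shows that $K$ is closed under addition, so every polynomial in $\overline{F[\Lambda]}$ lies in $K$. Finally any element of $\overline{F(\Lambda)} = \Frac_\nu \overline{F[\Lambda]}$ is of the form $h/g$ with $h,g \in \overline{F[\Lambda]} \subseteq K$, and $h/g \in K$ since $K$ is a subgroup. This gives the desired equality.

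With $\{a, \la_1, \dots, \la_n\}$ exhibited as a finite generating set for $\overline{F(\Lambda)}$ as a $\nu$-kernel, Theorem~\ref{thm_semi_with_a_gen} delivers the second assertion: $\overline{F(\Lambda)}$ is principal with generator $|a| + \sum_{i=1}^{n} |\la_i|$. The only mildly subtle point is the transition from ``$a$ generates the \vsemifield0 $F$'' to ``$F \subseteq \langle a \rangle$ inside the larger \vsemifield0 $\overline{F(\Lambda)}$''; the uniform order-theoretic characterization of principal $\nu$-kernels in Proposition~\ref{prop_principal_ker} makes this essentially automatic, so I do not foresee any substantive obstacle.
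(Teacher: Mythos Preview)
Your proof is correct and follows essentially the same route as the paper's: both arguments reduce to showing that every monomial lies in the $\nu$-kernel $\langle a,\la_1,\dots,\la_n\rangle$ and then appeal to Theorem~\ref{thm_semi_with_a_gen}. The paper's version is terser---it cites Corollary~\ref{cor_principal_kernels_sublattice} and leaves the passage from monomials to arbitrary elements of $\overline{F(\Lambda)}$ implicit---whereas you spell out the closure under addition via $\nu$-convexity with $\alpha=\beta=1$ and the passage to fractions explicitly; this added detail is sound and arguably clarifies a step the paper glosses over.
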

\begin{proof}
By Corollary \ref{cor_principal_kernels_sublattice} it is enough
to prove that any monomial $f$ in $\overline{F[\Lambda]}$ belongs
to $\langle \sum_{i=1}^n|\la_{i}|+ |a| \rangle,$ which follows
from Theorem~\ref{thm_semi_with_a_gen}.
\end{proof}

 \subsection{Digression: Fractional $\nu$-kernels}$ $

Having a way of transferring information to and from \semifields0,
let us introduce fractional  $\nu$-kernels, to be able to pass to
$\nu$-\domains0. This should be a useful tool in the further study
of $\nu$-kernels in tropical mathematics.

%

\begin{defn}  A \textbf{fractional $\nu$-kernel} $K$ on a
$\nu$-\domain0 $R$  is a $\nu$-kernel on the \vsemifield0 $\Frac
_\nu R$.
\end{defn}

\begin{rem}
Extending Remark~\ref{bfacts1}, given a $\nu$-congruence $\Cong$
on a $\nu$-\domain0 $R$ with \vsemifield0 of fractions~$F$, we can
define the fractional $\nu$-kernel $K_\Cong = \left\{ ab^*\in F :
a,b \in R \text{ and } a \equiv_\nu b\right\}$. (Or, in other
words, extending $\Cong$ to $F$ as in Lemma~\ref{ext}, we require
that $ ab^* \equiv_\nu  \fone$.)
\end{rem}

A $\nu$-congruence $\Cong$ on $R$ is called
$\nu$-\textbf{cancellative}
 if $a_1 b  \equiv_\nu   a_2 b$ for some  $b\in R$ implies $a_1 \equiv_\nu  a_2$.
\begin{thm} There is a 1:1 correspondence between $\nu$-cancellative
congruences of a $\nu$-\domain0 $R$ and  fractional $\nu$-kernels,
given by $\Cong \to K_\Cong.$ Any  homomorphism of $\nu$-\domains0
$R \to R'$ (for $R'$ an arbitrary $\nu$-\domain0) gives rise to a
fractional $\nu$-kernel, and $\Cong$ is the congruence corresponding
to $R \mapsto R/K$.
\end{thm}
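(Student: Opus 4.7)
The plan is to build the correspondence in two stages, using the already--established bijection for \vsemifields0 as a bridge. First I would invoke Proposition~\ref{basicprop} together with Remark~\ref{bfacts1}(i) for the \vsemifield0 $F = \Frac_\nu R$: this gives a one--to--one correspondence between $\nu$-congruences $\widetilde\Cong$ on $F$ and $\nu$-kernels $K$ of $F$ via $K = \{ab^* : a \equiv_\nu b\}$. Since fractional $\nu$-kernels on $R$ are by definition $\nu$-kernels on $F$, what remains is to match $\nu$-cancellative congruences of $R$ with $\nu$-congruences of $F$.

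Next I would define the two maps explicitly. Given a $\nu$-cancellative congruence $\Cong$ on $R$, extend it to a $\nu$-congruence $\widetilde\Cong$ on $F$ by Lemma~\ref{ext} and set $K_\Cong := K_{\widetilde\Cong}$; a routine check shows that the extension preserves the $\nu$-structure, since $(a,b)\in\Cong$ forces $(a^\nu,b^\nu)\in\Cong$ by Remark~\ref{vsemi20}, and this passes through the localization. In the other direction, given a fractional $\nu$-kernel $K$, let $\widetilde\Cong$ be its corresponding $\nu$-congruence on $F$ and put $\Cong := \widetilde\Cong \cap (R \times R)$.

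The key verification, which I expect to be the main obstacle, is that these constructions are mutually inverse. Specifically, $\nu$-cancellativity of $\Cong$ is exactly the hypothesis needed to ensure that the restriction of $\widetilde\Cong$ back to $R$ recovers $\Cong$: if $a_1,a_2 \in R$ and $\tfrac{a_1}{1} \equiv_\nu \tfrac{a_2}{1}$ in $F$, then Lemma~\ref{ext} produces $s \in R$ with $s a_1 \equiv_\nu s a_2$, whereupon $\nu$-cancellativity forces $a_1 \equiv_\nu a_2$. Conversely, a congruence arising as $\widetilde\Cong \cap (R\times R)$ for a $\nu$-kernel $K$ of $F$ is automatically $\nu$-cancellative, since in the \vsemifield0 $F$ the element $b$ is invertible and one can cancel on passing to $F$. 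The equality $K_{\Cong_K} = K$ holds because every element of $F$ is of the form $ab^*$ with $a,b \in R$, and $ab^* \in K$ iff $\tfrac{a}{1} \equiv_\nu \tfrac{b}{1}$, which by the restriction is exactly the condition $a \equiv b$ in $\Cong$.

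For the homomorphism statement, I would take $\varphi : R \to R'$ of \vdomains0 and form the $\nu$-congruence $\Cong_\varphi = \{(a,b) \in R\times R : \varphi(a) \nucong \varphi(b)\}$, as in Remark~\ref{cong10}(ii) adapted to the $\nu$-setting. This is $\nu$-cancellative because the multiplicative monoid of the \vdomain0 $R'$ is cancellative: $\varphi(s a_1) \nucong \varphi(s a_2)$ gives $\varphi(s)\varphi(a_1) \nucong \varphi(s)\varphi(a_2)$, and cancelling $\varphi(s)$ yields $\varphi(a_1) \nucong \varphi(a_2)$. Applying the correspondence established above then produces the associated fractional $\nu$-kernel $K = K_{\Cong_\varphi}$, and $\Cong_\varphi$ is by construction the congruence corresponding to the quotient map $R \to R/K$.
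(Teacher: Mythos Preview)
Your approach is essentially the same as the paper's: extend $\Cong$ to $F=\Frac_\nu R$ via Lemma~\ref{ext}, use the known kernel/congruence correspondence on the \vsemifield0 $F$, and restrict back to $R$; you supply considerably more detail than the paper does, in particular the explicit use of $\nu$-cancellativity to show the two maps are inverse, which the paper leaves implicit.

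The one place you diverge is the homomorphism clause. The paper composes $R\to R'$ with the canonical injection $R'\hookrightarrow F'=\Frac_\nu R'$, then extends to a \vsemifield0 homomorphism $F\to F'$ and reads off its $\nu$-kernel directly. You instead form $\Cong_\varphi$ on $R$ and argue it is $\nu$-cancellative so as to feed it into the first part. Your route is fine in spirit, but your cancellation step assumes the entire multiplicative monoid of $R'$ is cancellative, whereas the definition of \vdomain0 only guarantees this for the tangible submonoid $\tT'$; you should either restrict the cancelling element to $\tT'$ (which is where the denominators in the localization live anyway) or, more simply, adopt the paper's device of passing to $F'$ where every element is invertible and cancellation is automatic.
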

\begin{proof}  We extend $\Cong$ to $F$ as in Lemma~\ref{ext}. As noted in
\cite[Theorem~3.2]{Hom_Semifields}, $K_\Cong$ is clearly a
congruence, since $\a a + \beta b  \equiv  \a + \beta \nucong
\fone$ for all $a,b \in K_\Cong$.

In the other direction, we restrict this to $R$.

Given a homomorphism $R \to R',$ we compose it with the injection
of $R'$ into its \vsemifield0 of fractions $F',$ and then extend
this naturally to a homomorphism of \semifields0 $F \to F'$,
thereby obtaining a $\nu$-kernel. \end{proof}

%

 \section{The supertropical connection:
 Corner loci and \skeletonbs}\label{supertrop1}


In this section we apply our theory  to  tropical geometry. Let us
recall the basic notions concerning supertropical varieties.
Throughout,  $F$ denotes a \vsemifield0.

\subsection{Corner loci}$ $

In \cite[Section (5.2)]{SuperTropicalAlg} Izhakian and Rowen have
generalized the notion of (tangible) corner root to $F[\Lambda]$
(over a supertropical \vsemifield0 $F$) as follows:

\begin{defn}\label{def_st_corner_root}
Suppose $f \in F[\Lambda]$ is a supertropical polynomial, written
$f=\sum_{i=1}^{k}f_i$ where each $f_i$ is a monomial.  A point
$\bfa \in F^{(n)} $ is said to be a  \textbf{ghost root} of $f$ if
$f(\bfa) \in \mathcal{G}$, i.e., if $f$ has a ghost value
  at $\bfa$. \end{defn}

  This happens in one of the following cases:
\begin{enumerate}
  \item There are two distinct monomials $f_t$ and $f_s$ of $f$ such that $\small{f(a) = f_s(a) = f_t(a)}$.
  \item There exists a ghost monomial $f_t$ of $f$ such that $f(a) = f_t(a)$.
\end{enumerate}


 \begin{rem}\label{rem_cor_roots_of_prod}\begin{enumerate}\eroman
\item If $\bfa, \bfb \in F^{n}$ are ghost roots of $f,g \in
F[\Lambda]$ respectively, then both $\bfa$ and $\bfb$ are ghost
roots of the product $f g$.

\item In view of Proposition~\ref{Fr1}, any ghost root $\bfa$
of $f^k$ for $k\ge 1$ is a ghost root  of $f$.
%
\end{enumerate}
\end{rem}

\begin{defn}\label{def_cor_locus}
A set $A \subseteq \tT^{(n)} $ is said to be a \textbf{corner locus}
if $A$ is a set of the form
\begin{equation}
A  = \{ x \in F^{n} \ : \ \forall f \in S , \ x \ \text{is a ghost
root of} \ f \}
\end{equation}
for some $S \subset F[\Lambda]$. We write $\Corn(S)$ to denote the
corner locus defined by $S$.

In the case where $S = \{ f_1,...,f_m \}$ is finite,
 we say that $A$ is a \textbf{finitely generated} corner locus and
  write $A = \Corn(f_1,...,f_m)$.
\end{defn}

\begin{lem}\label{rootlift01} Suppose $f =  f'+f''  \in F[\Lambda]$.
   Then $\bfa \in \Corn(f)$ iff one of the following hold:
   \begin{enumerate}\eroman
\item $ \bfa \in  \Corn({f'})$ with $f'(\bfa) \ge f''(\bfa)$;
\item $ \bfa \in  \Corn({f''})$ with $f''(\bfa) \ge f'(\bfa)$;
\item $  f'(\bfa) \nucong f''(\bfa).$
   \end{enumerate}
\end{lem}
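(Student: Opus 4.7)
The proof should be essentially a direct unwinding of the definition of $\Corn(f)$ combined with the $\nu$-bipotent addition rule of the supertropical \vsemifield0. Recall (Example~\ref{stropi}, Definition~\ref{vsemi2}) that for any $u,v\in F$ we have
\[
u+v \;=\; \begin{cases} u & \text{if } u >_\nu v,\\ v & \text{if } u <_\nu v,\\ u^\nu & \text{if } u \nucong v,\end{cases}
\]
so in every case $u+v \in \{u,v,u^\nu\}$, and $u+v\in\tG$ iff either the dominant summand lies in $\tG$ or $u\nucong v$.

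The plan is to apply this trichotomy pointwise at $\bfa$ to $u = f'(\bfa)$ and $v = f''(\bfa)$. For the forward direction, assume $\bfa\in\Corn(f)$, i.e., $f(\bfa) = f'(\bfa)+f''(\bfa)\in\tG$. If $f'(\bfa)\nucong f''(\bfa)$, we are immediately in case (iii). Otherwise the $\nu$-order between $f'(\bfa)$ and $f''(\bfa)$ is strict; if $f'(\bfa)>_\nu f''(\bfa)$ then $f(\bfa)=f'(\bfa)$, so the hypothesis $f(\bfa)\in\tG$ forces $f'(\bfa)\in\tG$, i.e., $\bfa\in\Corn(f')$, yielding (i); the case $f'(\bfa)<_\nu f''(\bfa)$ is symmetric and gives (ii).

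For the reverse direction, each of the three listed conditions directly forces $f(\bfa)\in\tG$. Condition (iii) gives $f(\bfa) = f'(\bfa)^\nu\in\tG$ by $\nu$-bipotence. In condition (i), either $f'(\bfa)>_\nu f''(\bfa)$, in which case $f(\bfa)=f'(\bfa)\in\tG$ since $\bfa\in\Corn(f')$, or $f'(\bfa)\nucong f''(\bfa)$, which reduces to (iii); condition (ii) is symmetric.

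The statement is essentially a bookkeeping exercise rather than a substantive result, so there is no real obstacle. The only minor care needed is to note that the $\ge$ in clauses (i) and (ii) (which I read as $\ge_\nu$, consistent with the paper's notation) allows the equality case to overlap with (iii); this overlap is harmless and in fact necessary to make the disjunction exhaustive when combined with the trichotomy.
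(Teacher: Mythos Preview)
Your proof is correct and follows exactly the same idea as the paper's own proof, which is the one-line remark ``These are the only ways to get a ghost value.'' You have simply made the implicit trichotomy explicit via the $\nu$-bipotent addition rule, which is precisely what that one-liner abbreviates.
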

\begin{proof} These are the only ways to get a ghost value.
\end{proof}

 This concise
formulation
 enables us to  treat tropical varieties algebraically  as simultaneous roots of
 sets of
 polynomials. In particular, a \textbf{hypersurface} now is viewed as the corner locus of a single
polynomial. When the polynomials are tangible, the hypersurfaces
 are those from usual tropical geometry.

We write $\mathbf{2}^S$ for the power set of a set $S$.

In view of  Definition \ref{def_cor_locus}, we define an operator
$\small{\Corn : \mathbf{2}^{F[\Lambda]}} \rightarrow \mathbf{2}^{F^{(n)}}$ given
by
\begin{equation}
\Corn : S \subset F[\Lambda] \mapsto \Corn(S).
\end{equation}
%

We now proceed to study the behavior of the $\Corn$ operator.

\begin{rem}\label{rem_intersection_union_corner_loci}
If $S_A,S_B \subseteq F[\Lambda]$ then
\begin{equation}\label{cor_loci_prop1}
S_A \subseteq S_B \Rightarrow \Corn(S_B) \subseteq \Corn(S_A).
\end{equation}

Let $\{ S_i \}_{i \in I}$ be a family of subsets of $F[\Lambda]$ for
some index set $I$. Then $\bigcap_{i \in I}\Corn(S_i)$ is a corner
locus and
\begin{equation}\label{cor_loci_prop2}
\bigcap_{i \in I}\Corn(S_i) = \Corn\left(\bigcup_{i \in I}S_i\right)
\ ; \ \bigcup_{i \in I}\Corn(S_i) \subseteq \Corn\left(\bigcap_{i
\in I}S_i\right).
\end{equation}
In particular, $\Corn(S)= \bigcap_{f \in S}\Corn(f)$.
\end{rem}
\begin{proof}
First, equality \eqref{cor_loci_prop1} is a direct set-theoretical
consequence of the definition of corner loci. In turn this implies
that $\Corn(\bigcup_{i \in I}S_i) \subseteq \Corn(S_i)$ for each $i
\in I$ and thus $\Corn(\bigcup_{i \in I}S_i) \subseteq \bigcap_{i
\in I}\Corn(S_i)$. Conversely, if $x \in F^{(n)} $ is in $\bigcap_{i
\in I}\Corn(S_i)$ then $\bfa \in \Corn(S_i)$ for every $i \in I$,
which means that $\bfa$ is a common ghost root of $\{f \ : \ f \in
S_i \}$. Thus $\bfa$ is a common ghost root of $\{f \ : \ f \in
\bigcup_{i \in I} S_i \}$ which yields that  $\bfa \in
\Corn(\bigcup_{i \in I}S_i)$. For the second equation (inclusion) in
\eqref{cor_loci_prop2}, $\bigcap_{i \in I}S_i \subseteq S_j$  for
each $j \in I$. Thus, by \eqref{cor_loci_prop1}, $ \Corn(S_j)
\subseteq \Corn(\bigcap_{i \in I}S_i)$, and so, $\bigcup_{i \in
I}\Corn(S_i) \subseteq \Corn(\bigcap_{i \in I}S_i)$.
\end{proof}

The next lemma deals with the case where $A$ and $B$ are finitely
generated.

\begin{lem}\label{lem_intersection_union_finitely_generated_corner_loci} If
$A=\Corn(f_1,...,f_s)$ and $B = \Corn(g_1,...,g_t)$, then $A \cap B
= \Corn(f_1,...,f_s,g_1,...,g_t)$, and $A \cup B = \Corn(\{f_i
g_j\}_{i=1,j =1}^{s,t})$.
\end{lem}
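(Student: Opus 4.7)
The plan is to prove the two equalities separately, both by routine chasing of definitions, with the containment from the product characterization of $A\cup B$ requiring the tangible/ghost dichotomy.

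For the first equality, the argument is essentially just a citation of Remark~\ref{rem_intersection_union_corner_loci}. Setting $S_1 = \{f_1,\dots,f_s\}$ and $S_2 = \{g_1,\dots,g_t\}$, the remark gives
$$A \cap B = \Corn(S_1) \cap \Corn(S_2) = \Corn(S_1 \cup S_2) = \Corn(f_1,\dots,f_s,g_1,\dots,g_t),$$
and nothing more is needed.

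For the second equality, the inclusion $A \cup B \subseteq \Corn(\{f_ig_j\})$ follows immediately from Remark~\ref{rem_cor_roots_of_prod}(i): if $\bfa \in A$, then $\bfa$ is a ghost root of every $f_i$, hence of every product $f_ig_j$; symmetrically for $\bfa \in B$. So any point of $A \cup B$ is a common ghost root of $\{f_ig_j\}$. For the reverse inclusion, I would argue contrapositively. Suppose $\bfa \notin A \cup B$; then there exist indices $i_0$ and $j_0$ with $f_{i_0}(\bfa) \in \tT$ and $g_{j_0}(\bfa) \in \tT$. Since $\tT$ is a multiplicative submonoid of $F$, the product $(f_{i_0}g_{j_0})(\bfa) = f_{i_0}(\bfa)g_{j_0}(\bfa)$ lies in $\tT$, and in the standard supertropical setting (Example~\ref{stropi}) this means it is not in $\tG$. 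Hence $\bfa$ is not a ghost root of $f_{i_0}g_{j_0}$, so $\bfa \notin \Corn(\{f_ig_j\})$, finishing the contrapositive.

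The only delicate step is the last one, which relies on the fact that tangible elements are closed under multiplication and that a tangible value is not a ghost value. This is guaranteed in the standard supertropical \vsemifield0 where $F = \tT \sqcup \tG$ (or at least where tangibility is preserved under multiplication and is incompatible with being ghost for the purposes of root detection), so I would invoke the framework of Example~\ref{stropi} explicitly. No further machinery is required beyond the preceding remark and lemma.
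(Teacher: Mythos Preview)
Your proof is correct and follows essentially the same approach as the paper. The only cosmetic difference is that the paper packages the reverse inclusion via the identity $\Corn(f_{i_0}) \cup \Corn(g_{j_0}) = \Corn(f_{i_0}g_{j_0})$ for single polynomials, whereas you unpack this identity directly in terms of tangible values and the closure of $\tT$ under multiplication; the underlying content is identical.
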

\begin{proof}
In view of Definition \ref{def_cor_locus} and Remarks
\ref{rem_cor_roots_of_prod} and
\ref{rem_intersection_union_corner_loci}, we only need to prove that
$A \cup B = \Corn\left(\{f_i g_j\}_{i=1,j =1}^{s,t}\right)$. Indeed,
for each $1 \leq i \leq s$ and each $1 \leq j \leq t$, \ $A
\subseteq \Corn(f_i)$ and $B \subseteq \Corn(g_j)$. Thus $A \cup B
\subseteq \Corn(f_i) \cup \Corn(g_j) = \Corn\left(\{f_i
g_j\}\right)$ and so
$$A \cup B \subseteq \bigcap_{i=1,j=1}^{s,t}\Corn\left(\{f_i  g_j\}\right) = \Corn\left(\{f_i  g_j\}_{i=1,j =1}^{s,t}\right).$$ On the other hand, if $a \not \in A \cup B$ then there exist some $i_0$ and $j_0$ such that $a \not \in \Corn(f_{i_0})$ and $a \not \in \Corn(g_{j_0})$. Thus $a \not \in \Corn(f_{i_0}) \cup \Corn(g_{j_0}) = \Corn(f_{i_0}  g_{j_0})$. So   $a \not \in \bigcap_{i=1,j =1}^{s,t}\Corn\left(\{f_i  g_j\}\right) = \Corn\left(\{f_i  g_j\}_{i=1,j =1}^{s,t}\right)$, proving the opposite inclusion.
\end{proof}

 In \cite{Coordinate} the following density condition was introduced to
 pick out those varieties defined by several polynomials, which are more in line with the usual
 tropical viewpoint.

\begin{defn} A corner locus  $Z$ is  \textbf{admissible} if it
satisfies the following property:

If $f$ and $g$ agree on a $\nu$-dense subset of $Z$ then $f= g.$

A \textbf{variety} is an admissible corner locus that is not the
union of two admissible corner loci.
\end{defn}

%

\subsection{$\onenu$-sets}\label{Skel}$ $

We present a method for describing a `corner locus' by the
analogous concept for $\nu$-kernels, which we call a
\textbf{$\onenu$}-set.
 This in turn sets the stage for applying the theory of $\nu$-kernels to tropical
 geometry, focusing on the correspondence between $\nu$-kernels and $\onenu$-sets.
  We introduce
the geometric notion of `$\onenu$-set' and the algebraic notion of
`$\nu$-kernel of a $\onenu$-set', the respective analogs of affine
varieties and their ideals. To obtain a Zariski-type
correspondence, we define a pair of operators $\Skel$ and $\Ker$
where $\Skel$ maps a $\nu$-kernel to its $\onenu$-set and $\Ker$
maps a $\onenu$-set to its
corresponding $\nu$-kernel. 
%
%


 Throughout
this section, we take $F$ to be a \vsemifield0.
 $\overline{F(\Lambda)}$
was defined in Definition~\ref{ratfrac}.
\begin{defn}
A \textbf{kernel root} of $ f  \in \FunF$  is an element $\bfa \in
F^{(n)}$ such that $ f (\bfa) \nucong \fone.$

For $S \subseteq  \overline{F(\Lambda)}$, define the subset
$\Skel(S)$ of $F^{(n)}$  as
\begin{equation}
\Skel(S) = \{ \bfa \in F^{(n)}  \ : \  f(\bfa) \nucong 1, \ \forall
f \in S \}.
\end{equation}
\end{defn}

We write  $\Skel(f)$ for $\Skel(\{f\}).$

\begin{lem}\label{rem_max}
The following hold for $f,g \in \overline{F(\Lambda)}$:
\begin{enumerate}\eroman
  \item $\Skel(fg) = \Skel(f \dotplusss  g)  = \Skel(f) \cap \Skel(g)$ for all $f,g \ge_\nu 1$.
  \item $\Skel(f \wedge g)  = \Skel(f) \cup \Skel(g)$ for all $f,g \leq_\nu 1$.

\end{enumerate}
\end{lem}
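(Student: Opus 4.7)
The strategy is to verify both identities pointwise at each $\bfa \in F^{(n)}$, reducing the equality of $\onenu$-sets to a statement about which $\nu$-values at $\bfa$ coincide with $\one^\nu$. Concretely, $\bfa \in \Skel(h)$ is by definition $h(\bfa) \nucong \one$, so the question is purely one of $\nu$-value arithmetic at each chosen point.

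Before the case analysis, I would record the pointwise translations of the \vsemifield0 operations in the $\nu$-bipotent setting: $(f + g)(\bfa)^\nu = \max\{f(\bfa)^\nu, g(\bfa)^\nu\}$, $(fg)(\bfa)^\nu = f(\bfa)^\nu \cdot g(\bfa)^\nu$, and $(f \wedge g)(\bfa)^\nu = \min\{f(\bfa)^\nu, g(\bfa)^\nu\}$. The first two are immediate from $\nu$-bipotency and the fact that $\nu$ is a monoid homomorphism; the wedge formula unfolds from $f \wedge g = (f^{*} + g^{*})^{*}$, using that $(\,\cdot\,)^{*}$ inverts $\nu$-values by Remark~\ref{aut1}, so that a $\max$ on the $*$-image translates back to a $\min$.

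For part (i), the hypothesis $f, g \geq_\nu \one$ gives $f(\bfa)^\nu, g(\bfa)^\nu \geq \one^\nu$ at every $\bfa$. Consequently, both $\max\{f(\bfa)^\nu, g(\bfa)^\nu\}$ and the product $f(\bfa)^\nu g(\bfa)^\nu$ sit at or above $\one^\nu$, and in either case the value collapses to $\one^\nu$ if and only if both $f(\bfa)^\nu$ and $g(\bfa)^\nu$ equal $\one^\nu$. Translating back via $\Skel$, this yields $\Skel(f+g) = \Skel(fg) = \Skel(f) \cap \Skel(g)$.

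For part (ii), the dual argument runs as follows: under the indicated sign hypothesis, $\min\{f(\bfa)^\nu, g(\bfa)^\nu\}$ is forced to equal $\one^\nu$ precisely when the relevant one of $f(\bfa), g(\bfa)$ already has $\nu$-value $\one^\nu$ while the other is no obstruction, pinning down the union $\Skel(f) \cup \Skel(g)$. There is no real obstacle here: once the bipotent $\nu$-arithmetic for $+$, $\cdot$, and $\wedge$ is in hand, both parts are immediate, and the only care required is to track the sign conventions faithfully and reduce everywhere to max/min/product of $\nu$-values.
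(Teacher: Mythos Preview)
Your approach for part (i) is correct and identical to the paper's: pointwise, with $f(\bfa)^\nu, g(\bfa)^\nu \ge \one^\nu$, both the max and the product equal $\one^\nu$ iff each factor does.

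For part (ii), however, your argument does not go through as written. Under the stated hypothesis $f, g \le_\nu \one$, one has $f(\bfa)^\nu, g(\bfa)^\nu \le \one^\nu$ at every point, so $\min\{f(\bfa)^\nu, g(\bfa)^\nu\} = \one^\nu$ forces \emph{both} values to equal $\one^\nu$, yielding $\Skel(f) \cap \Skel(g)$ rather than the union. Your phrasing (``the relevant one \dots\ while the other is no obstruction'') glosses over this and implicitly treats the other value as lying at or above $\one^\nu$, which is the opposite of the stated sign. In fact the hypothesis in the statement appears to be a typo: the identity $\Skel(f \wedge g) = \Skel(f) \cup \Skel(g)$ holds when $f, g \ge_\nu \one$ (dually to~(i)), since then the minimum of two values $\ge \one^\nu$ equals $\one^\nu$ iff at least one of them does. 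The paper's one-line proof of (ii) implicitly uses exactly this. You should either state the corrected hypothesis explicitly, or, if you insist on $f, g \le_\nu \one$, observe that the asserted conclusion is simply false in that regime.
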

\begin{proof}
(i)     $(f \dotplusss  g)(\bfa) = f(\bfa) \dotplusss  g(\bfa)
\nucong 1$ if and only if $f(\bfa)\nucong 1$ and $g(\bfa)\nucong 1$
(since  both $f,g \ge_\nu 1$). Analogously the same holds for
$f(\bfa)g(\bfa) \nucong 1$.

(ii)   $(f \wedge g)(\bfa) = f(\bfa) \wedge g(\bfa) \nucong 1$, if
and only if $f(\bfa) \nucong 1$ or $g(\bfa)\nucong 1$.
\end{proof}

 \begin{lem}\label{rem_max0} $\Skel(f) = \Skel(f^{-1}) = \Skel(|f|)  = \Skel(f \wedge f^{-1})$.
\end{lem}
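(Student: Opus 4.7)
The plan is to reduce all four equalities to pointwise statements, since $\bfa\in\Skel(h)$ is by definition the single condition $h(\bfa)\nucong\fone$. Thus it suffices to show, for every $\bfa\in F^{(n)}$, the chain of pointwise equivalences
\[
f(\bfa)\nucong\fone \ \iff\ f^{-1}(\bfa)\nucong\fone \ \iff\ |f|(\bfa)\nucong\fone \ \iff\ (f\wedge f^{-1})(\bfa)\nucong\fone,
\]
each of which reduces to a property of the element $b:=f(\bfa)\in F$.

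For the first equivalence, I would use that $\nu$ is a monoid homomorphism into the (ordered) group $\tG$, so $\nu(b^{-1})=\nu(b)^{-1}$. Hence $\nu(b)=\onenu$ iff $\nu(b^{-1})=\onenu$, giving $\Skel(f)=\Skel(f^{-1})$. (Equivalently, this is Remark~\ref{aut1} applied to $b$, since the $(*)$-operation extends inversion.) For the second equivalence, I would cite Remark~\ref{norm1}(ii) directly: $|b|\nucong\fone$ iff $b\nucong\fone$, applied pointwise with $|f|(\bfa)=|f(\bfa)|$, yielding $\Skel(|f|)=\Skel(f)$.

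For the last equivalence, the key computation is to rewrite $f\wedge f^{-1}$ via the defining formula \eqref{starry}. Identifying $f^{-1}$ with $f^*$ (Remark~\ref{aut1}) and using that $(*)$ has order $2$, one gets
\[
f\wedge f^{-1}\;=\;(f^*+(f^{-1})^*)^*\;=\;(f^*+f)^*\;=\;|f|^*.
\]
Then $(f\wedge f^{-1})(\bfa)=|f(\bfa)|^*$, and since $(*)$ is a multiplicative monoid automorphism fixing the unit of $\tG$, we have $|f(\bfa)|^*\nucong\fone$ iff $|f(\bfa)|\nucong\fone$; combining with the second equivalence completes the chain.

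There is no serious obstacle; the whole proof is essentially bookkeeping with the $\nu$-equivalence relation, the $(*)$-operation, and the pointwise definition of $|\cdot|$. The only mild subtlety to flag is making sure we interpret $f^{-1}$ consistently as the $(*)$-image (so that the manipulations are valid on ghost as well as tangible values), which is precisely what Remark~\ref{aut1} licenses.
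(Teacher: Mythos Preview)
Your proof is correct and follows essentially the same pointwise-reduction strategy as the paper: both argue by fixing $\bfa$ and showing the four conditions on $f(\bfa)$ are equivalent. The only real difference is in the last equality: the paper invokes Lemma~\ref{rem_max}(ii) to get $\Skel(f\wedge f^{-1})=\Skel(f)\cup\Skel(f^{-1})$, whereas you compute directly that $f\wedge f^{-1}=|f|^*$ and reduce to the $|f|$ case already handled. Your route is arguably cleaner, since Lemma~\ref{rem_max}(ii) carries the hypothesis $f,g\le_\nu 1$, which is not automatic for $f$ and $f^{-1}$ individually; your identity $f\wedge f^{-1}=|f|^*$ sidesteps that issue entirely, and your care in reading $f^{-1}$ as $f^*$ via Remark~\ref{aut1} makes the argument work uniformly on ghost values.
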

\begin{proof} Clearly $f(\bfa) \nucong 1$ iff  $f^{-1}(\bfa) \nucong 1$, in
which case $|f|(\bfa) \nucong (f \wedge f^{-1})(\bfa) \nucong 1$.
Conversely, if $|f|(\bfa) \nucong 1$ then both $f(\bfa),f^{-1}(\bfa)
\le_\nu 1$, implying $f(\bfa),f^{-1}(\bfa) \nucong 1$. The last
assertion is by Lemma~\ref{rem_max}(ii).
\end{proof}

\begin{defn}
A subset $Z\subset F^{(n)}$ is said to be a \textbf{$\onenu$-set}
if there exists a subset $S \subset \overline{F(\Lambda)}$  such
that $Z = \Skel(S)$.
\end{defn}
%

\begin{prop}\label{prop_skel_property1}
For $S_i \subset \overline{F(\Lambda)}$ the following statements
hold:
\begin{enumerate}
  \item $S_1 \subseteq S_2 \Rightarrow \Skel(S_2) \subseteq \Skel(S_1)$.
  \item $\bigcap_{i \in I}\Skel(S_i) = \Skel(\bigcup_{i \in I}S_i)$ for any index set $I$ and in particular, $\Skel(S)= \bigcap_{f \in S}\Skel(f)$.
\end{enumerate}
\end{prop}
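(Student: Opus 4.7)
The plan is to unfold the definition of $\Skel$ directly; both statements are purely set-theoretic consequences of the definition $\Skel(S) = \{\bfa \in F^{(n)} : f(\bfa) \nucong 1 \text{ for all } f \in S\}$, so no special structure of $\overline{F(\Lambda)}$ or of $\nu$-kernels is needed here. There is no real obstacle; this is the analogue of the elementary fact in classical algebraic geometry that $V$ is order-reversing and turns unions of generating sets into intersections of varieties.

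For (i), I would take an arbitrary $\bfa \in \Skel(S_2)$, which by definition satisfies $f(\bfa) \nucong 1$ for every $f \in S_2$. Since $S_1 \subseteq S_2$, this condition holds a fortiori for every $f \in S_1$, so $\bfa \in \Skel(S_1)$. This gives $\Skel(S_2) \subseteq \Skel(S_1)$.

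For (ii), I would prove the two inclusions by chasing the quantifiers. If $\bfa \in \bigcap_{i \in I}\Skel(S_i)$, then for each $i \in I$ and each $f \in S_i$ we have $f(\bfa) \nucong 1$; hence for every $f \in \bigcup_{i \in I} S_i$ we have $f(\bfa) \nucong 1$, so $\bfa \in \Skel\bigl(\bigcup_{i \in I}S_i\bigr)$. Conversely, if $\bfa \in \Skel\bigl(\bigcup_{i \in I}S_i\bigr)$, then $f(\bfa) \nucong 1$ for all $f \in \bigcup_{i \in I}S_i$, and in particular for all $f \in S_j$ for each fixed $j \in I$, so $\bfa \in \Skel(S_j)$ for every $j$, giving $\bfa \in \bigcap_{i \in I}\Skel(S_i)$. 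The special case $\Skel(S) = \bigcap_{f \in S}\Skel(f)$ is obtained by applying this identity to the decomposition $S = \bigcup_{f \in S} \{f\}$. Alternatively, (i) alone already gives $\Skel\bigl(\bigcup_i S_i\bigr) \subseteq \bigcap_i \Skel(S_i)$, and one only has to argue the reverse inclusion, but it is just as quick to do both directions directly.
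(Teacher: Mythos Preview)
Your proof is correct and takes essentially the same approach as the paper, which simply notes that ``the assertions are formal.'' You have spelled out exactly the routine definition-chase that the paper leaves implicit.
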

\begin{proof}
The  assertions are  formal.
\end{proof}

\begin{lem}\label{oneset}   $\Skel(S_1) = \Skel(\langle S_1 \rangle)$.
\end{lem}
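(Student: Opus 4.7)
The plan is to prove the two inclusions separately, with one being immediate and the other requiring identification of a suitable $\nu$-kernel.

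For $\Skel(\langle S_1 \rangle) \subseteq \Skel(S_1)$, I would simply observe that $S_1 \subseteq \langle S_1 \rangle$ by definition of the generated $\nu$-kernel (Remark~\ref{bfacts1}(v)), and then invoke the order-reversing property of $\Skel$ from Proposition~\ref{prop_skel_property1}(1).

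For the reverse inclusion $\Skel(S_1) \subseteq \Skel(\langle S_1 \rangle)$, the natural move is to introduce the auxiliary set
\[
K \;=\; \{\, f \in \overline{F(\Lambda)} \;:\; f(\bfa) \nucong 1 \text{ for every } \bfa \in \Skel(S_1)\,\},
\]
which obviously contains $S_1$, and to show that $K$ is itself a $\nu$-kernel; by minimality of $\langle S_1 \rangle$ this will force $\langle S_1 \rangle \subseteq K$, and unpacking the definition of $K$ then yields exactly $\Skel(S_1) \subseteq \Skel(\langle S_1 \rangle)$. Checking that $K$ is a subgroup of $\overline{F(\Lambda)}$ is routine: closure under products and inverses follows pointwise from $1 \cdot 1 \nucong 1$ and $1^{-1} \nucong 1$ in $F$.

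The main point is to verify the $\nu$-convexity condition in Definition~\ref{vsemi24}: if $f,g \in K$ and $\a,\beta \in \overline{F(\Lambda)}$ satisfy $\a + \beta \nucong \fone$, then $\a f + \beta g \in K$. Fixing $\bfa \in \Skel(S_1)$ and applying $\nu$ pointwise, the key computation is
\[
(\a f + \beta g)(\bfa)^\nu \;=\; \a(\bfa)^\nu f(\bfa)^\nu + \beta(\bfa)^\nu g(\bfa)^\nu \;=\; \a(\bfa)^\nu + \beta(\bfa)^\nu \;=\; (\a + \beta)(\bfa)^\nu \;\nucong\; 1,
\]
where we used $f(\bfa) \nucong 1 \nucong g(\bfa)$ (since $f,g \in K$) together with the fact that in a \vsemifield0 the $\nu$-value of a sum is determined by the $\nu$-values of the summands. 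The hypothesis $\a + \beta \nucong \fone$ is interpreted pointwise to conclude. This establishes $\a f + \beta g \in K$ and completes the proof.

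The only mildly subtle step is the $\nu$-convexity verification, which rests on the elementary but crucial observation that $\nu$-equivalence in $\overline{F(\Lambda)}$ is tested pointwise and that addition in a $\nu$-bipotent structure respects $\nu$-values; once this is in place the lemma is purely formal.
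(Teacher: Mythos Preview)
Your proof is correct and follows essentially the same approach as the paper: both verify that the set $K=\{f:f(\bfa)\nucong 1\ \forall\,\bfa\in\Skel(S_1)\}$ is a $\nu$-kernel by checking closure under products, inverses, and convex combinations pointwise, yielding $\langle S_1\rangle\subseteq K$. Your write-up is somewhat more explicit about the logical structure (naming $K$ and invoking minimality), but the computations and the underlying idea are the same.
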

\begin{proof} For any
$f,g$ such that $f(\bfa)\nucong g(\bfa)\nucong 1$,
 $$(fg)(\bfa) = f(\bfa)g(\bfa) \nucong 1 \cdot
1 = 1, \quad f^*(\bfa)\nucong f(\bfa)^* \nucong 1^{-1}=1, \quad
\text{ and } \quad (f+g)(\bfa) = f(\bfa) + g(\bfa) \nucong
1+1\nucong 1.$$ To show that convexity is preserved for
$f_1,...,f_t \in \overline{F(\Lambda)}$ with
$\sum_{i=1}^{t}f_i\nucong 1$ and for any $g_1,...,g_t$ in
$\Skel(S_1)$, we note that
$$\left(\sum_{i=1}^{t}f_ig_i\right)(\bfa) =  \sum_{i=1}^{t}f_i(\bfa)g_i(\bfa)\nucong
\sum_{i=1}^{t}(f_i(\bfa)\cdot 1)\nucong  \sum_{i=1}^{t}f_i(\bfa)
\nucong 1.$$
\end{proof}

\begin{prop}\label{prop_ker_skel_correspondence}\cite[Proposition (4.2.6)]{AlgAspTropMath}
Let $K_i $ be $\nu$-kernels of $\overline{F(\Lambda)}$, and let
$Z_i=\Skel(K_i)$ be their corresponding $\onenu$-sets. Then the
following statements hold:
\begin{equation}
\Skel(K_1  K_2) = Z_1 \cap Z_2;
\end{equation}
\begin{equation}
\Skel(\cap K_i) =   \cup Z_i.
\end{equation}
\end{prop}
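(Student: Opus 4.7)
The plan is to derive both identities from the monotonicity of $\Skel$ (Proposition~\ref{prop_skel_property1}(i)) together with the explicit descriptions of products and intersections of $\nu$-kernels given in Remark~\ref{bfacts1} and Proposition~\ref{prop_algebra_of_generators_of_kernels}.

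First I would prove $\Skel(K_1 K_2) = Z_1 \cap Z_2$. Since $1 \in K_j$, we have $K_i \subseteq K_1 K_2$ for $i=1,2$, and monotonicity yields $\Skel(K_1 K_2) \subseteq \Skel(K_i) = Z_i$, hence $\Skel(K_1 K_2) \subseteq Z_1 \cap Z_2$. For the reverse inclusion, take $\bfa \in Z_1 \cap Z_2$; a typical element $h = fg \in K_1 K_2$ (with $f \in K_1$, $g \in K_2$) evaluates to $h(\bfa) = f(\bfa) g(\bfa) \nucong 1\cdot 1 = 1$, so $\bfa \in \Skel(K_1 K_2)$. Passing from products of pairs to arbitrary finite sums requires only that $\Skel$ respects convex combinations, which is exactly the content of Lemma~\ref{oneset}.

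For $\Skel(\bigcap K_i) = \bigcup Z_i$, the inclusion $\bigcup Z_i \subseteq \Skel(\bigcap K_i)$ again follows from monotonicity: $\bigcap K_i \subseteq K_j$ gives $Z_j = \Skel(K_j) \subseteq \Skel(\bigcap K_i)$. For the reverse (main) inclusion I would argue contrapositively. Suppose $\bfa \notin \bigcup Z_i$; for each $i$ choose $f_i \in K_i$ with $f_i(\bfa) \not\nucong 1$, and replace $f_i$ by $|f_i|$ (still in $K_i$ by Remark~\ref{bfacts1}(iii)), so $|f_i|(\bfa) >_\nu 1$ strictly. In the finite case form $h = |f_1| \wedge \cdots \wedge |f_n|$; since $1 \le_\nu h \le_\nu |f_j|$ with $1, |f_j| \in K_j$, the $\nu$-convexity of $\nu$-kernels (Remark~\ref{bfacts1}(ii)) forces $h \in K_j$ for every $j$, i.e.\ $h \in \bigcap K_j$. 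Unwinding the supertropical meet $h = (|f_1|^* + \cdots + |f_n|^*)^*$ from Remark~\ref{wedge1}, one gets $h(\bfa) = |f_1|(\bfa) \wedge \cdots \wedge |f_n|(\bfa)$, i.e.\ the pointwise minimum of values each strictly $>_\nu 1$, which is again $>_\nu 1$. Hence $h(\bfa) \not\nucong 1$ and $\bfa \notin \Skel(\bigcap K_j)$, the desired contradiction.

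The main obstacle I anticipate is the second identity for an infinite intersection: one cannot in general form $\bigwedge_i |f_i|$ inside $\overline{F(\Lambda)}$ and conclude it lies in $\bigcap_i K_i$. For finite families the construction above is clean; for infinite families one either has to invoke completeness of the lattice of $\nu$-kernels (Proposition~\ref{lem_kernels_algebra}) together with the infinite distributive law, or restrict attention to finitely generated $\nu$-kernels (as will be done later in the paper, e.g.\ for polars and principal $\nu$-kernels). A secondary issue is a careful check that the supertropical wedge commutes with evaluation, namely $(|f|\wedge|g|)(\bfa) = |f|(\bfa)\wedge|g|(\bfa)$, which falls out of the formula $x\wedge y = (x^*+y^*)^*$ and the pointwise definitions in Example~\ref{vloc31}.
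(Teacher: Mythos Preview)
The paper does not give its own proof of this proposition; it simply cites \cite[Proposition (4.2.6)]{AlgAspTropMath}. So there is nothing to compare against at the level of argument, and the question becomes whether your proof is correct.

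Your argument for $\Skel(K_1K_2)=Z_1\cap Z_2$ is fine. One small simplification: by Remark~\ref{bfacts1}(iv) the product $K_1K_2$ is literally $\{fg:f\in K_1,\,g\in K_2\}$, so your direct computation $h(\bfa)=f(\bfa)g(\bfa)\nucong 1$ already covers every element, and the extra step about ``arbitrary finite sums'' via Lemma~\ref{oneset} is not needed.

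For the second identity your finite-index argument is correct: the convexity step $1\le_\nu h\le_\nu |f_j|\Rightarrow h\in K_j$ is exactly Remark~\ref{bfacts1}(ii), and the pointwise compatibility $(|f|\wedge|g|)(\bfa)=|f|(\bfa)\wedge|g|(\bfa)$ follows from the definition of $\wedge$ in $\FunSF$. Your diagnosis of the infinite case is also right, and in fact the paper itself flags this in Note~\ref{compl} immediately after the proposition. A concrete obstruction: take $K_m=\langle \la_1/\beta_m\rangle$ in $\overline{\mathscr{R}(\la_1)}$ with $\{\beta_m\}$ dense; then each $Z_m=\{\beta_m\}$, yet any $f\in\bigcap_m K_m$ must satisfy $f(\beta_m)\nucong 1$ for all $m$ and hence $f\equiv 1$ by piecewise linearity, so $\Skel(\bigcap_m K_m)=\mathscr{R}\ne\bigcup_m Z_m$. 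Neither the infinite distributive law of Proposition~\ref{lem_kernels_algebra} nor restricting to finitely generated kernels rescues this; the statement should be read for finite families (which is how it is used in Corollary~\ref{cor_principal_skel_correspondence} and thereafter).
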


\begin{note}\label{compl} The proposition fails miserably when we take $\wedge$ of an infinite
set. For example, we could take $f_m $ to be a series of constants
approaching $1.$ Then $\inf \{ f_m \} = 1,$ whose $\onenu$-set is
$F^{(n)},$ although $\Skel(f_m) = \emptyset$ for each $m$.
\end{note}

\begin{defn}
A $\onenu$-set $Z$  is said to be  \textbf{principal}  if there
exists $f \in \overline{F(\Lambda)}$ such that $Z = \Skel(f)$.
\end{defn}

\begin{defn}\label{defn_principal_kernels_in_semifield_of_fractions}
Denote the collection of $\onenu$-sets in $F^{(n)}$ by
$\Skl(F^{(n)})$ and the collection of principal $\onenu$-sets in
$F^{(n)}$ by $\hSkl$. ($(F^{(n)})$ will be understood in the
context.)
\end{defn}

Although principal $\onenu$-sets are analogous to hypersurfaces,
they are more pervasive because of Lemma~\ref{rem_max}.

\begin{prop}\label{prop_skel_ker_relations0}$ $
 $\hSkl$ is closed under finite unions and intersections.
\end{prop}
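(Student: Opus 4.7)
The plan is to reduce the statement to the binary case and then invoke the two preceding lemmas (Lemma~\ref{rem_max} and Lemma~\ref{rem_max0}) to produce a single generator whose $\onenu$-set realizes the union or intersection. Once binary closure is established, induction on the number of terms extends it to any finite family. The main observation is that the $\nu$-norm $|\cdot|$ and the $(*)$-operation let us normalize an arbitrary generator $f$ so that it becomes either $\ge_\nu 1$ or $\le_\nu 1$, without changing its $\onenu$-set.

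For intersection, I would take principal $\onenu$-sets $\Skel(f)$ and $\Skel(g)$, replace $f,g$ by $|f|,|g|$ using Lemma~\ref{rem_max0} (which ensures $\Skel(f)=\Skel(|f|)$ and similarly for $g$), and then invoke Lemma~\ref{rem_max}(i), since $|f|,|g|\ge_\nu 1$:
\[
\Skel(f)\cap\Skel(g)=\Skel(|f|)\cap\Skel(|g|)=\Skel(|f|\cdot|g|),
\]
which is manifestly principal. Iterating this over a finite collection $\{f_1,\dots,f_m\}$ yields $\bigcap_{i=1}^m\Skel(f_i)=\Skel\bigl(\prod_{i=1}^m|f_i|\bigr)\in\hSkl$.

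For union, again appeal to Lemma~\ref{rem_max0} to write $\Skel(f)=\Skel(|f|^{-1})$ and $\Skel(g)=\Skel(|g|^{-1})$; now $|f|^{-1},|g|^{-1}\le_\nu 1$, so Lemma~\ref{rem_max}(ii) gives
\[
\Skel(f)\cup\Skel(g)=\Skel(|f|^{-1})\cup\Skel(|g|^{-1})=\Skel\bigl(|f|^{-1}\wedge|g|^{-1}\bigr),
\]
which again is principal. An easy induction extends this to $\bigcup_{i=1}^m\Skel(f_i)=\Skel\bigl(\bigwedge_{i=1}^m|f_i|^{-1}\bigr)\in\hSkl$.

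There is no serious obstacle here; the proof is a direct bookkeeping exercise combining the two preceding lemmas. The only point requiring any care is the reduction step: one must invoke Lemma~\ref{rem_max0} to replace an arbitrary generator by one comparable to $1$ in the appropriate direction before applying Lemma~\ref{rem_max}, so that the hypotheses $\ge_\nu 1$ (for the intersection case) and $\le_\nu 1$ (for the union case) are satisfied.
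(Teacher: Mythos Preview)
Your proof is correct and follows exactly the approach the paper intends: the paper's proof consists of the single phrase ``By the lemma,'' and you have simply made explicit how Lemma~\ref{rem_max0} normalizes the generators so that the hypotheses of Lemma~\ref{rem_max}(i) and~(ii) apply. The only cosmetic point is that in the $\nu$-setting one might prefer to write $|f|^*$ (or use $f\wedge f^{-1}$ directly, as in Lemma~\ref{rem_max0}) rather than $|f|^{-1}$, but the paper itself uses the $^{-1}$ notation loosely in Lemma~\ref{rem_max0}, so this is immaterial.
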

\begin{proof} By  the lemma.
\end{proof}

As a special case of Proposition \ref{prop_ker_skel_correspondence}
we have

\begin{cor}\label{cor_principal_skel_correspondence}
For $f,g \in \overline{F(\Lambda)}$,
\begin{equation}\label{eq_1_prop_principal_skel_correspondence}
\Skel(\langle f \rangle  \langle g \rangle ) = \Skel(f) \cap
\Skel(g);
\end{equation}
\begin{equation}\label{eq_2_prop_principal_skel_correspondence}
\Skel(\langle f \rangle \cap \langle g \rangle) = \Skel(f) \cup
\Skel(g).
\end{equation}
\end{cor}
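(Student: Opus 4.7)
The plan is to derive the corollary as a direct specialization of Proposition~\ref{prop_ker_skel_correspondence}, combined with Lemma~\ref{oneset} which identifies $\Skel$ of a generating set with $\Skel$ of the $\nu$-kernel it generates.

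First I would set $K_1 = \langle f \rangle$ and $K_2 = \langle g \rangle$ in Proposition~\ref{prop_ker_skel_correspondence}. The proposition then yields immediately
\[
\Skel(\langle f \rangle \langle g \rangle) = \Skel(\langle f \rangle) \cap \Skel(\langle g \rangle)
\]
and
\[
\Skel(\langle f \rangle \cap \langle g \rangle) = \Skel(\langle f \rangle) \cup \Skel(\langle g \rangle),
\]
so it remains only to rewrite the right-hand sides in terms of $\Skel(f)$ and $\Skel(g)$ themselves.

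For this final step I would invoke Lemma~\ref{oneset}, applied to the singleton sets $S_1 = \{f\}$ and $S_2 = \{g\}$: since $\langle f \rangle = \langle \{f\} \rangle$ and $\langle g \rangle = \langle \{g\} \rangle$, the lemma gives $\Skel(\langle f \rangle) = \Skel(f)$ and $\Skel(\langle g \rangle) = \Skel(g)$. Substituting these equalities into the two displays above produces the stated identities \eqref{eq_1_prop_principal_skel_correspondence} and \eqref{eq_2_prop_principal_skel_correspondence}.

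There is no real obstacle here; the work has already been done in the proposition and lemma, and the corollary is simply an unpacking of their content in the principal case. If any subtlety arises, it would be the book-keeping check that $\langle f \rangle \langle g \rangle$ (the product of $\nu$-kernels in the sense of Remark~\ref{bfacts1}(iv)) and $\langle f \rangle \cap \langle g \rangle$ are legitimate $\nu$-kernels of $\overline{F(\Lambda)}$ to which the proposition applies---but this is guaranteed by Remark~\ref{bfacts1}(iv)--(v), so no extra argument is required.
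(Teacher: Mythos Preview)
Your proposal is correct and matches the paper's own approach: the paper presents this corollary simply as ``a special case of Proposition~\ref{prop_ker_skel_correspondence},'' and your write-up spells out exactly that specialization, supplying the identification $\Skel(\langle f\rangle)=\Skel(f)$ via Lemma~\ref{oneset}. No further argument is needed.
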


\begin{prop}\label{gen21}
 If $\langle f \rangle$ is a principal $\nu$-kernel generated by $f \in \overline{F(\Lambda)}$, then $\Skel(f) = \Skel(\langle f \rangle)$.
Consequently, $\Skel(f) = \Skel(f')$ for any generator $f'$
of~$\langle f \rangle$.
\end{prop}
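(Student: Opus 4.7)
The plan is to observe that this proposition is essentially an immediate specialization of Lemma \ref{oneset}, which already gives $\Skel(S_1) = \Skel(\langle S_1 \rangle)$ for any subset $S_1 \subseteq \overline{F(\Lambda)}$. Applying that lemma with the singleton $S_1 = \{f\}$ yields
\[
\Skel(f) = \Skel(\{f\}) = \Skel(\langle \{f\} \rangle) = \Skel(\langle f \rangle),
\]
which is the first assertion.

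For the consequence, suppose $f'$ is any generator of $\langle f \rangle$, so that $\langle f' \rangle = \langle f \rangle$. Then applying the first part of the proposition to $f'$ in place of $f$ gives
\[
\Skel(f') = \Skel(\langle f' \rangle) = \Skel(\langle f \rangle) = \Skel(f),
\]
as claimed.

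Since Lemma \ref{oneset} has already done the substantive work (verifying that the set $\{g \in \overline{F(\Lambda)} : g(\bfa) \nucong 1\}$ is closed under products, $\ast$-inverses, sums, and $\nu$-convex combinations, hence is itself a $\nu$-kernel containing any generating set $S_1$, forcing the nontrivial inclusion $\Skel(S_1) \subseteq \Skel(\langle S_1 \rangle)$), there is no real obstacle here. The only conceivable friction would be notational: one must justify that $\Skel(\{f\})$ and $\Skel(f)$ denote the same set, but this is exactly the convention adopted right after the definition of $\Skel$. The proof is therefore a two-line deduction from Lemma \ref{oneset} and the fact that two generators of a principal $\nu$-kernel yield the same $\nu$-kernel.
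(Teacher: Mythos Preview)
Your proof is correct and is in fact cleaner than the paper's own argument. The paper does not invoke Lemma~\ref{oneset}; instead it uses the explicit order-theoretic description of a principal $\nu$-kernel from Corollary~\ref{cor_principal_ker_by_order}: for any $g \in \langle f \rangle$ there is $n$ with $|f|^{-n} \le_\nu g \le_\nu |f|^n$, so evaluating at $\bfa \in \Skel(f)$ squeezes $g(\bfa)$ between $1^{-n}$ and $1^n$, giving $g(\bfa) \nucong 1$ and hence $\Skel(f) \subseteq \Skel(\langle f \rangle)$. Your route via Lemma~\ref{oneset} is more conceptual and avoids reproving closure properties in the principal case; the paper's route is more hands-on and makes the mechanism visible (the bound $|f|^{-n} \le_\nu g \le_\nu |f|^n$), which is occasionally useful elsewhere when one needs quantitative control rather than just membership in the $\onenu$-set. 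Both yield the consequence for alternate generators $f'$ in the same one-line way.
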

\begin{proof}
By Corollary \ref{cor_principal_ker_by_order}, $g \in
\overline{F(\Lambda)}$ is in $\langle f \rangle$ if and only if
there exists some $n \in \mathrm{N}$ such that   $|f|^{-n}
\leq_\nu g \leq_\nu |f|^{n}$. Hence, for every $g \in \langle f
\rangle$, $1 = 1^{-n} \le_\nu g(\bfa) \le_\nu 1^{n} = 1$ for each
$\bfa \in \Skel(f)$, implying $g(\bfa)\nucong 1$.

In particular, $\Skel(f) =  \Skel(\langle f \rangle)  =
\Skel(\langle f' \rangle) = \Skel(f')$.
\end{proof}

%
%
%
%

\subsection{Kernels of $\onenu$-sets}\label{Subsection:Kernels_of_1-Sets}$ $

In view of Lemma~\ref{oneset}, the operator $\Skel$ can be
restricted to $\nu$-kernels. In the other direction, we now
construct an operator that associates a $\nu$-kernel of the
\vsemifield0\ of fractions $\overline{F(\Lambda)}$ to any subset
of $ F^{(n)} $. Then we note that the operator $\Ker$ and the
operator $\Skel$ defined in the previous subsection are inverses
of each other.

\begin{defn}
Given a subset $Z$ of $F^{(n)}$, we define

\begin{equation}
\Ker(Z) = \{ f \in \overline{F(\Lambda)} \ : \ f(\bfa) \nucong 1,
\ \forall \bfa \in Z    \}.
\end{equation}
\end{defn}

 $\Ker(Z)$ is a $\nu$-kernel of $\overline{F(\Lambda)}$, by the same argument as given in
Lemma~\ref{oneset}.

\begin{rem}\label{rem_ker_properties_1}
The following statements hold for $Z,Z_i \subset F^{(n)}$:
\begin{enumerate}

  \item If $Z_1 \subseteq Z_2$, then $\Ker(Z_2) \subseteq \Ker(Z_1)$.
  \item $\Ker(\bigcup_{i \in I}Z_i) = \bigcap_{i \in I}\Ker(Z_i)$.
  \item $K \subseteq \Ker(\Skel(K))$ for any $\nu$-kernel $K$ of $\overline{F(\Lambda)}$.
  \item $Z \subseteq \Skel(\Ker(Z))$.
\end{enumerate}
\end{rem}

\begin{lem}\label{kersk} If $Z \subset F^{(n)}$, then  $\Ker (Z)$ is a
$\nu$-kernel.
\end{lem}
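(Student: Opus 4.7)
The plan is a pointwise verification: since $\Ker(Z)$ is carved out by the pointwise conditions $f(\bfa) \nucong 1$ for all $\bfa \in Z$, I simply check each clause of Definition~\ref{vsemi24} (subgroup + $\nu$-convexity) one evaluation at a time. The remark immediately preceding the lemma in fact tells us that ``the same argument as given in Lemma~\ref{oneset}'' applies, so the proof essentially mirrors the one-set case, and my plan is to follow that blueprint.

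First, I would note that the constant function $\fone \in \Ker(Z)$ trivially, so $\Ker(Z) \neq \emptyset$. Then I would verify the subgroup property: given $f,g \in \Ker(Z)$, for each $\bfa \in Z$,
\[
(fg)(\bfa) = f(\bfa)g(\bfa) \nucong 1\cdot 1 = 1,
\qquad
f^{*}(\bfa) = f(\bfa)^{*} \nucong \fone^{*}= \fone,
\]
so $fg, f^{*} \in \Ker(Z)$; hence $\Ker(Z)$ is a subgroup of the multiplicative group of $\overline{F(\Lambda)}$. Next I would check $\nu$-convexity: given $f,g \in \Ker(Z)$ and $\alpha,\beta \in \overline{F(\Lambda)}$ with $\alpha + \beta \nucong \fone$, for each $\bfa \in Z$ one has
\[
(\alpha f + \beta g)(\bfa) = \alpha(\bfa)f(\bfa) + \beta(\bfa)g(\bfa) \nucong \alpha(\bfa) + \beta(\bfa) = (\alpha+\beta)(\bfa) \nucong 1,
\]
so $\alpha f + \beta g \in \Ker(Z)$.

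There is no real obstacle here; the only mild subtlety is making sure we are using the $\nu$-versions of ``subgroup'' and ``convex'' (which allow equality up to $\nucong$ rather than strict equality), and that the scalars $\alpha,\beta$ in the convexity condition are taken from the ambient \vsemifield0 $\overline{F(\Lambda)}$ rather than from $F$, so that the pointwise verification above makes sense. Once that is noted, the three displays are immediate from the definitions of the operations $\cdot$, $(*)$, $+$ on the function \vsemifield0 $\overline{F(\Lambda)}$ and from the fact that $\nu$-equivalence is respected by these operations.
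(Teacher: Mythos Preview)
Your proof is correct and follows essentially the same pointwise verification as the paper; in fact your version is more carefully written than the paper's own proof, which is a one-line sketch (and contains minor typos). Both arguments mirror the proof of Lemma~\ref{oneset}, as the paper itself remarks just before the definition of $\mathcal{K}$-kernels.
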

\begin{proof} It is closed under multiplication, and if $g_1 + g_2 \nucong 1$ then
$(g_1f + g_2 f )(\bfa) = g_1(\bfa) + g_2  (\bfa) \nucong \bfa.$
\end{proof}

\begin{defn}\label{defn_k_kernels}
A \textbf{$\mathcal{K}$-kernel} of $\overline{F(\Lambda)}$ is a
$\nu$-kernel of the form $\Ker(Z)$, where $Z$ is a $\onenu$-set.
\end{defn}

\begin{lem}\label{prop_skel_ker_relations}$ $
\begin{enumerate}
  \item $\Skel(\Ker(Z)) = Z$   for any  $\onenu$-set $Z$.
  \item $\Ker(\Skel(K)) = K$, for any  $\mathcal{K}$-kernel  $K$.
\end{enumerate}
\end{lem}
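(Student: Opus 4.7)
The plan is to observe that this is the standard closure statement for a Galois connection: the two operators $\Skel$ and $\Ker$ satisfy the containment relations in Remark~\ref{rem_ker_properties_1}(iii) and (iv), and the key point is that restricting to $\onenu$-sets on one side and to $\mathcal{K}$-kernels on the other forces these containments to become equalities. I expect no serious obstacle; the argument is essentially bookkeeping with the monotonicity recorded in Proposition~\ref{prop_skel_property1}(i) and Remark~\ref{rem_ker_properties_1}.

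For (1), fix a $\onenu$-set $Z$. By definition there exists $S \subseteq \overline{F(\Lambda)}$ with $Z = \Skel(S)$. For each $f \in S$ and each $\bfa \in Z = \Skel(S)$ one has $f(\bfa)\nucong 1$, so $f \in \Ker(Z)$; hence $S \subseteq \Ker(Z)$. Applying $\Skel$ and using the order-reversing property from Proposition~\ref{prop_skel_property1}(i) yields $\Skel(\Ker(Z)) \subseteq \Skel(S) = Z$. The reverse inclusion $Z \subseteq \Skel(\Ker(Z))$ is exactly Remark~\ref{rem_ker_properties_1}(iv).

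For (2), suppose $K$ is a $\mathcal{K}$-kernel, so $K = \Ker(Z_0)$ for some $\onenu$-set $Z_0$. Applying (1) gives $\Skel(K) = \Skel(\Ker(Z_0)) = Z_0$, whence $\Ker(\Skel(K)) = \Ker(Z_0) = K$. (Alternatively, Remark~\ref{rem_ker_properties_1}(iii) provides $K \subseteq \Ker(\Skel(K))$ directly, and the opposite inclusion follows from the same substitution $K = \Ker(Z_0)$ combined with Remark~\ref{rem_ker_properties_1}(i), since $Z_0 \subseteq \Skel(\Ker(Z_0))$ forces $\Ker(\Skel(\Ker(Z_0))) \subseteq \Ker(Z_0)$.)

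In both parts the only non-formal ingredient is the definition of $\onenu$-set, respectively $\mathcal{K}$-kernel: without the assumption that $Z$ or $K$ already lies in the image of the opposite operator, only inclusions hold (as the Remark records). Thus the statement is really the assertion that $\Skel$ and $\Ker$ restrict to mutually inverse bijections between $\onenu$-sets and $\mathcal{K}$-kernels, which is the Zariski-type correspondence the paper is setting up.
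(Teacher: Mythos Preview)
Your proof is correct and follows exactly the approach the paper indicates: it is the standard Galois-connection closure argument, invoking Remark~\ref{rem_ker_properties_1} and the order-reversal of Proposition~\ref{prop_skel_property1}. The paper's own proof is a one-line reference to these same ingredients; you have simply written out the details.
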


\begin{proof} This is a standard argument, applying Remark~\ref{rem_ker_properties_1} to the reverse
inclusion of   Proposition \ref{prop_skel_property1}.
\end{proof}

By Lemma~\ref{prop_skel_ker_relations}, we have

\begin{thm}\label{prop_correspondence}
There is a $1:1$, order-reversing correspondence
\begin{equation}
\{ \text{$\onenu$-sets of } \ F^{(n)}  \} \rightarrow \{
\mathcal{K}- \text{kernels  of } \ \overline{F(\Lambda)} \},
\end{equation}
given by $Z \mapsto \Ker(Z)$;  the reverse map is given by $K
\mapsto \Skel(K)$.
\end{thm}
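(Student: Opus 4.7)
The statement is essentially a repackaging of the immediately preceding Lemma~\ref{prop_skel_ker_relations}, so the plan is to assemble the pieces already in place rather than to do new work. First I would verify that the two maps are well defined on the stated domain and codomain: $\Ker(Z)$ is always a $\nu$-kernel of $\overline{F(\Lambda)}$ by Lemma~\ref{kersk}, and when $Z$ is a $\onenu$-set it is by definition a $\mathcal{K}$-kernel (Definition~\ref{defn_k_kernels}); conversely, the image $\Skel(K)$ of a $\mathcal{K}$-kernel $K$ is by construction a $\onenu$-set, since $\Skel$ of any subset of $\overline{F(\Lambda)}$ yields one.

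Next I would show that the two maps are mutual inverses. This is exactly Lemma~\ref{prop_skel_ker_relations}: part~(i), $\Skel(\Ker(Z)) = Z$ for any $\onenu$-set $Z$, says that starting with a $\onenu$-set, applying $\Ker$ and then $\Skel$ returns it; part~(ii), $\Ker(\Skel(K)) = K$ for any $\mathcal{K}$-kernel $K$, is the analogous statement in the other direction. Together these identities give a bijection between the two collections.

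Finally, the order-reversing property follows directly from Remark~\ref{rem_ker_properties_1}(i), namely $Z_1 \subseteq Z_2 \Rightarrow \Ker(Z_2) \subseteq \Ker(Z_1)$, and from Proposition~\ref{prop_skel_property1}(i), namely $S_1 \subseteq S_2 \Rightarrow \Skel(S_2) \subseteq \Skel(S_1)$, applied to $\nu$-kernels.

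There is no genuine obstacle here: the content of the theorem has already been distilled into Lemma~\ref{prop_skel_ker_relations}, whose proof was itself a formal consequence of Remark~\ref{rem_ker_properties_1} and Proposition~\ref{prop_skel_property1}. The only care needed is to remember that the correspondence must be stated on $\mathcal{K}$-kernels (not arbitrary $\nu$-kernels), since the inclusion $K \subseteq \Ker(\Skel(K))$ from Remark~\ref{rem_ker_properties_1}(iii) can be strict in general, as illustrated in the forthcoming Example~\ref{nonuniq} alluded to in the introduction; this is exactly why the codomain is restricted to $\mathcal{K}$-kernels, on which $\Ker\circ\Skel$ is the identity.
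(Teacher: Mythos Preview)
Your proposal is correct and matches the paper's approach exactly: the paper's proof consists of a single sentence citing Lemma~\ref{prop_skel_ker_relations}, and you have simply unpacked this citation together with the well-definedness and order-reversal details that the paper leaves implicit.
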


One of the main goals in this paper is to find an intrinsic
characterization of $\mathcal{K}$-kernels, especially the principal
$\mathcal{K}$-kernels;  this is only done in
Corollary~\ref{corlatticegeneratedbyCIkernels}.
%
%

\begin{defn}   
The  \textbf{\skeletonb}\  of a  set $ S \subseteq \FunF$, denoted
$\skel(S)$, is $\bigcap \{\skel(f): f \in S\}.$ (Usually $S$ is
taken to be finite.)
\end{defn}

We need to cut down   the class of \skeletonbs\ for tropical
applications, in view of the following examples.

\begin{exampl} $ $\begin{enumerate}

\item Let $f = \la _1 + \la _2$ and $g=\fone.$

\item Let $f = \la _1 + \la _2 +\fone$ and $g =  \la _1^3 + \la
_2^2 +\fone $.

\item Let $f = \la _1 + \la _2 +\fone$ and $g =  \la _1^2 + \la
_2^2 +\fone $.

\medskip

In each of (1)--(3), $\skel(\frac f g)_\tT= \{ (\a,\fone): \a
\le_\nu \fone\} \cup \{ (\fone,\a): \a \le_\nu \fone\} $ is not a
 variety.

\medskip

 \item Let $f = \la _1 + \la _2 +\fone$ and $g =  \la _1 + \la _2
$. Then $\skel(\frac f g)_\tT = \{ (\a,\a): \a \ge_\nu \fone\} $.
\end{enumerate}
\end{exampl}

\subsection{The coordinate \vsemifield0 of a $\onenu$-set}$ $

\begin{defn}\label{def_coord_semifield_1} For $X \subset F^{(n)},$ The \textbf{coordinate
\vsemifield0} $F(X)$ of a $\onenu$-set $X$ is the set of
restrictions of the rational functions $\overline{F(\Lambda)}$ to
$X$.

$$\phi_{X}~:~  \overline{F(\Lambda)}~\rightarrow~F(X)$$
denotes the restriction map  $h \mapsto h|_{X}$.
\end{defn}

\begin{prop}\label{prop_coord_semifield_100}
$\phi_{X}$ is an onto \vsemifield0 homomorphism.
\end{prop}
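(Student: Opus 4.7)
The plan is to verify the three assertions packaged into the statement: that $\phi_X$ is a homomorphism of \semirings0, that it respects the $\nu$-structure, and that it is surjective. Each of these is essentially a bookkeeping exercise based on Definition~\ref{def_coord_semifield_1}, since $F(X)$ is defined so as to make surjectivity automatic and all operations on the target are defined pointwise.

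First I would dispense with surjectivity. Since $F(X)$ is defined precisely as $\{h|_X : h \in \overline{F(\Lambda)}\}$, the map $\phi_X : h \mapsto h|_X$ is onto by construction.

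Next I would verify that $\phi_X$ preserves the \semifield0 operations. The relevant point is that $\overline{F(\Lambda)}$ sits inside $\Fun(F^{(n)}, F)$, whose operations are componentwise, and $F(X) \subseteq \Fun(X, F)$ has operations inherited componentwise as well. Thus for $h,h' \in \overline{F(\Lambda)}$ and any $\bfa \in X$,
\[
(hh')|_X(\bfa) = h(\bfa) h'(\bfa) = h|_X(\bfa)\, h'|_X(\bfa), \qquad (h+h')|_X(\bfa) = h(\bfa)+h'(\bfa) = h|_X(\bfa)+h'|_X(\bfa),
\]
so $\phi_X(hh') = \phi_X(h)\phi_X(h')$ and $\phi_X(h+h') = \phi_X(h)+\phi_X(h')$; moreover $\phi_X(\fone) = \fone|_X$ is the multiplicative identity of $F(X)$. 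Hence $\phi_X$ is a \semiring0 homomorphism, and since inversion is also pointwise on $\overline{F(\Lambda)} \subseteq \Fun(F^{(n)}, F)$ (on the tangible part), it is a \semifield0 homomorphism onto its image.

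Finally I would check that $\phi_X$ respects the $\nu$-structure, namely that it carries tangible elements to tangible elements, ghosts to ghosts, and commutes with the ghost map $\tilde\nu$ induced on $\Fun(X,F)$ as in Example~\ref{vloc31}. This is immediate from the pointwise definition $\tilde\nu(f)(\bfa) = \nu(f(\bfa))$: for each $\bfa \in X$,
\[
\tilde\nu\bigl(\phi_X(h)\bigr)(\bfa) = \nu(h(\bfa)) = \tilde\nu(h)(\bfa) = \phi_X\bigl(\tilde\nu(h)\bigr)(\bfa).
\]
There is no genuine obstacle here; the only subtlety worth flagging is purely formal, namely that $F(X)$ inherits a bona fide \vsemifield0 structure (in particular, that the image of the tangible submonoid $\overline{\tT[\Lambda]}\cdot \overline{\tT[\Lambda]}^{-1}$ under restriction is a cancellative group in $\Fun(X,F)$), which follows from Lemma~\ref{vsemi20} together with the observation that $F(X)$ is a homomorphic image of the \vsemifield0 $\overline{F(\Lambda)}$.
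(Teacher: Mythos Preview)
Your proposal is correct and is precisely the ``straightforward verification'' the paper invokes as its entire proof; you have simply written out the pointwise checks in full. There is no divergence in approach.
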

\begin{proof} Straightforward verification.
\end{proof}

\begin{prop}\label{prop_coord_semifield_1}
$F(X)$ is a \vdomain0, isomorphic to
$\overline{F(\Lambda)}/\Ker(X).$
\end{prop}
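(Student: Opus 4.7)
The plan is to apply a first-isomorphism-theorem argument adapted to the $\nu$-kernel framework of Section~\ref{ker0}. By Proposition~\ref{prop_coord_semifield_100}, $\phi_X$ is an onto \vsemifield0 homomorphism, so its image $F(X)$ inherits a \vsemifield0 structure, and in particular is a \vdomain0; this handles the first assertion.

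For the isomorphism, the key step is to identify the $\nu$-kernel associated to $\phi_X$ as $\Ker(X)$. Via the correspondence between $\nu$-congruences and $\nu$-kernels in Remark~\ref{bfacts1}(i), the congruence $\Cong_{\phi_X}$ of $\phi_X$ (in the sense of Remark~\ref{cong10}(ii)) corresponds to the $\nu$-kernel $\{f \in \overline{F(\Lambda)} : \phi_X(f) \nucong 1\}$. Unwinding the definitions, $\phi_X(f) \nucong 1$ in $F(X)$ says $f|_X(\bfa) \nucong 1$ for every $\bfa \in X$, which is exactly the defining condition $f \in \Ker(X)$; moreover, by Lemma~\ref{kersk}, $\Ker(X)$ is indeed a $\nu$-kernel of $\overline{F(\Lambda)}$.

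With this identification in hand, Remark~\ref{cong10}(ii) factors $\phi_X$ through the quotient $\overline{F(\Lambda)}/\Cong_{\phi_X} = \overline{F(\Lambda)}/\Ker(X)$, producing an injective induced \vsemifield0 homomorphism $\tilde\phi_X$ into $F(X)$; since $\phi_X$ is surjective, $\tilde\phi_X$ is also surjective, hence a \vsemifield0 isomorphism $\overline{F(\Lambda)}/\Ker(X) \cong F(X)$. The argument is essentially a routine invocation of the first isomorphism theorem; the only genuine bookkeeping point is to confirm that the pointwise $\nu$-equivalence condition defining $\Ker(X)$ matches the $\nu$-kernel extracted from $\Cong_{\phi_X}$ via the general correspondence, which is immediate from the definition of $\phi_X$ as the restriction map $h \mapsto h|_X$.
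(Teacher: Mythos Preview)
Your proof is correct and follows essentially the same approach as the paper: identify the $\nu$-kernel of the restriction map $\phi_X$ as $\Ker(X)$, then invoke the first isomorphism theorem. The paper's own proof is a one-line version of this, stating only that the restriction map has $\nu$-kernel equal to $\Ker(X)$; your proposal spells out the same argument with explicit references to the congruence--kernel correspondence and the factorization of $\phi_X$.
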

\begin{proof}
The restriction map has $\nu$-kernel equal to those functions $f$
which restrict to 1, which is $\Ker(X).$
\end{proof}

When $X' \supseteq X$ is another $\onenu$-set, further restriction
gives us a \semiring0 homomorphism $F[X]\to F[X'],$ and chains of
these homomorphisms give us an algebraic view of dimension, which
is studied at the end of~\cite{AlgAspTropMath}.

 \section{The transition between tropical varieties and \skeletonbs}\label{supertrop16}

%
%

%


In view of Lemma~\ref{kersk}, we would like to pass back and forth
from tropical varieties to \skeletonbs. This is one of our main
themes.

\subsection{The hat construction}\label{hat1}$ $

We start by passing to the corner kernel locus from the corner
locus obtained from (super)tropical polynomials. Towards this end,
we formulate the following notion. We say that a function $f$
\textbf{dominates} $g$ at $\bfa$ if
 $f(\bfa)\ge_\nu g(\bfa);$ $f$ \textbf{dominates} $g$ if $f$  dominates  $g$ at each
 point $\bfa$.

\begin{defn}\label{datom} A \textbf{molecule} is a rational function
$  hg^*$ where $h$ is a  monomial and $g$ is a polynomial. The
molecule is \textbf{tangible} if $h$ and $g$ are tangible. (In this
case, $ hg^*  = \frac hg$.) Given a polynomial $f \in F[\Lambda] =
\sum_{i=1}^{k}
 f_i $, written as a sum of monomials, define the \textbf{molecules of} $f $
 to be the  molecules
\begin{equation}
\widehat{f} _i =   {f_i}\left(\sum_{j \neq i}f_j\right)^*\in
\overline{F(\Lambda)},
\end{equation}
and \begin{equation} \widehat{f}  =  \sum_{i=1}^{k} \widehat{f}
_i\end{equation}
\end{defn}

Usually $f$ is tangible, in which case we have
\begin{equation}
\widehat{f} _i =  \frac{f_i}{\sum_{j \neq i}f_j}\in
\overline{F(\Lambda)},
\end{equation}

\begin{defn}
Given a polynomial $f = \sum h_i$ written as a sum of monomials, and
selecting one of these monomials $h = h_i$, define $f_{(h)} $ to be
$ \sum _{h_j \ne h} h_j$. Then   $$\hat f = \sum _h hf_{(h)}^*.$$
\end{defn}

\begin{lem}\label{kersk3}
Given $f = \sum h$ written as a sum of monomials, then $\hat f \in
\tT^{+}(\la_1, \dots, \la _n)_\nu.$
\end{lem}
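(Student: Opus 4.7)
The plan is to establish two properties of $\hat f = \sum_{i=1}^{k} h_i f_{(h_i)}^{*}$: first, that $\hat f(\bfa) \ge_\nu 1$ at every point $\bfa \in F^{(n)}$, and second, that $\hat f$ is expressible as the quotient of two elements of $\overline{\tT[\Lambda]}$, placing it in the tangible positive cone $\tT^{+}(\la_1, \dots, \la_n)_\nu$.

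First I would handle the pointwise inequality. Fix $\bfa \in F^{(n)}$ and choose an index $i_0$ for which $h_{i_0}(\bfa)$ is $\nu$-maximal among $h_1(\bfa), \dots, h_k(\bfa)$. Since $f_{(h_{i_0})} = \sum_{j \ne i_0} h_j$ is a $+$-sum of monomials each of $\nu$-value at most $h_{i_0}(\bfa)$, $\nu$-bipotency of $F$ yields $f_{(h_{i_0})}(\bfa) \le_\nu h_{i_0}(\bfa)$, so the molecule $\widehat{f}_{i_0}(\bfa) = h_{i_0}(\bfa)\, f_{(h_{i_0})}(\bfa)^{*} \ge_\nu 1$. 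Because $+$ coincides with the semilattice sup $\vee$ with respect to $\nu$-order, we conclude $\hat f(\bfa) \ge_\nu \widehat{f}_{i_0}(\bfa) \ge_\nu 1$.

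Next I would verify the tangibility. Each $h_i$ is a tangible monomial, and each $f_{(h_i)}$ lies in $\overline{\tT[\Lambda]}$ (its underlying polynomial has tangible coefficients), so each molecule $h_i f_{(h_i)}^{*}$ belongs to the subgroup of $\overline{F(\Lambda)}$ generated by fractions with numerator and denominator in $\overline{\tT[\Lambda]}$. Putting all $k$ molecules over the common denominator $D = \prod_{i=1}^{k} f_{(h_i)}$ yields
$$ \hat f \;=\; \frac{\sum_{i=1}^{k} h_i \prod_{j \ne i} f_{(h_j)}}{D}, $$
which is again a ratio of elements of $\overline{\tT[\Lambda]}$; together with the pointwise inequality of the previous step, this places $\hat f$ in $\tT^{+}(\la_1, \dots, \la_n)_\nu$.

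The main obstacle is justifying that the numerator above is indeed in $\overline{\tT[\Lambda]}$: a sum of tangible polynomials can take ghost values as a function at points where the $\nu$-values of two monomials coincide, so tangibility must be read formally. One therefore works at the level of the $\nu$-domain $\overline{F[\Lambda]}$ with its designated tangible submonoid given in Remark~\ref{vsemi23}, and then passes to the $\nu$-semifield of fractions $\overline{F(\Lambda)}$ via Lemma~\ref{vsemi20}. Under this interpretation the subgroup of tangible ratios is closed under the manipulations performed above, and the lemma follows.
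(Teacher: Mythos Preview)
Your proof is correct and takes essentially the same approach as the paper. The paper's argument is terser: it also picks a dominating monomial $h$ at $\bfa$ to get $\hat f(\bfa) \ge_\nu \fone$, and then notes separately that when two monomials $g,h$ co-dominate one in fact has $\hat f(\bfa) = \fone^\nu$. The paper does not explicitly justify the tangibility part at all, so your second paragraph (together with your careful caveat about sums of tangibles taking ghost values as functions, which is exactly the right issue to flag) is additional detail beyond what the paper provides.
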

\begin{proof} Write $f = \sum h$ and $\hat f =
\sum _h hf_{(h)}^*.$ If some monomial $h$ dominates at $\bfa,$ then
$$\hat f (\bfa) = hf_{(h)}^*(\bfa) \ge _\nu \fone.$$ Thus, we may assume that two
monomials $g,h$ dominate at $\bfa$, and then $$\hat f (\bfa) =
gf_{(g)}^*(\bfa) + hf_{(h)}^*(\bfa) = \fone^\nu.$$
 \end{proof}

\begin{lem}\label{mol}  (i) $\widehat{f} _i (\bfa) \ge_\nu \fone$ iff $f_i(\bfa)$
dominates $f (\bfa)$.

(ii) $\widehat{f} _i (\bfa)  \nucong \fone$ iff, for some $j \ne i,$
$f_i(\bfa) = f_j(\bfa)$ which dominates each monomial. This means
$f_i(\bfa)$ dominates in $f (\bfa)$,  and $\bfa\in \Corn(f) $.
\end{lem}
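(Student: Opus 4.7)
The plan is to unwind the definition $\widehat{f}_i(\bfa) = f_i(\bfa)\cdot\bigl(\sum_{j\ne i} f_j(\bfa)\bigr)^{*}$ and use the bipotent behaviour of addition in the supertropical \vsemifield0 $F$, in which $\sum_{j\ne i} f_j(\bfa)$ is $\nu$-equivalent to $\max_\nu\{f_j(\bfa):j\ne i\}$ by Example~\ref{stropi}.

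For part (i), I would first rewrite $\widehat{f}_i(\bfa)\ge_\nu \fone$ as $f_i(\bfa)\ge_\nu \sum_{j\ne i}f_j(\bfa)$ using the fact that $(*)$ reverses the $\nu$-order (Remark~\ref{aut1}); by $\nu$-bipotence this is equivalent to $f_i(\bfa)\ge_\nu f_j(\bfa)$ for every $j\ne i$. Combined with the trivial inequality $f(\bfa)=\sum_j f_j(\bfa)\ge_\nu f_i(\bfa)$, this is exactly the statement that $f_i(\bfa)$ dominates $f(\bfa)$. The converse direction is obtained by reading the same chain of equivalences backwards.

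For part (ii), I would specialise the argument of (i): $\widehat{f}_i(\bfa)\nucong\fone$ iff $f_i(\bfa)\nucong \sum_{j\ne i}f_j(\bfa)$. Using $\nu$-bipotence once more, this forces $f_i(\bfa)\ge_\nu f_j(\bfa)$ for all $j\ne i$ together with $f_i(\bfa)\nucong f_{j_0}(\bfa)$ for at least one $j_0\ne i$. This is precisely case~(1) of Definition~\ref{def_st_corner_root} applied to the two monomials $f_i$ and $f_{j_0}$, so $\bfa\in\Corn(f)$, and also says that $f_i(\bfa)$ dominates in $f(\bfa)$. Conversely, if two monomials $f_i$ and $f_{j_0}$ both dominate $f$ at $\bfa$, then $\sum_{j\ne i}f_j(\bfa)$ picks up $f_{j_0}(\bfa)\nucong f_i(\bfa)$, and multiplying by $f_i(\bfa)^{-1\,*}\nucong f_i(\bfa)$ gives $\widehat{f}_i(\bfa)\nucong\fone$.

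There is no real obstacle here; the only subtlety is keeping careful track of which manipulations are genuine equalities in $F$ versus $\nu$-equivalences, since $(*)$ is only a multiplicative automorphism and does not strictly preserve addition (Remark~\ref{aut1}). The whole argument is a pointwise check that reduces to the $\nu$-bipotence identity $a+b\nucong a$ when $a\ge_\nu b$, with equality $\nucong a^\nu$ precisely when $a\nucong b$.
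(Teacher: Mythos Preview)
Your argument is correct and is precisely the paper's approach, only spelled out in more detail: the paper's proof of (i) is the single line ``Each side says that the numerator of $\widehat{f}_i$ dominates the denominator,'' and its proof of (ii) just records that $\nu$-equality forces two dominating monomials to coincide so that $f(\bfa)\in\tG$. One minor wording slip: in your converse for (ii) you write ``multiplying by $f_i(\bfa)^{-1\,*}$,'' whereas what you actually use is $\bigl(\sum_{j\ne i}f_j(\bfa)\bigr)^* \nucong f_i(\bfa)^*$, so that $\widehat{f}_i(\bfa)=f_i(\bfa)\cdot\bigl(\sum_{j\ne i}f_j(\bfa)\bigr)^*\nucong f_i(\bfa)\cdot f_i(\bfa)^*\nucong\fone$.
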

\begin{proof} (i) Each side says that the numerator of $\widehat{f} _i$ dominates the
denominator.

(ii)  Clearly $\widehat{f} _i (\bfa) \nucong \fone$ iff, for some $j
\ne i,$ $f_i(\bfa) = f_j(\bfa)$ which dominates each other monomial.
This means $f_i(\bfa)+ f_j(\bfa) = f(\bfa)^\nu$, implying $f(\bfa)=
f(\bfa)^\nu \in \tG.$
\end{proof}

\begin{prop}\label{mol2} The following conditions are equivalent:
\begin{enumerate}
\eroman
\item $\widehat{f}  (\bfa) =  \fone^\nu;$
\item $\widehat{f}  (\bfa) \nucong \fone$;
\item$\bfa\in \Corn(f). $
\end{enumerate}
\end{prop}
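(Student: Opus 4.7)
The plan is to establish the cyclic implications (i) $\Rightarrow$ (ii) $\Rightarrow$ (iii) $\Rightarrow$ (i), leveraging the two parts of Lemma~\ref{mol} together with the unconditional lower bound $\widehat{f}(\bfa) \ge_\nu \fone$ from Lemma~\ref{kersk3}. The implication (i) $\Rightarrow$ (ii) is immediate from the definition of $\nucong$, since $\fone^\nu \nucong \fone$.

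For (ii) $\Rightarrow$ (iii), I would argue by contrapositive. If $\bfa \notin \Corn(f)$ (with $f$ tangible), then exactly one monomial $f_i$ strictly dominates $f$ at $\bfa$; by Lemma~\ref{mol}(i) this gives $\widehat{f}_i(\bfa) >_\nu \fone$. For every $k \neq i$, the dominant term $f_i$ appears in the denominator of $\widehat{f}_k$, so $\widehat{f}_k(\bfa) <_\nu \fone$. Because $\nu$-addition in a $\nu$-bipotent supertropical semifield picks out the max on $\nu$-classes, the sum $\widehat{f}(\bfa) = \sum_k \widehat{f}_k(\bfa)$ equals $\widehat{f}_i(\bfa) >_\nu \fone$, contradicting (ii).

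For (iii) $\Rightarrow$ (i), I would observe that Lemma~\ref{kersk3} already yields $\widehat{f}(\bfa) \ge_\nu \fone$. If $\bfa \in \Corn(f)$, at least two monomials $f_i, f_j$ attain the dominant value at $\bfa$. Lemma~\ref{mol}(ii) gives $\widehat{f}_i(\bfa), \widehat{f}_j(\bfa) \nucong \fone$, while Lemma~\ref{mol}(i) gives $\widehat{f}_k(\bfa) \le_\nu \fone$ for the non-dominant indices. Two $\nu$-equivalent contributions then sum to their common ghost class $\fone^\nu$, and the remaining (smaller) terms do not affect this value, so $\widehat{f}(\bfa) = \fone^\nu$.

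The delicate step that I would verify carefully is the final passage from ``$\widehat{f}_i(\bfa) + \widehat{f}_j(\bfa)$ is a ghost $\nu$-equivalent to $\fone$'' to the exact equality ``$= \fone^\nu$''. This relies on $\nu$ being a bijection on tangible elements: two $\nu$-equivalent tangibles are literally equal, so $f_i(\bfa)/f_j(\bfa) = \fone$ exactly and $\fone + \fone = \fone^\nu$ in any $\nu$-bipotent semifield. When three or more monomials attain the maximum, the denominator $(\sum_{k \neq i}f_k)(\bfa)$ is already a ghost of the dominant $\nu$-value, so $\widehat{f}_i(\bfa) = \fone^\nu$ directly, and the same conclusion follows a fortiori.
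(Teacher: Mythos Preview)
Your proof is correct and follows essentially the same route as the paper: both establish the cycle (i) $\Rightarrow$ (ii) $\Rightarrow$ (iii) $\Rightarrow$ (i) by invoking Lemma~\ref{mol} and the lower bound of Lemma~\ref{kersk3}. The only cosmetic difference is that for (ii) $\Rightarrow$ (iii) the paper argues directly (a dominant $f_i$ must satisfy $\widehat{f}_i(\bfa)\nucong\fone$, hence by Lemma~\ref{mol}(ii) a second dominant monomial exists), whereas you take the contrapositive; your treatment of the exact equality $\widehat{f}(\bfa)=\fone^\nu$ in the (iii) $\Rightarrow$ (i) step, including the case split on two versus three-or-more dominant monomials, is in fact more explicit than the paper's.
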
\begin{proof} (i) $\rightarrow$ (ii). Obvious.

(ii) $\rightarrow$ (iii). By Lemma~\ref{mol}, there are two
monomials $f_i$ and $f_j$ that dominate at $\bfa$, and so $$
\fone^\nu = \widehat{f} _i (\bfa) + \widehat{f} _j (\bfa),$$ which
then is $\widehat{f} (\bfa).$

(iii) $\rightarrow$ (i). $\widehat{f_i}  (\bfa) = \widehat{f_j}
(\bfa) \nucong   \fone^\nu,$ so $$\fone^\nu = \widehat{f_i} + (\bfa)
= \widehat{f_j} (\bfa)= \widehat{f} (\bfa).$$
\end{proof}

\begin{cor}\label{prop_ker_corner1} $\Skel( \widehat{f}) =  \Corn(f)$.
\end{cor}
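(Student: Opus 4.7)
The plan is to derive this as a direct unfolding of definitions combined with the equivalence just established in Proposition \ref{mol2}. Since $\Corn(f)$ is the set of ghost roots of $f$ and $\Skel(\widehat{f})$ is the set of kernel roots of $\widehat{f}$, the corollary is essentially a restatement of the equivalence (ii) $\Leftrightarrow$ (iii) in Proposition \ref{mol2}, applied pointwise to $F^{(n)}$.

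More concretely, I would argue as follows. Fix $\bfa \in F^{(n)}$. By the definition of $\Skel$ immediately preceding Lemma \ref{rem_max}, $\bfa \in \Skel(\widehat{f})$ if and only if $\widehat{f}(\bfa) \nucong \fone$. By Proposition \ref{mol2}, condition (ii) is equivalent to condition (iii), namely $\bfa \in \Corn(f)$. Taking the union over all such $\bfa$ yields the desired set equality $\Skel(\widehat{f}) = \Corn(f)$.

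There is no real obstacle here; the content has been absorbed into Lemma \ref{mol} and Proposition \ref{mol2}, which in turn depend on the basic dominance analysis of molecules $\widehat{f}_i = f_i \left(\sum_{j \neq i} f_j\right)^*$. The only thing to double-check is that the definition of $\Skel$ allows the value to land on $\fone^\nu$ rather than exactly $\fone$, but this is precisely built into the $\nucong$ relation, and moreover Proposition \ref{mol2} shows (i) and (ii) are equivalent, so either formulation suffices.
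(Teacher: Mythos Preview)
Your proposal is correct and matches the paper's approach exactly: the corollary is stated without proof, as an immediate consequence of the equivalence (ii) $\Leftrightarrow$ (iii) in Proposition~\ref{mol2}, which is precisely the pointwise argument you give.
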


A \textbf{principal corner locus} is a set of ghost roots of a
supertropical polynomial.

\begin{cor}\label{prop_ker_corner}
Any principal corner locus  is a principal $\onenu$-set. 
\end{cor}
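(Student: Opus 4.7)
The plan is to observe that this is essentially an immediate consequence of Corollary~\ref{prop_ker_corner1}, which was just established. A principal corner locus is by definition a set of the form $\Corn(f)$ for a single supertropical polynomial $f \in F[\Lambda]$. What we need to produce is a single rational function $g \in \overline{F(\Lambda)}$ whose $\onenu$-set equals $\Corn(f)$.

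The candidate is $g = \widehat{f} = \sum_{i=1}^{k}\widehat{f}_i = \sum_{i=1}^{k} f_i\bigl(\sum_{j\neq i} f_j\bigr)^*$ from Definition~\ref{datom}. First I would note that $\widehat{f}$ is a finite sum of molecules in $\overline{F(\Lambda)}$, so it is itself a single element of $\overline{F(\Lambda)}$. By Definition~\ref{defn_principal_kernels_in_semifield_of_fractions}, $\Skel(\widehat{f})$ is therefore a principal $\onenu$-set (being $\Skel$ applied to a single rational function). Then Corollary~\ref{prop_ker_corner1} immediately gives $\Skel(\widehat{f}) = \Corn(f)$, so $\Corn(f)$ is a principal $\onenu$-set, as required.

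There is essentially no obstacle here: the substantive content has already been absorbed into Proposition~\ref{mol2} and its Corollary~\ref{prop_ker_corner1}, where it was shown that $\widehat{f}(\bfa) \nucong \fone$ exactly when $\bfa \in \Corn(f)$. The only thing worth being careful about is the (trivial) verification that $\widehat{f}$ lives in $\overline{F(\Lambda)}$ and not in some larger object; this is clear since each $\widehat{f}_i$ is a ratio of a monomial and a polynomial (hence a molecule in the sense of Definition~\ref{datom}), and $\overline{F(\Lambda)}$ is closed under addition.
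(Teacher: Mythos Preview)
Your proposal is correct and is exactly the intended argument: the paper states this corollary without proof, since it follows immediately from Corollary~\ref{prop_ker_corner1} by noting that $\widehat{f}$ is a single element of $\overline{F(\Lambda)}$.
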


\begin{rem}\label{rem_corner_to_skel}
For $S \subset \mathcal{F}(F[\Lambda])$, let $\widehat{S} = \{
\widehat{f}  \ : \ f \in S \} \subseteq \overline{F(\Lambda)}$.
Then by Remark~\ref{rem_intersection_union_corner_loci} and
Proposition \ref{prop_skel_property1},
$$\Corn(S) = \bigcap_{g \in \widehat{S}}\Skel(g) = \Skel(\widehat{S}).$$
Thus, the map $\Corn(f) \mapsto \Skel(\widehat{f})$  extends to a
map
$$\Phi:~\Corn(F^{(n)} )~\rightarrow~\Skl(F^{(n)} ),$$
where $\Phi : \Corn(f) \mapsto \Skl(\widehat{f})$. In particular,
taking only finitely generated corner loci,
 and recalling that finite intersections  and unions of principal
 $\onenu$-sets are  principal $\onenu$-sets, $\Phi$ sends every
 finitely generated corner locus to a principal $\onenu$-set.
 %
\end{rem}

\begin{lem}\label{lem_ker_prop}\cite[Lemma (10.2.5)]{AlgAspTropMath}
Let $f = \sum_{i=1}^{k}f_i \in F[\Lambda]$.  Then for $1 \leq i,j
\leq k$ such that $i \neq j$ and $\widehat f_i(\bfa) \nucong
\widehat f_j(\bfa)$,
\begin{equation}\label{eq_lem_ker_prop}
\text{Either}\qquad  \widehat f_i(\bfa) \nucong \widehat f_j(\bfa)
\nucong 1 \  \qquad \text{or} \qquad \ \widehat f_i(\bfa) \nucong
\widehat f_j(\bfa)\ \text{are dominated  by}
 \ \widehat{f} \text{ at }\bfa.
\end{equation}
\end{lem}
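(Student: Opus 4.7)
The approach is a straightforward case analysis on the $\nu$-value of the common quantity $c := \widehat{f}_i(\bfa)^\nu = \widehat{f}_j(\bfa)^\nu$, exploiting Lemma~\ref{mol}(i) which characterizes when $\widehat{f}_l(\bfa) \ge_\nu 1$ as ``$f_l(\bfa)$ dominates $f(\bfa)$''. The key bookkeeping device is the set of dominating indices
$$D(\bfa) = \{\,l : f_l(\bfa)^\nu \ge_\nu f_m(\bfa)^\nu \text{ for all } m\,\}.$$
Since $\overline{F[\Lambda]}$ is $\nu$-bipotent on monomials, a straightforward reading of the definition of $\widehat{f}_l$ yields the trichotomy: $\widehat{f}_l(\bfa) >_\nu 1$ iff $l \in D(\bfa)$ and $|D(\bfa)|=1$; $\widehat{f}_l(\bfa) \nucong 1$ iff $l \in D(\bfa)$ and $|D(\bfa)| \ge 2$; and $\widehat{f}_l(\bfa) <_\nu 1$ iff $l \notin D(\bfa)$.

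Given this trichotomy, the hypothesis $\widehat{f}_i(\bfa)\nucong\widehat{f}_j(\bfa)$ forces $i$ and $j$ to lie in the same case. The case where both exceed $1$ in $\nu$-value is vacuous, because it would require $|D(\bfa)|=1$ while placing both $i$ and $j$ in $D(\bfa)$. The case where both $\widehat{f}_i(\bfa),\widehat{f}_j(\bfa)\nucong 1$ is exactly the first disjunct of \eqref{eq_lem_ker_prop}, so nothing more is needed there.

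The remaining case is $\widehat{f}_i(\bfa) \nucong \widehat{f}_j(\bfa) <_\nu 1$, where I must show both are dominated by $\widehat{f}(\bfa)$. The idea is to produce a summand in $\widehat{f}(\bfa) = \sum_l \widehat{f}_l(\bfa)$ of $\nu$-value at least $1$. Indeed, since $D(\bfa)$ is nonempty, pick any $l_0 \in D(\bfa)$: if $|D(\bfa)|=1$ then $\widehat{f}_{l_0}(\bfa) >_\nu 1$, while if $|D(\bfa)| \ge 2$ we may pick a second $l_1 \in D(\bfa)$ and observe $\widehat{f}_{l_0}(\bfa) + \widehat{f}_{l_1}(\bfa) \nucong 1^\nu$. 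Either way $\widehat{f}(\bfa) \ge_\nu 1 >_\nu \widehat{f}_i(\bfa) \nucong \widehat{f}_j(\bfa)$, giving the second disjunct.

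I do not anticipate any genuinely hard step; the only real pitfall is the supertropical subtlety that two indices $i \ne j$ achieving the maximum value produce $\widehat{f}_i(\bfa) \nucong 1$ (not $>_\nu 1$), because the very monomial $f_j$ appears in the denominator $f_{(f_i)}$ and matches $f_i$ in $\nu$-value. Keeping that distinction straight — strict dominance versus $\nu$-equal dominance — is what makes the trichotomy clean and the case ``both $>_\nu 1$'' vacuous.
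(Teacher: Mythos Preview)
Your proof is correct and follows essentially the same approach as the paper's. The paper's one-line proof (``Take the two cases, where either $f_i$ and $f_j$ both dominate, or neither dominates'') is precisely your case analysis, with your trichotomy and the set $D(\bfa)$ making explicit why the mixed case cannot occur under the hypothesis and why the ``both $>_\nu 1$'' sub-case is vacuous; in both arguments the key input is Lemma~\ref{mol} (together with Lemma~\ref{kersk3} for $\widehat{f}(\bfa)\ge_\nu 1$).
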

 \begin{proof} Take the two cases, where either $f_i$ and $f_j$ both dominate, or neither  dominates. \end{proof}

\begin{lem}\label{kersk1} For  $f,g \in F[\Lambda]$ with $g$ tangible, over a  \vsemifield0 $F$
(where $\Lambda = \{ \la _1, \dots, \la _n
\})$, the union of $\skel( \frac f g)$ with the corner loci of  $f$  and
$g$, is  the corner locus of $f+g$.\end{lem}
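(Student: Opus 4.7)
The plan is to invoke Lemma~\ref{rootlift01} with $f' = f$ and $f'' = g$, which gives a trichotomy for when $\bfa \in \Corn(f+g)$: (i) $\bfa \in \Corn(f)$ with $f(\bfa) \ge_\nu g(\bfa)$; (ii) $\bfa \in \Corn(g)$ with $g(\bfa) \ge_\nu f(\bfa)$; or (iii) $f(\bfa) \nucong g(\bfa)$. I would then match these three cases against the three pieces appearing on the left-hand side of the claimed decomposition.

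The crucial identification is between case (iii) and $\skel(f/g)$. Since $g$ is tangible, the quotient $f/g \in \overline{F(\Lambda)}$ makes sense as a rational function in the sense of Definition~\ref{ratfrac}, and unravelling definitions gives $\bfa \in \skel(f/g) \iff (f/g)(\bfa) \nucong 1 \iff f(\bfa) \nucong g(\bfa)$, which is exactly case (iii). Hence $\skel(f/g) \subseteq \Corn(f+g)$ by the supertropical addition rule: equal $\nu$-values sum to a ghost. The inclusion $\Corn(f+g) \subseteq \skel(f/g) \cup \Corn(f) \cup \Corn(g)$ follows at once from the trichotomy, since each of (i), (ii), (iii) places $\bfa$ in the respective set on the right-hand side. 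For the remaining reverse containments I would check that a point of $\Corn(f)$ satisfying the relevant $\nu$-dominance forces $(f+g)(\bfa)$ to equal $f(\bfa)$ or its $\nu$-image (and symmetrically for $g$), hence ghost.

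The main obstacle is the bookkeeping of the dominance inequalities that appear in clauses (i) and (ii) of Lemma~\ref{rootlift01}: a point of $\Corn(f)$ where $g$ strictly $\nu$-dominates need not lie in $\Corn(f+g)$, unless $f$ and $g$ are in addition $\nu$-equivalent there (in which case the point is already captured by $\skel(f/g)$). Accordingly the phrase ``corner loci of $f$ and $g$'' in the statement should be read as the dominance-restricted pieces picked out by (i) and (ii); with this reading the equality is a direct repackaging of Lemma~\ref{rootlift01}, and the tangibility hypothesis on $g$ enters precisely to legitimize the formation of $f/g$ and of the $\onenu$-set $\skel(f/g)$.
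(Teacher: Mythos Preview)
Your approach is essentially the paper's own: the one-line proof there just records the equivalence $(f/g)(\bfa)\nucong\fone \iff f(\bfa)\nucong g(\bfa)$, which is exactly your case~(iii), and leaves the rest to Lemma~\ref{rootlift01}. So methodologically you are on the same track, only more explicit.

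Your observation about the dominance side-conditions is well taken and is not addressed in the paper's proof either. As literally stated, the inclusion $\Corn(f)\subseteq\Corn(f+g)$ can fail: if $\bfa\in\Corn(f)$ but $g(\bfa)>_\nu f(\bfa)$ with $g(\bfa)$ tangible, then $(f+g)(\bfa)=g(\bfa)\notin\tG$. The reverse inclusion $\Corn(f+g)\subseteq \skel(f/g)\cup\Corn(f)\cup\Corn(g)$ does hold unconditionally, directly from the trichotomy in Lemma~\ref{rootlift01}, and the containment $\skel(f/g)\subseteq\Corn(f+g)$ is immediate from the supertropical addition rule. So the equality holds only after restricting $\Corn(f)$ and $\Corn(g)$ to their respective dominance regions, exactly as you propose; under that reading your argument is a complete proof. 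The paper's terse proof tacitly relies on this reading as well, and in the sole application (Proposition~\ref{kersk2}, where one of the two summands is a single monomial with empty corner locus) the issue does not arise.
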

\begin{proof} $ \frac f g(\bfa) \nucong
\fone$   iff $f(\bfa) \nucong g(\bfa),$ which happens when
$f(\bfa) + g(\bfa) \in \tG.$
\end{proof}

\begin{prop}\label{kersk2}
An element $\bfa\in F^{(n)}$ is a $\nu$-kernel root of $\hat f$
iff $\bfa$ is a ghost root of $f$.
\end{prop}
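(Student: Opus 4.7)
The plan is to observe that Proposition~\ref{kersk2} is essentially a pointwise reformulation of Corollary~\ref{prop_ker_corner1}, which asserts $\Skel(\hat f) = \Corn(f)$. Unpacking the definitions, ``$\bfa$ is a $\nu$-kernel root of $\hat f$'' means $\hat f(\bfa) \nucong \fone$ (i.e., $\bfa \in \Skel(\hat f)$), while ``$\bfa$ is a ghost root of $f$'' means $f(\bfa) \in \tG$, which by Definition~\ref{def_cor_locus} is the same as $\bfa \in \Corn(f)$. So the assertion is exactly the equality of sets $\Skel(\hat f) = \Corn(f)$ applied to a point, and Proposition~\ref{mol2} already records the equivalences (i)$\Leftrightarrow$(ii)$\Leftrightarrow$(iii) needed.

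If instead we wish to give a self-contained argument rather than quote Proposition~\ref{mol2} directly, the proof proceeds by cases through the monomial decomposition $f = \sum_i f_i$ and the formula $\hat f = \sum_i f_i\, f_{(i)}^{\,*}$. For the direction ($\Leftarrow$), assume $\bfa \in \Corn(f)$. By Lemma~\ref{mol}(ii), there exist indices $i \neq j$ (or a ghost monomial index) such that $f_i(\bfa)$ and $f_j(\bfa)$ both dominate at $\bfa$ and are $\nu$-equal; then $\hat f_i(\bfa) \nucong \hat f_j(\bfa) \nucong \fone$, and by Lemma~\ref{kersk3} (which shows $\hat f$ takes only values $\ge_\nu \fone$, with equality precisely when two monomials tie), we conclude $\hat f(\bfa) \nucong \fone$.

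For the direction ($\Rightarrow$), suppose $\hat f(\bfa) \nucong \fone$. By Lemma~\ref{kersk3}, the only way a sum of the $\hat f_i(\bfa)$'s can be $\nu$-equivalent to $\fone$ (rather than strictly $\nu$-larger) is if at least two of the summands achieve the maximal $\nu$-value, forcing $\hat f_i(\bfa) \nucong \hat f_j(\bfa) \nucong \fone$ for some $i \neq j$. Then Lemma~\ref{mol}(ii) gives $\bfa \in \Corn(f)$.

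There is no real obstacle here; the only point requiring care is the equivalence ``$\hat f(\bfa) \nucong \fone \iff$ two monomials of $f$ tie at the top at $\bfa$'', but this is precisely the content of Lemmas~\ref{kersk3} and~\ref{mol}, and is packaged in Proposition~\ref{mol2}. Consequently, Proposition~\ref{kersk2} reduces to a single appeal to the results already established in \S\ref{hat1}.
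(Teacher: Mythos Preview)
Your proof is correct. You take a different route from the paper: the paper's one-line argument invokes Lemma~\ref{kersk1} with $g = f_{(h)}$ (reasoning through the molecules $h f_{(h)}^*$ one at a time via the decomposition $f = h + f_{(h)}$), whereas you observe that Proposition~\ref{kersk2} is literally the pointwise unpacking of Corollary~\ref{prop_ker_corner1} (equivalently, Proposition~\ref{mol2}), which was already proved earlier in \S\ref{hat1}. Your route is more direct and makes the logical dependence transparent; the paper's route gives an independent derivation that avoids re-invoking Proposition~\ref{mol2}, at the cost of leaving the reader to reassemble the molecules $\hat f_h$ into $\hat f$. Either way the content is the same, and your recognition that nothing new is required here is the main point.
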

\begin{proof} Apply Lemma~\ref{kersk1} taking $g = f_{(h)}.$
 \end{proof}

We can state this more explicitly.

\begin{prop}\label{hatcomp}  Suppose $f  = \sum _{i=1}^t h_i \in
\overline{F[\la_1, \dots, \la _n]}$, written as a sum of
monomials.
   Then
    $$\hat f = f^t \left(\prod _{i=1}^t f_{h_i}\right)^*.$$
 \end{prop}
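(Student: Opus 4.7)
Clearing denominators, the identity to be proved is equivalent to
\[
\sum_{i=1}^{t} h_i \prod_{j \neq i} f_{(h_j)} \;=\; f^{t}
\]
in $\overline{F[\Lambda]}$, which (by the definition of $\overline{F[\Lambda]}$) is just equality as functions on $F^{(n)}$. I would establish this pointwise at an arbitrary $\bfa \in F^{(n)}$, first verifying $\nu$-equivalence of the two sides and then matching the ghost/tangible layers.

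For $\nucong$: since $f = h_j + f_{(h_j)} \ge_\nu f_{(h_j)}$ for each $j$, every summand on the left satisfies $h_i \prod_{j \neq i} f_{(h_j)} \le_\nu f \cdot f^{t-1} = f^{t}$, so $\text{LHS} \le_\nu f^t$. Conversely, at $\bfa$ let $M = \max_\nu h_i(\bfa)$, achieved by some $h_\ell$. Then $h_\ell$ appears in $f_{(h_j)}$ for every $j \neq \ell$, forcing $f_{(h_j)}(\bfa) \ge_\nu M$, so that the $i=\ell$ summand is $\ge_\nu M \cdot M^{t-1} = M^{t}$. Combined with the $\nu$-Frobenius identity $f^{t} \nucong \sum_i h_i^{t}$ (Proposition~\ref{Fr1}), which gives $f^{t}(\bfa) \nucong M^{t}$, we obtain $\text{LHS} \nucong f^{t}$.

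The main obstacle is upgrading $\nucong$ to strict equality of functions via ghost/tangible bookkeeping. Let $S = \{i : h_i(\bfa) \nucong M\}$. When $|S| = 1$, say $S = \{\ell\}$, then $f(\bfa) = M$ is tangible, and for each $j \neq \ell$ the element $h_\ell$ remains the unique dominator inside $f_{(h_j)}$, so $f_{(h_j)}(\bfa) = M$ tangibly; only the $i = \ell$ summand reaches $\nu$-value $M^{t}$, yielding $\text{LHS}(\bfa) = M^{t}$ tangibly, matching $f^{t}(\bfa)$. When $|S| \ge 2$, $f(\bfa) = M^{\nu}$, so $f^{t}(\bfa) = M^{t\nu}$, and I would verify the LHS is also ghost of $\nu$-value $M^{t}$ in three subcases: (a) if $|S| \ge 3$, for each $i \in S$ pick $j \in S \setminus \{i\}$ and note that $f_{(h_j)}(\bfa)$ still contains two dominators from $S \setminus \{j\}$ and is therefore ghost, ghosting the whole $i$-summand; (b) if $|S| = 2$ with $t \ge 3$, there exists $j \notin S$, and then $f_{(h_j)}(\bfa)$ is ghost because both dominators of $S$ remain inside it, ghosting each $i \in S$ summand; (c) if $|S| = 2 = t$, say $S = \{\ell,\ell'\}$, then the two summands $h_\ell f_{(h_\ell)}$ and $h_{\ell'} f_{(h_{\ell'})}$ each evaluate to the tangible $M^{2}$, and their sum of equal tangibles is $M^{2\nu}$ ghost. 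For $i \notin S$ one always has $h_i(\bfa) <_\nu M$, so these terms contribute strictly below $M^t$ and are absorbed. In every subcase $\text{LHS}(\bfa) = M^{t\nu} = f^{t}(\bfa)$, completing the pointwise verification and hence the proposition.
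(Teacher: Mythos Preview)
Your proof is correct. Both you and the paper begin with the same reduction, rewriting $\hat f=\sum_i h_i f_{(h_i)}^*$ over the common denominator $\prod_j f_{(h_j)}$ so that the claim becomes $\sum_i h_i\prod_{j\ne i} f_{(h_j)}=f^t$ in $\overline{F[\Lambda]}$. From there the arguments diverge. The paper dispatches the reduced identity in one line by comparing monomial expansions: the pure powers $h_i^t$ occur (tangibly) on both sides, and ``every other product of length $t$ of the $h_j$ appears as a ghost in both sides.'' You instead verify the reduced identity pointwise, first pinning down the common $\nu$-value $M^t$ via $\nu$-Frobenius and then matching the tangible/ghost layer by a case split on the size of the set $S$ of dominant monomials at $\bfa$. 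Your route is longer but more robust: the paper's monomial-matching sentence is not literally accurate (for $t=2$ the mixed product $h_1h_2$ never occurs on the left at all, and for $t=3$ the terms $h_i^2h_j$ occur once, hence tangibly, on the left but as ghosts on the right), so the paper is implicitly relying on the fact that mixed monomials are always $\nu$-dominated by some pure $h_i^t$ and hence inessential as functions---exactly the content your case analysis makes explicit. One small remark: the ``clearing denominators'' step is only a genuine equivalence at points where every $f_{(h_j)}(\bfa)$ is tangible; when some $f_{(h_j)}(\bfa)$ is ghost, both sides of the original identity are automatically ghost, so your $\nucong$ already suffices there, and no extra work is needed.
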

\begin{proof} (i) Since $\hat f = \sum  _i h_i f_{h_i}^*  =  (\sum
_i h_i\prod_{j\ne i} f_{h_j})(\prod _j f_{h_j})^*,$   it suffices to
prove that $\sum _i h_i\prod_{j\ne i} f_{h_j} = f^t.$ But each
$h_i^t$ appears in both sides, and every other product of length $t$
of the $h_j$ appears as a ghost in both sides.
\end{proof}

We get Proposition~\ref{kersk2} as a consequence, since the kernel
roots must be precisely those $\bfa$ for which each $\prod
f_{h_i}(\bfa) \nucong f(\bfa),$ which are the ghost roots.

 \begin{rem}\label{hatcomp1} Applying Proposition~\ref{hatcomp} to $\hat f_{(h_i)}$ and observing
that $f$ is dominated at each ghost root  in two monomials, by
definition, we see that  the $\onenu$-set of $\hat f_{(h_i)}$ is
precisely the $\onenu$-set of $f_{(h_i)} ^{t-1}(\prod _{j \ne i}
f_{h_j})^*.$ From this point of view, we can cut down one summand
when passing to $\onenu$-sets.
 \end{rem}

\begin{rem}\label{bend} The hat construction
 in Perri's dissertation \cite{AlgAspTropMath} is related to the bend congruences
  of \cite{GG,Mac,MacRin} (up to $\nu$-equivalence), although with the proviso that
  these authors
  deal with polynomials per se, whereas at the outset we view polynomials as functions.
 (This gap  is seen to be  bridged by  region fractions,
 which produce \textbf{region
 $\nu$-kernels}, cf.~Definition~\ref{regker},   enabling us to identify
 those $\nu$-kernels which are trivial in a given neighborhood, thereby
 indicating which local information may be lost, as explained in the decomposition of  Theorem~\ref{thm_HP_expansion}.)

Namely, take a polynomial $f = \sum_i f_i = h + f_{(h)}$ where
$h=f_j$ is one of the $f_i$
 dominating at $\bfa$ and $f_{(h)}: = \sum_{i\ne j} f_i;$  the \textbf{bend congruence} is
 defined to be the congruence in
 $F[\Lambda]$
 generated by $(f,f_{(h)}).$ But when viewed as a function, this could be matched with the
 principal $\nu$-kernel defined by the
 rational function $$\frac f{f_{(h)}} = \sum_i \frac {f_i}{f_{(h)}} \nucong \frac
 {h}{f_{(h)}}\nucong \hat f _j,$$ and we can reverse directions.
 \end{rem}

\subsection{Examples of $\onenu$-sets}\label{onenusetex0}$ $

\begin{exmp}\label{onenusetex} In these examples, we write $\bfa = (a_1, \dots, a_n).$

 (i)  Take the tropical line
 $f = \la_1  \dotplusss   \la_2  \dotplusss   1$. Its corresponding
$\onenu$-set is defined by the rational function
\begin{equation}\label{standline1}\widehat{f} = \frac{\la_1}{\la_2  \dotplusss
1}  \dotplusss   \frac{\la_2}{\la_1 \dotplusss 1} \dotplusss
\frac{1}{\la_1  \dotplusss   \la_2} = \frac{(\la_1 \dotplusss \la_2
\dotplusss   1)^3}{(\la_1 \dotplusss 1) (\la_2 \dotplusss 1)(\la_1
\dotplusss \la_2)}.\end{equation}
 Moreover, in
Remark~\ref{hatcomp1} it is shown that any of the three terms above
can be omitted, and for example we could use instead
\begin{equation}\label{standline2}\frac{\la_1}{\la_2  \dotplusss   1}  \dotplusss   \frac{\la_2}{\la_1
\dotplusss 1} = \frac{\la_1  ^2 + \la _2 ^2 + \la _1+ \la _2}{(\la_1
\dotplusss   1) (\la_2 \dotplusss 1)}.\end{equation}

Although these two functions \eqref{standline1} and
\eqref{standline2} define the same $\onenu$-set, they define
different $\nu$-kernels, since $\la_1  \dotplusss   \la_2$ is not
in the $\nu$-kernel generated by $\frac{\la_1}{\la_2  \dotplusss
1} \dotplusss \frac{\la_2}{\la_1 \dotplusss 1}$. Indeed, if we
take the point $\bfa = (\alpha, \beta)$ for $\alpha < \beta <1,$
we get $\frac  1 \alpha
 $ in \eqref{standline1} but $\beta $ in \eqref{standline2}, and
 their ratio can be whatever we want, so the condition of
 Proposition~\ref{prop_principal_ker} fails. This ambiguity
 motivates much of the   theory espoused later in this paper.

(ii) The ghost roots of $a_2 \le_\nu a_1 \nucong 1$ and $a _1
\le_\nu a_2 \nucong 1$ provide two rays, but we are missing the ray
$1 \le_\nu a _1 \nucong  a_2.$

(iii) The $\onenu$-set of the rational function
  $\frac{1}{\la_1  \dotplusss   \la_2} +   \frac{\la_1}{\la_2} + \frac{\la_2}{\la_1}$
  is comprised only of the ray $1 \le_\nu a _1 \nucong a_2.$

 (iv) Next, we take the tropical line
 $g = \la_1  \dotplusss   \la_2  \dotplusss   2$. Its corresponding
$\onenu$-set is defined by the rational function $\widehat{g} =
\frac{\la_1}{\la_2  \dotplusss   2}  \dotplusss   \frac{\la_2}{\la_1
\dotplusss 2} \dotplusss   \frac{2}{\la_1  \dotplusss   \la_2}$.

(v) We take the   corner locus defined by the common ghost roots of
$f$ and $g$, which correspond to the $\onenu$-set defined by the
rational function $ \widehat{f} + \widehat{g}$. Taking the three
dominant monomials yields $\frac{\la_1}{\la_2 \dotplusss 1}
\dotplusss \frac{\la_2}{\la_1 \dotplusss 1} \dotplusss
\frac{2}{\la_1 \dotplusss \la_2}$, which again cannot come from a
single polynomial.
\end{exmp}

\subsection{From the $\nu$-kernel locus to  hypersurfaces}$ $

The other direction is more straightforward.

\begin{defn} Given   $f = \frac{h }{g}\in \overline{F(\Lambda)}$ for $g,h\in \overline{F[\la_1, \dots, \la _n]},$ define $\underline{f}
= g+h.$
\end{defn}

\begin{rem}\label{OK1}
Conceivably, one could have $f = \frac{h}{g}= \frac{h'}{g'}\in
\tT^{+}(\la_1, \dots, \la _n)_\nu$, so the definition technically
depends on how we define the representation. But in this case,
${h}g' = gh'.$ Thus, any  ghost root $\bfa$  of $h$   is a ghost
root of $g$ or $h',$ so we get the same ghost roots except when both
the numerator and denominator have a common ghost root.
\end{rem}

\begin{thm}\label{cor110}  Suppose $f  \in \overline{F[\la_1, \dots, \la _n]}$.
   Then $$\Corn(\underline{\hat f}) =  \Skel(\hat f) = \Corn(f).$$\end{thm}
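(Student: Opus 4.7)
The plan splits the double equality into two assertions. The identity $\Skel(\hat f) = \Corn(f)$ has essentially already been recorded as Corollary~\ref{prop_ker_corner1} (itself a direct consequence of Proposition~\ref{mol2}), so the real content is to show $\Corn(\underline{\hat f}) = \Skel(\hat f)$. The starting move is to invoke the explicit formula from Proposition~\ref{hatcomp}: writing $f=\sum_{i=1}^{t}h_{i}$ as a sum of monomials, we have $\hat f = f^{t}\bigl(\prod_{i=1}^{t}f_{h_{i}}\bigr)^{*}$, so by the very definition of the underline operation, $\underline{\hat f} = f^{t} + \prod_{i=1}^{t}f_{h_{i}}$. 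The fundamental inequality underlying everything is $f = h_{i}+f_{h_{i}} \ge_{\nu} f_{h_{i}}$ for each $i$, which multiplies up to
\[
f^{t} \;\ge_{\nu}\; \prod_{i=1}^{t}f_{h_{i}}.
\]

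For the inclusion $\Corn(f)\subseteq \Corn(\underline{\hat f})$, I would take $\bfa\in\Corn(f)$. Then $f(\bfa)\in\tG$, so $f^{t}(\bfa)\in\tG$, and by the displayed inequality this ghost value dominates $\prod_{i}f_{h_{i}}(\bfa)$. Adding a dominating ghost term keeps the sum ghost, hence $\underline{\hat f}(\bfa)\in\tG$, i.e. $\bfa\in\Corn(\underline{\hat f})$.

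For the reverse inclusion, suppose $\bfa\in\Corn(\underline{\hat f})$, so $f^{t}(\bfa)+\prod_{i}f_{h_{i}}(\bfa)\in\tG$. Three cases arise: (a) $f^{t}(\bfa)\nucong \prod_{i}f_{h_{i}}(\bfa)$; (b) $f^{t}(\bfa)$ strictly $\nu$-dominates and is itself ghost; or (c) $\prod_{i}f_{h_{i}}(\bfa)$ strictly $\nu$-dominates. Case~(c) is ruled out by the displayed inequality. In case~(a) we have $\hat f(\bfa) = f^{t}(\bfa)\bigl(\prod_{i}f_{h_{i}}(\bfa)\bigr)^{*}\nucong 1$, so $\bfa\in\Skel(\hat f)$. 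In case~(b), $f(\bfa)\in\tG$ (since $f^{t}(\bfa)\in\tG$ and $F$ is $\nu$-Frobenius, so tangibility of $f(\bfa)$ would force tangibility of $f^{t}(\bfa)$), placing $\bfa\in\Corn(f)=\Skel(\hat f)$.

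This anticipated obstacle — ruling out case (b) producing a point outside $\Skel(\hat f)$ — is the one delicate step; the clean way is a short argument, exactly in the spirit of Lemma~\ref{rem_uniqueness_of_root}, that if exactly one $h_{i}$ dominates in $f$ at $\bfa$, then $f^{t}(\bfa)=h_{i}(\bfa)^{t}$ is tangible and strictly dominates $\prod_{k}f_{h_{k}}(\bfa)=h_{i}(\bfa)^{t-1}f_{h_{i}}(\bfa)$ (since $f_{h_{i}}(\bfa)<_{\nu}h_{i}(\bfa)$), forcing $\underline{\hat f}(\bfa)$ tangible and contradicting the assumption. Combining both inclusions yields $\Corn(\underline{\hat f})\subseteq\Skel(\hat f)$, and together with step~1 the whole chain $\Corn(\underline{\hat f})=\Skel(\hat f)=\Corn(f)$ is established.
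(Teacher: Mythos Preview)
Your proof is correct. The second equality $\Skel(\hat f)=\Corn(f)$ is, as you note, just Corollary~\ref{prop_ker_corner1}, and your pointwise trichotomy for $\Corn(\underline{\hat f})=\Corn(f)$ is sound: case~(c) is excluded by $f^{t}\ge_\nu\prod_i f_{h_i}$, case~(a) lands in $\Skel(\hat f)$ directly, and case~(b) forces $f(\bfa)\in\tG$ since $\tT$ is a multiplicative monoid (so a tangible value to the $t$-th power stays tangible --- no appeal to the Frobenius property is actually needed here, only closure of $\tT$ under products). The final paragraph re-deriving case~(b) via ``exactly one dominant monomial'' is a legitimate alternative, essentially the contrapositive of what you already have.

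The paper takes a different, shorter route: rather than analysing $\underline{\hat f}(\bfa)$ pointwise, it argues once and for all that $\underline{\hat f}=f^{t}$ as elements of $\overline{F[\Lambda]}$. The reason is structural: every monomial occurring in $\prod_i f_{h_i}$ is a mixed product $h_{j_1}\cdots h_{j_t}$ with not all $j_k$ equal, hence already appears in $f^{t}$ with a ghost coefficient, so adding it changes nothing. Once $\underline{\hat f}=f^{t}$, the conclusion is immediate from $\Corn(f^{t})=\Corn(f)$. Your argument trades this monomial bookkeeping for a clean pointwise analysis; the paper's trades the case split for a single global identity. Both are short, and yours has the advantage of not needing to think about which monomials of $f^t$ are tangible.
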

\begin{proof} Using Proposition~\ref{hatcomp} and its proof, we can compute
$\underline{\widehat f}$ explicitly, as $f^t +\prod _{i=1}^t f_{h_i}
= f^t $ (since each $f_{h_i} \le_\nu f,$ and $\prod _{i=1}^t
f_{h_i}$ does not contain any $h_i^t$, which are the only tangible
monomials of $f^t$).
\end{proof}

In this way, we can pass from the corner locus of a single
polynomial to a 1-set, and back.
  This procedure breaks down for tropical varieties defined by more than one
polynomial, as seen in Example~\ref{onenusetex}(v).

Going the other way is trickier. Suppose $f = \frac hg.$ Then
  $\widehat { (\underline{f})}= \widehat { g+h} $.

  \begin{exampl}\label{ci10}
Take  $f = \frac{h }{g}\in \overline{F(\Lambda)}$, for $g=  \la _1
$ and $ h =
 2 \la _1 + 4$. Then $\underline{f} = h$ has the ghost root 2, which is not in
the 1-set of~$f$S.
\end{exampl}

\section{Distinguishing the canonical tropical varieties}\label{tropapp1}

By ``canonical'' tropical variety we mean the kind of variety that
arises in the usual study of tropical geometry,
cf.~\cite{IntroTropGeo}, although there really is no truly
``canonical'' definition. One possibility is the definition of
variety given above. Our first objective is to indicate how to
obtain these via $\nu$-kernels. In this section, following Perri's
dissertation, \cite{AlgAspTropMath}, we present a method for
describing a canonical tropical variety in terms of a single
rational function, thereby enabling us to pass from corner loci to
$\onenu$-sets  which often are principal. This sets the path for
applying the study conducted in the previous sections to tropical
geometry. We begin our discussion by introducing the notion of
``corner internal'' $\onenu$-sets, which characterize the
$\onenu$-sets and $\nu$-kernels arising from
 tropical varieties. Then we bring in ``regular'' rational functions, which
 avoid degeneracy in the variety.


\subsection{Corner internal functions}\label{admis1}$ $

First we want to know which $\nu$-kernels come from corner loci.
Example~\ref{ci10} shows that we can have $f = \frac hg,$ where
$\bfa$ is a ghost root of $h$ and thus of $\underline{f} = h+g,$
but having $h(\bfa)
>_\nu g(\bfa)$, and thus $\bfa$ is not in the 1-set of~$f$.  Our next definition is designed to exclude this
possibility.

\begin{defn}\label{defn_corner_admissible}
  A rational function $f \in \overline{F(\Lambda)}$ is \textbf{corner internal} if we can write  $f =   \frac hg $
  for polynomials $g,h$, with $g \in \overline{\tT[\Lambda]},$  such that every
ghost root of $g+h$ is a kernel root of $f$.
\end{defn}

In other words, if $g(\bold a) \le_\nu h(\bold a) \in \tG$ or
$h(\bold a) \le_\nu g(\bold a) \in \tG$, then
 $g(\bold a)
 \nucong h(\bold a)$.

\begin{rem}\label{corner_int_cond_1} Suppose $f =   hg^* $. Any ghost root of $g+h$ is
either a ghost root of $h$, or of $g$, or else satisfies $g(\bfa)
\nucong h(\bfa)$ in which case it is automatically in $\Skel(f)$.
Thus,
 corner internality is equivalent to showing that $\Corn(g)\cup
\Corn(h) \subseteq \Skel(f).$
\end{rem}


%
%
%

%

\begin{rem}
When considering whether $f \in \overline{F(\Lambda)}$ is  corner
internal, the choice of $g$ and $h$ is important, which we call
the \textbf{canonical} way of writing $f$.  For example, $\la +1
\in F(\la)$ is trivially corner internal, although $\frac{(\la+
\alpha)(\la +1)}{\la+\alpha}$ for $\alpha >_\nu 1$ is not - since
substituting $\alpha$ for $\la$ we get $\frac{\alpha^2 +
\alpha^2}{\alpha + \alpha} = \frac{(\alpha^\nu)2}{\alpha^\nu)} $.
Thus $\alpha$ is a ghost root of the numerator, which surpasses
the denominator since $(\alpha^2)^{\nu}
>_\nu \alpha^{\nu}$.

Also note that $g+h$ must be tangible, since otherwise some open set
around $\bfa$ is in $\Corn(g+h)$ but not in $\Skel(f).$ Thus, we
write  $f = \frac hg $ instead of $f =   hg^* $.  In particular, $f$
must be reduced as a fraction.

\end{rem}

Even though the canonical way of writing $f = \frac hg$ may not be
unique, the following observation is enough for our purposes.

\begin{lem}\label{welld} Suppose $f = \frac hg =  \frac {h'}{g'} $ are two
canonical ways of writing a corner internal rational function $f$.
Then $\Corn (h+g) = \Corn (h'+g') .$
\end{lem}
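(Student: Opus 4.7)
The plan is to show that for any canonical representation $f = h/g$ of a corner internal rational function, one has $\Corn(h+g) = \Skel(f)$, after which the lemma follows immediately, since then $\Corn(h+g) = \Skel(f) = \Corn(h'+g')$.

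First I would establish the inclusion $\Corn(h+g) \subseteq \Skel(f)$, which is essentially the defining condition of corner internality (Definition~\ref{defn_corner_admissible}): every ghost root of $g+h$ is declared to be a $\nu$-kernel root of $f$, i.e., lies in $\Skel(f)$.

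For the reverse inclusion $\Skel(f) \subseteq \Corn(h+g)$, suppose $\bfa \in \Skel(f)$. Then $f(\bfa) = h(\bfa) g(\bfa)^{*} \nucong \fone$, so $h(\bfa) \nucong g(\bfa)$. By the defining axiom of a \vdomain0, elements with equal $\nu$-values sum to a ghost: $h(\bfa) + g(\bfa) = h(\bfa)^{\nu} \in \tG$. Hence $\bfa$ is a ghost root of $h+g$, i.e., $\bfa \in \Corn(h+g)$. Combining the two inclusions yields $\Corn(h+g) = \Skel(f)$, and by symmetry also $\Corn(h'+g') = \Skel(f)$, so $\Corn(h+g) = \Corn(h'+g')$.

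The main subtle point is that $g$ is a tangible polynomial but its evaluation $g(\bfa)$ may land in $\tG$; this is harmless, because the supertropical addition rule $a + b = a^{\nu}$ for $a \nucong b$ holds regardless of whether $a,b$ are individually tangible or ghost, so the step from $h(\bfa) \nucong g(\bfa)$ to $(h+g)(\bfa) \in \tG$ does not depend on either summand being tangible. (One also uses the remark preceding the lemma that in a canonical representation, the fraction is reduced so that $h(\bfa)$ and $g(\bfa)$ cannot simultaneously vanish in a way that would invalidate the ratio.) No appeal to the more restrictive formulation $\Corn(g)\cup\Corn(h) \subseteq \Skel(f)$ from Remark~\ref{corner_int_cond_1} is needed, though it can be used as an alternative route to the first inclusion via Lemma~\ref{rootlift01}.
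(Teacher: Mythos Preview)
Your proof is correct. You take a slightly cleaner route than the paper: you show directly that $\Corn(h+g)=\Skel(f)$ for any canonical representation, so that both corner loci coincide with the single set $\Skel(f)$. The paper instead argues the inclusion $\Corn(h+g)\subseteq\Corn(h'+g')$ by a case analysis (if $g(\bfa)\nucong h(\bfa)$ one is done; otherwise one uses $hg'=h'g$ together with corner internality to reduce back to that case), and then concludes by symmetry. Your argument is essentially the content of Proposition~\ref{cor_absolute_corner_admissible2}, which the paper states immediately after this lemma; in effect you have merged the lemma and the subsequent proposition into a single step, which is both shorter and more transparent. The only cost is that the paper's direct argument makes visible exactly where the cross-relation $hg'=h'g$ enters, whereas in your version that relation is implicit (it is what forces $h'(\bfa)\nucong g'(\bfa)$ once $h(\bfa)\nucong g(\bfa)$).
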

\begin{proof}  If $\bfa \in \Corn (h+g),$ then
either $g(\bfa) \nucong h(\bfa)$ and we are done, or say $\bfa \in
\Corn(g).$ But then $h (\bfa) \le_\nu g (\bfa),$ implying $h'(\bfa)
\le_\nu g'(\bfa),$ and also $\bfa \in \Corn(hg')$ since $hg'  =
h'g.$ We are done if $\bfa \in \Corn(g'),$ so we may assume that
$\bfa \in \Corn(h)$. Hence $h (\bfa) \le_\nu g(\bfa)\in \tG,$
implying $g (\bfa) \nucong h(\bfa),$ and again  we are done.
\end{proof}

Thus, $\Corn(\underline{f} )$ does not depend on the canonical way
we write the corner internal rational function $f$.

\begin{prop}\label{cor_absolute_corner_admissible2} A rational function $f$
is corner internal iff $\Corn(\underline{f} ) = \Skel(f),$ for any
canonical way of writing $f = \frac hg$.
\end{prop}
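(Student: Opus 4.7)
The plan is to verify the equivalence by showing that the inclusion $\Skel(f) \subseteq \Corn(\underline{f})$ is automatic for any canonical writing $f = h/g$, so that the proposition reduces essentially to the definition of corner internality together with Lemma~\ref{welld}.

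First I would establish the always-true inclusion $\Skel(f) \subseteq \Corn(\underline{f})$. Let $\bfa \in \Skel(f)$, so $f(\bfa) = h(\bfa) g(\bfa)^{*} \nucong 1$, which forces $h(\bfa) \nucong g(\bfa)$. In a $\nu$-bipotent supertropical semifield this immediately gives
\[
\underline{f}(\bfa) = g(\bfa) + h(\bfa) = g(\bfa)^{\nu} \in \tG,
\]
and hence $\bfa \in \Corn(g+h) = \Corn(\underline{f})$. (Note $g$ is tangible by the canonical convention, so this ghost value is unambiguous.)

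With this in hand, the forward direction of the proposition follows: if $f$ is corner internal, Definition~\ref{defn_corner_admissible} furnishes a canonical writing $f = h/g$ with $\Corn(g+h) \subseteq \Skel(f)$, and combining with the automatic reverse inclusion gives equality $\Corn(\underline{f}) = \Skel(f)$. By Lemma~\ref{welld}, $\Corn(\underline{f})$ is independent of the canonical writing, so the equality holds for \emph{any} canonical choice of $g,h$. The backward direction is essentially a tautology: if $\Corn(\underline{f}) = \Skel(f)$ for some (equivalently, any) canonical writing $f = h/g$, then in particular $\Corn(g+h) \subseteq \Skel(f)$, which is precisely the definition of $f$ being corner internal.

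There is no serious obstacle; the only point to check carefully is that the step $h(\bfa) \nucong g(\bfa) \Rightarrow (g+h)(\bfa) \in \tG$ genuinely uses the supertropical hypothesis, and that canonicality (in particular tangibility of $g$) is used only to legitimize speaking of ghost roots of the sum, as discussed in Remark~\ref{corner_int_cond_1}. Thus the proof amounts to one short pointwise computation plus an invocation of Lemma~\ref{welld}.
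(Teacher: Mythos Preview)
Your proposal is correct and follows essentially the same approach as the paper: the paper's proof of $(\Rightarrow)$ is precisely your pointwise computation that $h(\bfa)\nucong g(\bfa)$ forces $\bfa\in\Corn(g+h)$, and the paper's $(\Leftarrow)$ is your tautology from the definition. Your version is slightly more explicit in invoking Lemma~\ref{welld} to handle the ``for any canonical way'' clause, which the paper leaves implicit (having placed that lemma immediately before the proposition), but the substance is identical.
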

\begin{proof} $(\Leftarrow)$ is by definition.

 $(\Rightarrow)$ Any kernel root $\bfa$ must satisfy $g(\bfa) \nucong
 h(\bfa),$ implying $\bfa$ is a ghost root of $g+h.$
\end{proof}

\begin{lem}\label{ci1} For any polynomial $f\in F[\Lambda],$ the function $\hat f$  is
corner internal. \end{lem}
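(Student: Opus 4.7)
The plan is to leverage Proposition~\ref{cor_absolute_corner_admissible2}, which asserts that a rational function is corner internal iff $\Corn(\underline{f}) = \Skel(f)$ for a canonical representation $f = h/g$, together with Theorem~\ref{cor110}, which already establishes $\Corn(\underline{\hat f}) = \Skel(\hat f) = \Corn(f)$. To convert this into a clean proof I first write $f = \sum_{i=1}^{t} h_i$ as a sum of monomials and apply Proposition~\ref{hatcomp} to express
\[
\hat f \;=\; H/G,\qquad H = f^t,\qquad G = \prod_{i=1}^{t} f_{h_i},
\]
with $G \in \overline{\tT[\Lambda]}$ (a product of tangible polynomials), so that $\underline{\hat f} = H+G$.

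Next I would verify the key inclusion $\Corn(H+G)\subseteq \Skel(\hat f)$ directly. Since each $f_{h_i}\le_\nu f$ (because $f = h_i + f_{h_i}$), multiplying yields $G \le_\nu H = f^t$ as functions; hence at any ghost root $\bfa$ of $H+G$ either (i) $H$ strictly dominates, forcing $\bfa \in \Corn(H) = \Corn(f^t)= \Corn(f)$ by Remark~\ref{rem_cor_roots_of_prod}(ii), or (ii) $G(\bfa)\nucong H(\bfa)$, which combined with $f_{h_i}\le_\nu f$ forces $f_{h_i}(\bfa) \nucong f(\bfa)$ for every $i$, so that no monomial of $f$ is uniquely dominant at $\bfa$ and again $\bfa\in\Corn(f)$. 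Corollary~\ref{prop_ker_corner1} then gives $\Corn(f)=\Skel(\hat f)$, completing the inclusion.

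The main obstacle is the technical verification that $H/G$ qualifies as a canonical representation of $\hat f$: $G$ is tangible as a product of tangible polynomials, and $H+G$ is effectively tangible since the pure-power monomials $h_i^t$ are tangible and dominate wherever they survive in $H$ (note that no such pure power appears in $G$). With this check in place, the lemma follows directly from Proposition~\ref{cor_absolute_corner_admissible2}; indeed, once Theorem~\ref{cor110} is on hand, the statement is essentially an immediate corollary.
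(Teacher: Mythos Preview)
Your proposal is correct and follows essentially the same approach as the paper: the paper's proof is the one-liner ``$\Corn(\underline{\hat f}) = \Skel(\hat f)$ in view of Theorem~\ref{cor110},'' which (via Proposition~\ref{cor_absolute_corner_admissible2} or directly from the definition) is exactly your plan. Your additional direct verification of $\Corn(H+G)\subseteq \Skel(\hat f)$ and the check that $H/G$ is a canonical representation are careful but redundant once Theorem~\ref{cor110} is in hand, as you yourself note at the end.
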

\begin{proof} $\Corn(\underline{\hat f} ) = \Skel(\hat
f)$ in view of Theorem~\ref{cor110}.\end{proof}

\begin{thm}\label{cor11} The correspondences $f \mapsto \hat f$ and $h \mapsto
\underline{h}$ induce a 1:1 correspondence between
hypersurfaces and $\onenu$-sets of corner internal rational
functions.
\end{thm}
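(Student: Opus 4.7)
\medskip

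The plan is to assemble the ingredients already established—Lemma~\ref{ci1}, Corollary~\ref{prop_ker_corner1}, Proposition~\ref{cor_absolute_corner_admissible2}, Theorem~\ref{cor110}, and Lemma~\ref{welld}—to check that each of the two maps sends its domain into the claimed target, and that the resulting induced maps on subsets of $F^{(n)}$ are mutually inverse.

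First I would verify that the maps land in the correct collections. Given a polynomial $f \in F[\Lambda]$, Lemma~\ref{ci1} shows that $\hat f$ is a corner internal rational function, so $\Skel(\hat f)$ is a $\onenu$-set of a corner internal rational function, and Corollary~\ref{prop_ker_corner1} gives the crucial identification $\Skel(\hat f) = \Corn(f)$. In the other direction, given a corner internal rational function $h$ written canonically as $h = p/q$ with $q \in \overline{\tT[\Lambda]}$, the element $\underline{h} = p+q \in F[\Lambda]$ is a polynomial, and Proposition~\ref{cor_absolute_corner_admissible2} yields $\Corn(\underline{h}) = \Skel(h)$. These two identifications already imply that the hypersurfaces of $F^{(n)}$ and the $\onenu$-sets of corner internal rational functions are the same collection of subsets of $F^{(n)}$.

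Next I would show that the maps, read on the level of these coinciding subsets, are well-defined and mutually inverse. For well-definedness on the $\onenu$-set side, Lemma~\ref{welld} ensures that $\Corn(\underline{h})$ does not depend on the chosen canonical representation $h = p/q$, so the assignment $\Skel(h) \mapsto \Corn(\underline{h})$ is well-defined. For the composition starting from a polynomial, Theorem~\ref{cor110} gives
\[
\Corn(\underline{\hat f}) = \Skel(\hat f) = \Corn(f),
\]
so the composite $\Corn(f) \mapsto \Skel(\hat f) \mapsto \Corn(\underline{\hat f})$ is the identity on hypersurfaces. For the reverse composition, starting from a corner internal $h$, Proposition~\ref{cor_absolute_corner_admissible2} gives $\Corn(\underline{h}) = \Skel(h)$, and then Corollary~\ref{prop_ker_corner1} applied to the polynomial $\underline{h}$ gives
\[
\Skel\bigl(\widehat{\underline{h}}\bigr) = \Corn(\underline{h}) = \Skel(h),
\]
so the composite $\Skel(h) \mapsto \Corn(\underline{h}) \mapsto \Skel(\widehat{\underline{h}})$ is the identity on $\onenu$-sets of corner internal rational functions.

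The only genuinely nontrivial step is verifying well-definedness when passing from a rational function to $\underline{h}$, since the representation $h = p/q$ is not unique; this is exactly the content of Lemma~\ref{welld}, which was established precisely to handle this point. Once that is in hand, the 1:1 correspondence follows formally from chaining the identifications $\Skel(\hat f) = \Corn(f)$ and $\Corn(\underline{h}) = \Skel(h)$.
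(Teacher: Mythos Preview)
Your proposal is correct and follows essentially the same approach as the paper, which simply says ``Combine Proposition~\ref{cor_absolute_corner_admissible2} and Lemma~\ref{ci1}.'' You have carefully unpacked what that combination entails, additionally invoking Corollary~\ref{prop_ker_corner1}, Theorem~\ref{cor110}, and Lemma~\ref{welld} to verify well-definedness and the two inverse compositions explicitly; these are exactly the details the paper leaves implicit.
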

\begin{proof} Combine
Proposition~\ref{cor_absolute_corner_admissible2} and
Lemma~\ref{ci1}.
\end{proof}

Having established the importance of being corner internal, let us
delve deeper into the elementary properties.

 By symmetry, a rational
function $f \in \overline{F(\Lambda)}$ is corner internal if and
only if $f^{-1} $ is corner internal.

\begin{rem}\label{writeabs} If $f = \frac hg, \ f' =  \frac {h'}{g'} \in \overline{F(\Lambda)} $, then
$$f+f' =  \frac {hg' + gh'}{gg'},$$ and
 $$|f| = f+f^{-1} = (h^2 + g^2)(gh)^*. $$
  By Proposition~\ref{Fr1}, any ghost root of ${h^2 + g^2}$ or of $h^2 + g^2 +gh \nucong (h+g)^2$  is a ghost root of
  $g+h$.
\end{rem}

\begin{prop}\label{prop_absolute_corner_integral}
A rational function $f \in \overline{F(\Lambda)}$ is corner
internal if and only if $|f| $ is corner internal. \end{prop}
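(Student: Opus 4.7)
The plan is to reduce the claim to the characterization given in Proposition~\ref{cor_absolute_corner_admissible2}, namely that a rational function $r$ is corner internal iff $\Corn(\underline{r})=\Skel(r)$. Once this is set up, I would separately verify the two equalities $\Skel(|f|)=\Skel(f)$ and $\Corn(\underline{|f|})=\Corn(\underline{f})$; applied to $r=f$ and $r=|f|$, these together yield the equivalence.

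First, write $f = h/g$ canonically (so $g,h$ are polynomials with $g$ tangible, and, in the canonical reduced form, $g+h$ tangible). By Remark~\ref{writeabs}, the natural expression for $|f|$ is $|f|=(h^{2}+g^{2})(gh)^{*}$, and $gh$ is again tangible, so this is a legitimate canonical presentation of $|f|$. Thus $\underline{f}=g+h$ and $\underline{|f|}=h^{2}+g^{2}+gh$. For the $\Skel$-equality I would cite Lemma~\ref{rem_max0}, which gives $\Skel(f)=\Skel(|f|)$ directly.

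The heart of the argument is the corner-locus equality $\Corn(\underline{|f|})=\Corn(\underline{f})$, i.e.\ $\Corn(h^{2}+g^{2}+gh)=\Corn(h+g)$. I would prove this pointwise by evaluating at $\bfa\in F^{(n)}$ and writing $a=h(\bfa)$, $b=g(\bfa)$. The three monomials $a^{2},b^{2},ab$ satisfy $a^{2}\ge_{\nu} ab\ge_{\nu} b^{2}$ when $a\ge_{\nu} b$ (and symmetrically), using that the underlying monoid is ordered. A case split on whether $a>_\nu b$, $a<_\nu b$, or $a\nucong b$ shows that $(h^{2}+g^{2}+gh)(\bfa)\in\tG$ iff $(h+g)(\bfa)\in\tG$: the dominating monomial agrees up to $\nu$-equivalence with a power of whichever of $a,b$ dominates in $h+g$, and a two-way tie in one expression forces a two-way tie (hence a ghost value) in the other. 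Equivalently, and more slickly, since $h^{2}+g^{2}+gh\nucong (h+g)^{2}$ by the Frobenius identity (Remark~\ref{writeabs}), one has $\tilde\nu(\underline{|f|})=\tilde\nu((h+g)^{2})$, so their ghost-root loci coincide; then Remark~\ref{rem_cor_roots_of_prod}(ii) identifies the ghost roots of $(h+g)^{2}$ with those of $h+g$.

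Combining these ingredients, for each direction I argue: if $f$ is corner internal, then $\Corn(\underline{f})=\Skel(f)$ by Proposition~\ref{cor_absolute_corner_admissible2}; applying the two equalities above gives $\Corn(\underline{|f|})=\Corn(\underline{f})=\Skel(f)=\Skel(|f|)$, so $|f|$ is corner internal by the same proposition (and Lemma~\ref{welld} ensures independence from the choice of canonical form). The converse is symmetric. The main technical obstacle I expect is confirming that the chosen representation $(h^{2}+g^{2})(gh)^{*}$ actually qualifies as a canonical form of $|f|$ in the sense demanded by Definition~\ref{defn_corner_admissible} — in particular, checking that no cancellation obstructs $gh$ from being tangible and $h^{2}+g^{2}+gh$ from being tangible; this is handled by invoking the canonical form of $f$ itself together with Lemma~\ref{welld}, which guarantees the corner-locus data is independent of the representative.
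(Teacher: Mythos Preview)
Your proposal is correct and uses the same ingredients as the paper (Remark~\ref{writeabs} and $\Skel(|f|)=\Skel(f)$), repackaged symmetrically via the equality $\Corn(\underline{|f|})=\Corn(\underline{f})$ together with Proposition~\ref{cor_absolute_corner_admissible2}, whereas the paper checks each implication directly from the definition. One caution on your ``slick'' Frobenius variant: $\nu$-equivalence alone does not guarantee equal ghost loci, but here it works because $gh$ can never strictly dominate both $h^{2}$ and $g^{2}$---and your case-split argument already handles this cleanly.
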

\begin{proof} $|f|$ and $f$ have the same $\onenu$-sets.

$(\Rightarrow)$ Write $f = \frac hg $ canonically. Then $|f| =
h{g}^* +  g{h}^* =(h^2 + g^2)(gh)^*$. By Remark~\ref{writeabs}, any
ghost root $\bfa$ of $ g^2+ h^2 $ is a ghost root of $g+h$ and thus
a kernel root of $f$, by definition, or else $g(\bfa) = h(\bfa)$,
yielding a kernel root of $f$. But then $\bfa \in \Skel(|f|).$

$(\Leftarrow)$ If $\bfa$ is a ghost root of $g+h$, then, again by
Remark~\ref{writeabs}, $\bfa$ is a ghost root of $g^2 + h^2 + gh$
and thus by hypothesis is in $\Skel(|f|) = \Skel(f).$
\end{proof}

\begin{lem}\label{rem_corner_admissibility_of_an_expansion}
If $f \in \overline{F(\Lambda)}$ is corner internal, then
$f^{k}$, for any $k \in \mathbb{N}$ and $\sum_{i=1}^{m}f^{d(i)}$
with $d(1)< \cdots < d(m)$ in $\mathbb{N}$, also are corner
internal.
\end{lem}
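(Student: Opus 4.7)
The plan is to verify the defining condition of corner internality directly for $f^k$ and for $\sum_{i=1}^{m} f^{d(i)}$, working from a canonical representation $f = h/g$ with $g \in \overline{\tT[\Lambda]}$, using corner internality of $f$ together with the pointwise $\nu$-bipotence of $F$ and the torsion-freeness of the tangible group $\tT$ (Remark~\ref{rem_torsion_free}).

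For $f^k$, take the representation $f^k = h^k/g^k$, with $g^k \in \overline{\tT[\Lambda]}$ since $\overline{\tT[\Lambda]}$ is multiplicatively closed. I claim the ghost roots of $g^k + h^k$ coincide with those of $g+h$. Working pointwise, by $\nu$-bipotence, $(g+h)(\bfa) \in \tG$ iff either one of $g(\bfa), h(\bfa)$ strictly dominates in $\nu$ and lies in $\tG$, or $g(\bfa) \nucong h(\bfa)$. The same trichotomy applied to $g(\bfa)^k, h(\bfa)^k$ yields an equivalent condition, since raising to the $k$-th power is $\nu$-order preserving, sends tangibles to tangibles (as $\tT$ is a group), and ghosts to ghosts, and because $\tG$ torsion-free ensures $g(\bfa)^k \nucong h(\bfa)^k$ iff $g(\bfa) \nucong h(\bfa)$. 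Corner internality of $f$ then yields $f(\bfa) \nucong 1$, whence $f^k(\bfa) \nucong 1$, establishing corner internality of $f^k$.

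For $S = \sum_{i=1}^{m} f^{d(i)}$, write $S = H/G$ with $G = g^{d(m)}$ and
\[
H = \sum_{i=1}^{m} h^{d(i)} g^{d(m)-d(i)},
\]
so $G \in \overline{\tT[\Lambda]}$. Let $\bfa$ be a ghost root of $G+H$ and put $a = g(\bfa)$, $b = h(\bfa)$. The goal is to show $a \nucong b$, for then $f(\bfa) \nucong 1$, so each $f^{d(i)}(\bfa) \nucong 1$, and $S(\bfa) \nucong 1$ by idempotence of $\nu$-equivalence on the $\nu$-class of $1$. Assume for contradiction $a \not\nucong b$; by symmetry say $a >_\nu b$. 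Since $\tG$ is torsion-free, the $\nu$-values $\nu(a)^{d(m)-d(i)} \nu(b)^{d(i)}$ of the summands of $H(\bfa)$ are strictly decreasing in $d(i)$, so $H(\bfa) \nucong a^{d(m)-d(1)} b^{d(1)}$, which is strictly below $G(\bfa) = a^{d(m)}$ in $\nu$-value. Hence $(G+H)(\bfa) = G(\bfa) = a^{d(m)}$, which is ghost exactly when $a \in \tG$; but then $\bfa$ is a ghost root of $g$, hence of $g+h$, and corner internality of $f$ forces $a \nucong b$, contradicting $a >_\nu b$. The symmetric case $a <_\nu b$ is handled the same way, with the dominant summand of $H(\bfa)$ now being $h^{d(m)} = b^{d(m)}$, which exceeds $G(\bfa)$ in $\nu$ and is ghost only when $b \in \tG$, making $\bfa$ a ghost root of $h$ and hence of $g+h$.

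The main obstacle is the case analysis in the sum assertion: identifying which summand of $H$ dominates in $\nu$-value (using torsion-freeness to rule out accidental ties between distinct exponents), and then matching this against $G$ to identify precisely when $(G+H)(\bfa) \in \tG$ forces a ghost value of $g$ or $h$ individually. The $f^k$ case is comparatively routine once the pointwise trichotomy is in hand.
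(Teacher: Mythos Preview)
Your argument is correct and follows essentially the same route as the paper: both verify that the ghost roots of the new $\underline{f^k}$ (resp.\ $\underline{S}$) coincide with those of $g+h$, and then invoke corner internality of $f$. The paper compresses this into a one-line reference to Remark~\ref{writeabs} (the Frobenius identity $(g+h)^k \nucong g^k + h^k$), whereas you unpack the same content as an explicit pointwise trichotomy on $\nu(g(\bfa))$ versus $\nu(h(\bfa))$; your use of torsion-freeness of $\tG$ to rule out accidental ties among the summands of $H$ is exactly what makes the Frobenius-style reduction go through.
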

\begin{proof}
Everything follows easily from Remark~\ref{writeabs}.
\end{proof}

We recall $f \wedge g$ from \eqref{dua01} after Example
\ref{vloc31}.

\begin{rem}
For intuition, in view of Remark~\ref{ord1},  $ (f\wedge g )(\bfa)
\nucong \min \{ f(\bfa)^\nu, g(\bfa)^\nu \}.$
\end{rem}

\begin{spacing}{1.1}
\begin{prop}\label{prop_wedge_corner_admissibility}
If $f,f'\in \overline{F(\Lambda)}$ are corner internal, then $ |f|
\wedge |f'|$ is corner internal.
\end{prop}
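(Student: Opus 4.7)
The plan is to reduce to the case $f,f'\ge_\nu 1$ and then to compute $f\wedge f'$ explicitly, showing that the numerator-plus-denominator of $f\wedge f'$ agrees, as a function, with $(h_1+g_1)(h_2+g_2)$, which lets us invoke the corner-internal hypothesis on $f$ and $f'$. By Proposition~\ref{prop_absolute_corner_integral}, $|f|$ and $|f'|$ are corner internal, and $|f|,|f'|\ge_\nu 1$ (by Remark~\ref{norm1}); so, relabeling, it suffices to prove: if $u,u'\in\overline{F(\Lambda)}$ are corner internal with $u,u'\ge_\nu 1$, then $u\wedge u'$ is corner internal (and then apply this with $u=|f|$, $u'=|f'|$, so that $u\wedge u' = |f|\wedge|f'|$).

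Write $u=h_1/g_1$ and $u'=h_2/g_2$ canonically (Definition~\ref{defn_corner_admissible}), so that $g_1,g_2\in\overline{\tT[\Lambda]}$, $\Corn(h_1+g_1)\subseteq\Skel(u)$, and $\Corn(h_2+g_2)\subseteq\Skel(u')$ by Proposition~\ref{cor_absolute_corner_admissible2}; the hypothesis $u,u'\ge_\nu 1$ forces $h_i\ge_\nu g_i$ pointwise. Using formula~\eqref{dua22} of Remark~\ref{props}, write
\[
u\wedge u' \;=\; \frac{h_1 h_2}{h_1 g_2 + g_1 h_2} \;=:\; \frac{N}{D}.
\]
The key step is the functional identity
\[
N+D \;=\; h_1 h_2 + h_1 g_2 + g_1 h_2 \;=\; (h_1+g_1)(h_2+g_2) \qquad \text{as functions on } F^{(n)}.
\]
This holds because the only missing summand in the expansion of $(h_1+g_1)(h_2+g_2)$ is $g_1 g_2$, which is pointwise $\nu$-dominated by $h_1 h_2\le_\nu N+D$ (since $g_1\le_\nu h_1$ and $g_2\le_\nu h_2$); a short case analysis on supertropical addition (absorption $a+b=a$ when $a>_\nu b$, and $a+b=a^\nu=a$ when $a\nucong b$ with $a$ already ghost) confirms that adjoining $g_1 g_2$ leaves the function unchanged.

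Granting this identity, the ghost roots of $N+D$ coincide with the ghost roots of $(h_1+g_1)(h_2+g_2)$, and since $\tT$ is closed under multiplication the latter set is $\Corn(h_1+g_1)\cup\Corn(h_2+g_2)$. By the corner-internal hypothesis on $u$ and $u'$, this union sits inside $\Skel(u)\cup\Skel(u')$, which in turn equals $\Skel(u\wedge u')$ by Remark~\ref{ord1} (using $u,u'\ge_\nu 1$: $(u\wedge u')(\bfa)\nucong 1$ iff $\min\{u(\bfa)^\nu,u'(\bfa)^\nu\}\nucong 1$, iff one of them is $\nu$-equivalent to $1$). Thus every ghost root of $N+D$ is a kernel root of $u\wedge u'$, giving corner internality by Proposition~\ref{cor_absolute_corner_admissible2}. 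The main obstacle, beyond checking the functional identity above, is confirming that $(N,D)$ furnishes a legitimate canonical representation of $u\wedge u'$: the denominator $h_1 g_2 + g_1 h_2$ is assembled from tangible polynomials but may acquire ghost coefficients if monomials collide, and the fraction may also fail to be reduced. Both issues are controlled by Lemma~\ref{welld}, which ensures that $\Corn(N+D)$ does not depend on the canonical presentation chosen, but the reduction must be invoked carefully in the degenerate cases where collisions actually occur.
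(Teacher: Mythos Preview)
Your approach is correct and takes a genuinely different, more conceptual route than the paper's. The paper argues by a direct case analysis on the summands of $N+D=hh'+gh'+g'h$: it splits according to which pairwise sum $hh'+gh'$, $hh'+g'h$, or $gh'+g'h$ is ghost at~$\bfa$, and in each case chases through to land in $\Skel(|f|)$, $\Skel(|f'|)$, or directly obtains $D(\bfa)\ge_\nu N(\bfa)$. Your functional identity $N+D=(h_1+g_1)(h_2+g_2)$ replaces this case analysis with a single stroke: since $h_i+g_i$ are tangible (per the remark following Definition~\ref{defn_corner_admissible}), the corner locus of their product is the union of the individual corner loci, so $\Corn(N+D)=\Corn(h_1+g_1)\cup\Corn(h_2+g_2)$, which by hypothesis lies in $\Skel(u)\cup\Skel(u')=\Skel(u\wedge u')$. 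This is cleaner and makes the role of the corner-internal hypothesis on the factors transparent; the paper's case analysis is effectively rediscovering pieces of this product structure by hand.

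One caution about your final paragraph: the appeal to Lemma~\ref{welld} does not do what you want. That lemma compares two canonical presentations of a function \emph{already known} to be corner internal; invoking it to \emph{establish} corner internality is circular. The concern you raise---that $D=h_1g_2+g_1h_2$ may fail to lie in $\overline{\tT[\Lambda]}$ because of monomial collisions---is legitimate at the level of the formal definition, but the paper's own proof glosses over exactly the same point (it never verifies that $gh'+g'h$ is tangible). So this is not a defect of your argument relative to the paper's; it reflects a looseness in the ambient framework (cf.\ the comment after Definition~\ref{ratfrac} that $\overline{F(\Lambda)}$ and $\{fg^*:f,g\in\overline{F[\Lambda]}\}$ ``behave similarly''). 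Your core argument---the functional identity and the product formula for corner loci---is sound and is what carries the proof.
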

\begin{proof}
Since $f$ and $f'$ are corner internal, $|f|$ and $|f'|$ also are
corner internal. Write $|f| = \frac{h}{g}$ and $|f'| =
\frac{h'}{g'}$ in the canonical ways. Now,
\begin{equation}\label{f0} 1 \le_\nu |f| \wedge |f'| =  (gh^* +  g'(h')^*)^* =  {hh'}(gh'+g'h)^*,\end{equation} so
 \begin{equation}\label{f1}
h(\bfa)h' (\bfa) \ge _\nu (gh' + g'h)(\bfa) .\end{equation}  We need
to show that any ghost root $\bfa$ of $hh' +gh' + g'h$ satisfies
$$(hh' +gh' + g'h)(\bfa) \nucong  h(\bfa)h'(\bfa).$$ $\le_\nu$ follows from \eqref{f1},
so we need to show $\ge_\nu$. Clearly $\bfa$ is a ghost root of
$hh'+gh',$ $hh'+g'h,$ or $gh' + g'h$. In the latter case, $h(\bfa)h'
(\bfa) \le _\bfa (gh' + g'h)(\bfa)$, and we are done. Thus by
symmetry we may assume that  $\bfa \in \Corn(hh'+gh')
=\Corn((g+h)h').$ If $\bfa \in \Corn(g+h)$ then $\bfa \in
\Skel(|f|)$ and thus in $\Skel(|f| \wedge |f'|).$ So we are done
unless $\bfa \in \Corn(h')$.  If $h'(\bfa) \ge_\nu g'(\bfa),$ then
$\bfa \in \Corn(g'+h') = \Skel(f)$ and we are done. Thus we may
assume that $h'(\bfa) \le_\nu g'(\bfa),$ implying $g'(\bfa)h(\bfa)
\ge_\nu h'(\bfa)h(\bfa),$ and again we are done.
\end{proof}
\end{spacing}

\begin{cor}\label{repl1}
Let $f \in \overline{F(\Lambda)}$. Then $f$ is corner internal if and only if $|f| \wedge |\alpha|$ is corner internal for any $\alpha \neq 1$ in $F$. \\
\end{cor}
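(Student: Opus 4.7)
My plan is to treat each direction separately, using the preceding results on corner internal functions.

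For the forward direction, I would first verify that any tangible constant $\alpha \ne 1$ in $F$ is vacuously corner internal: writing $\alpha = \alpha/1$, the sum $1+\alpha$ is a tangible constant with no ghost roots, so the defining condition of Definition~\ref{defn_corner_admissible} is trivially satisfied. Then Proposition~\ref{prop_wedge_corner_admissibility}, applied to the corner internal functions $f$ and $\alpha$, immediately yields that $|f| \wedge |\alpha|$ is corner internal.

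For the backward direction, by Proposition~\ref{prop_absolute_corner_integral} it suffices to show $|f|$ is corner internal. I would write $|f| = h/g$ canonically (with $g$ and $g+h$ tangible polynomials) and note $h \ge_\nu g$ pointwise, since $|f| \ge_\nu 1$. Taking an arbitrary ghost root $\bfa$ of $g+h$, a case analysis handles all but one scenario: the case $g(\bfa) \nucong h(\bfa)$ places $\bfa \in \Skel(|f|)$ directly, while strict $g(\bfa) >_\nu h(\bfa)$ is excluded by $h \ge_\nu g$. Only the case $h(\bfa) \in \tG$ with $h(\bfa) >_\nu g(\bfa)$ strict requires further work.

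In this remaining case, I would pick a tangible $\alpha \ne 1$ with $|\alpha| >_\nu 1$, chosen generically so that the representation $|f| \wedge |\alpha| = \frac{h|\alpha|}{g|\alpha|+h}$ is canonical (that is, $g|\alpha|+h$ is tangible as a polynomial). Evaluating the numerator-plus-denominator at $\bfa$ gives
\[
h(\bfa)|\alpha| + g(\bfa)|\alpha| + h(\bfa) \;=\; h(\bfa)|\alpha|,
\]
which is a ghost, since $h(\bfa)|\alpha|$ strictly $\nu$-dominates the other two summands and is itself ghost (as $h(\bfa) \in \tG$). Hence $\bfa$ lies in $\Corn$ of num$+$denom, and Proposition~\ref{cor_absolute_corner_admissible2} combined with the hypothesized corner internality of $|f| \wedge |\alpha|$ places $\bfa \in \Skel(|f| \wedge |\alpha|) = \Skel(|f|) \cup \Skel(|\alpha|) = \Skel(|f|)$, the last equality because $\Skel(|\alpha|) = \emptyset$ for $|\alpha| >_\nu 1$. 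But in the remaining case $|f|(\bfa) = h(\bfa)/g(\bfa) >_\nu 1$, so $\bfa \notin \Skel(|f|)$---a contradiction, ruling out the remaining case.

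The main obstacle I expect is the canonical-form check: showing that for suitably chosen tangible $\alpha$, the denominator $g|\alpha|+h$ is tangible as a polynomial. This reduces to a monomial-level bookkeeping argument, since the ``bad'' values of $|\alpha|$---those forcing a scaled tangible monomial of $g$ to become $\nu$-equivalent to a monomial of $h$ at the same power---form a finite set determined by the paired monomials of $g$ and $h$. Hence the ``for any $\alpha \ne 1$'' hypothesis supplies an appropriate $\alpha$ provided $F$ is rich enough (e.g.\ $F = \mathscr R$), and the argument goes through.
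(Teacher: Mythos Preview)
Your proof is correct, but the backward direction is more elaborate than needed, and the canonicality concern you raise at the end is unnecessary.

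The paper dispatches both directions with a single observation you nearly made but did not exploit fully. Writing $|f| = h/g$ canonically, the representation $|f| \wedge |\alpha| = \dfrac{h|\alpha|}{g|\alpha| + h}$ has numerator-plus-denominator $|\alpha|h + |\alpha|g + h$. Since $|\alpha| >_\nu 1$, the summand $h$ is strictly dominated by $|\alpha|h$ at \emph{every} point, so as functions in $\overline{F[\Lambda]}$ one has $|\alpha|h + |\alpha|g + h = |\alpha|(h+g)$. Hence $\Corn(\underline{|f| \wedge |\alpha|}) = \Corn(|\alpha|(h+g)) = \Corn(h+g) = \Corn(\underline{|f|})$. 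Together with $\Skel(|f| \wedge |\alpha|) = \Skel(|f|)$ (as $\Skel(|\alpha|) = \emptyset$), the corner-internality condition of Proposition~\ref{cor_absolute_corner_admissible2} for $|f| \wedge |\alpha|$ becomes literally the same equation as for $|f|$, yielding both implications at once.

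This also dissolves your canonicality worry: the denominator $g|\alpha| + h$ is a sum of tangible polynomials and hence lies in $\overline{\tT[\Lambda]}$, and the numerator-plus-denominator equals the tangible polynomial $|\alpha|(h+g)$. No generic choice of $\alpha$ is required; the representation is canonical for every $\alpha \ne 1$. Your forward direction via Proposition~\ref{prop_wedge_corner_admissibility} is perfectly valid, and your contradiction argument in the backward direction does go through, but the direct identification of ghost-root sets is shorter and sidesteps the case split entirely.
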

\begin{proof} Write $f = \frac hg.$
Since $|\alpha| >1$, the ghost roots of $ |\alpha|h+|\alpha|g + h$
and of $|\alpha|(g+h)$ are the same, and thus the kernel roots of
$|f|$ and
 of $|f| \wedge |\alpha| $ are the same.
\end{proof}

\begin{prop}\label{prop_regular0}
There is a $1:1$ correspondence between principal corner internal
$\onenu$-sets and principal corner-loci.
\end{prop}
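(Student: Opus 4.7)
The plan is to observe that Proposition~\ref{prop_regular0} is essentially a re-packaging of Theorem~\ref{cor11} in the language of $\onenu$-sets and corner loci (rather than of their generators), so the work reduces to showing that the maps $f \mapsto \hat f$ and $h \mapsto \underline{h}$ pass to well-defined operations on \emph{sets}, and then that these set-level maps are mutually inverse.

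First I would define the two candidate maps. For a principal corner locus $Z = \Corn(f)$, send $Z$ to $\Skel(\hat f)$; by Lemma~\ref{ci1} the function $\hat f$ is corner internal, and by Corollary~\ref{prop_ker_corner1} we have $\Skel(\hat f) = \Corn(f) = Z$, so the image is indeed a principal corner internal $\onenu$-set and, more importantly, the output depends only on the set $Z$ (not on the particular polynomial $f$ realizing it). In the opposite direction, for a principal corner internal $\onenu$-set $W = \Skel(h)$ with $h$ corner internal written canonically as $h = h'/g$, send $W$ to $\Corn(\underline{h}) = \Corn(g+h')$. Proposition~\ref{cor_absolute_corner_admissible2} gives $\Corn(\underline h) = \Skel(h) = W$, and Lemma~\ref{welld} guarantees independence from the canonical representation; so this map is also well-defined on sets.

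Next I would verify that the two maps are mutual inverses. Starting from $Z = \Corn(f)$, the round trip yields
\[
Z \;\longmapsto\; \Skel(\hat f) \;\longmapsto\; \Corn(\underline{\hat f}) \;=\; \Skel(\hat f) \;=\; \Corn(f) \;=\; Z,
\]
where the middle equality is Proposition~\ref{cor_absolute_corner_admissible2} (since $\hat f$ is corner internal) and the next is Corollary~\ref{prop_ker_corner1} (in fact the explicit computation $\underline{\hat f} = f^t$ from the proof of Theorem~\ref{cor110} already gives $\Corn(\underline{\hat f}) = \Corn(f)$ by Remark~\ref{rem_cor_roots_of_prod}(ii)). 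Starting from $W = \Skel(h)$ with $h$ corner internal, the round trip yields
\[
W \;\longmapsto\; \Corn(\underline h) \;\longmapsto\; \Skel(\widehat{\underline h}) \;=\; \Corn(\underline h) \;=\; \Skel(h) \;=\; W,
\]
again using Corollary~\ref{prop_ker_corner1} and Proposition~\ref{cor_absolute_corner_admissible2}.

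The only real obstacle is the well-definedness on sets: two different supertropical polynomials can determine the same principal corner locus, and two different corner internal rational functions can determine the same $\onenu$-set, so one must be sure the constructions $f \mapsto \Skel(\hat f)$ and $h \mapsto \Corn(\underline h)$ factor through $\Corn(f)$ and $\Skel(h)$ respectively. But this factorization is exactly the content of the two identities $\Skel(\hat f) = \Corn(f)$ and $\Corn(\underline h) = \Skel(h)$ already established, so the argument goes through without further calculation.
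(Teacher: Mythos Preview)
Your proposal is correct and follows essentially the same approach as the paper: both arguments use the identities $\Skel(\hat f)=\Corn(f)$ (Corollary~\ref{prop_ker_corner1}) and $\Corn(\underline h)=\Skel(h)$ for corner internal $h$ (Proposition~\ref{cor_absolute_corner_admissible2}) to show that the two constructions are well-defined on sets and mutually inverse. The paper's version is terser and leaves the well-definedness implicit in the chains of equalities, whereas you spell it out explicitly; the underlying logic is the same.
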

\begin{proof} Take $f,f' \in \overline{F(\Lambda)}$   corner internal.

If $\Skel(f')= \Skel( f)$, then
$$\Skel(f') = \Skel(f) = \Corn(\underline{f}) = \Skel(\widehat{\underline{f}}).$$

 Conversely, if
$\Corn(f') = \Corn(f)$, then
$$\Corn(f') = \Corn(f) = \Skel(\hat{f}) = \Corn(\underline{\hat{f}}),$$
by Theorem~\ref{cor110}.
\end{proof}

\subsection{Corner internal $\nu$-kernels}\label{admis}$ $

\begin{defn}\label{defn_corner_admissibility}
A principal $\nu$-kernel $K$ of $\overline{F(\Lambda)}$  is said
to be \textbf{corner internal} if   it has a corner internal
generator. In this case, the $\onenu$-set $\Skel(K)$ corresponding
to~$K$ is said to be a \textbf{corner internal} $\onenu$-set.
\end{defn}

\begin{cor}\label{cor_intersection_of_CI}
Any finite intersection of principal corner internal $\nu$-kernels
is a principal corner internal $\nu$-kernel.
\end{cor}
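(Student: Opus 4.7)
The plan is a straightforward induction on the number of $\nu$-kernels being intersected, combining two prior results: the product/intersection formula of Theorem~\ref{cor_max_semifield_principal_kernels_operations} and the wedge-preservation of corner internality in Proposition~\ref{prop_wedge_corner_admissibility}.

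For the base case, let $\langle f \rangle$ and $\langle f' \rangle$ be principal corner internal $\nu$-kernels with $f, f'$ corner internal generators. By Theorem~\ref{cor_max_semifield_principal_kernels_operations},
\[
\langle f \rangle \cap \langle f' \rangle \;=\; \langle |f| \wedge |f'| \rangle,
\]
so the intersection is principal. Proposition~\ref{prop_wedge_corner_admissibility} then tells us that $|f| \wedge |f'|$ is corner internal (noting that $f$ corner internal implies $|f|$ corner internal by Proposition~\ref{prop_absolute_corner_integral}, and likewise for $f'$). Thus $\langle f \rangle \cap \langle f' \rangle$ admits a corner internal generator, so it is itself a principal corner internal $\nu$-kernel.

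For the inductive step, assume the result for intersections of fewer than $m$ principal corner internal $\nu$-kernels, and consider $\langle f_1 \rangle, \ldots, \langle f_m \rangle$ with each $f_i$ corner internal. By the inductive hypothesis, $\bigcap_{i=1}^{m-1} \langle f_i \rangle = \langle g \rangle$ for some corner internal $g$. Applying the base case to $\langle g \rangle \cap \langle f_m \rangle$ yields $\bigcap_{i=1}^m \langle f_i \rangle = \langle |g| \wedge |f_m| \rangle$, and $|g| \wedge |f_m|$ is corner internal by Proposition~\ref{prop_wedge_corner_admissibility}.

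The argument is essentially bookkeeping; the main substance was already done in proving Proposition~\ref{prop_wedge_corner_admissibility}. The only mild subtlety is that at each inductive step one must re-invoke Proposition~\ref{prop_absolute_corner_integral} to pass between a corner internal element and its absolute value in order to apply the wedge lemma — but since that proposition gives an \emph{iff}, this presents no obstacle. Thus no new ideas beyond the earlier machinery are needed, and the corollary follows directly.
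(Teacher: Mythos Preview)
Your proof is correct and follows essentially the same approach as the paper: induction reducing to the two-kernel case, where Theorem~\ref{cor_max_semifield_principal_kernels_operations} gives $\langle f\rangle\cap\langle f'\rangle=\langle |f|\wedge|f'|\rangle$, Proposition~\ref{prop_absolute_corner_integral} lets one pass to absolute values, and Proposition~\ref{prop_wedge_corner_admissibility} supplies corner internality of the wedge. The paper's version is just a bit terser (it replaces $f$ by $|f|$ up front rather than carrying the bars through), but the content is identical.
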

\begin{proof}

By induction, it is enough to show that if $K $ and $K'$ are
principal corner internal   $\nu$-kernels, then so is $K  \cap
K'$. Write $K = \langle f \rangle$ and $K' = \langle f' \rangle$.
By Proposition~\ref{prop_absolute_corner_integral} we may assume
that $f,f' \geq 1$. By
Proposition~\ref{prop_wedge_corner_admissibility}, $f \wedge f'$
is corner internal, which generates $K  \cap K'$.
\end{proof}

%

\begin{rem}
If  $f,g \in \overline{F(\Lambda)}$ are corner internal then $|f|
+ |g|$ need not be corner internal. Thus the collection of corner
internal $\nu$-kernels is not a lattice. In our study we thus take
the lattice \textbf{generated} by principal corner internal
$\nu$-kernels. These elements will be shown to correspond to
finitely generated corner loci that are not necessarily principal.
\end{rem}

\subsection{The hat-construction for corner internal
$\nu$-kernels}$ $

Although the $f\mapsto \hat{{f}}$ correspondence given above
yields a fast and effective correspondence from corner loci of
polynomials to $\onenu$-sets, it does not work so well on
arbitrary rational functions $f \in \overline{F(\Lambda)}$, so we
turn to a subtler but more thorough correspondence, which
``explains'' what makes a $\nu$-kernel corner internal. We start
with the special case of a polynomial $f = \sum_i f_i \in
F[\Lambda],$ written as a sum of monomials $f_i$.

\begin{defn}\label{wedge}  Given $f = \sum_i f_i \in F[\Lambda],$ define the rational function  $\tilde{f}~=~ \bigwedge_{ i=1}^{\ k} |\widehat {f_i}|. $\end{defn}
 \begin{prop}    
 $\Corn(f) = \Skel(\tilde{f})$.
 \end{prop}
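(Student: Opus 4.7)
The plan is to compute $\Skel(\tilde{f})$ by unwinding the wedge via the $(*)$-operation, and then match the resulting union with $\Corn(f)$ using the pointwise description of $\widehat{f_i}$ in Lemma~\ref{mol}.

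First I would record two preliminary observations. By Lemma~\ref{rem_max0}, $\Skel(|\widehat{f_i}|) = \Skel(\widehat{f_i})$, so it suffices to understand the wedge of the $|\widehat{f_i}|$. Second, I need an analog of Lemma~\ref{rem_max}(ii) for wedges of functions bounded below by $1$: whenever each $g_i \ge_\nu 1$,
\[
\Skel\!\Bigl(\bigwedge_{i=1}^{k} g_i\Bigr) \;=\; \bigcup_{i=1}^{k}\Skel(g_i).
\]
By induction, it is enough to prove this for $k=2$. Using $g \wedge g' = (g^* + (g')^*)^*$ from \eqref{dua01}, both $g^*$ and $(g')^*$ lie $\le_\nu 1$. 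In a $\nu$-bipotent setting, the sum of two elements $\le_\nu 1$ has $\nu$-value equal to the larger of the two, and equals $1^\nu$ only when both already equal $\nu$-value $1$. Hence $(g^*+(g')^*)(\bfa) \nucong 1$ iff $g^*(\bfa)\nucong 1$ or $(g')^*(\bfa) \nucong 1$, which, since $(*)$ preserves $\onenu$-sets, is the desired union.

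Applying this to $\tilde{f} = \bigwedge_i |\widehat{f_i}|$ yields
\[
\Skel(\tilde{f}) \;=\; \bigcup_{i=1}^{k} \Skel(\widehat{f_i}).
\]
It remains to identify this union with $\Corn(f)$. One direction is immediate from Lemma~\ref{mol}(ii): if $\widehat{f_i}(\bfa) \nucong 1$ then some $f_j(\bfa)$ ties with $f_i(\bfa)$ while dominating every other monomial, so $\bfa \in \Corn(f)$. For the reverse, if $\bfa \in \Corn(f)$ then Proposition~\ref{mol2} gives $\widehat{f}(\bfa) = 1^\nu$; unpacking the proof shows this is forced by having at least two summands $\widehat{f_i}(\bfa) \nucong \widehat{f_j}(\bfa) \nucong 1$, whence $\bfa \in \Skel(\widehat{f_i})$.

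The main obstacle I expect is the supertropical wedge step. The identity $a \wedge b = (a^*+b^*)^*$ is formally clean, but to conclude that $(a \wedge b)(\bfa) \nucong 1$ forces one of $a(\bfa), b(\bfa)$ to be $\nucong 1$, one must rule out the possibility that the sum $a^*(\bfa) + b^*(\bfa)$ only equals $1$ ``in ghost'' without either summand achieving it. This is precisely where the hypothesis $|\widehat{f_i}| \ge_\nu 1$ is essential: both $\nu$-inverses lie $\le_\nu 1$, so their $\nu$-supremum reaches $1$ only if at least one of them already does. Once this is pinned down, the rest is a bookkeeping assembly of Lemma~\ref{mol} and Proposition~\ref{mol2}.
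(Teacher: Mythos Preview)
Your approach is essentially the paper's: show $\Skel(\tilde f) = \bigcup_i \Skel(\widehat{f_i})$ using that each $|\widehat{f_i}| \ge_\nu 1$, then identify this union with $\Corn(f)$ via Lemma~\ref{mol} and Proposition~\ref{mol2}. One small slip: in the wedge step you write that the sum of the two $\le_\nu 1$ elements equals $1^\nu$ ``only when both'' do, but you mean (and correctly state both before and after) ``at least one''---fix that wording and the argument is sound, and in fact more explicit than the paper's two-line proof.
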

\begin{proof} We have seen in Theorem~\ref{cor110}
that $\bfa \in \Corn(f)$ iff $\widehat {f_i}(\bfa) = 1$ for some
$i$. But each  $|\widehat {f_i}|(\bfa) \ge 1,$ so we conclude with
Lemma~\ref{rem_max0}
\end{proof}
Thus, we have an alternative approach to that of
 Theorem~\ref{cor110},  motivating some of the intricate
computations we are about to make.

%
%

%
%
%

\begin{thm}\label{prop_CI_conditions} Suppose
$f = h g^* \in \tT(\Lambda)$ is a  rational function, where $h =
\sum_i h_i$ and $g = \sum _j g_j$ written as sums of monomials. Then
$f$ is corner internal if and only if the following conditions hold:
\begin{equation}\label{eq_prop_CI_conditions_1}
\left(\bigcup_{i=1}^{k}\Skel\left(\widehat{h_i}\right)\right) \cap
\Skel(f^* + 1) \subseteq \Skel(f)
\end{equation}
\begin{equation}\label{eq_prop_CI_conditions_2}
\left(\bigcup_{j=1}^{m}\Skel\left(\widehat{g_j}\right)\right) \cap
\Skel(f + 1) \subseteq  \Skel(f).
\end{equation}
\end{thm}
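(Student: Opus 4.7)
The plan is to invoke Remark~\ref{corner_int_cond_1}, which says that $f = hg^{*}$ is corner internal iff $\Corn(h) \cup \Corn(g) \subseteq \Skel(f)$, and then combine this with the hat-construction identities $\Corn(h) = \bigcup_{i}\Skel(\widehat{h_i})$ and $\Corn(g) = \bigcup_{j}\Skel(\widehat{g_j})$, which follow from Lemma~\ref{mol} and Proposition~\ref{mol2}. Once this reduction is made, the only remaining task is a case analysis keyed to the sign of $f$ at a given point.

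The key preparatory step is to read off the geometric meaning of the two auxiliary $\onenu$-sets in the hypotheses. Evaluating $(f^{*}+1)(\bfa) = f^{*}(\bfa) \vee 1$ and $(f+1)(\bfa) = f(\bfa) \vee 1$ in the supertropical structure yields
\[
\Skel(f^{*}+1) = \{\bfa : f(\bfa) \ge_{\nu} 1\} = \{\bfa : h(\bfa) \ge_{\nu} g(\bfa)\},
\]
\[
\Skel(f+1) = \{\bfa : f(\bfa) \le_{\nu} 1\} = \{\bfa : h(\bfa) \le_{\nu} g(\bfa)\}.
\]
In particular their union is all of $F^{(n)}$, which is precisely what will allow the two hypotheses together to cover every ghost root of $h+g$.

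For the forward direction, assume $f$ is corner internal and fix $\bfa \in \Skel(\widehat{h_i}) \cap \Skel(f^{*}+1)$. By Lemma~\ref{mol}(ii) the first membership forces $h_i(\bfa) \nucong h_j(\bfa)$ for some $j \ne i$ with both monomials dominating $h$, so $h(\bfa) \in \tG$; the second membership gives $h(\bfa) \ge_{\nu} g(\bfa)$, hence $(h+g)(\bfa) \in \tG$, and corner internality places $\bfa \in \Skel(f)$. This verifies \eqref{eq_prop_CI_conditions_1}, and \eqref{eq_prop_CI_conditions_2} is obtained by interchanging the roles of $h$ and $g$.

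For the converse, assume both inclusions hold and let $\bfa$ be an arbitrary ghost root of $h+g$; we must show $\bfa \in \Skel(f)$. If $h(\bfa) \nucong g(\bfa)$ then $f(\bfa) \nucong 1$ and we are done automatically. Otherwise one term strictly $\nu$-dominates; by symmetry, say $h(\bfa) >_{\nu} g(\bfa)$, which forces $h(\bfa) \in \tG$ and hence $\bfa \in \Corn(h) = \bigcup_i \Skel(\widehat{h_i})$, while at the same time $\bfa \in \Skel(f^{*}+1)$ by the preparatory step, so \eqref{eq_prop_CI_conditions_1} gives $\bfa \in \Skel(f)$; the remaining case $g(\bfa) >_{\nu} h(\bfa)$ is handled analogously by \eqref{eq_prop_CI_conditions_2}. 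The only real subtlety is the translation of $\Skel(f^{*}+1)$ and $\Skel(f+1)$ into the two closed $\nu$-half-spaces $\{f \ge_{\nu} 1\}$ and $\{f \le_{\nu} 1\}$; once this dictionary is in place the argument becomes essentially bookkeeping about which of $h$ or $g$ is dominant.
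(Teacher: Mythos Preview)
Your proof is correct and follows essentially the same approach as the paper: both reduce corner internality, via Remark~\ref{corner_int_cond_1} and the identities $\Corn(h)=\bigcup_i\Skel(\widehat{h_i})$, to the half-space dictionary $\Skel(f^{*}+1)=\{h\ge_\nu g\}$ and $\Skel(f+1)=\{h\le_\nu g\}$, and then finish by a case split on which of $h,g$ dominates. Your write-up is in fact somewhat cleaner, since you work directly with $\Corn(g+h)\subseteq\Skel(f)$ rather than with the paper's intermediate claim that ``$\Corn(h)\subseteq\Skel(f)$ is equivalent to \eqref{eq_prop_CI_conditions_1}''.
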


\begin{proof} Note that $f +1 =  (g+h )g^*,$ whereas $f^* +1 = (g+h)h^*.$  In view of Remark~\ref{corner_int_cond_1}, we need
to show that $\Corn(h) \subseteq \Skel(f)$ is equivalent to
\eqref{eq_prop_CI_conditions_1}, since the other condition is
symmetric (with respect to exchanging $g$ and $h$). By definition of
corner internality, any ghost root $\bfa$ of $h$ must satisfy
$h(\bfa) \le_\nu g(\bfa),$ implying $\bfa \in \Skel(f+1).$ Thus,
checking this at each ghost root, we have
$\Skel\left(\widehat{h_i}\right) \subseteq  \Skel(f + 1)$ for each
$i$, or equivalently,
\begin{equation}\label{eq_inclusion_1}
 \langle f + 1 \rangle \subseteq \left\langle \widehat{h_i}
\right\rangle.
\end{equation}

The steps are reversible.

%

Now, intersecting both sides  with $\Skel(f^* +1)$ yields
$$ \Skel\left(\widehat{h_i}\right) \cap \Skel(f^* + 1) \subseteq  \Skel(f + 1) \cap \Skel(f^* +
1).$$ Note that this step also is reversible since $\Skel(f^* +1)
\cup \Skel(f+1) = F^{(n)}$.

Passing again from \skeletonbs\  to $\nu$-kernels yields
\begin{equation*}
\langle f \rangle \subseteq \left\langle \widehat{h_i} \right\rangle
 \langle f^* + 1 \rangle.
\end{equation*}

Moreover, since the above inclusion holds for every $i \in \{
1,...,k \}$ we conclude that
\begin{equation*}
\left(\bigcup_{i=1}^{k}\Skel\left(\widehat{h_i}\right)\right) \cap
\Skel(f^* + 1) \subseteq \Skel(f),
\end{equation*}
as desired.

%
%
\end{proof}

\begin{rem}\label{rem_alternative_expression}
Note that
$$\bigcup_{i=1}^{k}\Skel\left(\widehat{h_i}\right) =
\Skel\left(\bigwedge_{i=1}^{k} \left|\widehat{h_i}\right|\right) =
\Skel(\tilde{h})$$ and similarly
$$\bigcup_{j=1}^{m}\Skel\left(\widehat{g_j}\right) =
\Skel(\tilde{g}).$$ Thus we can rewrite
\eqref{eq_prop_CI_conditions_1} and \eqref{eq_prop_CI_conditions_2}
as
$$\Skel\left( \tilde{h} \right) \cap  \Skel(f^* + 1) \subseteq \Skel(f) \qquad \text{and} \qquad  \Skel\left( \tilde{g} \right) \cap \Skel(f + 1) \subseteq  \Skel(f),$$
or as $$\Skel\left( \widehat{h} \right) \cap  \Skel(f^* + 1)
\subseteq \Skel(f) \qquad  \text{and} \qquad  \Skel\left(
\widehat{g} \right) \cap \Skel(f + 1) \subseteq  \Skel(f).$$ We
conclude that $\widehat{f}$ is corner internal for any $f \in
\overline{F(\Lambda)}$.
\end{rem}

\begin{rem}
In view of Theorem~\ref{prop_CI_conditions}, given $f =
\frac{h}{g} \in \overline{F(\Lambda)}$, in order to obtain a
corner internal rational function whose \skeletona\ contains
$\Skel(f)$ one must adjoin both
$$\left(\bigcup_{i=1}^{k}\Skel\left(\widehat{h_i}\right)\right)
\cap  \Skel(f^* + 1)$$ and
$$\left(\bigcup_{j=1}^{m}\Skel\left(\widehat{g_j}\right)\right) \cap \Skel(f + 1)$$ to the \skeletona\ of $f$.
\end{rem}

 writing

\begin{defn}\label{hat2}
Define the map $\Phi_{CI} : \overline{F(\Lambda)} \rightarrow
\overline{F(\Lambda)}$ by
\begin{equation} \Phi_{CI}(f) = |f| \wedge \left(|f^* +
1| + \tilde{h} \right) \wedge \left(|f + 1| + \tilde{g} \right),
\end{equation} where $f = \frac{h}{g}$.
\end{defn}

\begin{rem}\label{newr}
$ \Phi_{CI}(f) = \left(|f^* + 1| + \left(|f| \wedge \tilde{h}
\right)\right) \wedge \left(|f + 1| + \left(|f| \wedge
\tilde{g}\right)\right),$  since $|f^* + 1|, |f+1| \leq_\nu |f|$.
This is the fraction whose \skeletona\ is formed by adjoining all
the necessary points to $\Skel(f)$ to obtain corner internality.
Hence,\
$$\langle \Phi_{CI}(f) \rangle = \left(\langle f^* + 1 \rangle
\left(\langle f \rangle \cap \bigcap_{i=1}^{k} \left\langle
\widehat{h_i} \right\rangle\right)\right)  \cap \left(\left(\langle
f + 1 \rangle  \left(\langle f \rangle \cap \bigcap_{j=1}^{m}
\left\langle \widehat{g_j} \right\rangle\right)\right)\right).$$
\end{rem}

\begin{cor}\label{prop_CI_functor}
Let $f = \frac{h}{g} \in \overline{F(\Lambda)}$ be a rational
function, where $h = \sum_{i=1}^{k}h_i$ and $g =
\sum_{j=1}^{m}g_j$  are written as sums of
 monomials in $F[x_1,...,x_n]$.
  Then
\begin{equation}\label{eq_prop_CI_functor}
\Skel(\widehat{h+g})   =   \Skel(f) \nonumber
  \cup  \ \left(\left(\bigcup_{i=1}^{k}\Skel\left(\widehat{h_i}\right)\right)  \cap \Skel(f^* + 1)\right) \
  \cup  \ \left( \left(\bigcup_{j=1}^{m}\Skel\left(\widehat{g_j}\right)\right) \cap \Skel(f + 1)
  \right).
\end{equation}
Thus $\Skel(\widehat{h+g}) = \Skel(\Phi_{CI}(f))$.
\end{cor}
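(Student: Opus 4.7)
My proof plan proceeds in two stages: first establish the set-theoretic identity \eqref{eq_prop_CI_functor}, then verify that the right-hand side coincides with $\Skel(\Phi_{CI}(f))$.

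For the first stage, by Corollary~\ref{prop_ker_corner1} applied to the polynomial $h+g$, we have $\Skel(\widehat{h+g}) = \Corn(h+g)$. I analyze $\Corn(h+g)$ by partitioning $F^{(n)}$ according to the three $\nu$-comparisons between $h(\bfa)$ and $g(\bfa)$. Using the identifications
$$\Skel(f+1) = \{\bfa : h(\bfa) \leq_\nu g(\bfa)\}, \qquad \Skel(f^*+1) = \{\bfa : g(\bfa) \leq_\nu h(\bfa)\},$$
with intersection $\Skel(f) = \{\bfa : h(\bfa) \nucong g(\bfa)\}$ and union all of $F^{(n)}$, I break into cases: when $h(\bfa) >_\nu g(\bfa)$ we have $(h+g)(\bfa) = h(\bfa)$, so $\bfa \in \Corn(h+g)$ iff $\bfa \in \Corn(h) = \bigcup_i \Skel(\widehat{h_i})$ (using Corollary~\ref{prop_ker_corner1} for each $h_i$); symmetrically for $h(\bfa) <_\nu g(\bfa)$; and when $h(\bfa) \nucong g(\bfa)$ the value $(h+g)(\bfa) = h(\bfa)^\nu$ is automatically ghost, so these points contribute exactly $\Skel(f)$. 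Combining, $\Corn(h+g)$ equals $\Skel(f)$ unioned with $\bigcup_i \Skel(\widehat{h_i}) \cap \{h >_\nu g\}$ and $\bigcup_j \Skel(\widehat{g_j}) \cap \{h <_\nu g\}$. Since the overflow points where $h \nucong g$ already lie in $\Skel(f)$, we may freely replace the strict inequalities with the weak ones $\Skel(f^*+1)$ and $\Skel(f+1)$ without changing the union, yielding \eqref{eq_prop_CI_functor}.

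For the second stage, I compute $\Skel(\Phi_{CI}(f))$ directly from its definition, using the standard $\Skel$-calculus: by Lemma~\ref{rem_max0} the $\nu$-norm does not alter \skeletonas, and by (the dual of) Lemma~\ref{rem_max} the wedge of functions of norm $\geq_\nu 1$ corresponds to union of \skeletonas\ while the sum of such functions corresponds to intersection. Thus $\Skel(\tilde{h}) = \Skel\bigl(\bigwedge_i |\widehat{h_i}|\bigr) = \bigcup_i \Skel(\widehat{h_i})$, and
$$\Skel\bigl(|f^*+1| + \tilde{h}\bigr) \;=\; \Skel(f^*+1) \cap \bigcup_i \Skel(\widehat{h_i}),$$
and symmetrically for the $g$-term. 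Taking the outermost wedges then produces
$$\Skel(\Phi_{CI}(f)) \;=\; \Skel(f) \;\cup\; \Bigl(\Skel(f^*+1) \cap \bigcup_i \Skel(\widehat{h_i})\Bigr) \;\cup\; \Bigl(\Skel(f+1) \cap \bigcup_j \Skel(\widehat{g_j})\Bigr),$$
which is precisely the right-hand side of \eqref{eq_prop_CI_functor}, completing the proof.

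The main subtlety, and the only place care is needed, is absorbing the $\nucong$-boundary case: points where $h(\bfa) \nucong g(\bfa)$ land in $\Corn(h+g)$ unconditionally, and they are also captured by $\Skel(f)$, but they may or may not lie in $\Skel(\widehat{h_i})$ or $\Skel(\widehat{g_j})$. One must verify that switching from the strict partition $\{>_\nu,\;<_\nu,\;\nucong\}$ to the overlapping cover $\{\Skel(f^*+1),\,\Skel(f+1),\,\Skel(f)\}$ produces no spurious points on the right-hand side — and it does not, precisely because any new contribution is subsumed by $\Skel(f)$, which is already in the union.
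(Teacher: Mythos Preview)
Your proof is correct and follows essentially the same approach as the paper's own argument: partition $\Corn(h+g)$ according to whether the ghost value at $\bfa$ arises from $h$, from $g$, or from their $\nu$-coincidence, then translate these pieces via $\Skel(f^*+1)$, $\Skel(f+1)$, and $\Skel(f)$ respectively. The paper's proof is a one-line appeal to Remark~\ref{corner_int_cond_1} and Remark~\ref{rem_alternative_expression} (together with the analysis in Theorem~\ref{prop_CI_conditions}), which encode exactly the case split you carry out explicitly; your version simply unpacks those references in full, including the verification that the boundary $\{h\nucong g\}$ is absorbed by $\Skel(f)$ when passing from strict to weak inequalities.
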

\begin{proof} Apply Lemma~\ref{corner_int_cond_1} and Remark \ref{rem_alternative_expression} to the theorem, to
get the three parts.
\end{proof}

\begin{thm}\label{cor_CI_map1}
If $f \in \overline{F(\Lambda)}$, then  $\langle \Phi_{CI}(f)
\rangle$ is corner internal, and  $\Skel(\Phi_{CI}(f)) \supseteq
\Skel(f)$. Furthermore, $\Skel(\Phi_{CI}(f)) = \Skel(f)$ if and
only if $f$ is corner internal.
\end{thm}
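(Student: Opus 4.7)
The plan is to handle the three assertions in sequence: the inclusion $\Skel(\Phi_{CI}(f)) \supseteq \Skel(f)$, the corner-internality of $\langle \Phi_{CI}(f)\rangle$, and the equivalence characterizing equality.

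First, I establish $\Skel(\Phi_{CI}(f)) \supseteq \Skel(f)$ by direct pointwise computation. Fix $\bfa \in \Skel(f)$, so $f(\bfa) \nucong 1$ and hence $f(\bfa)^* \nucong 1$, $(f+1)(\bfa) \nucong 1$, and $(f^*+1)(\bfa) \nucong 1$; consequently $|f|(\bfa), |f+1|(\bfa), |f^*+1|(\bfa)$ are all $\nucong 1$. Since each $|\widehat{h_i}|, |\widehat{g_j}| \ge_\nu 1$ by Remark~\ref{norm1}(i), their wedges $\tilde h(\bfa), \tilde g(\bfa)$ are $\ge_\nu 1$, and therefore $(|f^*+1|+\tilde h)(\bfa)$ and $(|f+1|+\tilde g)(\bfa)$ are both $\ge_\nu 1$. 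The wedge of three quantities all $\ge_\nu 1$, one of which is $\nucong 1$, is itself $\nucong 1$, giving $\Phi_{CI}(f)(\bfa)\nucong 1$.

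Second, I show $\langle \Phi_{CI}(f)\rangle$ is corner internal. By Corollary~\ref{prop_CI_functor} and Theorem~\ref{cor110}, $\Skel(\Phi_{CI}(f)) = \Skel(\widehat{h+g}) = \Corn(h+g)$; and by Lemma~\ref{ci1}, $\widehat{h+g}$ is corner internal. The cleanest route to corner-internality of the $\nu$-kernel is to show $\langle \Phi_{CI}(f)\rangle = \langle \widehat{h+g}\rangle$, which furnishes $\widehat{h+g}$ as a corner-internal generator. Since principal $\nu$-kernels with the same $\onenu$-set need not coincide (cf.~Example~\ref{onenusetex}(i)), this equality requires the structural decomposition from Remark~\ref{newr},
\[
\langle \Phi_{CI}(f)\rangle \;=\; \Big(\langle f^*+1\rangle\big(\langle f\rangle \cap \bigcap_{i}\langle \widehat{h_i}\rangle\big)\Big) \cap \Big(\langle f+1\rangle\big(\langle f\rangle \cap \bigcap_{j}\langle \widehat{g_j}\rangle\big)\Big),
\]
which I then match term-by-term against the analogous decomposition of $\langle \widehat{h+g}\rangle$ obtained by splitting the sum-of-monomials expansion $h+g = \sum_i h_i + \sum_j g_j$ into its $h$-contributions and $g$-contributions, applying Lemma~\ref{rem_sum_prod_absolute_values} and Theorem~\ref{cor_max_semifield_principal_kernels_operations} to convert sums to products and to intersections of principal $\nu$-kernels. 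The main obstacle is this generator-by-generator matching; it is the most technical step, demanding careful tracking of which molecules appear in $\widehat{h+g}$ versus which appear in the two factors of $\Phi_{CI}(f)$.

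Third, for the equivalence $\Skel(\Phi_{CI}(f)) = \Skel(f) \iff f$ corner internal: combining $\Skel(\Phi_{CI}(f)) = \Corn(h+g)$ from the second step with Proposition~\ref{cor_absolute_corner_admissible2} (which states that $f$ is corner internal iff $\Corn(\underline{f}) = \Corn(h+g) = \Skel(f)$), the identity $\Skel(\Phi_{CI}(f)) = \Skel(f)$ becomes $\Corn(h+g) = \Skel(f)$, which is exactly the criterion for corner-internality of $f$.
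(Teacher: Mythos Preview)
Your first and third steps are essentially the paper's own argument. For the inclusion $\Skel(\Phi_{CI}(f)) \supseteq \Skel(f)$ the paper simply decomposes the wedge via Lemma~\ref{rem_max}(ii), which is exactly your pointwise computation repackaged; for the equivalence, the paper cites Theorem~\ref{prop_CI_conditions} where you cite its corollary Proposition~\ref{cor_absolute_corner_admissible2}, and these amount to the same thing once one uses $\Skel(\Phi_{CI}(f))=\Corn(h+g)$ from Corollary~\ref{prop_CI_functor}.

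The difficulty is in your second step. You propose to establish the \emph{kernel} equality $\langle \Phi_{CI}(f)\rangle = \langle \widehat{h+g}\rangle$ by a term-by-term matching of Remark~\ref{newr} against the molecular expansion of $\widehat{h+g}$. You correctly flag this as ``the main obstacle,'' but you do not carry it out, and in fact there is no reason to expect it to be tractable: the two generators are built from quite different pieces (one involves $\tilde h,\tilde g$ and the order fragments $|f\pm 1|$, the other the full list of molecules of $h+g$), and as you yourself note via Example~\ref{onenusetex}(i), equality of $\onenu$-sets does not force equality of principal $\nu$-kernels. So this route, as written, is a genuine gap.

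The paper avoids this entirely. It does not attempt the kernel equality; it invokes Corollary~\ref{prop_CI_functor} to obtain $\Skel(\Phi_{CI}(f))=\Skel(\widehat{h+g})$ and then observes that $\widehat{h+g}$ (equivalently $\widehat{\underline f}$) is corner internal by Theorem~\ref{cor110} and Lemma~\ref{ci1}. In other words, the paper is content to identify $\Skel(\Phi_{CI}(f))$ with a principal corner locus, which is what is actually used downstream. If you want to match the paper, drop the kernel-equality program and argue directly at the level of $\onenu$-sets via Corollary~\ref{prop_CI_functor}.
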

\begin{proof}
The first claim follows from Corollary~\ref{prop_CI_functor}, 
where $\widehat f$ is corner internal by Theorem~\ref{cor110}. The
second claim is straightforward from Remark~\ref{newr} since $$
\Skel\left(|f| \wedge \left(|f^* + 1| + \tilde{h} \right) \wedge
\left(|f + 1| + \tilde{g} \right)\right)  =
 \Skel(f) \cup  \Skel\left(|f^* + 1| + \tilde{h} \right)
\cup \Skel\left(|f + 1| + \tilde{g} \right).
$$
The last statement follows Theorem~\ref{prop_CI_conditions}.
\end{proof}
%

%
\begin{prop}\label{rem_corner_integrality_powers}
For any $f =\frac{h}{g} \in \overline{F(\Lambda)}$,
\begin{equation}
\langle \Phi_{CI}(\sum_{i=1}^{k}f^{d(i)}) \rangle = \langle
\Phi_{CI}(f) \rangle
\end{equation}
whenever  $0 < d(1) < \cdots < d(k),$ and
\begin{equation}
\langle \Phi_{CI}(f^k) \rangle = \langle \Phi_{CI}(f) \rangle
\end{equation}
for any $k \in \mathbb{Z} \setminus \{0\}$.
\end{prop}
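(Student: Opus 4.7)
The plan is to use the explicit formula for $\langle \Phi_{CI}(f)\rangle$ given in Remark \ref{newr}:
\[
\langle \Phi_{CI}(f) \rangle = \Bigl(\langle f^{-1} + 1 \rangle \bigl(\langle f \rangle \cap \bigcap_i \langle \widehat{h_i} \rangle\bigr)\Bigr) \cap \Bigl(\langle f + 1 \rangle \bigl(\langle f \rangle \cap \bigcap_j \langle \widehat{g_j} \rangle\bigr)\Bigr),
\]
where $f = h/g$ with $h = \sum_i h_i$ and $g = \sum_j g_j$ as sums of monomials. I will verify that each of the five ingredient $\nu$-kernels on the right is invariant when $f$ is replaced by $f^k$ or by $F := \sum_{i=1}^{k}f^{d(i)}$, and then obtain both equalities by substitution.

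For the power case $f^k$ with $k>0$, represent $f^k = h^k/g^k$. The Frobenius property of $\overline{F(\Lambda)}$ (Proposition \ref{Fr1}) yields the function-level identities $(f+1)^k \nucong f^k+1$, $(f^{-1}+1)^k \nucong f^{-k}+1$, $h^k \nucong \sum_i h_i^k$, $g^k \nucong \sum_j g_j^k$, and $\bigl(\sum_{\ell\ne i}h_\ell\bigr)^k \nucong \sum_{\ell\ne i}h_\ell^k$, so that $\widehat{h_i^k} \nucong \widehat{h_i}^k$ and $\widehat{g_j^k} \nucong \widehat{g_j}^k$ as functions. Corollary \ref{prop_absolute_generator} ($\langle a\rangle = \langle a^k\rangle$) then collapses each such power inside the kernel brackets, matching every ingredient of $\langle \Phi_{CI}(f^k)\rangle$ with the corresponding ingredient of $\langle \Phi_{CI}(f)\rangle$. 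The case $k<0$ reduces to $|k|>0$ applied to $f^{-1}$: writing $f^{-1} = g/h$ interchanges the roles of numerator and denominator, but the intersection formula in Remark \ref{newr} is symmetric in these roles, so the same $\nu$-kernel is obtained.

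For the sum case $F = \sum_{i=1}^{k} f^{d(i)}$ with $0<d(1)<\cdots<d(k)$, a pointwise analysis splitting into the regions $\{a:f(a)\ge_\nu 1\}$ and $\{a:f(a)\le_\nu 1\}$ yields the function identities $F+1 \nucong (f+1)^{d(k)}$ and $F^{-1}+1 \nucong (f^{-1}+1)^{d(1)}$ together with the sandwich $|f|^{d(1)} \le_\nu |F| \le_\nu |f|^{d(k)}$; with Corollary \ref{prop_absolute_generator} and Remark \ref{bfacts1}(ix), this matches the three ingredients $\langle F\rangle$, $\langle F+1\rangle$, $\langle F^{-1}+1\rangle$ with their counterparts for $f$. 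The main obstacle is the monomial ingredient: writing $F = H/G$ with $G = g^{d(k)}$ and $H = \sum_i h^{d(i)} g^{d(k)-d(i)}$, the polynomial expansions of $H$ and $G$ contain many mixed products of $h_i$'s and $g_j$'s with no direct counterpart in the original monomial sets. The resolution uses the function-level identity $H + G \nucong (h+g)^{d(k)}$, which forces every tie among the mixed monomials to descend from a tie among the original $h_i$'s or $g_j$'s; a careful bookkeeping (plus Frobenius and Corollary \ref{prop_absolute_generator}) absorbs each mixed molecule $\widehat{H_s}$ or $\widehat{G_t}$ into an appropriate power of some $\widehat{h_i}$ or $\widehat{g_j}$, so that $\bigcap_s \langle \widehat{H_s}\rangle$ and $\bigcap_t \langle \widehat{G_t}\rangle$ reduce to $\bigcap_i \langle \widehat{h_i}\rangle$ and $\bigcap_j \langle \widehat{g_j}\rangle$, respectively. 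Substituting all five matched ingredients into the Remark \ref{newr} formula yields $\langle \Phi_{CI}(F)\rangle = \langle \Phi_{CI}(f)\rangle$.
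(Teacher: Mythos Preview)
Your route through the five-ingredient formula of Remark~\ref{newr} is quite different from the paper's, and it has a real gap at what you call the ``main obstacle.''  The paper never unpacks that formula.  Instead it writes $\sum_{i}f^{d(i)}$ over a common denominator as a single fraction $H/G$, uses Frobenius to collapse the numerator, and then invokes the identification of $\Phi_{CI}$ of a fraction with the hat of (numerator plus denominator) coming from Corollary~\ref{prop_CI_functor}; since $H+G$ is $\nu$-equivalent to a power of $h+g$, one gets $\widehat{H+G}=(\widehat{h+g})^{m}$, and Corollary~\ref{prop_absolute_generator} finishes.  The entire argument is two lines and never looks at the individual monomials of $H$ or of $G$.

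Your proposed resolution of the monomial step does not work.  You assert that $\bigcap_{s}\langle\widehat{H_{s}}\rangle$ reduces to $\bigcap_{i}\langle\widehat{h_{i}}\rangle$, but this already fails at the $\onenu$-set level: the left side has $\onenu$-set $\Corn(H)$ and the right side has $\onenu$-set $\Corn(h)$, and these differ.  Concretely, take $h=h_{1}+h_{2}$, $g=g_{1}$, $d(1)=1$, $d(2)=2$; then (via Frobenius) $H\nucong h_{1}^{2}+h_{2}^{2}+h_{1}g_{1}+h_{2}g_{1}$, and the locus $\{h_{1}\nucong g_{1}\}$ lies in $\Corn(H)$ (from the tie $h_{1}^{2}=h_{1}g_{1}$) but not in $\Corn(h)$.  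So a mixed molecule such as $\widehat{h_{1}g_{1}}$ cannot be ``absorbed into an appropriate power of some $\widehat{h_{i}}$ or $\widehat{g_{j}}$'': its $\onenu$-set already mixes the $h$--$h$ and $h$--$g$ tie loci.  It is conceivable that the discrepancy cancels after you multiply by $\langle F^{-1}+1\rangle$ and intersect with the symmetric factor, but you have not argued this, and the bookkeeping you invoke does not exist as stated.  The point of the paper's approach is precisely to avoid this: by passing immediately to the single polynomial $H+G$ (rather than the two monomial families of $H$ and of $G$ separately), Frobenius gives $(h+g)^{m}$ in one stroke and no mixed molecules ever appear.
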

\begin{proof}
By the Frobenius property,   $\sum_{i=1}^{k}f^{d(i)} =
\frac{\sum_{i=1}^{k}h^{s + d(i)}g^{t - d(i)}}{h^{s}g^{t}}=
\frac{h^{s+t} + g^{s+t}}{h^{s}g^{t}}$ where $t=|d(k)|$ and
$s=|d(1)|$. So
$$\Phi_{CI}\left(\sum_{i=1}^{k}f^{d(i)}\right) = \Phi_{CI}\left(\frac{h^{s+t} + g^{s+t}}{h^{s}g^{t}}\right) = \widehat{h^{s+t} + g^{s+t} + h^{s}g^{t}} = \widehat{h^{s+t} + g^{s+t}} = (\widehat{h + g})^{s+t}.$$
(The hats are over the enitre expressions.) But $\langle
(\widehat{h + g})^{s+t} \rangle = \langle \widehat{h + g} \rangle
= \langle \Phi_{CI}(f) \rangle$, since
$\Phi_{CI}(\frac{h^{k}}{g^{k}}) = \widehat{h^{k} + g^{k}}=
(\widehat{h + g})^{k}$. Thus, $\langle (\widehat{h + g})^{k}
\rangle = \langle \widehat{h + g} \rangle = \langle \Phi_{CI}(f)
\rangle$.
\end{proof}

\begin{cor}
Let $f \in \overline{F(\Lambda)}$ be such that  $f = u_1 \wedge
\dots \wedge u_k$ where $u_1,...,u_k \in \overline{F(\Lambda)}$
are corner internal. Then $f$ is corner internal and
\begin{equation}\label{eq_phi_wedge_similarity}
\Skel(\Phi_{CI}(f))= \Skel( \Phi_{CI}(u_1) \wedge \dots \wedge
\Phi_{CI}(u_k)).
\end{equation}
\end{cor}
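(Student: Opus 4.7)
My plan is an induction on $k$, reducing to the base case $k = 2$. For $k = 2$, I would apply Proposition \ref{prop_absolute_corner_integral} to replace $u_1, u_2$ with their absolute values (which are corner internal iff the originals are), and then invoke Proposition \ref{prop_wedge_corner_admissibility} to deduce that $|u_1| \wedge |u_2|$ is corner internal. Because corner internality is a property of the principal $\nu$-kernel generated (Definition \ref{defn_corner_admissibility}), and because $\langle u_1 \wedge u_2 \rangle = \langle |u_1 \wedge u_2| \rangle$ (Corollary \ref{prop_absolute_generator1}) should coincide with $\langle u_1 \rangle \cap \langle u_2 \rangle = \langle |u_1| \wedge |u_2| \rangle$ (Theorem \ref{cor_max_semifield_principal_kernels_operations}), corner internality transfers back to $u_1 \wedge u_2$. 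Iterating in $k$ then delivers that $f$ is corner internal.

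For the equation, I would observe that by the very form of Definition \ref{hat2}, each $\Phi_{CI}(u_i)$ is a wedge of absolute-value-type terms and hence $\Phi_{CI}(u_i) \geq_\nu 1$; thus the wedge $\bigwedge_i \Phi_{CI}(u_i) \geq_\nu 1$ as well, so that its $\onenu$-set is exactly the union of the individual $\onenu$-sets. Combining with Theorem \ref{cor_CI_map1} and the corner internality of each $u_i$ gives
\begin{equation*}
\Skel\bigl(\Phi_{CI}(u_1) \wedge \dots \wedge \Phi_{CI}(u_k)\bigr) \;=\; \bigcup_{i=1}^k \Skel(\Phi_{CI}(u_i)) \;=\; \bigcup_{i=1}^k \Skel(u_i).
\end{equation*}
By the first part of the corollary and Theorem \ref{cor_CI_map1}, $\Skel(\Phi_{CI}(f)) = \Skel(f)$; and by Proposition \ref{prop_ker_skel_correspondence} applied to $\langle f \rangle = \bigcap_i \langle u_i \rangle$ (the $\nu$-kernel identification secured in the first part), this equals $\bigcup_i \Skel(u_i)$. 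Chaining these equalities delivers the claimed identity.

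The main obstacle is rigorously securing the $\nu$-kernel identification $\langle u_1 \wedge u_2 \rangle = \langle |u_1| \wedge |u_2| \rangle$, since as functions $u_1 \wedge u_2$ and $|u_1| \wedge |u_2|$ generally differ pointwise; one cannot simply invoke Proposition \ref{prop_wedge_corner_admissibility} on $u_1, u_2$ themselves. Resolving this requires using Corollary \ref{prop_absolute_generator1} to pass between $\langle g \rangle$ and $\langle |g| \rangle$, and the order-bound characterization of principal $\nu$-kernels (Proposition \ref{prop_principal_ker} and Corollary \ref{cor_principal_ker_by_order}) to squeeze the two generators between comparable powers of one another, leveraging that $|u_1 \wedge u_2| \geq_\nu 1$ and $|u_1| \wedge |u_2| \geq_\nu 1$ both bound the common kernel $\langle u_1 \rangle \cap \langle u_2 \rangle$. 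Once this identification is in place, every other step is a direct application of Theorem \ref{cor_CI_map1}, Proposition \ref{prop_ker_skel_correspondence}, and the lattice structure of principal $\nu$-kernels developed earlier in the section.
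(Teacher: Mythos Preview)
Your outline follows the paper's approach closely, and the second part of your argument (for the displayed identity) is essentially the paper's. But the obstacle you flag in the first part is real, and neither of the two steps you propose to bridge it actually works. First, the kernel identity $\langle u_1 \wedge u_2 \rangle = \langle |u_1| \wedge |u_2| \rangle$ fails in general: with $u_1 = \lambda$ and $u_2 = \alpha \in F$, $\alpha >_\nu 1$, one has $|u_1 \wedge u_2|(a) = |\min(a,\alpha)| = a^{-1}$ for $a <_\nu 1$, so $\langle u_1 \wedge u_2 \rangle$ is unbounded, whereas $\langle |u_1| \wedge |u_2| \rangle = \langle \lambda \rangle \cap \langle \alpha \rangle \subseteq \langle F \rangle$ is bounded. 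No squeeze via Proposition~\ref{prop_principal_ker} or Corollary~\ref{cor_principal_ker_by_order} can repair this. Second, even if the kernels agreed, you could not transfer corner internality along a change of generator: Definition~\ref{defn_corner_admissibility} only says a principal $\nu$-kernel is corner internal when it has \emph{some} corner internal generator, and the paper nowhere proves that every generator of a corner internal $\nu$-kernel is itself a corner internal function. Your sentence ``corner internality is a property of the principal $\nu$-kernel generated'' misreads that definition.

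The paper circumvents both issues by first reducing to $f \geq_\nu 1$. Since $f = u_1 \wedge \dots \wedge u_k = \min_i u_i$, this forces each $u_i \geq_\nu f \geq_\nu 1$, whence $|u_i| \nucong u_i$ and Proposition~\ref{prop_wedge_corner_admissibility} applies directly and iteratively to show that $f$ itself is corner internal---no kernel detour needed. The same positivity hypothesis is what makes $\Skel(\bigwedge_i u_i) = \bigcup_i \Skel(u_i)$ valid for the second part; without it this identity also fails (same example: $\Skel(\lambda \wedge \alpha) \ne \Skel(\lambda) \cup \Skel(\alpha)$), so your appeal to Proposition~\ref{prop_ker_skel_correspondence} through the kernel identification is again unsupported. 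Make the reduction to $f \geq_\nu 1$ explicit at the outset and both parts go through cleanly.
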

\begin{proof}
$f$ is corner internal, by Corollary~\ref{cor_intersection_of_CI}.
 Replacing $f$ by $|f|$, we may we assume that $f \geq 1$ (and
 thus each $u_i \ge 1$. Since $u_i$ is
corner internal, $ \Skel(\Phi_{CI}(u_i))=  \Skel( u_i)$ for
$i=1,...,k$, and thus $$ \Skel(\Phi_{CI}(u_1) \wedge \dots \wedge
\Phi_{CI}(u_k)
)=  \Skel( u_1 \wedge \dots \wedge u_k) =  \Skel(f).$$ 
\eqref{eq_phi_wedge_similarity} holds  since $f$
is corner internal. 
\end{proof}

%
%

\subsection{Regularity}\label{sym}$ $

In the standard tropical theory, varieties have the property that
their complement is dense, whereas in the supertropical theory the
root set say of $\la^2 + 2^\nu \la + 3$ contains the closed
interval $[1,2]$. We would like to handle this issue through
$\nu$-kernels and their $\onenu$-sets.

The
relation $f(\bfa) = \frac{\sum h_i}{\sum g_j}(\bfa) \nucong 1$ is
studied locally in the following sense: For any $\bfa \in F^{(n)}, $
there is at least one monomial $h_{i0}$ of the numerator and at
least one monomial $g_{j0}$ of the denominator which are dominant at
$\bfa$. If more than one monomial at $\bfa$ is dominant, say
$\{h_{ik}\}_{k=1}^{s}$ and $\{g_{jm}\}_{m=1}^{t}$, then we have
additional relations of the form $h_{i0} = h_{ik}$ and $g_{j0} =
g_{jm}$.

Our motivating example: The relation $\fone +\la = \fone$ holds
for all $\bfa \le_\nu \fone.$ This will be  an example of an
\textbf{order relation}.

 In general, the dominant monomials of both
numerator and denominator at some point
define  relations $\{ h_{ik} = g_{jm} \ : \ 0 \leq k \leq s, \ 0
\leq m \leq t \}$  on  regions of $F^{(n)} $. Every such relation
can be converted by multiplying by inverses of monomials to obtain
a relation of the form $1 = \phi(\Lambda)$ with $\phi \in
\overline{F(\Lambda)} \setminus F$ a Laurent monomial, and thus
reduces the dimension. Note that in the case in which $h_{i0}$ and
$g_{j0}$ singly dominate and are the same monomial, no extra
relation is imposed on the region described above, so we are left
only with the order relations defining the region.

 In this way, two types of principal
$\onenu$-sets emerge from two distinct types of $\nu$-kernels,
characterized by their generators, distinguished via the following
definition.

\begin{defn}
 A rational function $f = \frac{h}{g}  $ is
\textbf{regular} at a  point $\bfa$ in $\Skel (f)$  if  each
$\nu$-neighborhood of $\bfa$ contains a point on which $h$ and $g$
do not agree. Otherwise $f$ is \textbf{irregular} at  $\bfa$. $f$
is \textbf{regular} at a set~$S$ if it is regular at every point
$\bfa$ in $S$. $f$ is \textbf{regular} if it is regular at $\Skel
(f)$. $\operatorname{Reg}(\overline{F(\Lambda)})$ denotes the set
of regular rational functions.
\end{defn}

Note that any rational function which is regular at a  point $\bfa$
is also regular at the region containing $\bfa$. Another way of
stating this condition, writing $f = \frac hg$ is to define a
\textbf{leading Laurent monomial} of $f$ to be of the form
$\frac{h_i}{gj}$ where $h_i$ is a dominant monomial of $h$ and $
g_i$ is a dominant monomial of $g$. Of course $f$ will have several
leading Laurent monomials at $\bfa$ if $\bfa$ is a ghost root of $h$
or $g$. The regularity condition is that $f$ possesses some leading
Laurent monomial $\ne \fone .$  For example, $f = \frac{\la_1 + \la
 _2}{\la _1 + \la _3}$ is regular at $(1,1,1)$ but not at $(2,1,1)$.

\begin{lem}\label{rem_every_generator_is_regular}
If $f$ is regular, then any other  generator $f'  $
 of $\langle f \rangle$ is regular.\end{lem}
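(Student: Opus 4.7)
The plan is to reduce regularity to a condition that depends only on the $\onenu$-set $\Skel(f)$, which by Proposition~\ref{gen21} coincides with $\Skel(f')$ for any other generator $f'$ of $\langle f \rangle$.

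First I would reformulate the regularity condition intrinsically. Writing $f = h/g$, we have $h(\mathbf{x}) \nucong g(\mathbf{x})$ if and only if $(h/g)(\mathbf{x}) \nucong 1$, i.e., $\mathbf{x} \in \Skel(f)$. Reading ``$h$ and $g$ do not agree at $\mathbf{x}$'' as $h(\mathbf{x}) \not\nucong g(\mathbf{x})$ --- which is the interpretation consistent with the $\nu$-topology implicit in the ``$\nu$-neighborhood'' of the definition --- this becomes $\mathbf{x} \notin \Skel(f)$. Thus $f$ is regular at $\bfa \in \Skel(f)$ precisely when every $\nu$-neighborhood of $\bfa$ meets $F^{(n)} \setminus \Skel(f)$, i.e., $\bfa$ fails to be a $\nu$-interior point of $\Skel(f)$.

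Next, Proposition~\ref{gen21} gives $\Skel(f') = \Skel(f)$. For any $\bfa \in \Skel(f') = \Skel(f)$, regularity of $f$ at $\bfa$ therefore says that every $\nu$-neighborhood of $\bfa$ contains a point outside $\Skel(f') = \Skel(f)$; applying the reformulation in the opposite direction with $f' = h'/g'$, we get that $h'$ and $g'$ fail to agree on some point of each such neighborhood, so $f'$ is regular at $\bfa$. Since this holds for every $\bfa \in \Skel(f')$, we conclude that $f'$ is regular. The one point requiring care --- and really the main obstacle --- is justifying the reformulation, namely that ``agree'' in the definition is understood as $\nu$-equivalence, so that the condition depends only on the $\onenu$-set; once this is settled, the lemma reduces to the already-established invariance of the $\onenu$-set under change of generator of a principal $\nu$-kernel.
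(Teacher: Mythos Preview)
Your proposal is correct and follows essentially the same approach as the paper: both reduce regularity at $\bfa$ to the statement that $\bfa$ is not a $\nu$-interior point of $\Skel(f)$, and then invoke $\Skel(f)=\Skel(f')$ from Proposition~\ref{gen21}. The only cosmetic difference is that the paper routes the argument through the ``leading Laurent monomial $\ne \fone$'' reformulation introduced just before the lemma, whereas you go directly to the topological characterization; your caveat about reading ``agree'' as $\nu$-equivalence is exactly what the paper's own proof implicitly assumes when it concludes that a neighborhood lies in $\Skel(f')$.
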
 \begin{proof} If $f' $
 were not regular at $\bfa$, then its sole leading Laurent monomial at $\bfa$ would be
 $\fone ,$ implying that some $\nu$-neighborhood of $\bfa$ is in
 $\Skel(f')$ and thus of $\Skel(f),$ contrary to $f$ possessing some
leading Laurent monomial $\ne \fone .$
\end{proof}


\begin{lem}\label{rem_regular_closed_operations}
If $f,g \in \operatorname{Reg}(\overline{F(\Lambda)})$ such that
$f \neq \fone $ and $g \neq \fone $, then the following elements
are also in $\operatorname{Reg}(\overline{F(\Lambda)})$:
$$f^*, \ \ f^{k} \ \text{with} \  k \in \mathbb{Z}, \  f \dotplusss  g, \  |f|, \ f \wedge g.$$
\end{lem}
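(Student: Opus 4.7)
The plan is to characterize regularity intrinsically: $f$ is regular precisely when $\Skel(f)$ has empty interior in the order topology on $F^{(n)}$ (i.e., $\Skel(f)$ contains no open ``region''). Equivalently, by Lemma~\ref{rem_every_generator_is_regular}, regularity depends only on the $\nu$-kernel $\langle f\rangle$. This immediately handles $f^{*}$, $f^{k}$ (for $k\ne 0$), and $|f|$: by Corollary~\ref{prop_absolute_generator} and Corollary~\ref{prop_absolute_generator1} (together with $\langle f^{-1}\rangle=\langle f\rangle$), each of these generates $\langle f\rangle$, so each is regular by Lemma~\ref{rem_every_generator_is_regular}. Equivalently, one observes directly from Lemma~\ref{rem_max0} that $\Skel(f^{*})=\Skel(|f|)=\Skel(f)$, and combined with Proposition~\ref{gen21} that $\Skel(f^{k})=\Skel(\langle f^{k}\rangle)=\Skel(\langle f\rangle)=\Skel(f)$ for $k\ne 0$.

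Next, the meet case reduces to the sum case via the identity $f\wedge g=(f^{*}+g^{*})^{*}$ from \eqref{dua01}. Since $f^{*},g^{*}$ are regular by the previous paragraph, once we prove $f^{*}+g^{*}$ is regular, applying $*$-invariance one more time gives regularity of $f\wedge g$.

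The heart of the argument is the case $f+g$. Using $\nu(f(\bfa)+g(\bfa))=\max\bigl(\nu(f(\bfa)),\nu(g(\bfa))\bigr)$ in the supertropical setting, the condition $(f+g)(\bfa)\nucong 1$ forces the $\nu$-dominant summand to be $\nucong 1$ and the other to be $\le_\nu 1$. Hence
$$\Skel(f+g)\;=\;\bigl(\Skel(f)\cap\{\bfa:g(\bfa)\le_\nu 1\}\bigr)\;\cup\;\bigl(\Skel(g)\cap\{\bfa:f(\bfa)\le_\nu 1\}\bigr).$$
Each piece is closed (as an intersection of closed sets) and contained in $\Skel(f)$ or $\Skel(g)$, which by hypothesis have empty interior. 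Thus each piece is a closed, nowhere-dense subset of $F^{(n)}$.

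The remaining—and main—obstacle is showing that a finite union of closed, nowhere-dense sets still has empty interior in $F^{(n)}$. Two equivalent justifications are available: (i) the Baire property, valid when $F$ is complete (e.g.\ $F=\mathscr{R}$, which is the setting of most later theorems in the paper), and (ii) the polyhedral structure, which is more hands-on. For (ii), every rational function on $F^{(n)}$ is locally a single Laurent monomial on each cell of a finite polyhedral decomposition, so each $\Skel$ is a finite union of polyhedral cells; empty interior of $\Skel(f),\Skel(g)$ means all such cells have codimension $\ge 1$, and a finite union of polyhedra of codimension $\ge 1$ is again of codimension $\ge 1$, hence has empty interior. Either way, $\Skel(f+g)$ has empty interior, so $f+g$ is regular (and in particular $f+g\ne\fone$), completing the proof.
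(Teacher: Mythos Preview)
Your argument is correct and takes a cleaner topological route than the paper. The paper's (very terse) proof invokes Lemma~\ref{rem_every_generator_is_regular} and Corollary~\ref{prop_absolute_generator1} for the kernel-invariant cases $f^{*}$, $f^{k}$, $|f|$, and its ``more direct argument for $|f|$'' via leading Laurent monomials is meant to indicate how sums are handled as well: at $\bfa\in\Skel(f+g)$ one of $f,g$ dominates and is $\nucong 1$ there, so its leading Laurent monomials (which are $\ne\fone$ by regularity) reappear among those of $f+g$. You instead argue globally via $\Skel(f+g)\subseteq\Skel(f)\cup\Skel(g)$ and the characterization ``regular $\Leftrightarrow$ $\Skel$ has empty interior'', which is equally valid and arguably more transparent.

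One simplification: the step you flag as the ``main obstacle''---that a finite union of closed nowhere-dense sets has empty interior---needs neither the Baire property nor any polyhedral structure. It is elementary point-set topology, valid in every space: if $A,B$ are closed with empty interior and $U\subseteq A\cup B$ is open, then $U\cap A^{c}$ is an open subset of $B$, hence empty, so $U\subseteq A$, whence $U=\emptyset$. Thus your proof already works over an arbitrary $F$, with no completeness hypothesis.
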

\begin{proof}
Follows at once from Lemma~\ref{rem_every_generator_is_regular} and
Proposition ~\ref{prop_absolute_generator1}. A more direct argument
for $|f|$ is that writing $f   =  \frac{h}{g} $,
 we have $$|f| = \frac{h^2
+ g^2} {gh},$$ but the leading Laurent monomial is either that of
$h^2(gh)^* = f$ or that of $ g^2(gh)^* = f^*$.
\end{proof}

\begin{defn}\label{regu} The relation $f(\bfa) \nucong \fone$ is
\textbf{regular} at $\bfa$ if $f$ is regular at $\bfa$. Otherwise
$f(\bfa) \nucong \fone$ is an \textbf{order relation}.

A $\onenu$-set is \textbf{regular} if it can be written as
$\Skel(S)$ where each $f\in S$ is regular.

  \end{defn}

\begin{lem}\label{rem_finite_unions_and_intersections_of_reg_loci}
If  $A=\Skel(f_1,...,f_s)$ and $B = \Skel(g_1,...,g_t)$ are regular
then $A \cap B$ and $A \cup B$ are regular.
\end{lem}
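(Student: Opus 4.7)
The plan is to leverage the characterization of regularity in Definition~\ref{regu} (writing the $\onenu$-set as $\Skel(S)$ for some set $S$ of regular functions), then convert unions of $\Skel$-sets into single $\Skel$-sets via the lattice operations on $\overline{F(\Lambda)}$, and finally verify that regularity is preserved under those operations via Lemma~\ref{rem_regular_closed_operations}.

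Since $A$ and $B$ are regular, I would first invoke Definition~\ref{regu} to rewrite $A=\Skel(S_A)$ and $B=\Skel(S_B)$ where every function in $S_A\cup S_B$ is regular (these regular defining sets need not be the finite sets $\{f_1,\dots,f_s\}$, $\{g_1,\dots,g_t\}$ given in the hypothesis). For the intersection, Proposition~\ref{prop_skel_property1}(2) gives
\[
A\cap B \;=\; \Skel(S_A)\cap\Skel(S_B) \;=\; \Skel(S_A\cup S_B),
\]
which is a $\Skel$ of regular functions, hence $A\cap B$ is regular.

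For the union, set-theoretic distributivity yields
\[
A\cup B \;=\; \bigcap_{(f,g)\in S_A\times S_B}\bigl(\Skel(f)\cup\Skel(g)\bigr).
\]
By Lemma~\ref{rem_max0} I replace $\Skel(f)$ by $\Skel(|f|^{-1})$ and $\Skel(g)$ by $\Skel(|g|^{-1})$; since $|f|,|g|\ge_\nu \fone$ by Remark~\ref{norm1}(i), we have $|f|^{-1},|g|^{-1}\le_\nu \fone$, so Lemma~\ref{rem_max}(ii) converts each factor into a single $\Skel$:
\[
\Skel(|f|^{-1})\cup\Skel(|g|^{-1}) \;=\; \Skel\bigl(|f|^{-1}\wedge|g|^{-1}\bigr).
\]
Applying Proposition~\ref{prop_skel_property1}(2) again collapses the intersection, giving $A\cup B = \Skel(T)$ where $T=\{\,|f|^{-1}\wedge|g|^{-1} : f\in S_A,\, g\in S_B\,\}$.

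It remains to verify that each element of $T$ is regular, which is precisely what Lemma~\ref{rem_regular_closed_operations} supplies: if $f,g$ are regular with $f,g\neq\fone$, then $|f|$ and $|g|$ are regular, hence so are $|f|^{*}=|f|^{-1}$ and $|g|^{*}=|g|^{-1}$, and finally $|f|^{-1}\wedge|g|^{-1}$ is regular. The only point requiring attention is the degenerate case where some defining function equals $\fone$; but then the corresponding $\Skel$ is all of $F^{(n)}$, forcing $A$ or $B$ to equal $F^{(n)}$, whence $A\cup B=F^{(n)}=\Skel(\emptyset)$ is trivially regular. The main (minor) obstacle is the bookkeeping in passing from the finite nonregular presentations to the regular defining sets and ensuring the wedge construction commutes with the intersection; once the lattice identities from Lemmas~\ref{rem_max}, \ref{rem_max0} and the closure properties of Lemma~\ref{rem_regular_closed_operations} are brought to bear, no further work is required.
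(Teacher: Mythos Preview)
Your argument is correct and is essentially a fleshed-out version of what the paper gestures at: the paper's proof is a one-line citation of Lemma~\ref{lem_intersection_union_finitely_generated_corner_loci} (the corner-locus analogue), implicitly meaning ``run the same union-of-generators / pairwise-combination construction for $\Skel$ sets and observe regularity is preserved.'' You carry this out explicitly, correctly invoking Proposition~\ref{prop_skel_property1}(2), Lemmas~\ref{rem_max}, \ref{rem_max0}, and \ref{rem_regular_closed_operations}, and you are right to note that the regular defining sets $S_A,S_B$ need not coincide with the given finite presentations.

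One small slip in your degenerate-case discussion: if some $f\in S_A$ equals $\fone$, this does \emph{not} force $A=F^{(n)}$, since $A=\bigcap_{h\in S_A}\Skel(h)$ is still constrained by the other elements of $S_A$. The correct fix is simply to discard any such $f$ from $S_A$ (it contributes nothing to the intersection); after this pruning, either $S_A$ is nonempty with all elements $\neq\fone$ and your wedge construction goes through, or $S_A=\emptyset$ and $A=F^{(n)}$, whence $A\cup B=F^{(n)}$ and $A\cap B=B$ are both regular. With this minor correction your proof is complete.
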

\begin{proof}
  Immediate from Lemma
\ref{lem_intersection_union_finitely_generated_corner_loci}.
\end{proof}

Considering only dominant monomials at the neighborhood of $\bfa$,
our local relations fall into two distinct cases:
\begin{itemize}
  \item An  order relation  of the form $\fone \dotplusss  g \nucong \fone$
   with $g \in \overline{F(\Lambda)}$, i.e.,
  $g(\bfa)
  \le_\nu \fone$.
 The resulting quotient \vsemifield0\
 $\overline{F(\Lambda)}/\langle 1 \dotplusss  g \rangle$ does not reduce the dimension at such a point $\bfa$, but only imposes new order relations on the variables.
  \item A regular  relation; this reduces the dimensionality of the image of $\overline{F(\Lambda)}$ in the quotient \vsemifield0.
\end{itemize}
We aim to characterize those elements of $\overline{F(\Lambda)}$
that do not translate (locally) to order relations but only to
regular relations (locally).  This  will allow us to characterize
those relations which correspond to corner loci (tropical
varieties in tropical geometry). The $\nu$-kernels corresponding
to these relations will be shown to form a sublattice of the
lattice of principal $\nu$-kernels (which is itself a sublattice
of the lattice of $\nu$-kernels).

\begin{exmp}
If $f \in \overline{F(\Lambda)}$ such that $f \neq f \dotplusss
1$ (i.e., $1$ is essential in $f \dotplusss  1$), then $f
\dotplusss 1 = \frac{f \dotplusss  1}{1}$ is not regular since $1$
being essential in the numerator coincides with the denominator
over some nonempty region.
\end{exmp}

\begin{exmp}
Consider the  map $\phi : F(\lambda) \rightarrow F(\lambda)/\langle
\lambda \dotplusss  1 \rangle$ given by  $\lambda \mapsto 1 \nucong
1$. This map imposes the relation $\lambda \dotplusss  1 \nucong 1$
on $F(\lambda)$, which is just the order relation $\lambda \leq _\nu
1$. Under the map $\phi$, $\lambda$ is sent to $\bar{\lambda} =
\lambda \langle \lambda \dotplusss  1 \rangle$, where now, in
$\Im(\phi) = F(\bar{\lambda})$, $\bar{\lambda}$ and $\bar{1}$ are
comparable, as opposed to the situation in $F(\la)$, where $\lambda$
and $1$ are not comparable.
   If instead of  $\lambda \dotplusss  1$,
   we consider $|\lambda| \dotplusss  1 \nucong \lambda \dotplusss  \lambda^{-1} \dotplusss  1$, then as $|\lambda| \ge_\nu 1$ the relation $|\lambda| \dotplusss  1 \nucong 1$ means $|\lambda| \nucong 1$, as yielded by the substitution map sending $\lambda$ to $1$. Note that $|\lambda|$ and $1$ are comparable in $F(\lambda)$, since, as mentioned above, $|\lambda| \geq 1$, which is equivalent to the relation imposed by the equality $|\lambda| \dotplusss  1 \nucong |\lambda|$ or equivalently by $|\lambda|^{-1} \dotplusss  1 \nucong 1$.
   The $\nu$-kernel $ \langle |\lambda|^{-1} \dotplusss  1 \rangle$, since $|\lambda|^{-1} \dotplusss  1 \nucong 1$,
   is just the trivial $\nu$-kernel $\langle 1 \rangle = c$.  \\
\end{exmp}
\begin{note} As seen in Proposition \ref{prop_principal_ker} and Corollary
\ref{cor_principal_ker_by_order}, order relations affect the
structures of $\nu$-kernels in a \vsemifield0. For instance,
consider the principal $\nu$-kernel in a \vsemifield0\ $F$
generated by an element $a \in F$. Then $b \not \in \langle a
\rangle$ for any element $b \in F$ such that $b$ not comparable to
$a$.
\end{note}

 \section{The role of the
$\nu$-kernel  $\langle F \rangle$ of $\overline{F(\Lambda)}$ in
the lattice of regular principal $\nu$-kernels
}\label{subsection:bounded_kernel0} $ $

Our overall objective in this section and the next is to describe
the theory in terms of principal $\nu$-kernels.

\begin{defn}\label{defn_principal_kernels_in_semifield_of_fractions}
Denote the sublattice of principal $\nu$-kernels of a \vsemifield0
$\mathcal{S}$ by $\PCon(\mathcal{S})$.
\end{defn}

\begin{defn}
 A principal $\nu$-kernel $K = \langle f \rangle$ of $\overline{F(\Lambda)}$ is
\textbf{regular} if $f \in \overline{F(\Lambda)}$ is regular. In
this case the 1-Set $\Skel(f)$ corresponding to $K$ also is called
\textbf{regular}.
\end{defn}

\begin{prop}\label{cor_regular_lattice}
The set of regular principal $\nu$-kernels forms a sublattice of
$\PCon(\overline{F(\Lambda)})$.
\end{prop}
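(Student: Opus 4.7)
The plan is to express the meet and join in $\PCon(\overline{F(\Lambda)})$ in terms of concrete generators via Theorem~\ref{cor_max_semifield_principal_kernels_operations}, and then to verify directly that regularity is preserved at the level of those generators using Lemma~\ref{rem_regular_closed_operations}. Concretely, for principal $\nu$-kernels $K_1 = \langle f_1 \rangle$ and $K_2 = \langle f_2 \rangle$ of $\overline{F(\Lambda)}$, Theorem~\ref{cor_max_semifield_principal_kernels_operations} gives
\[
K_1 \cap K_2 \;=\; \langle |f_1| \wedge |f_2| \rangle, \qquad K_1 \, K_2 \;=\; \langle |f_1| \dotplusss |f_2| \rangle,
\]
so closure of the regular principal $\nu$-kernels under $\cap$ and the product reduces to showing that $|f_1| \wedge |f_2|$ and $|f_1| \dotplusss |f_2|$ are regular generators.

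The first step is to replace the given generators by regular ones. Since $K_1$ and $K_2$ are regular by hypothesis, each has some regular generator; but Lemma~\ref{rem_every_generator_is_regular} then forces every generator of $K_i$ to be regular, so we may simply take $f_1, f_2 \in \operatorname{Reg}(\overline{F(\Lambda)})$. The second step is to apply Lemma~\ref{rem_regular_closed_operations}, which asserts that for $f,g \in \operatorname{Reg}(\overline{F(\Lambda)})$ with $f,g \neq \fone$, the functions $|f|$, $|g|$, $f \wedge g$, and $f \dotplusss g$ all lie in $\operatorname{Reg}(\overline{F(\Lambda)})$. In particular $|f_1| \wedge |f_2|$ and $|f_1| \dotplusss |f_2|$ are regular, so the displayed identities exhibit $K_1 \cap K_2$ and $K_1 K_2$ as principal $\nu$-kernels with regular generators, i.e.\ as regular principal $\nu$-kernels.

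The one technical wrinkle is the degenerate case in which some $f_i \nucong \fone$, i.e.\ $K_i = \langle \fone \rangle$, which Lemma~\ref{rem_regular_closed_operations} explicitly excludes. Here, however, the conclusion is immediate: $\langle \fone \rangle \cap K_j = \langle \fone \rangle$ (regular trivially) and $\langle \fone \rangle \, K_j = K_j$ (regular by assumption). I would therefore dispatch this case as a short initial remark before applying the lemma in the generic case. Beyond this bookkeeping, there is no real obstacle: the proposition is essentially an assembly of Theorem~\ref{cor_max_semifield_principal_kernels_operations} with Lemmas~\ref{rem_every_generator_is_regular} and~\ref{rem_regular_closed_operations}, and the only mild subtlety is to remember that closure must be checked for the $\nu$-kernel-theoretic operations ($\cap$ and product), which after absolute values become $\wedge$ and $\dotplusss$ on generators.
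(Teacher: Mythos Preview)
Your proposal is correct and follows essentially the same route as the paper's one-line proof, which simply cites Theorem~\ref{cor_max_semifield_principal_kernels_operations} and Lemma~\ref{rem_regular_closed_operations}; you have merely unpacked how these two ingredients combine and added the (harmless) bookkeeping for the trivial kernel $\langle \fone \rangle$.
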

\begin{proof}
This follows directly from Corollary
\ref{cor_max_semifield_principal_kernels_operations} and Lemma
\ref{rem_regular_closed_operations}.
\end{proof}
%

%

\subsubsection{Corner loci and principal $\onenu$-sets}$ $

  Suppose  $X \subseteq F^{(n)} $, and consider rational functions restricted to $X$.
By Remark \ref{rem_intersection_union_corner_loci} and Lemma
\ref{lem_intersection_union_finitely_generated_corner_loci}, the
collection of corner loci is closed under intersections, while the
collection of finitely generated principal corner loci is also
closed under finite unions. Also,  $\Corn(\emptyset) = X$ and $
\Corn(\alpha)= \emptyset $.

By Lemma~\ref{rem_finite_unions_and_intersections_of_reg_loci}, the
regular finitely generated principal corner loci comprise a
sublattice, which we want to investigate.

\begin{rem}\label{OK2}
 By Remark
\ref{rem_corner_to_skel} and Proposition~\ref{cor_regular_lattice}
 applied
to the correspondence of Theorem~\ref{cor11}, the lattice
generated by principal corner internal $\nu$-kernels with respect
to (finite) products and intersections   corresponds to the
lattice of finitely generated
 corner loci. Note that $\Skel(\one) = X$ and $
\Skel(\alpha)= \emptyset $ for $\alpha \not \nucong \one$.
\end{rem}

To delve deeper, we need to turn to the $\nu$-kernel $\langle F
\rangle $ of $\overline{F(\Lambda)}.$

\begin{thm}\label{thmcireglattice}\cite[Theorem (13.5.2)]{AlgAspTropMath}
The lattice $\PCon(\langle F \rangle )$  is generated by the
principal corner internal $\nu$-kernels, and the sublattice of
regular principal $\nu$-kernels is generated by the regular,
principal corner internal $\nu$-kernels.
\end{thm}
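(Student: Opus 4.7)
The plan is to exploit the wedge decomposition of Theorem~\ref{thm_HP_expansion} as the main engine. Given an arbitrary principal $\nu$-kernel $K \in \PCon(\langle F \rangle)$, write $K = \langle f \rangle \cap \langle F \rangle$ for some $f \in \overline{F(\Lambda)}$. Apply Theorem~\ref{thm_HP_expansion} to decompose $\langle f \rangle = \bigcap_{i=1}^{q} K_i \cap \bigcap_{j=1}^{m} N_j$, where each $K_i = L_i R_i$ with $L_i$ an HS-kernel and $R_i$ a region $\nu$-kernel, and each $N_j$ is a product of ``bounded from below'' $\nu$-kernels with (complementary) region $\nu$-kernels. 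Since intersection with $\langle F \rangle$ distributes over intersections (by the distributive law of Proposition~\ref{lem_kernels_algebra}), it suffices to analyze each $K_i \cap \langle F \rangle$ and $N_j \cap \langle F \rangle$ separately and show each lies in the sublattice generated by principal corner internal $\nu$-kernels.

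For the $K_i$ factors, recall that an HS-kernel $L_i$ is generated by $\sum_{k=1}^{t} |f_k|$ where the $f_k$ are non-proportional non-constant Laurent monomials. Setting $p := \sum_{k} f_k \in \overline{F[\Lambda]}$, the hat $\widehat{p}$ is corner internal by Lemma~\ref{ci1}, and Corollary~\ref{prop_ker_corner1} gives $\Skel(\widehat{p}) = \Corn(p)$, which coincides with the locus where at least two of the $|f_k|$ tie for dominance, i.e.\ with $\Skel(L_i)$. Thus $L_i$ and $\langle \widehat{p}\rangle$ agree on $\onenu$-sets, and intersecting with $\langle F \rangle$ forces them to coincide as $\nu$-subkernels (using Theorem~\ref{cor_qoutient_corr} together with the boundedness of $\langle F\rangle$). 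The region factor $R_i$ is generated by monomial expressions of the form $|g|^{-1}\dotplusss 1$ specifying an order relation, and its product with a corner internal kernel is again in the lattice generated by principal corner internal $\nu$-kernels, by Corollary~\ref{cor_intersection_of_CI} and Theorem~\ref{cor_max_semifield_principal_kernels_operations}.

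The main obstacle will be handling the $N_j$ factors, since bounded-from-below $\nu$-kernels correspond to order relations $g \geq_\nu c$ which on their own are not corner internal. The crucial observation is that every $h \in \langle F \rangle$ satisfies $|h| \leq_\nu |a|$ for some $a \in F$, by Corollary~\ref{cor_principal_ker_by_order} applied to the generator of $\langle F \rangle$. Consequently, a bounded-from-below factor inside $N_j$ becomes compatible with $\langle F \rangle$ only when matched by an upper bound coming from $F$; after this matching it is absorbed into (and refines) the accompanying region $\nu$-kernel. The resulting factor is then of exactly the $K_i$ type handled above, so $N_j \cap \langle F \rangle$ either collapses to $\langle 1 \rangle$ or is a product of principal corner internal $\nu$-kernels. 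Collecting the contributions over all $i$ and $j$ and invoking Corollary~\ref{cor_intersection_of_CI} shows that $K$ lies in the lattice generated by principal corner internal $\nu$-kernels, proving the first assertion.

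For the regular version, I would track regularity through each step of the above argument. By Lemma~\ref{rem_every_generator_is_regular} regularity is an invariant of $\langle f\rangle$ independent of the chosen generator, and by Proposition~\ref{cor_regular_lattice} the regular principal $\nu$-kernels form a sublattice of $\PCon(\overline{F(\Lambda)})$. If $f$ is regular, then each HS-factor $L_i$ admits a regular generator, because at any $\bfa \in \Skel(L_i)$ two Laurent monomials $f_k, f_{k'}$ tie for dominance and the resulting leading Laurent monomial $f_k f_{k'}^{-1}$ is not $\fone$; similarly the region $\nu$-kernels $R_i$ are regular, as their generators $|g|^{-1}\dotplusss 1$ have leading Laurent monomial $|g|^{-1} \ne \fone$. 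Hence the hat-construction replacements above can be carried out while preserving regularity, yielding the desired presentation of $K$ as a lattice combination of regular principal corner internal $\nu$-kernels.
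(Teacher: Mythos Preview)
Your overall plan --- invoke the HO-decomposition of Theorem~\ref{thm_HP_expansion} and analyze each factor --- is the same engine the paper uses. But two of your key claims are false, and you miss the simple observation that makes the paper's proof go through in a few lines.

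\textbf{First error.} You assert that for $p=\sum_k f_k$ (the $f_k$ being the $\scrL$-monomials generating $L_i$), one has $\Skel(\widehat{p})=\Corn(p)=\Skel(L_i)$. This is wrong. By Lemma~\ref{rem_max}, $\Skel(L_i)=\Skel\!\big(\sum_k|f_k|\big)=\bigcap_k\Skel(f_k)$, the locus where \emph{every} $f_k$ is $\nu$-equivalent to $1$; whereas $\Corn(p)$ is the locus where at least two of the $f_k$ tie for dominance. For $f_1=\la_1,\ f_2=\la_2$ you get $\Skel(L_i)=\{(1,1)\}$ but $\Corn(\la_1+\la_2)$ is the entire diagonal $a_1=a_2$. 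So $L_i\cap\langle F\rangle\ne\langle\widehat{p}\rangle\cap\langle F\rangle$ in general, and your replacement of $L_i$ by a hat-kernel collapses. (Your appeal to Theorem~\ref{cor_qoutient_corr} is also misplaced; that is the quotient correspondence, not a statement about kernels sharing a $\onenu$-set.)

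\textbf{Second error.} In the regular case you claim that region $\nu$-kernels are regular because ``their generators $|g|^{-1}\dotplusss 1$ have leading Laurent monomial $|g|^{-1}\ne\fone$''. This is exactly backwards. On the half-space $\{g\le_\nu 1\}$ the dominant monomial of $1+g$ in both numerator and denominator is $1$, so the leading Laurent monomial \emph{is} $\fone$ there; $\Skel(1+g)$ contains an open region, and $\langle 1+g\rangle$ is the prototypical \emph{irregular} kernel (cf.\ the discussion following Definition~\ref{regu}).

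\textbf{What the paper actually does.} There is no need to manufacture a single corner-internal generator for each $K_i$. The paper simply observes that every HP-kernel $L_{i,j}$ (a Laurent monomial $h/g$ with $h,g$ monomials, hence $h+g$ a binomial whose only ghost roots occur where $h\nucong g$) and every order $\nu$-kernel $\mathcal O_{i,k}$ (Lemma~\ref{lem_HP_and_Oreder_kernels_are_CI}) is \emph{itself} corner internal. Since each $K_i=\prod_j L_{i,j}\prod_k\mathcal O_{i,k}$ is a finite product of such kernels, and $\langle f\rangle=\bigcap_i K_i\cap\langle F\rangle$ with $\langle F\rangle=\langle\alpha\rangle$ trivially corner internal, the whole kernel lies in the lattice generated by principal corner-internal $\nu$-kernels. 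For the regular statement, the paper uses that when $\langle f\rangle$ is regular each $K_i$ has at least one HP-factor (Theorem~\ref{thm_HP_expansion}), and then Lemma~\ref{cor_regularity_of_intersection_terms_of_HS O_expansion} (built on Proposition~\ref{rem_regularity_of_HP_times_order_kernels_generalization}) shows that an HP-kernel times any product of order kernels is regular. That is how regularity survives --- not by claiming the order factors are regular on their own.
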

The proof is rather long, requiring the concept of bounded
$\nu$-kernels, so we defer it until \S\ref{thmpf}.

\begin{cor}\label{corlatticegeneratedbyCIkernels} The lattice of (tangible) finitely generated corner loci corresponds
to the lattice of principal (tangible)   $\nu$-kernels of $\langle
F \rangle$. Intersections of (supertropical) hypersurfaces
correspond to principal $\onenu$-sets and $\nu$-kernels, whereas
intersections of tangible hypersurfaces correspond to tangible
principal $\onenu$-sets and $\nu$-kernels.\end{cor}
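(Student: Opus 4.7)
The plan is to assemble the correspondence out of pieces already in place. By Theorem~\ref{cor11}, the assignments $f\mapsto\widehat{f}$ and $h\mapsto\underline{h}$ induce a $1{:}1$ correspondence between (supertropical) hypersurfaces and $\onenu$-sets of principal corner internal $\nu$-kernels, and this restricts to a bijection between tangible hypersurfaces and tangible corner internal $\onenu$-sets (since $\widehat{f_i}=f_i/f_{(i)}$ is tangible whenever $f_i$ is). The remaining ingredients are the translation of finite intersections of hypersurfaces into products of principal $\nu$-kernels (via Remark~\ref{rem_corner_to_skel}, Corollary~\ref{cor_principal_skel_correspondence}, and Corollary~\ref{cor_intersection_of_CI}), and Theorem~\ref{thmcireglattice}, which states that $\PCon(\langle F\rangle)$ is precisely the lattice generated by principal corner internal $\nu$-kernels.

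Concretely, for polynomials $f_1,\ldots,f_m\in F[\Lambda]$ one writes
\[
\Corn(f_1,\ldots,f_m) \;=\; \bigcap_{i=1}^{m}\Corn(f_i) \;=\; \bigcap_{i=1}^{m}\Skel(\widehat{f_i}) \;=\; \Skel\!\left(\prod_{i=1}^{m}\langle \widehat{f_i}\rangle\right),
\]
using Corollary~\ref{prop_ker_corner1} for the middle equality and Proposition~\ref{prop_ker_skel_correspondence} for the last.  Thus every finitely generated corner locus is $\Skel(K)$ for some $K$ lying in the lattice generated by principal corner internal $\nu$-kernels, and conversely every such $K$ arises in this form by applying $h\mapsto\underline{h}$ to each corner internal generator and then taking finite intersections and unions of the resulting hypersurfaces.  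Because $\Skel$ is order-reversing, products and intersections on the algebraic side translate to intersections and unions on the geometric side (Proposition~\ref{prop_ker_skel_correspondence}), so the full lattice structure transfers.  Combining with Theorem~\ref{thmcireglattice} identifies the source lattice with $\PCon(\langle F\rangle)$, and with its tangible sublattice in the tangible case.

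The main obstacle is the ambiguity of Example~\ref{onenusetex}(i): a priori, distinct principal $\nu$-kernels of $\overline{F(\Lambda)}$ can define the same $\onenu$-set, so injectivity of the geometric-to-algebraic direction is not automatic.  However, this difficulty has already been isolated and resolved by Theorem~\ref{thmcireglattice}, whose whole purpose is to show that descending from $\PCon(\overline{F(\Lambda)})$ to $\PCon(\langle F\rangle)$ removes precisely this redundancy while retaining all principal corner internal $\nu$-kernels.  Modulo that theorem, the corollary reduces to a straightforward diagram chase between the three correspondences recalled above.
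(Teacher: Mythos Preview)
Your argument is correct and follows essentially the same route as the paper: the paper's proof simply invokes Remark~\ref{OK2} (which packages together Theorem~\ref{cor11}, Remark~\ref{rem_corner_to_skel}, and Proposition~\ref{cor_regular_lattice}) to obtain the correspondence between finitely generated corner loci and the lattice generated by principal corner internal $\nu$-kernels, and then applies Theorem~\ref{thmcireglattice} to identify that lattice with $\PCon(\langle F\rangle)$. You have unpacked Remark~\ref{OK2} into its constituent ingredients and added a helpful explicit computation, but the underlying strategy is identical.
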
\begin{proof}
We use the correspondence of Remark~\ref{OK2}, between principal
(tangible) corner-loci and
principal (tangible) corner internal $\nu$-kernels of $\langle F \rangle$.%
\end{proof}

Thus, supertropical varieties correspond to principal
$\onenu$-sets and $\nu$-kernels, while tropical varieties
correspond to regular principal $\onenu$-sets and $\nu$-kernels.

In order to bypass the ambiguity between $\nu$-kernels and
$\onenu$-sets encountered in Example~\ref{onenusetex}(i), we
introduce  one particular sub-\vsemifield0\ of
$\overline{F(\Lambda)}$ of considerable interest.

 \subsection{The $\nu$-kernel $\langle F \rangle$}$ $

Assume that  $F: = (F, \tT, \nu, \tG)$ is an  archimedean
$\nu$-bipotent \vsemifield0.

\begin{rem}\label{princ}
$ \langle \alpha \rangle  = \langle \beta \rangle $ for any $\alpha,
\beta \neq 1$  in $F$, in view of
Proposition~\ref{prop_principal_ker}. \end{rem}

\begin{defn}  $\langle
F \rangle$ denotes the $\nu$-kernel  given in Remark~\ref{princ}.
\end{defn}

The $\nu$-kernel  $\langle F \rangle$ is preserved under  any
  homomorphism $\phi$ for which
$\phi(F) \ne 1$. In this subsection we
  show that $ \langle F \rangle$ retains all the
information in $\overline{F(\Lambda)}$ needed for the important
family of principal $\onenu$-sets and provides a 1:1
correspondence between $\nu$-kernels and $\onenu$-sets.

 The $\nu$-kernel $\langle F \rangle$ is
 much more sophisticated than what one might think at first
 blush, because of the convexity condition. Namely, any function
 lying between two constants is in the $\nu$-kernel. For example,
 $3 \wedge |\la|$ is constant except in the interval between $\frac 13$ and $3$,
 where it descends to $1$ and then increases back to $3.$

 We begin by introducing the motivating example
for this section.

\begin{exmp}\label{nonuniq}
 The  $\onenu$-set corresponding to the principal $\nu$-kernel
  $\langle \lambda \rangle$ of $F(\la)$ is the set of all
 $\bfa \nucong 1$. For any  $\alpha \neq 1$ in~$F$, we also have the principal  $\nu$-kernel
   $ \langle \lambda \rangle \cap \langle \alpha \rangle =
   \langle |\lambda| \wedge |\alpha| \rangle  =
    \left\langle  \frac{|\lambda| |\alpha|}{|\lambda| + |\alpha| } \right\rangle $. As $\lambda \not \in \langle \lambda \rangle \cap \langle \alpha \rangle$,
       we conclude  that $\langle \lambda \rangle \supset  \langle \lambda \rangle \cap \langle \alpha \rangle$.\\

    But  $ (|\lambda|\wedge|\alpha|)(\bfa) \nucong 1$ iff $ \bfa \nucong
    1$.
    It follows at once that
    $ \Skel(|\lambda| \wedge |\alpha|) =  \Skel(\lambda) $.
\end{exmp}

In other words, different $\nu$-kernels may have the same
$\Skel($.
 This ambiguity can be bypassed by intersecting all $\nu$-kernels with
$\langle F \rangle$. 

\begin{prop}\label{intersects}
If $K $ is a $\mathcal{K}$-kernel of $\overline{F(\Lambda)}$, then
$K' := K \cap \langle F\rangle$ is  a   $\mathcal{K}$-kernel  of
$\langle F \rangle$ satisfying $\Skel(K) = \Skel(K')$.
\end{prop}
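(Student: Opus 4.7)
My plan is to separate the statement into two claims: (a) the $\onenu$-set equality $\Skel(K)=\Skel(K')$, and (b) the assertion that $K'$ is a $\mathcal{K}$-kernel of $\langle F\rangle$. Once (a) is in hand, (b) is essentially bookkeeping. Since $K$ is a $\mathcal{K}$-kernel, Theorem~\ref{prop_correspondence} gives $K=\Ker(\Skel(K))$, hence
$$K' \;=\; K\cap\langle F\rangle \;=\; \Ker(\Skel(K))\cap\langle F\rangle \;=\; \{\,g\in\langle F\rangle : g(\bfa)\nucong 1\ \text{for all } \bfa\in \Skel(K')\,\},$$
which is exactly the $\mathcal{K}$-kernel of $\langle F\rangle$ associated to the $\onenu$-set $\Skel(K')$. (That $K'$ is a $\nu$-kernel of $\langle F\rangle$ at all is Remark~\ref{bfacts1}(x).)

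For (a), the inclusion $\Skel(K)\subseteq \Skel(K')$ is immediate from $K'\subseteq K$. For the reverse, I would argue contrapositively: given $f\in K$ with $f(\bfa)\not\nucong 1$, I produce an explicit witness $g\in K'$ with $g(\bfa)\not\nucong 1$. The natural candidate is the truncation
$$g \;:=\; |f|\wedge |\alpha|^m,$$
where $\alpha\in F$ is picked with $\alpha>_\nu 1$ (this exists since $F$ is nontrivial) and $m\in\mathbb{N}$ is chosen large enough that $|\alpha|^m>_\nu|f|(\bfa)$; such an $m$ exists because $F$ is $\nu$-archimedean and $|f|(\bfa)\in F$. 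Two applications of $\nu$-convexity (Remark~\ref{bfacts1}(ii)) then place $g$ where we need it: from $1\le_\nu g\le_\nu |f|$ with $1,|f|\in K$ we get $g\in K$; from $|\alpha|^{-m}\le_\nu g\le_\nu|\alpha|^m$ together with Proposition~\ref{prop_principal_ker} and Remark~\ref{princ} we get $g\in\langle\alpha\rangle=\langle F\rangle$. Thus $g\in K'$. But by the choice of $m$ we have $g(\bfa)=|f|(\bfa)\not\nucong 1$, contradicting $\bfa\in\Skel(K')$.

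The only real obstacle is arranging for the truncation to land simultaneously in $K$ and in $\langle F\rangle$, and the archimedean standing hypothesis on $F$ is exactly what makes this possible: it is what allows the truncation height $|\alpha|^m$ to be simultaneously above $|f|(\bfa)$ and inside $\langle F\rangle$. Conceptually, this is precisely the mechanism that resolves the ambiguity of Example~\ref{nonuniq}: the $\nu$-kernel $\langle F\rangle$ is large enough that every ghost-nontrivial value at $\bfa$ is already witnessed by a bounded element lying inside $\langle F\rangle$, so intersecting with $\langle F\rangle$ loses no $\onenu$-set information.
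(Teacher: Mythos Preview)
Your proof is correct and follows essentially the same truncation idea as the paper: both arguments witness $\bfa\notin\Skel(K')$ via an element of the form $|f|\wedge|\alpha|^{(\cdot)}$, and both finish the $\mathcal{K}$-kernel claim by writing $K'=\Ker(\Skel(K))\cap\langle F\rangle$.

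The one difference worth noting is that your choice of $m$ (and the second invocation of the archimedean property) is unnecessary. You only need $g(\bfa)\not\nucong 1$, not $g(\bfa)=|f|(\bfa)$; since $|\alpha|>_\nu 1$ and $|f|(\bfa)>_\nu 1$, already $(|f|\wedge|\alpha|)(\bfa)\nucong\min(|f|(\bfa),|\alpha|)>_\nu 1$. The paper exploits this directly: it invokes Proposition~\ref{prop_algebra_of_generators_of_kernels}(ii) to write $K\cap\langle\alpha\rangle=\langle\{|f|\wedge|\alpha|:f\in K\}\rangle$ for a single fixed $\alpha$, and then observes that $(|f|\wedge|\alpha|)(\bfa)\nucong 1$ iff $|f|(\bfa)\nucong 1$. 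This avoids your pointwise convexity step and the dependence of the witness on~$\bfa$.
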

\begin{proof}
 $K = \Ker(\Skel(K))$
since  $K$ is a $\mathcal{K}$-kernel. Fix $\alpha \in F \setminus
\{1\}$.  By Proposition
\ref{prop_algebra_of_generators_of_kernels}(ii) we have that  $K
\cap \langle F \rangle = K \cap \langle \alpha \rangle = \langle
\{|f| \wedge |\alpha| : f \in K \} \rangle$.  Now, for any $f \in
K$, $f(\bfa)\nucong 1$ for some $\bfa \in F^{(n)} $ if and only if
$f(\bfa) \wedge |\alpha| = 1$ (since $|\alpha| > 1$) so $\Skel(K') =
\Skel(K)$. Thus $$K' = K \cap \langle F \rangle = \Ker(\Skel(K'))
\cap \langle F \rangle = \Ker_{\langle F\rangle}(\Skel(K')),$$ and
so $K'$ is a $\mathcal{K}$-kernel of $\langle F \rangle$.
\end{proof}

We want $K'$ to be unique with this property, but for this we need
to assume that $F$ is complete and $\nu$-archimedean,
cf.~Proposition~\ref{prop_generator_of_skel2} below.


Thus, we can pass down to $ \langle F \rangle $, leading us to the
next definition.

\begin{defn}\label{defn_abstract_similarity_of_generators}
 Define
the equivalence relation

\begin{equation}
f \sim_{\FF} f'\Leftrightarrow \langle f \rangle \cap \langle F
\rangle  = \langle f'\rangle  \cap \langle F \rangle .
\end{equation}as $\nu$-kernels of $\overline{F(\Lambda)}$.   The equivalence classes   are
$$[f] = \{ f' \ : \ f' \ \text{is a generator of} \ \langle f
\rangle \cap  \langle F \rangle \}.$$ 
\end{defn}


\begin{prop}\label{cor_intersection_with_the kernel_of_H}\cite[Corollary (5.2.7)]{AlgAspTropMath}
For any $\nu$-kernel $K$ of $\overline{F(\Lambda)}$, if $K \cap
\langle F \rangle = \{ 1 \} ,$ then $K = \{ 1 \}.$
\end{prop}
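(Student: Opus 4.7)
The plan is to argue by contrapositive: assuming $K$ is a $\nu$-kernel strictly larger than the trivial one, I produce an element of $K \cap \langle F \rangle$ that is not $\nu$-equivalent to $1$. Pick any $f \in K$ with $f \not\nucong 1$. Since $\langle f \rangle = \langle |f| \rangle$ by Corollary~\ref{prop_absolute_generator1}, replacing $f$ by $|f|$ I may assume that $f \geq_\nu 1$ everywhere and that $f(\bfa_0) >_\nu 1$ at some point $\bfa_0 \in F^{(n)}$.

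Next, set $\alpha := f(\bfa_0) \in F$. The strict inequality $\alpha >_\nu 1$ forces $\alpha \notin \{1, 1^\nu\}$, so by Remark~\ref{princ} the principal $\nu$-kernel $\langle \alpha \rangle$ equals $\langle F \rangle$, and $|\alpha| = \alpha$. Applying Theorem~\ref{cor_max_semifield_principal_kernels_operations} to $\langle f \rangle$ and $\langle \alpha \rangle$ yields
$$\langle f \rangle \cap \langle F \rangle \;=\; \langle f \rangle \cap \langle \alpha \rangle \;=\; \langle f \wedge \alpha \rangle.$$
Since $\langle f \rangle \subseteq K$, the element $f \wedge \alpha$ lies in $K \cap \langle F \rangle$.

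It remains to verify that $f \wedge \alpha$ is not $\nu$-equivalent to $1$. By Remark~\ref{ord1}, the wedge is computed pointwise as the $\nu$-minimum, so $(f \wedge \alpha)(\bfa_0) \nucong \min\{f(\bfa_0),\alpha\} = \alpha >_\nu 1$. Thus $f \wedge \alpha \not\nucong 1$, contradicting $K \cap \langle F \rangle = \{1\}$.

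The only delicate step is the choice of the constant $\alpha$: it must be unequal to $1$ (so that $\langle \alpha \rangle = \langle F \rangle$) yet $\nu$-dominated by $f(\bfa_0)$ (so the wedge at $\bfa_0$ retains the nontrivial value). Taking $\alpha$ to be $f(\bfa_0)$ itself achieves both simultaneously, and $f(\bfa_0)$ automatically lies in $F$ because $\bfa_0 \in F^{(n)}$. No additional hypotheses beyond the archimedean $\nu$-bipotence of $F$ (used implicitly in Remark~\ref{princ}) are required, and in particular the whole argument is pointwise in character, which is natural once one recognizes that $\langle F \rangle$ is ``detected'' by constant bounds.
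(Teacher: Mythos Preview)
Your proof is correct and follows essentially the same route as the paper: both produce the element $|f|\wedge|\alpha|\in K\cap\langle F\rangle$ and check pointwise that it is not $\nu$-equivalent to $1$. The only difference is that the paper fixes an arbitrary $\alpha\in F\setminus\{1\}$ and observes the cleaner equivalence $|f(\bfa)|\wedge|\alpha|\nucong 1 \Leftrightarrow |f(\bfa)|\nucong 1$ (valid since $|\alpha|>_\nu 1$), whereas you take the specific value $\alpha=f(\bfa_0)$; your choice works but is unnecessary, since any nontrivial constant does the job.
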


 \begin{proof}  For any  $\bfa \in F^{(n)}$,
   \  $|f(\bfa)| \wedge |\alpha| \nucong 1$ if and only if $|f(\bfa)|\nucong 1$.
\end{proof}

\begin{lem}\label{rem_the_lattice_of_kernels_of_the_bounded_kernel}
$$\PCon(\langle F \rangle) = \{ \langle f \rangle \cap \langle F \rangle : \langle f \rangle \in \PCon(\overline{F(\Lambda)}) \}.$$
\end{lem}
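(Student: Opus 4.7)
The plan is to prove both inclusions using the order-theoretic description of principal $\nu$-kernels from Corollary~\ref{cor_principal_ker_by_order}. Fix $\alpha \in F \setminus \{1\}$, so that $\langle F \rangle = \langle \alpha \rangle$ by Remark~\ref{princ}; note that $\langle F \rangle$ is a sub-\vsemifield0 of $\overline{F(\Lambda)}$ (Remark~\ref{bfacts1}(viii), since the quotient is $\nu$-idempotent). To prevent notational conflict, I write $\langle g \rangle_{\mathcal S}$ for the principal $\nu$-kernel generated by $g$ inside an ambient idempotent \vsemifield0 $\mathcal S$; here $\mathcal S$ will be either $\overline{F(\Lambda)}$ or $\langle F \rangle$.

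For the inclusion $(\supseteq)$, given $\langle f \rangle \in \PCon(\overline{F(\Lambda)})$, Theorem~\ref{cor_max_semifield_principal_kernels_operations} yields
$$\langle f \rangle \cap \langle F \rangle = \langle f \rangle_{\overline{F(\Lambda)}} \cap \langle \alpha \rangle_{\overline{F(\Lambda)}} = \langle\, |f| \wedge |\alpha| \,\rangle_{\overline{F(\Lambda)}}.$$
The generator $h := |f| \wedge |\alpha|$ satisfies $1 \leq_\nu h \leq_\nu |\alpha|$ (using $|f|, |\alpha| \geq_\nu 1$ from Remark~\ref{norm1}), so $h \in \langle \alpha \rangle = \langle F \rangle$ by Corollary~\ref{cor_principal_ker_by_order}. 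Remark~\ref{bfacts1}(x) then allows us to regard the same kernel as $\langle h \rangle_{\langle F \rangle}$, placing it in $\PCon(\langle F \rangle)$.

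For the inclusion $(\subseteq)$, take $K \in \PCon(\langle F \rangle)$ and write $K = \langle g \rangle_{\langle F \rangle}$ with $g \in \langle F \rangle$. Applying Corollary~\ref{cor_principal_ker_by_order} in each of the two ambient \vsemifields0 gives
$$\langle g \rangle_{\overline{F(\Lambda)}} = \{\, x \in \overline{F(\Lambda)} : |g|^{-n} \leq_\nu x \leq_\nu |g|^n \text{ for some } n \in \mathbb N\,\},$$
and an identical description for $\langle g \rangle_{\langle F \rangle}$ with $\overline{F(\Lambda)}$ replaced by $\langle F \rangle$. Intersecting the former with $\langle F \rangle$ recovers the latter verbatim, so $K = \langle g \rangle_{\overline{F(\Lambda)}} \cap \langle F \rangle$, which is of the required form.

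The main obstacle is only notational: one must carefully distinguish whether a principal $\nu$-kernel is being generated inside $\overline{F(\Lambda)}$ or inside $\langle F \rangle$. Once that bookkeeping is in place, the result follows immediately from the order-based characterization of principal $\nu$-kernels together with the intersection formula for principal kernels.
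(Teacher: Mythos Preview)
Your proof is correct and follows the same approach as the paper, which uses the identity $\langle f \rangle \cap \langle F \rangle = \langle |f| \wedge |\alpha| \rangle$ for the $(\supseteq)$ direction; the paper's proof is a one-liner addressing only that direction, whereas you have also spelled out the (easy) $(\subseteq)$ inclusion and the bookkeeping that identifies $\langle h \rangle_{\overline{F(\Lambda)}}$ with $\langle h \rangle_{\langle F \rangle}$ when $h \in \langle F \rangle$.
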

\begin{proof}
$\langle f \rangle \cap \langle F \rangle = \langle |f| \wedge
|\alpha| \rangle \in \PCon(\langle F \rangle)$ for any
 $\langle f \rangle \in
\PCon(\overline{F(\Lambda)})$.
\end{proof}
%
%

 \subsubsection{Bounded rational functions}\label{subsection:bounded_kernel}\ $ $

$F$ is assumed to be an archimedean supertropical \vsemifield0.

\begin{defn} A rational function
$f \in \overline{F(\Lambda)}$ is said to be \textbf{bounded from
below} if there exists some $\alpha >_\nu 1$ in $F$ such that $|f|
\ge_\nu \alpha$. $f$ is said to be \textbf{bounded from above} (or
\textbf{bounded}, for short) if there exists some $\alpha \ge_\nu
1$ in~$F$ such that $|f| \leq_\nu \alpha$.
\end{defn}

%

\begin{rem}\label{rem_bounded_from_below_contain_H_kernel}\cite[Remarks (5.1.3-4), (5.1.10-12) ]{AlgAspTropMath}
Let $\langle f \rangle$ be a principal $\nu$-kernel of
$\overline{F(\Lambda)}$. Then
 \begin{enumerate}\eroman
   \item $f$ is bounded from below if and only if
   $\langle f \rangle \supseteq \langle \alpha \rangle$ for some $\alpha > 1$
    in $F$. Moreover, any generator $g \in \langle f \rangle$ is bounded from below.
   \item $f$ is bounded  if and only if $\langle f \rangle \subseteq \langle \alpha \rangle$ for some $\alpha \in F$. Moreover, any generator
    $g \in \langle f \rangle$ is bounded .  \end{enumerate}
 \end{rem}

\begin{defn}
A principal $\nu$-kernel $\langle f \rangle$  of
$\overline{F(\Lambda)}$ is said to be \textbf{bounded from below},
if it is generated by a rational function bounded from below.
$\langle f \rangle$ is said to be \textbf{bounded} if it is
generated by a bounded rational function.
\end{defn}

\begin{prop}\label{cor_empty_kernels_correspond_to_bfb_kernels}\cite[Corollary (5.1.8)]{AlgAspTropMath}
  $\Skel(\langle f \rangle) = \emptyset$
if and only if $f $ is bounded from below.
\end{prop}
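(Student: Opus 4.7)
The plan is to reduce to the case $f \ge_\nu 1$ pointwise and then analyze $f$ via the finite polyhedral decomposition of $F^{(n)}$ induced by the dominant monomials of its numerator and denominator. By Proposition~\ref{gen21} and Lemma~\ref{rem_max0}, $\Skel(\langle f \rangle) = \Skel(f) = \Skel(|f|)$, and by Corollary~\ref{prop_absolute_generator1}, $\langle f \rangle = \langle |f| \rangle$; moreover $|f|$ is bounded from below iff $f$ is. So I would replace $f$ by $|f|$ and assume $f(\bfa)\ge_\nu 1$ for every $\bfa\in F^{(n)}$. The forward implication is then immediate: if $f \ge_\nu \alpha >_\nu 1$ uniformly then $f(\bfa) \not\nucong 1$ for any $\bfa$, so $\Skel(\langle f \rangle) = \Skel(f) = \emptyset$.

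For the converse, write $f = h/g$ canonically with $h = \sum_i h_i$ and $g = \sum_j g_j$ as sums of monomials. The pairs $(h_i, g_j)$ of dominant monomials induce a finite polyhedral subdivision $\{R_{ij}\}$ of $F^{(n)}$: on each region $R_{ij}$, $f$ agrees $\nu$-wise with the single Laurent monomial $h_i/g_j$, which under the archimedean embedding of $\tG$ into $\mathbb{R}$ (in log-coordinates) becomes an affine functional $\phi_{ij}$; the closure $\overline{R_{ij}}$ is a closed polyhedron with vertices having coordinates in $F$. The hypothesis $\Skel(f) = \emptyset$ combined with the continuity of $f$ on $\overline{R_{ij}}$ forces $\phi_{ij}$ to be strictly positive on all of $\overline{R_{ij}}$: any point of $\overline{R_{ij}}$ at which $\phi_{ij}$ vanished would satisfy $h_i(\bfa) = g_j(\bfa)$, hence $f(\bfa) \nucong 1$, placing $\bfa \in \Skel(f)$ and contradicting the hypothesis.

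The main obstacle is then the geometric lemma that a strictly positive affine functional $\phi$ on a closed polyhedron $P$ is bounded below by a positive constant attained at a vertex. I would prove this by a recession-cone argument: suppose $\inf_P \phi = 0$ and is not attained, and pick a sequence $p_n \in P$ with $\phi(p_n) \to 0$. If $\{p_n\}$ is bounded, a convergent subsequence yields a limit point in $P$ at which $\phi$ vanishes, contradicting strict positivity; if $\{p_n\}$ is unbounded, then after normalization one extracts a unit vector $v$ in the recession cone of $P$ with $\phi(v) = 0$, and one projects $P$ along $\mathrm{span}(v)$ onto a strictly lower-dimensional polyhedron on which $\phi$ descends, concluding by induction on dimension. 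Since the extremum is then attained at a vertex whose coordinates lie in $F$, the lower bound itself lies in $F$. Taking the minimum over the finitely many regions $R_{ij}$ finally produces a single $\alpha \in F$ with $\alpha >_\nu 1$ and $f \ge_\nu \alpha$ pointwise, showing that $f$ is bounded from below.
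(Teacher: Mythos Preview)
The paper does not give its own proof here; the proposition is quoted from \cite[Corollary~(5.1.8)]{AlgAspTropMath}. Your forward implication is correct, and the polyhedral strategy for the converse is the natural one and works cleanly when $F$ is divisible (in particular for $F=\mathscr{R}$, which is the case the paper actually relies on downstream).

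There is, however, a genuine gap at the step where you pass from $\Skel(f)=\emptyset$ in $F^{(n)}$ to strict positivity of $\phi_{ij}$ on the \emph{real} closure $\overline{R_{ij}}\subset\mathbb{R}^n$. You assert that a zero $\bfa$ of $\phi_{ij}$ in $\overline{R_{ij}}$ would lie in $\Skel(f)$; but $\Skel(f)\subseteq F^{(n)}$, and nothing forces the real point $\bfa$ to have coordinates in $\nu(F)$. The companion claim that vertices of $\overline{R_{ij}}$ lie in $F$ likewise needs divisibility: solving the integer linear system defining a vertex lands you in $\nu(F)\otimes_{\mathbb Z}\mathbb{Q}$, not in $\nu(F)$. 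This is not a cosmetic issue. Take $\nu(F)=\mathbb{Z}[\sqrt 2]$ (archimedean, not divisible) and $f=\alpha\la^{2}$ with $\nu(\alpha)=-1$. Then in log coordinates $\log|f|(x)=|2x-1|$, whose unique real zero $x=1/2$ is not in $\mathbb{Z}[\sqrt 2]$, so $\Skel(f)=\emptyset$; yet $(2\mathbb{Z}-1)+2\sqrt 2\,\mathbb{Z}$ is dense in $\mathbb{R}$, so $|2x-1|$ attains arbitrarily small positive values on $\mathbb{Z}[\sqrt 2]$ and no $\alpha'>_\nu 1$ in $F$ bounds $|f|$ from below. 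Thus your intermediate claim fails here, and in fact the proposition as literally stated (only ``archimedean'') appears to require the additional hypothesis that $F$ be divisible. Under that hypothesis your argument is fine: if the zero set of the piecewise-affine extension $\tilde f$ in $\mathbb{R}^n$ were nonempty, it would be a polyhedral set defined over $\nu(F)$ and hence contain $\nu(F)$-rational points, contradicting $\Skel(f)=\emptyset$; then $\inf_{\mathbb{R}^n}\tilde f>0$, and density of $\nu(F)$ in $\mathbb{R}$ (or the discrete case $\nu(F)\cong\mathbb{Z}$, where values are integers) supplies the required $\alpha\in F$.
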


\begin{lem}\label{rem_bounded_from_above_elements_kernel}
$\langle F \rangle = \{ f \in \overline{F(\Lambda)} \ : \ f \
\text{is bounded} \}.$
\end{lem}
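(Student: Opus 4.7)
The plan is to apply the explicit description of principal $\nu$-kernels from Corollary~\ref{cor_principal_ker_by_order} together with the $\nu$-archimedean assumption on $F$.

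First I would invoke Remark~\ref{princ} to write $\langle F \rangle = \langle \alpha \rangle$ for some fixed $\alpha \in F$ with $\alpha \neq 1$; by replacing $\alpha$ with $|\alpha|$ if necessary, we may assume $|\alpha| = \alpha >_\nu 1$. Then by Corollary~\ref{cor_principal_ker_by_order},
\[
\langle F \rangle \;=\; \langle \alpha \rangle \;=\; \{\, x \in \overline{F(\Lambda)} \ :\ \exists\, n \in \mathbb{N},\ \alpha^{-n} \leq_\nu x \leq_\nu \alpha^{n}\,\}.
\]

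For the inclusion $\langle F \rangle \subseteq \{f : f \text{ bounded}\}$, take $f \in \langle \alpha \rangle$. Then $\alpha^{-n} \leq_\nu f \leq_\nu \alpha^n$ for some $n$, and since this forces $f^{-1} \leq_\nu \alpha^n$ as well, we obtain $|f| = f + f^{-1} \leq_\nu \alpha^n$, so $f$ is bounded.

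For the reverse inclusion, suppose $f$ is bounded, say $|f| \leq_\nu \beta$ for some $\beta \in F$. Since $F$ is $\nu$-archimedean and $\alpha >_\nu 1$, there is some $n \in \mathbb{N}$ with $\beta \leq_\nu \alpha^n$. Hence $|f| \leq_\nu \alpha^n$, which yields both $f \leq_\nu \alpha^n$ and $f^{-1} \leq_\nu \alpha^n$, i.e., $\alpha^{-n} \leq_\nu f \leq_\nu \alpha^n$. By Corollary~\ref{cor_principal_ker_by_order} this places $f \in \langle \alpha \rangle = \langle F \rangle$.

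There is no real obstacle here; the only subtlety is to make sure the passage from ``$|f|$ bounded by a scalar'' to ``both $f$ and $f^{-1}$ bounded by $\alpha^n$'' is clean, which follows immediately from the definition $|f| = f + f^{-1}$ in the $\nu$-bipotent setting. The archimedean hypothesis on $F$ is exactly what is needed to absorb an arbitrary scalar bound $\beta$ into a power of the fixed generator $\alpha$.
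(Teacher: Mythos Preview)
Your proof is correct and takes essentially the same approach as the paper. The paper's argument is simply more terse: it cites Remark~\ref{rem_bounded_from_below_contain_H_kernel}(ii) (``$f$ is bounded iff $\langle f \rangle \subseteq \langle \alpha \rangle$ for some $\alpha \in F$'') together with the tautology $f \in \langle F \rangle \Leftrightarrow \langle f \rangle \subseteq \langle F \rangle$, whereas you unpack that remark directly via Corollary~\ref{cor_principal_ker_by_order} and the archimedean hypothesis.
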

\begin{proof}
 The assertion
follows from
Remark~\ref{rem_bounded_from_below_contain_H_kernel}(ii), since $f
\in \langle F \rangle $ if and only if $\langle f \rangle
\subseteq \langle FW \rangle$.
\end{proof}

\begin{lem}\label{rem_complete_semifield_of_functions}
If  $F$ is complete, then $Fun(X,F)$ is complete, for any subset
$X$ of~$F^{(n)}$.
\end{lem}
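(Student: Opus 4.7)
The plan is to argue directly from the definition of completeness (Definition~\ref{defn_cond_complete}), by constructing suprema and infima pointwise. Recall that the lattice structure on $\Fun(X,F)$ is pointwise: $f \leq g$ in $\Fun(X,F)$ iff $f(x) \leq g(x)$ in $F$ for every $x \in X$ (this is just the translation of $f + g = g$ pointwise, using Proposition~\ref{bip}).

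First I would fix a bounded subset $\mathcal{A} \subseteq \Fun(X,F)$, so by assumption there exist $g_0, h_0 \in \Fun(X,F)$ with $g_0 \leq f \leq h_0$ for every $f \in \mathcal{A}$. For each fixed $x \in X$, the subset $\mathcal{A}(x) := \{ f(x) : f \in \mathcal{A} \} \subseteq F$ is then bounded below by $g_0(x)$ and above by $h_0(x)$. Since $F$ is complete, $\mathcal{A}(x)$ has a supremum $s(x) \in F$ and an infimum $i(x) \in F$. This defines two functions $s, i : X \to F$, i.e., two elements of $\Fun(X,F)$.

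Next I would verify that $s$ is the supremum of $\mathcal{A}$ in $\Fun(X,F)$. For each $f \in \mathcal{A}$ and each $x \in X$, $f(x) \leq s(x)$ by construction, so $f \leq s$ pointwise. Conversely, if $t \in \Fun(X,F)$ satisfies $f \leq t$ for all $f \in \mathcal{A}$, then $f(x) \leq t(x)$ for every $x$ and every $f \in \mathcal{A}$; so $t(x)$ is an upper bound of $\mathcal{A}(x)$ in $F$, giving $s(x) \leq t(x)$, hence $s \leq t$. The argument for $i = \inf \mathcal{A}$ is symmetric.

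I do not see a real obstacle here: the only mild point is that one must note that the pointwise-bounded set $\mathcal{A}(x)$ admits sup and inf in $F$ because of the hypothesis that $F$ is complete as a lattice (not merely bounded-complete in some weaker sense), and that the resulting pointwise functions genuinely lie in $\Fun(X,F)$, which is automatic because $\Fun(X,F)$ is the full function set. The translation between the lattice order and the semiring order via Proposition~\ref{bip} is what makes the pointwise construction legitimate.
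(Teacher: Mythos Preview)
Your proof is correct and follows essentially the same pointwise approach as the paper: both construct the supremum and infimum of a bounded subset of $\Fun(X,F)$ by taking, at each point $x\in X$, the supremum and infimum in $F$ of the evaluated set, and then verify that the resulting functions are the required bounds. Your write-up is in fact somewhat cleaner than the paper's, which has minor notational slips, but the underlying argument is identical.
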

\begin{proof}
Suppose $\mathcal S \subset Fun(X,F)$ is bounded from below, say
by $h \in Fun(X,F)$. Then for any $\bfa \in X$ the set $W: =
\{f(\bfa) : f \in X \}$ is bounded from below by $h(\bfa)$ and
thus has an infimum $\bigwedge_{f \in W}f(\bfa)$. It is readily
seen that the function $g \in Fun(F^{(n)},F)$ defined by $g(\bfa)
= \bigwedge_{f \in W}f(\bfa)$ is an infimum for $X$, i.e., $g =
\bigwedge_{f \in W}f$. Analogously, if $W$ is bounded  then
$(\bigvee_{f \in W}f)(\bfa) = \bigvee_{f \in W}f(\bfa)$ is the
supremum of~$W$.
\end{proof}

   \subsubsection{The map $\omega$}
\begin{defn}
Fixing $\alpha >_\nu 1$, define the map $\omega :
\overline{F(\Lambda)}^{+} \rightarrow \langle F \rangle^{+}$ by
$$\omega(|f|) = |f| \wedge |\alpha|.$$\end{defn}
\begin{lem} $\omega$ is a lattice homomorphism.\end{lem}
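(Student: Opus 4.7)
The plan is to verify directly that $\omega$ preserves both lattice operations $\vee = +$ and $\wedge$ on $\overline{F(\Lambda)}^{+}$. Note that every element of $\overline{F(\Lambda)}^{+}$ equals its own $\nu$-norm (since $|f| = f+f^{*}$ reduces to $f$ when $f \ge_\nu 1$), so $\omega$ may be viewed simply as the map $x \mapsto x \wedge |\alpha|$ on $\overline{F(\Lambda)}^{+}$.

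The meet is immediate from the associativity, commutativity, and idempotence of $\wedge$: for $x,y \in \overline{F(\Lambda)}^{+}$,
$$\omega(x) \wedge \omega(y) = (x \wedge |\alpha|) \wedge (y \wedge |\alpha|) = (x \wedge y) \wedge |\alpha| = \omega(x \wedge y).$$
The join requires the distributive identity
$$(x+y) \wedge |\alpha| = (x \wedge |\alpha|) + (y \wedge |\alpha|),$$
which is really the only point that demands argument. My plan is to verify it pointwise: at each $\bfa \in F^{(n)}$, the three values $x(\bfa), y(\bfa), |\alpha|$ lie in the $\nu$-bipotent \vsemifield0 $F$, where $\vee = +$, and in a totally $\nu$-ordered setting the identity $(a \vee b) \wedge c = (a \wedge c) \vee (b \wedge c)$ is verified trivially by cases on which of $a,b$ dominates (the idempotent character means $\vee$ and $\wedge$ just select maxima and minima up to $\nu$-equivalence, so both sides equal $\min\{c, \max\{a,b\}\}$). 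Alternatively, this is the infinite distributive law stated in Proposition~\ref{prop_convex_l_subgroups_distributive_lattice}, applied in the lattice of $\nu$-kernels via principal $\nu$-kernels and Theorem~\ref{cor_max_semifield_principal_kernels_operations}.

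Finally, one checks that $\omega$ really does land in $\langle F \rangle^{+}$: since $1 \le_\nu x \wedge |\alpha| \le_\nu |\alpha|$, the image is bounded, and Lemma~\ref{rem_bounded_from_above_elements_kernel} identifies the bounded rational functions with $\langle F \rangle$, while positivity is preserved because $x \wedge |\alpha| \ge_\nu 1$. The only genuine obstacle is the distributive identity, and even that dissolves immediately once one passes to pointwise evaluation in the bipotent \vsemifield0 $F$.
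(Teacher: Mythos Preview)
Your proof is correct and follows essentially the same approach as the paper: both verify preservation of $\wedge$ by associativity/idempotence and preservation of $+$ via the distributive identity $(x+y)\wedge|\alpha| = (x\wedge|\alpha|)+(y\wedge|\alpha|)$. The paper simply asserts this distributive identity without justification, whereas you supply a pointwise argument in the $\nu$-bipotent \vsemifield0 $F$; your version is therefore more complete, but the underlying strategy is identical.
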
\begin{proof} Indeed, $\omega (|f| \dotplusss  |g|) = (|f| \dotplusss  |g|) \wedge |\alpha| = (|f| \wedge |\alpha|) \dotplusss  (|g| \wedge |\alpha|) = \omega(|f|) \dotplusss  \omega(|g|)$ and $\omega (|f| \wedge |g|) = (|f| \wedge |g|) \wedge |\alpha| = (|f| \wedge |\alpha|) \wedge (|g| \wedge |\alpha|) = \omega(|f|) \wedge \omega(|g|)$.\\
\end{proof}
$\omega$ induces a lattice map
$$\Omega : \PCon(\overline{F(\Lambda)}) \rightarrow \PCon(\overline{F(\Lambda)}) \cap \langle F \rangle$$
such that $\Omega(\langle f \rangle) = \langle \omega(|f|) \rangle =
\langle |f| \wedge |\alpha| \rangle = \langle f \rangle \cap \langle
F \rangle$.


\begin{lem}\label{rem_properties_of_H_kernels}
\begin{enumerate}\eroman
  \item   $\Omega$ preserves
  both  intersections and products of $\nu$-kernels. \item If $\langle f \rangle$ is a  $\nu$-kernel that is bounded from below,
   then $\langle f \rangle \cap \langle F \rangle = \langle F
   \rangle$. In fact, every principal $\nu$-kernel whose $\onenu$-set is the empty set is mapped to
   $\langle F \rangle$.
  \item $\Skel(\Omega(\langle f \rangle)) =
       \Skel(\langle f \rangle \cap \langle H \rangle) =
       \Skel(\langle f \rangle) \cup \emptyset =
       \Skel(\langle f \rangle).$
       \item Any
        $\nu$-subkernel $K \subset \langle \alpha \rangle$
        must satisfy  $K \cap F = \{ 1 \}$.
\end{enumerate}
\end{lem}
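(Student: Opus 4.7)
The plan is to dispatch each of the four items quickly, relying on the identifications $\Omega(\langle f\rangle)=\langle f\rangle\cap\langle F\rangle$ and $\langle F\rangle=\langle\alpha\rangle$ for any $\alpha\neq 1$ in $F$ (Remark~\ref{princ}). Most of the work is bookkeeping; the only non-trivial ingredients are the distributive law for the lattice of $\nu$-kernels (Proposition~\ref{lem_kernels_algebra}), the correspondence between boundedness from below and emptiness of the $\onenu$-set (Proposition~\ref{cor_empty_kernels_correspond_to_bfb_kernels} together with Remark~\ref{rem_bounded_from_below_contain_H_kernel}(i)), and the relation $\Skel(K_1\cap K_2)=\Skel(K_1)\cup\Skel(K_2)$ (Proposition~\ref{prop_ker_skel_correspondence}).

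For (i), preservation of intersections is immediate from associativity and commutativity of $\cap$, since $(K_1\cap K_2)\cap\langle F\rangle=(K_1\cap\langle F\rangle)\cap(K_2\cap\langle F\rangle)$. For products, I would invoke the infinite distributive law from Proposition~\ref{lem_kernels_algebra}, noting that the join in the lattice of $\nu$-kernels is the product: this gives $(K_1 K_2)\cap\langle F\rangle=(K_1\cap\langle F\rangle)(K_2\cap\langle F\rangle)$. The inclusion $\supseteq$ is clear because $(K_1\cap\langle F\rangle)(K_2\cap\langle F\rangle)$ sits inside both $K_1 K_2$ and $\langle F\rangle\cdot\langle F\rangle=\langle F\rangle$, and the reverse inclusion is exactly the distributive law.

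For (ii), assume $\langle f\rangle$ is bounded from below. By Remark~\ref{rem_bounded_from_below_contain_H_kernel}(i) there is some $\alpha'>_\nu 1$ with $\langle f\rangle\supseteq\langle\alpha'\rangle$, and by Remark~\ref{princ} we have $\langle\alpha'\rangle=\langle\alpha\rangle=\langle F\rangle$, so $\langle f\rangle\cap\langle F\rangle=\langle F\rangle$. The second assertion then follows by composing with Proposition~\ref{cor_empty_kernels_correspond_to_bfb_kernels}: $\Skel(\langle f\rangle)=\emptyset$ forces $f$ to be bounded from below, and we just argued that such $\langle f\rangle$ is mapped to $\langle F\rangle$ by $\Omega$. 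For (iii), apply Proposition~\ref{prop_ker_skel_correspondence} to obtain $\Skel(\Omega(\langle f\rangle))=\Skel(\langle f\rangle\cap\langle F\rangle)=\Skel(\langle f\rangle)\cup\Skel(\langle F\rangle)$, and observe that $\Skel(\langle F\rangle)=\Skel(\langle\alpha\rangle)=\emptyset$ because the constant function $\alpha\in\langle\alpha\rangle$ satisfies $\alpha(\bfa)=\alpha\not\nucong 1$ at every $\bfa\in F^{(n)}$, so no point lies in $\Skel(\langle\alpha\rangle)$.

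For (iv), I would argue by contradiction: suppose $K\subsetneq\langle\alpha\rangle$ contains some $\beta\in F$ with $\beta\neq 1$. Then $\langle\beta\rangle\subseteq K$, but by Remark~\ref{princ} we have $\langle\beta\rangle=\langle\alpha\rangle$, forcing $\langle\alpha\rangle\subseteq K\subsetneq\langle\alpha\rangle$, a contradiction. Hence $K\cap F=\{1\}$. Since each step reduces to a previously stated result, I do not anticipate a genuine obstacle; the only place deserving attention is the appeal to the distributive law in (i), and it is exactly the content of Proposition~\ref{lem_kernels_algebra}.
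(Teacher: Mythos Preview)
Your proof is correct and follows essentially the same route as the paper, which also dispatches (ii)--(iv) via Remark~\ref{rem_bounded_from_below_contain_H_kernel}, Proposition~\ref{prop_ker_skel_correspondence}, and the fact that any $\alpha\neq 1$ generates $\langle F\rangle$. For (i) your explicit appeal to the distributive law (Proposition~\ref{lem_kernels_algebra}) is actually more transparent than the paper's one-line citation; the arguments for (ii)--(iv) match the paper's almost verbatim.
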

\begin{proof}
(i) By Proposition~\ref{prop_ker_skel_correspondence}.

(ii)   By Remark \ref{rem_bounded_from_below_contain_H_kernel} and
Lemma~\ref{rem_the_lattice_of_kernels_of_the_bounded_kernel}.

(iii)   $\Skel(\langle \alpha \rangle) = \emptyset$ and
$\Skel(\langle f \rangle \cap \langle g \rangle) = \Skel(\langle f
\rangle) \cup \Skel(\langle g \rangle)$, for any principal
$\nu$-kernel $\langle f \rangle$.

(iv) Any $\alpha \neq 1$ generates $\langle F \rangle$.
\end{proof}

We now show that restricting $\Skel$ and $\Ker$ to $\langle F
\rangle$ does not affect the $\onenu$-sets and that each
$\mathcal{K}$-kernel of $\overline{F(\Lambda)}$  restricts to a
$\mathcal{K}$-kernel in $\langle F \rangle$.

In
Proposition~\ref{prop_maximal_kernels_in_semifield_of_fractions_part1},
we have shown using the substitution homomorphism $\psi$ that any
point  $a = (\alpha_1,...,\alpha_n) \in F^{(n)} $ corresponds to
the maximal $\nu$-kernel
$$\left\langle \frac{\la_1}{\alpha_1} , ... , \frac{\la_n}{\alpha_n} \right\rangle = \left\langle \left|\frac{\la_1}{\alpha_1}\right| + ....+ \left|\frac{\la_n}{\alpha_n}\right| \right\rangle = \left\langle \left|\frac{\la_1}{\alpha_1}\right| \cdots  \left|\frac{\la_n}{\alpha_n}\right| \right\rangle = \left\langle \frac{\la_1}{\alpha_1} \right\rangle  \cdots \left\langle \frac{\la_n}{\alpha_n} \right\rangle.$$
For example, consider the   homomorphism $\psi : F(\la_1)
\rightarrow F$  given by
 $\la_1 \mapsto 1$. By Theorem \ref{thm_kernels_hom_relations}, the $\nu$-kernel of
  its
 restriction
  $\psi|_{\langle F \rangle} :\langle F
\rangle \rightarrow \psi(\langle  F \rangle) = F$ is
$\Ker(\psi|_{\langle F \rangle})= \Ker (\psi) \cap \langle F
\rangle = \langle \la_1 \rangle \cap \langle F \rangle$. Thus, the
proposition applies to $\langle F \rangle$ providing the maximal
$\nu$-kernel $\langle x \rangle \cap \langle F \rangle$. We will
now show that any maximal $\nu$-kernel of $\langle F \rangle $ has
that form.

\begin{prop}\label{prop_maximal_kernels_in_semifield_of_fractions_part2}
If $K$ is a maximal $\nu$-kernel of  $\langle F \rangle$, then
$$K = \Omega\left(\left\langle \frac{\la_1}{\alpha_1}, ... ,
\frac{\la_n}{\alpha_n} \right\rangle\right)$$ for suitable
$\alpha_1,...,\alpha_n \in F$.
\end{prop}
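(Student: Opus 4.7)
The plan is to exploit the simplicity of $\langle F \rangle/K$ to produce an evaluation homomorphism $\pi : \langle F \rangle \to F$ whose kernel is $K$, and then to extract the required point $\bfa=(\alpha_1,\ldots,\alpha_n)$ from the values of $\pi$ on bounded approximations of the $\la_i$.

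First I would observe that $\Omega(M_\bfa) = M_\bfa \cap \langle F \rangle$ is always a proper $\nu$-kernel of $\langle F \rangle$ (it misses $\alpha \in F$ for $\alpha \not\nucong 1$), so by the maximality of $K$ it suffices to exhibit some $\bfa \in F^{(n)}$ with $K \subseteq \Omega(M_\bfa)$; equivalently, to exhibit $\bfa \in \Skel(K) \subseteq F^{(n)}$. Indeed, if $\bfa \in \Skel(K)$ then for every $f \in K$ we have $f(\bfa) \nucong 1$, which says $K \subseteq \Ker_{\langle F \rangle}(\bfa) = \Omega(M_\bfa)$, and maximality forces equality. So the whole task reduces to showing $\Skel(K)$ is non-empty.

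For this, since $K$ is maximal, $\langle F \rangle/K$ is a simple \vsemifield\ by Remark~\ref{cor_max_ker_simple_corr}, and it is $\nu$-archimedean as a quotient of $\langle F \rangle$ (which inherits $\nu$-archimedean-ness from $F$ and $\FunSR$ by Proposition~\ref{prop_semifield_of_fractions_is_archimedean}). The constant sub-\vsemifield\ $F$ injects into $\langle F \rangle/K$ (as $F \cap K = \{1\}$ by Lemma~\ref{rem_properties_of_H_kernels}(iv)), and an archimedean, simple \vsemifield\ containing a completely closed copy of the complete \vsemifield\ $F$ must coincide with it, by a standard Hölder-type argument. This identification yields a surjective homomorphism $\pi : \langle F \rangle \to F$ with $\ker\pi = K$ that restricts to the identity on $F$.

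It remains to recover the coordinates of $\bfa$. For each $i$, the element $|\la_i| \wedge |\beta|$ lies in $\langle F \rangle$ for every $\beta \in F$, and $\beta \mapsto \pi(|\la_i| \wedge |\beta|)$ is an increasing net in $F$ bounded above by $|\beta|$. Using completeness of $F$, I would define
\[
\alpha_i := \bigvee_{\beta \in F^+} \pi(|\la_i| \wedge |\beta|),
\]
and analogously $\alpha_i^{-1} := \bigvee_{\beta} \pi(|\la_i|^{-1} \wedge |\beta|)$; the pair is compatible because $\pi$ respects $\vee$ and $\wedge$ and fixes $F$. With $\bfa := (\alpha_1,\ldots,\alpha_n)$, a direct check shows $\pi(h) = h(\bfa)$ for every Laurent monomial bounded in some $\wedge |\beta|$, and then by the lattice operations and the fact that every element of $\langle F \rangle$ is a bounded rational function, $\pi$ and $\mathrm{ev}_\bfa|_{\langle F \rangle}$ agree on all of $\langle F \rangle$. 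Thus $K = \ker\pi = \Omega(M_\bfa)$.

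The main obstacle is ensuring that each supremum defining $\alpha_i$ is finite, i.e.\ ruling out a ``point at infinity'' where some $\pi(|\la_i| \wedge |\beta|) = |\beta|$ for arbitrarily large $\beta$. This is handled by the $\nu$-archimedean property: the equality $\pi(|\la_i|^{-1} \wedge |\beta|) = |\beta|$ for all $\beta >_\nu 1$ would force $\pi(|\la_i|^{-1} \wedge |\beta|^{-1} )$ below every $|\beta|^{-1}$, contradicting that $\pi$ is a $\nu$-homomorphism into the archimedean $F$ unless the relevant image is $1$, in which case the coordinate $\alpha_i$ is actually well-defined via the dual sup. With this taken care of, the identification $K = \Omega(M_\bfa)$ follows and completes the proof.
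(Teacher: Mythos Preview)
Your first paragraph already contains the paper's entire argument. The paper's proof is: since $K$ is a proper $\nu$-kernel of $\langle F \rangle$, Lemma~\ref{rem_properties_of_H_kernels} gives $\Skel(K) \neq \emptyset$ (the only $\nu$-kernel of $\langle F \rangle$ with empty $\onenu$-set being $\langle F \rangle$ itself); one picks any $a = (\alpha_1,\dots,\alpha_n) \in \Skel(K)$, sets $L_a = \big(\,|\frac{\la_1}{\alpha_1}| + \cdots + |\frac{\la_n}{\alpha_n}|\,\big) \wedge |\alpha|$, notes $\Skel(L_a) = \{a\} \subseteq \Skel(K)$ so $K \subseteq \langle L_a \rangle$, and maximality gives $K = \langle L_a \rangle = \Omega\big(\langle \frac{\la_1}{\alpha_1},\dots,\frac{\la_n}{\alpha_n}\rangle\big)$. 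That is precisely your reduction, and the proof stops there.

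Your subsequent H\"older-based construction of the point is therefore unnecessary, and it has real gaps. First, archimedean-ness is \emph{not} inherited by quotients (if it were, Proposition~\ref{rem_order_simple} would force every quotient of $\langle F \rangle$ to be simple); the correct route to $\langle F \rangle/K$ being archimedean is that simplicity already implies it, via Theorem~\ref{thm_holder_translated}. More seriously, you repeatedly invoke completeness of $F$ --- to form the suprema defining the $\alpha_i$, and to force the embedded copy of $F$ to exhaust $\langle F \rangle/K$ --- but the proposition is stated only under the hypothesis that $F$ is archimedean and $\nu$-bipotent; completeness is not introduced until \S\ref{compl1}. Without completeness and divisibility (needed for Corollary~\ref{cor_totaly_ordered_isomorphic_to_reals}) there is no reason the quotient coincides with $F$ or that your suprema are attained in $F$, and the ``point at infinity'' obstruction is only gestured at. So while your scheme is structurally plausible for $F = \mathscr{R}$, it does not establish the proposition at its stated generality, whereas the paper's one-line appeal to the earlier lemma does.
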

\begin{proof}
Denote $L_a = (|\frac{\la_1}{\alpha_1}| + ....+
|\frac{\la_n}{\alpha_n}|) \wedge |\alpha|$ with $\alpha \neq 1$,
for $a = (\alpha_1, ..., \alpha_n)$. By
Lemma~\ref{rem_properties_of_H_kernels} we may assume that
$\Skel(K) \neq \emptyset$, since the only $\nu$-kernel
corresponding to the empty set is $\langle F \rangle$ itself. If
$a \in \Skel(K)$, then as $\Skel(L_a) = \{a\} \subseteq \Skel(K)$,
we have that $\langle L_a \rangle \supseteq K$. Thus, the
maximality of $K$ implies that $K = \langle L_a \rangle$.
\end{proof}

%

\subsection{Completions of  idempotent $\nu$-\semifields0}\label{compl1}$ $

We recall Definition~\ref{defn_cond_complete} of ``complete,''
viewing a \vsemifield0 as a lattice. In essence it is enough to
consider $\mathscr{R}$ from Example~\ref{mathscrR}.

\subsubsection{Kernels of $\nu$-archimedean idempotent $\nu$-\semifields0} $ $

\begin{thm}\cite[Theorem~2.3.10 (H\"{o}lder)]{OrderedGroups3}\label{thm_holder_translated}
The following statements are equivalent for an idempotent
\vsemifield0\ $\mathcal{S}$.
\begin{enumerate}
  \item $\mathcal{S}$ is simple.
  \item $\mathcal{S}$ is totally ordered and archimedean.
  \item $\mathcal{S}$ can be embedded into the max-plus algebra
  of $\Rplus$.
\end{enumerate}

The  supertropical version: Any $\nu$-archimedean supertropical
\vsemifield0\ $\mathcal{S} = (\mathcal{S}, \tT, \nu, \tG)$   can
be embedded into
  $\mathscr{R}$.
\end{thm}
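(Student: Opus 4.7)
The plan is to establish the cycle (3) $\Rightarrow$ (2) $\Rightarrow$ (1) $\Rightarrow$ (2) $\Rightarrow$ (3) for the classical statement, and then to bootstrap to the supertropical version via the ghost map $\nu$. The two easy legs are (3) $\Rightarrow$ (2), since the max-plus algebra of $\Rplus$ is totally ordered and archimedean and a subsemifield inherits both, and (2) $\Rightarrow$ (1), which is exactly Proposition~\ref{rem_order_simple}.

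For (1) $\Rightarrow$ (2), suppose $\mathcal{S}$ is simple. We first show $\mathcal{S}$ is totally ordered. If not, choose incomparable $a', b' \in \mathcal{S}$ and set $a = a'(a' \wedge b')^{-1}$ and $b = b'(a' \wedge b')^{-1}$, so that $a, b > 1$ and $a \wedge b = 1$. A standard $\ell$-group fact from \cite{OrderedGroups3} gives that the polar $b^\perp := \{x \in \mathcal{S} : |x| \wedge b = 1\}$ is a convex $\ell$-subgroup, hence a $\nu$-kernel; it contains $a$ and therefore contains $\langle a \rangle$, yet does not contain $b$ (since $b \wedge b = b \ne 1$). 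Thus $\langle a \rangle \subsetneq \mathcal{S}$ is a proper nontrivial $\nu$-kernel, contradicting simplicity. With $\mathcal{S}$ now totally ordered and nontrivial, pick any $a > 1$; simplicity forces $\langle a \rangle = \mathcal{S}$, and Proposition~\ref{prop_principal_ker} yields that every $b \in \mathcal{S}$ satisfies $a^{-n} \le b \le a^n$ for some $n$, which is the archimedean property.

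For (2) $\Rightarrow$ (3), invoke the classical H\"older theorem for ordered abelian groups (see \cite{OrderedGroups3}): every totally ordered archimedean abelian group embeds order-preservingly into $(\mathbb{R}, +)$. Applied to the multiplicative group of $\mathcal{S}$, this yields the desired embedding of idempotent \vsemifields0, since $\vee$, which coincides with the semifield addition by idempotence, is automatically carried to the maximum in $\Rplus$.

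For the supertropical version, the ghost ideal $\tG$ of a $\nu$-archimedean supertropical \vsemifield0\ $\mathcal{S}$ is $\nu$-bipotent (so totally ordered as a semifield in its own right) and archimedean in the classical sense, hence embeds in $\Rplus$ by the previous case. Using that $\nu|_\tT : \tT \to \tG$ is a group isomorphism in the standard supertropical setup of Example~\ref{stropi}, we lift this embedding uniquely to an embedding of $\tT$ into the tangible part of $\mathscr{R}$; the two pieces combine to give the required embedding $\mathcal{S} \hookrightarrow \mathscr{R}$, with the supertropical addition rule $a + b = a^\nu$ for $a \nucong b$ preserved on both sides by construction. The main obstacle is the polar $\ell$-group lemma used in showing total ordering; this is classical and documented in \cite{OrderedGroups3}, and everything else reduces to short invocations of results already in the paper or of classical H\"older.
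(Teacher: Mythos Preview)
The paper gives no proof of its own here: the classical equivalence is cited directly from \cite[Theorem~2.3.10]{OrderedGroups3}, and the supertropical addendum is stated without argument. Your reconstruction is the standard one and is correct; the only nontrivial leg is (1) $\Rightarrow$ (2), and your polar argument to force total order is exactly the textbook $\ell$-group proof. For the supertropical addendum your reduction via $\tG$ is right, but note that it uses that $\nu|_\tT : \tT \to \tG$ is bijective; the general $\nu$-domain axioms in the paper only demand that $\nu|_\tT$ be onto, so you are implicitly invoking the standard supertropical construction of Example~\ref{stropi}, which is indeed the intended reading in this section (as the subsequent Corollary~\ref{cor_totaly_ordered_isomorphic_to_reals} and the standing use of $\mathscr{R}$ confirm).
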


\begin{cor}\cite{OrderedGroups3}\label{cor_totaly_ordered_isomorphic_to_reals}
Any complete divisible totally ordered $\nu$-archimedean
supertropical \vsemifield0 is isomorphic to
  $\mathscr{R}$.
\end{cor}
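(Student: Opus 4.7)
The plan is to strengthen the embedding provided by the supertropical version of Theorem~\ref{thm_holder_translated}, which supplies an embedding $\phi \colon \mathcal{S} \hookrightarrow \mathscr{R}$ of supertropical \vsemifields0. It suffices to show $\phi$ is surjective; since $\phi$ is compatible with the ghost map $\nu$, and $\nu$ identifies $\tT$ with $\tG$ inside both $\mathcal{S}$ and $\mathscr{R}$, it is enough to prove that $\phi$ maps the tangible submonoid $\tT$ of $\mathcal{S}$ onto $\mathbb{R}=\tT(\mathscr{R})$.

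Passing to additive notation via Remark~\ref{log0}, divisibility of $\mathcal{S}$ says that $\phi(\tT)$ is a $\mathbb{Q}$-subspace of $\mathbb{R}$. Assuming $\mathcal{S} \ne \{1\}$ (the trivial case is clear), pick $a \in \tT$ with $a >_\nu 1$, so $\phi(a^{1/n}) = \phi(a)/n$ lies in $\phi(\tT)$ for every $n \in \mathbb{N}$. Combined with the Archimedean property of $\mathbb{R}$, this yields density of $\phi(\tT)$ in $\mathbb{R}$: given $r \in \mathbb{R}$ and $\varepsilon > 0$, choose $n$ so that $\phi(a)/n < \varepsilon$, and then a suitable integer multiple of $\phi(a^{1/n})$ approximates $r$ within $\varepsilon$.

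To finish, I would exploit completeness. Given $r \in \mathbb{R}$, density produces sequences $(s_k)$ and $(t_k)$ in $\phi(\tT)$ with $s_k \to r$ from below and $t_k \to r$ from above. The subset $\{\phi^{-1}(s_k)\} \subset \tT$ is bounded above in $\mathcal{S}$ by $\phi^{-1}(t_1)$, so completeness of $\mathcal{S}$ yields a supremum $s_\ast \in \mathcal{S}$. Because $\phi$ is an injective \vsemifield0 homomorphism (hence preserves $\vee$ and the order), $\phi(s_\ast)$ is the supremum of $\{s_k\}$ inside $\phi(\mathcal{S})$. Each $t_k \in \phi(\mathcal{S})$ is an upper bound for $\{s_k\}$, hence $\phi(s_\ast) \le t_k$ for all $k$; conversely $\phi(s_\ast) \ge s_k$ for all $k$. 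Letting $k \to \infty$ in $\mathbb{R}$ forces $\phi(s_\ast) = r$, so $r \in \phi(\tT)$, completing surjectivity on the tangible parts.

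The main obstacle is reconciling the abstract completeness of $\mathcal{S}$ with the Dedekind completeness of $\mathbb{R}$: a priori, the supremum taken in $\phi(\mathcal{S})$ need not coincide with the supremum taken in $\mathbb{R}$, and a one-sided approximation would not pin down $r$. Having both ascending and descending approximations from $\phi(\tT)$ (produced by density together with divisibility) is exactly what forces the two suprema to agree, placing $r$ inside $\phi(\tT)$. Once tangible surjectivity is established, the ghost and mixed parts match automatically by $\nu$-compatibility of $\phi$, so $\phi$ is an isomorphism onto $\mathscr{R}$.
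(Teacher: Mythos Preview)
The paper does not supply its own proof of this corollary; it is stated with a citation to \cite{OrderedGroups3} and left at that. Your argument is the standard proof one finds in such references: H\"{o}lder's theorem (Theorem~\ref{thm_holder_translated}) gives the embedding, divisibility forces the image to contain a dense $\mathbb{Q}$-line, and completeness together with the two-sided squeeze $s_k \le \phi(s_\ast) \le t_k$ forces surjectivity. So your approach matches what the paper defers to.

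Two minor remarks. First, your parenthetical ``the trivial case is clear'' is slightly off: the trivial \vsemifield0 is \emph{not} isomorphic to $\mathscr{R}$, but this case is in fact excluded by the definition of a supertropical \vsemifield0 (see Condition~(iv) of Definition~\ref{semir1} and the remark in Example~\ref{stropi}), so there is nothing to prove there. Second, in the supertropical setting the supremum $s_\ast$ of tangible elements might a priori land in $\tG$ rather than $\tT$; you should note that since $\nu|_{\tT}$ is a bijection onto $\tG$ in the standard supertropical \vsemifield0, one can always lift back to a tangible element with the same $\nu$-value, so this causes no trouble. Your closing sentence about $\nu$-compatibility gestures at this, but it is worth making explicit.
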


To avoid duplication for $\nu$-kernels having the same
$\onenu$-set, our next step is to work with $\mathscr{R}$ and
restrict to $\nu$-subkernels of $\langle \mathscr{R}\rangle$ in
$\mathscr{R}(\Lambda)$.

\begin{defn}\label{defn_coinitial}
A subset $A$ of the poset $P$ is called  \textbf{co-initial} in $P$
if for every $x \in P$ there exists some $f \in A$ such that $f \leq
x$. $A$ is \textbf{co-final} in $P$ if for every $x \in P$ there
exists some $f \in A$ such that $x \leq f$.
\end{defn}

\begin{defn}\label{defn_lef_right_dense}
The subset $A$ of the idempotent \vsemifield0\ $\mathcal{S}$ is
called \textbf{left dense} in~$\mathcal{S}$ if $A^{+} $
(cf.~Definition~\ref{pos})
 is co-initial in
$\mathcal{S}^{+} $, and $A$ is called \textbf{right dense} in H if
$A^{+}$ is co-final in $\mathcal{S}^{+}$.
\end{defn}

\begin{defn}
A \textbf{completion} of the idempotent \vsemifield0\
$\mathcal{S}$ is a pair $(H,\theta)$ where $H$ is a complete
idempotent \vsemifield0\ and $\theta : \mathcal{S} \rightarrow H$
is a monomorphism whose image is dense (left and right) in $H$.
\end{defn}

\begin{rem}\label{thm_completion_of_l_group} Each $\nu$-archimedean idempotent \vsemifield0\ has a unique
completion up to isomorphism. The proof follows the standard lines,
with details given in \cite[Theorem
(2.3.4)]{OrderedGroups3}.\end{rem}


\begin{rem}\label{rem_carachterization_of_completions}
When the supertropical \vsemifield0 $F$ is $\nu$-archimedean,  any
$\nu$-kernel $K$ of $\overline{F(\Lambda)}$  also has a
completion, which we denote as $\bar{K} \subseteq
 \overline{F(\Lambda)}$, which
by Corollary \ref{cor_completion_of_kernels} is a $\nu$-kernel of $ \overline{F(\Lambda)}$.\\ 
\end{rem}

\begin{thm}\cite[Theorem~2.3.6]{OrderedGroups3}\label{thm_completion_of_a_semifield}
Let $\mathcal{S}$ be an $\nu$-archimedean sub-\vsemifield0\ of a
complete idempotent \vsemifield0~$\hat{\mathcal{S}}$. Then the
following are equivalent:
\begin{enumerate}
  \item $\hat{\mathcal{S}}$ is the completion of $\mathcal{S}$.
  \item $\mathcal{S}$ is left dense in $\hat{\mathcal{S}}$, and if $A$ is an idempotent subsemifield of $\hat{\mathcal{S}}$ that is complete and
contains $\mathcal{S}$, then $A = \hat{\mathcal{S}}$.
\end{enumerate}
\end{thm}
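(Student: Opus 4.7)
The plan is to prove each implication separately, leveraging the uniqueness of the completion (Remark~\ref{thm_completion_of_l_group}) and the interaction of completeness with density in a $\nu$-archimedean setting.

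For $(1) \Rightarrow (2)$: By the definition of completion, $\mathcal{S}$ is both left and right dense in $\hat{\mathcal{S}}$, giving the first clause of (2) for free. For the minimality clause, suppose $A$ is a complete sub-\vsemifield0 of $\hat{\mathcal{S}}$ with $\mathcal{S} \subseteq A$. Take $x \in \hat{\mathcal{S}}^{+}$ and consider $S_x = \{ s \in \mathcal{S}^{+} : s \le_\nu x \}$. I claim $x = \sup_{\hat{\mathcal{S}}} S_x$: any strict upper bound $y$ of $S_x$ with $y <_\nu x$ would, by left density of $\mathcal{S}$ inside the interval determined by $y$ and $x$, yield an element of $\mathcal{S}^{+}$ between $y$ and $x$, contradicting the choice of $y$. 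Since $S_x \subseteq A$ and $A$ is complete, $x = \sup_A S_x \in A$. A dual argument using right density handles $x \le_\nu 1$ via the $(*)$-operation from Remark~\ref{aut1}. Hence $A = \hat{\mathcal{S}}$.

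For $(2) \Rightarrow (1)$: Given left density and the minimality property, the nontrivial task is to produce right density. Invoke Remark~\ref{thm_completion_of_l_group} to obtain an abstract completion $\theta : \mathcal{S} \hookrightarrow \bar{\mathcal{S}}$, where $\bar{\mathcal{S}}$ is complete and $\theta(\mathcal{S})$ is both left and right dense. Extend the inclusion $\iota : \mathcal{S} \hookrightarrow \hat{\mathcal{S}}$ to a map $\tilde{\iota} : \bar{\mathcal{S}} \to \hat{\mathcal{S}}$ by sending $x \in \bar{\mathcal{S}}$ to $\sup_{\hat{\mathcal{S}}} \iota(\{ s \in \mathcal{S} : \theta(s) \le_\nu x\})$; this is well-defined because $\hat{\mathcal{S}}$ is complete, and it is order-preserving by construction. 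Verify that $\tilde{\iota}$ is a \vsemifield0 homomorphism using the distributivity of multiplication and the $\nu$-bipotent addition over suprema, and that it is injective because $\theta(\mathcal{S})$ is dense in $\bar{\mathcal{S}}$ (any two distinct elements of $\bar{\mathcal{S}}$ are separated by some $\theta(s)$, and $\iota(s)$ separates their images). Then $A := \tilde{\iota}(\bar{\mathcal{S}})$ is a complete sub-\vsemifield0 of $\hat{\mathcal{S}}$ containing $\mathcal{S}$, so by the minimality clause of (2), $A = \hat{\mathcal{S}}$. Transporting the right density of $\theta(\mathcal{S})$ in $\bar{\mathcal{S}}$ across the isomorphism $\bar{\mathcal{S}} \cong A = \hat{\mathcal{S}}$ yields right density of $\mathcal{S}$ in $\hat{\mathcal{S}}$, completing the proof.

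The main obstacle is verifying that the extension $\tilde{\iota}$ genuinely respects all the \vsemifield0 operations in the supertropical setting. Addition is $\sup$, which behaves well, but one must check that multiplication by a fixed element commutes with the suprema used to define $\tilde{\iota}$; this relies on the fact that in the lattice-ordered group of tangibles, multiplication is an order isomorphism that preserves bounded suprema. The $\nu$-structure must also be handled with care, using $a + a = a^\nu$ together with the $\nu$-archimedean hypothesis to ensure the ghost ideal is transported correctly; this is the point at which the $\nu$-archimedean assumption is essential, since it guarantees that $\bar{\mathcal{S}}$ (and hence the candidate $A$) is determined up to isomorphism by the order-theoretic completion of the tangible group.
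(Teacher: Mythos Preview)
The paper does not actually supply a proof of this theorem: it is stated with a citation to \cite[Theorem~2.3.6]{OrderedGroups3} and no proof environment follows. So there is no ``paper's own proof'' to compare your attempt against; you are effectively reconstructing Steinberg's argument.

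Your outline is along the right lines, but the $(1)\Rightarrow(2)$ direction has a genuine gap as written. You argue that $x=\sup_{\hat{\mathcal S}}S_x$ and then conclude $x=\sup_A S_x\in A$. The problem is that a supremum computed in the sublattice $A$ need not coincide with the supremum in $\hat{\mathcal S}$: all you get directly is $\sup_A S_x\ge \sup_{\hat{\mathcal S}}S_x=x$, and you have not excluded strict inequality. Your appeal to ``left density of $\mathcal S$ inside the interval determined by $y$ and $x$'' is also imprecise: co-initiality of $\mathcal S^{+}$ in $\hat{\mathcal S}^{+}$ does not by itself put an element of $\mathcal S$ into an arbitrary interval; one must combine it with the group law and the archimedean property (pick $t\in\mathcal S$ with $1<t\le xy^{-1}$, take the largest $m$ with $t^m\le x$, and check $t^m>y$). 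The clean way to finish is to use both densities at once: set $T_x=\{s\in\mathcal S:s>_\nu x\}$, show $x=\inf_{\hat{\mathcal S}}T_x$ from right density by the dual argument, and then in $A$ the inequalities $\sup_A S_x\le\inf_A T_x$ together with $\sup_A S_x\ge x\ge\inf_A T_x$ force $x\in A$.

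Your $(2)\Rightarrow(1)$ strategy via the abstract completion $\bar{\mathcal S}$ and an extension $\tilde\iota$ is reasonable and is essentially how such results are proved for archimedean $\ell$-groups. The verifications you flag (that multiplication commutes with bounded suprema, that $\tilde\iota$ is injective, and that its image is complete in its own right) are exactly the points that need work; none of them is automatic, but all hold in the archimedean setting.
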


\begin{cor}\label{cor_completion_of_kernels}\cite{OrderedGroups3}
Suppose  $\mathcal{S}$ is a left dense $\nu$-archimedean
idempotent sub-\vsemifield0 of the complete idempotent
\vsemifield0\ $\hat{\mathcal{S}}$. Then  the $\nu$-kernel of
$\hat{\mathcal{S}}$ generated  by a $\nu$-kernel $K$   of \
$\mathcal{S}$, is the completion of $K$ in  $\hat{\mathcal{S}}$.
\end{cor}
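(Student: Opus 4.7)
Let $L := \langle K \rangle_{\hat{\mathcal{S}}}$, the $\nu$-kernel of $\hat{\mathcal{S}}$ generated by $K$. The plan is to verify that $L$ satisfies the characterization of $\bar K$ given by Theorem~\ref{thm_completion_of_a_semifield}: namely, $L$ is a complete idempotent sub-\vsemifield0\ of $\hat{\mathcal{S}}$ containing $K$ in which $K$ is dense and which is minimal among such sub-\vsemifields0.

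First I would check that $L$ is complete as a lattice. Given a bounded subset $A \subseteq L$ with some $a_0 \in A$ and upper bound $c \in L$, the supremum $\bigvee A$ exists in $\hat{\mathcal{S}}$ by completeness of $\hat{\mathcal{S}}$, and satisfies $a_0 \leq_\nu \bigvee A \leq_\nu c$ with $a_0, c \in L$; the $\nu$-convexity of $L$ as a $\nu$-kernel (Remark~\ref{bfacts1}(ii)) then places $\bigvee A$ in $L$. Infima are handled dually.

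Next, I would establish that $K$ is both left and right dense in $L$. Right density follows immediately from the intrinsic description $L^{+} = \{y \in \hat{\mathcal{S}}^{+} : |y| \leq_\nu c \text{ for some } c \in K^{+}\}$, read off from Corollary~\ref{cor_principal_ker_by_order} applied to $\langle K \rangle_{\hat{\mathcal{S}}}$. For left density, given $y \in L^{+}$ bounded above by $c \in K^{+}$, the left density of $\mathcal{S}$ in $\hat{\mathcal{S}}$ supplies $s \in \mathcal{S}^{+}$ with $s \leq_\nu y$; since $1 \leq_\nu s \leq_\nu c$ with $1, c \in K$, the $\nu$-convexity of $K$ in $\mathcal{S}$ (Remark~\ref{bfacts1}(ii)) forces $s \in K^{+}$, as required.

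For minimality, I would show that any complete idempotent sub-\vsemifield0\ $A \subseteq \hat{\mathcal{S}}$ containing $K$ must contain $L$: for $y \in L^{+}$, one argues that $y = \bigvee \{s \in K : s \leq_\nu y\}$ in $\hat{\mathcal{S}}$, since the left density of $\mathcal{S}$ in $\hat{\mathcal{S}}$ together with the $\nu$-convexity of $K$ inside the interval bounded by $1$ and $c$ allows us to approximate $y$ from below arbitrarily closely by elements of $K$; the right-hand set lies in $A$, so its supremum, which agrees whether computed in $A$ or in $\hat{\mathcal{S}}$ by completeness of $A$, belongs to $A$, and so $y \in A$. Dually for infima, yielding $L \subseteq A$. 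Invoking Theorem~\ref{thm_completion_of_a_semifield} then gives $L = \bar K$. The main obstacle is this minimality step: the passage from the one-sided co-initial/co-final conditions of Definition~\ref{defn_lef_right_dense} to an order density strong enough to write $y = \bigvee \{s \in K : s \leq_\nu y\}$ requires combining the $\nu$-archimedean hypothesis on $\mathcal{S}$ with the convexity interplay between $K$, $\mathcal{S}$, and $\hat{\mathcal{S}}$, essentially translating the corresponding result for lattice-ordered groups from \cite{OrderedGroups3} into our supertropical setting.
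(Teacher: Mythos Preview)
The paper does not supply a proof of this corollary; it simply cites \cite{OrderedGroups3}. So there is no paper argument to compare against, and your attempt should be judged on its own merits.

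Your approach is sound, and in fact you have done more work than necessary. Look again at the paper's definition of completion (immediately preceding Remark~\ref{thm_completion_of_l_group}): a completion of $K$ is a complete idempotent \vsemifield0\ $H$ together with a monomorphism $K \hookrightarrow H$ whose image is dense (left and right) in $H$. Your Step~2 shows $L$ is complete (via convexity of the $\nu$-kernel $L$ inside the complete $\hat{\mathcal{S}}$), and your Step~3 shows $K$ is both left and right dense in $L$. Together with the observation that $L$ is a sub-\vsemifield0\ by Remark~\ref{bfacts1}(viii), this already verifies the \emph{definition} of completion directly. Uniqueness up to isomorphism is then Remark~\ref{thm_completion_of_l_group}.

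Your Step~4 (minimality) is therefore unnecessary: it corresponds to checking condition~(2) of Theorem~\ref{thm_completion_of_a_semifield}, but you are free to verify the equivalent condition~(1) instead, which is precisely what Steps~2 and~3 accomplish. The obstacle you correctly flag in the minimality argument --- passing from co-initiality to the stronger approximation $y = \bigvee\{s \in K : s \le_\nu y\}$ --- can simply be sidestepped. Drop Step~4 and conclude after Step~3; the proof is then complete and clean. (Note also that the $\nu$-archimedean hypothesis on $\mathcal{S}$ is inherited by $K$, since archimedeanity is a universal condition, so Theorem~\ref{thm_completion_of_a_semifield} applies with $K$ in the role of $\mathcal{S}$ if you do wish to invoke it.)
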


By Remark~\ref{thm_completion_of_l_group}, $\mathscr{R}(\Lambda)$ has a
 unique completion to a complete $\nu$-archimedean idempotent
 \vsemifield0\ $\overline{\mathscr{R}(\Lambda)}$ in  $Fun(\mathscr{R}^{(n)} ,\mathscr{R})$. By Theorem \ref{thm_completion_of_a_semifield}, $\mathscr{R}(\Lambda)$ is dense in $\overline{\mathscr{R}(\Lambda)}$. \\

\subsection{The  $\nu$-kernel $\langle
\mathscr{R} \rangle$ of $\mathscr{R}(\Lambda)$ as bounded
functions}\label{subsection:bounded_kernel2}$ $


\begin{prop}\label{prop_unbounded_generator}\cite[Proposition (5.2.1)]{AlgAspTropMath}
For any principal  bounded $\nu$-kernel $\langle f \rangle \in
\PCon(\langle \mathscr{R} \rangle)$, there exists an unbounded
 $\nu$-kernel $\langle f' \rangle \in
\PCon(\mathscr{R}(\Lambda))$,   such that
$$ \langle f \rangle = \langle f' \rangle \cap \langle \mathscr{R}
\rangle.$$ In particular, $\langle f' \rangle \supset \langle f
\rangle$  and $\Skel(f') = \Skel(f)$.
\end{prop}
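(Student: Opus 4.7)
The plan is to first invoke Lemma~\ref{rem_the_lattice_of_kernels_of_the_bounded_kernel} to obtain some $g \in \mathscr{R}(\Lambda)$ with $\langle g\rangle \cap \langle \mathscr{R}\rangle = \langle f\rangle$, and if $g$ happens already to be unbounded, take $f' := g$. Then the strict containment $\langle f'\rangle \supset \langle f\rangle$ is immediate (one side lies in $\langle \mathscr{R}\rangle$, the other does not), while $\Skel(f') = \Skel(\langle f'\rangle \cap \langle \mathscr{R}\rangle) = \Skel(\langle f\rangle) = \Skel(f)$ by Proposition~\ref{prop_ker_skel_correspondence} together with $\Skel(\langle \mathscr{R}\rangle) = \emptyset$ (Lemma~\ref{rem_properties_of_H_kernels}(iii)).

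If instead $g$ is bounded, I would replace it by $f' := g\cdot\phi$ for a carefully chosen unbounded $\phi \in \mathscr{R}(\Lambda)$ satisfying $\Omega(\langle \phi\rangle) \subseteq \langle g\rangle$. Since $\Omega$ preserves products (Lemma~\ref{rem_properties_of_H_kernels}(i)), this yields
\[
\Omega(\langle f'\rangle) \;=\; \Omega(\langle g\rangle)\cdot\Omega(\langle\phi\rangle) \;\subseteq\; \langle g\rangle \;=\; \langle f\rangle,
\]
while the reverse inclusion holds because $g \in \langle f'\rangle$ (as $|g| \le |g\phi|$ whenever $|\phi| \geq 1$); hence $\langle f'\rangle \cap \langle \mathscr{R}\rangle = \langle f\rangle$. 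Finally $|f'| \ge |\phi|$ forces $f'$ to be unbounded, and the remaining claims follow exactly as in the first paragraph.

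The substantive step is therefore the existence of $\phi$. The degenerate case $f = 1$ is excluded by Proposition~\ref{cor_intersection_with_the kernel_of_H} (no unbounded $\nu$-kernel meets $\langle \mathscr{R}\rangle$ trivially), so $\Skel(g) \subsetneq \mathscr{R}^n$ and there is an open region $U \subseteq \mathscr{R}^n$ on which $|g| \ge \gamma$ for some $\gamma > 1$. By archimedeanness, pick $N \in \mathbb{N}$ with $\gamma^N \ge |\alpha|$, so $|g|^N \ge |\alpha|$ throughout $U$. I then choose $\phi$ as a Laurent rational function in $\la_1,\dots,\la_n$ (built from the $|\la_i|$ using $\vee$ and $\wedge$) that is unbounded on $U$ while satisfying $|\phi| \le |g|^N$ off $U$; the pointwise bound $|\phi|\wedge|\alpha| \le |g|^N$ then holds (on $U$ since $|g|^N \ge |\alpha| \ge |\phi|\wedge|\alpha|$, and off $U$ by construction), giving $\Omega(\langle\phi\rangle) \subseteq \langle g\rangle$ as required. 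The main obstacle is this piecewise linear construction: one must extract from $g$ (via its dominant-monomial decomposition as a tropical rational function) enough structural information about $U$ to exhibit a matching $\phi$ whose super-level set $\{|\phi|>|\alpha|\}$ fits inside $\{|g| \ge |\alpha|^{1/N}\}$.
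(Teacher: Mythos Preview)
Your reduction is correct and, at the level of $\nu$-kernel identities, matches what the paper does: writing $f' = |g| + |\phi|$ gives $\langle f'\rangle = \langle g\rangle\cdot\langle\phi\rangle$, and since $\Omega$ preserves products the requirement $\langle f'\rangle\cap\langle\mathscr{R}\rangle = \langle f\rangle$ reduces precisely to finding an unbounded $\phi$ with $\Omega(\langle\phi\rangle)\subseteq\langle f\rangle$. The difference is that the paper simply \emph{writes down} such a $\phi$ and checks it, whereas you leave the construction open.

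Concretely, the paper takes
\[
\phi \;=\; \bigl(\beta^{-1}|\la_1|\wedge\cdots\wedge\beta^{-1}|\la_n|\bigr)+1
\]
for a suitably large $\beta$, so that $\phi=1$ on the set $\{\exists i:\ |\la_i|\le\beta\}$ and is unbounded on the corner $S=\{|\la_i|>\beta\ \forall i\}$; it then verifies the pointwise identity $|f'|\wedge|\alpha|\nucong|f|$ by a two-region case split. No analysis of the level sets of $|f|$ is attempted: the unbounded region for $\phi$ is fixed in advance, and the burden shifts to showing $|f|$ behaves well on $S$ (this is where the choice of the $\beta_i$ enters).

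Your heuristic, by contrast, tries to tailor $\phi$ to the superlevel set $U=\{|g|\ge\gamma\}$: unbounded on $U$, bounded by $|g|^N$ off $U$. This has a genuine gap beyond mere incompleteness. You need $U$ to be unbounded (otherwise no rational $\phi$ can be unbounded on $U$ while staying $\le|\alpha|$ on the unbounded complement), but $U$ can be bounded: already for $g=(\la^2+c\la+1)/(\la^2+1)$ in one variable one has $\{|g|>1\}=(c^{-1},c)$. So the construction you sketch cannot succeed as described; you would at minimum need a different mechanism for locating an unbounded region on which to let $\phi$ grow. The paper's explicit choice of $\phi$ is exactly the missing ingredient.
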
%

\begin{proof}
Suppose $f\in \langle \mathscr{R} \rangle$ is bounded. Then there
exists some $\beta_1 \in \mathscr{R}$ such that $|f(\bfa)| \le_\nu
\alpha_1 \in \mathscr{R}$. Similarly for each $2 \leq i \leq n$
there exists some $\beta_i \in \mathscr{R}$ such that $|f(\bfa)|
\le_\nu \alpha_i $ whenever $\a_i \ge_\nu \beta_i$. As~$|f|$ is
continuous we may assume that $\alpha_i = \alpha$ are all the same.
Now define the function
$$f' = | (\beta^{-1}|\la_1| \wedge .... \wedge \beta^{-1}|\la_n|+1)| + |f(\Lambda)|$$
where $\beta = \sum_{i=1}^{n}|\beta_i|$. Write $g(\la_1,...\la_n) = \beta^{-1}|\la_1| + .... + \beta^{-1}|\la_n|+1$. Let
$$S = \{\bfa =(a_1, \dots, a_n) \in F^{(n)}  :  |a_i| > \beta \ \forall i \}.$$ Then for every $a \in S$, $f'(a)=g(a)+|\alpha|$. Moreover, for every $b=(b_1,...,b_n) \not \in S$ there exists some $j$ such that $|b_j| \leq \beta$   thus we have that $(\beta^{-1}|b_1| + .... + \beta^{-1}|b_n|) \leq 1$ and so $g(b) = 1$. By construction $\Skel(f) \subseteq \Skel(g)$, so  $\Skel(f') = \Skel(|g| + |f|) = \Skel(g) \cap \Skel(f) = \Skel(f)$. Finally $f'$ is not bounded
since $|g|$ is not bounded and $|f'| = |g| + |f| \ge_\nu |g|$. Now,
as $|f'| = |g| + |f|$ we have that $|f| \leq |f'|$, so $f \in
\langle f' \rangle$. On the other hand, since $f'$ is not bounded,
clearly $f' \not \in \langle f \rangle$. Finally,  \ $g(a) \ge_\nu
1$ for any $a \in S$. Thus, $f'(a) \wedge |\alpha| \nucong (g(a) +
|\alpha|) \wedge |\alpha| \nucong |\alpha|$, while for $a \not \in
S$ \ $f'(a) \wedge |\alpha| \nucong (g(a) + |f(a)|) \wedge |\alpha|
\nucong (1 + |f(a)|) \wedge |\alpha| \nucong  |f(a)| \wedge |\alpha|
\nucong |f(a)|$, since $|f| \leq_\nu |\alpha|$. So we get that $|f'|
\wedge |\alpha| \nucong |f|$ which means that $\langle f \rangle =
\langle f' \rangle \cap \langle \mathscr{R} \rangle$.  (Note that
$f' = |f'|$ by definition, since $f' \ge_\nu 1$.)
\end{proof}

\subsection{Principal $\onenu$-sets and bounded
$\nu$-kernels}\label{4.1}$ $

 It turns out that $\langle \mathscr{R}
\rangle$ possesses enough distinct bounded copies of the principal
$\nu$-kernels of $\mathscr{R}(\Lambda)$ to  represent faithfully
the principal $\onenu$-sets.


\begin{rem}
The restriction of the image of the operator  $\Ker : \mathbf{2}^{\mathscr{R}^{(n)}} \rightarrow \mathbf{2}^{\mathscr{R}(\Lambda)}$
to $\mathbf{2}^{\langle \mathscr{R} \rangle}$ is
\begin{equation}
\small{\Ker_{\langle \mathscr{R} \rangle}(Z) = \{ f \in \langle
\mathscr{R} \rangle \ : \ f(a_1,...,a_n) = 1,  \ \forall
(a_1,...,a_n) \in Z \} = \Ker(Z) \cap \langle \mathscr{R} \rangle.}
\end{equation}

Furthermore, the assertions of this subsection  apply to
$\Ker_{\langle \mathscr{R} \rangle}$ and the restriction
 $\Skel|_{\langle \mathscr{R} \rangle} : \mathbf{2}^{\langle
\mathscr{R} \rangle} \rightarrow \mathbf{2}^{\mathscr{R}^{(n)}}$  of
 $\Skel $  to
$\mathbf{2}^{\langle \mathscr{R} \rangle}$.
\end{rem}

When there is no ambiguity, we denote
 $\Ker_{\langle \mathscr{R} \rangle}$  and
$\Skel|_{\mathbf{2}^{\langle \mathscr{R} \rangle}}$ respectively as
$\Ker$ and $\Skel$.

Applying Lemma~\ref{rem_properties_of_H_kernels} to the
\vsemifield0\ $\mathscr{R}$, we see that
$$\Omega : \PCon(\mathscr{R}(\Lambda)) \rightarrow \PCon(\langle \mathscr{R} \rangle)$$
is a lattice homomorphism of the lattice $(\PCon(\mathscr{R}(\Lambda)), \cdot, \cap)$ onto $(\PCon(\langle \mathscr{R} \rangle), \cdot, \cap)$, such that $\Skel(\langle f \rangle) = \Skel(\Omega(\langle f \rangle))$.\\

Let $f \in \PCon(\langle \mathscr{R} \rangle)$ and let $A = \{ g \in \mathscr{R}(\Lambda) : \Omega(\langle g \rangle) = f \}$. Define the $\nu$-kernel $K = \langle A \rangle $ of $\mathscr{R}(\Lambda)$. Then by Remark \ref{rem_properties_of_H_kernels}, if $g \in K$ then   $\Skel(g) = \Skel(f)$.\\

We aim for the 1:1 correspondence
$$ \langle f \rangle \in \PCon(\langle \mathscr{R} \rangle) \leftrightarrow \Skel(f)$$
between the principal $\onenu$-sets in $\mathscr{R}^{(n)} $ and the $\nu$-kernels in  $\PCon(\langle \mathscr{R} \rangle)$.\\
If $\Skel(g) = \Skel(f)$, then   $\Skel(\Omega(g)) = \Skel(f)$ since
$\Skel(g) = \Skel(\Omega(g))$, and $\Omega(g),f \in \PCon(\langle
\mathscr{R} \rangle)$. Thus in view of the above $\Omega(\langle g
\rangle) = \langle f \rangle$. Consequently
  $\Skel(K) = \Skel(f)$, and $K$ is the maximal $\nu$-kernel of
$\mathscr{R}(\Lambda)$ having this property. Our next result
justifies the use of $\langle \mathscr{R} \rangle$.

\begin{prop}\label{prop_generator_of_skel2}
If $\langle f \rangle \subseteq \langle \mathscr{R} \rangle$ and $h
\in \langle f \rangle$ is such that $\Skel(h) = \Skel(f)$, then $h$
is a generator of $\langle f \rangle$.
\end{prop}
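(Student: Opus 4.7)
I argue by contradiction. Since $h \in \langle f \rangle$ already yields $\langle h \rangle \subseteq \langle f \rangle$, it suffices to establish the reverse inclusion $f \in \langle h \rangle$; by Corollary~\ref{cor_principal_ker_by_order} this amounts to producing an $m \in \mathbb{N}$ with $|f| \leq_\nu |h|^m$. Suppose this fails. Then for each $m \geq 1$ one can pick $\bfa_m \in \mathscr{R}^{(n)}$ with $|f(\bfa_m)| >_\nu |h(\bfa_m)|^m \geq_\nu 1$.

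The hypothesis $\langle f \rangle \subseteq \langle \mathscr{R} \rangle$ together with Lemma~\ref{rem_bounded_from_above_elements_kernel} gives $|f|, |h| \leq_\nu \alpha$ for some $\alpha \in \mathscr{R}$. Therefore $|h(\bfa_m)|^m \leq_\nu \alpha$, and the divisibility of $\mathscr{R}$ yields $|h(\bfa_m)| \leq_\nu \alpha^{1/m}$, so $|h(\bfa_m)| \to 1$ as $m \to \infty$.

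The key refinement is to arrange that $|f(\bfa_m)|$ stays $\nu$-bounded away from $1$. Let $B_m = \{\bfa : |f(\bfa)| >_\nu |h(\bfa)|^m\}$; these form a decreasing chain of nonempty sets. Set $\beta_\infty = \inf_m \sup_{\bfa \in B_m}|f(\bfa)|$. In the main case $\beta_\infty >_\nu 1$, choose $\bfa_m \in B_m$ with $|f(\bfa_m)|$ uniformly bounded below by a fixed $\beta \in (1, \beta_\infty]$. Using completeness of $\mathscr{R}$ and the boundedness of $f, h$, pass to a subsequence of $\{\bfa_m\}$ converging to a limit $\bfa^*$ in a suitable compactification of $\mathscr{R}^{(n)}$ to which $f$ and $h$ extend continuously. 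Then $h(\bfa^*) \nucong 1$ puts $\bfa^* \in \Skel(h)$, whereas $|f(\bfa^*)| \geq_\nu \beta \not\nucong 1$ places $\bfa^* \notin \Skel(f)$, contradicting $\Skel(h) = \Skel(f)$.

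The residual case $\beta_\infty \nucong 1$ is where the argument is most delicate: here the $B_m$ accumulate near $\Skel(f) = \Skel(h)$, and one must use $h \in \langle f \rangle$ (which supplies a fixed $n$ with $|h| \leq_\nu |f|^n$) together with the $\nu$-archimedean property of $\overline{\mathscr{R}(\Lambda)}$ (Proposition~\ref{prop_semifield_of_fractions_is_archimedean}) to rule this scenario out. The principal technical obstacle throughout is the extraction of the limit point $\bfa^*$, since $\mathscr{R}^{(n)}$ itself is not compact; this requires careful use of the completeness of $\mathscr{R}$ and the fact that bounded elements of $\overline{\mathscr{R}(\Lambda)}$ admit continuous extensions to an appropriate compactification.
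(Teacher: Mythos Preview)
Your argument has two genuine gaps. First, the ``suitable compactification of $\mathscr{R}^{(n)}$ to which $f$ and $h$ extend continuously'' is never constructed, and there is no reason supplied for why bounded rational functions should extend continuously to whatever space you have in mind; this step carries the entire weight of the main case and is left as an assertion. Second, the residual case $\beta_\infty \nucong 1$ is not handled at all: you yourself flag it as ``most delicate'' and then gesture at the archimedean property and the bound $|h|\le_\nu |f|^n$ without showing how these combine to give a contradiction. As written, neither case is complete.

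The paper's proof avoids both difficulties by dispensing with the case split on $\beta_\infty$ and with sequential compactness altogether. It works directly with the nested nonempty sets $U_k=\{\bfa:|f(\bfa)|>_\nu|h(\bfa)|^k\}$, passes to their closures, and uses completeness of $\mathscr{R}$ to obtain a point $\bfb\in\bigcap_k \overline{U_k}$. The contradiction is then immediate from a dichotomy on $\bfb$: if $\bfb\notin\Skel(h)$ then $|h(\bfb)|>_\nu 1$, so $|h(\bfb)|^r>_\nu|f(\bfb)|$ for some $r$, forcing $\bfb\notin \overline{U_r}$; if $\bfb\in\Skel(h)=\Skel(f)$ then $|f(\bfb)|\nucong 1$, again forcing $\bfb$ out of each $U_k$. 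No compactification, no convergent subsequence, and no separate treatment of the case where $|f(\bfa_m)|$ drifts toward $1$ is needed. Your attempt to control $|f(\bfa_m)|$ via $\beta_\infty$ is precisely what the nested-sets argument renders unnecessary.
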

\begin{proof}
The assertion is obvious in the special case for which  $\langle f
\rangle \nucong \{1\}$.  So, as $\Skel(h) = \Skel(f)$ we may assume
that $f$ and $h$ are not $\nu$-equivalent to $1$. If $\langle f
\rangle = \langle \alpha \rangle$, then $\Skel(h) = \Skel(f) =
\emptyset$ implies by Lemma~\ref{rem_properties_of_H_kernels} (3)
that $\langle h \rangle = \langle \mathscr{R} \rangle = \langle f
\rangle$.

%

Suppose there is a rational function $h \in \langle f \rangle$ which
is not a generator of $\langle f \rangle$   but satisfying
$\Skel(h) =\Skel(f)$. By Corollary~\ref{cor_principal_ker_by_order},
for each $k \in \mathbb{N}$ there exists some $\bfa \in
\mathscr{R}^{n}$ for
which $|f(\bfa)| >_\nu |h(\bfa)|^{k}$. 
For any $k \in \mathbb{N}$, define the set $U_k = \{\bfa \ : \
|f(\bfa)|
>_\nu |h(\bfa)|^k \}$.  As $\mathscr{R}$ is  dense, for any $\bfa \in U_k$
there exists a  neighborhood $B_\bfa \subset U_k$ containing $\bfa$
such that for all $\bfa' \in B_\bfa$,  $|f(\bfa')| > |h(\bfa')|^k $.
Now, since $h$ and $f$ are bounded, the $U_k$ are bounded regions
inside $\mathscr{R}^{(n)} $. Taking the closures, we may assume that
the $U_k$ are closed (nonempty). Since $\Skel(h) =\Skel(f)$,
$|f(\bfa)|
>_\nu |h(\bfa)|^k $ implies that $|h(\bfa)|,|f(\bfa)| >_\nu 1$ , so, by the
definition of $U_k$ we get the sequence of strict inclusions $U_1
\supset U_2 \supset \dots \supset U_k \supset \dots$. Thus, since
$\mathscr{R}$ is complete, there exists an element $\bfb \in
\mathrm{B} = \bigcap_{\mathbb{N}} B_k$. Now, for $\bfa \not \in
\Skel(h)$, $|h(\bfa)|
> 1$, and thus there exists some $r = r(\bfa) \in \mathbb{N}$ such that
$|h(\bfa)|^r >_\nu |f(\bfa)|$ and thus $\bfa \not \in \mathrm{B}$
thus $\bfb \not \in \mathscr{R}^{(n)}  \setminus \Skel(h)$. On the
other hand, if $\bfb \in \Skel(h)$ then $\bfb \in \Skel(f)$ so $
1\nucong |f(\bfb)| \leq_\nu |h(\bfb)|\nucong 1$. Thus
$\bigcap_{\mathbb{N}} B_k = \emptyset$, contradiction.
\end{proof}
%

Using  Theorem~\ref{prop_correspondence} and
Proposition~\ref{intersects}, we conclude:

\begin{thm}\label{cor0}
There is a $1:1$, order-reversing correspondence
\begin{equation}
\{ \text{principal $\onenu$-sets of } \ \mathscr{R}^{n}  \}
\leftrightarrow \{ \mathcal{K}-  \text{principal $\nu$-kernels of
} \ \langle \mathscr{R} \rangle \},
\end{equation}
given by $Z \mapsto \Ker_{\langle \mathscr{R} \rangle}(Z)$;  the
reverse map is given by $K \mapsto \Skel(K)$.
\end{thm}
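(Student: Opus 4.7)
The plan is to assemble Theorem~\ref{cor0} out of three ingredients already in hand: Theorem~\ref{prop_correspondence} for the correspondence at the level of $\overline{\mathscr{R}(\Lambda)}$, Proposition~\ref{intersects} for transferring $\mathcal{K}$-kernels down to $\langle\mathscr{R}\rangle$ without disturbing the $\onenu$-set, and Proposition~\ref{prop_generator_of_skel2} for the uniqueness statement inside $\langle\mathscr{R}\rangle$ that kills the ambiguity seen in Example~\ref{nonuniq}. The order-reversing property is automatic from Remark~\ref{rem_ker_properties_1} and Proposition~\ref{prop_skel_property1}, so the content is to show that each map lands in the claimed set of principal objects and that the two compositions are the identity.

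First I would check well-definedness on principal objects. If $Z = \Skel(f)$ is a principal $\onenu$-set, then applying Proposition~\ref{intersects} to the $\mathcal{K}$-kernel $\Ker(Z) \supseteq \langle f\rangle$ shows that $\Ker_{\langle\mathscr{R}\rangle}(Z) = \Ker(Z) \cap \langle\mathscr{R}\rangle$ is a $\mathcal{K}$-kernel of $\langle\mathscr{R}\rangle$ with the same $\onenu$-set $Z$; using the lattice homomorphism $\Omega$ of \S\ref{4.1} together with Lemma~\ref{rem_the_lattice_of_kernels_of_the_bounded_kernel}, one identifies $\Ker_{\langle\mathscr{R}\rangle}(Z)$ with the principal $\nu$-kernel $\langle |f|\wedge|\alpha|\rangle$ for any $\alpha\ne 1$ in $\mathscr{R}$. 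Conversely, if $K=\langle g\rangle\subseteq \langle\mathscr{R}\rangle$ is a principal $\mathcal{K}$-kernel, then $\Skel(K) = \Skel(g)$ is by definition a principal $\onenu$-set. The identity $\Skel(\Ker_{\langle\mathscr{R}\rangle}(Z)) = Z$ now follows at once from Lemma~\ref{prop_skel_ker_relations}(i), since $\Ker_{\langle\mathscr{R}\rangle}(Z)$ is a $\mathcal{K}$-kernel.

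The main obstacle is the other identity $\Ker_{\langle\mathscr{R}\rangle}(\Skel(K)) = K$ for a principal $\mathcal{K}$-kernel $K = \langle f\rangle \subseteq \langle\mathscr{R}\rangle$; the inclusion $K\subseteq \Ker_{\langle\mathscr{R}\rangle}(\Skel(K))$ is trivial, and the reverse inclusion is where the bounded hypothesis and Proposition~\ref{prop_generator_of_skel2} are crucial. Given $h \in \Ker_{\langle\mathscr{R}\rangle}(\Skel(f))$, we have $h\in\langle\mathscr{R}\rangle$ and $\Skel(h)\supseteq\Skel(f)$. Replacing $f$ and $h$ by their $\nu$-norms (which does not change the principal $\nu$-kernels they generate, by Corollary~\ref{prop_absolute_generator1}, nor their $\onenu$-sets, by Lemma~\ref{rem_max0}), I would form the principal $\nu$-kernel $\langle |f| + |h|\rangle \subseteq \langle\mathscr{R}\rangle$. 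By Lemma~\ref{rem_max}(i),
\begin{equation*}
\Skel(|f|+|h|) \;=\; \Skel(|f|) \cap \Skel(|h|) \;=\; \Skel(f),
\end{equation*}
while $|f| \le_\nu |f|+|h|$ puts $|f|$ into $\langle |f|+|h|\rangle$ with $\Skel(|f|) = \Skel(|f|+|h|)$. Proposition~\ref{prop_generator_of_skel2} then forces $|f|$ to be a generator of $\langle |f|+|h|\rangle$, so $|h|\in \langle f\rangle$, and hence $h\in\langle f\rangle = K$ by Corollary~\ref{prop_absolute_generator1}. This establishes the reverse inclusion and completes the correspondence. The final order-reversing property is recorded from Remark~\ref{rem_ker_properties_1}(i) and Proposition~\ref{prop_skel_property1}(i).
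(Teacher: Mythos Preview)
Your proof is correct and follows the same strategy as the paper: invoke Theorem~\ref{prop_correspondence} and Proposition~\ref{intersects} for the framework, and use Proposition~\ref{prop_generator_of_skel2} as the key tool to kill the ambiguity inside $\langle\mathscr{R}\rangle$. The only difference is in the execution of the inclusion $\Ker_{\langle\mathscr{R}\rangle}(\Skel(f))\subseteq\langle f\rangle$: the paper (in Proposition~\ref{prop_kernel_is_k_kernel}, immediately following) forms $g=|f|\wedge|h|\in\langle f\rangle$, observes $\Skel(g)=\Skel(h)$, and applies Proposition~\ref{prop_generator_of_skel2} to conclude that $g$ generates $\langle h\rangle$, whence $\langle h\rangle\subseteq\langle f\rangle$; you instead form $|f|+|h|$, observe $\Skel(|f|+|h|)=\Skel(f)$, and apply the same proposition to conclude that $|f|$ generates $\langle|f|+|h|\rangle$, whence $|h|\in\langle f\rangle$. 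These are dual arguments, each equally short; yours avoids the paper's preliminary remarks ruling out $\langle f\rangle\cap\langle h\rangle=\{1\}$. One small organizational point: your first paragraph already asserts $\Ker_{\langle\mathscr{R}\rangle}(Z)=\langle|f|\wedge|\alpha|\rangle$, which is precisely what your second paragraph proves, so the principality of $\Ker_{\langle\mathscr{R}\rangle}(Z)$ should really be deferred until after the ``main obstacle'' argument.
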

\begin{proof}
Every principal $\nu$-kernel gives rise to a principal
$\onenu$-set by the definition of $\Skel$. The reverse direction
follows from Proposition \ref{prop_generator_of_skel2}, as every
principal $\nu$-kernel which produces a principal $\onenu$-set via
$\Skel$ is in fact a $\mathcal{K}$-kernel.
\end{proof}

\begin{prop}\label{prop_kernel_is_k_kernel}
Let   $\langle f \rangle$ be a principal $\nu$-kernel in
$\PCon(\langle \mathscr{R} \rangle)$. Then $\langle f \rangle$ is
a $\mathcal{K}$-kernel.
\end{prop}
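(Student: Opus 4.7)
The plan is to show $\langle f \rangle = \Ker_{\langle \mathscr{R} \rangle}(\Skel(\langle f \rangle))$, which by Definition~\ref{defn_k_kernels} will establish that $\langle f \rangle$ is a $\mathcal{K}$-kernel. One inclusion is automatic from Remark~\ref{rem_ker_properties_1}(iii), so only the reverse inclusion requires argument.

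For the nontrivial inclusion, I would pick any $h \in \langle \mathscr{R} \rangle$ lying in $\Ker_{\langle \mathscr{R} \rangle}(\Skel(\langle f \rangle))$, i.e., satisfying $\Skel(h) \supseteq \Skel(f)$, and assemble the principal product $L = \langle h \rangle \langle f \rangle = \langle |h| \cdot |f|\rangle$, which is principal by Theorem~\ref{cor_max_semifield_principal_kernels_operations} and sits inside $\langle \mathscr{R} \rangle$ because the latter is itself a $\nu$-kernel closed under products. By Proposition~\ref{prop_ker_skel_correspondence},
$$\Skel(L) = \Skel(h) \cap \Skel(f) = \Skel(f),$$
even though a priori $L \supseteq \langle f \rangle$.

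The decisive step is then to apply Proposition~\ref{prop_generator_of_skel2} with $L$ in place of $\langle f \rangle$ and the element $f$ in place of $h$: the hypotheses $L \subseteq \langle \mathscr{R} \rangle$, $f \in L$, and $\Skel(f) = \Skel(L)$ are all verified, so $f$ is a generator of $L$, forcing $L = \langle f \rangle$ and hence $h \in L = \langle f \rangle$. This closes the argument. There is no substantive obstacle: the only point needing care is keeping track of the ambient semifield, so that the principality inheritance and the hypothesis of Proposition~\ref{prop_generator_of_skel2} both genuinely apply inside $\langle \mathscr{R} \rangle$; but this is immediate once $\langle \mathscr{R} \rangle$ is recognized as a $\nu$-kernel of $\overline{\mathscr{R}(\Lambda)}$. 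Thus the proof reduces to the completeness-based Proposition~\ref{prop_generator_of_skel2}, which has already absorbed the topological work.
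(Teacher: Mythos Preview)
Your argument is correct and rests on the same key lemma as the paper, namely Proposition~\ref{prop_generator_of_skel2}. The only difference is a dual choice of auxiliary kernel: you form the \emph{product} $L=\langle h\rangle\langle f\rangle$, observe $\Skel(L)=\Skel(f)$, and conclude via Proposition~\ref{prop_generator_of_skel2} that $f$ generates $L$, whence $h\in L=\langle f\rangle$; the paper instead forms the \emph{intersection} $K=\langle f\rangle\cap\langle h\rangle=\langle g\rangle$ with $g=|f|\wedge|h|$, observes $\Skel(K)=\Skel(h)$, and concludes that $g$ generates $\langle h\rangle$, whence $\langle h\rangle=K\subseteq\langle f\rangle$. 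Your route is marginally cleaner, since it avoids the paper's side discussion ruling out $h=1$ and checking $K\neq\{1\}$, but the two arguments are otherwise mirror images of one another.
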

\begin{proof}
We need to show that $\Ker(\Skel(f)) \subseteq \langle f \rangle$.\\
Let $h \in \langle \mathscr{R} \rangle $ such that $h \in
\Ker(\Skel(f))$. Then $h(x) = 1$  for every $x \in \Skel(f)$ and so
$\Skel(f) \subseteq \Skel(h)$. If $|h| \leq |f|^{k}$ for some $k \in
\mathbb{N}$ then $h \in \langle f \rangle$. Thus in particular we
may assume that $h \neq 1$. Now, by
Corollary~\ref{cor_principal_skel_correspondence},  $\Skel(\langle f
\rangle \cap \langle h \rangle) = \Skel(f) \cup \Skel(h) =
\Skel(h)$. Since $h \neq 1$, $\Skel(h) \neq \mathscr{R}^{(n)} $ and
thus $\langle f \rangle \cap \langle h \rangle \neq \{1\}$. Again,
$\Skel(\langle f \rangle  \langle h \rangle) = \Skel(f) \cap
\Skel(h) = \Skel(f)$ by Corollary
\ref{cor_principal_skel_correspondence}.

 Thus $\langle f,h \rangle = \langle f \rangle
 \langle h \rangle \neq \langle \mathscr{R} \rangle$ for
otherwise $\Skel(f) = \emptyset$ . Consequently for  $g=|f| \wedge
|h|$,  the $\nu$-kernel $K = \langle g \rangle = \langle f \rangle
\cap \langle h \rangle$ satisfies $\{ 1 \} \neq K \subseteq
\langle f \rangle$, implying $g \in \langle f \rangle$ and
$\Skel(g)= \Skel(h)$. Thus by
Proposition~\ref{prop_generator_of_skel2}, $g$ is a generator of $
\langle h \rangle$, so   $\langle h \rangle = K \subseteq \langle
f \rangle$ \ as desired.
\end{proof}

\subsection{The wedge decomposition in $\langle F \rangle $}\label{wedge1}$ $

\begin{lem}
Suppose $f = \sum_{i=1}^{k}f_i \in F[x_1,...,x_n]$ is a
supertropical polynomial written as the sum of its   monomials, then
$\tilde{f} := \bigwedge_{i=1}^{k} \widehat{f_i}$ is corner
internal.\end{lem}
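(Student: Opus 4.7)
The approach is to verify corner internality of $\tilde f$ directly, via Proposition~\ref{cor_absolute_corner_admissible2}: I will write $\tilde f=h/g$ in canonical form and check that every ghost root of $h+g$ lies in $\Skel(\tilde f)$. One cannot simply iterate Proposition~\ref{prop_wedge_corner_admissibility}, because an individual molecule $|\widehat{f_i}|$ is generally not corner internal---for instance, for $f=\la_1+\la_2+1$ the point $(2,2)$ is a ghost root of $f$ but satisfies $|\widehat{1}|(2,2)=|1/(\la_1+\la_2)|(2,2)=2\not\nucong 1$---so corner internality of $\tilde f$ must come from the interaction of all $k$ molecules simultaneously.

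First I put $\tilde f$ in canonical form. Setting $A_i:=f_i^2+f_{(f_i)}^2$ and $B_i:=f_i\, f_{(f_i)}$, so that $|\widehat{f_i}|=A_i/B_i$, and using $\bigwedge_i x_i=(\sum_i x_i^{-1})^{-1}$, I compute
\begin{equation*}
\tilde f \;=\; \Bigl(\sum_{i}\tfrac{B_i}{A_i}\Bigr)^{-1} \;=\; \frac{\prod_i A_i}{\sum_i B_i \prod_{j\ne i} A_j}\;=:\;\frac{h}{g},
\end{equation*}
where both $h$ and $g$ are polynomials with tangible coefficients. The identity $\Skel(\tilde f)=\Corn(f)$ then follows from Lemma~\ref{mol}: each $|\widehat{f_i}|\ge_\nu 1$, so $\tilde f(\bfa)\nucong 1$ iff some $|\widehat{f_i}|(\bfa)\nucong 1$, iff some monomial $f_i$ is a dominating monomial of $f$ at $\bfa$, iff $\bfa\in\Corn(f)$.

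The crux is to show that any ghost root $\bfa$ of $h+g$ lies in $\Corn(f)$. I argue by contrapositive: assume $\bfa\notin\Corn(f)$, so $f$ has a unique dominating monomial $f_{i_0}$ at $\bfa$, with $M:=f_{i_0}(\bfa)>_\nu M':=f_{(f_{i_0})}(\bfa)$. Then $A_i(\bfa)=M^2$ for every $i$ (because $A_{i_0}(\bfa)=M^2+M'^2=M^2$, while for $i\ne i_0$ the polynomial $f_{(f_i)}$ contains $f_{i_0}$, giving $f_{(f_i)}(\bfa)=M$), and $B_{i_0}(\bfa)=MM'$, $B_i(\bfa)=f_i(\bfa)M$ for $i\ne i_0$. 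Hence $h(\bfa)=M^{2k}$ is tangible, while
\begin{equation*}
g(\bfa)\;=\;M^{2(k-1)}\sum_{i}B_i(\bfa)\;=\;M^{2k-1}\Bigl(M'+\sum_{i\ne i_0}f_i(\bfa)\Bigr)\;=\;M^{2k-1}(M'+M')
\end{equation*}
is ghost with $\nu$-value $M^{2k-1}M'<_\nu M^{2k}$, where I have used the cancellation $\sum_{i\ne i_0}f_i(\bfa)=f_{(f_{i_0})}(\bfa)=M'$. Therefore $(h+g)(\bfa)=h(\bfa)$ is tangible, contradicting the assumption that $\bfa$ is a ghost root of $h+g$.

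The main obstacle is recognizing the cancellation $\sum_{i\ne i_0}f_i(\bfa)=M'$ that occurs across the $k-1$ terms indexed by $i\ne i_0$ of $g(\bfa)$: this is what drops the $\nu$-value of $g(\bfa)$ below that of $h(\bfa)$, and it is precisely the ``collective'' effect of wedging all $k$ molecules simultaneously that no single molecule could provide---which is why the inductive route through Proposition~\ref{prop_wedge_corner_admissibility} fails and a direct computation is required.
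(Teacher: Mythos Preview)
Your argument has a genuine gap at the claim ``both $h$ and $g$ are polynomials with tangible coefficients.'' In fact your denominator $g=\sum_i B_i\prod_{j\ne i}A_j$ takes a \emph{ghost} value at every point $\bfa\notin\Corn(f)$: your own computation gives $g(\bfa)=M^{2k-1}(M'+M')$, and $M'+M'$ is ghost in the supertropical sense. Concretely, with $i_0$ the dominant index and $i_1$ the index of the second-largest monomial, one has $B_{i_0}(\bfa)=MM'=B_{i_1}(\bfa)$ while every $A_j(\bfa)=M^2$, so the $i_0$- and $i_1$-summands of $g$ collide at $\bfa$ and force $g(\bfa)\in\tG$. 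Equivalently, $|\widehat{f_{i_0}}|(\bfa)=M/M'=|\widehat{f_{i_1}}|(\bfa)$, so by Remark~\ref{wedge1} the supertropical wedge satisfies $\tilde f(\bfa)=(M/M')^\nu$. Since $g$ is ghost on a dense open set, $g\notin\overline{\tT[\Lambda]}$, and your representation fails the hypothesis of Definition~\ref{defn_corner_admissible}; Proposition~\ref{cor_absolute_corner_admissible2} does not apply.

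This is not a cosmetic defect repairable by choosing a different fraction for $\tilde f$. Because $\tilde f\ge_\nu 1$ everywhere and $\tilde f(\bfa)$ is ghost on the dense set $F^{(n)}\setminus\Corn(f)$, any representation $\tilde f=h'/g'$ with $g'\in\overline{\tT[\Lambda]}$ forces $h'(\bfa)$ ghost with $h'(\bfa)\ge_\nu g'(\bfa)$ at such $\bfa$, whence $(h'+g')(\bfa)\in\tG$; every such point becomes a ghost root of $h'+g'$ while lying outside $\Skel(\tilde f)=\Corn(f)$. So the direct verification route via an explicit canonical fraction cannot succeed as you have set it up. The paper instead argues through the operator $\Phi_{CI}$ and the criterion of Theorem~\ref{cor_CI_map1}, comparing $\Skel(\Phi_{CI}(\tilde f))$ with $\Skel(\tilde f)$; that comparison is carried out entirely at the level of $\onenu$-sets (hence $\nu$-values) and never requires exhibiting a tangible denominator, which is precisely what your more elementary approach runs into.
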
 \begin{proof} $\tilde{f} \geq 1$. For any given
$i \in \{1,...,k\}$,
 then  Remark \ref{rem_corner_integrality_powers} yields
$\Skel(\Phi_{CI}(|\widehat{f_i}|) )= \Skel(\Phi_{CI}(\widehat{f_i}))
= \Skel(\widehat{f_i})$. Then
$$\Skel(\Phi_{CI}(\tilde{f}))= \Skel(\bigwedge_{i=1}^{k}\Phi_{CI}(|\widehat{f_i}|)) =
\Skel(\bigwedge_{i=1}^{k}\widehat{f_i})  $$ so $\Phi_{CI}(\tilde{f})
=
 \tilde{f} =\bigwedge_{i=1}^{k}\widehat{f_i}$, which by Theorem~\ref{cor_CI_map1}
 is corner internal.
\end{proof}

\begin{defn}\label{defn_decomposition}
 A \textbf{wedge decomposition} of a rational function   $f \in F(x_1,...,x_n)$
  is an expression
\begin{equation}\label{eq_decomposition}
|f| =  \bigwedge _i |u_i|
\end{equation}
for $u_i\in F(x_1,...,x_n)$. This wedge decomposition is
\textbf{associated to} a $\nu$-kernel intersection $\langle f
\rangle = \bigcap K_i$ if each $K_i =  \langle u_i \rangle.$
\end{defn}

Note by Lemmas ~\ref{rem_max}(ii) and~\ref{rem_max0}  that $\Skel
(f)
= \bigcup_i \Skel (u_i).$ 

\begin{thm}\label{thm_every_gen_is_reducible}
Any  intersection  $\langle f \rangle  = \bigcap K_i$  of
(principal)
 $\nu$-subkernels of $\langle \mathscr{R} \rangle$  has an
 associated
  wedge decomposition.\end{thm}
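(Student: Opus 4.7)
The plan is, for each index $i$, to cook up a generator $u_i$ of $K_i$ of the form $u_i = |v_i|^{1/k}\vee|f|$, where $v_i$ is any chosen generator of $K_i$ and $k\in\mathbb{N}$ is a single integer serving for all $i$, and then to verify (a) $\langle u_i\rangle = K_i$ and (b) $|f|=\bigwedge_i|u_i|$ pointwise, which is exactly the wedge decomposition associated to $\langle f\rangle = \bigcap_i K_i$.

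First, I would fix, for each $i$, a generator $v_i$ of $K_i$, replaced by $|v_i|$ via Corollary \ref{prop_absolute_generator1}, so $v_i\ge 1$. Since the ambient semifield $\overline{\mathscr{R}(\Lambda)}$ is lattice-complete (Remark \ref{thm_completion_of_l_group}), the pointwise meet $g:=\bigwedge_i|v_i|$ exists there. The inequalities $1\le g\le|v_i|$ together with Proposition \ref{prop_principal_ker} place $g$ inside every $\langle v_i\rangle = K_i$, so $g\in\bigcap_i K_i = \langle f\rangle$. Applying Proposition \ref{prop_principal_ker} to the principal $\nu$-kernel $\langle f\rangle$ produces an integer $k$ with $g\le|f|^{k}$.

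Now, using divisibility of $\mathscr{R}$ (cf.\ Remark \ref{log0}), form $|v_i|^{1/k}$ for each $i$. The bounds $|v_i|^{-1}\le 1 \le |v_i|^{1/k}\le |v_i|$ and $(|v_i|^{1/k})^{k}=|v_i|$, combined with Proposition \ref{prop_principal_ker} and Corollary \ref{prop_absolute_generator}, yield $\langle|v_i|^{1/k}\rangle = \langle v_i\rangle = K_i$. Put $u_i := |v_i|^{1/k}\vee|f|$. Since $\langle f\rangle\subseteq K_i$, both $|v_i|^{1/k}$ and $|f|$ lie in $K_i$, so $u_i\in K_i$; conversely $|v_i|^{1/k}\le u_i$ forces $|v_i|\le u_i^{k}$ and hence $v_i\in\langle u_i\rangle$. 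Thus $\langle u_i\rangle = K_i$, which is (a).

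For (b), each $u_i\ge|f|$ gives $\bigwedge_i|u_i|\ge|f|$. Using pointwise distributivity of meet over finite join in the totally-ordered target $\mathscr{R}$, and monotonicity of $x\mapsto x^{1/k}$ on $[1,\infty)$,
\[
\bigwedge_{i}|u_{i}| \;=\; \bigwedge_{i}\bigl(|v_i|^{1/k}\vee|f|\bigr) \;=\; \Bigl(\bigwedge_{i}|v_{i}|\Bigr)^{\!1/k}\vee|f| \;=\; g^{1/k}\vee|f|.
\]
The bound $g\le|f|^{k}$ forces $g^{1/k}\le|f|$, so the right-hand side equals $|f|$.

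The main obstacle lies in Step 2: ensuring that the pointwise meet $g=\bigwedge_i|v_i|$ over a possibly infinite index set is genuinely contained in $\langle f\rangle$, so that one obtains a \emph{single} finite exponent $k$ with $g\le|f|^{k}$. This rests on completeness of $\overline{\mathscr{R}(\Lambda)}$ (so that the infinite meet exists in the ambient semifield), on divisibility of $\mathscr{R}$ (so that $|v_i|^{1/k}$ can be formed once $k$ is fixed), and on Proposition \ref{prop_principal_ker} to translate containment in $\langle f\rangle$ into an explicit exponential bound---which is precisely why the statement is formulated over $\mathscr{R}$.
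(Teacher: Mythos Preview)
Your argument has a genuine gap at the step where you form $|v_i|^{1/k}$. Divisibility of $\mathscr{R}$ (Remark~\ref{log0}) only tells you that \emph{constants} have $k$-th roots; it does not make $\mathscr{R}(\Lambda)$ divisible. For instance $\lambda_1$ has no square root in $\mathscr{R}(\Lambda)$, so in general $|v_i|^{1/k}$ is merely an element of $\Fun(\mathscr{R}^{(n)},\mathscr{R})$, not a rational function. Consequently your $u_i=|v_i|^{1/k}\vee |f|$ need not lie in $F(x_1,\dots,x_n)$, and Definition~\ref{defn_decomposition} explicitly requires the $u_i$ to be rational. The rest of your computation (distributivity of $\wedge$ over $\vee$ pointwise, and $\langle u_i\rangle=K_i$) is formally correct, but it produces a decomposition in the wrong ambient object.

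The paper sidesteps roots altogether. Working with a finite intersection (and inducting), it sets $\bar f:=\bigwedge_i|v_i|$, observes $\langle\bar f\rangle=\langle f\rangle$, and then uses the explicit description of a principal $\nu$-kernel to write $|f|=\sum_j q_j\,\bar f^{\,d(j)}$ with $\sum_j q_j=1$ and $d(j)\ge 0$. Distributing the wedge inside the powers gives $|f|=\bigwedge_i\bigl(\sum_j q_j|v_i|^{d(j)}\bigr)$, and each $u_i':=\sum_j q_j|v_i|^{d(j)}$ is manifestly a rational function in $\langle v_i\rangle$. The nontrivial point---that $u_i'$ actually \emph{generates} $K_i$---is then obtained by showing $\Skel(u_i')=\Skel(v_i)$ and invoking Proposition~\ref{prop_generator_of_skel2}, which is exactly where the hypothesis ``inside $\langle\mathscr{R}\rangle$'' is used. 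If you want to repair your approach, the natural move is to replace the illicit $k$-th root by this convex-combination trick.
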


\begin{proof} It is enough to prove the assertion when $\langle f \rangle =  \langle u \rangle \cap  \langle v \rangle$
for $u,v \in \langle \mathscr{R} \rangle $, and then to conclude by
induction. Let $\bar f = |u|\wedge |v|.$
     $\langle \bar f \rangle = \langle f \rangle$,
    so there exist some $q_1,...,q_k \in \mathscr{R}(\Lambda)$
    such that $\sum_{i=1}^{k}q_i = 1$ and $|f| = \sum_{i=1}^{k}q_i \bar f^{d(i)}$
    with $d(i) \in \mathbb{Z}_{\geq 0}$ ($d(i) \geq 0$ since $|f| \geq 1$).
Thus $$f = \sum_{i=1}^{k}q_i (|u| \wedge |v|)^{d(i)}   =
\left(\sum_{i=1}^{k}q_i|u|^{d(i)}\right)\bigwedge
\left(\sum_{i=1}^{k}q_i|v|^{d(i)}\right)$$
$$=|g| \wedge |h|$$ where $g = |g| = \sum_{i=1}^{k}q_i|u|^{d(i)}$ and $ h
= |h| = \sum_{i=1}^{k}q_i|v|^{d(i)}$. Thus $$\Skel(f) \supseteq
\Skel(g) \supseteq \Skel(u); \qquad \Skel(f) \supseteq \Skel(h)
\supseteq \Skel(v).$$

We claim that $|g|$ and $|h|$ generate $\langle |u| \rangle $ and
$\langle |v| \rangle$, respectively. Since $\langle \bar f \rangle =
\langle f \rangle$, we see that $\bar f(\bfa)=1 \Leftrightarrow
|f|(\bfa)=1$ for any $\bfa \in \mathscr{R}^{(n)}$. Let $q_j \bar
f^{d(j)}$ be a dominant term of $|f|$ at $\bfa$, i.e.,
$$|f(\bfa)| \nucong \sum_{i=1}^{k}q_i(\bfa) (\bar f(\bfa))^{d(i)} \nucong q_j(\bfa) (\bar f(\bfa))^{d(j)}.$$
Then   $\bar f(\bfa)  \nucong 1 \Leftrightarrow q_j(\bfa) (\bar
f(\bfa))^{d(j)} \nucong 1$. Hence, $\bar f(\bfa)  \nucong 1
\Leftrightarrow q_j(\bfa) = 1$.

Now, consider $\bfa \in \Skel(g)$. Then  $g(\bfa)  =
\sum_{i=1}^{k}q_i|u|^{d(i)}  \nucong 1$. Let $q_t|u|^{d(t)}$ be a
dominant term of $g$ at $\bfa$. If $q_t(\bfa)  \nucong 1$ then
$|u|^{d(t)(\bfa)}  \nucong 1$ and thus  $\bfa \in \Skel(u)$.

So we may assume that $q_t(\bfa) < _{\nu} 1$ (since
$\sum_{i=1}^{k}q_i \nucong 1$) and so, as above, $q_t \bar f^{d(t)}$
is not a dominant term of $|f|$ at $\bfa$. Hence,
$$|u(\bfa)|^{d(j)} = q_j(\bfa)|u(\bfa)|^{d(j)} <_{\nu} q_t(\bfa)|u(\bfa)|^{d(t)} \nucong g(\bfa) \nucong 1,$$
for any dominant term of $|f|$ at $\bfa$. Thus
\begin{equation}\label{eq_1_thm_every_gen_is_reducible}
q_j(\bfa)(\bar f(\bfa))^{d(j)}= q_j(\bfa) (|u|(\bfa) \wedge
|v|(\bfa))^{d(j)} \leq q_j(\bfa)|u(\bfa)|^{d(j)} <_{\nu} 1.
\end{equation}
On the other hand, $\bar f(\bfa)\nucong 1$ since $\Skel(f) \supseteq
\Skel(g)$,
  and thus
$q_j(\bfa)(\bar f(\bfa))^{d(j)} \nucong 1$, contradicting
\eqref{eq_1_thm_every_gen_is_reducible}. Hence $\Skel(g) \subseteq
\Skel(u)$, so, by the above, $\Skel(g) = \Skel(u)$, which in turn
yields that $g$ is a generator of $\langle |u| \rangle = \langle u
\rangle$, by Proposition \ref{prop_generator_of_skel2}. The proof
for $h$ and $|v|$ is analogous. Consequently, we have that $\Skel(g)
= \Skel(|g|) = \Skel(|u|)$ and $\Skel(h)= \Skel( |v|)$, as desired.
\end{proof}

\subsection{Example: The tropical line revisited}$ $

For ease of notation, we write $x$ for $\la_1$ and $y$ for $\la _2$.

\begin{note}
In the following example we consider the rational function
$$\widehat{f} = \left|\frac{x}{y  \dotplusss   1}  \dotplusss
\frac{y}{x  \dotplusss   1}  \dotplusss   \frac{1}{x  \dotplusss
y}\right| \wedge |\alpha| \in \langle \mathscr{R} \rangle$$ for any
$\alpha \in \mathscr{R} \setminus \{1 \}$.
\end{note}

\begin{figure*}
\centering

\begin{tikzpicture}[scale=3,
    axis/.style={thin, ->},
    important line/.style={very thick, ->},
    gap line/.style={thin},
    dline/.style={thick,dashed}]


    \draw[axis] (0,0)  -- (1,0) node[right, color=black] {$x$} ;
    \draw[axis] (0,0) -- (0,1) node[above, color=black] {$y$};
    \draw[important line] (0,0)  -- (-1,0) ;
    \draw[important line] (0,0) -- (0,-1) ;
    \draw[important line] (0,0) -- (0.7,0.7) ;

    \draw[dline] (-0.1,-1.1)  -- (-0.1,-0.1) -- (-1.1,-0.1) ;
    \draw[dline] (0.1,-1.1)  -- (0.1,-0.07) -- (1,0.8) ;
    \draw[dline] (-1.1,0.1)  -- (-0.1, 0.1) -- (0.8,1)   ;

    \node[left] at (0.5,-0.25) {$|\frac{y}{x \dotplusss  1}|$};
    \node[right] at (-0.5,0.25) {$|\frac{x}{y \dotplusss  1}|$};
    \node[left] at (-0.25, -0.25) {$|\frac{1}{x \dotplusss  y}|$};

 \end{tikzpicture}
\caption{$\tilde{f} = |\frac{x}{y  \dotplusss   1}| \wedge
|\frac{y}{x  \dotplusss   1}| \wedge |\frac{1}{x  \dotplusss   y}|$}
\label{fig:Ex1}
\end{figure*}

\begin{exmp}\label{tropline1}
Let $f = x  \dotplusss   y  \dotplusss   1$ be the tropical line,
considered already in Example~\ref{onenusetex}(i). Its
corresponding $\onenu$-set is defined by the rational function
$\widehat{f} = \frac{x}{y \dotplusss   1} \dotplusss \frac{y}{x
\dotplusss   1} \dotplusss \frac{1}{x \dotplusss y}$, and so, its
corresponding $\nu$-kernel in $\mathscr{R}(x,y)$ is $\langle
\widehat{f} \rangle$ and the bounded copy in $\langle \mathscr{R}
\rangle$ is $\langle |\widehat{f}| \wedge |\alpha| \rangle =
\langle \widehat{f} \rangle \cap \langle \mathscr{R} \rangle$ . As
mentioned above $\widehat{f} \sim_{\langle \mathscr{R} \rangle}
\left|\frac{x}{y  \dotplusss 1}\right| \wedge \left|\frac{y}{x
\dotplusss   1}\right| \wedge \left|\frac{1}{x \dotplusss
y}\right|$, and any of the three above terms  can be omitted. Thus
we have that
$$\langle \widehat{f} \rangle  \cap \langle \mathscr{R} \rangle = \left\langle \frac{x}{y  \dotplusss   1} \right\rangle \cap \left\langle \frac{y}{x  \dotplusss   1} \right\rangle \cap \left\langle x  \dotplusss   y \right\rangle  \cap \langle \mathscr{R} \rangle,$$
where each of the $\nu$-kernels comprising the intersection
(excluding $\langle \mathscr{R} \rangle$) is contained in each of
the other $\nu$-kernels. (In the third $\nu$-kernel from the left
we choose to take $x \dotplusss y$ as a generator instead of its
inverse.) Now, taking logarithms, it can be seen that
$\Skel(\langle x \dotplusss y \rangle)$ is precisely the union of
the bounding rays of the third quadrant.
 Furthermore,  $$\langle x \dotplusss  y \rangle = \langle |x \dotplusss  y| \dotplusss  |x| \rangle \cap \langle  |x \dotplusss  y|  \dotplusss   |y|
 \rangle,
$$ since
$$x \dotplusss  y \sim_{\langle \mathscr{R} \rangle} |x \dotplusss  y| = |x \dotplusss  y| \dotplusss   (|x| \wedge |y|) = \left(|x \dotplusss  y| \dotplusss  |x|\right) \wedge \left(|x \dotplusss  y|  \dotplusss   |y|\right).$$

This wedge decomposition of $|x  \dotplusss y|$  is   quite natural.
The geometric locus of the equation $|x| \wedge |y|
 = 1$ in logarithmic scaling is precisely the union of the
 $x$-axis corresponding to $|x|\nucong 1$ and the $y$-axis corresponding to $|y|\nucong 1$.
 \end{exmp}

 Using the wedge decomposition, we can define any segment and ray in
$\mathscr{R}^2$  by means of principal $\nu$-kernels, so the only
 irreducible $\onenu$-sets turn out to be the points in the plane.
 This does not hamper us in developing geometry,
 since we may also restrict our attention to
  sublattices of the lattice of principal $\nu$-kernels.

\section{Hyperspace-$\nu$-kernels and region $\nu$-kernels}\label{thmpf}

In order to prove Theorem~\ref{thmcireglattice}, we separate
principal $\nu$-kernels into two classes of $\nu$-kernels,
HO-kernels and bounded from below $\nu$-kernels.The latter
$\nu$-kernels are eliminated by passing to the $\nu$-kernel
$\langle F \rangle$. Using these classes of $\nu$-kernels we
introduce the hyperspace-region decomposition given in
Theorem~\ref{thm_HP_expansion}.
\subsection{Hyperspace-kernels.}$ $

\begin{rem}
Though we consider the \vsemifield0 of rational functions
$F(\la_1,...,\la_n)$, most of the results introduced in this
section are applicable  to any finitely generated sub-\vsemifield0
of $\overline{F(\Lambda)}$ over $F$. In particular, $\langle F
\rangle \subset F(\la_1,...,\la_n)$ is just another case of a
finitely generated \vsemifield0 over $F$, taking  the generators $
\la_i \wedge |\alpha|$ for $1 \leq i \leq n$, and $\alpha \in F
\setminus \{1\}$. In this case, the generators  are bounded, and
we specifically designate the results that are true only for
unbounded generators.
\end{rem}

\begin{defn}
An $\scrL$-\textbf{monomial} is a non-constant Laurent monomial $f
\in F(\la_1,...,\la_n)$; i.e.,  $f = \frac{h}{g}$ with $h,g \in
F[\la_1,...,\la_n]$ non-proportional monomials.
\end{defn}
%
%

\begin{rem}\label{rem_HP_fraction_not_bounded} Whenever $F$
is $\nu$-archimedean, $\scrL$-monomials in $F(\la_1,...,\la_n)$ are not
bounded; i.e., for any $\scrL$-monomial~$f$ there does not exist
$\alpha \in F$   for which  $|f| \leq |\alpha|$.
\end{rem}
%

\begin{defn}
A rational function  $f \in F(\la_1,...,\la_n)$ is  a
\textbf{hyperspace-fraction}, or HS-fraction, if $f \sim_{\FF}
\sum_{i=1}^{t} |f_i|$ where  the  $f_i $ are non-proportional
$\scrL$-monomials.
\end{defn}

%

\begin{rem}\label{rem_HS_fraction_not_bounded}
HS-fractions in $F(\la_1,...,\la_n)$ are not bounded, since $|f_j|
\leq \sum_{i=1}^{t} |f_i| = f$.
\end{rem}

\begin{defn}
A \textbf{hyperplane $\nu$-kernel}, or  \textbf{HP-kernel}, for
short, is is a principal $\nu$-kernel of $F(\la_1,...,\la_n)$
generated by an $\scrL$-monomial.

A \textbf{hyperspace-fraction $\nu$-kernel}, or
\textbf{HS-kernel}, for short, is is a principal $\nu$-kernel of
$F(\la_1,...,\la_n)$ generated by a hyperspace fraction.
\end{defn}

 By definition,
any HP-kernel is regular. Also, a fortiori, every HP-kernel is an
HS-kernel.

\begin{defn}\label{Omega}
 $\Omega(\overline{F(\Lambda)})$ is the lattice of
$\nu$-kernels finitely generated by  HP-kernels of
$\overline{F(\Lambda)}$, i.e., every element $\langle f \rangle
\in \Omega(\overline{F(\Lambda)})$ is obtained via finite
intersections and products of HP-kernels.
\end{defn}

\begin{prop}\label{rem_HS_prod_HP}
Any principal HS-kernel is a product of distinct HP-kernels, and
thus is in $\Omega(\overline{F(\Lambda)})$.
\end{prop}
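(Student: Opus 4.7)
My plan is to realize any principal HS-kernel $K$ in the canonical form $K=\langle \sum_{i=1}^{t}|f_i|\rangle$ for non-proportional $\scrL$-monomials $f_1,\dots,f_t$, and then collapse the sum into a product of HP-kernels using the algebraic identities already in hand.

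First, by Lemma~\ref{rem_sum_prod_absolute_values} we have $\langle|g||h|\rangle=\langle|g|+|h|\rangle$, so an easy induction on $t$ gives $\langle\sum_i|f_i|\rangle=\langle\prod_i|f_i|\rangle$. Then the formula $\langle f\rangle\langle g\rangle=\langle|f||g|\rangle$ from Theorem~\ref{cor_max_semifield_principal_kernels_operations}, iterated, yields $\langle\prod_i|f_i|\rangle=\prod_{i=1}^{t}\langle f_i\rangle$. Each factor $\langle f_i\rangle$ is an HP-kernel by definition, because $f_i$ is an $\scrL$-monomial, so $K$ is a finite product of HP-kernels and therefore sits in $\Omega(\overline{F(\Lambda)})$ directly from Definition~\ref{Omega}.

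The remaining point is distinctness of the HP-kernels. Suppose $i\neq j$ and $\langle f_i\rangle=\langle f_j\rangle$. By Proposition~\ref{prop_principal_ker} there would exist $n\in\mathbb{N}$ with $|f_i|^{-n}\le_\nu f_j\le_\nu |f_i|^{n}$ pointwise on $F^{(n)}$. But $f_j/f_i$ is a Laurent monomial $c\,\la_1^{u_1}\cdots\la_n^{u_n}$, and non-proportionality of $f_i,f_j$ forces the exponent vector $(u_1,\dots,u_n)$ to be nonzero; invoking the $\nu$-archimedean property of $F$ as in Remark~\ref{rem_HP_fraction_not_bounded}, such a non-constant monomial cannot be bounded by any power of $|f_i|$. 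This contradiction gives $\langle f_i\rangle\neq\langle f_j\rangle$.

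The one subtle point worth flagging is that the definition of HS-fraction involves $\sim_{\FF}$, which a priori identifies $\nu$-kernels only after intersecting with $\langle F\rangle$. So strictly speaking one must verify that a principal HS-kernel $\langle f\rangle$ can be represented with the canonical generator $\sum|f_i|$ without altering the $\nu$-kernel itself. This is the expected main obstacle, but it is harmless in our setting: HS-fractions are unbounded by Remark~\ref{rem_HS_fraction_not_bounded}, and the lattice $\Omega(\overline{F(\Lambda)})$ has been set up precisely so that a product of HP-kernels is unchanged by rescaling its generator within a $\sim_{\FF}$-class, so we may take $f=\sum_{i=1}^{t}|f_i|$ as the representative generator from the outset and apply the computation above.
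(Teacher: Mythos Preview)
Your core decomposition $\langle \sum_i |f_i|\rangle = \prod_i \langle f_i\rangle$ is exactly the paper's argument (the paper compresses your two steps into one line using the same identities).

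However, your distinctness argument is wrong. You claim that if $f_i,f_j$ are non-proportional $\scrL$-monomials then the quotient $f_j/f_i$, being a non-constant Laurent monomial, cannot lie in $\langle f_i\rangle$. This fails already for $f_i=\la_1$ and $f_j=\la_1^2$: these are non-proportional (no $c\in F$ gives $\la_1=c\,\la_1^2$), yet $\langle \la_1\rangle=\langle \la_1^2\rangle$ by Corollary~\ref{prop_absolute_generator}. The reference you invoke, Remark~\ref{rem_HP_fraction_not_bounded}, only says an $\scrL$-monomial is not bounded by a \emph{constant}; it says nothing about being bounded by powers of another $\scrL$-monomial. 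The paper does not attempt to prove distinctness either; the word ``distinct'' in the statement is harmless because any repeated factor $K$ satisfies $K\cdot K=K$ and can simply be dropped.

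Your final paragraph on $\sim_{\FF}$ raises a genuine subtlety, but your resolution (``unchanged by rescaling within a $\sim_{\FF}$-class'') is not an argument. The paper's proof also elides this point, silently replacing $f$ by the canonical generator $\sum|f_i|$. In practice the definitions are used as though the HS-kernel is literally $\langle\sum|f_i|\rangle$, so you may as well just take that as the generator and drop the discussion.
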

\begin{proof}
If $\langle f \rangle$ is an HS-kernel, then $\langle f \rangle =
\left\langle \sum_{i=1}^{t} |f_i| \right\rangle = \prod_{i=1}^{t}
\langle f_i \rangle$ with  $f_i\in F[\la_1,...,\la_n]$
non-proportional $\scrL$-monomials.
\end{proof}

\begin{cor}
Any HS-Kernel is regular.
\end{cor}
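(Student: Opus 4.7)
The plan is to reduce the statement to two facts already established in the excerpt: the product decomposition of Proposition \ref{rem_HS_prod_HP} and the closure property of Proposition \ref{cor_regular_lattice}.

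First I would invoke Proposition \ref{rem_HS_prod_HP} to write any principal HS-kernel as a finite product
$$\langle f \rangle \;=\; \left\langle \sum_{i=1}^{t} |f_i| \right\rangle \;=\; \prod_{i=1}^{t} \langle f_i \rangle,$$
where each $f_i$ is a non-proportional $\scrL$-monomial. Each factor $\langle f_i \rangle$ is by definition an HP-kernel, and the remark immediately preceding Definition \ref{Omega} records that every HP-kernel is regular. (Concretely, writing $f_i = h_i/g_i$ with $h_i, g_i$ non-proportional monomials, the leading Laurent monomial at any $\bfa$ is the single term $h_i(\bfa)/g_i(\bfa) = f_i(\bfa) \ne \fone$, so no $\nu$-neighbourhood of any point of $\Skel(f_i)$ is contained in $\Skel(f_i)$.)

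Next, I would quote Proposition \ref{cor_regular_lattice}, which asserts that the regular principal $\nu$-kernels form a sublattice of $\PCon(\overline{F(\Lambda)})$ under intersection and product. Applying this repeatedly to the finite product $\prod_{i=1}^{t} \langle f_i \rangle$ of regular HP-kernels yields that the HS-kernel $\langle f \rangle$ itself is regular. Equivalently, at the level of generators, one may note that $|f_i|$ is regular by Lemma \ref{rem_regular_closed_operations}, hence so is $\sum_{i=1}^{t} |f_i|$ by the same lemma, and then Lemma \ref{rem_every_generator_is_regular} transfers regularity to any generator of the resulting principal $\nu$-kernel.

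There is no real obstacle: the corollary is essentially a one-line consequence of Proposition \ref{rem_HS_prod_HP} combined with Proposition \ref{cor_regular_lattice}. The only point requiring a modicum of care is that an HS-fraction is defined only up to $\sim_{\FF}$-equivalence, so one must ensure that the principal $\nu$-kernel it generates actually coincides with (not merely has the same $\langle F \rangle$-trace as) the product $\prod_i \langle f_i \rangle$; but this identification is precisely what Proposition \ref{rem_HS_prod_HP} provides, via Theorem \ref{cor_max_semifield_principal_kernels_operations}.
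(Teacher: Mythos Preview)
Your proof is correct and follows exactly the paper's approach: the paper's proof is the single line ``Apply Proposition~\ref{cor_regular_lattice},'' and you have simply unpacked this by first invoking Proposition~\ref{rem_HS_prod_HP} to express the HS-kernel as a product of (regular) HP-kernels and then using the sublattice property. Your closing caveat about the $\sim_{\FF}$-equivalence is a fair point of hygiene, and you are right that Proposition~\ref{rem_HS_prod_HP} is exactly what licenses the identification $\langle f\rangle=\prod_i\langle f_i\rangle$.
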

\begin{proof}
Apply Proposition~\ref{cor_regular_lattice}.
\end{proof}

\begin{defn}
A $\onenu$-set  in $F^n$ is  a \textbf{hyperplane $\onenu$-set }
(HP-$\onenu$-set for short) if it is defined by an $\scrL$-monomial.
A $\onenu$-set  in $F^n$ is   a \textbf{hyperspace-fraction
$\onenu$-set } (HS-$\onenu$-set for short) if it is defined by an
HS-fraction.
\end{defn}

\begin{cor}
A $\onenu$-set  is an HS-$\onenu$-set  if and only if it is an
intersection of HP-$\onenu$-sets.
\end{cor}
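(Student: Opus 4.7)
The plan is to prove both directions by bookkeeping, since essentially all the tropical content has already been packaged into the preceding lemmas; what remains is to combine them in the correct order.

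For the forward direction, I would start with an HS-$\onenu$-set $Z = \Skel(f)$, where $f$ is an HS-fraction, i.e.\ $f \sim_{\FF} \sum_{i=1}^{t} |f_i|$ for pairwise non-proportional $\scrL$-monomials $f_i$. The equivalence $\sim_{\FF}$ only records that $\langle f \rangle \cap \langle F \rangle = \langle \sum |f_i| \rangle \cap \langle F \rangle$, so I would invoke Proposition~\ref{intersects} to transfer this into the equality $\Skel(f) = \Skel(\sum_{i=1}^{t} |f_i|)$. Since each $|f_i| \ge_\nu 1$, an iterated application of Lemma~\ref{rem_max}(i) gives $\Skel(\sum_{i} |f_i|) = \bigcap_{i} \Skel(|f_i|)$, and then Lemma~\ref{rem_max0} yields $\Skel(|f_i|) = \Skel(f_i)$. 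Each of the $\Skel(f_i)$ is an HP-$\onenu$-set by definition, so $Z = \bigcap_{i} \Skel(f_i)$ is an intersection of HP-$\onenu$-sets, as desired.

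For the converse, let $Z = \bigcap_{i=1}^{t} Z_i$ where each $Z_i$ is an HP-$\onenu$-set, so $Z_i = \Skel(f_i)$ for an $\scrL$-monomial $f_i$. By discarding repeated factors I may assume the $f_i$ are pairwise non-proportional. Using Lemma~\ref{rem_max0} to pass from $f_i$ to $|f_i|$ and then Lemma~\ref{rem_max}(i) in the other direction, I obtain
\[
Z \;=\; \bigcap_{i=1}^{t} \Skel(f_i) \;=\; \bigcap_{i=1}^{t} \Skel(|f_i|) \;=\; \Skel\!\left(\sum_{i=1}^{t} |f_i|\right).
\]
The function $g = \sum_{i=1}^{t} |f_i|$ is tautologically $\sim_{\FF}$ to itself, so it is an HS-fraction by definition. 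Hence $Z = \Skel(g)$ is an HS-$\onenu$-set.

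There is not really a hard step here; the only place one has to be careful is in the forward direction, where $f$ itself need not be a literal sum of absolute values of $\scrL$-monomials but merely $\sim_{\FF}$-equivalent to one. The observation that $\sim_{\FF}$ preserves the associated $\onenu$-set (via Proposition~\ref{intersects}) is exactly what bridges the gap. Everything else is a direct unwinding of Lemmas~\ref{rem_max} and~\ref{rem_max0} together with the definitions of HP- and HS-$\onenu$-sets.
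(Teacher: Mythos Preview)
Your argument is correct and is essentially the paper's one-line proof unpacked at the level of elements: the paper invokes Proposition~\ref{rem_HS_prod_HP} (an HS-kernel is a product of HP-kernels) together with $\Skel(K_1 K_2) = \Skel(K_1) \cap \Skel(K_2)$, while you carry out the same computation directly via Lemmas~\ref{rem_max}(i) and~\ref{rem_max0}. One small quibble: the passage from $f \sim_{\FF} \sum |f_i|$ to $\Skel(f) = \Skel(\sum |f_i|)$ is better justified by Proposition~\ref{prop_ker_skel_correspondence} (or Lemma~\ref{rem_properties_of_H_kernels}(iii)) than by Proposition~\ref{intersects}, and in the converse ``discarding repeated factors'' does not literally force the $f_i$ to be pairwise non-proportional, though this does not affect the outcome.
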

\begin{proof}
As $\Skel(\langle f \rangle    \langle g \rangle ) = \Skel(\langle f
\rangle)  \cap  \Skel(\langle g \rangle )$, the assertion follows
directly from Proposition \ref{rem_HS_prod_HP}.
\end{proof}

\subsection{Region $\nu$-kernels.}$ $

Our last kind of $\nu$-kernel marks out a tropical region, and
thus is called a ``region $\nu$-kernel.''  It also relates to
information lost by passing from $F[\Lambda]$  to
$\overline{F[\Lambda]}$.

\begin{lem}\label{prop_HP_element_is_a_generator}
Let $\langle f \rangle$ be an HP-kernel, with $F$ divisible. If $w
\in \langle f \rangle$ is an $\scrL$-monomial, then $w^s = f^k$ for
some $s,k \in \mathbb{Z} \setminus\{ 0 \}$.
\end{lem}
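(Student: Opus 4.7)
The plan is to pass to additive (logarithmic) coordinates so that $\scrL$-monomials become affine-linear functionals, and then to read off the required integer relation from a containment of affine hyperplanes in a $\mathbb{Q}$-vector space.

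First I would write $f = c\la^{A}$ and $w = d\la^{B}$ with $c,d \in F$ and non-zero integer exponent vectors $A = (a_{1},\dots,a_{n})$ and $B = (b_{1},\dots,b_{n})$; both are non-zero because $\scrL$-monomials are non-constant. By Proposition~\ref{gen21}, the hypothesis $w \in \langle f \rangle$ yields $\Skel(f) = \Skel(\langle f \rangle) \subseteq \Skel(w)$; alternatively, Corollary~\ref{cor_principal_ker_by_order} provides $N \in \mathbb{N}$ with $|w| \leq_{\nu} |f|^{N}$ pointwise, from which the same inclusion follows directly on ghost roots. Invoking Remark~\ref{log0}, divisibility of $F$ turns its tangible group into a $\mathbb{Q}$-vector space, so I may work in log coordinates $\alpha$ where $\Skel(f) = H_{f} := \{\alpha : A\cdot\alpha = -\log c\}$ and $\Skel(w) = H_{w} := \{\alpha : B\cdot\alpha = -\log d\}$, and the inclusion reads $H_{f} \subseteq H_{w}$.

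Next I would exploit this geometrically. Since $A \neq 0$ and $F$ is divisible, $H_{f}$ is a non-empty codimension-$1$ affine subspace of $(\log F)^{n}$. An affine hyperplane cut out by one linear equation is contained in another, cut out by $B\cdot\alpha = -\log d$, only if $B$ is a scalar multiple of $A$; and since $A,B$ are non-zero integer vectors, that scalar $\mu$ must be rational. Writing $\mu = k/s$ with $s,k \in \mathbb{Z}\setminus\{0\}$ gives $sB = kA$ immediately, and matching offsets on $H_{f}$ gives $s\log d = k\log c$, i.e.\ $d^{\,s} \nucong c^{\,k}$. Combining yields $w^{s} = d^{\,s}\la^{sB} = c^{\,k}\la^{kA} = f^{\,k}$, as required.

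The main obstacle will be the geometric step: justifying that $H_{f}$ is genuinely a non-empty codimension-$1$ affine hyperplane inside $(\log F)^{n}$, so that $H_{f} \subseteq H_{w}$ really forces the proportionality $B = \mu A$. This is precisely where the divisibility hypothesis is indispensable, via Remark~\ref{log0}, which provides the $\mathbb{Q}$-vector space structure in which these hyperplanes live and ensures that every value of $A\cdot\alpha$ in the target is actually attained; once this is in place, the remainder is routine affine-linear algebra over $\mathbb{Q}$, and the possible ambiguity between coefficients up to $\nu$-equivalence is resolved by working with the $\nu$-images of $c$ and $d$ in $\tG$, where $\nu$ is a bijection.
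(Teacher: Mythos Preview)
Your proposal is correct. The passage to additive coordinates via Remark~\ref{log0}, the inclusion $\Skel(f)\subseteq\Skel(w)$ from $w\in\langle f\rangle$, and the affine-linear step forcing $B=\mu A$ with $\mu\in\mathbb{Q}$ all hold; the ``codimension~$1$'' phrasing is informal, but the underlying argument (from $\ker(A\cdot)\subseteq\ker(B\cdot)$ one reads off $b_ka_j=b_ja_k$ coordinatewise) is sound, and the matching of constants via injectivity of $\nu$ on tangibles is fine in the standard supertropical setting.

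The paper takes a different route. It argues by contradiction: after using divisibility to take roots and normalize both exponent vectors to have $\gcd=1$, it observes that any variable occurring in $w$ must occur in $f$ (from $|w|\le_\nu|f|^m$), rules out the case of equal exponent vectors via $\Skel(f)\cap\Skel(\alpha f)=\emptyset$, and then, assuming the vectors differ, explicitly builds a Laurent monomial $g=w^{-k}f^{\ell}\in\langle f\rangle$ in which some variable $\la_j$ has been eliminated, and exhibits a point $\bfb\in\Skel(f)$ with $g(\bfb)\neq 1$, contradicting $\Skel(f)\subseteq\Skel(g)$. Your argument is more conceptual and bypasses the explicit witness construction; the paper's is more hands-on and stays entirely at the level of monomials and points. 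Both rest on the same inclusion of $1^\nu$-sets and on divisibility, but you use divisibility to obtain the $\mathbb{Q}$-vector-space framework, whereas the paper uses it to normalize $\gcd$'s before the combinatorial construction.
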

\begin{proof}
  By assumption $\langle w \rangle \subseteq \langle
f \rangle$, implying $\Skel(w) \supseteq \Skel(f)$. Assume that $w^s
\neq f^k$ for any $s,k \in \mathbb{Z} \setminus\{ 0 \}$. We will
show that there exists some $\bfb \in F^n$ such that $\bfb  \in
\Skel(f) \setminus \Skel(g)$ for some $g \in \langle f \rangle$,
which clearly is impossible.

Let $(p_1,...,p_n), (q_1,...,q_n) \in \mathbb{Z}^n$ be the vectors
of degrees of $\la_1,...,\la_n$ in the Laurent monomials $f$ and
$w$. Since $w$ and $f$ are nonconstant, $(p_1,...,p_n),
(q_1,...,q_n)~\neq~(0)$. Since $F$ is divisible, we may take
appropriate roots and assume that $gcd(p_1,...,p_n) =
gcd(q_1,...,q_n) = 1$.   Since $w \in \langle f \rangle$ we have
$|w| \leq |f|^m$ for some $m \in \mathbb{N}$, so if $\la_i$ occurs
in $w$ it must also occur in $f$. Finally, since $w^s \neq f^k$ for
any $k \in \mathbb{Z} \setminus\{ 0 \}$ we may also assume that
$(p_1,...,p_n) \neq (q_1,...,q_n)$, for otherwise $w = \alpha f$ for
some $\alpha \neq 1$ and thus
$$\Skel(f) \subseteq \Skel(f) \cap \Skel(w) = \Skel(f) \cap \Skel(\alpha f) =\emptyset,$$
a contradiction. Suppose that ${\la_j}^l$ occurs in $w$ for some $l
\in \mathbb{Z} \setminus \{ 0 \}$ such that $\la_j$ is not
identically $1$ on $\Skel(w)$ (and thus also on $\Skel(f)$). Thus
there exists some $k \in \mathbb{Z} \setminus \{ 0 \}$ such that
${\la_j}^{k}$ occurs in $f$. Then $\la_j$ does not occur in the
Laurent monomial $g : = w^{-k}f^\ell \in \langle f \rangle$. Without
loss of generality, assume that $j = 1$. If
$$\bfa = (\alpha_1,...,\alpha_n) \in \Skel(f),$$ then $g(\bfa)=
w(\bfa)^{-1}f(\bfa) = 1$. By assumption that $w^s \neq f^k,$ there
exists $\la_t$ occurring in $f$  and not in $w$. Take $\bfb =
(1,\alpha_2,...,\beta,..., \alpha_n)$ with $\beta \in F$ occurring
in the $t$-th component, such that
$\beta^{p_t}=\frac{\alpha_{t}^{p_t}}{\alpha_{1}^{p_1}}$. Then as
$\la_j$ is not identically $1$ over $\Skel(f)$, we can choose $\bfb
\in \Skel(w)$ such that  $f(\bfb) = 1$ but $g(\bfb) \neq 1$.
\end{proof}

\begin{prop}\label{cor_HP-element_is_a_generator}
Let $ f  $ be an $\scrL$-monomial. Then $w \in \langle f \rangle$ is
an $\scrL$-monomial if and only if $w$ is a generator of $\langle f
\rangle$.
\end{prop}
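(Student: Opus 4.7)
The plan is to prove the biconditional by handling each direction separately, with the $(\Rightarrow)$ direction following almost immediately from the preceding Lemma~\ref{prop_HP_element_is_a_generator} and Corollary~\ref{prop_absolute_generator}, and the $(\Leftarrow)$ direction requiring structural work on the canonical form of a generator.

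For $(\Rightarrow)$: Suppose $w \in \langle f \rangle$ is an $\scrL$-monomial. Since $f$ is an $\scrL$-monomial, $\langle f \rangle$ is by definition an HP-kernel, and I would invoke Lemma~\ref{prop_HP_element_is_a_generator} directly (with the standing assumption there that $F$ is divisible) to obtain integers $s, k \in \mathbb{Z} \setminus \{0\}$ satisfying $w^{s} = f^{k}$. Two applications of Corollary~\ref{prop_absolute_generator}, which tells us $\langle a \rangle = \langle a^{m} \rangle$ for every $m \ne 0$, then yield
$$\langle w \rangle \;=\; \langle w^{s} \rangle \;=\; \langle f^{k} \rangle \;=\; \langle f \rangle,$$
so $w$ is a generator of $\langle f \rangle$, as desired.

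For $(\Leftarrow)$: Suppose $w$ generates $\langle f \rangle$; I need $w$ to be an $\scrL$-monomial. The strategy is to reduce to the canonical form $w = h/g$ with $h,g \in F[\la_{1},\dots,\la_{n}]$ and then show, via $\Skel(w) = \Skel(f)$ together with Proposition~\ref{prop_generator_of_skel2}, that both $h$ and $g$ must each consist of a single monomial. The idea is that $\Skel(f)$ is, up to the order data, the set where a single Laurent monomial equals $1$, so if $h$ (say) contained two non-proportional monomials then there would be an open $\nu$-neighborhood in $\Skel(w)$ where a non-dominant monomial of $h$ is strictly below the dominant one, producing a point where $w^{k}$ remained strictly bounded away from $1$ in a way $f$ cannot — forcing a strict inclusion of $\nu$-kernels, contradicting that $w$ generates $\langle f \rangle$.

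The main obstacle is the $(\Leftarrow)$ direction: naively $|f| = f + f^{-1}$ is a generator of $\langle f \rangle$ (by Corollary~\ref{prop_absolute_generator1}) but is patently not a single $\scrL$-monomial, so the statement must be read modulo the equivalence $\sim_{\FF}$ of Definition~\ref{defn_abstract_similarity_of_generators}, or else as asserting that a generator $w$ is an $\scrL$-monomial precisely when it is written in reduced form with a single numerator monomial and a single denominator monomial. Making this precise — and ruling out the counterexamples that arise from $|\cdot|$, wedges and sums that $\sim_{\FF}$-collapse to a Laurent monomial — is the delicate point, and the proof will have to combine Lemma~\ref{prop_HP_element_is_a_generator} with a careful dominance/region analysis in the spirit of Theorem~\ref{thm_every_gen_is_reducible}.
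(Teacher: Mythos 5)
Your $(\Rightarrow)$ direction is precisely the paper's argument: the entire published proof consists of citing Lemma~\ref{prop_HP_element_is_a_generator} (to get $w^{s}=f^{k}$) together with $\langle g^{k}\rangle=\langle g\rangle$, exactly as you do. That part is correct and complete.

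For the $(\Leftarrow)$ direction you have put your finger on a real problem rather than a gap in your own argument: as literally stated, ``$w$ is a generator of $\langle f\rangle$ $\Rightarrow$ $w$ is an $\scrL$-monomial'' is false, since $|f|=f+f^{-1}=(h^{2}+g^{2})(gh)^{*}$ generates $\langle f\rangle$ by Corollary~\ref{prop_absolute_generator1} but is not a Laurent monomial; the same is true of $f+f^{2}$, of $|f|\wedge|\alpha f|$, etc. The paper's one-line proof does not address this direction at all, so there is nothing there for you to have missed. Your instinct that the statement must be read modulo $\sim_{\FF}$, or as asserting only the forward implication (the $\scrL$-monomials of $\langle f\rangle$ form a subset of its generators, which is the only part ever used later, e.g.\ in the irreducibility arguments of \S 9), is the right diagnosis. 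What you should not do is present your dominance/region sketch as if it could succeed: no amount of analysis of $\Skel(w)$ will show that a generator is an $\scrL$-monomial, because the counterexamples above have exactly the same $\onenu$-set and the same $\nu$-kernel as $f$. Either restrict the claim to the forward direction, or reformulate the converse as ``every generator of $\langle f\rangle$ is $\sim_{\FF}$-equivalent to (indeed generates the same $\nu$-kernel as) the $\scrL$-monomial $f$,'' which is then tautological and needs no further proof.
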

\begin{proof}
The claim follows from Lemma~\ref{prop_HP_element_is_a_generator}
and the property that $\langle g^k \rangle = \langle g \rangle$.
\end{proof}

\begin{defn}
The  \textbf{$\scrL$-binomial} $o$ defined by an $\scrL$-monomial
$f$ is the rational function   $1 +   f$.

The \textbf{complementary $\scrL$-binomial} $o^{c}$ of $o$ is $ 1 +
f^{-1}$. By definition $(\mathcal O^c)^c = \mathcal O$.

The \textbf{order $\nu$-kernel} of the \vsemifield0
$F(\la_1,...,\la_n)$ defined by $f$ is the principal $\nu$-kernel
$\mathcal O = \langle o \rangle$ for the $\scrL$-binomial $o = 1 +
f$.

 The \textbf{complementary order $\nu$-kernel} $\mathcal O^c$  of $\mathcal O$ is $ \langle o^c \rangle$.
\end{defn}

Since  $f$ is an $\scrL$-monomial, so is $f^{-1}$ and thus
$\mathcal O^{c}$ is an order $\nu$-kernel.

\begin{lem}\label{rem_complementary_order}
Let $\mathcal O = \langle 1+f \rangle$ be an order $\nu$-kernel of
$F(\la_1,...,\la_n)$. Then
$$\mathcal O \cap \mathcal O^{c} = \langle 1 \rangle \ \text{and} \  \mathcal O  \mathcal O^{c} = \langle f \rangle.$$
\end{lem}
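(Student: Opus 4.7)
The plan is to apply Theorem~\ref{cor_max_semifield_principal_kernels_operations}, which reduces intersections and products of principal $\nu$-kernels to the single-generator formulas $\langle a\rangle\cap\langle b\rangle=\langle|a|\wedge|b|\rangle$ and $\langle a\rangle\cdot\langle b\rangle=\langle|a|\cdot|b|\rangle$. The main task is then to show that the resulting generators are $\nu$-equivalent to $1$ and to $|f|$ respectively. Since $o=1+f$ and $o^c=1+f^{-1}$ both lie in $\overline{F(\Lambda)}^+$ (they are $\ge_\nu 1$), we have $|o|\nucong o$ and $|o^c|\nucong o^c$, so the computations simplify to $o\wedge o^c$ and $o\cdot o^c$.

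For the first equality, I would evaluate $o\wedge o^c$ pointwise. Since $F$ is $\nu$-bipotent, at any $\bfa\in F^{(n)}$ exactly one of the $\nu$-comparisons $f(\bfa)>_\nu 1$, $f(\bfa)<_\nu 1$, or $f(\bfa)\nucong 1$ holds; in each case one of $o(\bfa)$, $o^c(\bfa)$ is $\nu$-equivalent to $1$ (the case $f(\bfa)\nucong 1$ forces $o(\bfa)\nucong o^c(\bfa)\nucong 1^\nu$, which is still $\nu$-equivalent to $1$). Thus $(o\wedge o^c)(\bfa)\nucong 1$ for every $\bfa$, so $o\wedge o^c\nucong 1$ as elements of $\overline{F(\Lambda)}$, giving $\mathcal{O}\cap\mathcal{O}^c=\langle 1\rangle$.

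For the second equality, I would expand directly:
\begin{equation*}
o\cdot o^c \;=\;(1+f)(1+f^{-1})\;=\;1+f+f^{-1}+1\;\nucong\;1+|f|,
\end{equation*}
using $|f|=f+f^{-1}$. Because $|f|\ge_\nu 1$, the function $1+|f|$ is $\nu$-equivalent to $|f|$ pointwise (again the point $f(\bfa)\nucong 1$ needs checking, but there $|f|(\bfa)\nucong 1^\nu$ and $(1+|f|)(\bfa)\nucong 1^\nu$ agree). Hence $\langle o\cdot o^c\rangle=\langle 1+|f|\rangle=\langle|f|\rangle$, and by Corollary~\ref{prop_absolute_generator1} this is $\langle f\rangle$; alternatively, one applies Proposition~\ref{prop_principal_ker} to verify the two mutual containments $|f|\le(1+|f|)^1$ and $1+|f|\le|f|^1$ directly.

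The only delicate point will be the ghost case $f(\bfa)\nucong 1$, where the semilattice identities $1+1=1^\nu$ interact with the $\wedge$ definition \eqref{starry}; but in each relevant expression both sides reduce to $1^\nu$, so the $\nu$-equivalences survive. No further structural input beyond Theorem~\ref{cor_max_semifield_principal_kernels_operations}, $\nu$-bipotence of $F$, and the basic identity $|f|=f+f^{-1}$ is required.
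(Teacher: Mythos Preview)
Your proof is correct and follows essentially the same route as the paper's: apply Theorem~\ref{cor_max_semifield_principal_kernels_operations} to express $\mathcal O\cap\mathcal O^c$ and $\mathcal O\,\mathcal O^c$ via $|o|\wedge|o^c|$ and $|o|\cdot|o^c|$ (the paper uses $|o|+|o^c|$, equivalent by Lemma~\ref{rem_sum_prod_absolute_values}), note that $o,o^c\ge_\nu 1$ so the absolute values drop, and then reduce the generators to $1$ and $1+|f|=|f|$ respectively. Your pointwise case analysis and attention to the ghost case $f(\bfa)\nucong 1$ make explicit what the paper leaves implicit, but the argument is the same.
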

\begin{proof}
 $\mathcal O \cap \mathcal O^{c} = \langle |1  + f| \wedge |1 +
f^{-1}| \rangle =   \langle 1 \rangle$ and $\mathcal O  \mathcal
O^{c}  = \langle |1 +   f| +  |1  +  f^{-1}| \rangle = \langle 1 + f
+  f^{-1} \rangle = \langle 1 +  |f| \rangle = \langle |f| \rangle =
\langle f \rangle$ (noting that $(1 + f),(1 + f^{-1}) \geq_\nu 1$
implies $|1  +  f| \nucong 1 +  f$ and $|1 + f^{-1}| \nucong 1 +
f^{-1}$).
\end{proof}

\begin{defn}\label{regionfrac}
A rational function  $f \in F(\la_1,...,\la_n)$ is said to be a
\textbf{region fraction}   if   $\Skel (f)$ contains some nonempty
open interval.
\end{defn}

\begin{lem}
 $f \sim{\FF} \sum_{i=1}^{t} |o_i|$ is a region fraction iff,
 writing $o_i = 1+f_i$ for $\scrL$-monomials $f_i$, we have $f_i \not\nucong f_j^{\pm1}$ for every $i\ne
 j.$\end{lem}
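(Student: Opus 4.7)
The plan is to analyze $\Skel(f) = \bigcap_i \Skel(|o_i|)$ as an intersection of closed half-spaces in logarithmic coordinates, and translate the distinctness condition on the $f_i$ into a dimension statement about this polyhedron.

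First I would record the basic geometric setup. Since each $f_i$ is an $\scrL$-monomial, $o_i = 1+f_i \ge_\nu 1$ pointwise, so $|o_i| \nucong o_i$, and $\Skel(o_i) = \{\bfa \in F^{(n)} : f_i(\bfa) \le_\nu 1\}$. Using that $F$ is $\nu$-archimedean and hence embeds in $\mathscr{R}$ (Theorem~\ref{thm_holder_translated}), we pass to logarithmic coordinates $\bfx \in \R^n$, in which each $f_i$ becomes an affine functional $\bfa_i \cdot \bfx - b_i$ and each $\Skel(o_i)$ becomes a closed affine half-space $\{\bfa_i \cdot \bfx \le b_i\}$. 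By Lemma~\ref{rem_max}(i), since every $|o_i|\ge_\nu 1$,
\[
\Skel(f) \;=\; \Skel\!\left(\sum_{i=1}^{t}|o_i|\right) \;=\; \bigcap_{i=1}^{t} \{\bfa_i \cdot \bfx \le b_i\},
\]
which is a polyhedron in $\R^n$. The question ``is $f$ a region fraction?'' becomes: does this polyhedron have nonempty topological interior?

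For the $(\Rightarrow)$ direction, I would argue contrapositively. If $f_i \nucong f_j^{-1}$ for some $i\neq j$, then the corresponding half-spaces are $\{\bfa_i \cdot \bfx \le b_i\}$ and $\{-\bfa_i\cdot \bfx \le -b_i\}$, whose intersection is the hyperplane $\{\bfa_i\cdot \bfx = b_i\} = \{f_i \nucong 1\}$, a proper affine subspace of $\R^n$ since $f_i$ is non-constant. Then $\Skel(f)$ lies in this hyperplane and contains no nonempty open interval, contradicting the region-fraction hypothesis. The case $f_i \nucong f_j$ (with $i\neq j$) is handled by redundancy: then $\Skel(o_i) = \Skel(o_j)$ and $|o_i|+|o_j|\nucong |o_i|^\nu \sim_\FF |o_i|$, so the representation reduces to $t-1$ terms, and we proceed by induction on $t$; equivalently, the representation $\sum_{i=1}^{t}|o_i|$ is taken to be in its reduced (non-redundant) form.

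For the $(\Leftarrow)$ direction, assuming $f_i \not\nucong f_j^{\pm 1}$ for all $i\neq j$, the task is to exhibit an interior point of $\Skel(f)$. The distinctness condition says that no two defining affine functionals $(\bfa_i,b_i)$ agree up to sign, so no two half-spaces share a common boundary hyperplane with opposite orientations. The strategy is to show that under these hypotheses one can locate a point $\bfx_0$ satisfying all inequalities strictly, $\bfa_i\cdot \bfx_0 < b_i$, which then lies in the topological interior of $P$ and yields an open interval in $\Skel(f)$.

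The main obstacle is the $(\Leftarrow)$ direction: pairwise non-oppositeness alone does not in general prevent the intersection of many half-spaces from collapsing to a lower-dimensional face (e.g., three half-spaces in $\R^2$ meeting only at the origin). I expect the proof exploits the equivalence relation $\sim_\FF$ and the ``non-proportional'' convention for the generators of an HS-type sum: if the polyhedron $P$ were contained in some proper affine subspace, one could exhibit a nontrivial combination of the $o_i$'s that reduces, via $\sim_\FF$, to a shorter representation in which two indices become mutually $\nucong$-inverse, violating the hypothesis on the reduced form. The delicate point will therefore be translating the polyhedral degeneracy into an explicit $\sim_\FF$-reduction of the $o_i$-representation back to an opposite pair.
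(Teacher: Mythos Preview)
Your $(\Rightarrow)$ direction is correct and essentially the paper's: the paper simplifies $\sum_i |o_i| = 1 + \sum_i f_i$ and observes that if $f_\ell \nucong f_k^{-1}$ then $1 + f_k + f_\ell + \cdots \nucong |f_k| + \bigl(1 + \sum_{i\ne k,\ell} f_i\bigr)$, whence $\Skel(r) \subseteq \Skel(f_k)$, an HP-$\onenu$-set with empty interior. Your half-space picture says the same thing geometrically.

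Your suspicion about $(\Leftarrow)$ is well-founded, and the paper does not actually prove this direction either: after asserting the equivalence it only supplies the argument for $(\Rightarrow)$. In fact the pairwise condition $f_i \not\nucong f_j^{\pm 1}$ is too weak to force nonempty interior. Take $f_1 = \la_1$, $f_2 = \la_2$, $f_3 = (\la_1\la_2)^{-1}$ in $F(\la_1,\la_2)$: no $f_i$ is $\nu$-equivalent to any $f_j^{\pm 1}$, yet in logarithmic coordinates the half-spaces $\{x\le 0\}$, $\{y\le 0\}$, $\{x+y\ge 0\}$ meet only at the origin, so $\Skel\bigl(1+\sum f_i\bigr)$ is a single point and $f$ is not a region fraction in the sense of Definition~\ref{regionfrac}. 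So the $(\Leftarrow)$ implication, read literally, is false, and no $\sim_{\FF}$-reduction trick of the sort you sketch will rescue it.

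What the paper is really doing here is adopting the algebraic condition (no inverse pair among the $f_i$) as the \emph{working} definition of ``region fraction'' --- this is what feeds directly into Lemma~\ref{rem_Region_prod_Order} (region $\nu$-kernel $=$ product of order $\nu$-kernels) and the HO-decomposition, where only the order-kernel factorization matters and the open-interval criterion of Definition~\ref{regionfrac} is never invoked again. So your gap is real at the level of the stated lemma, but it does not propagate: the downstream arguments use the algebraic characterization, not the topological one.
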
 \begin{proof} Since $1 +   f_i \geq_\nu 1$ for every $i$, we
have that
$$\sum_{i=1}^{t} |o_i| = \sum_{i=1}^{t} |1  +   f_i| = \sum_{i=1}^{t} (1  +   f_i) = 1  +   \sum_{i=1}^{t}f_i.$$
 Thus a region fraction $r$ can be defined as $r \sim_{\FF} 1  +   \sum_{i=1}^{t}f_i$, so the last condition of the definition can be stated as $f_j \neq f_i^{-1}$ for any $1 \leq i,j \leq t$.
 If there exist $k$ and $\ell$ for which $f_\ell \nucong  f_k^{-1}$,
 then
 $$\sum_{i=1}^{t} |o_i| \nucong |f_k|  +   (1   +   \sum_{i \neq k,\ell}f_i) \nucong |f_k|  +   \Big|1   +   \sum_{i \neq k,\ell}f_i \Big|,$$
and thus $\Skel(r) = \Skel(\sum_{i=1}^{t} |o_i|) = \Skel(f_k) \cap \Skel(1   +   \sum_{i \neq k,\ell}f_i) \subseteq \Skel(f_k)$.\\
\end{proof}

\begin{defn}\label{regker}
A \textbf{region $\nu$-kernel} is a principal $\nu$-kernel
generated by a region fraction.
\end{defn}

\begin{lem}\label{rem_Region_prod_Order} A principal $\nu$-kernel
$K$ is a region $\nu$-kernel if and only if it has the form
$$K = \prod_{i=1}^{v}\mathcal O_i$$
for order $\nu$-kernels $\mathcal O_1,...,\mathcal O_v \in
\PCon(F(\la_1,...,\la_n))$.
\end{lem}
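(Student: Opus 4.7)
The plan is to deduce both directions from a single identity together with the preceding lemma characterising when $\sum |o_i|$ is a region fraction. The identity to establish is
\begin{equation*}
\prod_{i=1}^{v}\mathcal{O}_i \;=\; \left\langle \sum_{i=1}^{v} |1+f_i| \right\rangle \;=\; \left\langle 1+ \sum_{i=1}^{v} f_i \right\rangle,
\end{equation*}
where $\mathcal{O}_i = \langle 1+f_i \rangle$ for $\scrL$-monomials $f_i$. First I would derive the first equality by iterating the formula $\langle u \rangle \langle v \rangle = \langle |u||v| \rangle$ from Theorem~\ref{cor_max_semifield_principal_kernels_operations} and invoking Lemma~\ref{rem_sum_prod_absolute_values} to trade the product $|u||v|$ for the sum $|u|+|v|$ inside the $\nu$-kernel. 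The second equality uses that $1+f_i \ge_{\nu} 1$, so $|1+f_i| = 1+f_i$, whence the sum simplifies to $1+\sum f_i$.

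For the $(\Leftarrow)$ direction, I would feed $K = \prod_{i=1}^v \mathcal{O}_i$ into the identity to exhibit the generator $1 + \sum_{i=1}^v f_i$. In the generic case where no $f_i$ is $\nu$-equivalent to $f_j^{\pm 1}$ for $i\ne j$, the preceding lemma certifies this generator as a region fraction, so $K$ is a region $\nu$-kernel by Definition~\ref{regker}. Degeneracies I would dispose of by pruning: proportional $f_i$'s define the same $\mathcal{O}_i$ and may be dropped, while a complementary pair $\mathcal{O}_k \mathcal{O}_\ell = \langle f_k \rangle$ (via Lemma~\ref{rem_complementary_order}) can be reabsorbed into the remaining sum, restoring the hypotheses of the preceding lemma on a smaller collection.

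For the $(\Rightarrow)$ direction, assume $K = \langle r \rangle$ where $r$ is a region fraction. The preceding lemma is exactly what converts $r$ into the canonical form $r \sim_{\FF} \sum_{i=1}^{v} |o_i|$ with $o_i = 1+f_i$ and the $f_i$ pairwise non-proportional and not mutually inverse. Applying the identity backwards then yields $K = \prod_{i=1}^v \mathcal{O}_i$, completing the proof.

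The principal obstacle will be the $(\Rightarrow)$ direction: justifying that every region fraction in the sense of Definition~\ref{regionfrac} -- any $r$ with $\Skel(r)$ containing a nonempty open interval -- actually admits the canonical form $\sim_{\FF} \sum|o_i|$. In logarithmic coordinates, $\Skel(r)$ is a polyhedral region cut out by finitely many half-spaces of the form $\{f_i \le_{\nu} 1\}$ for $\scrL$-monomials $f_i$, which suggests the correct list of $o_i$'s; but promoting this description of $\Skel(r)$ to an honest $\sim_{\FF}$-equivalence between $r$ and $\sum |o_i|$ (rather than merely an equality of $\onenu$-sets) is the technical core, and is where I expect to invoke the density/completeness results of~\S\ref{subsection:bounded_kernel2} together with Proposition~\ref{prop_generator_of_skel2} to upgrade $\Skel$-equality to generator-equivalence inside $\langle F \rangle$.
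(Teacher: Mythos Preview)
Your core identity $\prod_i \mathcal O_i = \langle \sum_i |o_i| \rangle$ and the two-direction structure are exactly what the paper does; its proof is two lines, one per direction, using precisely this formula.

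Where you diverge is in how much weight you put on $(\Rightarrow)$. The paper simply writes ``If $f = \sum |o_i|$ is a generating region fraction of $K$, then $K = \langle \sum |o_i| \rangle = \prod \langle o_i \rangle$'' and is done. You treat the passage from Definition~\ref{regionfrac} (``$\Skel(f)$ contains an open interval'') to the canonical form $f \sim_{\FF} \sum |o_i|$ as the hard step, and propose to attack it via Proposition~\ref{prop_generator_of_skel2} and the completeness machinery of \S\ref{subsection:bounded_kernel2}. That plan is misaligned: those results are specific to $F = \mathscr{R}$ and to $\langle \mathscr{R} \rangle$, while the present lemma is stated for general $F$. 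More to the point, the paper reads the immediately preceding lemma as \emph{supplying} the canonical form --- note the sentence in its proof, ``Thus a region fraction $r$ can be defined as $r \sim_{\FF} 1 + \sum_{i=1}^t f_i$'' --- so from the paper's standpoint there is nothing left to prove here.

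Your care with the $(\Leftarrow)$ degeneracies (proportional or complementary $f_i$'s) is more scrupulous than the paper, which simply asserts ``$\sum |o_i|$ is a region fraction generating $K$'' without checking the non-complementarity condition from the preceding lemma. Your pruning argument is a reasonable way to patch this, though note that if a genuine complementary pair is present the product $\mathcal O_k \mathcal O_k^c = \langle f_k \rangle$ is an HP-kernel, not a region $\nu$-kernel, so the lemma as stated tacitly excludes that case rather than ``reabsorbing'' it.
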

\begin{proof}
If $f = \sum |o_i|$ is a generating region fraction of $K$, then
$$K = \langle f \rangle = \langle \sum_{i=1}^{t} |o_i| \rangle~=~\prod_{j=1}^{v}\langle o_i \rangle.$$

Conversely, if $K = \prod_{i=1}^{v}\mathcal O_i$ then  writing
$\mathcal O_i = \langle o_i \rangle$, we see that $  \sum_{i=1}^{t}
|o_i|$ is a region fraction generating $K$, since $\left\langle
\sum_{i=1}^{t} |o_i| \right\rangle  = \prod_{i=1}^{v}\mathcal O_i =
K$.
\end{proof}

%

\begin{lem}\label{lem_HP_and_Oreder_kernels_are_CI}
Any order $\nu$-kernel $f = 1+ \frac hg$ (for monomials $g$ and
$h$)  is corner-internal.
\end{lem}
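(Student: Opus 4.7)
The plan is to verify Definition~\ref{defn_corner_admissible} directly, using the canonical representation $f = 1 + \frac{h}{g} = \frac{g+h}{g}$. The denominator is the tangible monomial $g \in \overline{\tT[\Lambda]}$, so this is a legitimate canonical form, and the corner-internality condition reduces to showing that every ghost root of $\underline{f} = g + (g+h)$ is a kernel root of $f$.

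First I would simplify $\underline{f}$ as a function. Since the supertropical addition rule gives $g + g = g^\nu$ pointwise, we have $\underline{f} = g^\nu + h$ in $\overline{F[\Lambda]}$. A short pointwise case analysis then yields $\Corn(\underline{f}) = \{\bfa \in F^{(n)} : g(\bfa) \geq_\nu h(\bfa)\}$: the value $g^\nu(\bfa)$ is always a ghost, so $(g^\nu + h)(\bfa)$ is tangible exactly when $h$ strictly dominates, giving $h(\bfa) \in \tT$; in all other cases (namely $g(\bfa) >_\nu h(\bfa)$ or $g(\bfa) \nucong h(\bfa)$), the sum lies in $\tG$.

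In parallel, I compute $\Skel(f)$ directly: $f(\bfa) \nucong 1$ iff $\frac{h(\bfa)}{g(\bfa)} \leq_\nu 1$, i.e., iff $h(\bfa) \leq_\nu g(\bfa)$. Thus $\Skel(f) = \{\bfa : g(\bfa) \geq_\nu h(\bfa)\} = \Corn(\underline{f})$, and by Proposition~\ref{cor_absolute_corner_admissible2}, $f$ is corner-internal.

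No step presents a serious obstacle---the argument is essentially an unpacking of definitions---though the supertropical identity $g + g = g^\nu$ used in computing $\underline{f}$ is the one point requiring care, since it is precisely this ghost absorption that makes the ghost roots of $\underline{f}$ line up exactly with the kernel roots of $f$.
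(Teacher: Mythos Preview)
Your proof is correct and follows essentially the same route as the paper: write $f=\frac{g+h}{g}$, compute $\underline{f}=g^\nu+h$, and check that its ghost roots coincide with the kernel roots of $f$. Your version is in fact more explicit than the paper's (which tersely splits into the cases ``$g$ dominating $h$'' and ``ghost root of $g+h$'' without spelling out the computation of $\Skel(f)$), and your invocation of Proposition~\ref{cor_absolute_corner_admissible2} to close the argument is clean.
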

\begin{proof}  $f = \frac{g + h}{g}$. Any ghost root $\bfa$  of $\bar f
= g^\nu + h$ is either a ghost root of $g$ dominating $h$, or a
ghost root of $g + h$; in either case it is a  $\nu$-kernel root
of $f$.
\end{proof}

\begin{lem}\label{rem_regularity_of_HP_times_order_kernels}
If $\langle f \rangle \ne \langle 1 \rangle$ is a regular
$\nu$-kernel and $\langle o \rangle$ is an order $\nu$-kernel,
then  $ \langle f \rangle
 \langle o \rangle$ is regular.
\end{lem}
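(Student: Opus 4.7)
The plan is to reduce the claim to a pointwise computation, using the identity from Theorem~\ref{cor_max_semifield_principal_kernels_operations} to produce an explicit generator of the product kernel, and then verify regularity of that generator directly using the definition.

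First, I would observe that $\langle f \rangle\,\langle o\rangle = \langle |f|\,|o|\rangle$ by Theorem~\ref{cor_max_semifield_principal_kernels_operations}, and by Lemma~\ref{rem_every_generator_is_regular} it suffices to prove the single generator $|f|\,|o|$ is regular. Since $o = 1+w$ for an $\scrL$-monomial $w$, we have $o \ge_\nu 1$, so $|o| = o$. Also $|f| \ge_\nu 1$, so $|f|\,o \ge_\nu 1$, and
\[
\Skel(|f|\,o) \;=\; \{\bfa : |f|(\bfa)\nucong 1 \text{ and } o(\bfa)\nucong 1\} \;=\; \Skel(f)\cap \Skel(o) \;\subseteq\; \Skel(f),
\]
using Lemma~\ref{rem_max0} for the first equality.

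Next, I would fix a point $\bfa \in \Skel(|f|\,o)$ and verify the regularity condition at $\bfa$. Since $\bfa\in\Skel(f)$ and $f$ is regular, every $\nu$-neighborhood $U$ of $\bfa$ contains a point $\bfa' \in U$ at which numerator and denominator of (a canonical writing of) $f$ disagree, equivalently $f(\bfa') \not\nucong 1$, equivalently $|f|(\bfa') >_\nu 1$. Because $o(\bfa') \ge_\nu 1$, we then obtain
\[
(|f|\,o)(\bfa') \;=\; |f|(\bfa')\,o(\bfa') \;\ge_\nu\; |f|(\bfa') \;>_\nu\; 1,
\]
so $(|f|\,o)(\bfa') \not\nucong 1$. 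This exhibits in every neighborhood of $\bfa$ a point on which the numerator and denominator of $|f|\,o$ (written canonically) fail to agree, establishing regularity of $|f|\,o$ at $\bfa$. Since $\bfa\in\Skel(|f|\,o)$ was arbitrary, $|f|\,o$ is regular, and hence so is $\langle f\rangle\,\langle o\rangle$.

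I do not expect a serious obstacle: the only point to watch is the degenerate case in which $\Skel(|f|\,o)=\emptyset$ (for instance when $\langle f\rangle\,\langle o\rangle$ collapses to $\langle 1\rangle$), but then the regularity condition is vacuous and there is nothing to check. The essential content is simply that multiplying the regular generator $|f|$ by the quantity $o \ge_\nu 1$ cannot destroy the strict inequality $|f|(\bfa')>_\nu 1$ witnessing regularity of $f$ near $\bfa$.
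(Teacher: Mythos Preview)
Your proof is correct and takes a genuinely different route from the paper's. The paper works with the generator $|f|+o$ (which also generates $\langle f\rangle\langle o\rangle$), writes it explicitly as a single fraction $\frac{(h^2+g^2)g'+ghh'}{ghg'}$, and then analyzes the leading Laurent monomials case by case to see that one of them is nonconstant. You instead use the generator $|f|\cdot o$ and argue purely from values: since $\Skel(|f|\,o)\subseteq\Skel(f)$ and $o\ge_\nu 1$, any witness $\bfa'$ to regularity of $f$ near $\bfa$ (i.e.\ with $|f|(\bfa')>_\nu 1$) automatically gives $(|f|\,o)(\bfa')>_\nu 1$. Your argument is shorter and more conceptual, and in fact works verbatim with any $\nu$-kernel $\langle o\rangle$ whose generator satisfies $|o|\ge_\nu 1$ (equivalently, always), not just order $\nu$-kernels; the paper's monomial computation, by contrast, uses the specific shape $o=1+\frac{h'}{g'}$. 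One minor citation fix: the identity $\Skel(|f|\,o)=\Skel(|f|)\cap\Skel(o)$ is Lemma~\ref{rem_max}(i), while Lemma~\ref{rem_max0} gives $\Skel(|f|)=\Skel(f)$ for the second equality.
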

\begin{proof} Write $f = \frac{h}{g}$ and $o = 1  +
\frac{h'}{g'}$ with $h'$ and $g'$ monomials in
$F[\la_1,...,\la_n]$. Since regularity does not depend on the
choice of  generator of the $\nu$-kernel and since $$ \langle f
\rangle  \langle o \rangle = \langle |f| +  |o| \rangle = \langle
|f| + o\rangle,$$
  we check the condition on $|f|  + o$.  Then $|f| + o =
|\frac{h}{g}| +   1 + \frac{h'}{g'}$. Since  $|\frac{h}{g}| \geq_\nu
1$ we have
$$|f|  +   o = \left(\left|\frac{h}{g}\right|  +   1\right)  +   \frac{h'}{g'}
= \left|\frac{h}{g}\right|  +   \frac{h'}{g'} \nucong \frac{(h^2  +
g^2)g' + ghh'}{ghg'}.$$ Since $g\ne h$, we are done unless $g'>_\nu
h',$
and Frobenius   enables us to reduce to $\frac{h^2  + g^2}{gh} =
|f|$. But then we are done since $|f|$ is regular, by
Lemma~\ref{rem_regular_closed_operations}.
\end{proof}

\begin{prop}\label{rem_regularity_of_HP_times_order_kernels_generalization}
$\langle f \rangle  \langle o_1 \rangle \cdots \langle o_k
\rangle$ is regular, for any HP-kernel $\langle f \rangle \neq 1$
and order $\nu$-kernels $\langle o_1 \rangle, ... , \langle o_k
\rangle$.
\end{prop}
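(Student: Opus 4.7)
The plan is to establish this by an explicit computation of a convenient generator of the product $\nu$-kernel, then to verify regularity by exhibiting a nonconstant leading Laurent monomial at every relevant point. A direct induction on $k$ from Lemma~\ref{rem_regularity_of_HP_times_order_kernels} is awkward, because the intermediate kernels $\langle f\rangle\langle o_1\rangle\cdots\langle o_{k-1}\rangle$ are no longer HP-kernels, and the base lemma requires an HP-kernel in the left factor. So I would handle all of the $o_i$ simultaneously.

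First, by Lemma~\ref{rem_sum_prod_absolute_values} applied inductively (or by repeated use of Theorem~\ref{cor_max_semifield_principal_kernels_operations}),
$$\langle f \rangle \langle o_1 \rangle \cdots \langle o_k \rangle \;=\; \Bigl\langle |f| + \textstyle\sum_{i=1}^{k} |o_i| \Bigr\rangle \;=\; \Bigl\langle |f| + \textstyle\sum_{i=1}^{k} o_i \Bigr\rangle,$$
the second equality since each $o_i=1+m_i\ge_\nu 1$, so $|o_i|\nucong o_i$. Writing each $m_i$ as an $\scrL$-monomial and using that $|f|=f+f^{-1}\ge_\nu 1$ absorbs every stray $+1$ pointwise in the $\nu$-bipotent sum, I would show that this generator is $\nu$-equivalent to
$$r \;=\; f + f^{-1} + \sum_{i=1}^{k} m_i,$$
which is a sum of $\scrL$-monomials (noting that $f$ and $f^{-1}$ are themselves $\scrL$-monomials since $\langle f\rangle\ne 1$ is an HP-kernel).

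Next, by Lemma~\ref{rem_every_generator_is_regular} the regularity of $\langle r\rangle$ reduces to regularity of the single function $r$. To check the latter, I would put $r$ over a common denominator $r=h/g$ and read off its leading Laurent monomials at an arbitrary point $\bfa$: the dominant monomial of $h$ pairs with $g$ to yield precisely the dominant term among $\{f, f^{-1}, m_1, \ldots, m_k\}$ at $\bfa$. Since every term in this list is an $\scrL$-monomial, hence a nonconstant Laurent monomial, at every $\bfa$ (and in particular at every $\bfa\in\Skel(r)$) the leading Laurent monomial of $r$ is distinct from the constant monomial $1$. By the characterization of regularity of Definition~\ref{regu} (and the discussion of leading Laurent monomials in \S\ref{sym}), this gives that $r$ is regular, and therefore $\langle f\rangle\langle o_1\rangle\cdots\langle o_k\rangle=\langle r\rangle$ is regular.

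The main obstacle I anticipate is the pointwise verification that $|f|+\sum_i o_i \nucong f+f^{-1}+\sum_i m_i$: one must carefully handle the possibility that some $m_i(\bfa)<_\nu 1$, in which case $o_i(\bfa)=1$ and the $1$-contribution is absorbed by $|f|(\bfa)\ge_\nu 1$, so $m_i$ simply does not contribute at $\bfa$ but $f$ or $f^{-1}$ still dominates and is an $\scrL$-monomial. Once that bookkeeping is in place, the regularity conclusion is essentially formal: every dominant summand of $r$ is a nonconstant Laurent monomial, which is exactly what regularity requires.
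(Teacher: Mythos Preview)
Your argument is correct, but it rests on a misreading of Lemma~\ref{rem_regularity_of_HP_times_order_kernels}. That lemma does \emph{not} require the left factor to be an HP-kernel: its hypothesis is merely that $\langle f\rangle\ne\langle 1\rangle$ be a \emph{regular} $\nu$-kernel. Hence direct induction on $k$ works immediately: $\langle f\rangle$ is an HP-kernel, hence regular; then $\langle f\rangle\langle o_1\rangle$ is regular by the lemma; since it contains $\langle f\rangle\ne\langle 1\rangle$ it is nontrivial, so the lemma applies again to give regularity of $\langle f\rangle\langle o_1\rangle\langle o_2\rangle$; and so on. This is exactly the paper's proof, which is a single sentence.

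Your alternative route---writing the product as $\langle\,|f|+\sum_i o_i\,\rangle$, absorbing the constant $1$'s into $|f|\ge_\nu 1$ to obtain the generator $r=f+f^{-1}+\sum_i m_i$, and then observing that every leading Laurent monomial of $r$ is one of the nonconstant $\scrL$-monomials $f,f^{-1},m_1,\dots,m_k$---is valid and has the virtue of producing an explicit regular generator in one stroke. The bookkeeping you flag (the case $m_i(\bfa)<_\nu 1$) is harmless: at any $\bfa\in\Skel(r)$ the dominant summand is $f$ or $f^{-1}$ (both $\nucong 1$ there), so the leading Laurent monomial is never the constant $\fone$. But note that this explicit computation is essentially what the proof of Lemma~\ref{rem_regularity_of_HP_times_order_kernels} already does for a single $o$; you have simply unrolled the induction by hand.
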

\begin{proof} Iterate Lemma~\ref{rem_regularity_of_HP_times_order_kernels},
noting that every  HP-kernel is regular.
\end{proof}

\subsection{Geometric interpretation of HS-kernels and region $\nu$-kernels}\label{subsection:geometric_interpretation}
 $ $


\begin{rem}
In view of Theorem~\ref{thm_holder_translated}, for the case that
$F$ is a divisible, $\nu$-archimedean supertropical \vsemifield0,
we may take $F= \mathscr{R}$.
\end{rem}

By Remark~\ref{log0}, any $\scrL$-monomial $f$ may be considered as
a linear functional over $\mathbb{Q}$ and thus the HP-kernel given
by the equation $f = 1$ over $\mathscr{R}$ translates to $\frak f =
0$ over $(\mathscr{R}, +)^{(n)}$ (in logarithmic notation), where
$\frak f$ is the linear functional obtained from $f$ by applying
\eqref{eq_vector_space_translation}.

\begin{lem}
If $f$ is an $\scrL$-monomial in $F(\la_1,...,\la_n)$, then $f$ is
completely determined by the set of vectors $\{ w_0, ...,w_n \}$ for
any $w_i =
(\alpha_{i,1},...,\alpha_{i,n},f(\alpha_{i,1},...,\alpha_{i,n})) \in
F^{(n+1)}$ where   $\{ a_i = (\alpha_{i,1},...,\alpha_{i,n}) \}
\subset F^{(n)}$ such that $w_0,....,w_n$ are in general position
(are not contained in an $(n-1)$-dimensional affine subspace of
$F^{(n+1)}$).
\end{lem}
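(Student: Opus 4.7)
The plan is to transport everything to logarithmic coordinates via Remark~\ref{log0}. Since we may assume $F = \mathscr{R}$ (by Theorem~\ref{thm_holder_translated} and the surrounding discussion), the multiplicative group of $F$ can be viewed additively as an $\mathbb{R}$-vector space. Under this identification, an $\scrL$-monomial
\[
f(\la_1,\dots,\la_n) = \alpha\,\la_1^{k_1}\cdots\la_n^{k_n},
\]
with $\alpha \in F$ and $(k_1,\dots,k_n) \ne (0,\dots,0)$, becomes an affine functional
\[
\frak f(x_1,\dots,x_n) = a + k_1 x_1 + \cdots + k_n x_n
\]
on $\mathbb{R}^n$, where $a$ corresponds to $\alpha$ via \eqref{eq_vector_space_translation}. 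Its graph
\[
\Gamma(\frak f) = \{(x_1,\dots,x_n,y)\in \mathbb{R}^{n+1} : y = \frak f(x_1,\dots,x_n)\}
\]
is then an $n$-dimensional affine subspace of $\mathbb{R}^{n+1}$.

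Next, I would observe that by construction each vector $w_i = (\alpha_{i,1},\dots,\alpha_{i,n}, f(\alpha_{i,1},\dots,\alpha_{i,n}))$ corresponds under the logarithmic translation to a point $\bar w_i$ lying on $\Gamma(\frak f)$ (the last coordinate is the value of $\frak f$ on the first $n$). The general-position hypothesis on $\{w_0,\dots,w_n\}$ means that $\bar w_0,\dots,\bar w_n$ are not contained in any common $(n-1)$-dimensional affine subspace of $\mathbb{R}^{n+1}$; equivalently, they are affinely independent and therefore affinely span an $n$-dimensional affine subspace $V \subseteq \mathbb{R}^{n+1}$. Since $V \subseteq \Gamma(\frak f)$ and both have affine dimension $n$, they coincide.

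Finally, $\frak f$ is determined uniquely by its graph $\Gamma(\frak f) = V$ (an affine functional is determined by its graph), so $\frak f$, and hence $f$ via the inverse of \eqref{eq_vector_space_translation}, is determined by the vectors $w_0,\dots,w_n$.

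The only potentially nontrivial step is the equivalence between the stated general-position condition and affine independence of the $\bar w_i$; this is routine linear algebra over $\mathbb{R}$, since $n+1$ points lie in a common $(n-1)$-dimensional affine subspace precisely when they fail to affinely span an $n$-dimensional one. No technical obstacle is anticipated beyond this bookkeeping.
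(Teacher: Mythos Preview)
Your proof is correct and rests on the same linear-algebra fact as the paper's: after passing to logarithmic coordinates, an $\scrL$-monomial becomes an affine functional on $\mathbb{R}^n$, and such a functional is determined by $n+1$ points in general position on its graph. The paper carries this out by explicitly solving for the coefficient $\alpha$ from $w_0$ and then setting up a linear system for the exponents $k_i$ from $w_1,\dots,w_n$, whereas you phrase the same determination more geometrically via the graph $\Gamma(\frak f)$; the content is the same.
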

\begin{proof}
Writing $f(\la_1,...,\la_n) = \alpha \prod_{i=1}^{n}\la_i^{k_i}$
with $k_i \in \mathbb{Z}$, we have $\alpha  =
f(a_{0})\prod_{i=1}^{n}a_{0,i}^{-k_i}$. After $\alpha$ is
determined, since $w_0,....,w_n$ are in general position the set
$$\left\{a_1,...,a_n, b = \left(f(a_1),...,f(a_n)\right)\right\}
\subset F^{(n)}$$ define a linearly independent set of $n$ linear
equations in the variables $k_i$, and thus determine them uniquely.
\end{proof}

Consider the HS-kernel of $F(\la_1,...,\la_n)$ defined by the
HS-fraction $f = \sum_{i=1}^{t}|f_i|$ where $f_1,...,f_t$ are
$\scrL$-monomials. Then $f(\bfa) \nucong 1$ if and only if
$f_i(\bfa) \nucong  1$ for each $i = 1,...,t$, which translates to
a homogenous system of  linear equations (over $ \mathbb Q$) of
the form $\frak f_i = 0$ where $\frak f_i$ is the logarithmic form
of $f_i$. This way $\Skel(f) \subset F^{(n)} \cong (F^{+})^{n}$ is
identified with an affine subspace of $F^{(n)}$ which is just the
intersection of the $t$ affine hyperplanes defined by $\frak f_i =
0, \ 1 \leq i \leq t$. Analogously, the $\scrL$-binomial $o = 1  +
g$ has $\nu$-value $1$ if and only if $g \leq_\nu 1$,  giving rise
to the half space of $\mathbb{R}$ defined by the weak inequality
$\frak g \leq 0$. Thus, the region $\nu$-kernel defined by $r =
\sum_{i=1}^{t}|o_i| = 1 + \sum_{i=1}^{t}g_i$ where $o_i = 1 + g_i$
are $\scrL$-binomials, yields the nondegenerate polyhedron formed
as the intersection of the affine half spaces each  defined using
the
  $\frak g_i$ corresponding to  $o_i.$

\begin{figure}
\centering

\begin{tikzpicture}[scale=2]

  \tikzstyle{axis}=[thin, ->]
  \tikzstyle{dline}=[very thick, ->]
  \tikzstyle{important line}=[very thick, dashed, ->]

    \colorlet{anglecolor}{green!50!black}

    \filldraw[fill=gray!40, draw = gray!40 ] (0,-1) -- (0,1) -- (-1,1) -- (-1,-1);
    \draw[dline] (0,0)  -- (0.5,0) node[above] {$\{\la_2=1\}$} -- (1,0)  node[right] {$\la_1$} ;
    \draw[axis] (0,-1) -- (0,1)   node[above] {$\la_2$} ;
    \draw[important line]  (0,0)  -- (-0.5,0) node[above] {$X$} -- (-1,0);

    \node[left] at (-0.2,0.5) {$\{ x \leq 1 \}$};

 \end{tikzpicture}
\caption{Order relations} \label{fig:Ex2}
\end{figure}

\subsection{The HO-decomposition}\label{HOdecomp}$ $


Since our next major goal, Theorem~\ref{thm_HP_expansion}, is
somewhat technical, we start this section with a motivating
example. Any point $\bfa = ( \alpha_1,...,\alpha_n)$ in $F^{(n)}$
is just $ \Skel(f_\bfa)$, where
$$f_\bfa(\la_1,...,\la_n) = \left|\frac{\la_1}{\alpha_1}\right| +
\dots  + \left|\frac{\la_n}{\alpha_n}\right| \in
F(\la_1,...,\la_n).$$ We would like $\langle f_\bfa \rangle$ to
encode the reduction of dimension from $F^{(n)}$ to the point
$\{\bfa \}$.

For each $1 \leq k \leq n$ define $f_{k,\bfa} =
\left|\frac{\la_1}{\alpha_1}\right|  +   \dots  +
\left|\frac{\la_k}{\alpha_k}\right|$ and $f_0 = 1$, and consider the
 chain of principal HS-kernels
\small{\begin{equation}\label{eq_chain_of_origin} \langle f_{\bfa}
\rangle = \langle f_{n,\bfa} \rangle   \supset \langle f_{n-1,\bfa}
\rangle \supset \dots \supset \langle f_{1,\bfa} \rangle \supset
\{1\} .
\end{equation}}
The factors  $\langle f_{k,\bfa} \rangle / \langle f_{k-1,\bfa}
\rangle$ are the quotient \semifields0
$$ \prod_{j=1}^{k}\left\langle \frac{\la_j}{\alpha_j} \right\rangle \bigg/ \prod_{j=1}^{k-1}\left\langle \frac{\la_j}{\alpha_j} \right\rangle \cong \left\langle \frac{\la_k}{\alpha_k} \right\rangle  \bigg/ \left(\left(\prod_{j=1}^{k-1}\left\langle \frac{\la_j}{\alpha_j} \right\rangle \right) \cap \left\langle \frac{\la_k}{\alpha_k} \right\rangle\right),$$
 a nontrivial homomorphic  image of an
HP-kernel of the quotient \vsemifield0 $F(\la_1,...,\la_n) \bigg/
\left(\left(\prod_{j=1}^{k-1}\langle \frac{\la_j}{\alpha_j}
\rangle \right) \cap \langle \frac{\la_k}{\alpha_i}
\rangle\right).$

We claim that this chain of HS-kernels can be refined to a longer
descending chain of principal   $\nu$-kernels descending from
$\langle f_a \rangle$. Indeed, the $\nu$-kernels $\langle |\la_1|
+ |\la_2| \rangle$ and
 $\langle |\la_1| \rangle = \langle \la_1 \rangle$
 both are \semifields0,  and
 $\langle |\la_1| \rangle$ is a $\nu$-subkernel of $\langle |\la_1|  +   |\la_2| \rangle$. Consider the substitution map $\phi$ sending $\la_1$ to $1$. Then $\Im(\phi) = \langle 1  +   |\la_2| \rangle = \langle |\la_2| \rangle_{F(\la_2)}$. The $\nu$-kernel $\langle |\la_2| \rangle$ is not
 simple as a principal $\nu$-kernel of the \vsemifield0  $\langle |\la_2|
 \rangle_{F(\la_2)}$, for
  the chain  $\langle |\la_2| \rangle \supset \langle |1  +   \la_2| \rangle \supset \langle 1 \rangle$ is the image of the refinement  $$\langle |\la_1|  +   |\la_2| \rangle \supset \langle |\la_1  +   \la_2|  +   |\la_1| \rangle \supset \langle |\la_1| \rangle$$ (since  $\phi(|\la_1  +   \la_2|  +   |\la_1|) = |\phi(\la_1)  +   \phi(\la_2) |  +   | \phi(\la_1)| = |1  +   \la_2|  +   |1|= |1 +  \la_2|$).

  On  the other hand, $\langle 1  +   \la_2 \rangle$ is an order $\nu$-kernel which induces the order
  relation $\la_2 \leq 1$ on the \vsemifield0 $\langle |\la_2|
  \rangle$.

In view of these considerations, we would like to exclude order
$\nu$-kernels and ask:
\begin{itemize}
\item Can \eqref{eq_chain_of_origin}   be refined to a longer descending chain of HS-kernels descending from $\langle f_a \rangle$?  \item Are the lengths of descending chains of HS-kernels beginning at $\langle f_{a} \rangle$ bounded? \item Can any chain of  HS-kernels be refined to such a chain of maximal length? \\
\end{itemize}
We provide answers to these three questions, for which the chain
\eqref{eq_chain_of_origin} is of maximal unique length common to
all chains of HS-kernels descending from $\langle f_a \rangle$.
Our method is to provide an explicit decomposition of a principal
$\nu$-kernel $\langle f \rangle$ as an intersection of
$\nu$-kernels of two types: The first, called an
\textbf{HO-kernel}, is a product of an HS-kernel and a region
$\nu$-kernel. The second is a  product of a region $\nu$-kernel
and a bounded from below $\nu$-kernel.  Whereas the first type
defines the $\onenu$-set  of $\langle f \rangle$, the second type
corresponds to the empty set and thus has no effect on the
geometry. This latter type is the source of ambiguity in relating
a $\onenu$-set to a $\nu$-kernel, preventing the $\nu$-kernel of
the $\onenu$-set from being principal.

When intersected with $\langle F \rangle$, the factors in the
decomposition coming from $\nu$-kernels of the second type are
degenerate. Restriction to $\langle F \rangle$ thus removes our
ambiguity,
 and each HO-kernel (intersected with $\langle F \rangle$) is in
  $1:1$ correspondence with its $\onenu$-set
    (the segment in the $\onenu$-set  defined by $\langle f \rangle$).
    Then the `HO-part' is unique and independent of the choice of the $\nu$-kernel generating the $\onenu$-set.

Geometrically, the decomposition to be described below is just the decomposition
 of a principal $\onenu$-set  defined by~$\langle f \rangle$ to its ``linear" components.
 Each   component is obtained
  by bounding an affine subspace of $F^{(n)}$ defined by an appropriate HS-fraction
  (which in turn generates an HS-kernel)  using a region fraction
  (generating a region $\nu$-kernel).
  Although the HS-fraction and region fraction defining each segment may vary when we pass from one generator of a principal $\nu$-kernel to the other,
  the HS-kernels and region $\nu$-kernels are independent of the choice of generator, for they correspond to the   components of the $\onenu$-set  of $\langle f \rangle$.

\begin{constr}\label{HO_construction}
Take a rational function  $f \in F(\la_1,...,\la_n)$ for which
$\Skel(f)\ne \emptyset$. Replacing $f$ by $|f|$, we may assume that
$f \geq_\nu 1$. Write $f = \frac{h}{g} = \frac{\sum_{i=1}^k
h_i}{\sum_{j=1}^m g_j}$ where $h_i$ and $g_j$ are monomials in
$F[\la_1,...,\la_n]$. For each $\bfa\in \Skel(f)$, let
$$H_a \subseteq H = \{ h_i \ : \ 1 \leq i \leq k \}; \qquad G_a \subseteq G = \{ g_j \ : \ 1 \leq j \leq m \}$$
be the sets of dominant monomials   at $\bfa$; thus,  $ h_i(\bfa) =
g_j(\bfa) $ for any $h_i \in H_\bfa$ and $g_j \in G_\bfa$. Let
$H_{\bfa}^{c} = H \setminus H_\bfa$ and $G_{\bfa}^{c} = G \setminus
G_\bfa$. Then, for any $h' \in H_\bfa$ and $h'' \in H_{\bfa}^{c}$, $
h'(\bfa)  + h''(\bfa) = h'(\bfa),$ or, equivalently,    $1 +
\frac{h''(\bfa)}{h'(\bfa)} = 1$. Similarly, for any $g' \in G_\bfa$
and $g'' \in G_{\bfa}^{c}$, $g'(\bfa)  + g''(\bfa) = g'(\bfa)$  or,
equivalently,  $1 + \frac{g''(\bfa)}{g'(\bfa)} = 1$.

Thus for any such $\bfa$ we obtain the  relations
\begin{equation}\label{eq_relations1}
\frac{h'}{g'} = 1, \qquad \forall h' \in  H_{\bfa},\ g' \in
G_{\bfa},
\end{equation}
\begin{equation}\label{eq_relations2}
1  +   \frac{h''}{h'} = 1  ; \ 1  +   \frac{g''}{g'} = 1,  \qquad
\forall h' \in H_{\bfa},\ h'' \in H_{\bfa}^{c},\ g' \in G_{\bfa},\
g'' \in G_{\bfa}^{c}.
\end{equation}
As $\bfa $ runs over $\Skel(f)$,  there are only finitely many
possibilities for $H_a$ and $G_a$ and thus for the relations in
\eqref{eq_relations1} and~\eqref{eq_relations2}; we denote these as
$(\theta_1(i), \theta_2(i)),$ $i = 1,...,q$.

In other words, for any $1 \le i \le q,$ the pair $(\theta_1(i),
\theta_2(i))$ corresponds to a $\nu$-kernel $K_i$ generated by the
corresponding elements
$$\frac{h'}{g'}  ,\ \left(1  +   \frac{h''}{h'}\right),
\ \text{and} \  \left(1  +   \frac{g''}{g'}\right),$$ where $\{
\frac{h'}{g'} = 1 \}  \in \theta_1$ and $\{ 1  +   \frac{g''}{g'} =
1\},\ \{ 1  +   \frac{h''}{h'} = 1\} \in \theta_2$.

Reversing the argument, every point satisfying one of these $q$ sets
of relations  is in $\Skel(f)$. Hence,
\begin{equation}\label{localexpansion} \Skel\left(\langle f \rangle
\cap \langle F \rangle \right) =\Skel(f) =
\bigcup_{i=1}^{q}\Skel(K_i) = \bigcup_{i=1}^{q}\Skel\left(K_i \cap
\langle F \rangle\right)\end{equation} $$=
\Skel\left(\bigcap_{i=1}^{q}(K_i \cap \langle F \rangle)\right),$$
Hence  $\langle f \rangle \cap \langle F \rangle =
\bigcap_{i=1}^{q}K_i \cap \langle F \rangle$, since $\langle f
\rangle \cap \langle F \rangle, \ \bigcap_{i=1}^{q}K_i \cap \langle
F \rangle \in \PCon(\langle F \rangle)$.
 $\bigcap_{i=1}^{q}K_i$ provides a local description of $f$ in a neighborhood of its $\onenu$-set .

Let us view this construction globally. We used the $\onenu$-set
of $\langle f \rangle$ to construct $\bigcap_{i=1}^{q}K_i$.
Adjoining various points $\bfa$ in $F^{(n)}$ might add some
regions, complementary to the regions defined by
\eqref{eq_relations2} in $\theta_2(i)$ for $i=1,...,q$, over which
$\frac{h'}{g'} \neq 1, \forall h' \in H_{\bfa}, \forall g' \in
G_{\bfa}$ for each $\bfa$, i.e., regions over which the dominating
monomials never agree. Continuing the construction above using
$\bfa \in F^{(n)} \setminus \Skel{(f)}$ similarly produces a
finite collection of, say $t \in \mathbb{Z}_{\geq 0}$,
$\nu$-kernels generated by elements from \eqref{eq_relations2} and
their complementary order fractions and by elements of the form
\eqref{eq_relations1} (where now $\frac{h'}{g'} \neq 1$ over the
region considered). Any
 principal $\nu$-kernel $N_j = \langle q_j \rangle$,
   $1 \leq j \leq t$, of this complementary set of $\nu$-kernels has the
   property that $\Skel(N_j) = \emptyset$, and thus by
   Corollary \ref{cor_empty_kernels_correspond_to_bfb_kernels}, $N_j$ is bounded from below.
    As there are finitely many such $\nu$-kernels there exists small enough $\gamma >_\nu 1$ in $\tT$
for which $|q_j| \wedge \gamma = \gamma$ for $j = 1,...,t$. Thus
$\bigcap_{j=1}^{t}N_j$ is bounded from below and thus
$\bigcap_{j=1}^{t}N_j \supseteq \langle F \rangle$ by Remark
   \ref{rem_bounded_from_below_contain_H_kernel}.

Piecing this together with \eqref{localexpansion} yields $f$ over
all of $F^{(n)}$, so we have
\begin{equation}\label{full_expansion}
\langle f \rangle = \bigcap_{i=1}^{q}K_i \cap \bigcap_{j=1}^{t}N_j.
\end{equation}

So, $\langle f \rangle \cap \langle F \rangle = \bigcap_{i=1}^{q}K_i \cap \bigcap_{j=1}^{t}N_j \cap \langle F \rangle = \bigcap_{i=1}^{q}K_i \cap \langle F \rangle$.\\
\end{constr}

In  this way, we see that intersecting a principal
 $\nu$-kernel $\langle f \rangle$  with $\langle F \rangle$ `chops off' all of the  bounded from below $\nu$-kernels in \eqref{full_expansion} (the $N_j$'s given above). This eliminates ambiguity in the $\nu$-kernel corresponding to $\Skel(f)$.\\
Finally we note that if $\Skel(f) = \emptyset$, then $\langle f \rangle = \bigcap_{j=1}^{t}N_j$ for appropriate $\nu$-kernels $N_j$ and   $\langle f \rangle \cap \langle F \rangle = \langle F \rangle$.\\

\begin{rem}\label{RNL} $ $
\begin{enumerate}\eroman
  \item If $K_1$ and $K_2$ are such that $K_1  K_2 \cap F = \{1\}$ (i.e., $\Skel(K_1) \cap \Skel(K_2) \neq \emptyset$), then the sets of $\scrL$-monomials~$\theta_1$ of $K_1$ and of $K_2$ are not the same
   (although one may contain the other), for otherwise together they would yield a single $\nu$-kernel via Construction~\ref{HO_construction}.

  \item The $\nu$-kernels $K_i$, being finitely generated, are in fact principal,
  so we can write $K_i = \langle k_i \rangle$ for rational functions $k_1,
\dots, k_q$. Let $\langle f \rangle \cap \langle F \rangle =
\bigcap_{i=1}^{q}(K_i \cap \langle F \rangle)  =
\bigcap_{i=1}^{q}\langle |k_i| \wedge |\alpha| \rangle =
\bigwedge_{i=1}^{q} \langle |k_i| \wedge |\alpha| \rangle$ with
$\alpha \in F \setminus \{ 1 \}$.
By~Theorem~\ref{thm_every_gen_is_reducible}, for any generator
$f'$ of $\langle f \rangle \cap \langle F \rangle$ we have that
$|f'| = \bigwedge_{i=1}^{q} |k_i'|$ with $k_i' \sim_{\FF} |k_i|
\wedge |\alpha|$ for every $i = 1,...,q$. In particular,
$\Skel(k_i') = \Skel(|k_i| \wedge |\alpha|) = \Skel(k_i)$. Thus
the $\nu$-kernels $K_i$ are independent of the choice of generator
$f$, being defined by the components $\Skel(k_i)$ of   $\Skel(f)$.
\end{enumerate}
\end{rem}

We now provide two instances of
Construction~\ref{HO_construction}. In what follows, we always
make use of the above notation  for the different types of
$\nu$-kernels involved in the construction. When denoting
$\nu$-kernels, $R$ stands for "region", $N$ for "null" (which are
bounded from below), and $L$ for "linear" (representing
HS-kernels, which are unbounded).

\begin{exmp}\label{exmp1_for_construction} $F = (F, \tT, \nu, \tG).$
Let $f = |\la_1| \wedge \alpha = \frac{\alpha|\la_1|}{\alpha  +
|\la_1|} \in F(\la_1,\la_2),$ where $\alpha
>_\nu 1$ in $\tT$. The order relation $ \alpha \leq_\nu |\la_1|$ translates to
the relation $\alpha +   |\la_1|\nucong |\la_1|$  or equivalently to
$\alpha|\la_1|^{-1} +   1 \nucong 1$. Over the region defined by
this relation we have $f =  \frac{\alpha |\la_1|}{|\la_1|} = \alpha
$. Similarly, its complementary order relation $ \alpha \geq_\nu
|\la_1|$ translates to $\alpha^{-1}|\la_1|  +  1 = 1$ (via $|\la_1|
+ \alpha \nucong \alpha$) over whose region $f = \frac{\alpha
|\la_1|}{\alpha}= |\la_1|$ . So
$$\langle f \rangle = K_1 \cap K_2 = (R_{1,1}  L_{1,1}) \cap (R_{2,1}  N_{2,1})$$
where $R_{1,1} =  \langle \alpha^{-1}|\la_1| +   1 \rangle$,
$L_{1,1} = \langle |\la_1| \rangle$, $R_{2,1} = \langle
\alpha|\la_1|^{-1}  +   1 \rangle$, and $N_{2,1} = \langle \alpha
\rangle$. Geometrically $R_{1,1}$ is a strip containing the axis
$\la_1 = 1$, and $R_{2,1}$ is the complementary region. The
restriction of $f$ to $R_{1,1}$ is $|\la_1|$ while $f$ restricted to
$R_{2,1}$ is $\alpha$. Deleting $N_{2,1}$ we still have $\Skel(f) =
\Skel(R_{1,1}  L_{1,1})$, although $ R_{1,1}  L_{1,1}$ properly
contains $\langle f \rangle$.
$$\langle f \rangle = \langle f \rangle \cap \langle F \rangle = (R_{1,1}
 L_{1,1} \cap R_{2,1}  N_{2,1}) \cap \langle F \rangle =
(R_{1,1}  L_{1,1}) \cap \langle F \rangle.$$
\end{exmp}

\begin{exmp}\label{exmp2_for_construction}
Let $f = |\la_1  +   1| \wedge \alpha \in F(\la_1,\la_2)$ for some
$\alpha
>_\nu 1$ in $\tT$. First note that  $|\la_1 +  1| = \la_1  +   1$ since $\la_1 +  1 \geq 1$, allowing us to rewrite $f$ as $(\la_1 +  1) \wedge \alpha$. Then $
f = \frac{\alpha (\la_1 +   1) }{\alpha  +   (\la_1 +  1)} =
\frac{\alpha \la_1 + \alpha}{\alpha +  \la_1}$. The order relation $
\alpha \leq_\nu \la_1$ translates to the relation $\alpha + \la_1
\nucong \la_1$ or equivalently to $\alpha \la_1^{-1} +   1 \nucong
1$, over whose region    $f = \frac{\alpha \la_1 + \alpha}{\alpha +
\la_1} = \frac{\alpha \la_1 + \alpha}{\la_1} = \alpha +
\frac{\alpha}{\la_1} = \alpha$. Similarly, the complementary order
relation $ \alpha \geq \la_1$ translates to $\alpha^{-1}\la_1 +  1
\nucong 1$  over whose region  $f = \frac{\alpha \la_1 +
\alpha}{\alpha + \la_1} = \frac{\alpha \la_1 +  \alpha}{\alpha} =
\la_1 +   1$. So
$$\langle f \rangle = K_1 \cap K_2 = (R_{1,1}  \mathcal O _{1,2}) \cap (R_{2,1}  N_{2,1}) = \mathcal O _{1,2} \cap (R_{2,1}  N_{2,1})$$
where $R_{1,1} = \langle \alpha^{-1} \la_1  +   1 \rangle$,
$\mathcal O _{1,2} = \langle \la_1  +   1 \rangle$, $R_{2,1} =
\langle \alpha \la_1^{-1}  +   1 \rangle$ and $N_{2,1} = \langle
\alpha \rangle$. But $\Skel(R_{2,1}  N_{2,1}) \subseteq
\Skel(N_{2,1})= \emptyset$. So
$$\Skel(f) = \Skel(\mathcal O _{1,2}) \cup \Skel(R_{2,1}  N_{2,1}) = \Skel(\la_1 +  1) \cup \emptyset = \Skel(\la_1 +  1).$$
\end{exmp}

Now suppose $L_i$ are HP-kernels and $\mathcal O_j$ are order
$\nu$-kernels, and let $L =  \prod L_i$ and  $R =\prod {\mathcal
O}_j$. As can be  seen easily from examples
\ref{exmp1_for_construction} and \ref{exmp2_for_construction}, by
substituting any $\scrL$-monomial for $\la_1$ and any order
fraction for $\la_1 + 1$,
\begin{align*}
 ( L  R)\cap \langle F \rangle  =& \prod (L_i \cap \langle F \rangle)   \prod (\mathcal O_j \cap \langle F \rangle) \\
= & \left( \prod \left((L_i  R_i) \cap (N_i  R_i^{c})\right)  \prod
\left((\mathcal O_j  {R'}_j) \cap (M_j   {{R'}_j}^{
c})\right)\right)\cap \langle F \rangle \\ & =  \left( \prod L_i
\prod {\mathcal O'}_j \right)  \cap N \cap  \langle F \rangle
\end{align*} where the $R_i$ and $R'_j$ are region $\nu$-kernels and $N = \left(\prod_i N_i\right)\left(\prod_j M_j\right)$.
  The $\nu$-kernels $N_i,
M_j,$ and thus $N$, are bounded from below;  $ \langle F \rangle
\subseteq N$, and the right side is  $( L R' ) \cap \langle F
\rangle $ where $R'$ is a region $\nu$-kernel.

 Note that the $\mathcal
O'_j$s involve the $R_i$s, the $R'_j$s and the $\mathcal O_j$s. Also
note that intersecting with $\langle F \rangle$ keeps the HS-kernel
unchanged in the new decomposition.

As  the $N_j$s in \eqref{full_expansion}, being  bounded from
below, do not affect $\Skel(f)$, we put them aside for the time
being and proceed to study the structure of the $\nu$-kernels
$K_i$ and their corresponding $\onenu$-sets.

Take one of these $\nu$-kernels $K_i$. Recall that  $K_i$ is
generated by a set comprised of   $\scrL$-monomials  and order
elements. Let $L_{i,j}$,  $1\le j \le u$, and  $\mathcal O_{i,k}$,
$1 \le k \le v$, be the HP-kernels and the order $\nu$-kernels
generated respectively by these   $\scrL$-monomials  and order
elements. Then we can write
\begin{equation}\label{eqLO}
K_i = L_i  R_i = \left(\prod_{j=1}^{u}L_{i,j}
\right)\left(\prod_{k=1}^{v}\mathcal O_{i,k}\right),
\end{equation}
where $L_i=\prod_{j=1}^{u}L_{i,j}$ is an HS-kernel and $R_i =
\prod_{k=1}^{v}\mathcal O_{i,k}$ is a region $\nu$-kernel. By
assumption, $\Skel(K_i) \neq \emptyset$ since
 at least one point of the $\onenu$-set  was used in its construction.
  Moreover, one cannot write $L = M_1 \cap M_2$ for distinct HS-kernels $M_1$ and
  $M_2$,
  for otherwise the construction
  would have produced two distinct $\nu$-kernels, one with $M_1$ as its HS-kernel
  and the other with $M_2$ as its HS-kernel, rather than $K_i$.

  Let us formalize this situation.

\begin{defn}\label{defn_HO_fraction}
A rational function  $f \in F(\la_1,...,\la_n)$ is  an
\textbf{HO-fraction} if it is the sum of   an HS-fraction $f'$ and a
region fraction $o_f$.
\end{defn}

\begin{defn}
A principal $\nu$-kernel $K \in \PCon(F(\la_1,...,\la_n))$ is said
to be an \textbf{HO-kernel} if it is generated by an HO-fraction.
\end{defn}

 Note that any HS-kernel or any
region $\nu$-kernel is an HO-kernel.

\begin{lem}
 A principal $\nu$-kernel $K$ is an HO-kernel if and only if
   $K = L  R$ where $L$ is an HS-kernel and $R$ is a region $\nu$-kernel.
\end{lem}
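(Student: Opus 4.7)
The plan is to reduce this biconditional to Theorem~\ref{cor_max_semifield_principal_kernels_operations}, which provides the product formula $\langle f \rangle \langle g \rangle = \langle |f| \dotplusss |g| \rangle$ for principal $\nu$-kernels. The only subtlety is to verify that HS-fractions and region fractions are already $\ge_\nu 1$, so their absolute values coincide with themselves, making the translation between ``sum of fractions'' and ``product of $\nu$-kernels'' transparent.

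For the forward direction, assume $K$ is an HO-kernel, so $K = \langle f \rangle$ where $f = f' \dotplusss o_f$ with $f' = \sum_{i=1}^t |f_i|$ an HS-fraction (each $f_i$ an $\scrL$-monomial) and $o_f = \sum_{j=1}^s |o_j|$ a region fraction (each $o_j = 1 \dotplusss g_j$ for an $\scrL$-monomial $g_j$). Since each $|f_i| \ge_\nu 1$ and each $|o_j| \ge_\nu 1$, we have $f' \ge_\nu 1$ and $o_f \ge_\nu 1$, so $|f'| = f'$ and $|o_f| = o_f$. Then by Theorem~\ref{cor_max_semifield_principal_kernels_operations},
\begin{equation*}
K = \langle f' \dotplusss o_f \rangle = \langle |f'| \dotplusss |o_f| \rangle = \langle f' \rangle \langle o_f \rangle = L R,
\end{equation*}
where $L = \langle f' \rangle$ is an HS-kernel and $R = \langle o_f \rangle$ is a region $\nu$-kernel by definition.

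For the reverse direction, assume $K = L R$ where $L$ is an HS-kernel with generator $f' = \sum_{i=1}^t |f_i|$ (an HS-fraction) and $R$ is a region $\nu$-kernel with generator $o = \sum_{j=1}^s |o_j|$ (a region fraction). Again applying Theorem~\ref{cor_max_semifield_principal_kernels_operations},
\begin{equation*}
K = \langle f' \rangle \langle o \rangle = \langle |f'| \dotplusss |o| \rangle = \langle f' \dotplusss o \rangle,
\end{equation*}
where the last equality uses $f', o \ge_\nu 1$. Since $f' \dotplusss o$ is by definition the sum of an HS-fraction and a region fraction, it is an HO-fraction (Definition~\ref{defn_HO_fraction}), so $K$ is an HO-kernel.

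There is no real obstacle here; the entire argument is a direct unpacking of definitions once one has the product formula for principal $\nu$-kernels. The only point requiring care is the sign convention on generators (ensuring they are $\ge_\nu 1$ so the absolute-value notation in the definitions of HS- and region fractions matches the elements themselves), and this is automatic from the fact that both $\sum |f_i|$ and $\sum |o_j|$ are manifestly $\ge_\nu 1$.
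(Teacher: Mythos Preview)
Your proof is correct and follows essentially the same approach as the paper's own proof: both directions reduce to the identity $\langle f' \dotplusss o_f \rangle = \langle f' \rangle \langle o_f \rangle$. You are slightly more explicit than the paper in justifying why $|f'| = f'$ and $|o_f| = o_f$ (so that Theorem~\ref{cor_max_semifield_principal_kernels_operations} applies directly), but the argument is the same.
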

\begin{proof} $(\Rightarrow)$
Write $K = \langle f \rangle$, where $f = f' +   o_f$ is an
HO-fraction. Thus $K = \langle f'  +  o_f \rangle = \langle f'
\rangle  \langle o_f \rangle$ where $\langle f' \rangle$ is an
HS-kernel and $\langle o_f \rangle$ is a region $\nu$-kernel.

$(\Leftarrow)$ Write the HO-fraction $f = f'  +   r$ where $f'$ is
an HS-fraction generating $L$ and $r$ is a region fraction
generating~$R$; then $\langle f \rangle = \langle f'  +   r \rangle
= \langle f' \rangle  \langle r \rangle =  L  R = K$.
\end{proof}

\begin{thm}\label{thm_HP_expansion}
Every principal $\nu$-kernel $\langle f \rangle$ of
$F(\la_1,...,\la_n)$ can be written as the intersection of
finitely many principal $\nu$-kernels
$$\{K_i : i=1,...,q\} \ \text{and} \ \{N_j : j = 1,...,m \},$$
whereas each $K_i$ is the product of an HS-kernel and a region
$\nu$-kernel
\begin{equation}\label{consab}
K_i = L_i  R_i =  \prod_{j=1}^{t_i}L_{i,j} \prod_{k=1}^{k_i}\mathcal
O_{i,k}
\end{equation}
while each $N_j$ is a product of bounded from below $\nu$-kernels
and (complementary) region $\nu$-kernels. For $\langle f \rangle
\in \PCon(\langle F \rangle)$, the~$N_j$ can be replaced by
$\langle F \rangle$ without affecting the resulting
$\nu$-kernel.\end{thm}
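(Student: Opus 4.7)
The plan is to follow the explicit dominance analysis outlined in Construction~\ref{HO_construction}, which essentially already contains the argument; my task is to organize it as a clean proof and verify that each $K_i$ has the claimed product form. First I would reduce to $f \ge_\nu 1$ by replacing $f$ with $|f|$ (since $\langle f \rangle = \langle |f| \rangle$), and write $f = h g^{*}$ canonically with $h = \sum_{i=1}^{k} h_i$ and $g = \sum_{j=1}^{m} g_j$ as sums of monomials. The key observation is that, for every $\bfa \in F^{(n)}$, the values $h(\bfa)$ and $g(\bfa)$ depend only on which monomials of $h$ and $g$ are dominant at $\bfa$, and there are only finitely many possible dominance patterns $(H_\bfa, G_\bfa)$.

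Next I would enumerate the dominance patterns occurring at points of $\Skel(f)$ as $(\theta_1(i), \theta_2(i))$ for $i=1,\dots,q$. Each such pattern enforces equality relations $h'/g' \nucong 1$ (for $h' \in H_\bfa$, $g' \in G_\bfa$) coming from the fact that dominant monomials in numerator and denominator match, plus order relations $1 + h''/h' \nucong 1$ and $1 + g''/g' \nucong 1$ explaining non-dominance of the remaining monomials. I define $K_i$ to be the $\nu$-kernel generated by all these fractions. The equality relations generate an HS-kernel $L_i$ (a product of HP-kernels, by Proposition~\ref{rem_HS_prod_HP}), while the order relations generate a region $\nu$-kernel $R_i$ (by Lemma~\ref{rem_Region_prod_Order}), so that $K_i = L_i R_i$ as in \eqref{consab}. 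Similarly, the remaining dominance patterns (those never realized in $\Skel(f)$) yield $\nu$-kernels $N_j$ built from order relations together with relations of the form $h'/g' \neq 1$ that are bounded from below; by Corollary~\ref{cor_empty_kernels_correspond_to_bfb_kernels} these $N_j$ have empty $\onenu$-sets, hence are themselves bounded from below.

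The equality
\begin{equation*}
\langle f \rangle \;=\; \bigcap_{i=1}^{q} K_i \;\cap\; \bigcap_{j=1}^{t} N_j
\end{equation*}
then follows from Proposition~\ref{prop_ker_skel_correspondence} and Proposition~\ref{prop_generator_of_skel2}: the right-hand side is a principal $\nu$-kernel (by Corollary~\ref{cor_principal_kernels_sublattice}) whose $\onenu$-set equals $\Skel(f)$, so by the uniqueness result of Proposition~\ref{prop_generator_of_skel2} (applied after intersecting with $\langle F \rangle$ if necessary) it coincides with $\langle f \rangle$.

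The main obstacle I anticipate is verifying the \emph{exhaustiveness} and \emph{minimality} of the dominance patterns in a way that makes the intersection rigorously equal to $\langle f \rangle$ and not merely to a $\nu$-kernel with the same $\onenu$-set. This is where Proposition~\ref{prop_generator_of_skel2} plus Remark~\ref{RNL}(ii) (identifying the $K_i$ intrinsically with the components of $\Skel(f)$) are essential. Finally, for the statement about $\langle F \rangle$, I observe that each $N_j$ is bounded from below, so by Remark~\ref{rem_bounded_from_below_contain_H_kernel}(i) we have $N_j \supseteq \langle F \rangle$, whence $\bigcap_j N_j \cap \langle F \rangle = \langle F \rangle$ and this factor may be dropped in the decomposition of $\langle f \rangle \cap \langle F \rangle$, leaving only the $K_i \cap \langle F \rangle$.
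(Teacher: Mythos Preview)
Your overall strategy matches the paper's: the decomposition is produced exactly as in Construction~\ref{HO_construction}, by enumerating dominance patterns $(H_\bfa,G_\bfa)$ and associating to each an HO-kernel $K_i = L_iR_i$ (equality relations giving the HS-part, order relations giving the region part) or a bounded-from-below $N_j$.  The product form~\eqref{consab} is precisely what the paper's formal proof verifies via Proposition~\ref{rem_HS_prod_HP} and Lemma~\ref{rem_Region_prod_Order}, and your handling of the $\langle F\rangle$ clause via Remark~\ref{rem_bounded_from_below_contain_H_kernel}(i) is correct.

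There is, however, a genuine gap at the step you yourself flag as the main obstacle.  You propose to deduce the equality $\langle f\rangle = \bigcap_i K_i \cap \bigcap_j N_j$ from Proposition~\ref{prop_generator_of_skel2}, but that proposition (a) is proved only over $\mathscr{R}$, not over a general $F$, and (b) applies only to $\nu$-subkernels of $\langle \mathscr{R}\rangle$; at best it gives $\langle f\rangle\cap\langle F\rangle = (\bigcap_i K_i\cap\bigcap_j N_j)\cap\langle F\rangle$.  Example~\ref{nonuniq} shows exactly why matching $\onenu$-sets is not enough: distinct principal $\nu$-kernels of $F(\la_1,\dots,\la_n)$ can share the same $\onenu$-set, so an argument via $\Skel$ alone cannot establish the full equality claimed in the theorem.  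The paper's Construction~\ref{HO_construction} resolves this differently, by the ``piecing together'' step: the dominance regions exhaust $F^{(n)}$, and on each region $|f|$ coincides \emph{as a function} with the generator of the corresponding $K_i$ or $N_j$; the wedge of these local generators therefore equals $|f|$ pointwise on all of $F^{(n)}$, which gives $\langle f\rangle = \bigcap_i K_i \cap \bigcap_j N_j$ directly, without appeal to $\onenu$-set uniqueness.  Replace your invocation of Proposition~\ref{prop_generator_of_skel2} with this direct functional identity and the proof goes through.
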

\begin{proof}

Let $K = L  R$ be an HO-kernel with  $R$ a region $\nu$-kernel and
$L$ an HS-kernel. By Proposition~\ref{rem_HS_prod_HP} and
Lemma~\ref{rem_Region_prod_Order}, we have that $L =
\prod_{i=1}^{u}L_i$ for some HP-kernels $L_1,...,L_u$ and $R =
\prod_{j=1}^{v}\mathcal O_j$ for some order $\nu$-kernels
$\mathcal O_1,...,\mathcal O_v$. Thus
\begin{equation}
K = L  R = \left(\prod_{i=1}^{u}L_i \right)\left(
\prod_{j=1}^{v}\mathcal O_j\right),
\end{equation}
where $u \in \mathbb{Z}_{\geq 0}$, \ $v \in \mathbb{N}$,
$L_1,...,L_u$ are HP-kernels and $\mathcal O_1,...,\mathcal O_v$
are order $\nu$-kernels.

Let $K_1$ and $K_2$ be region $\nu$-kernels (respectively
HS-kernels) such that  $(K_1K_2)~\cap~F~=~\{ 1 \}$. Then $K_1 K_2$
is a region $\nu$-kernel (respectively HS-kernel). Consequently,
if $K_1$ and $K_2$ are HO-kernels such that $(K_1  K_2) \cap F =
\{ 1 \}$ , then $K_1 K_2$ is an HO-kernel. Indeed, the assertions
follow from the decomposition   $K_{i} = L_{i} \mathcal O_{i} =
(\prod_{j=1}^{u_{i}}L_{i,j} )( \prod_{k=1}^{v_{i}}\mathcal
O_{i,k})$ for $i =1,2$, so that
$$K_1  K_2 =
(L_1 L_2)  (\mathcal O_1 \mathcal O_2) =
\left(\prod_{i=1}^{u_1}L_{1,i} \prod_{i=1}^{u_2}L_{2,i}\right)
\left(\prod_{j=1}^{v_1}\mathcal O_{1,j} \prod_{j=1}^{v_2}\mathcal
O_{2,j}\right) = L  \mathcal O,$$ with the appropriate $u_{i},v_i$
taken for $i =1,2$.
\end{proof}
\begin{prop}$ $
\begin{itemize}\eroman
  \item If $\langle f \rangle$ is an HS-kernel, then the decomposition degenerates to $\langle f \rangle = K_1$ with $K_1 = L_1 = \langle f \rangle$.
  \item If $\langle f \rangle$ is a region $\nu$-kernel, then $\langle f \rangle = K_1$ with $K_1 = R_1= \langle f \rangle$.
  \item $\langle f \rangle$ is an irregular $\nu$-kernel if and only if there exists some $i_0 \in \{ 1,...,q \}$ such that $K_{i_0} =  R_{i_0} = \prod_{k=1}^{k}\mathcal O_{k,i_0}$.
  \item $\langle f \rangle$ is a regular $\nu$-kernel if and only if $K_i$ is comprised of at least one HP-kernel, for every $i = 1,...,q$.
\end{itemize}
\end{prop}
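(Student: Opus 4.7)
The plan is to verify each of the four items by running Construction~\ref{HO_construction} on a conveniently chosen generator of $\langle f \rangle$ and then invoking Remark~\ref{RNL}(ii), which guarantees that the HS- and region-components of the HO-decomposition depend only on $\langle f \rangle$ itself, not on the chosen generator. This will let me pick the most transparent generator in each case.

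For (i), I would take the canonical generator $f=\sum_{i=1}^{t}|f_i|$ with non-proportional $\scrL$-monomials $f_i$. Since $\Skel(f)=\bigcap_{i=1}^{t}\Skel(f_i)$, every $\bfa\in\Skel(f)$ satisfies $f_i(\bfa)\nucong\fone$ for all $i$. Running Construction~\ref{HO_construction} at such a $\bfa$ yields from~\eqref{eq_relations1} exactly the HP-relations $f_i=\fone$, while the order relations of~\eqref{eq_relations2} become trivial there because all dominating monomials already agree. Thus only a single pattern $(\theta_1,\theta_2)$ arises, forcing $q=1$ and $K_1=L_1=\prod_{i=1}^{t}\langle f_i\rangle=\langle f\rangle$ with $R_1=\langle 1\rangle$. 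For (ii), I would analogously pick a generating region fraction $f=1+\sum_{i=1}^{t}g_i$ where the $g_i$ are $\scrL$-monomials with $g_i\not\nucong g_j^{\pm 1}$ for $i\ne j$. Then $\Skel(f)$ contains the open polyhedron cut out by the $g_i\le_\nu 1$, and at any interior point the dominant monomials of the numerator and denominator coincide trivially, so no HP-relation is imposed and only the order relations $1+g_i=\fone$ appear. This forces $K_1=R_1=\prod_{i=1}^{t}\langle 1+g_i\rangle=\langle f\rangle$ with $L_1=\langle 1\rangle$.

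For (iii) I would use the characterization of regularity from \S\ref{sym}: $f$ is irregular at $\bfa\in\Skel(f)$ iff some $\nu$-neighborhood of $\bfa$ lies entirely in $\Skel(f)$, equivalently iff every leading Laurent monomial of $f$ at $\bfa$ equals $\fone$. Translated into Construction~\ref{HO_construction}, this forces the dominating pair $(h',g')$ at $\bfa$ to satisfy $h'\nucong g'$, so~\eqref{eq_relations1} contributes no nontrivial HP-kernel and the component $K_{i_0}$ attached to this region is purely a product of order $\nu$-kernels, i.e., $K_{i_0}=R_{i_0}$. Conversely, if some $K_{i_0}$ in~\eqref{consab} has $L_{i_0}=\langle 1\rangle$, then $\Skel(R_{i_0})$ is a nonempty open region contained in $\Skel(f)$, so any point in its interior witnesses irregularity of any generator of $\langle f\rangle$, by Lemma~\ref{rem_every_generator_is_regular}. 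Part~(iv) is then the logical contrapositive of~(iii): $\langle f\rangle$ is regular iff no $K_{i_0}$ is a pure product of order $\nu$-kernels, iff every $K_i$ involves at least one HP-kernel factor.

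The main obstacle will be confirming in (i) and (ii) that no extraneous bounded-from-below pieces $N_j$ or complementary region components contribute to the HO-part of the decomposition, so that the decomposition of Theorem~\ref{thm_HP_expansion} really collapses to a single $K_1$ with the claimed special form. This reduces to combining the intrinsic uniqueness statement of Remark~\ref{RNL}(ii) with the fact that HS-fractions and region fractions are unbounded (Remarks~\ref{rem_HS_fraction_not_bounded} and~\ref{rem_HP_fraction_not_bounded}), thereby ensuring that the collection of patterns $(\theta_1(i),\theta_2(i))$ produced by Construction~\ref{HO_construction} degenerates to a single type in each case.
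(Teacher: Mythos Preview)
Your proposal is correct and follows essentially the same approach as the paper, though you are considerably more explicit. For (iii) and (iv) your argument coincides with the paper's almost verbatim: irregularity at $\bfa$ means the dominating monomials $h_{i_0},g_{j_0}$ satisfy $h_{i_0}\nucong g_{j_0}$, so the relation~\eqref{eq_relations1} degenerates to $1=1$ and the corresponding $K_{i_0}$ is purely a product of order $\nu$-kernels; (iv) is then the contrapositive.

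For (i) and (ii) the paper takes a shorter route than you do: rather than running Construction~\ref{HO_construction} on a canonical generator and arguing that only one pattern $(\theta_1,\theta_2)$ arises, it simply observes (via Proposition~\ref{rem_HS_prod_HP} and Lemma~\ref{rem_Region_prod_Order}) that an HS-kernel, resp.\ region $\nu$-kernel, is \emph{already} in the form~\eqref{consab} with $q=1$ and $R_1=\langle 1\rangle$, resp.\ $L_1=\langle 1\rangle$. This sidesteps your ``main obstacle'' entirely: since Theorem~\ref{thm_HP_expansion} asserts existence of such a decomposition and Remark~\ref{RNL}(ii) gives uniqueness of the $K_i$, exhibiting the trivial decomposition suffices, and no analysis of possible $N_j$ terms or complementary regions is needed. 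Your route is not wrong, just more laborious than necessary.
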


\begin{proof} (i)
By Proposition \ref{rem_HS_prod_HP}, $\langle f \rangle =
\prod_{j=1}^{t}L_j$ where $L_j$ is an   HP-kernel for each $j$.
Write $f = \frac{\sum h_{i}}{\sum g_{j}}$ for monomials $h_i,g_j \in
F[\Lambda]$. If $\langle f \rangle$ is irregular, then  at some
neighborhood of a point $\bfa \in F^{(n)}$  we have some $i_0$ and
$j_0$ for which $h_{i_0}\nucong g_{j_0}$ where
$(g_{j_0}(\bfa)\nucong ) h_{i_0}(\bfa)
>_\nu h_i(\bfa) , g_j(\bfa)$ for every $i \neq i_0$ and $j \neq j_0$. The
$\nu$-kernel corresponding to (the closure) of this region has its
relations \eqref{eq_relations1} degenerating to $1 = 1$ as
$\frac{h_{i_0}}{g_{j_0}} \nucong 1$ over the region, thus is given
only by its order relations of \eqref{eq_relations2}.

The last three assertions are direct consequences of \eqref{consab};
 namely, if $\langle f \rangle$ is either an HS-kernel or a region $\nu$-kernel, $\langle f \rangle$   already takes on the form of its decomposition. The fourth is equivalent to the third.\\

\end{proof}

\subsection{Conclusion of the proof of
Theorem~\ref{thmcireglattice}}$ $

Having the palate of $\nu$-kernels at our disposal, we can now
better understand Theorem~\ref{thmcireglattice}.

\begin{defn}
The \textbf{HO-decomposition} of  a principal $\nu$-kernel
$\langle f \rangle $ is   its decomposition given in Theorem
\ref{thm_HP_expansion}.
In the special case where  $\langle f \rangle \in \PCon(\langle F
\rangle)$, all bounded from below terms of the intersection are
equal to $\langle F \rangle$.
\end{defn}


\begin{defn}
For a subset  $S \subseteq F(\la_1,...,\la_n)$, denote by
$\text{HO}(S)$ the family of   HO-fractions in $S$, by
$\text{HS}(S)$ the family of HS-fractions in $S$, and by
$\text{HP}(S)$ the family of $\scrL$-monomials in $S$.
\end{defn}

\begin{rem} $\text{HP}(S) \subset \text{HS}(S)
\subset \text{HO}(S)$ for any $S \subseteq F(\la_1,...,\la_n)$,
since every $\scrL$-monomial is an HS-fraction and every HS-fraction
is an HO-fraction.
\end{rem}

\begin{exmp}\label{exmp_HO_decomposition}
Consider the $\nu$-kernel $\langle f \rangle$ where $f =
\frac{\la_1}{\la _2 +1} \in F(\la_1,...,\la_n)$. The points on the
$\onenu$-set  of $f$ define three distinct HS-kernels: $\langle
\frac{\la_1}{\la_2} \rangle $ (corresponding to   $\la_1=\la_2$)
over the region $\{\la_2 \geq 1\}$  defined by the region
$\nu$-kernel $\langle 1 + \la_2^{-1} \rangle$, $\langle \la_1
\rangle = \langle \frac{\la_1}{1} \rangle$ (corresponding to
$\la_1=1$) over the region $\{\la_2 \leq 1\}$  defined by the
region $\nu$-kernel $\langle 1 + \la_2 \rangle$, and $\langle
|\la_1| + |\la_2| \rangle$ (corresponding to the point defined by
$\la_1=\la_2=1$). Thus by Construction \ref{HO_construction},
$$\langle f \rangle = \left(\left\langle \frac{\la_1}{\la_2} \right\rangle
 \langle 1 + \la_2^{-1} \rangle\right) \cap \langle \la_1
\rangle  \langle 1 + \la_2 \rangle \cap \langle |\la_1| + |\la_2|
\rangle  \langle 1 \rangle$$ and
$$\Skel(f) = \left(\Skel\left(\frac{\la_1}{\la_2}\right) \cap \Skel(1 +
\la_2^{-1})\right) \cup \left(\Skel(\la_1) \cap \Skel(1 +
\la_2)\right) \cup \left(\Skel(|\la_1| + |\la_2|) \cap F^{(2)}
\right).$$ The third component of the decomposition (i.e., the
HS-kernel $\langle |\la_1| + |\la_2| \rangle$) could be omitted
without effecting $\Skel(f)$.

 The decomposition is shown (in
logarithmic scale) in Figure \ref{fig:Ex6}, where the first two
components are the rays emanating from the origin and the third
component is the origin itself.

\end{exmp}

\begin{figure}
\centering

\begin{tikzpicture}[scale=3,
    axis/.style={thin, color=gray, ->},
    important line/.style={very thick, ->}]

    \draw[axis] (-1,0)  -- (1,0) node[right, color=black] {$\lambda_1$} ;
    \draw[axis] (0,-1) -- (0,1) node[above, color=black] {$\la_2$} ;
    \draw[important line] (0,0)  -- (0,-1) node[above= 1.5cm,left=0.4cm] {$\Skel(\langle \lambda_1 \rangle \cap \langle 1 + \la_2 \rangle)$};
    \draw [black, fill=black] (0,0) circle (1pt) node[below=0.5cm,right=0.3cm] {$\Skel(\langle |\lambda_1| + |\la_2| \rangle)$};
    \draw[important line] (0,0) -- (0.7,0.7) node[below=0.2cm,right=0.2cm] {$\Skel(\left\langle \frac{\lambda_1}{\la_2} \right\rangle  \cap \langle 1 + \la_2^{-1} \rangle)$};

 \end{tikzpicture}

\caption{\small{$\Skel(\frac{\la_1}{\la_2+1}) = \Skel(\left\langle
\frac{\la_1}{\la_2} \right\rangle  \cap \langle 1 + \la_2^{-1}
\rangle) \cup \Skel(\langle \la_1 \rangle \cap \langle 1 + \la_2
\rangle) \cup \Skel(|\la_1| + |\la_2|)$}} \label{fig:Ex6}

\end{figure}

\subsubsection{The lattice generated by regular corner-internal
principal $\nu$-kernels}$ $

Recall from
Proposition~\ref{rem_regularity_of_HP_times_order_kernels_generalization}
  that the principal $\nu$-kernel $\langle f \rangle
\langle o_1 \rangle \cdots  \langle o_k \rangle$ is regular, for
any HP-kernel $\langle f \rangle \neq 1$ and order $\nu$-kernels
$\langle o_1 \rangle, ... , \langle o_k \rangle$.

\begin{lem}\label{cor_regularity_of_intersection_terms_of_HS O_expansion}
Let $K \in \PCon(\langle F \rangle)$ and let $$K = (L  R) \cap
\langle F \rangle = \left(\prod_{j=1}^{u}L_j\right)
\left(\prod_{k=1}^{v}\mathcal O_k\right) \ \cap \langle F
\rangle$$ be the decomposition of $K$, as given in \eqref{eqLO},
where $L_1,....,L_u$ are HP-kernels and $\mathcal O_1,...,\mathcal
O_v$ are order $\nu$-kernels. If $u \neq 0$, i.e., $L \neq \langle
1 \rangle$, then $K$ is regular.
\end{lem}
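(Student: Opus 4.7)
The plan is to chain together three regularity facts already proven in the paper, using the given decomposition $K = (LR)\cap\langle F\rangle$ with $L=\prod_{j=1}^{u}L_{j}$ a nonempty product of HP-kernels and $R=\prod_{k=1}^{v}\mathcal O_{k}$ a product of order $\nu$-kernels.

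First I would establish that $L$ itself is regular. Since $u\ge 1$, $L$ is a nontrivial product of HP-kernels, so by Proposition~\ref{rem_HS_prod_HP} it is an HS-kernel, and the corollary immediately following that proposition asserts that every HS-kernel is regular. Next I would adjoin the order $\nu$-kernels one at a time by iterating Lemma~\ref{rem_regularity_of_HP_times_order_kernels}: starting from the regular (principal) $\nu$-kernel $L\ne\langle 1\rangle$ and multiplying successively by $\mathcal O_{1},\dots,\mathcal O_{v}$, one obtains at each step a regular principal $\nu$-kernel, so in the end $LR$ is regular.

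Finally I would intersect with $\langle F\rangle$. Choose $\alpha\in F\setminus\{1\}$, so that $\langle F\rangle=\langle\alpha\rangle$; by Lemma~\ref{rem_every_generator_is_regular} the regularity of $LR$ means every generator of $LR$ is regular, so let $f$ be such a generator. By Theorem~\ref{cor_max_semifield_principal_kernels_operations},
\[
K \;=\; (LR)\cap\langle\alpha\rangle \;=\; \langle\,|f|\wedge|\alpha|\,\rangle .
\]
The constant $|\alpha|$ lies in $\operatorname{Reg}(\overline{F(\Lambda)})$ since $\Skel(|\alpha|)=\emptyset$ makes the regularity condition vacuous; also $|f|,|\alpha|\ne 1$. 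Thus Lemma~\ref{rem_regular_closed_operations} applies and $|f|\wedge|\alpha|$ is regular, so $K$ is regular as claimed.

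The only delicate point is the last step, because it relies on treating a nowhere-$1$ constant as regular by vacuity. If one prefers to avoid that convention, the same conclusion can be obtained directly: for any $\bfa\in\Skel(K)=\Skel(LR)$, continuity of $|f|$ together with $|f(\bfa)|\nucong 1<_{\nu}|\alpha|$ gives $|f|\le_{\nu}|\alpha|$ throughout a $\nu$-neighborhood of $\bfa$, hence $|f|\wedge|\alpha|$ agrees with $|f|$ near $\bfa$; the leading Laurent monomials of $|f|\wedge|\alpha|$ at $\bfa$ therefore coincide with those of $f$, and regularity of $f$ at $\bfa$ transfers to $|f|\wedge|\alpha|$.
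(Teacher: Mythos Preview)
Your proof is correct and follows essentially the same route as the paper. The paper regroups $K=(\prod_{j=2}^u L_j)(L_1\prod_k\mathcal O_k)\cap\langle F\rangle$, applies Proposition~\ref{rem_regularity_of_HP_times_order_kernels_generalization} to $L_1\prod_k\mathcal O_k$, then multiplies by the remaining HP-kernels and asserts without further comment that intersection with $\langle F\rangle$ preserves regularity; you instead show $L$ is regular as an HS-kernel and iterate Lemma~\ref{rem_regularity_of_HP_times_order_kernels} to adjoin the $\mathcal O_k$, which is the same content in a different order. Your added justification for the $\langle F\rangle$ step (via vacuous regularity of $|\alpha|$ or via the local argument) actually fills in a detail the paper leaves implicit.
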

\begin{proof}
Indeed, $K = (\prod_{j=2}^{u}L_j)  (L_1  \prod_{k=1}^{v}\mathcal
O_k) \cap \langle F \rangle$. By
Proposition~\ref{rem_regularity_of_HP_times_order_kernels_generalization},
$(L_1  \prod_{j=1}^{v}\mathcal O_j)$ is regular since $L_1$ is
 an HP-kernel. Thus $K$ is  a regular $\nu$-kernel,
 since a product of regular $\nu$-kernels is
regular and since intersection with $\langle F \rangle$ does not
affect regularity.
\end{proof}

We are finally ready to prove Theorem~\ref{thmcireglattice}.
\begin{proof}
Let $\langle f \rangle$ be a principal $\nu$-kernel and let
$$\langle f \rangle = \left(\bigcap_{i=1}^{q}K_i\right)  \cap \langle F \rangle, \ \ K_i = \prod_{j=1}^{t}L_{i,j} \prod_{k=1}^{k}\mathcal O_{i,k}$$
be its HO-decomposition. By Lemma
\ref{lem_HP_and_Oreder_kernels_are_CI}, each HP-kernel $L_{i,j}$
and each order $\nu$-kernel $\mathcal O_{i,k}$ are corner
internal. Thus $\langle f \rangle$ as a finite product of
principal corner internal $\nu$-kernels is in the lattice
generated by principal corner-internal $\nu$-kernels.
%
%

For the second assertion,  if $\langle f \rangle$ is regular, then
by Theorem \ref{thm_HP_expansion}, for every $1 \leq i \leq q$, we
have that $L_{i,1} \neq 1$. Thus by
Lemma~\ref{cor_regularity_of_intersection_terms_of_HS O_expansion},
  each $K_i$ is a product of principal regular
corner-internal $\nu$-kernels. Thus $\langle f \rangle$ is in the
lattice generated by principal regular corner-internal
$\nu$-kernels. We conclude with
Proposition~\ref{cor_regular_lattice}.
\end{proof}

%

\section{Polars: an intrinsic
description of $\mathcal{K}$-kernels}\label{Polar}

 To characterize
 $\mathcal{K}$-kernels intrinsically, we need a kind of orthogonality relationship,
 and introduce a new kind of $\nu$-kernel, called
\textbf{polar}, borrowed from the theory of lattice ordered groups
(\cite[section (2.2)]{OrderedGroups3}). We fix $X \subseteq
F^{(n)} .$  $\mathcal{S}$ is always assumed to be an idempotent
1-\vsemifield0, often $F(X)$ or even  $\FunF.$ In this section, we
lay out the general basics of the theory, although its full
strength is only obtained in the following sections when the
underlying \vsemifield0 $F$ is taken to be $\nu$-divisible,
$\nu$-archimedean, and complete, e.g., $F = \mathscr{R}$.

\subsection{Basic properties of polars}$ $

\begin{defn}\label{def_polar}
We write $f\perp g $ for $f,g \in F(X)$ if   $|f(\bfa)|\wedge
|g(\bfa)| \nucong 1$ for all $\bfa \in X$, i.e., $\Skel(f) \cup
\Skel(g) = X.$

 For  subsets $K,L$ of $F(X)$ we write  $K\perp L $ if $f\perp g $ for all $f \in
K$ and $g \in L$. (This is not necessarily implied by $ (\wedge |
K|) \perp (\wedge  |L|) ,$ as evidenced by Note~\ref{compl}.) For
$V\subseteq \mathcal{S}$, we define
\begin{equation}
V^{\bot} = \{ g  \in \mathcal{S} : g\perp V.  \}
\end{equation}
Such a set $V^{\bot}$ is called a \textbf{polar} in the literature.

For $f \in \mathcal{S}$  we write $f^{\bot}$ for $\{ f \}^{\bot}$.
Thus, $ f^{\bot}= |f|^{\bot}$. The set of all polars in
$\mathcal{S}$ is denoted as $\Plr(\mathcal{S})$.
\end{defn}

\begin{rem}  $f\perp g $ iff  $|f|\perp g $, iff  $\langle f \rangle \perp \langle g
\rangle.$\end{rem}

\begin{rem}[\cite{OrderedGroups3}]\label{rem_polar_of_kernel}
If $K \subset \mathcal{S}$, then   $K^{\bot} = \langle K
\rangle^{\bot}.$ Consequently,
            $$\langle K \rangle^{\bot \bot} = K^{\bot \bot}.$$
Thus, $L \perp K$ iff $K \subseteq L^{\bot},$  for  any kernels
$K,L$. \end{rem}

Although this kind of property cannot arise in classical algebraic
geometry since a variety cannot be the proper union of two algebraic
sets, it is quite common in the tropical setting. For example $(1+f)
\perp (1+f^{-1}).$ The usual properties of orthogonality go through
here.

\begin{lem}\label{rem_double polar of a polar}
The following statements are immediate consequences of Definition
\ref{def_polar}. For any  $K,L \subset \mathcal{S}$,
\begin{enumerate}\eroman
  \item $K \subseteq L \Rightarrow K^{\bot} \supseteq L^{\bot}$.
  \item $K \subseteq K^{\bot \bot}$.
  \item $K^{\bot} = K^{\bot \bot \bot}$.
   \item $K$ is a polar iff  $K^{\bot \bot} = K$.
\end{enumerate}
\end{lem}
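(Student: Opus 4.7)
The plan is to treat these as the standard Galois-connection properties arising from the orthogonality relation $\perp$, since $V \mapsto V^\bot$ is order-reversing (by the symmetric definition $f \perp g$) and the four statements are exactly those that hold for any such polarity. So I would prove them in the order they are listed, using only Definition~\ref{def_polar} and Remark~\ref{rem_polar_of_kernel}.

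For (i), I would argue directly from the definition: if $g \in L^\bot$, then $|g(\bfa)| \wedge |h(\bfa)| \nucong 1$ for every $h \in L$ and every $\bfa \in X$, and since $K \subseteq L$ this in particular holds for every $h \in K$, so $g \in K^\bot$. For (ii), I would take $f \in K$ and any $g \in K^\bot$; by the definition of $K^\bot$, $f \perp g$, and since this holds for every $g \in K^\bot$, we get $f \in (K^\bot)^\bot = K^{\bot\bot}$.

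Part (iii) is then a two-line consequence: applying (ii) with $K$ replaced by $K^\bot$ gives $K^\bot \subseteq K^{\bot\bot\bot}$, while applying (i) to the inclusion $K \subseteq K^{\bot\bot}$ from~(ii) yields $K^{\bot\bot\bot} = (K^{\bot\bot})^\bot \subseteq K^\bot$. Finally for (iv), the ($\Leftarrow$) direction is immediate: $K^{\bot\bot}$ is manifestly the polar of $K^\bot$. For ($\Rightarrow$), writing $K = V^\bot$ and using (iii) on $V$ gives $K^{\bot\bot} = V^{\bot\bot\bot} = V^\bot = K$.

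There is no real obstacle here; everything follows from unwinding the definition, so I would not expect any difficulty. The only subtlety worth flagging in the write-up is that these arguments use $\perp$ pointwise (as given in Definition~\ref{def_polar}) rather than via the ``$\wedge$ of absolute values equals $1$'' reformulation, which the paper has already warned can fail for infinite sets (cf.~Note~\ref{compl}); but since we only ever quantify $f \perp g$ pairwise, this warning is not triggered.
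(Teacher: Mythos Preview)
Your proof is correct and matches the paper's own argument essentially line for line: the paper declares (i) and (ii) obvious, derives (iii) by applying (ii) to $K^\bot$ together with (i), and proves the nontrivial direction of (iv) via $K = V^\bot \Rightarrow K^{\bot\bot} = V^{\bot\bot\bot} = V^\bot = K$, exactly as you do. Your additional remark about the pairwise nature of $\perp$ is a fair clarification but not needed for the proof itself.
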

\begin{proof} The first two assertions are obvious,
and the third follows from using  $K^{\bot}$ in (ii), applying (i).
  Finally, if $K$ is a
polar, then $K = V^{\bot}$ for some $V \subseteq \mathcal{S}$, and
$K^{\bot \bot} = (V^{\bot})^{\bot \bot} =V^{\bot} = K$.
\end{proof}

The following facts, taken from \cite{OrderedGroups3}, can be
checked pointwise.

\begin{thm}\label{thm_properties of polars}
  For any subset $V$ of  $\mathcal{S}$,  $V^{\bot}$ is a $\mathcal{K}$-kernel of \ $\mathcal{S}$.
 \end{thm}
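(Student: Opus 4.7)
The plan is to identify $V^{\bot}$ explicitly as $\Ker$ of an appropriate subset of $X$, and then upgrade that subset to a genuine $\onenu$-set via the Galois-type connection between $\Skel$ and $\Ker$.

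First I would translate the pointwise orthogonality condition into the language of $\onenu$-sets. Using Lemma~\ref{rem_max0}, $\Skel(f) = \Skel(|f|)$, and since $|g|,|f|\ge_\nu 1$, Lemma~\ref{rem_max}(ii) gives
$$(|g|\wedge|f|)(\bfa)\nucong 1 \;\Longleftrightarrow\; \bfa \in \Skel(f)\cup\Skel(g).$$
Hence $g\perp f$ iff $\Skel(f)\cup\Skel(g) = X$, i.e.\ $X\setminus\Skel(f)\subseteq \Skel(g)$. Consequently
$$g\in V^{\bot} \;\Longleftrightarrow\; Z_0\subseteq \Skel(g), \qquad\text{where}\qquad Z_0 := \bigcup_{f\in V}\bigl(X\setminus\Skel(f)\bigr),$$
which is exactly the statement that $g\in\Ker(Z_0)$. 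Thus $V^{\bot}=\Ker(Z_0)$, and by Lemma~\ref{kersk} this already shows that $V^{\bot}$ is a $\nu$-kernel.

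Next I would promote this identification to the form required by Definition~\ref{defn_k_kernels}. Set $Z := \Skel(V^{\bot})$, which is a $\onenu$-set by construction. From Remark~\ref{rem_ker_properties_1}(iii), applied to the $\nu$-kernel $V^{\bot}$, we get $V^{\bot}\subseteq \Ker(\Skel(V^{\bot}))=\Ker(Z)$. Conversely, Remark~\ref{rem_ker_properties_1}(iv) applied to $Z_0$ gives $Z_0\subseteq\Skel(\Ker(Z_0))=\Skel(V^{\bot})=Z$, and then Remark~\ref{rem_ker_properties_1}(i) yields $\Ker(Z)\subseteq\Ker(Z_0)=V^{\bot}$. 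Combining the two inclusions, $V^{\bot}=\Ker(Z)$ with $Z$ a $\onenu$-set, so $V^{\bot}$ is a $\mathcal{K}$-kernel.

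There is no serious obstacle in this argument; the only subtlety worth flagging is that the set $Z_0$ produced by the direct computation need not itself be a $\onenu$-set (it is a union of \emph{complements} of $\onenu$-sets, which lie outside the closure properties recorded in Proposition~\ref{prop_skel_property1}). The fix is the standard one: replacing $Z_0$ by its ``closure'' $\Skel(\Ker(Z_0))$ under the $\Skel\circ\Ker$ operator produces a $\onenu$-set on the nose while leaving $\Ker(Z_0)$ unchanged. This is the only nontrivial step, and it is a formal consequence of the inclusions catalogued in Remark~\ref{rem_ker_properties_1}.
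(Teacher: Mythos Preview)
Your proof is correct and follows essentially the same route as the paper's: both translate $g\perp V$ into the condition that $\Skel(g)$ contains $X\setminus\Skel(f)$ for every $f\in V$, and then verify $V^{\bot}=\Ker(\Skel(V^{\bot}))$. The only cosmetic difference is that the paper cites the lattice-ordered-group literature for $V^{\bot}$ being a $\nu$-kernel and then checks the reverse inclusion $\Ker(\Skel(V^{\bot}))\subseteq V^{\bot}$ by a one-line direct computation (if $\Skel(g)\supseteq\Skel(V^{\bot})$ and $\Skel(V^{\bot})\cup\Skel(v)=X$ for all $v\in V$, then $\Skel(g)\cup\Skel(v)=X$), whereas you extract both facts from the single identification $V^{\bot}=\Ker(Z_0)$ together with the formal Galois properties in Remark~\ref{rem_ker_properties_1}.
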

\begin{proof}
 As noted in \cite[Theorem 2.2.4(e)]{OrderedGroups3}, $K := V^{\bot}$ is a convex
(abelian) subgroup, and thus a $\nu$-kernel. It remains to show
that $K = \Ker(\Skel(K)).$ Clearly $\subseteq$ holds, so we need
to show that any $g \in \Ker(\Skel(K))$ belongs to $K$. We are
given $\Skel(g) \supseteq \Skel(K)$ and $K \perp v$ for each $v$
in $V$, so $\Skel(g) \supseteq \Skel(K)$ implies that
$\Skel(g)\cup \Skel(v) \supseteq \Skel(K) \cup \Skel(v) = X $,
i.e., $g \perp V,$ as desired.
\end{proof}

\begin{prop}\label{thm_properties of polars1}
 $(\Plr(\mathcal{S}), \cdot, \cap, \bot, \{1\},\mathcal{S})$ is a complete Boolean algebra.
 \end{prop}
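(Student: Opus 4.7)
The plan is to deduce this from Glivenko's theorem: in any pseudocomplemented distributive lattice, the regular elements (those fixed by double pseudocomplementation) form a Boolean algebra. The lattice of $\nu$-kernels of $\mathcal{S}$ is already a complete distributive lattice by Proposition~\ref{lem_kernels_algebra}, so the proof reduces to identifying $K^\bot$ as the pseudocomplement operation on $\nu$-kernels and the polars as the regular elements with respect to it.

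First I would verify that $K^\bot$ is the pseudocomplement of a $\nu$-kernel $K$, meaning it is the largest $\nu$-kernel $L$ with $K \cap L = \{1\}$. If $L \subseteq K^\bot$ and $f \in K \cap L$, then $|f| \wedge |f| \nucong 1$, so $f \nucong 1$ by Remark~\ref{bfacts1}(iii). Conversely, if $K \cap L = \{1\}$ and $f \in K$, $g \in L$, then $|f| \wedge |g|$ lies in both $K$ and $L$ by the $\nu$-convexity of Remark~\ref{bfacts1}(ii), so it is $\nu$-equivalent to $1$; hence $f \perp g$ and $L \subseteq K^\bot$. Combined with Lemma~\ref{rem_double polar of a polar}(iv), this identifies polars with the regular elements.

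Glivenko's theorem then yields that $\Plr(\mathcal{S})$ is a Boolean algebra: meet is $\cap$, bottom is $\{1\} = \mathcal{S}^\bot$, top is $\mathcal{S} = \{1\}^\bot$, and complement is $K \mapsto K^\bot$. The join in this Boolean algebra is $K \vee L := (K \cup L)^{\bot\bot}$, which is a polar by Lemma~\ref{rem_double polar of a polar}(iv); I would identify this with the product $\cdot$ appearing in the statement via $(K \cup L)^{\bot\bot} = (KL)^{\bot\bot}$, which for polars coincides with the join. Arbitrary meets come from $\bigcap_i V_i^\bot = (\bigcup_i V_i)^\bot$, and arbitrary joins from $\bigvee_i K_i := (\bigcup_i K_i)^{\bot\bot}$, so the Boolean algebra is complete.

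The hardest step is the first---verifying that $|f| \wedge |g|$ lies in both $K$ and $L$ when $f \in K$ and $g \in L$. This requires each $\nu$-kernel to be closed under meets of absolute values, which in turn relies on $\nu$-convexity (Remark~\ref{bfacts1}(ii),(iii)) and the lattice structure inherited from the underlying lattice-ordered group of $\tG$. Once this pseudocomplement identification is in place, the rest of the proof is essentially formal via Glivenko, paralleling the classical treatment in \cite[Theorem~2.2.5]{OrderedGroups3}.
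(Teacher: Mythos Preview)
Your approach is correct. The paper's own proof is little more than a citation to \cite[Theorem~2.2.5]{OrderedGroups3} together with the single identity $\left(\bigcup_i K_i\right)^\bot = \bigcap_i K_i^\bot$, so your sketch supplies substantially more detail than the paper does. The route via Glivenko---identify $K^\bot$ as the pseudocomplement of $K$ in the complete distributive lattice of $\nu$-kernels (Proposition~\ref{lem_kernels_algebra}), then observe that polars are exactly the regular elements $K = K^{\bot\bot}$---is the standard argument and almost certainly what the cited theorem in Steinberg does.

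Two small points. First, the step you flag as ``hardest'' is in fact routine: $1 \le_\nu |f|\wedge|g| \le_\nu |f|$ together with $\nu$-convexity (Remark~\ref{bfacts1}(ii)) immediately gives $|f|\wedge|g|\in K$, and symmetrically for $L$; the correct reference for $|f|\nucong 1 \Rightarrow f\nucong 1$ is Remark~\ref{norm1}(ii) rather than Remark~\ref{bfacts1}(iii). Second, you are right to note that the symbol $\cdot$ in the statement cannot literally be the kernel product, since $KL$ need not be a polar; it must be read as the Boolean join $(KL)^{\bot\bot} = (K\cup L)^{\bot\bot}$, and the paper is tacitly using $\cdot$ for this induced operation on $\Plr(\mathcal{S})$.
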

\begin{proof}
 \cite[Theorem 2.2.5]{OrderedGroups3}; If  $\{K_i ; i \in I\}$
are subsets of $\mathcal{S}$,  then  $$\left(\bigcup_{i}
K_i\right)^{\bot} = \bigcap_{i\in I}K_i^{\bot}.$$
 \end{proof}

  Closure under complements is a consequence of (ii).

%
%
%

\begin{prop}\label{prop_minimal_polar_containing_a_set}
For any subset $V \subseteq \mathcal{S}$, $V^{\bot \bot}$ is the
minimal polar containing~$V$.
\end{prop}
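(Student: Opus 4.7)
The plan is to argue in three short steps using only the formal properties of the $\bot$ operator established in Lemma~\ref{rem_double polar of a polar}. The statement has two parts: (a) $V^{\bot\bot}$ is a polar containing $V$, and (b) any polar containing $V$ already contains $V^{\bot\bot}$. Both are essentially formal manipulations of the Galois-type connection $V \mapsto V^{\bot}$.

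First, I would verify that $V^{\bot\bot}$ is a polar: it is exhibited as $(V^{\bot})^{\bot}$, i.e., as the polar of the subset $V^{\bot} \subseteq \mathcal{S}$. Alternatively, by Lemma~\ref{rem_double polar of a polar}(iv), $V^{\bot\bot}$ is a polar because $(V^{\bot\bot})^{\bot\bot} = V^{\bot\bot}$, which in turn follows from applying Lemma~\ref{rem_double polar of a polar}(iii) to the set $V$. Second, the containment $V \subseteq V^{\bot\bot}$ is exactly Lemma~\ref{rem_double polar of a polar}(ii).

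For the minimality claim, suppose $P$ is any polar of $\mathcal{S}$ with $V \subseteq P$. By Lemma~\ref{rem_double polar of a polar}(i), reversing the inclusion under $\bot$ yields $P^{\bot} \subseteq V^{\bot}$, and applying $\bot$ once more gives $V^{\bot\bot} \subseteq P^{\bot\bot}$. Since $P$ is a polar, Lemma~\ref{rem_double polar of a polar}(iv) gives $P^{\bot\bot} = P$, so $V^{\bot\bot} \subseteq P$, as required.

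There is no real obstacle here: the entire proof is a bookkeeping exercise with the order-reversing involution properties already recorded in Lemma~\ref{rem_double polar of a polar}. The only subtlety worth flagging is that we must be careful to use the characterization ``$K$ is a polar iff $K^{\bot\bot} = K$'' from part (iv) of that lemma, which is what converts the abstract minimality in the closure-operator sense into minimality among polars.
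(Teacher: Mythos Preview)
Your proof is correct and follows essentially the same route as the paper's: both use Lemma~\ref{rem_double polar of a polar}(i) twice to pass from $V \subseteq P$ to $V^{\bot\bot} \subseteq P^{\bot\bot}$, and then invoke part~(iv) to conclude $P^{\bot\bot} = P$. The paper is simply terser, compressing the two reversals into a single line.
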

\begin{proof}
By  definition,  $V^{\bot \bot}$ is a polar containing $S $. Let $P
 \supseteq S$ be a polar. Then  $S^{\bot \bot} = (S^{\bot})^{\bot} \subseteq
(P^{\bot})^{\bot}=P$.
\end{proof}

\begin{defn}
Let $S \subseteq \mathcal{S}$. We say that a polar $P$ is
\textbf{generated} by $S$ if $P~=~S^{\bot \bot}$. If $S = \{f \}$
then we also write $f^{\bot \bot}$ for the polar generated by
$\{f\}$.
\end{defn}

\begin{defn}
A polar $P$ of $\mathcal{S}$ is \textbf{principal} if there exists
some $f \in \mathcal{S}$ such that $P  = f^{\bot \bot} ( = \langle f
\rangle^{\bot \bot})$.
\end{defn}

\begin{lem}\label{rem_lattice_of_principal_polars}
The collection of principal polars  is a sublattice of
$(\Plr(\mathcal{S}), \cdot, \cap)$.
\end{lem}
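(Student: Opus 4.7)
My plan is to prove the lemma by establishing the two explicit formulas
\[
f^{\bot\bot}\cdot g^{\bot\bot}=(|f|\cdot|g|)^{\bot\bot}
\qquad\text{and}\qquad
f^{\bot\bot}\cap g^{\bot\bot}=(|f|\wedge|g|)^{\bot\bot}
\]
for $f,g\in\mathcal{S}$; both right-hand sides are principal polars by definition, so these identities show that the set of principal polars is closed under both operations of $(\Plr(\mathcal{S}),\cdot,\cap)$. Throughout, I will use $f^{\bot}=|f|^{\bot}$ (noted after Definition~\ref{def_polar}) and $\langle f\rangle=\langle|f|\rangle$ (Corollary~\ref{prop_absolute_generator1}), together with the monotonicity $K\subseteq L\Rightarrow K^{\bot\bot}\subseteq L^{\bot\bot}$ that follows from Lemma~\ref{rem_double polar of a polar}(i).

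The $\supseteq$-inclusions in both formulas are immediate: Theorem~\ref{cor_max_semifield_principal_kernels_operations} supplies $\langle|f|\cdot|g|\rangle=\langle f\rangle\cdot\langle g\rangle\subseteq f^{\bot\bot}\cdot g^{\bot\bot}$ and $\langle|f|\wedge|g|\rangle=\langle f\rangle\cap\langle g\rangle\subseteq f^{\bot\bot}\cap g^{\bot\bot}$; since products and intersections of polars are polars by Proposition~\ref{thm_properties of polars1}, monotonicity of $(\cdot)^{\bot\bot}$ yields the desired containments. The $\subseteq$-direction of the product formula is equally short: Proposition~\ref{prop_principal_ker} combined with $|f|,|g|\ge 1$ gives $\langle f\rangle,\langle g\rangle\subseteq\langle|f|\cdot|g|\rangle$, so $f^{\bot\bot},g^{\bot\bot}\subseteq(|f|\cdot|g|)^{\bot\bot}$ by monotonicity, and the right-hand side, being a $\nu$-kernel, then absorbs their product.

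The heart of the proof is the $\subseteq$-direction for the intersection. Given $h\in f^{\bot\bot}\cap g^{\bot\bot}$ and $k\in(|f|\wedge|g|)^{\bot}$, I plan to show $h\perp k$ by a telescoping argument using only associativity of $\wedge$ (up to $\nucong$) and the supertropical identity $|h|\wedge|h|\nucong|h|$ coming from Remark~\ref{wedge1}. Starting from $|k|\wedge|f|\wedge|g|\nucong 1$, associativity places $|k|\wedge|f|$ in $g^{\bot}$; then $h\in g^{\bot\bot}$ gives $|h|\wedge|k|\wedge|f|\nucong 1$, placing $|h|\wedge|k|$ in $f^{\bot}$; finally $h\in f^{\bot\bot}$ gives $|h|\wedge(|h|\wedge|k|)\nucong 1$, which collapses via $|h|\wedge|h|\nucong|h|$ to $|h|\wedge|k|\nucong 1$, i.e., $h\perp k$. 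Since $k\in(|f|\wedge|g|)^{\bot}$ was arbitrary, $h\in(|f|\wedge|g|)^{\bot\bot}$, completing the proof.

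The main obstacle is the careful bookkeeping of $\nu$-equivalences through the chain, particularly at the final idempotence step where the classical $\ell$-group identity $|h|\wedge|h|=|h|$ is replaced by the weaker $|h|\wedge|h|\nucong|h|$; what makes the argument ultimately go through is that the $\perp$-relation is itself defined up to $\nu$-equivalence, so $\nucong$ is all that is needed.
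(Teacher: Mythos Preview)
Your proof is correct and in fact more complete than the paper's. The paper's argument only records the trivial identities
\[
(\langle f\rangle\cap\langle g\rangle)^{\bot\bot}=(|f|\wedge|g|)^{\bot\bot}
\quad\text{and}\quad
(\langle f\rangle\,\langle g\rangle)^{\bot\bot}=(|f|+|g|)^{\bot\bot},
\]
obtained by applying $(\cdot)^{\bot\bot}$ to Theorem~\ref{cor_max_semifield_principal_kernels_operations}, and leaves entirely implicit why these coincide with $f^{\bot\bot}\cap g^{\bot\bot}$ and with the join of $f^{\bot\bot},g^{\bot\bot}$ in $\Plr(\mathcal{S})$. Your telescoping argument for the intersection, passing from $|k|\wedge|f|\wedge|g|\nucong 1$ through $g^{\bot}$ and $f^{\bot}$ and collapsing $|h|\wedge|h|\nucong|h|$, is exactly the missing justification for $f^{\bot\bot}\cap g^{\bot\bot}\subseteq(|f|\wedge|g|)^{\bot\bot}$, and it is carried out carefully at the level of $\nu$-equivalence.

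One small caveat on the product side: your appeal to ``products of polars are polars by Proposition~\ref{thm_properties of polars1}'' is a reading of the paper's notation --- in the Boolean algebra $(\Plr(\mathcal{S}),\cdot,\cap)$ the symbol $\cdot$ denotes the \emph{join}, and in general the literal kernel product of two polars need not be a polar. Your argument survives this: from $f^{\bot\bot}\cdot g^{\bot\bot}\subseteq(|f|\cdot|g|)^{\bot\bot}$ together with $|f|\cdot|g|\in f^{\bot\bot}\cdot g^{\bot\bot}$ you get $(|f|\cdot|g|)^{\bot\bot}=(f^{\bot\bot}\cdot g^{\bot\bot})^{\bot\bot}$, and the right-hand side \emph{is} the join of $f^{\bot\bot}$ and $g^{\bot\bot}$ in $\Plr(\mathcal{S})$, hence a principal polar. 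So the conclusion stands regardless of whether the kernel product itself lands in $\Plr(\mathcal{S})$.
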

\begin{proof}
For any  $f,g \in \mathcal{S}$,  $$(\langle f \rangle \cap \langle g
\rangle)^{\bot \bot}  = (\langle |f| \wedge |g| \rangle)^{\bot \bot}
=  (|f| \wedge |g|)^{\bot \bot}  ,$$ and $$ (\langle f \rangle
\langle g \rangle)^{\bot \bot} = (\langle |f| \dotplusss |g|
\rangle)^{\bot \bot} =  (|f| \dotplusss |g|)^{\bot \bot},$$ by
Corollary~\ref{eq_max_principal_operations}.
\end{proof}

%
%

\begin{rem}
Any $\nu$-kernel  $K$ of an idempotent \vsemifield0\ $\mathcal{S}$
cannot be orthogonal to a nontrivial $\nu$-subkernel of $K^{\bot
\bot}$.
\end{rem}
\begin{proof}
Any $\nu$-kernel $L$ of $K^{\bot \bot}$  is a $\nu$-kernel of
$\mathcal{S}$. If $K \perp L $, then $L \subseteq K^{\bot}\cap
K^{\bot \bot} = \{1\}$, yielding $L = \{ 1 \}$.
\end{proof}

\subsection{Polars over complete Archimedean \semifields0}\label{polar}$ $

In Theorem~\ref{prop_correspondence} we showed that
$\mathcal{K}$-kernels of $\overline{F(\Lambda)}$ correspond to
$\onenu$-sets in $F^{(n)} $. Our aim here is to characterize these
special kind of $\nu$-kernels as polars, as a converse to
Theorem~\ref{thm_properties of polars}. In analogy to algebraic
geometry, polars play the role of `radical ideals,' corresponding
to root sets by an analog to the celebrated Nullstellensatz
theorem, but for this we need to make extra assumptions on   the
underlying \vsemifield0.

\subsection{The polar-$\onenu$-set correspondence}$ $

Recall that $\mathscr{R}$ is presumed to be a   $\nu$-bipotent,
divisible, archimedean and complete \vsemifield0.
In this subsection we concentrate on $\overline{\mathscr{R}(\Lambda)}$. Doing so, we consider the natural extensions to $\overline{\mathscr{R}(\Lambda)}$ of the operators $\Skel$ and $\Ker$  defined in \S\ref{Subsection:Kernels_of_1-Sets} with respect to $\mathscr{R}(\Lambda)$. We write $\Ker_{\mathscr{R}(\Lambda)}$ to denote the  restriction of $\Ker$ to $\mathscr{R}(\Lambda)$.\\

%
%
%
Recall ``completely closed'' from
Definition~\ref{defn_cond_complete}.

\begin{lem} All $\nu$-kernels
of $\overline{\mathscr{R}(\Lambda)}$ that are completely closed
are completions of $\nu$-kernels of
$\mathscr{R}(\Lambda)$.\end{lem}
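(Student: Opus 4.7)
The plan is to show that for a completely closed $\nu$-kernel $L$ of $\overline{\mathscr{R}(\Lambda)}$, the $\nu$-kernel $K := L \cap \mathscr{R}(\Lambda)$ of $\mathscr{R}(\Lambda)$ (which is a $\nu$-kernel by Theorem~\ref{thm_nother_1_and_3}(1)) has its completion $\bar{K}$ in $\overline{\mathscr{R}(\Lambda)}$ equal to $L$. By Corollary~\ref{cor_completion_of_kernels}, $\bar{K}$ is the $\nu$-kernel of $\overline{\mathscr{R}(\Lambda)}$ generated by $K$, so the task splits into two inclusions.

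For $\bar{K}\subseteq L$: since $K\subseteq L$ and $L$ is a $\nu$-kernel of $\overline{\mathscr{R}(\Lambda)}$ closed under all sups and infs of its subsets, $L$ contains every sup/inf of bounded subsets of $K$; as $\bar K$ is built from $K$ exactly by these closure operations (being a dense completion in the sense of Theorem~\ref{thm_completion_of_a_semifield}), we conclude $\bar K\subseteq L$.

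For $L\subseteq \bar K$: by Remark~\ref{bfacts1}(iii) it suffices to take $f\in L^+$. Since $\overline{\mathscr{R}(\Lambda)}$ is the completion of $\mathscr{R}(\Lambda)$ and the latter is left dense, one has $f=\sup\{g\in\mathscr{R}(\Lambda)^+:g\leq_\nu f\}$ computed in $\overline{\mathscr{R}(\Lambda)}$. For any such $g$, the chain $1\leq_\nu g\leq_\nu f$ together with the $\nu$-convexity of $L$ (Remark~\ref{bfacts1}(ii)) forces $g\in L\cap\mathscr{R}(\Lambda)=K$. Hence $f$ is the sup (in $\overline{\mathscr{R}(\Lambda)}$) of a subset of $K^+$; since $\bar K$ is itself a completion (closed under bounded sups in $\overline{\mathscr{R}(\Lambda)}$) and contains $K$, we get $f\in\bar K$.

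The main obstacle is the last step: justifying precisely that $f=\sup\{g\in\mathscr{R}(\Lambda)^+:g\leq_\nu f\}$ and that this sup, taken in $\overline{\mathscr{R}(\Lambda)}$, actually lands in $\bar K$. The first half is a standard consequence of the Dedekind--MacNeille-type density criterion in Theorem~\ref{thm_completion_of_a_semifield}, while the second hinges on the fact, implicit in the construction of the completion, that the sup in $\overline{\mathscr{R}(\Lambda)}$ of a bounded subset of $K$ coincides with the sup in the completion $\bar K$ of $K$. The convexity of $L$ is what lets us upgrade ``approximating elements of $\mathscr{R}(\Lambda)$'' to ``approximating elements of $K$'', which is the key technical move.
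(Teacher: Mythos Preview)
Your proposal is correct and follows essentially the same approach as the paper: intersect the given $\nu$-kernel with $\mathscr{R}(\Lambda)$ and show that the completion of this intersection recovers the original $\nu$-kernel, using density of $\mathscr{R}(\Lambda)$ in $\overline{\mathscr{R}(\Lambda)}$. In fact your argument is more explicit than the paper's, since you spell out the role of $\nu$-convexity (Remark~\ref{bfacts1}(ii)) in passing from ``$\mathscr{R}(\Lambda)$ is dense in $\overline{\mathscr{R}(\Lambda)}$'' to ``$L\cap\mathscr{R}(\Lambda)$ is dense in $L$'', which the paper's proof uses implicitly.
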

\begin{proof} By Theorem \ref{thm_nother_1_and_3}(1), the $\nu$-kernels of
$\mathscr{R}(\Lambda)$ are precisely $\{ K \cap
\mathscr{R}(\Lambda) : K$ is a $\nu$-kernel of
$\overline{\mathscr{R}(\Lambda)}\}$. Since $\mathscr{R}(\Lambda)$
is dense in $\overline{\mathscr{R}(\Lambda)}$ for every
$\nu$-kernel $K$ of $\overline{\mathscr{R}(\Lambda)}$, the
$\nu$-kernel $L = K \cap \mathscr{R}(\Lambda)$ of
$\mathscr{R}(\Lambda)$ is dense in $K$. Since $L \subseteq K$, we
conclude that $\bar{L} = \bar{K}$, i.e., $\bar{L} = K$ if and only
if $K$ is completely closed.
\end{proof}

\begin{prop}\label{cor_correspondence_of_kernels_with_the_completions}
Each completely closed $\nu$-kernel $K$ of
$\overline{\mathscr{R}(\Lambda)}$ defines a unique $\nu$-kernel of
$\mathscr{R}(\Lambda)$ given by $L = K \cap \mathscr{R}(\Lambda)$
for which $\overline{L} = K$.
\end{prop}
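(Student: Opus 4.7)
The plan is to deduce this directly from the lemma just established. Given a completely closed $\nu$-kernel $K$ of $\overline{\mathscr{R}(\Lambda)}$, I would set $L := K \cap \mathscr{R}(\Lambda)$; by Theorem~\ref{thm_nother_1_and_3}(1), applied with $\mathcal{U} = \mathscr{R}(\Lambda)$ as a sub-\vsemifield0 of $\overline{\mathscr{R}(\Lambda)}$, this $L$ is a $\nu$-kernel of $\mathscr{R}(\Lambda)$, and its definition as the intersection with $\mathscr{R}(\Lambda)$ makes it uniquely determined by $K$.

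To verify $\overline{L} = K$, I would argue by double containment. The inclusion $\overline{L} \subseteq K$ follows from Corollary~\ref{cor_completion_of_kernels}, which identifies $\overline{L}$ with the $\nu$-kernel of $\overline{\mathscr{R}(\Lambda)}$ generated by $L$: since $L \subseteq K$ and $K$ is already a $\nu$-kernel of $\overline{\mathscr{R}(\Lambda)}$, the generated $\nu$-kernel must sit inside $K$. For the reverse inclusion $K \subseteq \overline{L}$, I would invoke the preceding lemma, which asserted that $L$ is dense in $K$ (using the density of $\mathscr{R}(\Lambda)$ in $\overline{\mathscr{R}(\Lambda)}$ together with the complete closure of $K$). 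Then by Theorem~\ref{thm_completion_of_a_semifield}, $\overline{L}$ is a complete sub-\vsemifield0 of $\overline{\mathscr{R}(\Lambda)}$ in which $L$ is dense, and the uniqueness of completions (Remark~\ref{thm_completion_of_l_group}) forces $\overline{L} = K$.

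The hard part is buried in the preceding lemma: justifying that $L$ is dense in $K$. Given $k \in K$, one wants an approximating net drawn from $L = K \cap \mathscr{R}(\Lambda)$, not merely from $\mathscr{R}(\Lambda)$. Density of $\mathscr{R}(\Lambda)$ in $\overline{\mathscr{R}(\Lambda)}$ supplies approximants $a_i \in \mathscr{R}(\Lambda)$ to $k$; the key move is to trim each $a_i$ against $k$, e.g.\ by passing to $|a_i| \wedge |k|^{\pm 1}$ or a comparable sandwich, so that convexity of $K$ forces the trimmed elements into $K$ while preserving convergence to $k$. Once this density statement is secured, the present proposition is essentially a bookkeeping of the lemma together with Corollary~\ref{cor_completion_of_kernels}.
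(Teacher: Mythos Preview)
Your proposal is correct and follows essentially the same approach as the paper: the proposition is stated without a separate proof there, being an immediate consequence of the preceding lemma, whose proof already establishes that $L = K \cap \mathscr{R}(\Lambda)$ is dense in $K$ and hence $\bar{L} = \bar{K}$, so $\bar{L} = K$ exactly when $K$ is completely closed. Your write-up just unpacks this bookkeeping a bit more explicitly (invoking Corollary~\ref{cor_completion_of_kernels} and Remark~\ref{thm_completion_of_l_group} by name), and your final paragraph correctly identifies that the only substantive step---density of $L$ in $K$---lives in that lemma rather than in the proposition itself.
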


\begin{exmp}\label{exmp_completion_of_bounded_kernel}
Consider the $\nu$-kernel $K = \langle |\lambda_1| \wedge |\alpha|
\rangle \in \PCon(\mathscr{R}(\lambda_1))$ and its subset $X =
\{|\lambda_1| \wedge |\alpha|^n : n \in \mathbb{N}  \}$. Then
$|\lambda_1| = \bigvee_{f \in X}f \in \overline{K}$. Hence
$\langle \lambda_1 \rangle \subset \overline{K}$ yielding
$\overline{\langle |\lambda_1| \wedge |\alpha| \rangle} =
\overline{\langle |\lambda_1| \rangle}$.
\end{exmp}

\begin{rem}
$\Skel(\bar{K}) = \Skel(K)$ for every $\nu$-kernel $K $ of
$\mathscr{R}(\Lambda)$.
\end{rem}
\begin{proof}
First note that  $\Skel(\bar{K}) \subseteq \Skel(K)$ since $K
\subset \bar{K}$ in $\overline{\mathscr{R}(\Lambda)}$. Now, let $X =
\Skel(K)$. For any nonempty subset $A$ of $K$. If $\bigvee_{f \in
A}f \in \overline{\mathscr{R}(\Lambda)}$ then for any $a \in X$,
$$\left(\bigvee_{f \in A}f\right) (\bfa) = \bigvee_{f \in A}f(\bfa) = \bigvee_{f \in
A}1 \nucong 1,$$ yielding  $\Skel(\bigvee_{f \in A}) \supseteq X$
(cf.~Lemma~\ref{rem_complete_semifield_of_functions}). Similarly, if
$\bigwedge_{f \in A}f \in \overline{\mathscr{R}(\Lambda)}$ then for
any $\bfa \in X$ we have   $(\bigwedge_{f \in A}f) (\bfa) =
\bigwedge_{f \in A}f(\bfa) = \bigwedge_{f \in A}1 \nucong 1$,
yielding that $\Skel(\bigwedge_{f \in A}f) \supseteq X$. We conclude
that $\Skel(\bar{K}) = \Skel(K)$.
\end{proof}



\begin{prop}\label{prop_mathcal_K_kernel_is_completely_closed}
Every $\mathcal{K}$-kernel of $\overline{\mathscr{R}(\Lambda)}$ is
completely closed.
\end{prop}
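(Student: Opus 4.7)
The plan is to use the pointwise characterization of suprema and infima in the completion, combined with the defining property of $\Ker(Z)$.

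Let $K = \Ker(Z)$ be a $\mathcal{K}$-kernel of $\overline{\mathscr{R}(\Lambda)}$, where $Z = \Skel(K)$ is the associated $\onenu$-set in $\mathscr{R}^{(n)}$. Suppose $A \subseteq K$ is a bounded subset; I need to show that both $\bigvee_{f \in A} f$ and $\bigwedge_{f \in A} f$ (computed in $\overline{\mathscr{R}(\Lambda)}$) again belong to $K$.

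First I would invoke Lemma~\ref{rem_complete_semifield_of_functions}, which says that $\Fun(\mathscr{R}^{(n)}, \mathscr{R})$ is complete and that suprema and infima there are computed pointwise. Since $\overline{\mathscr{R}(\Lambda)}$ sits inside this function \vsemifield0 and is itself complete (being the completion of $\mathscr{R}(\Lambda)$), the sup and inf of $A$ in $\overline{\mathscr{R}(\Lambda)}$ must agree with the corresponding pointwise sup and inf in $\Fun(\mathscr{R}^{(n)}, \mathscr{R})$. Concretely, for every $\bfa \in \mathscr{R}^{(n)}$,
\[
\Bigl(\bigvee_{f \in A} f\Bigr)(\bfa) = \bigvee_{f \in A} f(\bfa), \qquad \Bigl(\bigwedge_{f \in A} f\Bigr)(\bfa) = \bigwedge_{f \in A} f(\bfa).
\]

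Next, for any $\bfa \in Z$ and any $f \in A \subseteq K = \Ker(Z)$, we have $f(\bfa) \nucong 1$ by definition of $\Ker(Z)$. Therefore
\[
\Bigl(\bigvee_{f \in A} f\Bigr)(\bfa) = \bigvee_{f \in A} f(\bfa) \nucong \bigvee_{f \in A} 1 \nucong 1,
\]
and similarly $\bigl(\bigwedge_{f \in A} f\bigr)(\bfa) \nucong 1$. Hence both $\bigvee_{f \in A} f$ and $\bigwedge_{f \in A} f$ belong to $\Ker(Z) = K$, proving that $K$ is completely closed.

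The only delicate point is justifying that the sup/inf in $\overline{\mathscr{R}(\Lambda)}$ coincides with the pointwise sup/inf in $\Fun(\mathscr{R}^{(n)}, \mathscr{R})$; this was already established in the remark immediately preceding the proposition (where the analogous computation was made for $\bar K$ in place of the present $K$), so it can be quoted directly.
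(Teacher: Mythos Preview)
Your argument is correct and follows essentially the same route as the paper: write $K=\Ker(Z)$, use Lemma~\ref{rem_complete_semifield_of_functions} to evaluate $\bigvee$ and $\bigwedge$ pointwise, and observe that at each $\bfa\in Z$ every $f\in A$ satisfies $f(\bfa)\nucong 1$, so the sup and inf do as well and hence lie in $\Ker(Z)$. The only extra care you take---flagging the identification of the sup/inf in $\overline{\mathscr{R}(\Lambda)}$ with the pointwise sup/inf---is handled in the paper implicitly via the same lemma and the preceding remark, exactly as you suggest.
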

\begin{proof}
By Lemma~\ref{prop_skel_ker_relations}, if  $Z =\Skel(K)$ then
$\Skel(\Ker(Z)) = Z$. Let $Z = \Skel(K) \subseteq
\mathscr{R}^{(n)} $ where  $K = \Ker(Z)$, a $\nu$-kernel of
$\overline{\mathscr{R}(\Lambda)}$.     $\Skel(f) \supseteq Z$ for
any $f \in K$. Now, if $\bigvee_{f \in A}f \in
\overline{\mathscr{R}(\Lambda)}$ and $\bigwedge_{f \in A}f \in
\overline{\mathscr{R}(\Lambda)}$, then by
Lemma~\ref{rem_complete_semifield_of_functions}, for any $a \in Z$
$$\left(\bigvee_{f \in A}f\right)(a) = \bigvee_{f \in A}f(a) = \bigvee_{f \in A}1 \nucong 1 \ \ \text{and} \ \
(\bigwedge_{f
 \in A}f) (a) = \bigwedge_{f \in A}f(a) = \bigwedge_{f
\in A}1 \nucong 1.$$ Thus
$$\Skel\left(\bigvee_{f \in A}f\right), \Skel\left(\bigwedge_{f \in A}f\right) \supseteq Z.$$
So $\bigvee_{f \in A}f, \bigwedge_{f \in A}f \in K$, and $K$ is
completely closed in $\overline{\mathscr{R}(\Lambda)}$.
\end{proof}

\begin{rem}
Proposition \ref{prop_mathcal_K_kernel_is_completely_closed} is not
true when taking $\mathcal{K}$-kernels of $\mathscr{R}(\Lambda)$
instead of $\overline{\mathscr{R}(\Lambda)}$. Let $\alpha \in
\mathscr{R}$ such that $\alpha > 1$. Consider the subset
$$X = \{  |\lambda_1^n| \wedge \alpha : n \in \mathbb{N}\}.$$
Then $X \subset \langle \lambda_1 \rangle$, \ $\bigvee_{f \in X}f =
\alpha$ (the constant function) and $\Skel(f) = \{ 1 \} \subset
\mathscr{R}$ for every $f \in X$. Thus $\alpha = (\alpha)(1) =
(\bigvee_{f \in X}f) (1) \neq \bigvee_{f \in X}f(1) = 1$ and
$\alpha$ is not in the preimage of $\Skel(\lambda_1)$. So we deduce
that $\mathcal{K}$-kernels of $\mathscr{R}(\Lambda)$ need not  be
completely closed in $\mathscr{R}(\Lambda)$, and  thus not polars
since every polar is completely closed by
Proposition~\ref{prop_mathcal_K_kernel_is_completely_closed}. Also
note that $\alpha \in \lambda_1^{\bot \bot}$,  yielding
$\lambda_1^{\bot} = \lambda_1^{\bot \bot \bot} = (\lambda_1^{\bot
\bot})^{\bot} =\{1\}$.
\end{rem}

%


\begin{thm}\label{prop_polar_k_kernel_completion_equivalence}
 The  following properties of  a $\nu$-kernel  $K$ of $\overline{\mathscr{R}(\Lambda)}$ are equivalent:
 \begin{enumerate}\eroman
   \item $K$ is a $\mathcal{K}$-kernel.
   \item $K$ is a polar.
   \item $K$ is completely closed.
 \end{enumerate}
\end{thm}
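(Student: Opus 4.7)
The plan is to close the cycle (ii)$\Rightarrow$(i)$\Rightarrow$(iii)$\Rightarrow$(ii). The first implication is immediate from Theorem~\ref{thm_properties of polars}, which states that every polar is a $\mathcal{K}$-kernel, and the second is exactly Proposition~\ref{prop_mathcal_K_kernel_is_completely_closed}. So the entire proof reduces to establishing (iii)$\Rightarrow$(ii), i.e., that any completely closed $\nu$-kernel $K$ satisfies $K=K^{\bot\bot}$. The inclusion $K\subseteq K^{\bot\bot}$ is Lemma~\ref{rem_double polar of a polar}(ii), so the task is $K^{\bot\bot}\subseteq K$.

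Given $f\in K^{\bot\bot}$, replacing $f$ by $|f|$ (note $|f|\in K^{\bot\bot}$ and $|f|\in K$ iff $f\in K$ by Remark~\ref{bfacts1}), we may assume $f\ge_\nu 1$. I would then form the set $A=\{g\in K:1\le g\le f\}$ and let $h=\bigvee A$; completeness of $K$ forces $h\in K$, with $1\le h\le f$. The plan is to show $f\nucong h$, which by $\nu$-convexity of $K$ yields $f\in K$. Set $u=fh^{-1}\ge_\nu 1$. The crux is to prove $u\in K^\bot$. Fix $g\in K$ and replace it by $|g|\ge 1$. By $\nu$-convexity each $g^n\wedge f$ sits between $1$ and $g^n\in K$, hence lies in $A$, so $g^n\wedge f\le h$. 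At a point $\bfa$ where $u(\bfa)>_\nu 1$, i.e.\ $f(\bfa)>_\nu h(\bfa)$, this inequality reads $\min(g(\bfa)^n,f(\bfa))\le_\nu h(\bfa)<_\nu f(\bfa)$, so necessarily $g(\bfa)^n\le_\nu h(\bfa)<_\nu f(\bfa)$ for every $n\in\mathbb{N}$. The $\nu$-archimedean property of $\mathscr{R}$ (Theorem~\ref{thm_holder_translated}) then forces $g(\bfa)\nucong 1$. Consequently $u(\bfa)\wedge g(\bfa)\nucong 1$ at every $\bfa$, so $u\perp g$, and since $g\in K$ was arbitrary, $u\in K^\bot$.

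To conclude, observe that $u=fh^{-1}$ with $f\in K^{\bot\bot}$ and $h\in K\subseteq K^{\bot\bot}$, so $u\in K^{\bot\bot}$. Hence $u\in K^\bot\cap K^{\bot\bot}$, but any element of this intersection is self-orthogonal, so $|u|\wedge|u|=|u|\nucong 1$, giving $u\nucong 1$ and $f\nucong h$. Since $K$ is $\nu$-convex and $h\in K$, we obtain $f\in K$. The main obstacle is the verification that $u\in K^\bot$: one must simultaneously exploit the $\nu$-convexity of $K$ (to keep $g^n\wedge f$ inside $K$), the completeness hypothesis (to make the supremum $h$ lie in $K$), and the archimedean pumping $g^n\to\infty$ (to force $g$ to be trivial wherever $u$ is not); without archimedeanness the construction would only bound a single power of $g$ and fail to establish pointwise orthogonality.
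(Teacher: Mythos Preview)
Your argument is correct and traverses exactly the same cycle of implications as the paper; the only difference is that where the paper simply cites \cite[Theorem~2.3.7]{OrderedGroups3} for (iii)$\Rightarrow$(ii), you supply the classical proof in full (the ``bounded supremum plus archimedean pumping'' argument). One small terminological slip: you write ``completeness of $K$ forces $h\in K$'', but what you are using is \emph{complete closedness} of $K$ together with completeness of the ambient $\overline{\mathscr{R}(\Lambda)}$ (so that the bounded set $A$ has a supremum there); the logic is fine, only the wording should be adjusted.
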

\begin{proof}$ $
$(i) \Rightarrow (iii)$ By Proposition
\ref{prop_mathcal_K_kernel_is_completely_closed}.
$\mathcal{K}$-kernel of $\overline{\mathscr{R}(\Lambda)}$ is
completely closed.

$(iii) \Rightarrow (ii)$ See \cite[Theorem 2.3.7]{OrderedGroups3}.
One checks the condition of Lemma~\ref{rem_double polar of a polar}.

$(ii) \Rightarrow (i)$ By Theorem \ref{thm_properties of polars}.
\end{proof}

\begin{prop}\label{prop_principal}
For any $f \in \overline{\mathscr{R}(\Lambda)}$,
$$\Skel(f) = \Skel(f^{\bot \bot}) \  \text{and} \ f^{\bot \bot} = \Ker(\Skel(f)).$$
\end{prop}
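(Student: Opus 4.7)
The key observation is that by Theorem~\ref{prop_polar_k_kernel_completion_equivalence}, every polar of $\overline{\mathscr{R}(\Lambda)}$ is a $\mathcal{K}$-kernel, so I can freely pass back and forth between polars and the sets $\Ker(Z)$ for $\onenu$-sets $Z$. My strategy is to prove the second equality first, and then derive the first as an immediate consequence via Lemma~\ref{prop_skel_ker_relations}.

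To show $f^{\bot\bot} \subseteq \Ker(\Skel(f))$: first observe that $\Ker(\Skel(f))$ is a $\mathcal{K}$-kernel by definition (it is $\Ker(Z)$ for the $\onenu$-set $Z = \Skel(f)$), hence a polar by Theorem~\ref{prop_polar_k_kernel_completion_equivalence}. Since $f \in \Ker(\Skel(f))$ trivially, and $f^{\bot\bot}$ is the \emph{minimal} polar containing $f$ by Proposition~\ref{prop_minimal_polar_containing_a_set}, the inclusion follows.

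For the reverse inclusion $\Ker(\Skel(f)) \subseteq f^{\bot\bot}$, I take $g \in \Ker(\Skel(f))$, so $\Skel(g) \supseteq \Skel(f)$, and verify $g \perp h$ for every $h \in f^\bot$. Unwinding orthogonality, $h \in f^\bot$ means $|h| \wedge |f| \nucong 1$ pointwise, equivalently $\Skel(h) \cup \Skel(f) = \mathscr{R}^{(n)}$. But then $\Skel(h) \cup \Skel(g) \supseteq \Skel(h) \cup \Skel(f) = \mathscr{R}^{(n)}$, so $|g| \wedge |h| \nucong 1$, i.e., $g \perp h$. This yields $g \in f^{\bot\bot}$, completing the equality $f^{\bot\bot} = \Ker(\Skel(f))$.

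The first equality follows instantly: applying $\Skel$ to the second equality and using Lemma~\ref{prop_skel_ker_relations}(1) (which gives $\Skel(\Ker(Z)) = Z$ for any $\onenu$-set $Z$) yields
$$\Skel(f^{\bot\bot}) = \Skel(\Ker(\Skel(f))) = \Skel(f).$$
Nothing here looks like a genuine obstacle; the only subtlety is making sure that the hypotheses needed for Theorem~\ref{prop_polar_k_kernel_completion_equivalence} (that $\mathscr{R}$ is complete, archimedean, divisible) are in force, which is why the statement is formulated over $\overline{\mathscr{R}(\Lambda)}$ rather than a general $\overline{F(\Lambda)}$.
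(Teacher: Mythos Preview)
Your proof is correct and uses the same essential ingredients as the paper: minimality of $f^{\bot\bot}$ among polars containing $f$ (Proposition~\ref{prop_minimal_polar_containing_a_set}) together with the identification of $\mathcal{K}$-kernels and polars (Theorem~\ref{prop_polar_k_kernel_completion_equivalence}). The only organizational difference is that the paper establishes $\Skel(f)=\Skel(f^{\bot\bot})$ first and then deduces $f^{\bot\bot}=\Ker(\Skel(f))$ from the fact that $f^{\bot\bot}$ is a $\mathcal{K}$-kernel, whereas you prove $f^{\bot\bot}=\Ker(\Skel(f))$ directly and read off the $\Skel$ equality afterward; your element-wise check that $\Ker(\Skel(f))\subseteq f^{\bot\bot}$ is just an inline unpacking of the argument behind Theorem~\ref{thm_properties of polars}.
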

\begin{proof}
   $\Skel(f^{\bot \bot}) \subseteq \Skel(f)$ since $f^{\bot \bot} \supseteq \langle f \rangle$.
   Now, let
$K$ be the $\mathcal{K}$-kernel containing $f$ such that $\Skel(K) =
\Skel(f)$.  $K$ is a polar by  Lemma~\ref{rem_double polar of a
polar}(iv). By Proposition
\ref{prop_minimal_polar_containing_a_set}, $K \supseteq f^{\bot
\bot}$, and so $$\Skel(f) = \Skel(K) \subseteq \Skel(f^{\bot \bot})
\subseteq \Skel(f).$$  But $f^{\bot \bot}$ is a polar and thus a
$\mathcal{K}$-kernel, implying $f^{\bot \bot}= \Ker(\Skel(f^{\bot
\bot})) = \Ker(\Skel(f))$.
\end{proof}

\begin{thm}\label{thm_polar}
There is a $1:1$ correspondence
\begin{equation}\label{eq_polar1}
\Plr(\overline{\mathscr{R}(\Lambda)}) \leftrightarrow
\Skl(\mathscr{R}^{(n)} )
\end{equation}
between the polars of $\overline{\mathscr{R}(\Lambda)}$ and the
$\onenu$-sets in $\mathscr{R}^{n}$, given by $B \mapsto \Skel(B)$
and $Z \mapsto \Ker(Z)$.

This correspondence restricts to a correspondence
between the principal polars of $\overline{\mathscr{R}(\Lambda)}$
and the principal $\onenu$-sets in~$\mathscr{R}^{n}$.
\end{thm}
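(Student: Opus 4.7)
The plan is to assemble the correspondence from two pieces already established: the basic Zariski-type correspondence of Theorem~\ref{prop_correspondence} (between $\onenu$-sets and $\mathcal{K}$-kernels via $Z \mapsto \Ker(Z)$ and $K \mapsto \Skel(K)$), and the intrinsic algebraic characterization of $\mathcal{K}$-kernels provided by Theorem~\ref{prop_polar_k_kernel_completion_equivalence}, which says that over the complete, divisible, $\nu$-archimedean \vsemifield0 $\mathscr{R}$, the $\mathcal{K}$-kernels of $\overline{\mathscr{R}(\Lambda)}$ are exactly the polars. Composing these two bijections immediately yields the first claim: $B \mapsto \Skel(B)$ and $Z \mapsto \Ker(Z)$ are mutually inverse maps between $\Plr(\overline{\mathscr{R}(\Lambda)})$ and $\Skl(\mathscr{R}^{(n)})$.

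For the restriction to principal objects, I would argue both directions using Proposition~\ref{prop_principal}. In the forward direction, a principal polar has the form $f^{\bot\bot}$ for some $f \in \overline{\mathscr{R}(\Lambda)}$, and Proposition~\ref{prop_principal} gives $\Skel(f^{\bot\bot}) = \Skel(f)$, which is by definition a principal $\onenu$-set. In the reverse direction, a principal $\onenu$-set has the form $\Skel(f)$, and the same proposition tells us $\Ker(\Skel(f)) = f^{\bot\bot}$, which is a principal polar. Since the maps are already known to be mutually inverse on the larger correspondence, this shows they restrict correctly to the principal sub-collections.

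I do not foresee a genuine obstacle here, since the theorem is really an assembly of earlier results that did the heavy lifting. The only thing worth being careful about is keeping straight that the principal notion on both sides is defined with respect to a single generator (principal polar $= f^{\bot\bot}$, principal $\onenu$-set $= \Skel(f)$), and verifying that these two notions of ``principal'' correspond under the bijection --- which is exactly what Proposition~\ref{prop_principal} supplies. The completeness and $\nu$-archimedean hypotheses on $\mathscr{R}$ are used implicitly through Theorem~\ref{prop_polar_k_kernel_completion_equivalence}, so no new use of these assumptions is required beyond what is already invoked. The proof will therefore be short, essentially a two-line deduction followed by a one-paragraph verification of the principal case via Proposition~\ref{prop_principal}.
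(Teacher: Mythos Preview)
Your proposal is correct and follows essentially the same approach as the paper: both combine the Zariski-type correspondence between $\onenu$-sets and $\mathcal{K}$-kernels (Theorem~\ref{prop_correspondence}) with the identification of polars and $\mathcal{K}$-kernels (Theorem~\ref{prop_polar_k_kernel_completion_equivalence}), and both invoke Proposition~\ref{prop_principal} for the principal case. Your citation of Theorem~\ref{prop_polar_k_kernel_completion_equivalence} is in fact more precise than the paper's reference to Lemma~\ref{rem_double polar of a polar}(iv), and your explicit verification of both directions of the principal restriction is slightly more detailed than the paper's terse appeal to Lemma~\ref{rem_lattice_of_principal_polars}.
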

\begin{proof}
By  Lemma~\ref{rem_double polar of a polar}(iv), $B$ is a polar iff
$B$ is  a $\mathcal{K}$-kernel; thus, $\Ker(\Skel(B)) = B$. If $B =
f^{\bot \bot}$ for some $f \in \overline{\mathscr{R}(\Lambda)}$,
then  $\Skel(B) = \Skel(f)$ by Proposition \ref{prop_principal},
yielding $\Ker(\Skel(f^{\bot \bot})) = \Ker(\Skel(f)) = f^{\bot
\bot}$. The restriction to principal polars follows from
Lemma~\ref{rem_lattice_of_principal_polars}.
\end{proof}

\begin{cor}\label{cor_polar}
Let $$\mathcal{B} = \{ B \cap \mathscr{R}(\Lambda) : B \in
\Plr(\overline{\mathscr{R}(\Lambda)}) \}.$$  There is a 1:1
correspondence
\begin{equation}\label{eq_cor_polar_-set_corrsepondence}
 \mathcal{B} \leftrightarrow \Skl(\mathscr{R}^{(n)} )
\end{equation}
given by $K \mapsto \Skel(K)$ and $Z \mapsto \Ker(Z) \cap
\mathscr{R}(\Lambda) = \Ker_{\mathscr{R}(\Lambda)}(Z)$, which
restricts to a correspondence
\begin{equation}\label{eq_polar_-set_corrsepondence}
\text{Principal $\nu$-kernels of }\mathcal{B} \leftrightarrow
\hSkl.\end{equation} Furthermore,
$\Ker_{\mathscr{R}(\Lambda)}(\Skel(K)) = \overline{K} \cap
\mathscr{R}(\Lambda)$ for any $\nu$-kernel $K$ of
$\mathscr{R}(\Lambda)$.

\end{cor}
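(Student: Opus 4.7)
My plan is to derive Corollary~\ref{cor_polar} by transporting Theorem~\ref{thm_polar} through the intersection map $B \mapsto B \cap \mathscr{R}(\Lambda)$. First I would show that this map gives a bijection $\Plr(\overline{\mathscr{R}(\Lambda)}) \to \mathcal{B}$: surjectivity is immediate from the definition of $\mathcal{B}$, and injectivity follows because, by Theorem~\ref{prop_polar_k_kernel_completion_equivalence}, every polar $B$ is completely closed in $\overline{\mathscr{R}(\Lambda)}$, so by Proposition~\ref{cor_correspondence_of_kernels_with_the_completions} it must equal $\overline{B \cap \mathscr{R}(\Lambda)}$ and is therefore recoverable from its restriction. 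Composing this bijection with the polar/$\onenu$-set correspondence of Theorem~\ref{thm_polar} will yield the first correspondence of the corollary, once I verify $\Skel(B) = \Skel(B \cap \mathscr{R}(\Lambda))$; this uses the (unlabeled) remark preceding Proposition~\ref{prop_mathcal_K_kernel_is_completely_closed} that $\Skel(\overline{L}) = \Skel(L)$ for any $\nu$-kernel $L$ of $\mathscr{R}(\Lambda)$, applied to $L = B \cap \mathscr{R}(\Lambda)$.

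For the inverse direction, given $Z \in \Skl(\mathscr{R}^{(n)})$, the polar $\Ker(Z) \in \Plr(\overline{\mathscr{R}(\Lambda)})$ restricts to $\Ker(Z) \cap \mathscr{R}(\Lambda) = \Ker_{\mathscr{R}(\Lambda)}(Z) \in \mathcal{B}$ by the very definition of $\mathcal{B}$, so the two maps displayed in the corollary are mutually inverse. The restriction to principal objects is then inherited directly from the ``principal polar $\leftrightarrow$ principal $\onenu$-set'' half of Theorem~\ref{thm_polar}: a principal polar $f^{\bot\bot}$ corresponds to $\Skel(f) \in \hSkl$, and then $f^{\bot\bot} \cap \mathscr{R}(\Lambda)$ is the associated principal element of $\mathcal{B}$. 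For the ``Furthermore'' clause, take any $\nu$-kernel $K$ of $\mathscr{R}(\Lambda)$. By Corollary~\ref{cor_completion_of_kernels} its completion $\overline{K}$ is a $\nu$-kernel of $\overline{\mathscr{R}(\Lambda)}$ which is completely closed there; Theorem~\ref{prop_polar_k_kernel_completion_equivalence} then makes $\overline{K}$ a $\mathcal{K}$-kernel, so part~(2) of Lemma~\ref{prop_skel_ker_relations} yields $\Ker(\Skel(\overline{K})) = \overline{K}$. Combining this with $\Skel(\overline{K}) = \Skel(K)$ gives
\[
\Ker_{\mathscr{R}(\Lambda)}(\Skel(K)) \;=\; \Ker(\Skel(K)) \cap \mathscr{R}(\Lambda) \;=\; \Ker(\Skel(\overline{K})) \cap \mathscr{R}(\Lambda) \;=\; \overline{K} \cap \mathscr{R}(\Lambda),
\]
as claimed.

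The main technical subtlety I expect is the step that identifies $\overline{K}$ as genuinely completely closed in $\overline{\mathscr{R}(\Lambda)}$ rather than merely as the $\nu$-kernel generated by $K$; this passage relies on the archimedean, divisibility, and completeness assumptions on $\mathscr{R}$ through Theorem~\ref{thm_completion_of_a_semifield} and the density statement of Corollary~\ref{cor_completion_of_kernels}, which together ensure that the $\nu$-kernel-theoretic completion of $K$ coincides with its order-theoretic (completely-closed) hull. Once this identification is secure, the rest of the proof is straightforward bookkeeping between the ambient lattice $\overline{\mathscr{R}(\Lambda)}$ and its dense sub-\vsemifield0 $\mathscr{R}(\Lambda)$.
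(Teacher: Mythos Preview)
Your proposal is correct and follows essentially the same route as the paper: transport Theorem~\ref{thm_polar} through the restriction map using Proposition~\ref{cor_correspondence_of_kernels_with_the_completions} (which rests on Theorem~\ref{prop_polar_k_kernel_completion_equivalence}), and handle the ``Furthermore'' clause via the equivalence of polars, $\mathcal{K}$-kernels, and completely closed $\nu$-kernels together with $\Skel(\overline{K}) = \Skel(K)$. Your write-up is more explicit than the paper's --- in particular the verification that $\Skel(B) = \Skel(B \cap \mathscr{R}(\Lambda))$ and the flagged subtlety that the $\nu$-kernel completion $\overline{K}$ coincides with the completely-closed hull --- but these are exactly the points the paper's terse proof leaves implicit under Remark~\ref{rem_carachterization_of_completions}.
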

\begin{proof}
 Since $(\Plr(\mathscr{R}(\Lambda)), \cdot, \cap)$ is a lattice,
the first assertion follows by applying
Proposition~\ref{cor_correspondence_of_kernels_with_the_completions}
to the correspondence in Theorem \ref{eq_polar_-set_corrsepondence}.
The second assertion is by Theorem~\ref{thm_polar} and
\eqref{eq_cor_polar_-set_corrsepondence}, and the last by
Proposition \ref{prop_polar_k_kernel_completion_equivalence} and~
Remark~\ref{rem_carachterization_of_completions}.\end{proof}
%
%

Here is an analog to Proposition~\ref{prop_coord_semifield_1}. For
$X \subset \mathscr{R}^{n}$ we define the \textbf{restriction map}
$\phi_{X}: \mathscr{R}(\Lambda) \to \mathscr{R}(X)$ by $f \mapsto
f|_X.$

\begin{prop}\label{prop_coord_semifield_2}
For any  $\onenu$-set $X = \Skel(\mathcal{S}) \subseteq
\mathscr{R}^{n}$ with $\mathcal{S} \subseteq \mathscr{R}(\Lambda)$,
 $\phi_{X}$ is a homomorphism and
\begin{equation}
\mathscr{R}(\Lambda)  / K_{\mathcal{S}} \cong \mathscr{R}[X].
\end{equation}
where $K_{\mathcal{S}} = \mathcal{S}^{\bot \bot} \cap
\mathscr{R}(\Lambda)$ with $\mathcal{S}^{\bot \bot}$ is taken in the
completion  $\overline{\mathscr{R}(\Lambda)}$ of
$\mathscr{R}(\Lambda)$ in $Fun(\mathscr{R}^{(n)} , \mathscr{R})$.
\end{prop}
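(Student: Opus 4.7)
The plan is to apply the First Isomorphism Theorem for $\nu$-semifields (Remark~\ref{cong10} together with Theorem~\ref{thm_nother_1_and_3}(3)) to the restriction map $\phi_X$. Thus I would carry out three steps: (a) verify $\phi_X$ is a \vsemifield0 homomorphism onto $\mathscr{R}[X]$; (b) identify its $\nu$-kernel with $\Ker_{\mathscr{R}(\Lambda)}(X)$; (c) show that $\Ker_{\mathscr{R}(\Lambda)}(X) = \mathcal{S}^{\bot\bot} \cap \mathscr{R}(\Lambda) = K_{\mathcal{S}}$ via the polar correspondence of \S\ref{Polar}.

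For step (a), the routine pointwise verification that $(f\cdot g)|_X = f|_X \cdot g|_X$, $(f+g)|_X = f|_X + g|_X$, and $(f^*)|_X = (f|_X)^*$ shows that $\phi_X$ is a homomorphism. Surjectivity onto $\mathscr{R}[X]$ holds by the definition of $\mathscr{R}[X]$ as the image (Definition~\ref{def_coord_semifield_1}). For step (b), the $\nu$-kernel of $\phi_X$ consists of those $f \in \mathscr{R}(\Lambda)$ with $f|_X \nucong 1$, which is precisely $\Ker(X) \cap \mathscr{R}(\Lambda) = \Ker_{\mathscr{R}(\Lambda)}(X)$.

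For step (c), which is the substantive step, I would argue as follows. Since $X = \Skel(\mathcal{S})$, Lemma~\ref{oneset} gives $X = \Skel(\langle \mathcal{S}\rangle)$, where $\langle \mathcal{S}\rangle$ is a $\nu$-kernel of $\overline{\mathscr{R}(\Lambda)}$. By Remark~\ref{rem_polar_of_kernel}, $\langle \mathcal{S}\rangle^{\bot\bot} = \mathcal{S}^{\bot\bot}$. Now in $\overline{\mathscr{R}(\Lambda)}$, Proposition~\ref{prop_principal} (or more generally the bijection of Theorem~\ref{thm_polar} together with Proposition~\ref{prop_polar_k_kernel_completion_equivalence}) yields
$$\Ker(X) \;=\; \Ker(\Skel(\langle \mathcal{S}\rangle)) \;=\; \langle \mathcal{S}\rangle^{\bot\bot} \;=\; \mathcal{S}^{\bot\bot},$$
the first equality taken inside the completion. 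Intersecting with $\mathscr{R}(\Lambda)$ (and noting Corollary~\ref{cor_polar} to see that this intersection is indeed $\Ker_{\mathscr{R}(\Lambda)}(X)$) gives $\Ker_{\mathscr{R}(\Lambda)}(X) = \mathcal{S}^{\bot\bot} \cap \mathscr{R}(\Lambda) = K_{\mathcal{S}}$. Combining (a)--(c), Theorem~\ref{thm_nother_1_and_3}(3) (or the fundamental homomorphism theorem of Remark~\ref{cong10}) yields
$$\mathscr{R}(\Lambda)/K_{\mathcal{S}} \;\cong\; \Im(\phi_X) \;=\; \mathscr{R}[X].$$

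The main obstacle I foresee is bookkeeping between the two \vsemifields0 $\mathscr{R}(\Lambda)$ and its completion $\overline{\mathscr{R}(\Lambda)}$: the polar $\mathcal{S}^{\bot\bot}$ is formed in the completion (where the polar-to-$\onenu$-set correspondence is a bijection by Theorem~\ref{prop_polar_k_kernel_completion_equivalence}), while the quotient is taken in $\mathscr{R}(\Lambda)$. The key tool for reconciling these is Corollary~\ref{cor_polar}, which ensures that intersecting the polar with $\mathscr{R}(\Lambda)$ recovers exactly $\Ker_{\mathscr{R}(\Lambda)}(X)$; once this identification is made explicit, no additional calculation is needed.
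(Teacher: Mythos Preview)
Your proposal is correct and follows essentially the same approach as the paper: identify the $\nu$-kernel of the restriction map $\phi_X$ with $K_{\mathcal{S}} = \mathcal{S}^{\bot\bot}\cap\mathscr{R}(\Lambda)$ via Corollary~\ref{cor_polar}, then apply the isomorphism theorems (Theorem~\ref{thm_nother_1_and_3}). The paper's proof is terser but uses the same two ingredients in the same order; your steps (a)--(c) simply unpack the identification more explicitly.
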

\begin{proof}
  Corollary \ref{cor_polar} implies that  $$\Ker(\phi_X) = \{ g \in
\mathscr{R}(\Lambda) : g|_X = 1 \} = \{  g \in \mathscr{R}(\Lambda)
: g \in \mathcal{S}^{\bot \bot} \} = K_\mathcal{S}.$$   We conclude
with the isomorphism theorems \ref{thm_nother_1_and_3}.
\end{proof}

%


\subsubsection{The restriction of the coordinate \vsemifield0}$ $

\begin{cor}\label{prop_coord_semifield_10}
For any (principal) $\onenu$-set $X = \Skel(\langle f \rangle)
\subseteq \mathscr{R}^{n}$, the restriction map  of $\phi_{X}$ to
  $\mathscr{R} $ satisfies
\begin{equation}
\langle \mathscr{R} \rangle /(\langle f \rangle \cap \langle
\mathscr{R} \rangle) \cong \mathscr{R}[X].
\end{equation}
\end{cor}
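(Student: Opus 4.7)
The plan is to obtain the isomorphism as an application of Proposition~\ref{prop_coord_semifield_2} combined with the second isomorphism theorem, Theorem~\ref{thm_nother_1_and_3}(ii), taking the sub-\vsemifield0 $L = \langle \mathscr{R} \rangle$ and the $\nu$-kernel $K = K_\mathcal{S} = \Ker(\phi_X)$ from that proposition. Theorem~\ref{thm_nother_1_and_3}(ii) then gives
$$\langle \mathscr{R} \rangle / (\langle \mathscr{R} \rangle \cap K_\mathcal{S}) \;\cong\; \langle \mathscr{R} \rangle\, K_\mathcal{S} / K_\mathcal{S},$$
and the desired conclusion will follow provided we identify the left-hand intersection with $\langle f \rangle \cap \langle \mathscr{R} \rangle$ and the right-hand factor with $\mathscr{R}[X]$.

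The first (and easier) step is to identify $\langle \mathscr{R} \rangle \cap K_\mathcal{S}$ with $\langle f \rangle \cap \langle \mathscr{R} \rangle$. By Proposition~\ref{prop_coord_semifield_2}, $K_\mathcal{S} = \langle f \rangle^{\bot\bot} \cap \mathscr{R}(\Lambda)$, which by Corollary~\ref{cor_polar} equals $\Ker_{\mathscr{R}(\Lambda)}(\Skel(f))$. Intersecting with $\langle \mathscr{R} \rangle$ and applying Proposition~\ref{intersects} yields $\Ker_{\langle \mathscr{R} \rangle}(\Skel(f))$. Since by Proposition~\ref{prop_kernel_is_k_kernel} every principal $\nu$-kernel of $\langle \mathscr{R} \rangle$ is a $\mathcal{K}$-kernel, this in turn equals $\langle f \rangle \cap \langle \mathscr{R} \rangle$, as required.

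The second step is to show that the image $\phi_X(\langle \mathscr{R} \rangle) = \langle \mathscr{R} \rangle\, K_\mathcal{S}/K_\mathcal{S}$ is all of $\mathscr{R}[X] = \mathscr{R}(\Lambda)/K_\mathcal{S}$. Equivalently, given an arbitrary $h \in \mathscr{R}(\Lambda)$, I need to produce a bounded $h' \in \langle \mathscr{R} \rangle$ with $h'|_X = h|_X$, so that $h \equiv h' \pmod{K_\mathcal{S}}$. This is the main obstacle: when $X$ is unbounded (which occurs for generic principal $\onenu$-sets, e.g.\ tropical hyperplanes), $h|_X$ itself may be unbounded, so one cannot simply truncate $h$ by a constant without altering its values on $X$. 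I expect to overcome this via a modification of the construction in the proof of Proposition~\ref{prop_unbounded_generator} run in reverse: combine a large constant bound $|\alpha|$ with a cut-off function built from $f$ itself (which vanishes on $X$), producing a rational function that coincides with $h$ on $X$ but is forced to remain bounded outside a neighborhood of $X$.

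Once surjectivity is established, the second isomorphism theorem yields
$$\langle \mathscr{R} \rangle / (\langle f \rangle \cap \langle \mathscr{R} \rangle) \;\cong\; \mathscr{R}(\Lambda)/K_\mathcal{S} \;\cong\; \mathscr{R}[X],$$
where the second isomorphism is Proposition~\ref{prop_coord_semifield_2}, completing the proof.
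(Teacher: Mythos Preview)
Your identification of the kernel $\langle \mathscr{R}\rangle \cap K_{\mathcal S}$ with $\langle f\rangle \cap \langle \mathscr{R}\rangle$ is exactly what the paper does, and it is the only substantive computation in the paper's proof: the paper simply restricts $\phi_X$ to $\langle \mathscr{R}\rangle$, observes (via Proposition~\ref{prop_kernel_is_k_kernel}) that the kernel of this restricted map is $\langle f\rangle \cap \langle \mathscr{R}\rangle$, and then invokes the isomorphism theorems~\ref{thm_nother_1_and_3}.  So your first step is on target.

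The gap is in your second step.  The surjectivity you set out to prove --- that for every $h\in\mathscr{R}(\Lambda)$ there is a bounded $h'\in\langle\mathscr{R}\rangle$ with $h'|_X=h|_X$ --- is \emph{false} whenever $X$ is unbounded and $h|_X$ is unbounded.  For instance, if $n=2$, $f=\lambda_1\lambda_2^{-1}$ and $X=\Skel(f)$ is the diagonal, then $\lambda_1|_X$ takes arbitrarily large values, while every $h'\in\langle\mathscr{R}\rangle$ satisfies $|h'|\le|\alpha|$ for some constant $\alpha$, so $h'|_X$ is bounded.  No cutoff built from $f$ can help: any function agreeing with $\lambda_1$ on all of $X$ is automatically unbounded, hence not in $\langle\mathscr{R}\rangle$.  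Equivalently, $\langle\mathscr{R}\rangle\cdot K_{\mathcal S}\ne\mathscr{R}(\Lambda)$ in such examples, so the right-hand side of your second isomorphism theorem is a \emph{proper} sub-\vsemifield0 of $\mathscr{R}(\Lambda)/K_{\mathcal S}$.

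The paper sidesteps this entirely by reading the corollary (note the subsection title ``The restriction of the coordinate \vsemifield0'') as a statement about the image of $\langle\mathscr{R}\rangle$ under $\phi_X$: once the kernel of $\phi_X|_{\langle\mathscr{R}\rangle}$ is identified, the first isomorphism theorem gives the displayed isomorphism onto that image directly, with no surjectivity argument required.  You should drop the detour through Proposition~\ref{prop_coord_semifield_2} and work with $\phi_X|_{\langle\mathscr{R}\rangle}$ itself.
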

\begin{proof}  By Proposition
\ref{prop_kernel_is_k_kernel},  $\Ker(\phi_X) = \{g \in \langle
\mathscr{R} \rangle : g|_X = 1 \} = \{  g \in \langle \mathscr{R}
\rangle : g \in \langle f \rangle \} = \langle f \rangle \cap
\langle \mathscr{R} \rangle$, so we conclude with the isomorphism
theorems \ref{thm_nother_1_and_3}.
\end{proof}

\section{Dimension}\label{section:Convexity degree and Hyperdimension}

Throughout, $F$ denotes a divisible, $\nu$-archimedean \vsemifield0.
   Having the HS $\nu$-kernels at our disposal, our next
major goal is to use them to define dimension, which will be seen to
be uniquely defined. We return to the notation of
Construction~\ref{HO_construction}, as indicated after
Remark~\ref{RNL}.

Let $\langle f \rangle \subseteq \langle F \rangle$ be a principal
$\nu$-kernel and let $\langle f \rangle = \bigcap_{i=1}^{s} K_i$,
where
$$K_i = (L_i \cdot R_i) \cap \langle F \rangle = (L_i \cap \langle F \rangle) \cdot (R_i \cap \langle F \rangle) = L_i' \cdot R_i'$$
is its (full) HO-decomposition; i.e., for each $1 \leq i \leq s$,
\ $R_i \in \PCon(F)$ is a region $\nu$-kernel and $L_i\in
\PCon(F)$ is either an HS-kernel or bounded from below (in which
case  $L_i' = \langle F \rangle$). Then by
Corollary~\ref{cor_subdirect_decomposition_for_semifield_of_fractions},
we have the subdirect decomposition
$$\langle F \rangle/ \langle f \rangle  \hookrightarrow \prod_{i=1}^{t} \langle F \rangle /K_i = \prod_{i=1}^{t} (\langle F \rangle /L_i' \cdot R_i')$$
where $t \leq s$ is the number of $\nu$-kernels $K_i$ for which
$L_i' \neq \langle F \rangle$ (for otherwise $\langle F \rangle
/K_i~=~\{1 \}$ and can be omitted from the subdirect product).

\begin{exmp}\label{exmp_infinite_chain}
Consider the principal $\nu$-kernel $\langle \la_1 \rangle \in
\PCon(F(\la_1,\la_2))$. For $\alpha \in F$ such that $\alpha > 1$,
we have the following infinite strictly descending chain of
principal $\nu$-kernels
$$\langle \la_1 \rangle \supset \langle |\la_1| + |\la_2+1| \rangle \supset \langle |\la_1| + |\alpha^{-1}\la_2 + 1| \rangle \supset \langle |\la_1| + |\alpha^{-2}\la_2 + 1| \rangle \supset \dots$$ $$\supset \langle |\la_1| + |\alpha^{-k}\la_2 + 1| \rangle \supset \dots$$ and the strictly ascending chain of $\onenu$-sets corresponding to it.
$$\skel(\la_1) \subset \skel(|\la_1| + |\la_2+1|) \subset \dots \subset \skel(|\la_1| + |\alpha^{-k}\la_2 + 1|) \subset \dots =$$ $$\skel(\la_1) \subset \skel(\la_1) \cap \skel(\la_2+1) \subset \dots \subset \skel(\la_1) \cap \skel(\alpha^{-k}\la_2 + 1) \subset \dots.$$
\end{exmp}
%
%
%
%
%
%
%
%
%

\begin{exmp}\label{exmp_decomposition}
Again, consider the principal $\nu$-kernel $\langle x \rangle \in
\PCon(F(x,y))$. Then $$\langle x \rangle = \langle |x| + (|y + 1|
\wedge |\frac{1}{y} + 1|) \rangle = \langle  (|x| + |y + 1|)
\wedge (|x| + |\frac{1}{y} + 1|) \rangle = \langle |x| + |y + 1|
\rangle \cap \langle |x| + |\frac{1}{y} + 1| \rangle.$$ So, we
have the nontrivial decomposition of $\skel(x)$ as $\skel(|x| + |y
+ 1|) \cup \skel(|x| + |\frac{1}{y} + 1|)$ (note that $\skel(|x| +
|y+1|) = \skel(x) \cap \skel(y+1)$,  and furthermore $\skel(|x| +
|\frac{1}{y} + 1|) = \skel(x) \cap \skel(\frac{1}{y} + 1)$). In a
similar way, using complementary order $\nu$-kernels,  one can
show that every principal $\nu$-kernel can be nontrivially
decomposed to a pair of principal $\nu$-kernels.
\end{exmp}
%
%
%
%
%
%
%
%
%

\subsection{Irreducible $\nu$-kernels}$ $

Examples \ref{exmp_infinite_chain} and \ref{exmp_decomposition}
demonstrate that the lattice of principal $\nu$-kernels
$\PCon(\overline{F(\Lambda)})$ (resp. $\PCon(\langle F \rangle)$)
is too rich to define reducibility or finite dimension. (See
\cite{Bi} for a discussion of infinite dimension.) Moreover, these
examples suggest that this richness is caused by order
$\nu$-kernels. This motivates us to consider $\Theta$-reducibility
for a suitable sublattice of $\nu$-kernels $\Theta \subset
\PCon(\overline{F(\Lambda)})$ (resp. $\Theta \subset \PCon(\langle
F \rangle)$).

There are various  families of
 $\nu$-kernels that could be utilized to define the notions of reducibility,
dimensionality, and so forth. We take $\Theta$ to be the
sublattice generated by HP-kernels, because of its connection to
the (local) dimension of the linear spaces (in logarithmic scale)
defined by the $\onenu$-set corresponding to a $\nu$-kernel.
Namely, HP-kernels, and more generally   HS-kernels, define affine
subspaces of $F^{(n)}$ (see \cite[\S 9.2]{Kern}).
%
%
%
 We work with Definition~\ref{Omega}.

\begin{defn}\label{irre} A $\nu$-kernel $\langle f \rangle \in \Omega(\overline{F(\Lambda)})$ is
\textbf{reducible} if there are  $ \langle g \rangle, \langle h
\rangle \in \Omega(\overline{F(\Lambda)})$ for which  $ \langle g
\rangle, \langle h \rangle \not \subseteq \langle f \rangle$ but
$\langle g \rangle \cap \langle h \rangle \subseteq \langle f
\rangle$.
\end{defn}


\begin{lem}\label{irred}
 $\langle f \rangle$ is
reducible iff  $\langle f \rangle  = \langle g \rangle \cap \langle
h \rangle$ where $\langle f \rangle \neq \langle g \rangle$ and
$\langle f \rangle \neq \langle h \rangle.$
\end{lem}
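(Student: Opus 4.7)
The plan is to prove both directions by a direct lattice-theoretic argument, relying on the infinite distributivity of the lattice of $\nu$-kernels (Proposition~\ref{prop_convex_l_subgroups_distributive_lattice}) and on the fact that $\Omega(\overline{F(\Lambda)})$ is a sublattice closed under product and intersection.

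For the ($\Leftarrow$) direction, I would argue as follows. Suppose $\langle f \rangle = \langle g \rangle \cap \langle h \rangle$ with $\langle g \rangle \ne \langle f \rangle$ and $\langle h \rangle \ne \langle f \rangle$. Since $\langle f \rangle \subseteq \langle g \rangle$ and $\langle f \rangle \subseteq \langle h \rangle$, inclusion in the other direction would force equality; hence $\langle g \rangle \not\subseteq \langle f \rangle$ and $\langle h \rangle \not\subseteq \langle f \rangle$, while by hypothesis $\langle g \rangle \cap \langle h \rangle = \langle f \rangle \subseteq \langle f \rangle$. This is exactly Definition~\ref{irre}.

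For the ($\Rightarrow$) direction, assume $\langle f \rangle$ is reducible, so we have $\langle g \rangle, \langle h \rangle \in \Omega(\overline{F(\Lambda)})$ with $\langle g \rangle, \langle h \rangle \not\subseteq \langle f \rangle$ and $\langle g \rangle \cap \langle h \rangle \subseteq \langle f \rangle$. I would set
\[
\langle g' \rangle := \langle f \rangle \langle g \rangle, \qquad \langle h' \rangle := \langle f \rangle \langle h \rangle,
\]
both of which lie in $\Omega(\overline{F(\Lambda)})$ since $\Omega$ is closed under products. Applying the distributive law of Proposition~\ref{prop_convex_l_subgroups_distributive_lattice} (specifically, that product distributes over intersection),
\[
\langle g' \rangle \cap \langle h' \rangle \;=\; \langle f \rangle \bigl(\langle g \rangle \cap \langle h \rangle\bigr) \;\subseteq\; \langle f \rangle \cdot \langle f \rangle \;=\; \langle f \rangle,
\]
while $\langle g' \rangle, \langle h' \rangle \supseteq \langle f \rangle$ gives the reverse inclusion, so $\langle f \rangle = \langle g' \rangle \cap \langle h' \rangle$. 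Finally, $\langle g' \rangle = \langle f \rangle$ would mean $\langle g \rangle \subseteq \langle f \rangle$, contradicting the hypothesis; hence $\langle g' \rangle \ne \langle f \rangle$, and symmetrically $\langle h' \rangle \ne \langle f \rangle$.

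There is no serious obstacle here: the argument is essentially a standard distributive-lattice manipulation. The only item that requires care is invoking distributivity in the form $K(L \cap M) = KL \cap KM$ (as opposed to the meet-over-join direction); this is available because the lattice of $\nu$-kernels is distributive in both directions by Proposition~\ref{prop_convex_l_subgroups_distributive_lattice}. One also needs to confirm that $\langle g' \rangle$ and $\langle h' \rangle$ remain principal and lie in the sublattice $\Omega(\overline{F(\Lambda)})$, which is immediate from the definition of $\Omega$ as the sublattice finitely generated (under product and intersection) by HP-kernels, together with Theorem~\ref{cor_max_semifield_principal_kernels_operations} ensuring that products of principal $\nu$-kernels are again principal.
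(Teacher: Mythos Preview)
Your proof is correct and takes essentially the same approach as the paper. The paper phrases the forward direction contrapositively (assuming no nontrivial intersection decomposition exists and deriving that $\langle g \rangle \subseteq \langle f \rangle$ or $\langle h \rangle \subseteq \langle f \rangle$), while you argue directly by constructing $\langle g' \rangle = \langle f \rangle \langle g \rangle$ and $\langle h' \rangle = \langle f \rangle \langle h \rangle$; but the underlying computation---namely the distributive identity $(\langle f \rangle \langle g \rangle) \cap (\langle f \rangle \langle h \rangle) = \langle f \rangle(\langle g \rangle \cap \langle h \rangle) = \langle f \rangle$---is identical in both.
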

\begin{proof}
Assume $\langle f \rangle$ admits the stated condition. If $\langle
f \rangle  \supseteq \langle g \rangle \cap \langle h \rangle$, then
$\langle f \rangle = \langle f \rangle \cdot \langle f \rangle  =
(\langle g \rangle \cdot \langle f \rangle )\cap (\langle h \rangle
\cdot \langle f \rangle)$. Thus $\langle f \rangle = \langle g
\rangle \cdot \langle f \rangle$ or $\langle f \rangle = \langle h
\rangle \cdot \langle f \rangle$, implying $\langle f \rangle
\supseteq \langle g \rangle$ or $\langle f \rangle \supseteq \langle
g \rangle$. The converse is obvious.\end{proof}

\begin{lem}
Let $\langle f \rangle$ be an HP-kernel. Then for any HP-kernels
$\langle g \rangle$ and $\langle h \rangle$ such that $\langle f
\rangle = \langle g \rangle \cap \langle h \rangle$ either $\langle
f \rangle = \langle g \rangle$ or $\langle f \rangle = \langle h
\rangle$. In other words, every HP-kernel is irreducible.
\end{lem}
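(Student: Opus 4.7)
The plan is to reduce this lemma directly to Proposition~\ref{cor_HP-element_is_a_generator}, which says that any $\mathscr{L}$-monomial contained in an HP-kernel is itself a generator of that HP-kernel.

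First I would unravel the hypothesis: since $\langle f \rangle = \langle g \rangle \cap \langle h \rangle$, in particular $\langle f \rangle \subseteq \langle g \rangle$, so the generator $f$ of $\langle f \rangle$ lies in $\langle g \rangle$. Because $\langle f \rangle$ is an HP-kernel, by the very definition of HP-kernel we may take $f$ to be an $\mathscr{L}$-monomial. Similarly $g$ is an $\mathscr{L}$-monomial that generates the HP-kernel $\langle g \rangle$. Now I invoke Proposition~\ref{cor_HP-element_is_a_generator} applied to the $\mathscr{L}$-monomial $g$ and the $\mathscr{L}$-monomial $f \in \langle g \rangle$: the proposition asserts that $f$ must then be a generator of $\langle g \rangle$, which forces $\langle f \rangle = \langle g \rangle$. (Symmetrically, the same argument applied with $h$ gives $\langle f \rangle = \langle h \rangle$, so actually all three coincide; but the weaker disjunction claimed in the statement is already proved.)

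There is really no obstacle here beyond recognizing that the hypothesis of Proposition~\ref{cor_HP-element_is_a_generator} is satisfied for both factors; the underlying content is Lemma~\ref{prop_HP_element_is_a_generator}, which tells us that inside an HP-kernel any $\mathscr{L}$-monomial $w$ satisfies $w^s = f^k$ for some nonzero integers, and hence $\langle w \rangle = \langle w^s \rangle = \langle f^k \rangle = \langle f \rangle$ by Corollary~\ref{prop_absolute_generator}. In the language of Definition~\ref{irre} (compare also Lemma~\ref{irred}), the conclusion says precisely that $\langle f \rangle$ cannot be written as $\langle g \rangle \cap \langle h \rangle$ inside $\Omega(\overline{F(\Lambda)})$ with both factors strictly larger, so $\langle f \rangle$ is irreducible in the lattice generated by HP-kernels. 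No case analysis, no completeness or density hypotheses on $F$, and no appeal to the HO-decomposition are required; the whole argument rests on the rigidity of $\mathscr{L}$-monomial generators within a single HP-kernel.
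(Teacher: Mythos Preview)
Your proof is correct and follows essentially the same route as the paper: from $\langle f\rangle=\langle g\rangle\cap\langle h\rangle$ deduce $f\in\langle g\rangle$, then use the rigidity of $\scrL$-monomials inside an HP-kernel (the paper cites Lemma~\ref{prop_HP_element_is_a_generator} directly, you cite its immediate consequence Proposition~\ref{cor_HP-element_is_a_generator}) to conclude $\langle f\rangle=\langle g\rangle$. Your parenthetical observation that the same argument with $h$ forces all three kernels to coincide is a harmless strengthening the paper does not bother to state.
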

\begin{proof}
If $\langle f \rangle = \langle g \rangle \cap \langle h \rangle$
then $\langle f \rangle \subseteq  \langle g \rangle$ thus $f \in
\langle g \rangle$. As both  $f$ and $g$ are $\scrL$-monomials (up
to equivalence),   Lemma~\ref{prop_HP_element_is_a_generator} yields
$\langle f \rangle = \langle g \rangle$, which in turn, by
Lemma~\ref{irred}, implies that $\langle f \rangle$ is irreducible.
\end{proof}
\begin{cor}\label{cor_HS-decomposition}
Any HS-kernel $\langle f \rangle$ is irreducible.
\end{cor}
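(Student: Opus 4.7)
The plan is to reduce to a geometric statement via the $\onenu$-set correspondence, and then invoke that a vector space over an infinite field cannot be a finite union of proper subspaces. First, using distributivity of the lattice of $\nu$-kernels (Proposition \ref{lem_kernels_algebra}) together with Proposition \ref{rem_HS_prod_HP} (every product of HP-kernels is an HS-kernel), any element of $\Omega(\overline{F(\Lambda)})$ can be placed in normal form as a finite intersection of HS-kernels. Thus, if $\langle f \rangle = \langle g \rangle \cap \langle h \rangle$ with $\langle g \rangle, \langle h \rangle \in \Omega$, expanding both yields $\langle f \rangle = \bigcap_{i=1}^{m} M_i$, where each $M_i$ is an HS-kernel containing $\langle f \rangle$, arising from one of $\langle g \rangle$ or $\langle h \rangle$.

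Next, I pass to $\onenu$-sets. By the geometric interpretation of \S\ref{subsection:geometric_interpretation} and the divisibility via Remark~\ref{log0}, in logarithmic coordinates each HS-kernel cuts out an affine $\Q$-subspace of $F^{(n)}$. The identity $\langle f \rangle = \bigcap_i M_i$ translates via Proposition~\ref{prop_ker_skel_correspondence} to
\[ \Skel(\langle f \rangle) \;=\; \bigcup_{i=1}^{m} \Skel(M_i), \]
a finite union of affine subspaces of $\Skel(\langle f \rangle)$. Since $\Q$ is infinite, no affine $\Q$-subspace is the union of finitely many of its proper affine subspaces, so $\Skel(\langle f \rangle) = \Skel(M_{i_0})$ for some $i_0$.

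Finally, I would conclude by showing that an HS-kernel is determined by its $\onenu$-set. For $L = \prod_k \langle f_k \rangle$, any $\scrL$-monomial $w$ in $L$ has $\log w$ in the $\Q$-span of $\{\log f_k\}$, which (using divisibility and Lemma~\ref{prop_HP_element_is_a_generator}, together with Remark~\ref{bfacts1}(ix) for passing to roots) is precisely the space of affine functionals vanishing on $\Skel(L)$; hence two HS-kernels with the same $\onenu$-set have the same set of $\scrL$-monomial generators and must coincide. Applying this to $M_{i_0}$ and $\langle f \rangle$, we get $M_{i_0} = \langle f \rangle$. Since $M_{i_0}$ arose from expanding, say, $\langle g \rangle$, we have $\langle g \rangle \subseteq M_{i_0} = \langle f \rangle$; combined with $\langle g \rangle \supseteq \langle f \rangle$ (from the intersection), $\langle g \rangle = \langle f \rangle$, and irreducibility follows by Lemma~\ref{irred}.

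The main obstacle will be the uniqueness claim in the last step: that HS-kernels are determined by their $\onenu$-sets. While the HP case is handled directly by Lemma~\ref{prop_HP_element_is_a_generator}, the general HS case requires a careful bi-annihilator identity over $\Q$, exploiting both the divisibility of $F$ and the characterization of $\scrL$-monomials inside a product of HP-kernels.
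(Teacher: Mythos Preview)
Your approach is genuinely different from the paper's, and considerably longer. The paper's argument is a two-line algebraic reduction: write $\langle f \rangle = \langle f_1 \rangle \cdots \langle f_t \rangle$ with each $f_j$ an $\scrL$-monomial; if $\langle f \rangle \subseteq \langle g \rangle$ for an HP-kernel $\langle g \rangle$, then each $f_j \in \langle g \rangle$, so by Lemma~\ref{prop_HP_element_is_a_generator} every $\langle f_j \rangle = \langle g \rangle$, whence $\langle f \rangle = \langle g \rangle$. No detour through $\onenu$-sets or bi-annihilators is needed, and the subsequent Corollary~\ref{cor_reducibility_of_HS_kernels} then bootstraps from HP-witnesses to HS-witnesses.

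Your geometric route has a genuine gap: the case $\Skel(\langle f \rangle) = \emptyset$. This does occur for HS-kernels. For instance, with $\alpha \neq 1$ the monomials $\lambda_1$ and $\alpha\lambda_1^{-1}$ are non-proportional $\scrL$-monomials, so $L = \langle \lambda_1 \rangle \cdot \langle \alpha\lambda_1^{-1} \rangle$ is an HS-kernel, yet $\Skel(L) = \{\lambda_1 = 1\} \cap \{\lambda_1 = \alpha\} = \emptyset$. When $\Skel(\langle f \rangle) = \emptyset$, your step~3 (no affine space is a finite union of proper affine subspaces) is vacuous, and your step~5 collapses: every $\scrL$-monomial vacuously vanishes on the empty set, so the annihilator of $\Skel(L)$ does not recover the $\scrL$-monomials in $L$, and the uniqueness claim ``HS-kernels are determined by their $\onenu$-sets'' fails. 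To rescue this case you would have to argue algebraically about which $\scrL$-monomials lie in $L$, which is exactly what Lemma~\ref{prop_HP_element_is_a_generator} and the paper's direct argument already do. So even where your plan works, it is reproving that lemma in a roundabout way; and where it doesn't, you are forced back to it anyway.
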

\begin{proof}
If $\langle f \rangle =  \langle g \rangle \cap \langle h \rangle$
for HP-kernels $\langle g \rangle$ and $\langle h \rangle$, then
$\langle f \rangle \subseteq \langle g \rangle$. But $\langle f
\rangle$ is a product $\langle f_1 \rangle \cdots \langle f_t
\rangle$  of finitely many HP-kernels. For each $1 \le j \le t$,
$\langle f_j \rangle \subseteq \langle g \rangle$  yielding $\langle
g \rangle = \langle f_j  \rangle$  by
Lemma~\ref{prop_HP_element_is_a_generator}, and so $\langle f
\rangle = \langle g \rangle$.
\end{proof}

\begin{cor}\label{cor_reducibility_of_HS_kernels}
If $\langle f \rangle = \langle g \rangle \cap \langle h \rangle$
for  HS-kernels $\langle g \rangle$ and $\langle h \rangle$, then
either $\langle f \rangle = \langle g \rangle$ or $\langle f \rangle
= \langle h \rangle$.
\end{cor}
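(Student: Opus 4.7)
The plan is to pass to $\onenu$-sets and then return, using the linear-algebraic structure of $\scrL$-monomials. By Proposition~\ref{prop_ker_skel_correspondence},
$$\Skel(f) = \Skel(g) \cup \Skel(h).$$
The corollary is best read as a strengthening of Corollary~\ref{cor_HS-decomposition} in which $\langle f \rangle$ is itself assumed to be an HS-kernel; without that implicit hypothesis the conclusion fails, as exemplified by $\langle \la_1 \rangle \cap \langle \la_2 \rangle$, which is strictly smaller than both factors. Under that reading, all three $\onenu$-sets $\Skel(f)$, $\Skel(g)$, $\Skel(h)$ are affine subspaces of $F^{(n)}$ in the logarithmic picture of Remark~\ref{log0} and \S\ref{subsection:geometric_interpretation}.

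First I would run a pure dimension argument: an affine subspace $V$ cannot be written as the union of two properly smaller affine subspaces, since each proper affine subsubspace has dimension strictly less than $\dim V$ and therefore their union cannot equal $V$. Applied to $\Skel(f) = \Skel(g) \cup \Skel(h)$, this forces $\Skel(g) \subseteq \Skel(h)$ or $\Skel(h) \subseteq \Skel(g)$; without loss of generality $\Skel(f) = \Skel(h)$.

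The second step is to promote this equality of $\onenu$-sets to an equality of $\nu$-kernels. Using Proposition~\ref{rem_HS_prod_HP}, I write $\langle f \rangle = \prod_{k=1}^{r}\langle f_k \rangle$ and $\langle h \rangle = \prod_{j=1}^{t}\langle h_j \rangle$ as finite products of HP-kernels, with $\scrL$-monomial generators $f_k$ and $h_j$. In the logarithmic picture, each $\scrL$-monomial corresponds to a $\mathbb{Q}$-linear functional on $F^{(n)}$, and the $\mathbb{Q}$-annihilator of the common affine subspace $\Skel(f) = \Skel(h)$ is spanned simultaneously by the functionals arising from $\{f_k\}$ and by those arising from $\{h_j\}$. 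Hence each functional coming from $h_j$ is a $\mathbb{Q}$-linear combination of the functionals coming from the $f_k$'s; clearing denominators using divisibility of $F$ yields integer identities $h_j^{M_j} = \alpha_j \prod_k f_k^{c_{k,j}}$ with $M_j, c_{k,j} \in \mathbb{Z}$ and $\alpha_j \in F$. This places $h_j^{M_j}$ in $\prod_k \langle f_k \rangle = \langle f \rangle$, and since $h_j$ is tangible, Remark~\ref{bfacts1}(ix) gives $h_j \in \langle f \rangle$. Therefore $\langle h_j \rangle \subseteq \langle f \rangle$ for each $j$, so $\langle h \rangle = \prod_j \langle h_j \rangle \subseteq \langle f \rangle$; combined with the reverse inclusion supplied by the intersection, $\langle f \rangle = \langle h \rangle$.

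The main obstacle is the second step, where divisibility of $F$ and tangibility of the HP-generators are essential. The first step is conceptually easy once one identifies HS-kernels with affine subspaces; the return trip requires enough structure (integer-power identities among $\scrL$-monomials, and the ``$a^n \in K \Rightarrow a \in K$'' property for tangible elements) to promote an inclusion of annihilator subspaces to an inclusion of the generating $\scrL$-monomials of the corresponding kernels.
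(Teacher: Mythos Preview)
You are right that the corollary tacitly assumes $\langle f \rangle$ is itself an HS-kernel; the paper's own proof ends by invoking Corollary~\ref{cor_HS-decomposition}, which applies only to HS-kernels, and your counterexample $\langle \la_1 \rangle \cap \langle \la_2 \rangle$ shows this hypothesis is essential. Your route is genuinely different from the paper's. The paper never leaves the algebraic side: assuming $\langle f \rangle \ne \langle g \rangle$ and $\langle f \rangle \ne \langle h \rangle$, it uses Proposition~\ref{rem_HS_prod_HP} to select HP-subkernels $\langle g' \rangle \subseteq \langle g \rangle$ and $\langle h' \rangle \subseteq \langle h \rangle$ with $\langle g' \rangle, \langle h' \rangle \not\subseteq \langle f \rangle$, observes that $\langle g' \rangle \cap \langle h' \rangle \subseteq \langle g \rangle \cap \langle h \rangle = \langle f \rangle$, and cites Corollary~\ref{cor_HS-decomposition} for the contradiction. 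This is three lines, and it completely bypasses your Step~4 (promoting $\Skel(f)=\Skel(h)$ to $\langle f \rangle = \langle h \rangle$), which is the hard part of your argument.

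Your approach also carries a genuine gap: it does not handle $\Skel(f) = \emptyset$. HS-kernels with empty $\onenu$-set exist (e.g., $\langle |\la_1| + |\alpha\la_1^2| \rangle$ for $\alpha \ne 1$), and then your Step~2 is vacuous while your Step~4 collapses, since the annihilator of the empty set is the full dual and gives no relation among the $\frak f_k$ and $\frak h_j$. A related piece of sloppiness even in the nonempty case: in the identity $h_j^{M_j} = \alpha_j \prod_k f_k^{c_{k,j}}$ you silently need $\alpha_j \in \langle f \rangle$ to conclude $h_j^{M_j} \in \langle f \rangle$. When $\Skel(f)\ne\emptyset$ one can translate a point of $\Skel(f)$ to the origin to force $\alpha_j = 1$, but this should be said explicitly. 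The paper's algebraic argument treats all cases uniformly and is the cleaner path here.
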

\begin{proof} Otherwise, since
$\langle g \rangle$ and $\langle h \rangle$ are finite products of
HP-kernels, there are HP-kernel $\langle g' \rangle \subseteq
\langle g \rangle$   and  $\langle h' \rangle \subseteq \langle h
\rangle$   such that $\langle g' \rangle \not \subseteq \langle f
\rangle$ and $\langle h' \rangle \not \subseteq \langle f \rangle$.
But $\langle g' \rangle \cap \langle h' \rangle \subseteq \langle g
\rangle \cap \langle h \rangle = \langle f \rangle$, implying
$\langle f \rangle$ is reducible, contradicting Corollary
\ref{cor_HS-decomposition}.
\end{proof}

\begin{prop} The irreducible $\nu$-kernels in the
  lattice generated by HP-kernels are precisely the HS-kernels.\end{prop}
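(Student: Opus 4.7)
The plan has two directions. The easy direction asserts that every HS-kernel is irreducible in $\Omega(\overline{F(\Lambda)})$; the converse asserts that any irreducible $\nu$-kernel in $\Omega(\overline{F(\Lambda)})$ is an HS-kernel. Throughout I will use the key structural fact recorded in Proposition~\ref{rem_HS_prod_HP}: a product of HP-kernels is an HS-kernel. Combining this with the definition of $\Omega(\overline{F(\Lambda)})$ as the lattice finitely generated by HP-kernels, every element of $\Omega(\overline{F(\Lambda)})$ can be written as a finite intersection of HS-kernels.

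For the forward direction, suppose $\langle f \rangle \in \Omega(\overline{F(\Lambda)})$ is irreducible. By the observation above, write $\langle f \rangle = L_1 \cap L_2 \cap \dots \cap L_m$ with each $L_i$ an HS-kernel. Since $\Omega(\overline{F(\Lambda)})$ is closed under finite intersections, we may bracket this as $\langle f\rangle = L_1 \cap (L_2 \cap \dots \cap L_m)$, with both factors in $\Omega(\overline{F(\Lambda)})$. By irreducibility (in the sense of Lemma~\ref{irred}), $\langle f \rangle = L_1$ or $\langle f \rangle = L_2 \cap \dots \cap L_m$; iterating, $\langle f \rangle = L_i$ for some $i$, and hence is an HS-kernel.

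For the converse direction, let $\langle f \rangle$ be an HS-kernel and suppose $\langle f \rangle = \langle g \rangle \cap \langle h \rangle$ with $\langle g \rangle, \langle h \rangle \in \Omega(\overline{F(\Lambda)})$. Writing each as a finite intersection of HS-kernels, we get $\langle g \rangle = \bigcap_i G_i$ and $\langle h \rangle = \bigcap_j H_j$, so $\langle f \rangle = \bigcap_i G_i \cap \bigcap_j H_j$ is a finite intersection of HS-kernels. By a direct induction on the number of terms, extending Corollary~\ref{cor_reducibility_of_HS_kernels} (the inductive step being: if $\langle f \rangle$ is an HS-kernel and equals $G \cap K$ for an HS-kernel $G$ and a $\nu$-kernel $K$ that is itself a finite intersection of HS-kernels, apply Corollary~\ref{cor_reducibility_of_HS_kernels} after absorbing into $K$ a single HS-kernel at a time), we conclude that $\langle f \rangle$ equals one of the HS-kernels in the intersection, say $\langle f \rangle = G_{i_0}$. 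Then $\langle g \rangle = \bigcap_i G_i \subseteq G_{i_0} = \langle f \rangle$, but also $\langle f \rangle \subseteq \langle g \rangle$, forcing $\langle f \rangle = \langle g \rangle$. By Lemma~\ref{irred}, this shows $\langle f \rangle$ is irreducible.

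The main obstacle is the inductive extension of Corollary~\ref{cor_reducibility_of_HS_kernels}. The two-factor version rules out $\langle f \rangle = G \cap H$ with $\langle f \rangle$ strictly smaller than both $G$ and $H$. To pass to $n$ factors, one should show that if $\langle f \rangle = G_1 \cap \cdots \cap G_n$ with $G_i$ HS-kernels, then $\langle f \rangle = G_i$ for some $i$. This should follow by writing $\langle f \rangle = G_1 \cap (G_2 \cap \dots \cap G_n)$, observing that $G_2 \cap \dots \cap G_n$ is itself a finite product of HP-kernels (so a \emph{single} HS-kernel) by Theorem~\ref{cor_max_semifield_principal_kernels_operations} together with Proposition~\ref{rem_HS_prod_HP}, and then invoking Corollary~\ref{cor_reducibility_of_HS_kernels} directly. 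This is the place where the lattice structure of HS-kernels, and in particular the principality of finite intersections of HS-kernels inside $\Omega(\overline{F(\Lambda)})$, is essential.
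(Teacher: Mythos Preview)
Your forward direction (irreducible $\Rightarrow$ HS-kernel) is correct and is essentially the paper's argument: write the irreducible element of $\Omega(\overline{F(\Lambda)})$ as a finite intersection of HS-kernels and peel off one factor at a time using Lemma~\ref{irred}.

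The converse direction, however, contains a genuine error. You claim that $G_2 \cap \dots \cap G_n$ is ``a finite product of HP-kernels (so a single HS-kernel)'' by invoking Theorem~\ref{cor_max_semifield_principal_kernels_operations} and Proposition~\ref{rem_HS_prod_HP}. This is false: Theorem~\ref{cor_max_semifield_principal_kernels_operations} only guarantees that the intersection of principal $\nu$-kernels is \emph{principal}, not that it is an HS-kernel. In fact, a finite intersection of HS-kernels is generally \emph{not} an HS-kernel. For instance, $\langle \la_1 \rangle \cap \langle \la_2 \rangle = \langle\, |\la_1| \wedge |\la_2|\, \rangle$ cannot be an HS-kernel: if it were, it would contain some HP-kernel $\langle g \rangle$, and then $\langle g \rangle \subseteq \langle \la_1 \rangle$ and $\langle g \rangle \subseteq \langle \la_2 \rangle$ would force $\langle g \rangle = \langle \la_1 \rangle = \langle \la_2 \rangle$ by Proposition~\ref{cor_HP-element_is_a_generator}, a contradiction. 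So your inductive step, which hinges on applying Corollary~\ref{cor_reducibility_of_HS_kernels} with both intersectands being HS-kernels, collapses.

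The good news is that the converse direction is already established in the paper as Corollary~\ref{cor_HS-decomposition} (HS-kernels are irreducible), so you need not re-prove it at all. The paper's own proof of the proposition simply cites the preceding corollaries and observes that every element of $\Omega(\overline{F(\Lambda)})$ that is not an HS-kernel is a proper intersection of HS-kernels, hence reducible---exactly your forward argument. Once you replace your flawed inductive re-proof of the converse by a citation of Corollary~\ref{cor_HS-decomposition}, your write-up coincides with the paper's.
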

\begin{proof}
 This follows from Corollary \ref{cor_reducibility_of_HS_kernels}, since all proper intersections in the lattice generated by HP-kernels are reducible.
  (Note that HP-kernels are also HS-kernels.)
\end{proof}

\begin{defn}\label{hyperspec}
The \textbf{hyperspace spectrum} of $\overline{F(\Lambda)}$,
denoted $\HSpec(\overline{F(\Lambda)})$, is the family of
irreducible $\nu$-kernels in $\Omega(\overline{F(\Lambda)})$.
\end{defn}

\begin{cor}
$\HSpec(\overline{F(\Lambda)})$ is the family of HS-kernels in
$\Omega(\overline{F(\Lambda)})$, which is precisely the family of
HS-fractions of $\overline{F(\Lambda)}$.
\end{cor}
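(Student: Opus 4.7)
The plan is to establish the equality by showing two inclusions: first, that every HS-kernel is irreducible (and hence lies in $\HSpec$), and second, that every irreducible $\nu$-kernel in $\Omega(\overline{F(\Lambda)})$ must be an HS-kernel. The third assertion, that HS-kernels correspond bijectively to HS-fractions, is essentially by definition, since an HS-kernel is principal and generated by an HS-fraction, while Corollary~\ref{cor_HS-element_is_a_generator} (or really its HS-variant built from Lemma~\ref{prop_HP_element_is_a_generator}) pins down the generator up to the appropriate equivalence.

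For the first direction, I would simply invoke Corollary~\ref{cor_HS-decomposition} together with Corollary~\ref{cor_reducibility_of_HS_kernels}: if an HS-kernel $\langle f\rangle$ equals $\langle g\rangle \cap \langle h\rangle$ with $\langle g\rangle,\langle h\rangle \in \Omega(\overline{F(\Lambda)})$, then each of $\langle g\rangle,\langle h\rangle$ is itself a finite intersection of HS-kernels, but by the corollary above, $\langle f\rangle$ cannot be a proper intersection of HS-kernels, forcing $\langle f\rangle=\langle g\rangle$ or $\langle f\rangle=\langle h\rangle$. By Lemma~\ref{irred}, this is exactly irreducibility.

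For the converse direction, the key preliminary step is to show that every element $K \in \Omega(\overline{F(\Lambda)})$ admits a finite representation
\[
K = L_1 \cap L_2 \cap \cdots \cap L_m
\]
in which each $L_i$ is an HS-kernel. This follows by structural induction on the formation of $K$ from HP-kernels via finite products and intersections: products of HP-kernels are HS-kernels by Proposition~\ref{rem_HS_prod_HP}, and using the distributive identity from Proposition~\ref{lem_kernels_algebra} one can push all products inside intersections, so that every formal expression collapses to an intersection of HS-kernels.

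Once such a representation is in hand, suppose $K$ is irreducible. I would then induct on the number $m$ of factors. If $m=1$, $K$ is itself an HS-kernel and we are done. For $m \ge 2$, write $K = L_1 \cap (L_2 \cap \cdots \cap L_m)$; the right-hand factor lies in $\Omega(\overline{F(\Lambda)})$, so by Lemma~\ref{irred} irreducibility forces either $K = L_1$ (in which case $K$ is an HS-kernel) or $K = L_2 \cap \cdots \cap L_m$ (in which case the induction hypothesis applies). Either way, $K$ is an HS-kernel. The main technical obstacle I anticipate is verifying cleanly that distributivity reduces arbitrary words in products and intersections of HP-kernels to a single intersection of HS-kernels; but this is a routine consequence of the infinite distributive law available in $\PCon(\overline{F(\Lambda)})$.
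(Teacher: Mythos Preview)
Your proposal is correct and follows essentially the same approach as the paper. The paper states this Corollary without proof, since it is an immediate restatement (via the definition of $\HSpec$) of the preceding Proposition, whose one-line proof already encapsulates both directions you spell out: HS-kernels are irreducible by Corollary~\ref{cor_HS-decomposition} and Corollary~\ref{cor_reducibility_of_HS_kernels}, and every element of $\Omega(\overline{F(\Lambda)})$ is a finite intersection of HS-kernels (by distributivity), so any proper such intersection is reducible. Your explicit induction on the number of factors and the structural induction for the intersection-of-HS-kernels normal form are just a more detailed unpacking of the same argument. One small quibble: you cite a ``Corollary~\texttt{cor\_HS-element\_is\_a\_generator}'' that does not exist in the paper; the relevant statement is Proposition~\ref{cor_HP-element_is_a_generator} (built on Lemma~\ref{prop_HP_element_is_a_generator}), but in any case the bijection between HS-kernels and HS-fractions is by definition up to $\sim_{\FF}$ and needs no separate justification here.
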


%

\begin{defn}\label{height0}
A chain $P_0 \subset P_1 \subset \dots \subset P_t$ in
$\HSpec(\overline{F(\Lambda)})$  of HS-kernels of
$\overline{F(\Lambda)}$ is said to have \textbf{length}~$t$. An
HS-kernel $P$ has \textbf{height} $t$ (denoted $\hgt(P)=t$) if there
is a chain of length $t$ in $\HSpec(\overline{F(\Lambda)})$
terminating at $P$, but no chain of length $t+1$ terminates at $P$.
\end{defn}

\begin{rem}\label{rem_correspondence_of_quontient_HSpec}
Let $L$ be a $\nu$-kernel in $\PCon(\overline{F(\Lambda)})$.
Consider the canonical homomorphism $\phi_L :
\overline{F(\Lambda)} \rightarrow \overline{F(\Lambda)}/L$. Since
the image of a principal $\nu$-kernel is generated by the image of
any of its generators, $\phi_L(\langle f \rangle)~=~\langle
\phi_L(f) \rangle$ for any HP-kernel $\langle f \rangle$. Choosing
$f$ to be an $\scrL$-monomial,  $\langle \phi_L(f) \rangle$ is a
nontrivial HP-kernel in $ \overline{F(\Lambda)}/L$ if and only if
$\phi_L(f) \not \in F$. Thus, the set of HP-kernels of
$\overline{F(\Lambda)}$ mapped to HP-kernels of
$\overline{F(\Lambda)}/L$ is
\begin{equation}\label{eq_subset_of_Omega}
\left\{ \langle g \rangle : \langle g \rangle \cdot \langle F
\rangle \supseteq \phi_L^{-1}\left(\langle F \rangle\right) = L
\cdot \langle F \rangle \right\}.
\end{equation}
As $\phi_L$ is an   $F$-homomorphism, it respects $\vee, \wedge$
and $| \cdot |$, and  thus $\phi_{L}( (\Omega
(\overline{F(\Lambda)}), \cap, \cdot)) = (\Omega
(\overline{F(\Lambda)}/L),\cap, \cdot)$.
 In fact Theorem~\ref{thm_kernels_hom_relations} yields a correspondence
identifying $\HSpec(\overline{F(\Lambda)}/L)$ with the subset of
$\HSpec(\overline{F(\Lambda)})$ which consists of all HS-kernels
$P$ of $\overline{F(\Lambda)}$ such that   $P \cdot \langle F
\rangle \supseteq L\cdot\langle F \rangle$.  \end{rem}

\begin{lem} The above correspondence extends to a
correspondence identifying $\Omega(\overline{F(\Lambda)}/L)$ with
the subset~\eqref{eq_subset_of_Omega} of
$\Omega(\overline{F(\Lambda)})$. Under this correspondence, the
maximal HS-kernels of $\overline{F(\Lambda)}/L$ correspond to
maximal HS-kernels of $\overline{F(\Lambda)}$, and reducible
$\nu$-kernels of $\overline{F(\Lambda)}/L$ correspond to reducible
$\nu$-kernels of $\overline{F(\Lambda)}$.
\end{lem}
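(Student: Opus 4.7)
The plan is to lift the HSpec correspondence from Remark~\ref{rem_correspondence_of_quontient_HSpec} to all of $\Omega$ by exploiting the fact, noted in that remark, that $\phi_L$ respects $\vee$, $\wedge$, and $|\cdot|$, and therefore carries finite products and intersections of HP-kernels to the same combinations of HP-kernels (or trivial kernels) of the quotient. For the forward direction I would take any $K \in \Omega(\overline{F(\Lambda)})$, express it as a finite lattice combination $K = \bigcap_i \prod_j \langle f_{ij}\rangle$ of HP-kernels, and apply Theorem~\ref{thm_kernels_hom_relations}(i) to conclude that $\phi_L(K) = \bigcap_i \prod_j \phi_L(\langle f_{ij}\rangle)$ is a $\nu$-kernel obtained as the same lattice combination of HP-kernels (or trivial factors) in $\overline{F(\Lambda)}/L$; hence $\phi_L(K) \in \Omega(\overline{F(\Lambda)}/L)$.

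For the reverse direction, every $\bar K \in \Omega(\overline{F(\Lambda)}/L)$ is a finite lattice combination of HP-kernels $\langle \bar g_i\rangle$ of the quotient. Each such $\langle \bar g_i\rangle$ lifts, via the HSpec correspondence already established, to an HP-kernel $\langle g_i \rangle$ lying in the subset \eqref{eq_subset_of_Omega}, and the same lattice combination of the $\langle g_i \rangle$ produces a $\nu$-kernel $K$ in the sublattice of $\Omega(\overline{F(\Lambda)})$ generated by \eqref{eq_subset_of_Omega}, with $\phi_L(K) = \bar K$. That this lift is unique follows from Theorem~\ref{cor_qoutient_corr} applied inside the lattice of $\nu$-kernels, together with the infinite distributivity of that lattice from Proposition~\ref{lem_kernels_algebra}: two distinct combinations of lifted HP-kernels cannot collapse to the same $\nu$-kernel of the quotient.

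The assertions about maximality and reducibility then become formal consequences of the bijection being an order-preserving lattice isomorphism between the relevant sublattices. For maximality, Proposition~\ref{rem_HS_prod_HP} gives that every HS-kernel is a finite product of HP-kernels, so an HS-kernel in $\Omega(\overline{F(\Lambda)})$ satisfying \eqref{eq_subset_of_Omega} is maximal among HS-kernels of the subset precisely when its image is maximal among HS-kernels of the quotient. For reducibility, Lemma~\ref{irred} characterises a reducible $\langle f \rangle$ by the existence of a proper intersection decomposition $\langle f\rangle = \langle g\rangle \cap \langle h\rangle$ with $\langle g\rangle,\langle h\rangle \supsetneq \langle f\rangle$; since the bijection preserves $\cap$ and strict inclusion, reducibility transfers both ways. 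The main obstacle I foresee is verifying that the condition in \eqref{eq_subset_of_Omega} is genuinely stable under the lattice operations of $\Omega$, i.e.\ that the sublattice of $\Omega(\overline{F(\Lambda)})$ generated by HP-kernels satisfying the condition is exactly the preimage under $\phi_L$ of $\Omega(\overline{F(\Lambda)}/L)$; I would handle this by pulling the condition $\langle g\rangle\cdot \langle F\rangle \supseteq L\cdot \langle F\rangle$ through $\phi_L$ and applying the identities of Proposition~\ref{prop_algebra_of_generators_of_kernels}(i)--(ii) to commute intersections and products past $\phi_L$.
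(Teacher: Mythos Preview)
Your treatment of the extension of the correspondence to all of $\Omega$ and of reducibility is correct and essentially matches the paper's reasoning, though you supply considerably more detail than the paper does: the paper takes the extension as implicit from the preceding Remark (since $\phi_L$ respects $\vee$, $\wedge$, $|\cdot|$), and dispatches reducibility in one line by noting that $\wedge$ is preserved under homomorphisms.

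Where your approach diverges substantively is in the maximality argument. You argue via order preservation of the bijection, concluding that an HS-kernel is maximal \emph{among HS-kernels of the subset \eqref{eq_subset_of_Omega}} iff its image is maximal among HS-kernels of the quotient. But the lemma asserts correspondence with maximal HS-kernels of $\overline{F(\Lambda)}$ outright, where ``maximal'' means maximal as a $\nu$-kernel (equivalently, the quotient by it is simple; cf.\ Remark~\ref{cor_max_ker_simple_corr}). Your order-theoretic argument does not by itself reach this: you would still need to argue that maximality within the subset coincides with maximality in the full $\nu$-kernel lattice. The paper avoids this issue entirely by invoking the third isomorphism theorem (Theorem~\ref{thm_nother_1_and_3}(3)): $(\overline{F(\Lambda)}/L)/(P/L) \cong \overline{F(\Lambda)}/P$, so simplicity of one quotient is equivalent to simplicity of the other, and maximality of $P$ and of $P/L$ stand or fall together. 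This is both shorter and closes the gap directly; you should replace your order-preservation argument with this.
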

\begin{proof} The latter assertion is obvious since $\wedge$
is preserved under homomorphisms. For the first assertion,
$(\overline{F(\Lambda)}/L)/(P/L) \cong \overline{F(\Lambda)}/P$ by
Theorem~\ref{thm_nother_1_and_3}, so simplicity of the quotients
is preserved. Hence, so is maximality of  $P/L$ and $P$.
\end{proof}

\begin{defn}\label{defn_Hdim}
The Hyperdimension of $\overline{F(\Lambda)}$, written $\Hdim
\overline{F(\Lambda)}$ (if it exists), is the maximal height of
the HS-kernels in $\overline{F(\Lambda)}$.
\end{defn}

\subsection{Decompositions}$ $

Let us garner some information about reducible $\nu$-kernels, from
rational functions. Suppose $f\in \overline{F(\Lambda)}$. We write
$f = \sum_{i=1}^{k}f_i$ where each $f_i$ is of the form $
{g_i}{h_i}^*$ with $g_i,h_i \in F[\Lambda]$ and $g_i$  a monomial.
(Thus $f_i(\bfa) = \frac{g_i(\bfa)}{h_i(\bfa)}$ when $h_i(\bfa)$
is tangible.) We also assume that this sum is \textbf{irredundant}
in the sense that we cannot remove any of the summands and still
get $f$.
If each time the value $1$ is attained by one of the terms $f_i$ in this expansion and all other terms attain values   $\le 1$, then $\tilde{f} = \bigwedge_{i=1}^{k}|f_i|$  defines the same $\onenu$-set as $f$. Moreover, if $f \in \langle F \rangle$ then $\tilde{f} \wedge |\alpha| \in \langle F \rangle$, for $\alpha \in F \setminus \{1\}$  is also a generator of $\langle f \rangle$. The reason we take $\tilde{f} \wedge |\alpha|$ is that we have no guarantee that each of the $f_i$'s in the above expansion is bounded.\\
We can generalize this idea as follows:

 We call $f\in \overline{F(\Lambda)}$
\textbf{reducible} if we can write $f = \sum_{i=1}^{k}f_i$ as above,
such that for every $1 \leq i \leq k$ the following condition holds:
\begin{equation*}
f_i(\bfa) \nucong 1  \Rightarrow  \ \ f_j(\bfa) \le _{\nu} 1, \
\forall j \neq i.
\end{equation*}

\begin{defn}\label{defn_decomposition1}
Let $f \in \overline{F(\Lambda)}$. A
$\Theta$-\textbf{decomposition} of $f$ is an expression of the
form
\begin{equation}\label{eq_decomposition}
|f| = |u| \wedge |v|
\end{equation}
with $u,v $ $\Theta$-elements in $\overline{F(\Lambda)}$.

The decomposition \eqref{eq_decomposition} is said to be
\emph\textbf{trivial} if  \ $f \sim_{\FF} u$ \ or \ $f \sim_{\FF} v$
(equivalently $|f|~\sim_{\FF}~|u|$ or $|f|~\sim_{\FF}~|v|$).
 Otherwise, the decomposition  is said to be \textbf{nontrivial}.
\end{defn}
%
%
%

\begin{lem}\label{lem_reducibility_and_decomposition}
Suppose $f \in \overline{F(\Lambda)}$ is a $\Theta$-element. Then
$\langle f \rangle$ is reducible if and only if there exists some
generator $f'$ of $\langle f \rangle$  that has a nontrivial
$\Theta$-decomposition.
\end{lem}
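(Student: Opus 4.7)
The plan is to reduce the whole statement to the algebraic identity
\[
\langle u \rangle \cap \langle v \rangle \;=\; \langle |u| \wedge |v| \rangle
\]
from Theorem~\ref{cor_max_semifield_principal_kernels_operations}, combined with the characterization of reducibility in Lemma~\ref{irred}. Both directions are then essentially a one-line translation between ``intersection of two $\Theta$-kernels'' and ``wedge of two $\Theta$-elements.''

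For the forward direction ($\Rightarrow$), I would start by assuming $\langle f \rangle$ is reducible in the lattice of $\Theta$-kernels. Lemma~\ref{irred} (applied in the $\Theta$-sublattice) then lets me write $\langle f \rangle = \langle u \rangle \cap \langle v \rangle$ with $u,v$ $\Theta$-elements and $\langle u \rangle,\langle v \rangle \neq \langle f \rangle$. Setting $f' := |u| \wedge |v|$, the intersection identity gives $\langle f' \rangle = \langle u \rangle \cap \langle v \rangle = \langle f \rangle$, so $f'$ is a generator of $\langle f \rangle$; since $|u|,|v| \geq 1$ we have $|f'| = f' = |u| \wedge |v|$, which is the desired $\Theta$-decomposition. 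Nontriviality amounts to showing $f' \not\sim_{\FF} u$ and $f' \not\sim_{\FF} v$; equivalently, that $\langle f \rangle \cap \langle F \rangle$ differs from each of $\langle u \rangle \cap \langle F \rangle$ and $\langle v \rangle \cap \langle F \rangle$.

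For the reverse direction ($\Leftarrow$), I take a generator $f'$ of $\langle f \rangle$ with $|f'| = |u| \wedge |v|$ a nontrivial $\Theta$-decomposition. Applying the intersection identity yields
\[
\langle f \rangle \;=\; \langle f' \rangle \;=\; \langle |u| \wedge |v| \rangle \;=\; \langle u \rangle \cap \langle v \rangle,
\]
with $u,v$ $\Theta$-elements, and nontriviality ensures $\langle u \rangle \neq \langle f \rangle \neq \langle v \rangle$; invoking Lemma~\ref{irred} then gives reducibility.

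The main obstacle I expect is bridging the gap between the two notions of equality used here: reducibility is defined in Definition~\ref{irre} via strict kernel inequality $\langle u \rangle \neq \langle f \rangle$, whereas triviality of a decomposition in Definition~\ref{defn_decomposition1} is phrased via $\sim_{\FF}$, i.e., equality after intersecting with $\langle F \rangle$. The step that requires care is therefore showing that on the sublattice of $\Theta$-kernels relevant here, $f \sim_{\FF} u$ is equivalent to $\langle f \rangle = \langle u \rangle$. This should follow from Proposition~\ref{prop_generator_of_skel2} together with Theorem~\ref{cor0}: a principal $\Theta$-kernel is determined by its $\onenu$-set once we pass inside $\langle F \rangle$, and distinct $\Theta$-kernels yield distinct intersections with $\langle F \rangle$. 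Once this injectivity is recorded, both directions of the lemma collapse to the single identity above, and the proof is complete.
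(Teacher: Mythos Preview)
Your approach is exactly the paper's: both directions hinge on the identity $\langle u\rangle\cap\langle v\rangle=\langle |u|\wedge|v|\rangle$ together with Lemma~\ref{irred}, and the paper's proof is essentially the two one-line translations you describe.

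You are, however, more careful than the paper on one point. The paper's forward direction simply asserts that $f'=|u|\wedge|v|$ is a \emph{nontrivial} decomposition, without checking that $\langle f\rangle\neq\langle u\rangle$ forces $f'\not\sim_{\FF}u$. As you observe, triviality in Definition~\ref{defn_decomposition1} is phrased via $\sim_{\FF}$ (equality after intersecting with $\langle F\rangle$), while reducibility is phrased via honest kernel equality, and Example~\ref{nonuniq} shows these can differ in general. Your proposed fix via Proposition~\ref{prop_generator_of_skel2} and Theorem~\ref{cor0} does close the gap over $\mathscr{R}$, but it is heavier than necessary: in the intended setting one has $\Theta\subset\PCon(\langle F\rangle)$ (cf.\ the hypothesis of Theorem~\ref{thm_every_gen_is_reducible1} and Lemma~\ref{rem_the_lattice_of_kernels_of_the_bounded_kernel}), so every $\Theta$-kernel $K$ satisfies $K\cap\langle F\rangle=K$, and $\sim_{\FF}$ collapses to kernel equality outright. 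With that remark recorded, both directions are the one-liners you wrote. The reverse implication $u\not\sim_{\FF}f'\Rightarrow\langle u\rangle\neq\langle f'\rangle$, which is all the paper actually uses explicitly, is of course automatic in any case.
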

\begin{proof}
If $\langle f \rangle$ is reducible , then there exist
 $\nu$-kernels $\langle u \rangle$ and $\langle v \rangle$ in
$\Theta$ such that $\langle f \rangle = \langle u \rangle \cap
\langle v \rangle$ where $\langle f \rangle \neq \langle u \rangle$
and $\langle f \rangle \neq \langle v \rangle$. Since $\langle u
\rangle \cap \langle v \rangle = \langle |u|~\wedge~|v|\rangle$ we
have the nontrivial $\Theta$-decomposition $f' = |u| \wedge |v|$
(which is a generator of $\langle f \rangle$).

Conversely, assume that $f' = |u| \wedge |v|$ is a nontrivial
$\Theta$- decomposition for some $f' \sim_{\FF} f$. Then $\langle f
\rangle = \langle f' \rangle = \langle |u| \wedge |v| \rangle =
\langle u \rangle \cap \langle v \rangle$. Since the decomposition
$f' = |u| \wedge |v|$ is nontrivial, we have that $u \not \sim_{\FF}
f'$ and $v \not \sim_{\FF} f'$, and thus $\langle |u| \rangle =
\langle u \rangle \neq \langle f' \rangle = \langle f \rangle$.
Similarly, $\langle v \rangle \neq \langle f \rangle$. Thus, by
definition, $\langle f \rangle$ is reducible.
\end{proof}

\begin{flushleft} We can equivalently rephrase Lemma \ref{lem_reducibility_and_decomposition} as follows: \end{flushleft}
\begin{rem}
$f$ is reducible if and only if   some $f' \sim_{\FF} f$ has a
nontrivial $\Theta$-decomposition.
\end{rem}

A  question immediately arising from Definition
\ref{defn_decomposition1} and
Lemma~\ref{lem_reducibility_and_decomposition} is:

If $f \in \overline{F(\Lambda)}$ has a nontrivial
$\Theta$-decomposition and $g \sim_{\FF} f$, does $g$ also have a
nontrivial $\Theta$-decomposition? If so, how is this pair of
decompositions related?

In the next few paragraphs we provide an answer to both of these
questions, for  $\Theta = \PCon(\langle \mathscr{R} \rangle)$.

\begin{rem}
$\sum_{i=1}^{k}s_i(a_i \wedge b_i)^{d(i)} =
\left(\sum_{i=1}^{k}s_i a_i^{d(i)} \right) \wedge
\left(\sum_{i=1}^{k}s_i b_i^{d(i)}\right),$ $ \forall s_1, ...,
s_k , a_1,....,a_k  , b_1,....,b_k \in \overline{F(\Lambda)}$,
and  $d(i) \in \mathbb{N}_{\geq 0}$.
\end{rem}

\begin{rem}
If $h_1,...,h_k \in \overline{F(\Lambda)}$   such that each $h_i
\ge_{\nu} 1$, then $\sum_{i=1}^{k}s_ih_i \ge_{\nu}  1 $ for every
$s_1,...,s_k \in \overline{F(\Lambda)}$ such that
$\sum_{i=1}^{k}s_i \nucong 1$.
\end{rem}

\begin{thm}\label{thm_every_gen_is_reducible1}
(For $\Theta = \PCon(\langle \mathscr{R} \rangle)$.) If $\langle f
\rangle$ is a (principal) reducible $\nu$-kernel, then there exist
$\Theta$-elements $g,h \in \overline{F(\Lambda)}$ such that $|f| =
|g| \wedge |h|$ and  $|f| \not \sim_{\FF} |g|,|h|$.
\end{thm}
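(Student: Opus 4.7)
The plan is to reduce this statement directly to the wedge decomposition theorem (Theorem~\ref{thm_every_gen_is_reducible}) and then unpack what nontriviality means in terms of $\sim_{\FF}$. The stronger Lemma~\ref{lem_reducibility_and_decomposition} already tells us that some generator $f'$ of $\langle f \rangle$ admits a nontrivial $\Theta$-decomposition, so the only real work here is to upgrade ``some generator of $\langle f \rangle$'' to ``$|f|$ itself.'' This upgrade is precisely what Theorem~\ref{thm_every_gen_is_reducible} provides, because it asserts that the wedge decomposition can be chosen to be \emph{associated} to a prescribed intersection decomposition of $\langle f \rangle$.

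First, using reducibility of $\langle f \rangle$ in the lattice $\Theta = \PCon(\langle \mathscr{R}\rangle)$, I would apply Lemma~\ref{irred} (or directly the definition of reducibility) to obtain principal $\nu$-subkernels $\langle u \rangle,\langle v \rangle \in \Theta$, both properly containing $\langle f\rangle$, such that
\[
\langle f \rangle \;=\; \langle u \rangle \cap \langle v \rangle .
\]
Because every element of $\Theta$ sits inside $\langle\mathscr{R}\rangle$, the hypotheses of Theorem~\ref{thm_every_gen_is_reducible} are met: $\langle f\rangle$ is a principal $\nu$-subkernel of $\langle\mathscr{R}\rangle$ expressed as the intersection of two others. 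Applying that theorem gives elements $g,h \in \langle\mathscr{R}\rangle$ with
\[
|f| \;=\; |g| \wedge |h|, \qquad \langle g \rangle \;=\; \langle u\rangle, \qquad \langle h \rangle \;=\; \langle v\rangle ,
\]
so $g$ and $h$ are genuine $\Theta$-elements satisfying the desired wedge identity on the nose, not just up to $\sim_{\FF}$.

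Finally, I would verify nontriviality, i.e.\ $|f| \not\sim_{\FF} |g|$ and $|f|\not\sim_{\FF} |h|$. By Definition~\ref{defn_abstract_similarity_of_generators}, $|f|\sim_{\FF} |g|$ would mean $\langle f\rangle \cap \langle F\rangle = \langle g\rangle \cap \langle F\rangle$; but because we are already inside $\langle\mathscr{R}\rangle = \langle F\rangle$, this reduces to $\langle f\rangle = \langle g\rangle = \langle u\rangle$, contradicting the choice of $u$ with $\langle u\rangle \supsetneq \langle f\rangle$. The symmetric argument rules out $|f|\sim_{\FF} |h|$, completing the proof.

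The argument is short because the heavy lifting sits in Theorem~\ref{thm_every_gen_is_reducible}, whose proof constructs the generators $g = \sum q_i |u|^{d(i)}$ and $h=\sum q_i|v|^{d(i)}$ via the Frobenius-type identity and then uses Proposition~\ref{prop_generator_of_skel2} to identify the principal $\nu$-kernels they generate. The only conceptual step one needs to be careful about is that Theorem~\ref{thm_every_gen_is_reducible} gives an \emph{associated} decomposition (matching the prescribed intersection $\langle u\rangle\cap\langle v\rangle$), rather than an arbitrary wedge decomposition; without this matching, one could not rule out the degenerate possibility $\langle g\rangle = \langle f\rangle$, and the upgrade from Lemma~\ref{lem_reducibility_and_decomposition} would fail. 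I expect this compatibility check — and correctly invoking Proposition~\ref{prop_generator_of_skel2} inside $\langle\mathscr{R}\rangle$ rather than in all of $\overline{F(\Lambda)}$ — to be the only subtle point.
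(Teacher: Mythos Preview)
Your proposal is correct and follows the same underlying construction as the paper: obtain $\langle f\rangle = \langle u\rangle \cap \langle v\rangle$ from reducibility, then produce an associated wedge decomposition $|f| = |g|\wedge|h|$ with $\langle g\rangle = \langle u\rangle$ and $\langle h\rangle = \langle v\rangle$. The only difference is packaging: the paper reproves the construction $g=\sum s_i|u|^{d(i)}$, $h=\sum s_i|v|^{d(i)}$ and the identification of their kernels via Proposition~\ref{prop_generator_of_skel2} from scratch, whereas you invoke Theorem~\ref{thm_every_gen_is_reducible} as a black box---which is entirely legitimate since that theorem already delivers an \emph{associated} decomposition.
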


\begin{proof}
If $\langle f \rangle$ is a principal reducible $\nu$-kernel, then
there exists $f' \sim_{\FF} f$ such that $f' = |u| \wedge |v| =
\min(|u|,|v|)$ for $\Theta$-elements $u,v \in \langle \mathscr{R}
\rangle $ with $f'~\not\sim_{\FF}~|u| , |v|$. Then $|f| \in
\langle f' \rangle$ since $f'$ is a generator of $\langle f
\rangle$,
 so there exist   $s_1,...,s_k \in \overline{F(\Lambda)}$ such that $\sum_{i=1}^{k}s_i = 1$
 and $|f| = \sum_{i=1}^{k}s_i (f')^{d(i)}$ with $d(i) \in \mathbb{N}_{\geq 0}$. ($d(i) \geq 0$ since $|f| \ge_{\nu}
 1$.)
Thus $$f = \sum_{i=1}^{k}s_i (|u| \wedge |v|)^{d(i)} =
\sum_{i=1}^{k}s_i (\min(|u|,|v|))^{d(i)} =
\min\left(\sum_{i=1}^{k}s_i|u|^{d(i)},\sum_{i=1}^{k}s_i|v|^{d(i)}\right)
=|g| \wedge |h|$$ where $g = |g| = \sum_{i=1}^{k}s_i|u|^{d(i)}, h =
|h| = \sum_{i=1}^{k}s_i|v|^{d(i)}$.

Now   $\langle |f| \rangle \subseteq \langle |g| \rangle \subseteq
\langle |u| \rangle$  and $\langle |f| \rangle \subseteq  \langle
|h| \rangle \subseteq \langle |v| \rangle$, implying $\Skel(f)
\supseteq \Skel(g) \supseteq \Skel(u)$ and $\Skel(f) \supseteq
\Skel(h) \supseteq \Skel(v)$.

We claim that $|g|$ and $|h|$ generate $\langle |u| \rangle $ and
$\langle |v| \rangle$, respectively. Indeed, $\Skel(f')
=\Skel(|f|)$, since $f' \sim_{\FF} |f|$ and thus for any $\bfa \in
F^{(n)}$, $f'(\bfa)=1 \Leftrightarrow |f|(\bfa)=1$. Let $s_j
(f')^{d(j)}$ be a dominant term of $|f|$ at $\bfa$, i.e.,
$$|f| \nucong \sum_{i=1}^{k}s_i(\bfa) (f'(\bfa))^{d(i)} \nucong s_j(\bfa) (f'(\bfa))^{d(j)}.$$
Then   $f(\bfa) \nucong 1 \Leftrightarrow s_j(\bfa)
(f'(\bfa))^{d(j)}\nucong1$. If $f'(\bfa) \nucong 1 $, then
$(f'(\bfa))^{d(j)}\nucong1$, so  $s_j(\bfa) \nucong 1$. Now, for
$\bfa \in \Skel(g)$. Then we have $$g(\bfa) \nucong
\sum_{i=1}^{k}s_i|u|^{d(i)} \nucong 1.$$ Let $s_t|u|^{d(t)}$ be a
dominant term of $g$ at $\bfa$. If $s_t(\bfa) \nucong1$ then
$|u|^{d(t)} \nucong 1$ and thus $u = 1$, and $\bfa \in \Skel(u)$.
Otherwise $s_t(\bfa) <_\nu 1$ (since $\sum_{i=1}^{k}s_i \nucong 1$)
and so, by the above, $s_t (f')^{d(t)}$ is not a dominant term of
$|f|$ at $\bfa$. Thus,  for every index j of a dominant term of
$|f|$ at $\bfa$, we have $j \ne t$ and
$$|u(\bfa)|^{d(j)} \nucong s_j(\bfa)|u(\bfa)|^{d(j)} < _\nu s_t(\bfa)|u(\bfa)|^{d(t)} \nucong g(\bfa) \nucong 1.$$

\begin{equation}\label{eq_1_thm_every_gen_is_reducible}
s_j(\bfa)(f'(\bfa))^{d(j)}\nucong s_j(\bfa) (|u|(\bfa) \wedge
|v|(\bfa))^{d(j)} \leq s_j(\bfa)|u(\bfa)|^{d(j)} <_\nu 1.
\end{equation}
On the other hand, $f'(\bfa)\nucong1$ since $\Skel(f) \supseteq
\Skel(g)$, implying $s_j(\bfa)(f'(\bfa))^{d(j)}\nucong1$,
contradicting \eqref{eq_1_thm_every_gen_is_reducible}. Hence,
$\Skel(g) \subseteq \Skel(u)$, yielding $\Skel(g) = \Skel(u)$, which
implies that $g$ is a generator of $\langle |u| \rangle = \langle u
\rangle$. The proofs for $h$ and $|v|$ are analogous.

Consequently,   $g \sim_{\FF} |g| \sim_{\FF} |u|$ and $h \sim_{\FF}
|h| \sim_{\FF} |v|$. Since $|f|~\sim_{\FF}~f' \not \sim_{\FF} |u| ,
|v|$ we conclude that $|f| \not \sim_{\FF} |g|,|h|$.
\end{proof}

\begin{cor}\label{cor_generator_structure}
For $f \in \langle F \rangle$, if $|f| = \bigwedge_{i=1}^{s} |f_i|$
for   $f_i \in \langle F \rangle$, then for any $g \sim_{\FF} f$, we
have $|g| = \bigwedge_{i=1}^{s} |g_i|$, with $g_i \sim_{\FF}  f_i$
for $i = 1,...,s$.
\end{cor}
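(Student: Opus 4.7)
The plan is to induct on $s$ and reduce to Theorem~\ref{thm_every_gen_is_reducible1} at each step. The base case $s=1$ is immediate: $|f|=|f_1|$ forces $\langle f\rangle\cap\langle F\rangle=\langle f_1\rangle\cap\langle F\rangle$, so any $g\sim_{\FF} f$ satisfies $g\sim_{\FF} f_1$ and we take $g_1=g$.

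For the inductive step, I would set $f_1':=\bigwedge_{i=2}^{s}|f_i|$, so that $|f|=|f_1|\wedge|f_1'|$ and $\langle f\rangle\cap\langle F\rangle = (\langle f_1\rangle\cap\langle F\rangle)\cap(\langle f_1'\rangle\cap\langle F\rangle)$. Given $g\sim_{\FF} f$, apply Theorem~\ref{thm_every_gen_is_reducible1} to $g$ relative to this two-term reducible decomposition. The proof of that theorem explicitly constructs elements $g_1,g_1'$ with $|g|=|g_1|\wedge|g_1'|$, and in the course of the argument establishes $\Skel(g_1)=\Skel(f_1)$ and $\Skel(g_1')=\Skel(f_1')$. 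Since $g_1,g_1',f_1,f_1' \in \langle F\rangle$, Proposition~\ref{prop_generator_of_skel2} upgrades these equalities of $\onenu$-sets to $\langle g_1\rangle\cap\langle F\rangle = \langle f_1\rangle\cap\langle F\rangle$ and $\langle g_1'\rangle\cap\langle F\rangle = \langle f_1'\rangle\cap\langle F\rangle$, that is, $g_1\sim_{\FF} f_1$ and $g_1'\sim_{\FF} f_1'$. The inductive hypothesis applied to the generator $g_1'$ of $\langle f_1'\rangle\cap\langle F\rangle$ then produces $|g_1'|=\bigwedge_{i=2}^{s}|g_i|$ with $g_i\sim_{\FF} f_i$, and combining yields the desired $|g|=\bigwedge_{i=1}^{s}|g_i|$.

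The only real obstacle is extracting the strengthened conclusion from Theorem~\ref{thm_every_gen_is_reducible1}: its statement asserts merely that $|f|=|g|\wedge|h|$ with $|f|\not\sim_{\FF}|g|,|h|$, but its proof in fact shows that the constructed factors generate precisely the $\nu$-kernels $\langle u\rangle$ and $\langle v\rangle$ appearing in the hypothesized reducibility (via $\Skel(g)=\Skel(u)$, $\Skel(h)=\Skel(v)$ together with Proposition~\ref{prop_generator_of_skel2}). Once this strengthened form is explicitly recorded, the induction above is essentially bookkeeping, and no further estimates in $\overline{F(\Lambda)}$ are required.
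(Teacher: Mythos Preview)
Your approach is correct and is exactly what the paper does: its entire proof is the single line ``Iterate Theorem~\ref{thm_every_gen_is_reducible1}.'' You have simply unpacked what that iteration means---induction on $s$, splitting off one factor at a time---and correctly observed that the proof of Theorem~\ref{thm_every_gen_is_reducible1} already establishes the strengthened conclusion $g_1\sim_{\FF} u$, $g_1'\sim_{\FF} v$ (not merely nontriviality), which is precisely what the induction needs. One small remark: you need not separately invoke Proposition~\ref{prop_generator_of_skel2}, since the proof of Theorem~\ref{thm_every_gen_is_reducible1} already records ``$g$ is a generator of $\langle u\rangle$'' and hence $g\sim_{\FF} u$ directly; and the construction there does not actually require the two-term decomposition to be nontrivial, so you need not worry about that edge case in the inductive step.
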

\begin{proof}
Iterate Theorem \ref{thm_every_gen_is_reducible1}.
\end{proof}
%

%

\ \\

\begin{cor}\label{cor_norm_decomposition}
If $\langle f \rangle$ is a $\nu$-kernel in $\Theta$, then
$\langle f \rangle$ has a nontrivial decomposition $\langle f
\rangle = \langle g \rangle \cap \langle h \rangle$ if and only if
$|f|$ has a nontrivial decomposition $|f|~=~|g'|~\wedge~|h'|$ with
$|g'| \sim_{\FF} g$ and $|h'| \sim_{\FF} h$.
\end{cor}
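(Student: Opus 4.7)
The plan is to deduce the corollary directly from Theorem~\ref{thm_every_gen_is_reducible1} together with the product/intersection formula for principal $\nu$-kernels (Theorem~\ref{cor_max_semifield_principal_kernels_operations}), noting that $\sim_{\FF}$-equivalence for elements of $\langle F \rangle$ coincides with generating the same principal $\nu$-kernel of $\langle F \rangle$. So the first thing I would record is the observation: if $g,g' \in \langle F \rangle$, then $g \sim_{\FF} g'$ iff $\langle g \rangle = \langle g' \rangle$, because $\langle g \rangle \subseteq \langle F \rangle$ gives $\langle g \rangle \cap \langle F \rangle = \langle g \rangle$, and similarly for $g'$.

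For the forward direction, I would assume $\langle f \rangle = \langle g \rangle \cap \langle h \rangle$ is a nontrivial decomposition with $\langle g \rangle, \langle h \rangle \in \Theta = \PCon(\langle \mathscr{R} \rangle)$. Then $\langle f \rangle$ is reducible by Definition~\ref{irre}. Applying Theorem~\ref{thm_every_gen_is_reducible1} (whose proof actually produces generators $g',h'$ of $\langle g \rangle$ and $\langle h \rangle$ respectively, with $|f| = |g'| \wedge |h'|$), we obtain the desired wedge decomposition. Since $\langle g' \rangle = \langle g \rangle$ and both lie in $\langle F \rangle$, the preliminary observation gives $|g'| \sim_{\FF} g$, and analogously $|h'| \sim_{\FF} h$. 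Nontriviality transfers: if $|f| \sim_{\FF} |g'|$, then $\langle f \rangle = \langle g' \rangle = \langle g \rangle$, contradicting nontriviality of the intersection; symmetrically for $h$.

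For the backward direction, I would start from a nontrivial decomposition $|f| = |g'| \wedge |h'|$ with $|g'| \sim_{\FF} g$, $|h'| \sim_{\FF} h$. By Theorem~\ref{cor_max_semifield_principal_kernels_operations},
\[
\langle f \rangle \;=\; \langle |g'| \wedge |h'| \rangle \;=\; \langle g' \rangle \cap \langle h' \rangle.
\]
Using the preliminary observation, $\langle g' \rangle = \langle g \rangle$ and $\langle h' \rangle = \langle h \rangle$ in $\PCon(\langle F \rangle)$, so $\langle f \rangle = \langle g \rangle \cap \langle h \rangle$. Finally, $|f| \not\sim_{\FF} |g'|$ translates to $\langle f \rangle \neq \langle g' \rangle = \langle g \rangle$, and analogously $\langle f \rangle \neq \langle h \rangle$, giving the nontriviality of the intersection.

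The whole argument is essentially a bookkeeping exercise, since all the genuine work has been done in Theorem~\ref{thm_every_gen_is_reducible1}. The only mild obstacle is keeping careful track of which equivalence governs which side: $\sim_{\FF}$ (equality of $\nu$-kernels intersected with $\langle F \rangle$) versus equality of principal $\nu$-kernels, and verifying that these coincide in the present setting where all functions involved live in $\langle F \rangle$. Once that is done, the two directions are symmetric consequences of Theorems~\ref{thm_every_gen_is_reducible1} and~\ref{cor_max_semifield_principal_kernels_operations}.
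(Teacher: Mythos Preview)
Your proposal is correct and follows essentially the same approach as the paper's proof: the backward direction uses the wedge-intersection formula (Theorem~\ref{cor_max_semifield_principal_kernels_operations}) together with the observation that $\sim_{\FF}$ coincides with equality of principal $\nu$-kernels inside $\langle F \rangle$, and the forward direction is exactly what the paper means by ``as in the proof of Theorem~\ref{thm_every_gen_is_reducible1}''. Your explicit statement of the preliminary observation about $\sim_{\FF}$ for elements of $\langle F \rangle$ is a helpful clarification that the paper leaves implicit.
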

\begin{proof}
If $|f| = |g'| \wedge |h'|$  then, since $|g'| \sim_{\FF} g$ and
$|h'| \sim_{\FF} h$ we have $$\langle f \rangle  = \langle |f|
\rangle =  \langle |g'| \wedge |h'| \rangle = \langle |g'| \rangle
\cap \langle  |h'| \rangle = \langle g \rangle \cap \langle h
\rangle.$$ The converse is seen as in  the proof of
Theorem~\ref{thm_every_gen_is_reducible1}.
\end{proof}

Corollary \ref{cor_norm_decomposition} provides a
$\Theta$-decomposition of $|f|$, for every generator $f$ of a
reducible $\nu$-kernel in $\Theta$.

\begin{rem}\label{rem_wedge_equivalence}
By \cite[Corollary~4.1.25]{Kern},
$$\langle f \rangle \cap \langle g \rangle = \langle (f + f^*) \wedge (g + g^*)\rangle  = \langle |f| \wedge |g| \rangle.$$
But, in fact, $\langle f \rangle \cap \langle g \rangle = \langle f'
\rangle \cap \langle g' \rangle$  for any  $g' \sim_{\FF} g$ and $h'
\sim_{\FF} h$, so we could   take  $|g'| \wedge |f'|$ instead of
$|g| \wedge |f|$ on the righthand side of the equality, e.g.,
$\langle |f^k| \wedge |g^m| \rangle$ for any $m,k \in \mathbb{Z}
\setminus \{ 0 \}$.
\end{rem}

\begin{defn}
Let $\mathbb{S}$ be a semifield and let $a,b \in \mathbb{S}$. We say
that $a$ and $b$ are $\FF$-\textbf{comparable} if  there exist some
$a' \sim_\FF a$ and $b' \sim_\FF b$ such that  $|a'| \leq | b'| $ or
$|b'| \leq |a'|$.
\end{defn}

Since $|g| \wedge |h| = \min(|g|,|h|)$ we can utilize Remark
\ref{rem_wedge_equivalence} to get the following observation:

\begin{prop}
A $\Theta$-decomposition $f  \sim_{\FF} |g| \wedge |h| \in
\overline{F(\Lambda)}$ is nontrivial if and only if the
$\Theta$-elements $g$ and $h$ are not $\FF$-comparable.
\end{prop}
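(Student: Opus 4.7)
The plan is to reformulate both ``triviality of the decomposition'' and ``$\FF$-comparability'' as inclusion statements about the intersected kernels $\langle g \rangle \cap \langle F \rangle$ and $\langle h \rangle \cap \langle F \rangle$, at which point the bi-implication becomes essentially formal.

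First I would establish the key lemma: $g$ and $h$ are $\FF$-comparable if and only if $\langle g \rangle \cap \langle F \rangle \subseteq \langle h \rangle \cap \langle F \rangle$ or the reverse inclusion holds. The forward direction is immediate from Proposition~\ref{prop_principal_ker}: if $g' \sim_{\FF} g$, $h' \sim_{\FF} h$ with $|g'| \leq |h'|$, then $g' \in \langle h' \rangle$, so $\langle g' \rangle \subseteq \langle h' \rangle$; intersecting with $\langle F \rangle$ and unwinding the definition of $\sim_{\FF}$ gives the claim. For the converse, assume $\langle g \rangle \cap \langle F \rangle \subseteq \langle h \rangle \cap \langle F \rangle$; fix any $\alpha \neq 1$ in $F$ and set $g' := |g| \wedge |\alpha|$, which generates $\langle g \rangle \cap \langle F \rangle$ and hence satisfies $g' \sim_{\FF} g$. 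Since $g' \in \langle h \rangle$, Corollary~\ref{cor_principal_ker_by_order} provides $n \in \mathbb{N}$ with $g' \leq |h|^{n}$; setting $h' := h^{n}$, Corollary~\ref{prop_absolute_generator} yields $h' \sim_{\FF} h$, and $|g'| = g' \leq |h|^{n} = |h'|$.

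Given the lemma, the proposition follows by applying $\langle |g| \wedge |h| \rangle = \langle g \rangle \cap \langle h \rangle$ (Corollary~\ref{cor_max_semifield_principal_kernels_operations}) and then intersecting everything with $\langle F \rangle$. From $f \sim_{\FF} |g| \wedge |h|$ we have
$$\langle f \rangle \cap \langle F \rangle = \bigl(\langle g \rangle \cap \langle F \rangle\bigr) \cap \bigl(\langle h \rangle \cap \langle F \rangle\bigr).$$
The decomposition is trivial (say, $f \sim_{\FF} g$) precisely when this intersection equals $\langle g \rangle \cap \langle F \rangle$, i.e.\ when $\langle g \rangle \cap \langle F \rangle \subseteq \langle h \rangle \cap \langle F \rangle$; by symmetry in $g$ and $h$, triviality is equivalent to one of the two inclusions holding, which by the lemma is exactly $\FF$-comparability.

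The main obstacle is the lemma: once the passage between ``comparable generators'' and ``comparable intersected kernels'' is secured, the rest is lattice bookkeeping. The subtle step is the reverse direction of the lemma, where one must exhibit explicit comparable $\FF$-representatives $g', h'$; here the trick is to use the bounded representative $|g| \wedge |\alpha|$ afforded by $\langle F \rangle$ together with the power-invariance $\langle h^{n} \rangle = \langle h \rangle$ of principal $\nu$-kernels to absorb the exponent produced by Corollary~\ref{cor_principal_ker_by_order}.
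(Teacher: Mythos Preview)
Your proof is correct. The key lemma you isolate---that $\FF$-comparability of $g,h$ is equivalent to comparability by inclusion of $\langle g \rangle \cap \langle F \rangle$ and $\langle h \rangle \cap \langle F \rangle$---is the heart of the matter, and once it is in place the proposition is indeed a one-line lattice computation.

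The paper argues more directly, without isolating this lemma. For the direction ``comparable $\Rightarrow$ trivial'' it computes the intersection $\langle g \rangle \cap \langle h \rangle$ using the comparable representatives $g',h'$; for the converse it argues by contradiction, invoking an external reference (\cite[Proposition~4.1.13]{Kern}) to produce a suitable representative $f'$. Your approach is more self-contained and, crucially, more careful: the paper's computations pass between $\langle g \rangle$ and $\langle g' \rangle$ as if $g \sim_{\FF} g'$ implied $\langle g \rangle = \langle g' \rangle$, whereas the definition only gives equality after intersecting with $\langle F \rangle$. By systematically working at the level of $\langle \cdot \rangle \cap \langle F \rangle$ throughout, you avoid this conflation entirely. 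The trick you flag---using the bounded representative $|g| \wedge |\alpha|$ together with $\langle h^n \rangle = \langle h \rangle$ to manufacture explicit comparable representatives---is exactly what makes the reverse direction of your lemma go through cleanly, and is a genuine improvement over the paper's treatment of that direction.
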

\begin{proof}
If $g$ and $h$ are $\FF$-comparable, then there exist some $g'
\sim_{\FF} g$ and $h' \sim_{\FF} h$ such that $|g'| \ge_{\nu} |h'|$
or $|h'| \ge_{\nu}  |g'|$. Without loss of generality, assume that
$|g'| \ge_{\nu}  |h'|$. Then $\langle  |g| \wedge |h| \rangle  =
\langle |g| \rangle \cap \langle |h| \rangle = \langle |g'| \rangle
\cap \langle |h'| \rangle = \langle  |g'| \wedge |h'| \rangle =
\langle \min(|g'|,|h')| \rangle = \langle  |g'| \rangle = \langle g'
\rangle = \langle  g \rangle$. Thus $\langle f \rangle = \langle g
\rangle$ so $f \sim_{\FF} g$ yielding that the decomposition is
trivial.

Conversely, if  $g$ and $h$ are not $\FF$-comparable then we claim
that $f \not \sim_{\FF} g$ and $f \not \sim_{\FF} h$. We must show
that $\langle h \rangle \not \subseteq \langle g \rangle$ and
$\langle g \rangle \not \subseteq \langle h \rangle$ respectively.
So assume that $\langle h \rangle  \supseteq \langle g \rangle$. In
view of Lemma~\ref{prop_HP_element_is_a_generator} and
\cite[Proposition~4.1.13]{Kern}, there exists some $f' \sim f$ such
that $f' = |h|^{k} \wedge g$. Note that $|h|^{k} \geq 1$ and $g \geq
1$ so $f' = |h|^{k} \wedge g \geq 1$ and thus $|f'| = f'$. Finally,
$$|f'| = |h|^{k} \wedge g \Leftrightarrow |f'| \leq |h|^{k}
\Leftrightarrow |f'| \in \langle |h| \rangle \Leftrightarrow f' \in
\langle h \rangle \Leftrightarrow f \in \langle h \rangle.$$
\end{proof}

\subsection{Convex dependence}$ $


\begin{defn}\label{defn_convex_dep}
An  HS-fraction  $f$ of $\overline{F(\Lambda)}$ is
\textbf{$F$-convexly dependent} on a set $A$   of HS-fractions  if
\begin{equation}\label{eq_defn_convex_dep}
f \in \left\langle \{g : g \in A \} \right\rangle \cdot \langle F
\rangle;
\end{equation}
otherwise $f$ is said to be \textbf{$F$-convexly independent} of
$A$. The set $A$ is said to be $F$-\textbf{convexly independent} if
$f$  is $F$-convexly independent of $A \setminus \{ f \}$, for every
$f \in A$. If $\{a_1,...,a_n \}$ is $F$-convexly dependent, then we
also say that $a_1,...,a_n$ are $F$-convexly dependent.

\end{defn}

Note that under the assumption that $g \in  \langle F \rangle
\setminus \{1\}$ for some $g \in A$, the condition in
\eqref{eq_defn_convex_dep} simplifies to $f \in \left\langle \{g : g
\in A \} \right\rangle $.

\begin{rem}
By  definition, an HS-fraction $f$ is $F$-convexly   dependent on
HS-fractions $\{g_1,...,g_t\} $  if and only if
$$\langle |f|  \rangle = \langle f  \rangle  \subseteq  \langle g_1,..., g_t \rangle \cdot
\langle F \rangle = \Big\langle \sum_{i=1}^{t} |g_i| \Big\rangle
\cdot \langle F \rangle
 = \Big\langle \sum_{i=1}^{t} |g_i|  +  |\alpha| \Big\rangle,$$
  for any element $\alpha$ of $F$ for which $\a^\nu \ne \onenu.$
\end{rem}

\begin{exmp}
For any $\alpha \in F$ and any $f \in \overline{F(\Lambda)}$,
$$|\alpha f| \leq |f|^2  + |\alpha|^2 =(|f|  +  |\alpha|)^2.$$
Thus $\alpha f \in \left\langle (|f|  +  |\alpha|)^2 \right\rangle =
\langle |f|  +  |\alpha| \rangle = \langle f \rangle \cdot \langle F
\rangle$. In particular, if $f$ is an HS-fraction, then $\alpha f$
is $F$-convexly dependent on $f$.
\end{exmp}

As a consequence of Lemma~\ref{prop_HP_element_is_a_generator}, if
two $\scrL$-monomials $f,g$, satisfy $g \in \langle f \rangle$,
then $\langle g \rangle = \langle f \rangle$. In other words,
either $\langle g \rangle = \langle f \rangle$ or $\langle g
\rangle \not \subseteq \langle f \rangle$ and $\langle f \rangle
\not \subseteq \langle g \rangle$. This motivates us to restrict
the convex dependence relation to the set of $\scrL$-monomials.
This will be justified later by showing that for each $F$-convexly
independent subset of HS-fractions of order $t$ in
$\overline{F(\Lambda)}$, there exists an $F$-convexly independent
subset of $\scrL$-monomials having order $ \geq t$ in
$\overline{F(\Lambda)}$. Let us see that convex-dependence is an
abstract dependence relation.

\begin{prop}\label{prop_abstract_dependence_properties}

Let $A, A_1 \subset  \overline{F(\Lambda)}$ be  sets of
HS-fractions, and let $f$ be an HS-fraction.
\begin{enumerate}
  \item If $f \in A$, then $f$ is $F$-convexly dependent on $A$.
  \item If $f$ is $F$-convexly dependent on $A$ and   each $a \in A$ is
  $F$-convexly dependent on $A_1$, then $f$ is $F$-convexly dependent on $A_1$.
  \item If $f$ is $F$-convexly dependent on $A$, then $f$ is $F$-convexly dependent on $A_0$ for some finite subset $A_0$ of $A$.
\end{enumerate}
\end{prop}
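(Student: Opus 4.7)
My plan is to verify each of the three properties directly from the definition, namely that $f$ is $F$-convexly dependent on $A$ exactly when $f\in \langle A\rangle\cdot\langle F\rangle$, and then exploit the standard properties of $\nu$-kernels generated by a set (Remark~\ref{bfacts1}(v), Corollary~\ref{cor_max_semifield_principal_kernels_operations}, Proposition~\ref{prop_algebra_of_generators_of_kernels}) together with the idempotence $\langle F\rangle\cdot\langle F\rangle=\langle F\rangle$.

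First, for (i), if $f\in A$, then by Remark~\ref{bfacts1}(v), $\langle f\rangle\subseteq \langle A\rangle\subseteq \langle A\rangle\cdot\langle F\rangle$, so $f$ is $F$-convexly dependent on $A$. For (ii), the hypothesis that each $a\in A$ is $F$-convexly dependent on $A_1$ means $a\in \langle A_1\rangle\cdot\langle F\rangle$ for each $a\in A$; hence $A\subseteq \langle A_1\rangle\cdot\langle F\rangle$, and applying Remark~\ref{bfacts1}(v) again we get $\langle A\rangle\subseteq \langle A_1\rangle\cdot\langle F\rangle$. Multiplying by $\langle F\rangle$ on both sides and using $\langle F\rangle\cdot\langle F\rangle=\langle F\rangle$ yields
\[
\langle A\rangle\cdot\langle F\rangle\;\subseteq\;\langle A_1\rangle\cdot\langle F\rangle\cdot\langle F\rangle\;=\;\langle A_1\rangle\cdot\langle F\rangle,
\]
so that $f\in \langle A\rangle\cdot\langle F\rangle$ immediately gives $f\in \langle A_1\rangle\cdot\langle F\rangle$, as required.

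The main content, and the place where a little care is needed, is (iii). Here I would use the description of the $\nu$-kernel generated by a set as the union of the $\nu$-subkernels generated by its finite subsets. Concretely, by Remark~\ref{bfacts1}(vi), every finitely generated $\nu$-kernel is principal, and by Proposition~\ref{prop_algebra_of_generators_of_kernels}(i), $\langle A\rangle$ is generated as a $\nu$-kernel by the elements $|x|$ for $x\in A$. Thus any element $g\in \langle A\rangle$ is already contained in $\langle A_0'\rangle$ for some finite $A_0'\subseteq A$, by Remark~\ref{bfacts1}(vii) applied to an appropriate principal $\nu$-subkernel. Writing the assumption $f\in\langle A\rangle\cdot\langle F\rangle$ as $|f|\leq_\nu |g|\cdot|\alpha|^k$ for some $g\in \langle A\rangle$, $\alpha\in F\setminus\{1\}$ and $k\in\mathbb N$ (using Corollary~\ref{cor_principal_ker_by_order}), we may then choose a finite $A_0\subseteq A$ with $g\in\langle A_0\rangle$, and conclude $f\in\langle A_0\rangle\cdot\langle F\rangle$.

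The only subtle point will be confirming the ``finite support'' property used in (iii), i.e.\ that every element of $\langle A\rangle$ lies in $\langle A_0\rangle$ for some finite $A_0\subseteq A$; this follows because $\langle A\rangle=\bigcup_{A_0\subseteq A\text{ finite}}\langle A_0\rangle$ is itself a $\nu$-kernel (the right-hand side is closed under multiplication, inversion, and the convexity operation, each of which involves only finitely many inputs at a time), and any $\nu$-kernel containing $A$ must contain this union, forcing equality. Granted this, the three axioms fall out as above and the proposition is proved.
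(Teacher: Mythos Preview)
Your proof is correct and for parts (1) and (2) it matches the paper's argument essentially verbatim. For part~(3) the approaches diverge slightly: you use the directed-union description $\langle A\rangle=\bigcup_{A_0\subseteq A\text{ finite}}\langle A_0\rangle$ (justified by noting that the convexity condition involves only finitely many inputs), whereas the paper instead invokes a structure result expressing $f$ as a finite convex combination $\sum s_i g_i^{d(i)}$ with $g_i$ in the group $G(A\cup F)$ generated by $A\cup F$, and takes $A_0=\{g_1,\dots,g_k\}$. Your route is arguably cleaner, since it directly produces a finite subset of $A$ itself; in the paper's version one must still implicitly extract from each $g_i$ the finitely many elements of $A$ appearing in it. One minor point: your detour through the inequality $|f|\le_\nu |g|\cdot|\alpha|^k$ via Corollary~\ref{cor_principal_ker_by_order} is valid but unnecessary---since $\langle A\rangle\cdot\langle F\rangle=\{gh:g\in\langle A\rangle,\ h\in\langle F\rangle\}$ by Remark~\ref{bfacts1}(iv), you can simply write $f=gh$, pick finite $A_0$ with $g\in\langle A_0\rangle$, and conclude $f\in\langle A_0\rangle\cdot\langle F\rangle$ directly.
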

\begin{proof}$ $
(1)  $f \in \langle A \rangle \subseteq \langle A \rangle \cdot
\langle F \rangle $.

(2)  $\langle A \rangle \subseteq \langle A_1 \rangle \cdot \langle
F \rangle$ since $a$ is convexly dependent on $A_1$  for each $a \in
A$. If $f$ is $F$-convexly dependent on~$A$, then $f \in \langle A
\rangle \cdot \langle F \rangle \subseteq \langle A_1 \rangle \cdot
\langle F \rangle$, so, $f$ is $F$-convexly dependent on $A_1$.

 (3) $a \in
\langle A \rangle \cdot \langle F \rangle$, so by Proposition
\ref{prop_ker_stracture_by group} there exist some $s_1,...,s_k
\in \overline{F(\Lambda)} $ and  $g_1,...,g_k \in G(A \cup F)
\subset \langle A \rangle \cdot \langle F \rangle$, where $G(A
\cup F)$ is the group generated by $A \cup F$, such that
$\sum_{i=1}^{k}s_i = 1$ and $a = \sum_{i=1}^{k}s_i g_i^{d(i)}$
with $d(i) \in \mathbb{Z}$. Thus $a \in \langle g_1,...,g_k
\rangle$ and  $A_0  = \{g_1,...,g_k\}$.
\end{proof}

From now on, we assume that the \vsemifield0 $F$ is divisible.

\begin{prop}[Steinitz exchange axiom]\label{prop_convex_dependence_of_HP_property}
 Let $S =
\{b_1,...,b_t\} \subset \operatorname{HP}(\overline{F(\Lambda)})$
and let $f$ and $b$ be elements of
$\operatorname{HP}(\overline{F(\Lambda)})$. If $f$ is $F$-convexly
dependent on $S \cup \{ b \}$ and $f$ is $F$-convexly independent
of $S$, then $b$ is $F$-convexly dependent on $S \cup \{ f \}$.
\end{prop}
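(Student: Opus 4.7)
The plan is to reduce $F$-convex dependence of $\scrL$-monomials to ordinary linear dependence of their exponent vectors over $\mathbb{Q}$, and then invoke the classical Steinitz exchange lemma. Since $F$ is divisible, Remark~\ref{log0} and the description of HP-elements as $g = \alpha\prod \la_i^{k_i(g)}$ attach to each $\scrL$-monomial $g$ an exponent vector $\vec{v}_g = (k_1(g),\dots,k_n(g)) \in \mathbb{Q}^n\setminus\{0\}$, and the constant $\alpha \in F$ lives in $\langle F\rangle$. The key reduction is the equivalence, for any $\scrL$-monomial $f$ and any finite set $\{b_1,\dots,b_t\}$ of $\scrL$-monomials,
\begin{equation}\label{eq_plan_key}
f \in \langle b_1,\dots,b_t \rangle \cdot \langle F\rangle
\;\iff\;
\vec{v}_f \in \operatorname{span}_{\mathbb{Q}}(\vec{v}_{b_1},\dots,\vec{v}_{b_t}).
\end{equation}

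The implication $(\Leftarrow)$ is straightforward: if $\vec{v}_f = \sum q_i \vec{v}_{b_i}$ with $q_i \in \mathbb{Q}$, clear denominators to produce $N \in \mathbb{N}$ and $m_i \in \mathbb{Z}$ with $\vec{v}_{f^N} = \sum m_i \vec{v}_{b_i}$, so $f^N = \beta \prod b_i^{m_i}$ for some $\beta \in F$; then $f \in \langle b_1,\dots,b_t\rangle \cdot \langle F\rangle$ by Corollary~\ref{prop_absolute_generator} and Proposition~\ref{prop_principal_ker}. For $(\Rightarrow)$, assume $f \in \langle \sum |b_i| + |\a|\rangle$ for some $\a \in F\setminus\{1\}$. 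By Corollary~\ref{cor_principal_ker_by_order} there is $N \in \mathbb{N}$ with $|f|\le_\nu \big(\sum |b_i|+|\a|\big)^N$ pointwise. Passing to logarithmic coordinates via \eqref{eq_vector_space_translation}, this becomes the bound
\begin{equation}\label{eq_plan_bound}
|\vec{v}_f \cdot \mathbf{x} + c_f| \;\le\; N\max_i\big(|\vec{v}_{b_i}\cdot\mathbf{x}+c_{b_i}|\big) + Nc_\a,
\end{equation}
valid for every $\mathbf{x} \in \mathbb{R}^n$ (here $c_g \in \mathbb{R}$ denotes the logarithm of the constant part of $g$). If $\vec{v}_f \notin \operatorname{span}_{\mathbb{R}}(\vec{v}_{b_1},\dots,\vec{v}_{b_t})$, choose a linear functional $\phi$ on $\mathbb{R}^n$ vanishing on that span with $\phi(\vec{v}_f)\ne 0$, and specialize $\mathbf{x} = t\,\phi^\sharp$ (the vector representing $\phi$) with $t\to\infty$: the right side of \eqref{eq_plan_bound} stays bounded while the left grows linearly in $t$, contradiction. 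Thus $\vec{v}_f$ lies in $\operatorname{span}_{\mathbb{R}}$, and since the $\vec{v}_{b_i}$ span a $\mathbb{Q}$-subspace and $\vec{v}_f\in\mathbb{Q}^n$, it actually lies in the $\mathbb{Q}$-span.

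With \eqref{eq_plan_key} in hand, the statement is immediate from classical linear algebra over $\mathbb{Q}$: the hypotheses translate to $\vec{v}_f \in \operatorname{span}_{\mathbb{Q}}(\vec{v}_{b_1},\dots,\vec{v}_{b_t},\vec{v}_b)$ and $\vec{v}_f \notin \operatorname{span}_{\mathbb{Q}}(\vec{v}_{b_1},\dots,\vec{v}_{b_t})$. Writing $\vec{v}_f = \sum q_i\vec{v}_{b_i} + q\vec{v}_b$ forces $q\ne 0$, so $\vec{v}_b = q^{-1}(\vec{v}_f - \sum q_i\vec{v}_{b_i}) \in \operatorname{span}_{\mathbb{Q}}(\vec{v}_{b_1},\dots,\vec{v}_{b_t},\vec{v}_f)$. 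Applying the $(\Leftarrow)$ direction of \eqref{eq_plan_key} with $\{b_1,\dots,b_t,f\}$ in place of $\{b_1,\dots,b_t\}$ yields $b \in \langle b_1,\dots,b_t,f\rangle\cdot\langle F\rangle$, i.e., $b$ is $F$-convexly dependent on $S\cup\{f\}$.

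The main obstacle is the $(\Rightarrow)$ direction of \eqref{eq_plan_key}, which generalizes Lemma~\ref{prop_HP_element_is_a_generator} from a single generator to a finite family together with $\langle F\rangle$. Once the archimedean/unbounded growth argument in \eqref{eq_plan_bound} is in place (essentially the reason $\scrL$-monomials are never bounded, as noted in Remark~\ref{rem_HP_fraction_not_bounded}), the rest of the proof is the usual Steinitz exchange.
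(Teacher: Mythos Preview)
Your proof is correct and takes a genuinely different route from the paper's. The paper argues abstractly inside the kernel formalism: after absorbing a constant $\alpha$ into $S$ so that $\langle S\rangle\supseteq\langle F\rangle$, it passes to the quotient map $\phi:\overline{F(\Lambda)}\to\overline{F(\Lambda)}/\langle S\rangle$, observes that $\phi(f)$ and $\phi(b)$ are nontrivial $\scrL$-monomials in the quotient (since neither $f$ nor $b$ lies in $\phi^{-1}(\langle F\rangle)=\langle S\rangle$), and then invokes Lemma~\ref{prop_HP_element_is_a_generator} in the quotient to conclude $\langle\phi(f)\rangle=\langle\phi(b)\rangle$; pulling back yields $\langle S\rangle\cdot\langle f\rangle=\langle S\rangle\cdot\langle b\rangle$, hence $b\in\langle S\cup\{f\}\rangle$.

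Your approach instead establishes the global dictionary~\eqref{eq_plan_key} between $F$-convex dependence and $\mathbb{Q}$-linear dependence of exponent vectors, and then quotes the classical Steinitz exchange. This has the virtue of making the link to ordinary linear algebra completely explicit and gives a reusable characterization of $\ConSpan_F(A)$ in terms of $\operatorname{span}_{\mathbb{Q}}\{\vec v_a:a\in A\}$; in effect you are proving the multi-generator analogue of Lemma~\ref{prop_HP_element_is_a_generator} directly via the archimedean growth argument, whereas the paper sidesteps that generalization by quotienting down to a single generator and applying the one-variable lemma there. Two small caveats: the exponent vectors actually live in $\mathbb{Z}^n$ (not $\mathbb{Q}^n$), and the inequality~\eqref{eq_plan_bound} holds a priori only for $\mathbf{x}$ in the logarithmic image of $F^{(n)}$ rather than all of $\mathbb{R}^n$ --- but since $F$ is divisible and $\nu$-archimedean this image is a nontrivial $\mathbb{Q}$-subspace, and choosing $\phi$ rational lets you take $t\phi^\sharp$ inside that image with $t\to\infty$, so the contradiction goes through unchanged.
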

\begin{proof}
We may assume that $\alpha \in S$ for some $\alpha \in F$. Since
$f$ is $F$-convexly independent of $S$, by definition $f \not \in
\langle S \rangle$ this implies that $\langle S \rangle \subset
\langle S \rangle \cdot \langle f \rangle$ (for otherwise $\langle
f \rangle \subseteq \langle S \rangle$ yielding that $f$ is
$F$-convexly dependent on $S$).  Since $f$ is $F$-convexly
dependent on $S \cup \{ b \}$, we have that  $f \in \langle S \cup
\{ b \} \rangle = \langle S \rangle \cdot \langle b \rangle$. In
particular, we get that $b \not \in \langle S \rangle \cdot
\langle F \rangle$ for otherwise $f$ would be dependent on $S$.
Consider the quotient map $\phi : \overline{F(\Lambda)}
\rightarrow \overline{F(\Lambda)}/\langle S \rangle$. Since $\phi$
is a semifield epimorphism and $f,b  \not \in \langle S \rangle
\cdot \langle F \rangle = \phi^{-1}(\langle F \rangle)$, we have
that $\phi(f)$ and $\phi(b)$ are not in $F$ thus are
$\scrL$-monomials in the semifield $\Im(\phi)
=\overline{F(\Lambda)}/\langle S \rangle$. By the above, $\phi(f)
\neq 1$ and $\phi(f) \in \phi(\langle b \rangle) = \langle \phi(b)
\rangle$. Thus, $\langle \phi(f) \rangle =\langle \phi(b) \rangle$
by Lemma~\ref{prop_HP_element_is_a_generator}. So $\langle S
\rangle \cdot \langle f \rangle = \phi^{-1}(\langle \phi(f)
\rangle) = \phi^{-1}(\langle \phi(b) \rangle) = \langle S \rangle
\cdot \langle b \rangle$, consequently $b \in \langle S \rangle
\cdot \langle b \rangle = \langle S \rangle \cdot \langle f
\rangle = \langle S \cup \{ f \} \rangle $, i.e., $b$ is
$F$-convexly dependent on $S \cup \{ f \}$.
\end{proof}

\begin{defn}\label{defn_convex_span_in_semifield_of_fractions}
Let $A \subseteq \operatorname{HP}(\overline{F(\Lambda)})$. The
\textbf{convex span} of $A$ over $F$ is the set
\begin{equation}
\ConSpan_{F}(A) = \{ a \in
\operatorname{HP}(\overline{F(\Lambda)}) : a \text{ is
$F$-convexly dependent on $A$} \}.
\end{equation} For a \vsemifield0 $\mathrm{K} \subseteq \overline{F(\Lambda)}$  such that
$F \subseteq \mathrm{K}$.
 a set  $A \subseteq
\operatorname{HP}(\overline{F(\Lambda)})$  is said to
\textbf{convexly span} $\mathrm{K}$ over $F$ if
$$\operatorname{HP}(\mathrm{K}) = \ConSpan_{F}(A).$$
\end{defn}

\begin{rem}
$\ConSpan(\{ f_1,...,f_m\}) =  \langle f_1,...,f_m \rangle \cdot
\langle F \rangle.$
\end{rem}

In view of Propositions \ref{prop_abstract_dependence_properties}
and \ref{prop_convex_dependence_of_HP_property}, convex dependence
on  $\operatorname{HP}(\overline{F(\Lambda)})$ is an abstract
dependence relation. Then by \cite[Chapter 6]{Ro}, we have:

\begin{cor}\label{cor_basis_for_HP}
Let $V \subset \operatorname{HP}(\overline{F(\Lambda)})$. Then any
set convexly spanning $V$ contains a basis $B_V \subset V$, which
is a maximal convexly independent subset of unique cardinality
such that
$$\ConSpan(B_V) = \ConSpan(V).$$
\end{cor}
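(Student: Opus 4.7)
The plan is to invoke the general theory of abstract dependence relations, which is exactly what the authors have been setting up. By Proposition~\ref{prop_abstract_dependence_properties}, the convex dependence relation on $\operatorname{HP}(\overline{F(\Lambda)})$ is reflexive, transitive, and of finite character; by Proposition~\ref{prop_convex_dependence_of_HP_property}, it also satisfies the Steinitz exchange axiom. These four conditions are precisely what is needed to apply the standard matroid-style results from \cite[Chapter 6]{Ro}, which the authors themselves cite.

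First I would make this reduction explicit: convex dependence on $\operatorname{HP}(\overline{F(\Lambda)})$ is an abstract dependence relation in the sense of \cite[Chapter 6]{Ro}. Once this is noted, the existence portion is immediate from the general fact that in any abstract dependence relation, any set $S$ that spans a given set $V$ contains a maximal convexly independent subset $B_V \subseteq S$ whose span equals $\ConSpan(V)$. To extract such a $B_V$ inside $V$ itself (and not merely inside a generic spanning set $S$), apply this statement with $S = V$: a maximal convexly independent subset of $V$ exists (either by Zorn's lemma in general, or by iterative removal of dependent elements), and by maximality together with transitivity it convexly spans all of $V$, hence equals $\ConSpan(V)$.

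Next, I would invoke the uniqueness of cardinality of such a basis, again a formal consequence of the Steinitz exchange axiom: given two bases $B$ and $B'$, one uses Proposition~\ref{prop_convex_dependence_of_HP_property} repeatedly to swap elements of $B$ into $B'$ one at a time, maintaining independence and the spanning property at each step, and obtain $|B| = |B'|$. Finally, the equivalence of ``basis'' with ``maximal convexly independent subset'' and with ``minimal convexly spanning subset'' again follows from the standard argument: a maximal independent set must span (else one could adjoin an element outside its span, contradicting maximality), while a minimal spanning set must be independent (else one could remove a dependent element, contradicting minimality).

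The only potential subtlety, and hence the main point to verify carefully, is the applicability of the Steinitz exchange axiom in the form proved in Proposition~\ref{prop_convex_dependence_of_HP_property}. That proposition was established specifically for elements of $\operatorname{HP}(\overline{F(\Lambda)})$, using Lemma~\ref{prop_HP_element_is_a_generator} (which required divisibility of $F$); this is why the corollary is restricted to subsets of $\operatorname{HP}(\overline{F(\Lambda)})$ and why the standing divisibility assumption on $F$ is in force. With that in place, the corollary is a purely formal consequence of the abstract dependence axioms, and no further computation in $\overline{F(\Lambda)}$ is required.
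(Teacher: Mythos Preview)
Your proposal is correct and follows exactly the paper's approach: the paper's proof is the single sentence preceding the corollary, which observes that Propositions~\ref{prop_abstract_dependence_properties} and~\ref{prop_convex_dependence_of_HP_property} make convex dependence an abstract dependence relation and then invokes \cite[Chapter~6]{Ro}. You have simply unpacked that reference in more detail.
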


\begin{exmp}\label{exmp_basis_for_semifield_of_fractions}
By
Lemma~\ref{prop_maximal_kernels_in_semifield_of_fractions_part2},
the maximal $\nu$-kernels in $\PCon(\overline{F(\Lambda)})$ are
HS-fractions of the form $L_{(\alpha_1,...,\alpha_n)}=\langle
\alpha_1 x_1, \dots, \alpha_n x_n \rangle$ for any
$\alpha_1,...,\alpha_n \in F$. In view of
Corollary~\ref{cor_basis_for_HP},
$$\overline{F(\Lambda)} = \ConSpan(\{ \alpha_1 x_1, \dots, \alpha_n x_n \}),$$
i.e., $\{ \alpha_1 x_1, \dots, \alpha_n x_n \}$ convexly spans
$\overline{F(\Lambda)}$ over $F$.  Now $\alpha_k x_k \not \in
\langle \bigcup_{j \neq k}\alpha_j x_j \rangle \cdot~\langle
F\rangle$, since there are no order relations between $\alpha_i
x_i$ and the elements of  $\{ \alpha_j x_j : j \neq i \} \cup
\{\alpha\ : \alpha \in F \}$.  Thus, for arbitrary
$\alpha_1,...,\alpha_n \in F$, $\{ \alpha_1 x_1, \dots, \alpha_n
x_n \}$ is $F$-convexly independent, constituting a basis for
$\overline{F(\Lambda)}$.

\begin{defn}\label{condim}
Let $V \subset \operatorname{HP}(\overline{F(\Lambda)})$ be a set
of $\scrL$-monomials. We define the \textbf{convex  dimension} of
$V$, $\condeg(V)$, to be $|B|$ where $B$ is a basis for $V$.
\end{defn}

\begin{exmp}\label{rem_convexity_degree_of_semifield_of_fractions}
$\condeg\left(\overline{F(\Lambda)}\right) = n$, by Example
\ref{exmp_basis_for_semifield_of_fractions}.
\end{exmp}

\begin{rem}\label{rem_dependence_is_a_lattice}
If $S \subset \operatorname{HP}(\overline{F(\Lambda)})$, then for
any $f, g \in \overline{F(\Lambda)}$ such that $f,g \in
\ConSpan(S)$
$$|f|  +  |g| \in \ConSpan(S) \ \ \text{and} \ \ |f| \wedge |g| \in \ConSpan(S).$$
\end{rem}
\begin{proof}
First we prove that $|f|  +  |g| \in \ConSpan(S)$. Since $\langle f
\rangle \subseteq \langle S \rangle \cdot \langle F \rangle$ and
$\langle g \rangle \subseteq \langle S \rangle \cdot \langle F
\rangle$, we have $\langle f, g \rangle =  \langle |f|  +  |g|
\rangle = \langle f \rangle \cdot \langle g \rangle \subseteq
\langle S \rangle \cdot \langle F \rangle$.  $|f| \wedge |g| \in
\ConSpan(S)$,  since $\langle |f| \wedge|g| \rangle = \langle f
\rangle \cap \langle g \rangle \subseteq \langle g \rangle \subseteq
\langle  S \rangle \cdot \langle F \rangle$.
\end{proof}

\begin{rem}\label{rem_HS_kernel_finitely_spanned}
If $K$ is an HS-kernel, then $K$ is generated by an HS-fraction
$f~\in~\overline{F(\Lambda)}$ of the form $f = \sum_{i=1}^{t}
|f_i|$ where $f_1,...,f_t$ are $\scrL$-monomials. So,
$$\ConSpan(K) = \langle F \rangle \cdot K =\langle F \rangle \cdot \langle f \rangle = \langle F \rangle \cdot \Big\langle \sum_{i=1}^{t} |f_i| \Big\rangle = \langle F \rangle \cdot \prod_{i=1}^{t} \langle f_i \rangle$$ $$= \langle F \rangle \cdot \langle f_1,...,f_t \rangle$$ and so, $\{f_1,..., f_t \}$ convexly spans $\langle F \rangle \cdot K$.
\end{rem}

\begin{rem}\label{rem_HS_depend_on_HP}
Let $f$ be an HS-fraction. Then $f \sim_{\FF} \sum_{i=1}^{t}
|f_{i}|$ where $f_{i}$ are   $\scrL$-monomials. Hence $f$ is
$F$-convexly dependent on $\{ f_1,...,f_t \}$, since $\langle f
\rangle = \prod_{i=1}^{t} \langle f_i \rangle = \langle \{
f_1,...,f_t \} \rangle$.
\end{rem}

\begin{lem}\label{lem_HS_HP_convexity_relations}
 Suppose $\{ b_1, ...,b_m \}$ is a set of HS-fractions,
 such that $b_i\sim_{\FF}\sum_{j=1}^{t_i} |f_{i,j}|$,  where $f_{i,j}$ are $\scrL$-monomials.
 Then $b_1$ is $F$-convexly dependent on $\{ b_2, ...,b_m \}$ if and only if all of its
 summands $f_{1,r}$ for $1 \leq r \leq t_1$ are $F$-convexly dependent on $\{ b_2, ...,b_m \}$.
\end{lem}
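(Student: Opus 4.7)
The plan is to reduce the biconditional to a direct statement about containment of principal $\nu$-kernels, exploiting the product decomposition of the HS-kernel $\langle b_1 \rangle$. By Proposition~\ref{rem_HS_prod_HP}, since $b_1 \sim_{\FF} \sum_{j=1}^{t_1} |f_{1,j}|$ is an HS-fraction whose $\scrL$-monomial summands are $f_{1,1}, \ldots, f_{1,t_1}$, the HS-kernel it generates admits the decomposition $\langle b_1 \rangle = \prod_{j=1}^{t_1} \langle f_{1,j} \rangle$ into a product of HP-kernels. The second structural fact to invoke is Remark~\ref{bfacts1}(iv) (together with Proposition~\ref{prop_algebra_of_generators_of_kernels}(i)), which says that the product $K_1 \cdots K_n$ of $\nu$-kernels is the smallest $\nu$-kernel containing all the $K_i$; equivalently, $K_1 \cdots K_n \subseteq L$ if and only if each $K_i \subseteq L$.

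Concretely, I would unfold the definition of $F$-convex dependence: the statement that $b_1$ is $F$-convexly dependent on $\{b_2, \ldots, b_m\}$ is by definition the containment $\langle b_1 \rangle \subseteq \langle b_2, \ldots, b_m \rangle \cdot \langle F \rangle$. Substituting the product decomposition yields the equivalent condition $\prod_{j=1}^{t_1} \langle f_{1,j} \rangle \subseteq \langle b_2, \ldots, b_m \rangle \cdot \langle F \rangle$, and by the product--containment equivalence recalled above this is in turn equivalent to the simultaneous containments $\langle f_{1,r} \rangle \subseteq \langle b_2, \ldots, b_m \rangle \cdot \langle F \rangle$ holding for every $r = 1, \ldots, t_1$. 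This last condition is precisely the statement that each $f_{1,r}$ is $F$-convexly dependent on $\{b_2, \ldots, b_m\}$, so both directions of the biconditional follow at once from this chain of equivalences.

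I anticipate no substantial obstacle, since both the product decomposition of HS-kernels into HP-kernels and the characterization of the product of $\nu$-kernels as the join in the kernel lattice are already established earlier in the paper. The only point requiring care is to confirm that the $\sim_{\FF}$-equivalence $b_1 \sim_{\FF} \sum_j |f_{1,j}|$ yields equality at the level of generated $\nu$-kernels for the purpose of the containment in $\langle b_2, \ldots, b_m \rangle \cdot \langle F \rangle$; this is precisely the content of Proposition~\ref{rem_HS_prod_HP}, applied to the HS-kernel $\langle b_1 \rangle$, so the reduction is justified and no additional computation is needed.
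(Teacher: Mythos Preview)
Your argument is correct and follows essentially the same approach as the paper: both proofs hinge on the product decomposition $\langle b_1 \rangle = \prod_{j=1}^{t_1}\langle f_{1,j}\rangle$ (Proposition~\ref{rem_HS_prod_HP}) together with the fact that a product of $\nu$-kernels is contained in a kernel $L$ if and only if each factor is. The paper unpacks the converse direction via the explicit order bound $|f_{1,r}| \le \sum_{i,j}|f_{i,j}|^{k_r}$ from Corollary~\ref{cor_principal_ker_by_order}, whereas you invoke the lattice-theoretic join property directly for both directions at once; these are the same computation in different packaging.
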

\begin{proof}
If $b_1$ is $F$-convexly dependent on $\{ b_2, ...,b_m \}$, then
$$ \prod_{j=1}^{t_1} \langle f_{1,j} \rangle = \Big\langle \sum_{j=1}^{t_1} |f_{1,j}|
\Big\rangle = \langle b_1 \rangle \subseteq \big\langle \{ b_1,
...,b_m \} \big\rangle .$$ Hence $ f_{1,r} \in \prod_{j=1}^{t_1}
\langle f_{1,j} \rangle$ is $F$-convexly   dependent on $\{ b_1,
...,b_m \}$ and by
 Remark \ref{rem_HS_depend_on_HP} $f_{1,r}$ is $F$-convexly dependent on  $ \left\{ f_{i,j} : 2 \leq i \leq m ;\ 1 \leq j \leq t_i \right\}$.
Conversely, if each~$f_{1,r}$ is $F$-convexly dependent on $\{ b_2,
...,b_m \}$ for $1 \leq r \leq t_1$, then there exist some
$k_1,...,k_{t_1}$ such that   $|f_{1,r}| \leq
\sum_{i=2}^{m}\sum_{j=1}^{t_i}  |f_{i,j}|^{k_r}$. Hence $b_1$ is
$F$-convexly dependent on $\{ b_2, ...,b_m \}$, by
Corollary~\ref{cor_principal_ker_by_order}.
\end{proof}

\begin{lem}\label{lem_HS_base_gives_rise_to_HP_base}
Let $V = \{ f_1, ...,f_m \}$ be a $F$-convexly independent set of
HS-fractions, with $f_i \sim_{\FF} \sum_{j=1}^{t_i} |f_{i,j}|$ for
$\scrL$-monomials $f_{i,j}$. Then there exists an $F$-convexly
independent subset $$S_0 \subseteq S =\left\{ f_{i,j} : 1 \leq i
\leq m ; 1 \leq j \leq t_i \right\}$$ such that $|S_0| \geq |V|$ and
$\ConSpan(S_0) = \ConSpan(V)$.
\end{lem}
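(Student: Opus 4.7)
\textbf{Proof plan for Lemma \ref{lem_HS_base_gives_rise_to_HP_base}.}

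The plan is first to observe that $V$ and $S$ convexly span the same thing and produce a basis $S_0 \subseteq S$, then to show by induction on $m$ that the convex dimension of $\ConSpan(V)$ is at least $m$. Concretely, since each $f_i \sim_{\FF} \sum_{j=1}^{t_i} |f_{i,j}|$ gives $\langle f_i\rangle = \prod_j \langle f_{i,j}\rangle$, one gets
\[
\ConSpan(V) \;=\; \Bigl(\prod_i \langle f_i\rangle\Bigr)\langle F\rangle \;=\; \Bigl(\prod_{i,j}\langle f_{i,j}\rangle\Bigr)\langle F\rangle \;=\; \ConSpan(S).
\]
Applying Corollary~\ref{cor_basis_for_HP} to the set $S$ of $\scrL$-monomials yields an $F$-convexly independent subset $S_0 \subseteq S$ with $\ConSpan(S_0)=\ConSpan(S)=\ConSpan(V)$. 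Hence it remains to show $|S_0|\ge m$.

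For the size bound I would induct on $m$, proving that for any $F$-convexly independent set $V=\{f_1,\dots,f_m\}$ of HS-fractions one has $\condeg(\ConSpan(V))\ge m$. The base $m=1$ is immediate: $F$-convex independence of $\{f_1\}$ forces $f_1\notin\langle F\rangle$, so $\ConSpan(V)\supsetneq\langle F\rangle$. For the inductive step, set $V'=V\setminus\{f_m\}$; it is still $F$-convexly independent, so by the inductive hypothesis $\condeg(\ConSpan(V'))\ge m-1$, and Corollary~\ref{cor_basis_for_HP} provides a basis $B'$ for $\ConSpan(V')$ consisting of $\scrL$-monomials drawn from $\bigcup_{i<m}\{f_{i,j}\}$.

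The key exchange step now uses Lemma~\ref{lem_HS_HP_convexity_relations}: since $f_m$ is \emph{not} $F$-convexly dependent on $V'$, some summand $f_{m,r}$ of $f_m$ is not $F$-convexly dependent on $V'$, hence $f_{m,r}\notin\ConSpan(V')=\ConSpan(B')$. I claim $B'\cup\{f_{m,r}\}$ is $F$-convexly independent. Indeed, if some $b\in B'$ were dependent on $(B'\setminus\{b\})\cup\{f_{m,r}\}$, then (since $b\notin\ConSpan(B'\setminus\{b\})$ by independence of $B'$) the Steinitz exchange axiom of Proposition~\ref{prop_convex_dependence_of_HP_property} for $\scrL$-monomials would yield $f_{m,r}\in\ConSpan(B')$, a contradiction; and $f_{m,r}$ itself is independent of $B'$ by construction. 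Consequently $B'\cup\{f_{m,r}\}\subseteq\ConSpan(V)$ is an $F$-convexly independent set of $\scrL$-monomials of size $m$, giving $\condeg(\ConSpan(V))\ge m$. Combining with the first paragraph, $|S_0|=\condeg(\ConSpan(S))=\condeg(\ConSpan(V))\ge m=|V|$, as required.

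The main technical obstacle is the mismatch in Proposition~\ref{prop_convex_dependence_of_HP_property}: the Steinitz exchange axiom is proven only for $\scrL$-monomials, whereas the elements of $V$ are HS-fractions (sums of such). Lemma~\ref{lem_HS_HP_convexity_relations} is precisely the bridge that lets us replace the ``bad'' HS-fraction $f_m$ by a genuine $\scrL$-monomial summand $f_{m,r}$ without sacrificing independence, so that the exchange argument can be applied inside $\operatorname{HP}(\overline{F(\Lambda)})$. Once this bridge is in place, the rest of the proof is a routine induction together with the abstract-dependence machinery of Corollary~\ref{cor_basis_for_HP}.
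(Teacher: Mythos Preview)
Your proof is correct and follows the same overall strategy as the paper: establish $\ConSpan(V)=\ConSpan(S)$ via Remark~\ref{rem_HS_depend_on_HP}, extract a basis $S_0\subseteq S$ via Corollary~\ref{cor_basis_for_HP}, and use Lemma~\ref{lem_HS_HP_convexity_relations} as the bridge between HS-fractions and their $\scrL$-monomial summands. The paper's proof is extremely terse on the size bound $|S_0|\ge m$ (the phrase ``we can shrink down to a base'' gestures at Lemma~\ref{lem_HS_HP_convexity_relations} without spelling anything out), whereas your explicit induction together with the Steinitz exchange of Proposition~\ref{prop_convex_dependence_of_HP_property} fills in exactly the argument that is needed; in that sense your write-up is more complete than the paper's own, but the ingredients and the route are the same.
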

\begin{proof}
By Remark \ref{rem_HS_depend_on_HP}, \  $f_i$ is dependent on the
set of $\scrL$-monomials $\{ f_{i,j} :  1 \leq j \leq t_i \} \subset
S$ for each    $1 \leq i \leq m$, implying $\ConSpan(S) =
\ConSpan(V)$. By Corollary \ref{cor_basis_for_HP},  $S$ contains a
maximal $F$-convexly independent subset $S_0$ such that
$\ConSpan(S_0) = \ConSpan(S)$ which, by
Lemma~\ref{lem_HS_HP_convexity_relations}, we can shrink down to a
base.
\end{proof}

\begin{lem}\label{rem_HP_not_contains_contained_H}
The following hold for an $\scrL$-monomial $f$:
\begin{enumerate}
  \item $\langle F \rangle \not \subseteq \langle f \rangle$.
  \item If $\overline{F(\Lambda)}$ is not bounded, then $\langle f  \rangle \not \subseteq \langle F \rangle$.
\end{enumerate}
\end{lem}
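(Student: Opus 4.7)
The plan is to reduce both parts to the characterization of $\langle F \rangle$ as the set of bounded rational functions (Lemma~\ref{rem_bounded_from_above_elements_kernel}) combined with the order-theoretic description of principal $\nu$-kernels (Corollary~\ref{cor_principal_ker_by_order}). In both cases the point is that an $\scrL$-monomial, being a non-constant Laurent monomial, is unbounded in $\nu$-value when we use the $\nu$-archimedean hypothesis on $F$ that is standing throughout this section.

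For part (1), I fix any $\alpha \in F$ with $\alpha \neq 1$, so that by Remark~\ref{princ} we have $\langle F \rangle = \langle \alpha \rangle$; it then suffices to show $\alpha \notin \langle f \rangle$. By Corollary~\ref{cor_principal_ker_by_order} membership of $\alpha$ in $\langle f \rangle$ would require some $n \in \mathbb{N}$ with $|f|^{-n} \leq_\nu \alpha \leq_\nu |f|^n$ holding pointwise on $F^{(n)}$. Since $f$ is a non-constant Laurent monomial, there is a variable $\la_j$ entering $f$ with nonzero exponent $k_j$. The $\nu$-archimedean, divisible nature of $F$ lets me evaluate $f$ at points of $F^{(n)}$ whose $j$-th coordinate is chosen extreme enough (small if $k_j>0$, large if $k_j<0$) to force $|f(\bfa)|^n <_\nu \alpha$, which contradicts $\alpha \leq_\nu |f|^n$. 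Hence no such $n$ exists, so $\alpha \notin \langle f \rangle$ and $\langle F \rangle \not\subseteq \langle f \rangle$.

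For part (2), the hypothesis that $\overline{F(\Lambda)}$ is not bounded is exactly what allows the conclusion of Remark~\ref{rem_HP_fraction_not_bounded} to apply, namely that every $\scrL$-monomial fails to be bounded. Therefore $f$ is not bounded, and Lemma~\ref{rem_bounded_from_above_elements_kernel} gives $f \notin \langle F \rangle$. Since $f \in \langle f \rangle$ trivially, this forces $\langle f \rangle \not\subseteq \langle F \rangle$.

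I expect no real obstacle here: the main care needed is to verify that the standing assumption in this section (divisible, $\nu$-archimedean $F$) genuinely supplies the substitutions used in part (1) and the applicability of Remark~\ref{rem_HP_fraction_not_bounded} used in part (2). Once that is checked, both statements follow by a one-line application of the cited results, with the non-constancy of the Laurent monomial $f$ doing all the work.
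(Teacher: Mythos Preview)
Your argument for part~(2) is correct and essentially identical to the paper's: an $\scrL$-monomial is unbounded (Remark~\ref{rem_HP_fraction_not_bounded}), hence lies outside $\langle F\rangle$ by Lemma~\ref{rem_bounded_from_above_elements_kernel}, so $\langle f\rangle\not\subseteq\langle F\rangle$.

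For part~(1), your overall strategy---show that a fixed $\alpha$ with $|\alpha|>_\nu 1$ does not lie in $\langle f\rangle$ by violating the bound $\alpha\le_\nu |f|^n$ from Corollary~\ref{cor_principal_ker_by_order}---is sound, but the specific substitution you describe points in the wrong direction. Sending the $j$-th coordinate to an extreme value makes $f(\bfa)$ extreme, and therefore makes $|f(\bfa)|=f(\bfa)+f(\bfa)^*$ \emph{large}, which \emph{satisfies} rather than violates $\alpha\le_\nu|f(\bfa)|^n$. What you actually need is a point at which $|f(\bfa)|\nucong 1$: since $f$ is a non-constant Laurent monomial and $F$ is divisible, $\Skel(f)\ne\emptyset$, and any $\bfa\in\Skel(f)$ gives $|f(\bfa)|^n\nucong 1<_\nu|\alpha|$ for every $n$. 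This is precisely the content of the paper's one-line proof, phrased there as ``an $\scrL$-monomial is not bounded from below, so $\langle f\rangle\cap F=\{1\}$,'' invoking Remark~\ref{rem_bounded_from_below_contain_H_kernel}(i). Once you correct the choice of $\bfa$, your approach and the paper's coincide.
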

\begin{proof} By definition, an $\scrL$-monomial is not bounded from below. Thus
$\langle f \rangle \cap F = \{ 1 \}$, yielding $\langle F \rangle
\not \subseteq \langle f \rangle$. For the second assertion,   an
HP-kernel is not bounded when $\overline{F(\Lambda)}$ is not
bounded, so $\langle f \rangle \not \subseteq \langle F \rangle$.
\end{proof}

A direct consequence of Lemma~\ref{rem_HP_not_contains_contained_H}
is:
\begin{lem}
If $\overline{F(\Lambda)}$ is not bounded, then any nontrivcial
HS-kernel (i.e., $\ne \langle 1 \rangle$) is   $F$-convexly
independent.
\end{lem}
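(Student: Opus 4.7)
The plan is to read off the statement as an immediate corollary of part (ii) of the preceding Lemma~\ref{rem_HP_not_contains_contained_H}, by exploiting the factorization of an HS-kernel into HP-kernels given by Proposition~\ref{rem_HS_prod_HP}. First I would fix a nontrivial HS-kernel $K$ and invoke that proposition to write $K=\prod_{i=1}^{t}\langle f_i\rangle$ with the $f_i$ pairwise non-proportional $\scrL$-monomials; each HP-kernel factor $\langle f_i\rangle$ is then a $\nu$-subkernel of $K$, and at least one is nontrivial since $K\ne\langle 1\rangle$.

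Next I would apply the hypothesis that $\overline{F(\Lambda)}$ is not bounded: part (ii) of Lemma~\ref{rem_HP_not_contains_contained_H} yields $\langle f_i\rangle\not\subseteq\langle F\rangle$ for each $i$, and since $K\supseteq\langle f_i\rangle$ this at once forces $K\not\subseteq\langle F\rangle$. To phrase this as $F$-convex independence, I would take any HS-fraction generator $f$ of $K$ (for instance $f=\sum_{i=1}^{t}|f_i|$) and unpack Definition~\ref{defn_convex_dep}: the singleton $\{f\}$ is $F$-convexly independent exactly when $f\notin\langle\emptyset\rangle\cdot\langle F\rangle=\langle F\rangle$, and were this to fail we would have $K=\langle f\rangle\subseteq\langle F\rangle$, contradicting the previous step.

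I do not expect a real obstacle; the authors themselves flag the lemma as a direct consequence of Lemma~\ref{rem_HP_not_contains_contained_H}. The only delicate point is to keep the distinction between an HS-kernel $K$ and an HS-fraction generator $f$ of it when applying Definition~\ref{defn_convex_dep}, and to notice that the singleton case of that definition reduces, via $\langle\emptyset\rangle=\{1\}$, precisely to the non-containment $f\notin\langle F\rangle$ that has just been established from the unboundedness hypothesis on $\overline{F(\Lambda)}$.
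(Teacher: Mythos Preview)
Your proposal is correct and follows essentially the same approach as the paper's proof: both invoke Lemma~\ref{rem_HP_not_contains_contained_H}(ii) to see that HP-kernels are not contained in $\langle F\rangle$, then use that every nontrivial HS-kernel contains an HP-kernel to conclude. Your version is more explicit (spelling out the factorization via Proposition~\ref{rem_HS_prod_HP} and unpacking how the singleton case of Definition~\ref{defn_convex_dep} reduces to $f\notin\langle F\rangle$), which is a welcome clarification of the paper's terse one-line argument.
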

\begin{proof}
By Lemma~\ref{rem_HP_not_contains_contained_H} the assertion is true
for HP-kernels, and thus for   HS-kernels, since every HS-kernel
contains some HP-kernel.
\end{proof}

\subsection{Computing convex dimension}\label{compcon}$ $

Having justified our restriction to $\scrL$-monomials, we move ahead
with computing lengths of chains.

\begin{rem}\label{rem_basis_for_HS_kernel}
Let $K$ be an HS-kernel of $\overline{F(\Lambda)}$.
By definition there are   $\scrL$-monomials $f_1,...,f_t\in
\operatorname{HP}(K)$ such that $K = \langle \sum_{i=1}^{t} |f_i|
\rangle$. By Remark \ref{rem_HS_kernel_finitely_spanned},
 $\ConSpan(K)$ is convexly spanned by ${f_1,...,f_t}$. Now,
 since $\ConSpan(K) = \ConSpan(f_1,...,f_t)$ and $\{f_1,...,f_t\} \subset \operatorname{HP}(K) \subset \operatorname{HP}(\overline{F(\Lambda)})$,
 by Corollary~\ref{cor_basis_for_HP}, $\{f_1,...,f_t\}$ contains a basis $B=\{b_1,...,b_s\} \subset \{f_1,...,f_t\}$ of $F$-convexly independent elements, where $s = \condeg(K),$ such that $\ConSpan(B) = \ConSpan(f_1,...,f_t)=\ConSpan(K)$.
\end{rem}

%

\begin{prop}\label{prop_order_independence_1}
For  any order $\nu$-kernel $o$ of $\overline{F(\Lambda)}$, if
$\scrL$-monomials $h_1,...,h_t $  are $F$-convexly dependent, then
the images of $ h_1,...,h_t $  are   $F$-convexly dependent (in
the quotient \vsemifield0 $\overline{F(\Lambda)}/o$).
\end{prop}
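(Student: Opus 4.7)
The plan is to push $F$-convex dependence through the canonical quotient homomorphism $\phi\colon \overline{F(\Lambda)}\to \overline{F(\Lambda)}/o$ and exploit the fact that this map respects all the operations that appear in Definition~\ref{defn_convex_dep}. After relabeling the $h_i$ we may assume that $h_1$ is $F$-convexly dependent on $\{h_2,\ldots,h_t\}$, i.e.\
$$\langle h_1\rangle \subseteq \langle h_2,\ldots,h_t\rangle\cdot \langle F\rangle,$$
and we want to transport this inclusion to the quotient.

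The first ingredient is that $\phi$ respects generation of principal $\nu$-kernels: by Theorem~\ref{thm_kernels_hom_relations}, $\phi(\langle g\rangle)=\langle \phi(g)\rangle$ for every $g \in \overline{F(\Lambda)}$. Being a \vsemifield0 homomorphism, $\phi$ also commutes with products of $\nu$-kernels, so
$$\phi\bigl(\langle h_2,\ldots,h_t\rangle \cdot \langle F\rangle\bigr)=\langle \phi(h_2),\ldots,\phi(h_t)\rangle \cdot \phi(\langle F\rangle).$$

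The second ingredient is the identification of $\phi(\langle F\rangle)$ with the $\langle F\rangle$ of the quotient. Writing $o=\langle 1+f\rangle$ for an $\scrL$-monomial $f$, I first check that $o\cap F=\{1\}$: any $\a\in o\cap F$ must satisfy $|\a|\leq_\nu (1+f)^n$ for some $n$, but on the half-space where $f\leq_\nu 1$ the function $1+f$ equals $1$, forcing $\a\nucong 1$. Hence $\phi|_F$ is injective, so $\phi(F)$ is a faithful copy of $F$ inside the quotient, and $\phi(\langle F\rangle)=\langle \phi(F)\rangle$ is the bounded $\nu$-kernel of $\overline{F(\Lambda)}/o$. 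Combining these observations gives
$$\langle \phi(h_1)\rangle \subseteq \langle \phi(h_2),\ldots,\phi(h_t)\rangle\cdot \langle F\rangle$$
in the quotient, which is precisely the statement that $\phi(h_1)$ is $F$-convexly dependent on $\phi(h_2),\ldots,\phi(h_t)$.

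The only mild subtlety is that some $\phi(h_i)$ may collapse into $\langle F\rangle$ of the quotient, in which case $\phi(h_i)$ is bounded modulo $o$; but then $\phi(h_i)$ is already $F$-convexly dependent on the empty set, and the conclusion holds vacuously for that index. The main (rather modest) obstacle is the bookkeeping needed to ensure that the target $\langle F\rangle$ really is computed with respect to the image $\phi(F)$, which is exactly what $o\cap F=\{1\}$ guarantees; everything else follows from the fact that $\phi$ is a homomorphism of \vsemifields0 preserving the lattice operations that encode convex dependence.
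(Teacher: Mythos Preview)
Your proof is correct and follows essentially the same route as the paper's: push the inclusion $\langle h_1\rangle \subseteq \langle h_2,\ldots,h_t\rangle\cdot\langle F\rangle$ through the quotient map, using that $\phi$ sends generators to generators and that $\phi(\langle F\rangle)=\langle F\rangle$ in the quotient. The paper simply asserts the latter identification (calling $\phi_o$ an ``$F$-homomorphism''), whereas you justify it explicitly via $o\cap F=\{1\}$; otherwise the arguments are the same.
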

\begin{proof}
Denote by $\phi_o: \overline{F(\Lambda)} \rightarrow
\overline{F(\Lambda)}/o$ the quotient $F$-homomorphism. Then
$\phi_o( \langle F \rangle) = \langle \phi_o(F) \rangle = \langle
F \rangle_{\overline{F(\Lambda)}/ o}$. Now, if $ h_1,...,h_t  $
are $F$-convexly dependent then there exist some $j$, say without
loss of generality $j=1$, such that $ h_1 \in \langle h_2,...,h_t
\rangle \cdot \langle F \rangle$.
 By assumption and Proposition~\ref{prop_principal_ker},
\begin{align*}
\phi_o( h_1) \ &   \in \phi_o \left(\langle h_2,...,h_t, \alpha \rangle \right)\\
& = \left\langle \phi_o(h_2),...,\phi_o(h_t),\phi_o(\alpha) \right\rangle = \langle\phi_o(h_2),...,\phi_o(h_t), \alpha \rangle\\
& = \langle \phi_o(h_2),...,\phi_o(h_t)  \rangle \cdot \langle F
\rangle
\end{align*}
Thus $\phi_o( h_1)  $ is $F$-convexly dependent on $\{
\phi_o(h_2),...,\phi_o(h_t) \}$.
\end{proof}

\begin{flushleft}Conversely, we have:\end{flushleft}

\begin{lem}\label{lem_order_independence_2}
For any order $\nu$-kernel   $o$  of $\overline{F(\Lambda)}$, and
any set $\{ h_1,...,h_t \}$  of $\scrL$-monomials, if
$\phi_o(h_1),...,\phi_o(h_t)$ are $F$-convexly dependent in the
quotient \vsemifield0   $\overline{F(\Lambda)}/o$  and
$\sum_{i=1}^{t}\phi_o(|h_i|)\cap F = \{1\}$, then
 $  h_1,...,h_t  $   are $F$-convexly dependent   in
$\overline{F(\Lambda)}$.
\end{lem}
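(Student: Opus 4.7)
The plan is to pass to the logarithmic picture (Remark~\ref{log0}, \S\ref{subsection:geometric_interpretation}), where each $\scrL$-monomial $h$ corresponds to an affine-linear functional $\mathfrak{h}=c+\mathfrak{h}^{\rm lin}$ on $\mathbb{R}^n$ (with $|h|\leftrightarrow|\mathfrak{h}|$, sums $\leftrightarrow$ maxima, products $\leftrightarrow$ sums), and the order $\nu$-kernel $o=\langle 1+f\rangle$ corresponds to the affine half-space $H=\{x\in\mathbb{R}^n:\mathfrak{f}(x)\le 0\}$, with $|1+f|\leftrightarrow\mathfrak{f}^+:=\max(0,\mathfrak{f})$, which vanishes on~$H$.

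First I would unwind the hypotheses. Reindexing, assume $\phi_o(h_1)\in\langle\phi_o(h_2),\dots,\phi_o(h_t)\rangle\cdot\langle F\rangle$; by Theorem~\ref{thm_kernels_hom_relations} this pulls back to $h_1\in\langle h_2,\dots,h_t\rangle\cdot\langle F\rangle\cdot o$, and Remark~\ref{bfacts1}(vi) together with Proposition~\ref{prop_principal_ker} give a pointwise bound
\begin{equation}\label{eq:plan-bound}
|h_1|\le\Big(\sum_{i=2}^{t}|h_i|+|\alpha|+|1+f|\Big)^k
\end{equation}
for some $k\in\mathbb{N}$ and $\alpha\in F$. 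Taking logarithms,
$$|\mathfrak{h}_1(x)|\le k\max\bigl(|\mathfrak{h}_2(x)|,\dots,|\mathfrak{h}_t(x)|,\log|\alpha|,\mathfrak{f}(x)^+\bigr)\qquad(\forall x\in\mathbb{R}^n),$$
and the $\mathfrak{f}^+$ term drops out on $H$. The extra hypothesis $\sum_i\phi_o(|h_i|)\cap F=\{1\}$---namely, that the HS-kernel $\langle\sum\phi_o(|h_i|)\rangle$ of the quotient contains no non-trivial constant, equivalently, has non-empty $\onenu$-set in the quotient's spectrum (which, by Proposition~\ref{prop_maximal_kernels_in_semifield_of_fractions_part2}, is carved out of $F^{(n)}$ by $o$)---provides a point $x_0\in H$ at which all $\mathfrak{h}_i(x_0)=0$; fix such an $x_0$ in the affine variety $V=\{\mathfrak{h}_i=0:1\le i\le t\}$. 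The goal is to remove the $|1+f|$ term from \eqref{eq:plan-bound}, which gives the desired $F$-convex dependence of $h_1$ on $\{h_i\}_{i\ge 2}$ in $\overline{F(\Lambda)}$.

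The key step is verifying $\mathfrak{h}_1^{\rm lin}\in\operatorname{span}_{\mathbb{R}}\{\mathfrak{h}_i^{\rm lin}\}_{i\ge 2}$; once this holds, expressing $\mathfrak{h}_1^{\rm lin}=\sum_{i\ge 2}\gamma_i\mathfrak{h}_i^{\rm lin}$ produces a global bound $|\mathfrak{h}_1|\le(\sum|\gamma_i|)\max_{i\ge 2}|\mathfrak{h}_i|+C$, which translates back to $|h_1|\le(\sum_{i\ge 2}|h_i|+|\alpha'|)^{k'}$ as desired. Suppose for contradiction $\mathfrak{h}_1^{\rm lin}\notin\operatorname{span}_{\mathbb{R}}\{\mathfrak{h}_i^{\rm lin}\}_{i\ge 2}$; then some $v\in\mathbb{R}^n$ has $\mathfrak{h}_i^{\rm lin}(v)=0$ for every $i\ge 2$ but $\mathfrak{h}_1^{\rm lin}(v)\ne 0$. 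After replacing $v$ by $-v$ if needed, we may assume $\mathfrak{f}^{\rm lin}(v)\le 0$; then $\mathfrak{f}(x_0+tv)\le\mathfrak{f}(x_0)\le 0$ for every $t\ge 0$, so the ray $\{x_0+tv:t\ge 0\}$ lies entirely in $H$. Along it, $\mathfrak{h}_i(x_0+tv)=\mathfrak{h}_i(x_0)+t\mathfrak{h}_i^{\rm lin}(v)=0$ for $i\ge 2$, yet $|\mathfrak{h}_1(x_0+tv)|=t|\mathfrak{h}_1^{\rm lin}(v)|\to\infty$, contradicting the bound on $H$. The hard part is precisely this ray argument: without the extra hypothesis supplying a base point $x_0\in V\cap H$, the contradiction collapses, since a direction $v$ emanating from a generic origin might not stay inside $H$ asymptotically.
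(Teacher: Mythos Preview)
Your argument is correct and rests on the same underlying idea as the paper's proof: both use the hypothesis $\sum_i\phi_o(|h_i|)\cap F=\{1\}$ to locate a common base point in $\bigcap_i\Skel(h_i)\cap\Skel(o)$ (your $x_0$, the paper's implicit origin after translating constant coefficients to $1$), pull the dependence in the quotient back to an inequality involving $|1+g|$, and then exploit a sign-flip (your $v\mapsto -v$; the paper's $\bfa_m\mapsto\bfa_m^{-1}$) to force a ray into the half-space $H=\Skel(o)$ along which $|h_1|$ is unbounded while the $|h_i|$ for $i\ge 2$ remain $1$. The difference is presentational: you pass to the logarithmic picture and use linear algebra to isolate the direction $v$ directly, then deduce the affine identity $\mathfrak h_1=\sum_{i\ge 2}\gamma_i\mathfrak h_i$ and read off the desired bound; the paper stays in the multiplicative semifield, produces a sequence of witness points $\bfa_m$ via Corollary~\ref{cor_principal_ker_by_order}, and derives a contradiction by chasing inequalities. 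Your formulation is cleaner and makes the role of the base point $x_0$ transparent, while the paper's version has the virtue of remaining entirely inside the kernel-theoretic language without invoking the log dictionary.
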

\begin{proof}
Note that $\sum_{i=1}^{t}\phi_o(|h_i|) \cap F = \{1\}$ if and only
if $\bigcap_{i=1}^{t}\skel(h_1) \cap \skel(o) \neq \emptyset$.
Translating the variables by a point $a \in
\bigcap_{i=1}^{t}\skel(h_1) \cap \skel(o)$, we may assume that the
constant coefficient of each $\scrL$-monomial $h_i$ is~$1$. Assume
that $\phi_o(h_1),...,\phi_o(h_t)$ are $F$-convexly dependent. We
may assume that $\phi_o(h_1)$ is $F$-convexly dependent on
$\phi_o(h_2),...,\phi_o(h_t)$.  This means by
Definition~\ref{defn_convex_dep} that we can take $h_{t+1} \in F$
for which $\phi_o(h_1)\in \langle
\phi_o(h_2),...,\phi_o(h_t),\phi_o(h_ {t+1}) \rangle$. Taking the
pre-images of the quotient map yields $$\langle h_1 \rangle \cdot o
\subseteq \langle h_2,...,h_t, h_{t+1} \rangle \cdot o.$$ Take an
$\scrL$-monomial $g$ such that   $1 +  g$ generates $o$. By
Corollary~\ref{cor_principal_ker_by_order}, there exists some $k \in
\mathbb{N}$ such that
\begin{equation}\label{eq_order_independence_2_1}
|h_1|  +  | 1  +  g| \leq_{\nu} (|h_2|  +  \dots  +  |h_{t+1}|  + |1
+  g|)^{k} = |h_2|^{k}  +  \dots  +  |h_{t+1}|^{k}  +  |1  + g|^{k}.
\end{equation}
 As $1  +  g \geq_{\nu} 1$ we have that $|1  +  g | \nucong 1  +  g$, and the
 right hand side of Equation~ \eqref{eq_order_independence_2_1} equals $$|h_2|^{k}  +  \dots  +  |h_{t+1}|^{k}  +  (1  +  g)^{k} \nucong |h_2|^{k}  +  \dots  +  |h_{t+1}|^{k}  +  1  +  g^{k} \nucong |h_2|^{k}  +  \dots +  |h_{t+1}|^{k}  +  g^{k}.$$ The last equality is due to the fact that $\sum|h_i|^k \geq_{\nu} 1$ so that $1$ is absorbed. The same argument,
  applied to the left hand side of Equation \eqref{eq_order_independence_2_1}, yields that
\begin{equation}\label{eq_order_independence_2_2}
|h_1|   +  g \leq_{\nu} |h_2|^{k}  +  \dots  +  |h_{t+1}|^{k}  +
g^{k}.
\end{equation}
Assume on the contrary that $h_1$ is $F$-convexly independent of $\{
h_2 ,...,h_t  \}$. Then $$\langle h_1 \rangle \not \subseteq \langle
h_2,...,h_{t+1} \rangle \nucong \bigg\langle \sum_{i=2}^{t+1}|h_i|
\bigg\rangle.$$ Thus for any $m \in \mathbb{N}$ there exists some
$\bfa_m \in F^{(n)}$ such that
$$|h_1(\bfa_m)| >_{\nu} \bigg|\sum_{i=2}^{t+1}|h_i(\bfa_m)|\bigg|^{m}
\nucong \sum_{i=2}^{t+1}|h_i(\bfa_m)|^m .$$ Thus by equation
\eqref{eq_order_independence_2_2} and the last observation we get
that $$\sum_{i=2}^{t}|h_i(\bfa_m)|^m   +  g(\bfa_m) <_{\nu}
|h_1(\bfa_m)| + g(\bfa_m) \leq_{\nu} \sum_{i=2}^{t}|h_i(\bfa_m)|^k +
g(\bfa_m)^k,$$ i.e., there exists some fixed $k \in \mathbb{N}$ such
that for any $m \in \mathbb{N}$,
\begin{equation}\label{eq_order_independence_2_3}
\sum_{i=2}^{t}|h_i(\bfa_m)|^m  <_{\nu} \sum_{i=2}^{t}|h_i(\bfa_m)|^k
+ g(\bfa_m)^k.
\end{equation}
For $m > k$, since  $|\gamma|^k \leq_{\nu} |\gamma|^m$ for any
$\gamma \in F$, we get that $\sum_{i=2}^{t}|h_i(\bfa_m)|^m
\geq_{\nu} \sum_{i=2}^{t}|h_i(\bfa_m)|^k$. Write $$g^k  = g(1)g' .$$
Since $g^k$ is an HP-kernel, $g(1)$ is the constant coefficient of
$g$ and $g'$ is a Laurent monomial with coefficient $1$.

According to the way the $\bfa_m$ were chosen,
$\sum_{i=2}^{t}|h_i(\bfa_m)|
> 1$ and $\sum_{i=2}^{t}|h_i(\bfa_m)|^m <_{\nu} g(\bfa_m)^k$, and thus
  $g(1) <_{\nu} g'(\bfa_{m_0})$ for  large enough $m_0$. But
$g'(\bfa_{m}^{-1}) \nucong g'(\bfa_{m})^{-1}$ so
$$g^k(\bfa_{m_0}^{-1}) \nucong g(1)g'(\bfa_{m_0}^{-1}) \nucong
g(1)g'(\bfa_{m_0})^{-1} <_{\nu} 1.$$ Thus
\eqref{eq_order_independence_2_3} yields
$\sum_{i=2}^{t}|h_i(\bfa_{m_0}^{-1})|^m <_{\nu}
\sum_{i=2}^{t}|h_i(\bfa_{m_0}^{-1})|^k$, a contradiction.
\end{proof}

%

\begin{prop}\label{prop_order_independence_2}
Let $R$ be a region $\nu$-kernel of $\overline{F(\Lambda)}$.  Let
$\{ h_1,...,h_t \}$ be a set of $\scrL$-monomials such that $(R
\cdot \langle h_1, ...,h_t \rangle) \cap F = \{1 \}$. Then $h_1
\cdot R,...,h_t \cdot R$ are   $F$-convexly dependent in the
quotient \vsemifield0 $\overline{F(\Lambda)}/R$ if and only if
$h_1,...,h_t$ are $F$-convexly dependent in
$\overline{F(\Lambda)}$.
\end{prop}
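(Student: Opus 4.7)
My plan is to reduce the statement to the results already established for a single order $\nu$-kernel, namely Proposition~\ref{prop_order_independence_1} (for $\Rightarrow$) and Lemma~\ref{lem_order_independence_2} (for $\Leftarrow$), and then to promote these to an arbitrary region $\nu$-kernel by induction on the number $v$ of order factors in the decomposition $R = \mathcal{O}_1 \cdots \mathcal{O}_v$ supplied by Lemma~\ref{rem_Region_prod_Order}. The inductive step is driven by the third isomorphism theorem (Theorem~\ref{thm_nother_1_and_3}(iii)), which identifies $\overline{F(\Lambda)}/R$ with $(\overline{F(\Lambda)}/R')/\overline{\mathcal{O}_v}$ where $R' = \mathcal{O}_1 \cdots \mathcal{O}_{v-1}$ and $\overline{\mathcal{O}_v}$ is the image of $\mathcal{O}_v$ in $\overline{F(\Lambda)}/R'$.

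For the direction $(\Rightarrow)$, I would simply iterate Proposition~\ref{prop_order_independence_1}. Quotienting by $\mathcal{O}_1$ preserves $F$-convex dependence, then quotienting the image by $\overline{\mathcal{O}_2}$ again preserves it, and so on; after $v$ steps, the images $h_i R$ remain $F$-convexly dependent in $\overline{F(\Lambda)}/R$. This direction requires no hypothesis on intersections with $F$, and is essentially formal.

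For the direction $(\Leftarrow)$, I would induct on $v$. The base case $v=1$ is exactly Lemma~\ref{lem_order_independence_2}, once we verify its hypothesis $\sum_{i=1}^t \phi_{\mathcal{O}_1}(|h_i|) \cap F = \{1\}$. Under the $\onenu$-set/$\nu$-kernel correspondence this is equivalent to $\mathcal{O}_1 \cdot \langle h_1, \dots, h_t \rangle \cap F = \{1\}$, which is precisely our standing assumption when $R = \mathcal{O}_1$. In the inductive step I would factor $R = R' \cdot \mathcal{O}_v$, apply the base case to $\overline{\mathcal{O}_v}$ inside $\overline{F(\Lambda)}/R'$ (transporting the given dependence of $\{h_i R\}$ in $\overline{F(\Lambda)}/R$ to a dependence of $\{h_i R'\}$ modulo $\overline{\mathcal{O}_v}$ via the third isomorphism theorem), and then invoke the inductive hypothesis on $R'$ to conclude that $h_1, \dots, h_t$ are $F$-convexly dependent in $\overline{F(\Lambda)}$.

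The main obstacle, and the step that requires genuine care, is checking that the non-triviality hypothesis needed to apply Lemma~\ref{lem_order_independence_2} survives each stage of the induction: at the $k$-th quotient we need $\overline{\mathcal{O}_k} \cdot \langle \overline{h_1}, \dots, \overline{h_t} \rangle \cap F = \{1\}$ in the ambient quotient \vsemifield0. The cleanest way I see to verify this is to translate everything into the language of $\onenu$-sets. The global hypothesis $(R \cdot \langle h_1,\dots, h_t \rangle) \cap F = \{1\}$ means, by Remark~\ref{rem_bounded_from_below_contain_H_kernel} and Proposition~\ref{cor_empty_kernels_correspond_to_bfb_kernels}, that
\[
\Skel(R) \cap \bigcap_{i=1}^{t} \Skel(h_i) \;=\; \Big(\bigcap_{j=1}^{v}\Skel(\mathcal{O}_j)\Big) \cap \bigcap_{i=1}^{t} \Skel(h_i) \;\neq\; \emptyset.
\]
Any point $\bfa$ in this intersection lies in every $\Skel(\mathcal{O}_j)$ and in every $\Skel(h_i)$, and therefore persists as a common kernel-root after passing to any of the successive quotients $\overline{F(\Lambda)}/\mathcal{O}_1 \cdots \mathcal{O}_{k-1}$; this keeps the required hypothesis in force at every stage and closes the induction.
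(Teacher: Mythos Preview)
Your proposal is correct and follows essentially the same approach as the paper: use Proposition~\ref{prop_order_independence_1} (iterated) for the forward direction, and apply Lemma~\ref{lem_order_independence_2} repeatedly, once for each order factor $\mathcal{O}_i$ in the decomposition $R = \prod_{i=1}^{m}\mathcal{O}_i$, for the converse. The paper's proof is terse and does not spell out the inductive framework or the verification that the hypothesis of Lemma~\ref{lem_order_independence_2} persists at each stage; your explicit use of the third isomorphism theorem and the $\onenu$-set argument to propagate the condition $(R\cdot\langle h_1,\dots,h_t\rangle)\cap F=\{1\}$ through the successive quotients fills in exactly the detail the paper leaves implicit.
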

\begin{proof}
The `if' part of the assertion follows from Proposition
\ref{prop_order_independence_1}. Since $R = \prod_{i=1}^{m}o_i$
for suitable order $\nu$-kernels $\{ o_i \}_{i=1}^{m}$, the `only
if' part follows from Lemma \ref{lem_order_independence_2} applied
repeatedly to each of these $o_i$'s.
\end{proof}
%

\begin{prop}\label{prop_convexity_degree_invariance_for_HS_kernels}
Let $R \in \PCon(\overline{F(\Lambda)})$ be a region $\nu$-kernel.
Then, for any set $L$ of~HS-fractions, we have $$\condeg(L) =
\condeg(L \cdot R),$$  the right side taken in
$\overline{F(\Lambda)}/R$.
\end{prop}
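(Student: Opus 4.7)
The plan is to reduce $L$ to a basis $B$ of $F$-convexly independent $\scrL$-monomials, and then to transfer $F$-convex independence through the quotient map $\phi_R \colon \overline{F(\Lambda)} \to \overline{F(\Lambda)}/R$ by means of Proposition \ref{prop_order_independence_2}. First, applying Remark \ref{rem_HS_depend_on_HP} together with Lemma \ref{lem_HS_base_gives_rise_to_HP_base}, one extracts from the component $\scrL$-monomials of $L$ a basis $B = \{h_1, \dots, h_t\}$ with $\ConSpan_F(B) = \ConSpan_F(L)$ and $t = \condeg(L)$. Since $\phi_R$ is an $F$-homomorphism respecting $+$, $\wedge$, $|\cdot|$, and products of $\nu$-kernels, the equality $\ConSpan_F(\phi_R(B)) = \ConSpan_F(\phi_R(L))$ holds in the quotient, so it suffices to show that $\phi_R(h_1), \dots, \phi_R(h_t)$ remain $F$-convexly independent.

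The main obstacle is to ensure the hypothesis $(R \cdot \langle h_1, \dots, h_t \rangle) \cap F = \{1\}$ of Proposition \ref{prop_order_independence_2}, which by Corollary \ref{cor_empty_kernels_correspond_to_bfb_kernels} is equivalent to $\Skel(R) \cap \bigcap_{i=1}^{t} \Skel(h_i) \neq \emptyset$. When this intersection is empty, I would replace each $h_i$ by a translate $h_i' = \alpha_i h_i$ with suitable $\alpha_i \in F$. The replacement alters neither the $F$-convex dependence relations (since $\langle \alpha_i h_i \rangle \cdot \langle F \rangle = \langle h_i \rangle \cdot \langle F \rangle$) nor the convex spans on either side of $\phi_R$, so the convex dimensions are unchanged. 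Writing $h_i = \beta_i \lambda^{u_i}$, the $F$-convex independence of $B$ is equivalent to the $\mathbb{Z}$-linear independence of the exponent vectors $u_i$ (via logarithmic scaling, Remark \ref{log0}); consequently the system $\{ h_i'(\bfa) = 1 \}_{i=1}^{t}$ cuts out a nonempty affine subspace of codimension $t$. Picking $\bfa$ in the nonempty interior of $\Skel(R)$ and choosing the $\alpha_i$ so that $\lambda^{u_i}(\bfa) = (\alpha_i \beta_i)^{-1}$ secures $\bfa \in \Skel(R) \cap \bigcap_i \Skel(h_i')$, establishing the required hypothesis.

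Finally, with the hypothesis in place for $B' = \{h_1', \dots, h_t'\}$ (and for every subset thereof), Proposition \ref{prop_order_independence_2} yields the equivalence: $B'$ is $F$-convexly independent in $\overline{F(\Lambda)}$ if and only if $\phi_R(B')$ is $F$-convexly independent in $\overline{F(\Lambda)}/R$. Since translation preserves convex independence, $B'$ is independent upstairs, so $\phi_R(B')$ is independent downstairs. Combined with the already-established equality of convex spans, this gives $\condeg(L \cdot R) = t = \condeg(L)$, completing the proof.
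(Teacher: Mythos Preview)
Your proof is correct and uses the same two ingredients as the paper's argument, namely Proposition~\ref{prop_order_independence_1} (dependence is preserved under $\phi_R$) and Proposition~\ref{prop_order_independence_2} (dependence is reflected by $\phi_R$, under the nonemptiness side condition). The paper packages these into the sandwich
\[
\condeg(L)\ \le\ \condeg(R\cdot L)\quad\text{and}\quad
\condeg(L)\ \ge\ \condeg(\phi_R(L))\ \ge\ \condeg\big(\phi_R^{-1}(\phi_R(L))\big)=\condeg(R\cdot L),
\]
whereas you fix a basis $B$ of $\scrL$-monomials for $\ConSpan_F(L)$ and verify directly that $\phi_R(B)$ is a basis for $\ConSpan_F(\phi_R(L))$. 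Your translation step (replacing $h_i$ by $\alpha_i h_i$ so that a chosen $\bfa\in\Skel(R)$ lies in every $\Skel(h_i')$) explicitly secures the hypothesis $(R\cdot\langle h_1',\dots,h_t'\rangle)\cap F=\{1\}$ of Proposition~\ref{prop_order_independence_2}; the paper invokes Lemma~\ref{lem_order_independence_2} without pausing to check that side condition, so in this respect your argument is the more careful one.

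One small simplification: the digression about $\mathbb{Z}$-linear independence of the exponent vectors and the ``codimension $t$'' remark is unnecessary. You only need a single point $\bfa\in\Skel(R)$ (nonempty because $R$ is a region $\nu$-kernel), and then the choice $\alpha_i=h_i(\bfa)^{-1}$ already gives $\bfa\in\Skel(R)\cap\bigcap_i\Skel(h_i')$; no interior point and no linear-algebra detour are required.
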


\begin{proof}   $\condeg(L) \leq  \condeg(R \cdot L)$ since $L \subseteq  R
\cdot L$. For the reverse inequality, let $\phi_{R} :
\overline{F(\Lambda)} \rightarrow \overline{F(\Lambda)}/R$ be the
quotient map. Since $L$ is a sub-\vsemifield0 of
$\overline{F(\Lambda)}$, $\phi_{R}^{-1}(\phi_R(L))=R \cdot \langle
L \rangle$ by Theorem \ref{thm_nother_1_and_3}, and $\condeg(L)
\geq \condeg(\phi_R(L))$ by Proposition
\ref{prop_order_independence_1}, while $\condeg(\phi_R(L)) \geq
\condeg(\phi_{R}^{-1}(\phi_R(L)))$ by Lemma
\ref{lem_order_independence_2}. Thus
  $\condeg(L) \geq  \condeg(R \cdot L)$.
\end{proof}

\end{exmp}

%
%
%

In this way we see that $K \mapsto \Omega(K)$ yields a
homomorphism of $\nu$-kernels. Hence $\Omega$ is a natural map in
the sense of Definition~\ref{natural}, and we can apply
Theorem~\ref{Schreier}.

\begin{rem}\label{rem_condeg_not_effected_by_region}
Let $R$ be a region $\nu$-kernel
and let
$$A = \{ \langle g \rangle : \langle g \rangle \cdot \langle F
\rangle \supseteq  R \cdot \langle F \rangle \}.$$ Then
$\condeg(\overline{F(\Lambda)}/R) = \condeg(A)$, in view of
Remark \ref{rem_correspondence_of_quontient_HSpec} and
Proposition \ref{prop_order_independence_2}. As $L_{a} \in A$ for
any $a \in \skel(R) \neq \emptyset$
and $\condeg(L_a) = \condeg(\overline{F(\Lambda)}),$ we conclude that $\condeg(A) = \condeg(\overline{F(\Lambda)})$.\\
\end{rem}

We are ready to prove    catenarity for  $\condeg.$

\begin{thm}\label{caten}
If $R$ is a region $\nu$-kernel and $L$ is an HS-kernel  of
$\overline{F(\Lambda)}$, then
$$\condeg(\overline{F(\Lambda)}/LR) = \condeg(\overline{F(\Lambda)}) - \condeg(L).$$
In particular, $$\condeg(\overline{F(\Lambda)}/LR) = n -
\condeg(L).$$
\end{thm}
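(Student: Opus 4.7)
The plan is to reduce this to a statement purely about HS-kernels by exploiting Proposition~\ref{prop_order_independence_2} and Remark~\ref{rem_condeg_not_effected_by_region}, which tell us that quotienting by the region $\nu$-kernel $R$ preserves both convex dimension and convex (in)dependence (provided the relevant $\onenu$-sets meet). So the core content becomes computing $\condeg$ of a quotient by an HS-kernel.

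First I would invoke Lemma~\ref{lem_HS_base_gives_rise_to_HP_base} together with Remark~\ref{rem_basis_for_HS_kernel} to pick a basis $B_L = \{g_1,\dots,g_k\}$ of $\scrL$-monomials for $L$, where $k = \condeg(L)$. Next, using the basis $\{\alpha_1\la_1,\dots,\alpha_n\la_n\}$ of $\overline{F(\Lambda)}$ from Example~\ref{exmp_basis_for_semifield_of_fractions} and iterating the Steinitz exchange of Proposition~\ref{prop_convex_dependence_of_HP_property}, I would extend $B_L$ to a basis $\{g_1,\dots,g_k,g_{k+1},\dots,g_n\}$ of $\overline{F(\Lambda)}$ consisting of $\scrL$-monomials. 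Because $\condeg(\overline{F(\Lambda)}) = n$ by Remark~\ref{rem_convexity_degree_of_semifield_of_fractions}, the extended basis has cardinality exactly $n$.

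Let $\phi\colon \overline{F(\Lambda)}\to \overline{F(\Lambda)}/LR$ be the canonical homomorphism. The plan is to show that $\{\phi(g_{k+1}),\dots,\phi(g_n)\}$ is a basis for $\overline{F(\Lambda)}/LR$. For spanning: every $\scrL$-monomial in the image is $\phi(h)$ for some $\scrL$-monomial $h\in \overline{F(\Lambda)}$ which is $F$-convexly dependent on $\{g_1,\dots,g_n\}$; since each $g_i$ ($i\le k$) lies in $L\subseteq LR$, $\phi(g_i)\in \langle F\rangle_{\overline{F(\Lambda)}/LR}$, so $\phi(h)$ is $F$-convexly dependent on $\{\phi(g_{k+1}),\dots,\phi(g_n)\}$. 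For independence: suppose some $\phi(g_j)$ with $j>k$ were $F$-convexly dependent on the rest. Translating through the quotient, this yields $g_j \in LR\cdot\langle g_{k+1},\dots,\widehat{g_j},\dots,g_n\rangle\cdot\langle F\rangle = R\cdot\langle g_1,\dots,\widehat{g_j},\dots,g_n\rangle\cdot\langle F\rangle$, using that $L=\langle g_1,\dots,g_k\rangle\cdot\langle F\rangle$ on the level of HS-kernels (via Proposition~\ref{rem_HS_prod_HP} and Remark~\ref{rem_HS_kernel_finitely_spanned}). Passing to $\overline{F(\Lambda)}/R$ and then pulling back through Proposition~\ref{prop_order_independence_2} would force $g_j$ to be $F$-convexly dependent on $\{g_1,\dots,\widehat{g_j},\dots,g_n\}$ already in $\overline{F(\Lambda)}$, contradicting that $\{g_1,\dots,g_n\}$ is a basis.

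Combining, $\condeg(\overline{F(\Lambda)}/LR) = n - k = \condeg(\overline{F(\Lambda)}) - \condeg(L)$, and the ``in particular'' statement is immediate from $\condeg(\overline{F(\Lambda)})=n$. The main obstacle I anticipate is verifying the hypothesis $(R\cdot\langle g_1,\dots,\widehat{g_j},\dots,g_n\rangle)\cap F = \{1\}$ required to apply Proposition~\ref{prop_order_independence_2} in the pullback step; this should be handled by observing that an $\scrL$-monomial basis avoids nontrivial intersection with $F$ (each HP-kernel has trivial intersection with $F$ by Lemma~\ref{rem_HP_not_contains_contained_H}), and if $R\cap F\ne\{1\}$ then $R=\langle F\rangle$, in which case the statement is vacuous on the $\langle F\rangle$-piece. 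The secondary subtlety is the reduction to the case $R=\langle 1\rangle$: we can either absorb $R$ into the quotient first (applying Remark~\ref{rem_condeg_not_effected_by_region} to see that $\condeg$ is unaffected) and then run the argument with $L$ alone, or treat $LR$ uniformly as above—I would opt for the uniform treatment since the region part is automatically killed by Proposition~\ref{prop_order_independence_2}.
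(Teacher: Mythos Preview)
Your approach is essentially the paper's: the two key inputs---basis extension via the abstract dependence relation (Steinitz exchange, Proposition~\ref{prop_convex_dependence_of_HP_property}) and Proposition~\ref{prop_order_independence_2} to control the region part---are exactly the same. The paper simply takes the alternative you mention at the end: it quotients by $R$ first via the third isomorphism theorem, writing $\overline{F(\Lambda)}/LR \cong (\overline{F(\Lambda)}/R)/(LR/R)$, then chooses a basis for $\operatorname{HP}(\overline{F(\Lambda)}/R)$ containing a basis for $\operatorname{HP}(LR/R) = \operatorname{HP}(\phi_R(L))$ and reads off the difference, using Remark~\ref{rem_condeg_not_effected_by_region} for $\condeg(\overline{F(\Lambda)}/R)=n$.

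That ordering is slightly cleaner precisely because it sidesteps the obstacle you flag. The paper's only invocation of Proposition~\ref{prop_order_independence_2} is to show $\condeg(\phi_R(L)) = \condeg(L)$, whose hypothesis is $(LR)\cap F = \{1\}$. In your ordering (extend the basis in $\overline{F(\Lambda)}$ first, then quotient by $LR$), the pullback in the independence step requires $(R\cdot\langle g_1,\dots,g_n\rangle)\cap F = \{1\}$ for the \emph{extended} basis, and your proposed fix via Lemma~\ref{rem_HP_not_contains_contained_H} only gives $\langle g_1,\dots,g_n\rangle\cap F = \{1\}$, not the product with $R$: the common point $\bigcap_i\Skel(g_i)$ need not lie in $\Skel(R)$. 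This is repairable---extend using $\scrL$-monomials through a chosen point of $\Skel(LR)$, so that $\bigcap_i\Skel(g_i)\subset\Skel(R)$---but the paper's order avoids the issue.
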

\begin{proof}
 $\overline{F(\Lambda)}/LR \cong (\overline{F(\Lambda)}/R)/ (L\cdot R/R)$, by the third isomorphism theorem.
 Choose a basis for HP$(\overline{F(\Lambda)}/R)$ containing a basis for HP$(L\cdot R/R)$.
 Then  $\condeg(L \cdot R/R) = \condeg(\phi_R(L))$, by Remark \ref{rem_condeg_not_effected_by_region}.
But $L \cdot R \cap F = \{ 1 \}$. Hence, by Proposition
\ref{prop_order_independence_2},
 $\condeg(\phi_R(L)) = \condeg(L)$. So $$\condeg(\overline{F(\Lambda)}/LR) = \condeg(A)-\condeg(L) = \condeg(\overline{F(\Lambda)}) -
 \condeg(L).$$

Thus, $$\condeg(\overline{F(\Lambda)}/LR) = \condeg(A)-\condeg(L)
= n - \condeg(L).$$
\end{proof}

\subsection{Recapitulation for dimensions}$ $

In conclusion, for every principal regular $\nu$-kernel $\langle f
\rangle \in P(\overline{F(\Lambda)})$, we have  obtained explicit
region $\nu$-kernels
$$\{R_{1,1},...,R_{1,s},R_{2,1},...,R_{2,t} \}$$ having trivial
intersection, such that
$$\langle f \rangle = \bigcap_{i=1}^{s}K_i \cap \bigcap_{j=1}^{t}N_j$$
where $K_i = L_i \cdot R_{1,i}$ for $i =1,...,s$ and appropriate
HS-kernels $L_i$ and $N_j = B_j \cdot R_{2,j}$ for $j =1,...,t$
and appropriate bounded from below $\nu$-kernels $B_j$. If
$\langle f \rangle \in \PCon(\langle F \rangle)$,  then, in view
of Theorem~\ref{thm_HP_expansion} we can take $B_j = \langle F
\rangle$ for every $j = 1,...,t$. Note that over the various
regions in $F^{(n)}$ corresponding to the region
$\nu$-kernels~$R_{i,j}$, $f$ is locally represented by distinct
HS-fractions in $\HSpec(\overline{F(\Lambda)})$. In fact each
region is defined so
 that the local HS-representation of $f$ is given over the entire region.
 Thus the    $R_{i,j}$'s  defining the partition of the space
 can be obtained as a minimal set of regions over each of which $\langle f \rangle$
takes the form of an HS-kernel.

For each $j = 1,...,t$,  $\condeg(N_j) = \Hdim(N_j) = 0$,  since
$N_j$ contains no elements of
$\operatorname{HP}(\overline{F(\Lambda)})$, implying
$$\condeg(\overline{F(\Lambda)}/N_j) = \Hdim(\overline{F(\Lambda)}/N_j) = n.$$ For each $i
= 1,...,s$,
 $\condeg(K_i)= \condeg(L_i) = \Hdim(L_i) \geq 1$, implying
$$\condeg(\overline{F(\Lambda)}/K_i) =
\Hdim(\overline{F(\Lambda)}/K_i) = n - \Hdim(L_i) < n.$$

%
\begin{rem}
In view of the discussion in \cite[\S 9.2]{Kern}, each term
$\overline{F(\Lambda)}/L_i$ corresponds to the linear subspace
of~$F^{(n)}$ (in logarithmic scale) defined by the linear
constraints endowed on the quotient $\overline{F(\Lambda)}/L_i$ by
the HS-kernel~$L_i$. One can think of these terms as an algebraic
description of the affine subspaces
 locally comprising  $\skel(f)$.
\end{rem}

\section{The Jordan–-H\"{o}lder theorem}$ $

Our final goal is to obtain a Jordan-H\"{o}lder theorem for
$\nu$-kernels. But there are too many $\nu$-kernels for a viable
theory in general, as discussed in \cite{Bi}, and anyway, as
pointed out in the introduction, the order $\nu$-kernels need not
alter the geometric dimension,so we want to ignore them in this
section. If we limit our set of $\nu$-kernels to a given
sublattice, of HS-kernels, one can use the Schreier refinement
theorem \cite{Rot} to obtain a version of the Jordan--H\"{o}lder
Theorem.

\begin{defn}\label{natural}
 $\mathcal L(\mathcal S)$ denotes the lattice
of $\nu$-kernels of a \vsemifield0 $\mathcal S.$

$\Psi$ is a \textbf{natural map} if for each \vsemifield0
$\mathcal S,$ there is a lattice homomorphism $\Psi_{\mathcal S}:
\mathcal L(\mathcal S)\to \mathcal L(\mathcal S) $ such that $K
\mapsto \Psi_{\mathcal S}(K)$ is a homomorphism of $\nu$-kernels.
We write $\Psi(\mathcal S) $ for $\Psi_{\mathcal S}(\mathcal
L(S)),$ and call the $\nu$-kernels in $\Psi({\mathcal S})$
$\Psi$-\textbf{kernels}. (We delete $\mathcal S$ when it is
unambiguous.)

 A $\Psi(\mathcal S)$-\textbf{simple} $\nu$-kernel is a minimal
$\Psi$-kernel $\ne \{1 \}$.  A $\Psi(\mathcal
S)$-\textbf{composition series}
 $\mathcal C(K,L)$ in $\Psi(\mathcal S)$ from  a $\nu$-kernel~$K$ to a $\nu$-subkernel $L$ is
  a chain $$K = K_0 \supset K_1 \supset \dots K_t = L$$  in $\Psi$
such that each factor is $\Psi$-simple. \end{defn}

By Theorem~\ref{thm_kernels_hom_relations} $\mathcal C(K,L)$ is equivalent to the
$\Psi(\mathcal S/L)$-composition series
$$K/L \supset K_1/L \supset \dots \supset K_t/L = 0$$ of $K/L.$

Given a $\Psi$-kernel $K$, we define its {\bf composition length}
$\ell (K)$ to be the length of a $\Psi$-composition series for $K$
(presuming $K$ has one). By definition, $\{ 1\}$ is the only
$\Psi$-kernel of composition length 0. A nonzero $\Psi$-kernel $K$
is simple iff $\ell(K) = 1.$ The next theorem is a standard
lattice-theoretic result of Schreier and Zassenhaus, yielding the
Schreier-Jordan-H\"{o}lder Theorem, cf.~\cite[Theorem~3.11]{Ro}.

\begin{thm}\label{Schreier}  Suppose $K$ has a
composition series  $$K = K_0 \supset K_1 \supset \dots \supset K_t
= 0,$$  which we denote as $\mathcal C$. Then:

(i) Any arbitrary finite chain of $\nu$-subkernels $$K = N_0
\supset N_1 \supset \dots \supset N_k\supset 0$$ (denoted as
$\mathcal D$),  can be refined to a composition series equivalent
to $\mathcal C$. In particular, $k\le t$.

(ii)  Any two composition series of $K$ are equivalent.

(iii) $\ell (K) = \ell(N) + \ell (K/N)$ for every $\nu$-subkernel
$N$ of $K$. In particular, every $\nu$-subkernel and every
homomorphic image of a $\nu$-kernel with composition series has a
composition series.
\end{thm}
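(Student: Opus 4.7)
The plan is to follow the classical Schreier--Zassenhaus framework, adapted to the lattice of $\Psi$-kernels. The crucial input, already available to us, is Proposition~\ref{lem_kernels_algebra}, which tells us that $\mathcal L(\mathcal S)$ is a complete distributive (hence modular) lattice, together with the three isomorphism theorems in Theorem~\ref{thm_nother_1_and_3}. Modularity is exactly what makes the butterfly lemma go through, and distributivity is even stronger than what we need.

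First I would establish the Zassenhaus lemma in this setting: given $\nu$-subkernels $A' \subseteq A$ and $B' \subseteq B$ inside a $\nu$-kernel $K$, there is an isomorphism
$$A'(A\cap B)\,/\,A'(A\cap B') \;\cong\; B'(A\cap B)\,/\,B'(A'\cap B),$$
obtained by applying Theorem~\ref{thm_nother_1_and_3}(2) twice (to each of the two sides, reducing both to $(A\cap B)/((A'\cap B)(A\cap B'))$), using modularity to rewrite $A'(A\cap B) \cap A = A'(A\cap B) $ and similar expressions. Both quotients, being \vsemifield0 isomorphic by the theorem, belong to $\Psi(\mathcal S)$ since $\Psi$ is a lattice homomorphism that respects the relevant operations on $\nu$-kernels.

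Second, with Zassenhaus available, the Schreier refinement theorem is routine: given two chains $\mathcal C: K=K_0\supset K_1\supset\dots\supset K_t=\{1\}$ and $\mathcal D:K=N_0\supset N_1\supset\dots\supset N_k=\{1\}$, interpolate $K_{i-1} \supseteq K_i(K_{i-1}\cap N_j)\supseteq K_i$ as $j$ runs through $0,1,\dots,k$, and symmetrically in $\mathcal D$. This produces two refinements of equal length $tk$ whose consecutive factors are paired by the Zassenhaus isomorphisms, hence are equivalent. Part (i) now falls out: applying Schreier to $\mathcal C$ and $\mathcal D$ yields a refinement of $\mathcal C$ equivalent to a refinement of $\mathcal D$; but $\mathcal C$ is a composition series and therefore admits no proper refinement in $\Psi(\mathcal S)$ (any inserted term would contradict $\Psi$-simplicity of some factor), so the refinement of $\mathcal C$ equals $\mathcal C$ itself. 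This forces $k\le t$, and shows $\mathcal D$ refines to a chain equivalent to $\mathcal C$. Part (ii) is the special case where $\mathcal D$ is itself already a composition series. Part (iii) follows from the third isomorphism (Theorem~\ref{thm_nother_1_and_3}(3)): concatenate a $\Psi$-composition series for $N$ with the preimage in $K$ of a $\Psi$-composition series for $K/N$, and observe by Theorem~\ref{thm_kernels_hom_relations} that each factor in the preimage is $\Psi$-isomorphic to the corresponding factor in $K/N$, hence $\Psi$-simple.

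The main obstacle is bookkeeping, not substance: one must check at each step that the intermediate kernels $K_i(K_{i-1}\cap N_j)$ and the subquotients appearing in Zassenhaus actually lie in $\Psi(\mathcal S)$ rather than merely in $\mathcal L(\mathcal S)$. This is handled by the requirement in Definition~\ref{natural} that $\Psi$ be both a lattice homomorphism (giving closure under intersection and product) and a homomorphism on the underlying kernel structure (so that the Noether isomorphisms of Theorem~\ref{thm_nother_1_and_3} restrict to isomorphisms of $\Psi$-kernels, preserving $\Psi$-simplicity of factors under quotienting by $L$).
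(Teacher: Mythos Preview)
The paper does not give its own proof of this theorem; it simply states it as ``a standard lattice-theoretic result of Schreier and Zassenhaus'' and cites \cite[Theorem~3.11]{Ro}. Your proposal is precisely a sketch of that standard argument---Zassenhaus butterfly from modularity plus the second isomorphism theorem, then Schreier refinement, then Jordan--H\"older---so you are supplying exactly the proof the paper defers to.
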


To apply this, we need the following observation.

\begin{prop}\label{prop_kernel_descending_chain}
Let $L$ be an HS-kernel in $\overline{F(\Lambda)}$ with
$\skel(L)\neq\emptyset$. Let $\{ h_1, ...,h_t \}$ be a set of
$\scrL$-monomials in $\HSpec(\overline{F(\Lambda)})$ such that
$\ConSpan(h_1, ...,h_t) = \ConSpan(L)$ and let $L_i = \langle h_i
\rangle$. Then the chain

\begin{equation}\label{eq_desc_chain_kernels}
L = \prod_{i=1}^{u}L_i  \supseteq \prod_{i=1}^{u-1}L_i  \supseteq
\dots \supseteq L_1  \supseteq \langle 1 \rangle .
\end{equation}
of HS-kernels is  strictly descending   if and only if
$h_1,....,h_u$ are $F$-convexly independent.
\end{prop}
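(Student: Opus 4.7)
The plan is to reduce both sides of the equivalence to parallel step-by-step conditions, and then use the hypothesis $\skel(L)\neq\emptyset$ to bridge the gap between ``$h_i\notin\langle h_1,\dots,h_{i-1}\rangle$'' and the superficially stronger ``$h_i\notin\langle h_1,\dots,h_{i-1}\rangle\cdot\langle F\rangle$''.

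First I would unfold the two sides step by step. By Proposition~\ref{prop_algebra_of_generators_of_kernels}(i), $\prod_{j=1}^{i}L_j=\langle h_1,\dots,h_i\rangle$, so the chain is strictly descending iff $h_i\notin\langle h_1,\dots,h_{i-1}\rangle$ for each $i=1,\dots,u$. Because $F$-convex dependence on $\operatorname{HP}(\overline{F(\Lambda)})$ is an abstract dependence relation satisfying the Steinitz exchange axiom (Propositions~\ref{prop_abstract_dependence_properties} and~\ref{prop_convex_dependence_of_HP_property}), the standard greedy/matroid argument shows that $\{h_1,\dots,h_u\}$ is $F$-convexly independent iff $h_i\notin\langle h_1,\dots,h_{i-1}\rangle\cdot\langle F\rangle$ for each $i$. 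Since $\langle h_1,\dots,h_{i-1}\rangle\subseteq\langle h_1,\dots,h_{i-1}\rangle\cdot\langle F\rangle$, the implication ``$F$-convex independence $\Rightarrow$ strict descent'' is immediate.

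For the converse, assume the chain is strictly descending and suppose, for contradiction, that some $h_i$ is $F$-convexly dependent on $\{h_1,\dots,h_{i-1}\}$. By Corollary~\ref{cor_principal_ker_by_order} together with the Frobenius identity, there exist $k\in\mathbb{N}$ and $\alpha\in F$ with $|h_i|\le_\nu |h_1|^k+\dots+|h_{i-1}|^k+|\alpha|^k$. Passing to the logarithmic $\mathbb{Q}$-vector space of Remark~\ref{log0}, this reads $|\mathfrak h_i(\lambda)|\le k\max(|\mathfrak h_1(\lambda)|,\dots,|\mathfrak h_{i-1}(\lambda)|,|c|)$ for all $\lambda$, where $\mathfrak h_j$ is the affine linear form attached to the $\scrL$-monomial $h_j$ and $c=\log\alpha$. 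Letting $\lambda$ travel to infinity in any direction orthogonal to the degree vectors $p_1,\dots,p_{i-1}$ of $h_1,\dots,h_{i-1}$ keeps the right side bounded while forcing the left side to grow linearly, unless the degree vector of $h_i$ already lies in $\operatorname{span}_{\mathbb{Q}}(p_1,\dots,p_{i-1})$. Divisibility of $F$ then yields rationals $q_1,\dots,q_{i-1}$ and some $\beta\in F$ with $h_i=\beta\prod_{j=1}^{i-1}h_j^{q_j}$.

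The hypothesis $\skel(L)\neq\emptyset$ is what turns this relation into actual membership $h_i\in\langle h_1,\dots,h_{i-1}\rangle$. Pick $\bfa\in\skel(L)$; since $L=\prod_{j=1}^{u}L_j\supseteq L_j$ for each $j$, we have $\skel(L)\subseteq\skel(L_j)=\skel(h_j)$, so $h_j(\bfa)\nucong 1$ for $1\le j\le i$. Evaluating the identity at $\bfa$ gives $1\nucong h_i(\bfa)=\beta\prod_j 1^{q_j}=\beta$, and $\nu$-bipotence together with tangibility of $h_i$ forces $\beta=1$. Hence $h_i=\prod_{j<i}h_j^{q_j}\in\langle h_1,\dots,h_{i-1}\rangle$ by Lemma~\ref{prop_HP_element_is_a_generator}, contradicting strict descent. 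The main obstacle is the rigorous translation of the $\nu$-inequality into a $\mathbb{Q}$-linear relation of degree vectors modulo an $F$-constant, which I would handle using the $\nu$-archimedean, divisible structure of $F$ as above; the role of the nonempty $\onenu$-set is exactly to kill the remaining $F$-constant once the degree-vector relation is in hand.
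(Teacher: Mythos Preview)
Your argument is correct and reaches the same conclusion as the paper, but the execution of the crucial $(\Rightarrow)$ step is genuinely different. The paper argues abstractly: from $\langle h_u\rangle\subseteq\prod_{i<u}L_i\cdot\langle F\rangle$ together with $\langle h_u\rangle\not\subseteq\prod_{i<u}L_i$ it asserts that $\langle F\rangle\subseteq\prod_{i\le u}L_i=L$, which contradicts $L$ being an HS-kernel with $\skel(L)\neq\emptyset$. (The unstated justification is that the image of an $\scrL$-monomial in the quotient by an HS-kernel is again a Laurent monomial, and a bounded Laurent monomial must be a nontrivial constant $\beta$, which then lies in $L$.) You instead work explicitly in the logarithmic picture, extracting the degree-vector relation and then evaluating at a point of $\skel(L)$ to force the stray constant to be $1$. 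Your route is more self-contained and makes transparent exactly where $\skel(L)\neq\emptyset$ enters; the paper's route is shorter but leaves that structural step implicit.

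Two small clean-ups. First, ``divisibility of $F$'' is not what lets you form $h_j^{q_j}$ for rational $q_j$; instead clear denominators and work with $h_i^{N}=\gamma\prod_{j<i}h_j^{m_j}$ for integers $m_j$ and some $\gamma\in F$, then evaluate at $\bfa\in\skel(L)$ to get $\gamma=1$. Second, the final appeal to Lemma~\ref{prop_HP_element_is_a_generator} is misplaced: once $h_i^{N}=\prod_{j<i}h_j^{m_j}$ with integer exponents, membership $h_i^{N}\in\langle h_1,\dots,h_{i-1}\rangle$ is immediate because a $\nu$-kernel is a group, and then $h_i\in\langle h_1,\dots,h_{i-1}\rangle$ follows from $\langle h_i^{N}\rangle=\langle h_i\rangle$.
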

\begin{proof}
 $(\Rightarrow)$ If $h_u$ is
$F$-convexly dependent on $\{h_1,....,h_{u-1}\}$, then   $L_u =
\langle h_u \rangle  \subseteq \prod_{i=1}^{u-1}L_i \cdot \langle F
\rangle$. Assume that $L_u = \langle h_u \rangle  \not \subseteq
\prod_{i=1}^{u-1}L_i$. Then $\langle F \rangle \subseteq
\prod_{i=1}^{u}L_i$, implying that $\prod_{i=1}^{u}L_i$ is not an
HS-kernel. Thus $L_u = \langle h_u \rangle   \subseteq
\prod_{i=1}^{u-1}L_i$, and the chain is not strictly descending.

  $(\Leftarrow)$  $\skel(\prod_{i=1}^{t}L_i )
\subseteq \skel(L) \neq \emptyset$ for every $0 \leq t \leq u$,
implying  that $(\prod_{i=1}^{t}L_i ) \cap F = \{ 1 \}$ for every $0
\leq t \leq u$ (for otherwise
$\skel(\prod_{i=1}^{t}L_i)~=~\emptyset$). If $\{h_1,....,h_u\}$ is
$F$-convexly independent then  $L_u = \langle h_u \rangle \not
\subseteq \prod_{i=1}^{u-1}L_i \cdot \langle F \rangle$. By
induction, the
 chain \eqref{eq_desc_chain_kernels}  is strictly descending.

\end{proof}

\begin{thm}\label{prop_kernel_descending_chain_properties}
If $L \in \HSpec(\overline{F(\Lambda)})$, then $\hgt(L) =
\condeg(L)$, cf.~Definition~\ref{height0}.
 Moreover, every factor of a  descending chain of HS-kernels of maximal length is an  HP-kernel.
\end{thm}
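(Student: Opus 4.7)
The plan is to use Proposition~\ref{prop_kernel_descending_chain} as the bridge between strictly descending chains in $\HSpec(\overline{F(\Lambda)})$ and $F$-convexly independent sets of $\scrL$-monomials, together with Remark~\ref{rem_basis_for_HS_kernel}, which guarantees that every HS-kernel $L$ has a basis $B=\{b_1,\ldots,b_s\}\subset\operatorname{HP}(L)$ of $F$-convexly independent $\scrL$-monomials with $s=\condeg(L)$ and $\ConSpan(B)=\ConSpan(L)$. I will verify $\hgt(L)=\condeg(L)$ by proving both inequalities, and then extract the moreover statement from maximality of the chain.

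For the inequality $\hgt(L)\geq\condeg(L)$, I would take the basis $B=\{b_1,\ldots,b_s\}$ from Remark~\ref{rem_basis_for_HS_kernel} and form the chain
\[
L\;=\;\prod_{i=1}^{s}\langle b_i\rangle\;\supset\;\prod_{i=1}^{s-1}\langle b_i\rangle\;\supset\;\cdots\;\supset\;\langle b_1\rangle\;\supset\;\langle 1\rangle.
\]
Each partial product is an HS-kernel by Proposition~\ref{rem_HS_prod_HP}. Since $\skel(L)\neq\emptyset$ (an HS-kernel is not bounded from below, so the base case holds via Corollary~\ref{cor_empty_kernels_correspond_to_bfb_kernels}, or else one first passes to the $L_a$ case and argues as in Remark~\ref{rem_condeg_not_effected_by_region}), and the $b_i$ are $F$-convexly independent, Proposition~\ref{prop_kernel_descending_chain} ensures that this chain is strictly descending in $\HSpec(\overline{F(\Lambda)})$; hence $\hgt(L)\geq s=\condeg(L)$.

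For the reverse inequality $\hgt(L)\leq\condeg(L)$, I would start with any strictly descending chain
\[
L\;=\;L_t\;\supset\;L_{t-1}\;\supset\;\cdots\;\supset\;L_0\;=\;\langle 1\rangle
\]
of HS-kernels ending at $L$. For each $i\geq 1$, since $L_i\supsetneq L_{i-1}$ and both are finite products of HP-kernels (Proposition~\ref{rem_HS_prod_HP}), we can pick an HP-generator $\langle h_i\rangle$ of $L_i$ with $h_i\notin L_{i-1}\cdot\langle F\rangle$; otherwise every HP-factor of $L_i$ would already lie in $L_{i-1}\cdot\langle F\rangle$, forcing $L_i\subseteq L_{i-1}\cdot\langle F\rangle$ and, via $\skel(L_i)\neq\emptyset$ (hence $L_i\cap F=\{1\}$), the contradiction $L_i\subseteq L_{i-1}$. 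Then $\{h_1,\ldots,h_t\}$ is an $F$-convexly independent set of $\scrL$-monomials in $\ConSpan(L)$ by construction, so $t\leq\condeg(L)$ by Corollary~\ref{cor_basis_for_HP} (basis cardinality is an upper bound for any independent set). Taking the supremum over all such chains gives $\hgt(L)\leq\condeg(L)$.

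For the moreover statement, consider a descending chain of HS-kernels of maximal length $\hgt(L)=\condeg(L)$. The argument above extracts, at each step $L_i\supset L_{i-1}$, an HP-kernel $\langle h_i\rangle\subseteq L_i$ with $h_i$ $F$-convexly independent from $\{h_1,\ldots,h_{i-1}\}$; by maximality the resulting list has length exactly $\condeg(L)$, so it is a basis of $\ConSpan(L)$. Consequently $L_i=\prod_{j\leq i}\langle h_j\rangle=L_{i-1}\cdot\langle h_i\rangle$, so the factor $L_i/L_{i-1}$ is the image of the HP-kernel $\langle h_i\rangle$ in $\overline{F(\Lambda)}/L_{i-1}$; by Remark~\ref{rem_correspondence_of_quontient_HSpec} this image is itself an HP-kernel of the quotient, as claimed. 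The main obstacle is the careful bookkeeping in the upper-bound argument: one must ensure that the chosen HP-generators $h_i$ really witness $F$-convex independence, which is precisely where Proposition~\ref{prop_kernel_descending_chain} (and the non-emptiness of $\skel(L_i)$ to prevent collapse via $\langle F\rangle$) is essential.
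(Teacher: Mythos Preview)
Your proof follows essentially the same strategy as the paper's: both rely on Proposition~\ref{prop_kernel_descending_chain} and Remark~\ref{rem_basis_for_HS_kernel} to identify strictly descending chains in $\HSpec(\overline{F(\Lambda)})$ with $F$-convexly independent sets of $\scrL$-monomials, and both read off $\hgt(L)=\condeg(L)$ from this identification. You are more explicit than the paper, which simply invokes the proposition in one line for the height equality.

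There is a soft spot in your upper-bound argument. The deduction ``$L_i\subseteq L_{i-1}\cdot\langle F\rangle$ together with $L_i\cap F=\{1\}$ forces $L_i\subseteq L_{i-1}$'' is not valid as stated: the condition $L_i\cap F=\{1\}$ alone does not let you strip off the $\langle F\rangle$ factor. What actually rules out this situation is the argument in the $(\Rightarrow)$ direction of Proposition~\ref{prop_kernel_descending_chain}: if an $\scrL$-monomial $h$ lies in $K\cdot\langle F\rangle$ but not in $K$ (with $K$ an HS-kernel), then $\langle F\rangle\subseteq K\cdot\langle h\rangle$, contradicting $\skel(K\cdot\langle h\rangle)\neq\emptyset$. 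Your argument is easily repaired by citing that direction of the proposition rather than the bare implication you wrote.

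For the ``moreover'' clause the two proofs diverge slightly. The paper works only with the specific chain $\prod_{i\le j}\langle h_i\rangle$ coming from a basis, applies the second isomorphism theorem (Theorem~\ref{thm_nother_1_and_3}(2)) to write the $j$-th factor as $L_j/(L_j\cap\prod_{i<j}L_i)$, and checks directly that the image of the HP-kernel $L_j$ remains an HP-kernel because $(\prod_{i\le j}L_i)\cap F=\{1\}$. You instead treat an \emph{arbitrary} maximal chain, use maximality to force $L_i=L_{i-1}\cdot\langle h_i\rangle$, and then invoke Remark~\ref{rem_correspondence_of_quontient_HSpec}. Your route addresses the literal statement (``every factor of \emph{a} descending chain of maximal length'') more directly; the paper's route is shorter but implicitly relies on Jordan--H\"older (Theorem~\ref{Schreier}) to transfer the conclusion from one maximal chain to all of them.
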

\begin{proof}
By Proposition \ref{prop_kernel_descending_chain},   the maximal
length of a chain of   HS-kernels descending from an HS-kernel $L$
equals the number of elements in a basis of $\ConSpan(L)$; thus  the
chain is of unique length $\condeg(L)$, i.e., $\hgt(L) =
\condeg(L)$. Moreover, by Theorem \ref{thm_nother_1_and_3}(2),
$$\prod_{i=1}^{j}L_i / \prod_{i=1}^{j-1}L_i   \cong L_j/\bigg(L_j \cap \prod_{i=1}^{j-1}L_i \bigg).$$
Furthermore $$\left(L_j \cdot (L_j \cap (\prod_{i=1}^{j-1}L_i
))\right) \cap F = \{1\},$$ since $L_j \cdot (L_j \cap
\prod_{i=1}^{j-1}L_i  ) = L_j \cap \prod_{i=1}^{j}L_i   \subset
\prod_{i=1}^{j}L_i $
 and $(\prod_{i=1}^{j}L_i ) \cap F = \{1\}$.
 So the  image of the HP-kernel~$L_j$ in $\overline{F(\Lambda)}/(L_j \cap (\prod_{i=1}^{j-1}L_i))$  is an HP-kernel. Thus, every factor of the chain is an HP-kernel.
\end{proof}

\begin{cor}\label{correctdim}
$\Hdim(\overline{F(\Lambda)}) = \condeg(\overline{F(\Lambda)})=
n.$
\end{cor}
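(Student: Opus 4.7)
The plan is to combine the height-equals-convex-dimension identity of Theorem~\ref{prop_kernel_descending_chain_properties} with the explicit basis computation of Example~\ref{exmp_basis_for_semifield_of_fractions} (recorded as Example~\ref{rem_convexity_degree_of_semifield_of_fractions}). The second equality, $\condeg(\overline{F(\Lambda)})=n$, is already on record, so the real content is the identification $\Hdim(\overline{F(\Lambda)})=n$.

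First I would establish the lower bound $\Hdim(\overline{F(\Lambda)})\ge n$. Consider the HS-kernel
\[
L \;=\; \langle \lambda_1 \rangle \cdot \langle \lambda_2 \rangle \cdots \langle \lambda_n \rangle \;=\; \Bigl\langle \sum_{i=1}^{n}|\lambda_i|\Bigr\rangle,
\]
which is a product of HP-kernels and is the maximal $\nu$-kernel $L_{(1,\dots,1)}$ of Proposition~\ref{prop_maximal_kernels_in_semifield_of_fractions_part1}. Its $F$-convex span is convexly spanned by the $\scrL$-monomials $\lambda_1,\dots,\lambda_n$, which form an $F$-convexly independent set by Example~\ref{exmp_basis_for_semifield_of_fractions}; hence $\condeg(L)=n$. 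Theorem~\ref{prop_kernel_descending_chain_properties} then yields $\hgt(L)=\condeg(L)=n$, so $\Hdim(\overline{F(\Lambda)})\ge n$.

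Next I would establish the upper bound $\Hdim(\overline{F(\Lambda)})\le n$. Let $L'$ be any HS-kernel of $\overline{F(\Lambda)}$. By Remark~\ref{rem_basis_for_HS_kernel} the set $\operatorname{HP}(L')$ admits a basis of $F$-convexly independent $\scrL$-monomials, and this basis is a fortiori $F$-convexly independent in $\operatorname{HP}(\overline{F(\Lambda)})$. Since $\condeg(\overline{F(\Lambda)})=n$ by Example~\ref{rem_convexity_degree_of_semifield_of_fractions}, no such basis can have more than $n$ elements, whence $\condeg(L')\le n$. A second application of Theorem~\ref{prop_kernel_descending_chain_properties} gives $\hgt(L')=\condeg(L')\le n$. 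Taking the supremum over all HS-kernels $L'$ produces $\Hdim(\overline{F(\Lambda)})\le n$, completing the proof.

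There is no genuine obstacle here; the only thing to be careful about is matching the two notions of ``basis''---the basis of $\operatorname{HP}(\overline{F(\Lambda)})$ over $F$ in the Steinitz-exchange sense (Corollary~\ref{cor_basis_for_HP}) versus the basis of convexly independent $\scrL$-monomials generating an HS-kernel (Remark~\ref{rem_basis_for_HS_kernel})---so that the inequality $\condeg(L')\le\condeg(\overline{F(\Lambda)})$ is legitimate. Both notions live inside $\operatorname{HP}(\overline{F(\Lambda)})$ and use the same convex-dependence relation, so the comparison is immediate.
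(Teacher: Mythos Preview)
Your proof is correct and matches the paper's intended reasoning: the corollary is stated without proof in the paper, as it follows immediately from combining Theorem~\ref{prop_kernel_descending_chain_properties} (giving $\hgt(L)=\condeg(L)$ for every HS-kernel) with Example~\ref{rem_convexity_degree_of_semifield_of_fractions} (giving $\condeg(\overline{F(\Lambda)})=n$), and your two inequalities spell out exactly this deduction.
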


\begin{rem}\label{cover_of_regions_sub_direct_product}
If $R_1 \cap R_2 \cap \dots \cap R_t = \{1 \}$, then
$\overline{F(\Lambda)}$ is a subdirect product
$$\overline{F(\Lambda)} = \overline{F(\Lambda)}/(R_1 \cap R_2 \cap \dots \cap R_t) \hookrightarrow \prod_{i=1}^{t} \overline{F(\Lambda)}/R_i.$$
Then for  any $\nu$-kernel $K$ of $\overline{F(\Lambda)}$, $R_1
\cap R_2 \cap \dots \cap R_t \cap K = \bigcap_{i=1}^t (R_i \cap K)
= \{1 \}$ and, since $K$ itself is an idempotent \vsemifield0,
$$K = K/\bigcap_{i=1}^t (R_i \cap K) \cong \prod_{i=1}^{t} K/(R_i \cap K) \cong \prod_{i=1}^{t} R_i K/R_i.$$
\end{rem}


\bibliographystyle{amsplain}

\begin{thebibliography}{10}


\bibitem{BeE} A.~Bertram and R.~Easton, \emph{The tropical nullstellensatz for
congruences}, preprint (2013).

\bibitem{Bi} G.~Birkhoff, \emph{Transfinite subgroup series}, Bulletin of
the American Mathematical Society 40 (12): 847–850 (1934).



\bibitem{B}
N.~Bourbaki,
\newblock  {\em Commutative Algebra.}
\newblock Paris and Reading, 1972.

\bibitem{Ca}
D.~Castella, \emph{{A}lg$\grave{e}$bres de polyn$\grave{o}$mes
tropicaux},
  {A}nnales math$\acute{e}$matiques {B}laise {P}ascal (2013).

\bibitem{UnivAlg}
P.~Cohn, \emph{Universal algebra}, Reidel, 1981.

\bibitem{CHWW}
G.~Cortinas, C.~Haesemeyer, M.~Walker, and C.~Weibel, \emph{Toric
varieties,
  monoid schemes, and cdh-descent},  (1970).

\bibitem{CG}
L.W.~Cohen,   C.~Goffman,
\newblock The topology of ordered Abelian groups.
\newblock Trans. Amer. Math. Soc. (1949), 310--319.


\bibitem{AGG}
S.~Gaubert, M.~Akian, and A.~Guterman, \emph{Linear independence
over tropical
  semirings and beyond}, Tropical and Idempotent Mathematics, G.L.~Litvinov and
  S.N.~Sergeev, (eds.) \textbf{495} (2009), 1--38.

\bibitem{GG}
J.~Giansiracusa and N.~Giansiracusa, \emph{Equations of tropical
varieties},
  arXiv:1308.0042v1 [math.AG] (2013).

\bibitem{golan92}
J.~Golan, \emph{The theory of semirings with applications in
mathematics and
  theoretical computer science}, vol.~54, {P}itman {M}onographs and {S}urveys
  in {P}ure and {A}pplied {M}athematics, {L}ongman {S}cientific and
  {T}echnical, 1992.

\bibitem{Ho} C.~Holland,  \newblock {\em Extensions of
   ordered groups and sequence completion},
   \newblock Trans. Amer. Math. Soc. 107 (1963), 71--82.

\bibitem{Hom_Semifields}
H.~Hutchins and H.~Weinert, \emph{{H}omomorphisms and kernels of
semifields},
  {P}eriodica {M}athematica {H}ungarica \textbf{21 (2)} (1990), 113--152.

\bibitem{Layered}
Z.~Izhakian, M.~Knebusch, and L.~Rowen, \emph{Layered tropical
mathematics}, J.
  Algebra, to appear, preprint (2013), available at
  \texttt{http://arxiv.org/pdf/0912.1398.pdf}.

\bibitem{IzhakianKnebuschRowenVal}
\bysame, \emph{Supertropical semirings and supervaluations}, J. Pure
Appl.
  Algebra \textbf{215(10)} (2011), 2431--2463.


\bibitem{IzhakianKnebuschRowen2011CategoriesII}
\bysame, \emph{{C}ategorical notions of  layered tropical algebra
and geometry},   Algebraic and combinatorial aspects of tropical
geometry, 191–234, Contemp. Math., 589, Amer. Math. Soc.,
Providence, RI, 2013.


\bibitem{IzhakianKnebuschRowen2012Algebraicstructures}
\bysame, \emph{{A}lgebraic structures of tropical mathematics,  in
Tropical and Idempotent Mathematics and Applications}, G.L.~Litvinov
and S.N.Sergeev, editors, Cont. Math. 616 arXiv:1305.3906v1
[math.RA] 16 May 2013


\bibitem{SuperTropicalAlg}
Z.~Izhakian and L.~Rowen, \emph{Supertropical algebra}, Adv.~ in
Math
  \textbf{225} (2010), 2222--2286.

\bibitem{IzhakianRowenMatrix}
\bysame, \emph{Supertropical matrix algebra}, Israel J. Math.
\textbf{182}
  (2011), 383--424.

\bibitem{IzhakianRowenIdeals}
\bysame, \emph{Ideals of polynomial semirings in tropical
mathematics}, J.
  Algebra Appl. \textbf{12(2)} (2013), 125--143.

\bibitem{Coordinate}
\bysame, \emph{Coordinate semirings of tropical varieties},
preprint (2014).

\bibitem{J}
N.~Jacobson, \emph{{B}asic {A}lgebra {II}}, Freeman, 1980.

\bibitem{JoM} D.~Joo and K.~Mincheva, \emph{Prime congruences of idempotent semirings and a Nullstellensatz
for tropical polynomials}, arXiv:1408.3817v1 [math.AC] (17 Aug
2014).

 \bibitem{Ko} V.M.~Kopytov,
\newblock Lattice ordered groups (Russian),
\newblock Nauka, Moscow (1984)


\bibitem{IntroTropGeo}
D.~Maclagan and B.~Sturmfels, \emph{{I}ntroduction to {T}ropical
{G}eometry},
  {T}o be published by the American Mathematical Society.

\bibitem{MacRin}
D.~ Maclagan and F.~ Rin$\acute{c}$on, \emph{{T}ropical schemes, tropical cycles, and valuated matroids},
arXiv:1401.4654v1  [math.AG] (2014).

\bibitem{Mac}
A.~Macpherson, \emph{Skeleta in non-archimedean and tropical
geometry},
  arXiv:1311.0502v1 [math.AG] (2013).

\bibitem{M} D.~Marker,
\newblock  Model theory : An introduction
\newblock Springer Graduate texts in
mathematics; 217, 2002.

\bibitem{AlgAspTropMath}
T.~Perri, \emph{{A}lgebraic aspects in tropical mathematics},
  preprint, available at \texttt{http://arxiv.org/pdf/1305.2764v2.pdf} (2013).

\bibitem{Kern}
T.~Perri and L.~Rowen, \emph{Kernels in tropical geometry},
  preprint,  \texttt{http://arxiv.org/pdf/14v2.pdf} (2013).

\bibitem{KS}
T.~Perri and L. Rowen,  \emph{A tropical Krull-Schmidt theorem}
 preprint, available at \texttt{http://arxiv.org/pdf/1408.4757}  (2014).


\bibitem{Alglogic}
B.~Plotkin, E.~Aladova, and E.~Plotkin, \emph{Algebraic logic and
  logically-geometric types in varieties of algebras}, Journal of Algebra and
  its Applications \textbf{12 (2)} (2013), 23pp.


\bibitem{Rot}
 J.~Rotman,   \emph{An introduction to the theory of groups}, New
York: Springer-Verlag. ISBN 0-387-94285-8.Rotman, Joseph (1994).

\bibitem{Ro}
L.~Rowen, \emph{Graduate Algebra: Commutative View}, AMS Graduate Studies in Mathematics
 \textbf{73} 2006.

\bibitem{M} D.~Marker,
\newblock  Model theory : An introduction
\newblock Springer Graduate texts in
mathematics; 217, 2002.

\bibitem {Sa} G.E.~Sacks,
\newblock Saturated Model Theory,
\newblock Mathematical Lecture Noets 80
\newblock Benjamin, 1972.


\bibitem{OrderedGroups3}
S.~Steinberg, \emph{Lattice-ordered {R}ings and {M}odules}, Springer
Science,
  2010.

\bibitem{Prop_Semifields}
E.~Vechtomov and A.~Cheraneva, \emph{Semifields and their
{P}roperties},
  Journal of Mathematical Sciences \textbf{163} (2009), 625--661.

\bibitem{Cont_Semifields}
E.~Vechtomov and D.~Chuprakov, \emph{The principal kernels of
semifields of
  continuous positive functions}, Journal of Mathematical Sciences \textbf{163}
  (2009), 500--514.

\bibitem{Semifields_LO_Groups}
H.~Weinert and R.~Wiegandt, \emph{On the structure of semifields and
  lattice-ordered groups}, Periodica Mathematics Hungarica \textbf{32(1-2)}
  (1996), 129--147.

\bibitem{KA_Semifields}
\bysame, \emph{A new {K}urosh-{A}mitsur radical theory for proper
semifields},
  Mathematica Pannonica \textbf{14(1)} (2003), 3--28.


\bibitem{Y} T.Y.~Yu, Balancing conditions in global tropical geometry, arXiv:1304.2251v1
[math.AG], 2013
\end{thebibliography}

\providecommand{\bysame}{\leavevmode\hbox
to3em{\hrulefill}\thinspace}
\providecommand{\MR}{\relax\ifhmode\unskip\space\fi MR }
\providecommand{\MRhref}[2]{%
  \href{http://www.ams.org/mathscinet-getitem?mr=#1}{#2}
} \providecommand{\href}[2]{#2}

\end{document}